\newcommand{\EE}{\mathbb{ E}}
\newcommand{\PP}{\mathbb{P}}
\newcommand{\R}{\mathbb{R}}
\newcommand{\C}{\mathbb{C}}
\newcommand{\Q}{\mathbb{Q}}
\newcommand{\HH}{\mathbb{H}}
\newcommand{\N}{\mathbb{N}}
\newcommand{\D}{\mathbb{D}}
\newcommand{\pa}{\partial}
\newcommand{\F}{{\cal F}}
\newcommand{\no}{\noindent}
\newcommand{\aeq}{\overset{\mathrm{ae}}{=}}
\def\eps{\varepsilon}
\def\til{\widetilde}
\def\ha{\widehat}
\def\sem{\setminus}
\def\lin{\overline}
\def\ulin{\underline}
\def\del{\delta}
\def\L{{\cal L}}
\def\I{{\cal I}}
\def\lr{\leftrightarrow}
\DeclareMathOperator{\rad}{rad}
\DeclareMathOperator{\sign}{sign} \DeclareMathOperator{\diam}{diam}
\DeclareMathOperator{\dist}{dist} 
\DeclareMathOperator{\hcap}{hcap} 
\DeclareMathOperator{\Imm}{Im } \DeclareMathOperator{\Ree}{Re }
\DeclareMathOperator{\mA}{m} 
 \DeclareMathOperator{\cc}{c}
\DeclareMathOperator{\bb}{b} \DeclareMathOperator{\doub}{doub}
\DeclareMathOperator{\disj}{disj} \DeclareMathOperator{\Hull}{Hull}
\theoremstyle{plain}
\newtheorem{Theorem}{Theorem}[section]
\newtheorem{Lemma}[Theorem]{Lemma}
\newtheorem{Corollary}[Theorem]{Corollary}
\newtheorem{Proposition}[Theorem]{Proposition}
\theoremstyle{definition}
\newtheorem{Definition}[Theorem]{Definition}
\newtheorem{Remark}[Theorem]{Remark}
\numberwithin{equation}{section}
\newcommand{\BGE}{\begin{equation}}
\newcommand{\BGEN}{\begin{equation*}}
\newcommand{\EDE}{\end{equation}}
\newcommand{\EDEN}{\end{equation*}}
\begin{document}
\title{Two-curve Green's function for $2$-SLE: the boundary case}
\author{Dapeng Zhan
}
\affil{Michigan State University}
\maketitle

\begin{abstract}
   We prove that for $\kappa\in(0,8)$, if $(\eta_1,\eta_2)$ is a $2$-SLE$_\kappa$ pair in a simply connected domain $D$ with an analytic boundary point $z_0$, then {as $r\to 0^+$,} $ \PP[\dist(z_0,\eta_j)<r,j=1,2]$ converges to a positive number for some $\alpha>0$, which is called the  two-curve Green's function. The exponent $\alpha$ equals $\frac{12}{\kappa}-1$ or $2(\frac{12}{\kappa}-1)$ depending on whether $z_0$ is  one of the endpoints of $\eta_1$ {or} $\eta_2$. We also find the convergence rate and the exact formula {for} the Green's function up to a multiplicative constant. To derive these results, we construct   two-dimensional diffusion processes and use orthogonal polynomials to obtain their transition density.
\end{abstract}

\tableofcontents

\section{Introduction}
\subsection{Main results}
This paper is the follow-up of   \cite{Two-Green-interior}, in which we proved the existence of {the} two-curve Green's function for $2$-SLE$_\kappa$ at an interior point, and obtained the formula for the Green's function up to a multiplicative constant. In the present paper, we will study the case when the interior point is replaced by a boundary point. 

As a particular case of multiple SLE$_\kappa$, a $2$-SLE$_\kappa$ consists of two random curves in a simply connected domain connecting two pairs of boundary points (more precisely, prime ends), which satisfy the property that, when any one curve is given, the conditional law of the other curve is that of a chordal SLE$_\kappa$ in a complement{ary} domain of the first curve.

The two-curve Green's function of a $2$-SLE$_\kappa$ is {defined to be} the rescaled limit of the probability that the two curves in the $2$-SLE$_\kappa$ both approach a marked point in $\lin D$. More specifically, it was proved in \cite{Two-Green-interior} that, for any $\kappa\in(0,8)$, if $(\eta_1,\eta_2)$ is a $2$-SLE$_\kappa$ in $D$, and $z_0\in D$, then the limit
\BGE G(z_0):=\lim_{r\to 0^+} r^{-\alpha}\PP[\dist(\eta_j,z_0)<r,j=1,2] \label{Gz0}\EDE
{is} a positive number, where the exponent $\alpha$ is $\alpha_0:=\frac{(12-\kappa)(\kappa+4)}{8\kappa}$. The limit $G(z_0)$ is called the (interior) two-curve Green's function for  $(\eta_1,\eta_2)$. The paper \cite{Two-Green-interior} also derived the convergence rate and the exact formula for $G(z_0)$ up to an unknown constant.

In this paper we study the limit in the case that $z_0\in\pa D$ assuming that $\pa D$ is analytic near $z_0${\footnote{By saying that $\D$ is analytic near $z_0$, we mean that there is a conformal map $f$ defined on $\{|z|<1\}$ such that $f(0)=z_0$, $f(\{|z|<1,\Imm z>0\})=f(\{|z|<1\})\cap D$, and $f(\{|z|<1,\Imm z\le 0\})=f(\{|z|<1\})\cap D^c$.}}. Below is our  main theorem.

\begin{Theorem}
Let $\kappa\in(0,8)$. Let $(\eta_1,\eta_2)$ be a $2$-SLE$_\kappa$ in a simply connected domain $D$. Let $z_0\in\pa D$. Suppose $\pa D$ is analytic near $z_0$.   We have the following results in two cases.
\begin{enumerate}
	\item [(A)] If $z_0$ is not {an} endpoint of $\eta_1$ or $\eta_2$, then the limit in (\ref{Gz0}) exists and lies in $(0,\infty)$ for $\alpha= \alpha_1= \alpha_2:=2(\frac{12}{\kappa}-1)$.
	\item [(B)] If $z_0$ is one of the endpoints of $\eta_1$ {or} $\eta_2$, then the limit in (\ref{Gz0}) exists and lies in $(0,\infty)$ for $\alpha=\alpha_3:= \frac{12}{\kappa}-1$.
\end{enumerate}
Moreover, in each case we may compute $G_D(z_0)$   up to some constant $C>0$ as follows. Let $F$ denote the hypergeometric function $_2F_1(\frac 4\kappa,1-\frac 4\kappa;\frac 8\kappa,\cdot)$.
Let $f$ map $D$ conformally onto $\HH$ such that $f(z_0)=\infty$. Let $J$ denote the map $z\mapsto -1/z$.
\begin{enumerate}
	\item [(A1)] Suppose Case (A) happens and {neither} $\eta_1$ {nor} $\eta_2$ separates $z_0$ from the other curve. We label the $f$-images of the four endpoints of $\eta_1$ and $\eta_2$ by $v_-<w_-<w_+<v_+$. Then
	$$G_D(z_0)= C_1 |(J\circ f)'(z_0)|^{\alpha_1} G_{1 }(\ulin w;\ulin v),$$
	where $C_1>0$ is a constant depending only on $\kappa$, and
\BGE G_1(\ulin w;\ulin v):=\prod_{\sigma\in\{+,-\}}( |w_\sigma-v_\sigma|^{\frac 8\kappa-1} |w_\sigma-v_{-\sigma}|^{\frac 4\kappa} ) F\Big(\frac{(w_+-w_-)(v_+-v_-)}{(w_+-v_-)(v_+-w_-)}\Big)^{-1}.\label{G1(w,v)}\EDE
	\item [(A2)] Suppose Case (A) happens and  one of $\eta_1$ and $\eta_2$ separates $z_0$ from the other curve. We label the $f$-images of the four endpoints of $\eta_1$ and $\eta_2$ by $v_-<w_-<w_+<v_+$. Then
	$$G_D(z_0)= C_2 |(J\circ f)'(z_0)|^{\alpha_2} G_{2}(\ulin w;\ulin v)$$
where $C_2>0$ is a constant depending only on $\kappa$, and
 \BGE G_2(\ulin w;\ulin v):=\prod_{u\in\{w,v\}} |u_+-u_-|^{\frac 8\kappa -1} \prod_{\sigma\in\{+,-\}}  |w_\sigma-v_{-\sigma}|^{\frac {4}\kappa} F\Big(\frac{(v_+-w_+)(w_--v_-)}{(w_+-v_-)(v_+-w_-)}\Big)^{-1}.  \label{G2(w,v)}\EDE
	\item [(B)] Suppose Case (B) happens. We label the $f$-images of the other three endpoints of $\eta_1$ and $\eta_2$ by $w_+,w_-,v_+$, such that $f^{-1}(v_+)$ and $z_0$ are endpoints of the same curve, and $w_+,v_+$ lie on the same side of $w_-$. Then
	$$G_D(z_0)= C_3 |(J\circ f)'(z_0)|^{\alpha_3} G_{3}(\ulin w;v_+ ),$$
where $C_3>0$ is a constant depending only on $\kappa$, and
\BGE G_3(\ulin w;v_+)=|w_+-w_-|^{\frac 8\kappa -1}   |v_+-w_{-}|^{\frac {4}\kappa}  F\Big(\frac{ v_+-w_+ }{v_+-w_- }\Big)^{-1}.  \label{G3(w,v)}\EDE
\end{enumerate}
{In each case, the function $G_D$ does not depend on the choice of $f$ because $G_1$ and $G_2$ are homogeneous of degree $2(\frac 8\kappa -1+\frac 4\kappa)=\alpha_1=\alpha_2$, and $G_3$ is homogeneous of degree $\frac 8\kappa -1+\frac 4\kappa=\alpha_3$.}
\label{main-Thm1}
\end{Theorem}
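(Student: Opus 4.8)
The plan is to follow the strategy of \cite{Two-Green-interior} but adapted to a boundary target point, which changes the relevant SLE martingale observable and hence the drift structure of the diffusions. First I would normalize: apply the conformal map $f$ sending $D$ to $\HH$ and $z_0$ to $\infty$, reducing everything to a statement about $2$-SLE$_\kappa$ in $\HH$ with the four marked boundary points at finite real coordinates, and use conformal covariance of the whole setup (this is where the factor $|(J\circ f)'(z_0)|^{\alpha}$ and the Jacobian $J$ enter, since near $\infty$ one works in the coordinate $-1/z$). The event $\{\dist(z_0,\eta_j)<r,\ j=1,2\}$ becomes, roughly, the event that both curves enter a neighborhood of $\infty$ of the appropriate size; quantifying this requires the standard distortion estimates for conformal maps near an analytic boundary point, so that $r^{-\alpha}\PP[\cdots]$ is comparable to a rescaled crossing probability in $\HH$.

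Next I would set up the Loewner evolution. Parametrize $\eta_1$ (say) by half-plane capacity from its endpoint and track the images of the tip, of the force points, and crucially of the target $\infty$ under the Loewner maps. The conditional law of $\eta_2$ given $\eta_1$ being chordal SLE$_\kappa$ lets me write down a two-curve local martingale $M_t$ built from the SLE partition-function-type observable with a power of the ``conformal radius seen from $z_0$'' — here, because $z_0\in\pa D$ is a boundary point, the relevant quantity is a boundary analogue, and its exponent is $\alpha_1=\alpha_2=2(\frac{12}{\kappa}-1)$ in Case (A) and $\alpha_3=\frac{12}{\kappa}-1$ in Case (B). Weighting the $2$-SLE law by $M_t$ produces a new measure under which the driving function and force points evolve as a diffusion; after projecting onto the scale-invariant coordinates (the cross-ratio-type variable that is the argument of $F$, plus one radial-type variable recording the distance scale), I get a two-dimensional diffusion whose generator I can read off. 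The hypergeometric function $F={}_2F_1(\tfrac4\kappa,1-\tfrac4\kappa;\tfrac8\kappa;\cdot)$ appears as the normalizing eigenfunction making $M_t$ a genuine (not just local) martingale, exactly as the $2$-SLE partition function in \cite{Two-Green-interior}.

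Then comes the ergodic/coupling analysis: I want to show the rescaled probability converges, with a rate, to a constant times the value of this observable at the initial configuration. The idea is to show the auxiliary two-dimensional diffusion, after removing the deterministic linear-in-``time'' drift of the radial coordinate, converges exponentially fast to a stationary distribution in the transverse (cross-ratio) coordinate; the limit constant $C_j$ is then an integral against that stationary law. This is where orthogonal polynomials come in, as advertised in the abstract: the transverse generator is a Jacobi-type operator (its eigenfunctions are Jacobi polynomials with parameters determined by $\kappa$), so one gets an explicit spectral decomposition of the transition density, hence the exponential convergence rate and a clean expression for the limiting constant. Finally I would translate back through $f$ and check that the resulting $G_D(z_0)$ is independent of the choice of $f$ by the stated homogeneity of $G_1,G_2,G_3$ (degree $2(\tfrac8\kappa-1+\tfrac4\kappa)$ or $\tfrac8\kappa-1+\tfrac4\kappa$), which matches the Jacobian exponent $\alpha_j$.

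The main obstacle I anticipate is twofold. First, correctly identifying the boundary observable and its exponent in each of the three topological sub-cases (A1), (A2), (B) — the combinatorics of which endpoints are ``between'' others on $\pa\HH$ and whether one curve separates $z_0$ from the other changes which boundary force points collide in the limit, and hence both the exponent ($\alpha_1$ vs.\ $\alpha_3$) and the precise product of power factors and the argument of $F$ in \eqref{G1(w,v)}--\eqref{G3(w,v)}. Second, upgrading the local martingale $M_t$ to a true martingale and justifying the limit exchange: one must control the ``bad'' events where one curve gets close to $z_0$ but the other does not, or where the curves swallow $z_0$ from the wrong side, and show these contribute negligibly to $r^{-\alpha}\PP[\cdots]$ in the limit. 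Handling these requires quantitative one-curve boundary Green's function estimates and a priori bounds on the two-dimensional diffusion ruling out the degenerate boundary behavior, analogous to — but technically heavier than — the interior case of \cite{Two-Green-interior}.
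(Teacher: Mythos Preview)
Your high-level outline captures the right flavor (normalize to $\HH$ with $z_0=\infty$, build a martingale from the hSLE partition function, reduce to a diffusion, diagonalize with Jacobi polynomials, extract the limit from a quasi-stationary distribution), but there is a genuine structural gap in how you propose to implement it.

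You write ``Parametrize $\eta_1$ (say) by half-plane capacity from its endpoint and track the images of the tip, of the force points, and crucially of the target $\infty$.'' This is a one-curve Loewner parametrization, and it is not what the paper does nor is it sufficient. The paper's central technical device is to grow \emph{both} curves simultaneously along a carefully chosen \emph{time curve} $\underline u(t)=(u_+(t),u_-(t))$ in the two-dimensional time region, with speeds tuned so that (F1) the harmonic measure of the union seen from $\infty$ grows like $e^{2t}$ and (F2) the two sides stay balanced. Only after composing with this time curve do the scale-invariant quantities $R_\sigma(t)=\tfrac{W_\sigma^u-V_0^u}{V_\sigma^u-V_0^u}\in[0,1]$ form an autonomous two-dimensional diffusion in $[0,1]^2$ with explicit SDE (\ref{SDE-R}); the ``radial'' scale is then deterministically $e^{2t}$, not a second stochastic coordinate as you suggest. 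Your proposed variables (``one cross-ratio plus one radial'') do not match this, and with a single-curve parametrization you would not get a closed two-dimensional Markov process capturing the event that \emph{both} curves reach scale $L$ --- in Case (A1) this is essential, and even in Cases (A2)/(B) the paper grows the single curve $\eta_w$ from \emph{both} ends simultaneously to obtain the same $(R_+,R_-)$ structure.

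A second, smaller point: the Radon--Nikodym relationship runs in the direction hSLE-pair $\to$ SLE$_\kappa(2,\underline\rho)$-pair (Lemmas \ref{RN-Thm1-inv}, \ref{RN-Thm2-inv}, \ref{RN-Thm3-inv}), and the transition density is first computed for the latter (which has infinite lifetime and no continuation threshold) and then pulled back; you describe weighting the $2$-SLE law directly, which obscures why the diffusion is well-posed on all of $\R_+$. Relatedly, showing that the SDE (\ref{SDE-R}) holds on the \emph{entire} lifespan (not just up to the first curve-intersection time $T^u_{\mathrm{disj}}$) requires a nontrivial restart/DMP argument (Theorem \ref{Thm-SDE-whole-R}) that your sketch does not anticipate.
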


Our long-term goal is to prove the existence of Minkowski content of double points of chordal SLE$_\kappa$ for $\kappa\in(4,8)$, which may be transformed into the existence of Minkowski content of the intersection of the two curves in a $2$-SLE$_\kappa$. Following the approach in \cite{LR}, we need to prove the existence of two-curve two-point Green's function for $2$-SLE$_\kappa$, where Theorem \ref{main-Thm1} is expected to serve as the boundary estimate in the proof.

{The paper uses a two-curve technique introduced in \cite{Two-Green-interior}, which will be described in Section \ref{Strategy}. Besides \cite{Two-Green-interior} and this paper, the  technique was recently used in \cite{Green-cut} to study the Green's function for the cut points of chordal SLE$_\kappa$, $\kappa\in(4,8)$. One future application of the technique is the interior two-curve Green's function for a commuting pair of SLE$_\kappa(2,\ulin\rho)$ curves (which may arise as a commuting pair of flow lines in the imaginary geometry theory (\cite{MS1}) in the case $\kappa\ne 4$, or a commuting pair of Gaussian free field level lines (\cite{WW-level}) in the case $\kappa =4$,  cf.\ Section \ref{section-commuting-SLE-kappa-rho}). The result is expected to lead to the existence of the Minkowski content of the intersection of the two curves (subject to the existence of the two-curve two-point Green's function for this commuting pair).
}

\subsection{Strategy} \label{Strategy}
\begin{figure}
	\begin{center}
		\includegraphics[width=1.9in]{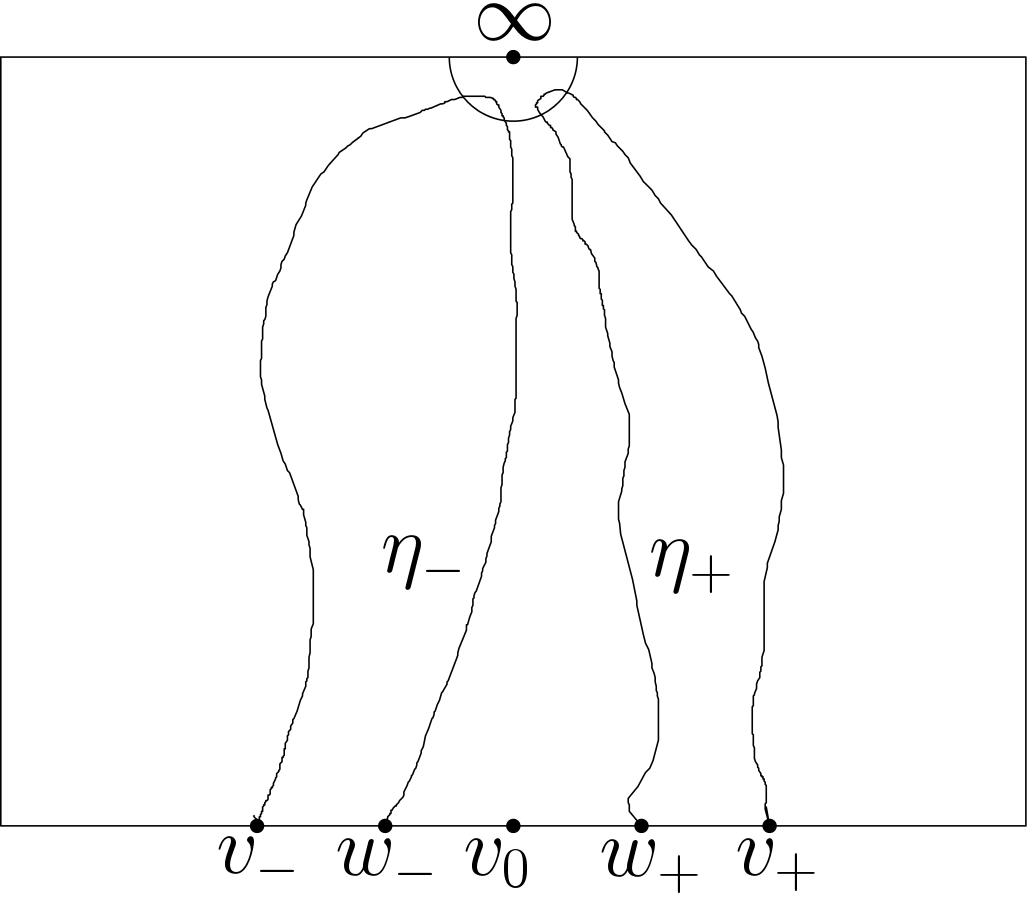} \quad
		\includegraphics[width=1.9in]{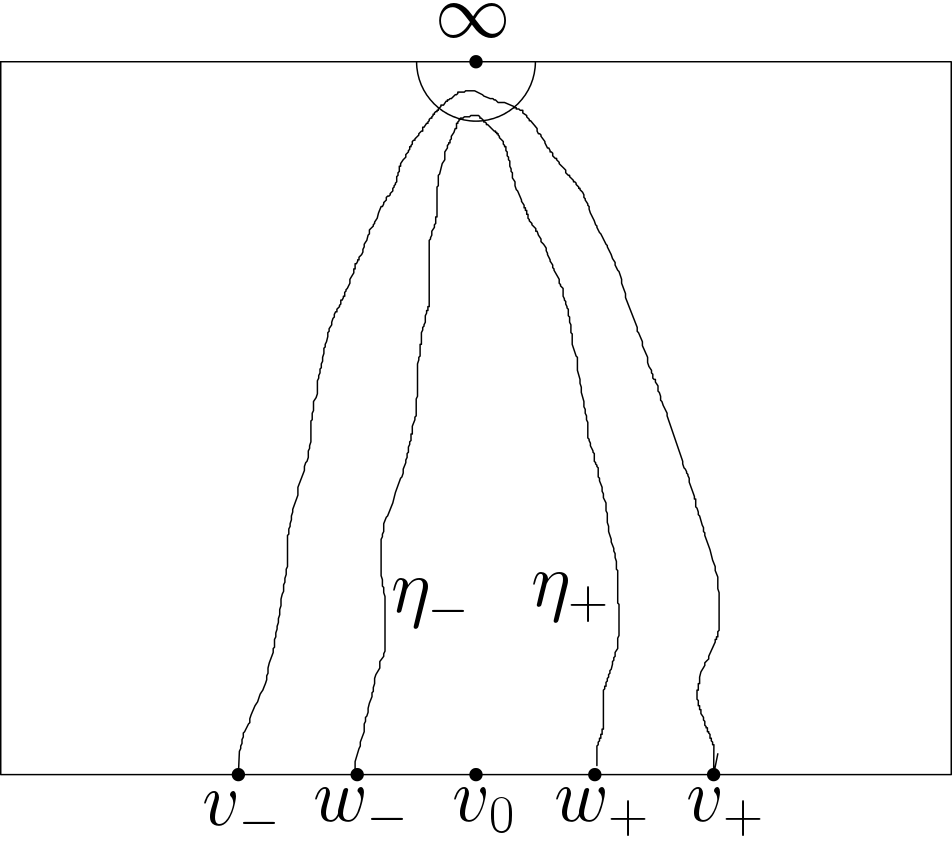}\quad
		\includegraphics[width=1.9in]{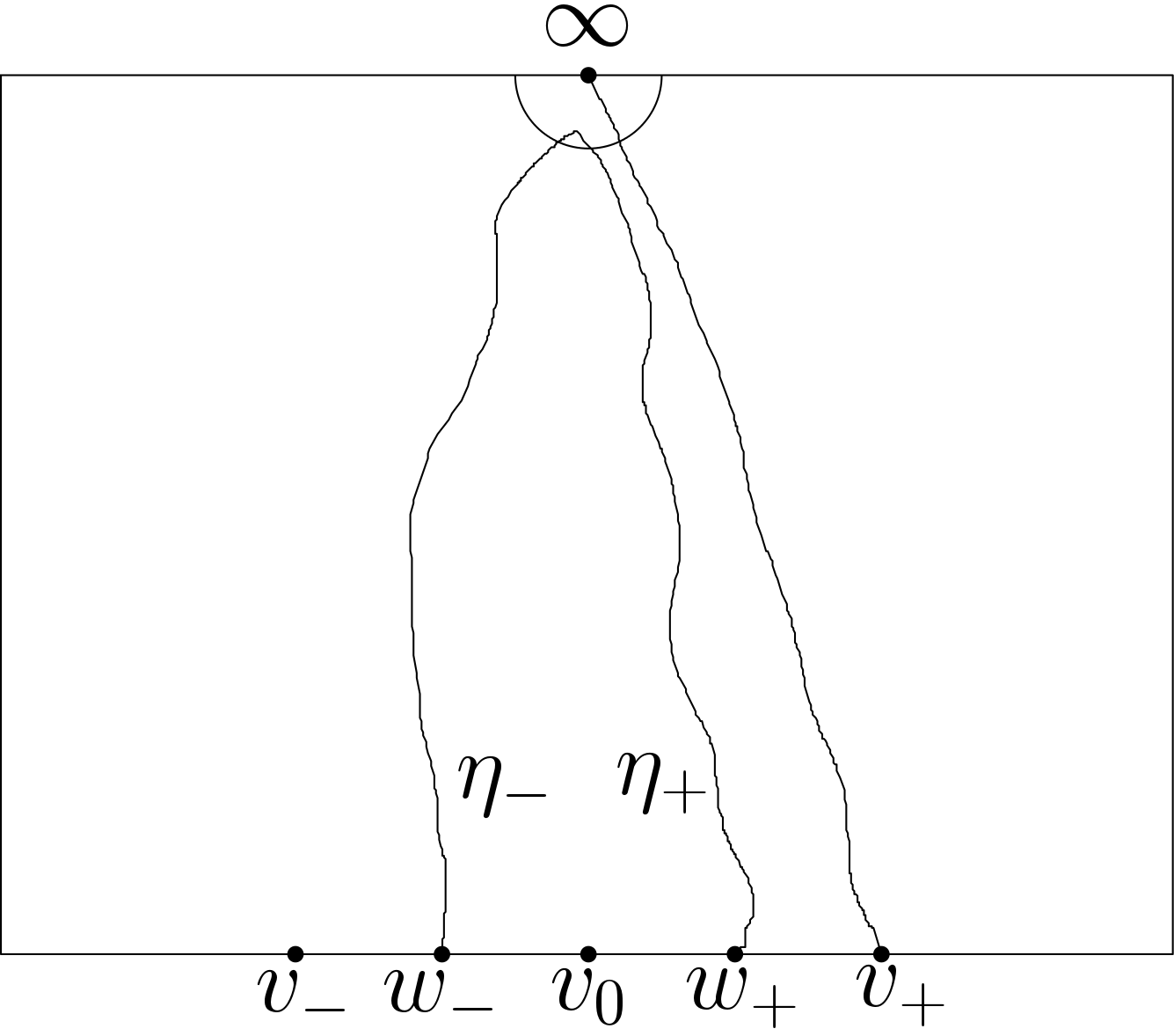}
	\end{center}
	\caption{{The three figures above respectively illustrate Case (A1), Case (A2), and Case (B).}} \label{fig-1}
\end{figure}

{We now briefly explain how the two-curve technique works for the boundary two-curve Green's function.} 
By conformal invariance of $2$-SLE$_\kappa$, we may assume that  $D=\HH:=\{z\in\C:\Imm z>0\}$, and $z_0=\infty$. It suffices to consider the limit
\BGE \lim_{L\to \infty} L^{\alpha} \PP[\eta_j\cap \{|z|>L\}\ne\emptyset,j=1,2].\label{limit-L}\EDE  In Case (A) of Theorem \ref{main-Thm1}, we label the four endpoints of $\eta_1$ and $\eta_2$ by $v_+>w_+>w_->v_-$. There are two possible link patterns: $(w_+\lr v_+;w_-\lr v_-)$ and $(w_+\lr w_-;v_+\lr v_-)$, which respectively correspond to Case (A1) and Case (A2) of Theorem \ref{main-Thm1}. {See Figure \ref{fig-1} for illustrations of the thee cases of Theorem \ref{main-Thm1}.}

For the first link pattern, we label the two curves by $\eta_+$ and $\eta_-$. By translation and dilation, we may assume that $v_\pm=\pm1$. {Additionally}, we assume that {$\frac{v_++v_-}2\in [w_-,w_+]$. After converting $v_\pm$ to $\pm 1$ using translation and dilation, we get}
 $0\in[w_-,w_+]$.  Let $v_0=0$. {We orient $\eta_+$ and $\eta_-$ from $w_+$ and $w_-$ to $v_+$ and $v_-$, and grow the two curves simultaneously with some speeds to be described later. The growing process stops at the time $T^u$ when either curve reaches its target, or separates $v_+$ or $v_-$ from $\infty$.} For each $t$ in the lifespan $[0,T^u)$, let $H_t$ denote the unbounded connected component of $\HH\sem (\eta_+[0,t]\cup \eta_-[0,t])$. During the lifespan $[0,T^u)$ of the process, the speeds of  $\eta_+$ and $\eta_-$ are controlled by two factors:
\begin{enumerate}
  \item [(F1)] the harmonic measure of $[v_-,v_+]\cup \eta_+[0,t]\cup \eta_-[0,t]$ in $H_t$ viewed from $\infty$ increases in $t$ exponentially with factor $2$, and
  \item [(F2)]   $[v_-,v_0]\cup \eta_-[0,t]$ and $[v_0,v_+]\cup \eta_+[0,t]$ have the same harmonic measure viewed from $\infty$.
\end{enumerate}
Suppose $g_t$ maps $H_t$ conformally onto $\HH$ and satisfies $g_t(z)=z+o(1)$ as $z\to\infty$. Define
$V_+(t)=\lim_{x\downarrow \max([v_0,v_+]\cup \eta_+[0,t]\cap \R)} g_t(x)$ and $V_-(t)=\lim_{x\uparrow \max([v_-,v_0]\cup \eta_-[0,t]\cap \R)} g_t(x)$. Then (F1) is equivalent to {the condition} that
$V_+(t)-V_-(t)=e^{2t}(v_+-v_-)$. The inverse $g_t^{-1}$ extends continuously to $\lin\HH$. We will see that there is a unique $V_0(t)\in (V_-(t),V_+(t))$ such that   $g_t^{-1}$  maps $[ V_0(t),V_\sigma(t)]$ into $[v_0,v_\sigma]\cup \eta_\sigma[0,t]$ for $\sigma\in\{+,-\}$. Then (F2) is equivalent to {the condition} that $V_+(t)-V_0(t)=V_0(t)-V_-(t)$.

{If $\kappa\in(0,4]$, $\eta_+$ and $\eta_-$ are disjoint, and do not disconnect $v_+,v_-,v_0$ from $\infty$. In this case,} $V_\sigma(t)$ is simply $g_t(v_\sigma)$ for $\sigma\in\{+,-,0\}$. {If $\kappa\in(4,8)$, $\eta_+$ and $\eta_-$ may or may not intersect, and they together may disconnect some $v_\sigma$, $\sigma\in\{+,-,0\}$ from $\infty$. When the disconnection happens, the function $V_\sigma(t)$ is more complicated. The two curves do not cross each other even if they intersect. }

At the time $T^u$, one of the two curves, say $\eta_+$, separates $v_+$ or $v_-$ from $\infty$. If $\eta_+$ separates $v_+$, the rest of $\eta_+$ grows in a bounded connected component of $\HH\sem \eta_+[0,T^u)$; if $\eta_+$ separates $v_-$, the whole $\eta_-$ is disconnected from $\infty$ by $\eta_+[0,T^u)$. Thus, after $T^u$, at least one curve {cannot} get closer to $\infty$. So we may focus on the parts of $\eta_+$ and $\eta_-$ before $T^u$.
Using Koebe's $1/4$ theorem (applied to $g_t$ at $\infty$) and Beurling's estimate (applied to a planar Brownian motion started near $\infty$), we find that for $0\le t<T^u$, the diameter of both $\eta_+[0,t]$ and $\eta_-[0,t]$ are comparable to $e^{2t}$.

We define a two-dimensional diffusion process $\ulin R(t)=(R_+(t),R_-(t))\in [0,1]^2$, $0\le t<T^u$, by $R_\sigma(t)=\frac{W_\sigma(t)-V_0(t)}{V_\sigma(t)-V_0(t)}$, $\sigma\in\{+,-\}$, where $W_\sigma(t)=g_t(\eta_\sigma(t))\in [V_0(t),V_\sigma(t)]$. Here $\eta_\sigma(t)$ is understood as a prime end of $H_t$.
We then use the knowledge of $2$-SLE$_\kappa$ partition function and a technique of orthogonal polynomials to derive the transition density  of $(\ulin R)$. 

{From the transition density, we find that $(\ulin R)$ has a quasi-invariant distribution, which means that if $(\ulin R)$ starts with this distribution, then the lifetime $T^u$ follows an exponential distribution, and the distribution of $\ulin R(t)$ conditionally on the event $\{T^u>t\}$ stays unchanged. Moreover, if we start $(\ulin R)$ from any other distribution, then the distribution of $\ulin R(t)$ conditionally on  $\{T^u>t\}$ converges exponentially to the quasi-invariant distribution.}

{To prove the existence of the limit in (\ref{limit-L}), we first prove that the limit exists if the condition $\eta_j\cap \{|z|>L\}\ne\emptyset$ , $j=1,2$, is replaced by the condition that $e^{2 T^u}>L$. The value $e^{2 T^u}$ plays the role of the conformal radius: by Koebe's $1/4$ theorem, the supremum of the set of $L>0$ such that $\eta_j\cap \{|z|>L\}\ne\emptyset$ , $j=1,2$, is comparable to $e^{2T^u}$. Suppose  $(\ulin R)$ starts from its quasi-invariant distribution. Then $L^{\alpha} \PP[e^{2 T^u}>L]$ stays constant for $L\in(0,\infty)$ by the property of the quasi-invariant distribution, and so its limit as $L\to\infty$ exists. If $(\ulin R)$ starts from a deterministic point, then the existence of the limit follows from the convergence of the conditional distribution of $\ulin R(t)$ to its quasi-invariant distribution. After this step, we remove the additional assumption that $\frac{v_++v_-}2\in [w_-,w_+]$ by growing a segment of one of the two curves. Finally, we use a technique in \cite{LR} (obtaining the Euclidian distance Green's function from the conformal radius Green's function) to prove the existence of the limit in (\ref{limit-L}).
}

For the link pattern $(w_+\lr w_-;v_+\lr v_-)$, we label the curves by $\eta_w$ and $\eta_v$. We observe that $\eta_v$ disconnects $\eta_w$ from $\infty$. Thus, for $L>\max\{|v_+|,|v_-|\}$, $\eta_w$ intersects $\{|z|>L\}$ implies that $\eta_v$ does the intersection as well. Then the two-curve Green's function reduces to a single-curve Green's function. But we will still use a two curve approach. We  assume that $v_\pm=\pm1$ and $0\in(w_-,w_+)$, and let $v_0=0$ as in the previous case. This time, we grow $\eta_+$ and $\eta_-$ simultaneously along the same curve $\eta_w$ such that $\eta_\sigma$ runs from $w_\sigma$ towards $w_{-\sigma}$, $\sigma\in\{+,-\}$. The growth is stopped if $\eta_+$ and $\eta_-$ together exhaust the range of $\eta_w$, or any of them disconnects its target from $\infty$. The speeds of the curves are also controlled by  (F1) and (F2). {The relation between $\eta_1$ and $\eta_2$ is similar to that in Case (A1) depending on whether $\kappa\in(0,4]$ or $\kappa\in(4,8)$.} Then we define $V_0,V_\pm,W_\pm,R_\pm$ in the same way as before, and derive the transition density of $\ulin R=(R_+,R_-)$, {which will be used to prove the existence of the limit in (\ref{limit-L}) following the same approach as in Case (A1)}.

In Case (B),  we may assume that $v_+=1$ and $w_++w_-=0$. Now we introduce two new points: $v_0=0$ and $v_-=-1$. Unlike the previous cases, $v_-$ is not an end point of any curve. For this case, we grow $\eta_+$ and $\eta_-$ simultaneously from $w_+$ and $w_-$ along the same curve $\eta_w$ as in Case (A2). The rest of the proof almost follows the same approach as in Case ({A1}).

\subsection{Outline}
Below is the outline of the paper. In Section \ref{section-prel}, we recall definitions, notations, and some basic results that will be needed in this paper. In Section \ref{section-deterministic} we develop a framework on a commuting pair of deterministic chordal Loewner curves, which do not cross but may touch each other. The work extends the disjoint ensemble of Loewner curves that appeared in \cite{reversibility,duality}. At the end of the section, we describe the way to grow the two curves simultaneously with properties (F1) and (F2). In Section \ref{section-commuting-SLE-kappa-rho}, we use the results from the previous section to study a pair of multi-force-point SLE$_\kappa(\ulin\rho)$ curves, which commute with each other in the sense of \cite{Julien}. We obtain a two-dimensional diffusion process $\ulin R(t)=(R_+(t),R_-(t))$, $0\le t<\infty$,   and derive its transition density using orthogonal two-variable polynomials. In Section \ref{section-other-commut}, we study three types of commuting pairs of hSLE$_\kappa$ curves, which correspond to the three cases in Theorem \ref{main-Thm1}. We  prove that each of them is {\it locally} absolutely continuous w.r.t.\  a commuting pair of SLE$_\kappa(\ulin\rho)$ curves for certain force values, and also find the Radon-Nikodym derivative at different times. For each commuting pair of hSLE$_\kappa$ curves, we obtain a two-dimensional diffusion process $\ulin R(t)=(R_+(t),R_-(t))$ with random finite lifetime, and derive its transition density and quasi-invariant density. In the last section we finish the proof of Theorem \ref{main-Thm1}.

\section*{Acknowledgments}
The author thanks Xin Sun for suggesting the problem  on the (interior and boundary) two-curve Green's function for $2$-SLE.

\section{Preliminar{ies}} \label{section-prel}
We first fix some notation. Let $\HH=\{z\in\C:\Imm z>0\}$. For $z_0\in\C$ and $S\subset \C$, let $\rad_{z_0}(S)=\sup\{|z-z_0|:z\in S\cup\{z_0\}\}$. If a function $f$ is absolutely continuous on a real interval $I$, and $f'=g$ a.e.\ on $I$, then we write $f'\aeq g$ on $I$.  This means that $f(x_2)-f(x_1)=\int_{x_1}^{x_2} g(x)dx$ for any $x_1<x_2\in I$. Here $g$ may not be defined on a subset of $I$ with Lebesgue measure zero. We will also use ``$\aeq$'' for PDE or SDE in some similar sense. {For example, if $B_t$ is a standard Brownian motion, the SDE $dX_t\aeq dB_t+ g_t dt$ means that a.s.\ $f_t:=X_t-B_t$ is absolutely continuous, and $f'=g$ a.e.\ in the lifespan. }

\subsection{$\HH$-hulls and chordal Loewner equation}
A  relatively closed subset $K$ of $\HH$ is called an $\HH$-hull if $K$ is bounded and $\HH\sem K$ is a simply connected domain. For a set $S\subset\C$, if there is an $\HH$-hull $K$ such that $\HH\sem K$ is
the unbounded connected component of $\HH\sem \lin S$, then we say that $K$ is  the $\HH$-hull generated by $S$, and write $K=\Hull(S)$.
For an $\HH$-hull $K$, there is a unique conformal map $g_K$ from $\HH\sem K$ onto $\HH$ such that $g_K(z)=z+\frac cz+O(1/z^2)$ as $z\to \infty$ for some $c\ge 0$. The constant $c$, denoted by $\hcap(K)$, is called the $\HH$-capacity of $K$, which is zero iff $K=\emptyset$. We write $\hcap_2(K)$ for $\hcap(K)/2$. If $\pa(\HH\sem K)$ is locally connected, then $g_K^{-1} $ extends continuously from $\HH$ to $\lin\HH$, and we use $f_K$ to denote the continuation. If $K=\Hull(S)$, then we write $g_S,f_S, \hcap(S),\hcap_2(S)$ for $g_K,f_K,\hcap(K),\hcap_2(K)$, respectively.

 If $K_1\subset K_2$ are two $\HH$-hulls, then we define $K_2/K_1=g_{K_1}(K_2\sem K_1)$, which is also an $\HH$-hull. Note that $g_{K_2}=g_{K_2/K_1}\circ g_{K_1}$ and  $\hcap(K_2)=\hcap(K_2/K_1)+\hcap(K_1)$, which impl{ies} that $\hcap(K_1),\hcap(K_2/K_1)\le \hcap (K_2)$. If $K_1\subset K_2\subset K_3$ are $\HH$-hulls, then $K_2/K_1\subset K_3/K_1$ and
\BGE (K_3/K_1)/(K_2/K_1)=K_3/K_2. \label{K123}\EDE

Let $K$ be a non-empty $\HH$-hull. Let $K^{\doub}=\lin K\cup\{\lin z:z\in K\}$, where $\lin K$ is the closure of $K$, and $\lin z$ is the  complex conjugate of $z$. By {the} Schwarz reflection principle, there is a compact set $S_K\subset\R$ such that $g_K$ extends to a conformal map from $\C\sem K^{\doub}$ onto $\C\sem S_K$. Let $a_K=\min(\lin{K}\cap \R)$, $b_K=\max(\lin{K}\cap\R)$,  $c_K=\min S_K$, $d_K=\max S_K$. Then the extended $g_K$ maps  $\C\sem (K^{\doub}\cup [a_K,b_K])$ conformally onto $\C\sem [c_K,d_K]$. Since $g_K(z)=z+o(1)$ as $z\to\infty$, by Koebe's $1/4$ theorem, $\diam(K)\asymp \diam(K^{\doub}\cup [a_K,b_K])\asymp d_K-c_K$.
\vskip 2mm

\no{\bf Example}.
Let $x_0\in\R$, $r>0$. Then $H:=\{z\in\HH:|z-x_0|\le r\}$ is an $\HH$-hull with $g_H(z)=z+\frac{r^2}{z-x_0}$, $\hcap(H)=r^2$, $a_H=x_0-r$, $b_H=x_0+r$, $H^{\doub}=\{z\in\C:|z-x_0|\le r\}$, $c_H=x_0-2r$, $d_H=x_0+2r$.
\vskip 2mm

The next proposition combines  \cite[Lemmas 5.2 and 5.3]{LERW}.
\begin{Proposition}
  If $L\subset K$ are two non-empty $\HH$-hulls, then $[a_K,b_K]\subset [c_K,d_K]$, $[c_L,d_L]\subset [c_K,d_K]$, and $[c_{K/L},d_{K/L}]\subset [c_K,d_K]$. \label{abcdK}
\end{Proposition}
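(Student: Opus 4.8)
\noindent\emph{Proof idea.}
The plan is to reduce all three inclusions to two elementary facts about a \emph{single} $\HH$-hull, combined with the composition rule $g_K=g_{K/L}\circ g_L$. The main tool is the Herglotz (Cauchy-transform) representation of $f_M:=g_M^{-1}$ for a nonempty $\HH$-hull $M$: since $f_M$ maps $\HH$ conformally onto $\HH\setminus M\subseteq\HH$ with $f_M(z)-z\to0$ as $z\to\infty$, the maximum principle applied to the bounded harmonic function $z\mapsto\Imm(g_M(z)-z)$ on $\HH\setminus M$ gives $\Imm f_M(z)\ge\Imm z$ on $\HH$, so $f_M(z)=z-\int\frac{\nu_M(dt)}{z-t}$ for a finite positive measure $\nu_M$ of total mass $\hcap(M)$; moreover $\nu_M$ is supported on $S_M\subseteq[c_M,d_M]$ since $f_M$ extends conformally across $\R\setminus S_M$.

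From this I would extract, for a nonempty $\HH$-hull $M$: (a) $g_M$ maps $(-\infty,a_M)$ increasingly onto $(-\infty,c_M)$ and $(b_M,\infty)$ increasingly onto $(d_M,\infty)$ (each image is a connected, unbounded subinterval of $\R\setminus S_M$, and $g_M(x)\sim x$ near $\infty$); and (b) by positivity of $\nu_M$, $f_M(y)<y$ for $y>d_M$ and $f_M(y)>y$ for $y<c_M$, equivalently $g_M(x)>x$ for $x>b_M$ and $g_M(x)<x$ for $x<a_M$. Together, (a) and (b) give $c_M=\lim_{x\uparrow a_M}g_M(x)\le a_M$ and $d_M=\lim_{x\downarrow b_M}g_M(x)\ge b_M$; with $a_M\le b_M$ this is part~(i) (take $M=K$).

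For part~(iii) I would use that $g_K=g_{K/L}\circ g_L$, valid on $\HH\setminus K$, extends by Schwarz reflection to $\C\setminus K^{\doub}$ with $g_L(\C\setminus K^{\doub})\subseteq\C\setminus(K/L)^{\doub}$ (both standard, cf.\ \cite{LERW}); taking images of $\C\setminus K^{\doub}$,
\[\C\setminus S_K=g_{K/L}\big(g_L(\C\setminus K^{\doub})\big)\subseteq g_{K/L}\big(\C\setminus(K/L)^{\doub}\big)=\C\setminus S_{K/L},\]
so $S_{K/L}\subseteq S_K$, hence $c_K\le c_{K/L}$ and $d_{K/L}\le d_K$. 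For part~(ii), the reflection $z\mapsto-\lin z$ turns an $\HH$-hull with data $(a,b,c,d)$ into one with data $(-b,-a,-d,-c)$ and is compatible with the relation $L\subseteq K$ and with the quotient operation, so it suffices to prove $d_L\le d_K$. Fix $x>b_K$; since $b_L\le b_K$, both $g_L(x)$ and $g_K(x)$ are defined, with $g_L(x)>d_L$ and $g_K(x)>d_K$ by~(a). By the extended composition rule $g_K(x)=g_{K/L}(g_L(x))$; since $g_K(x)>d_K\ge d_{K/L}$ (part~(iii)) and $g_{K/L}$ maps $(b_{K/L},\infty)$ bijectively onto $(d_{K/L},\infty)$, this forces $g_L(x)>b_{K/L}$, whence $g_K(x)=g_{K/L}(g_L(x))>g_L(x)>d_L$ by~(b). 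Letting $x\downarrow b_K$ gives $d_K\ge d_L$, and the reflected statement gives $c_K\le c_L$.

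I expect the only step with real content to be the Herglotz representation together with its consequences (a) and (b): the sign statements in~(i) and the monotone comparison in~(ii) both rely essentially on the \emph{positivity} of $\nu_M$ --- equivalently on $\hcap\ge0$ and the Schwarz-reflection structure of $g_M$. The rest, namely the behaviour of the extended composition rule and the passage from the sets $S_{\bullet}$ to the intervals $[c_{\bullet},d_{\bullet}]$, is routine bookkeeping with conformal maps.
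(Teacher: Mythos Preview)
Your argument is correct. The paper does not give its own proof of this proposition but simply cites \cite[Lemmas 5.2 and 5.3]{LERW}; your self-contained route via the Herglotz representation of $g_M^{-1}$ (yielding facts (a) and (b)), together with the extended composition rule $g_K=g_{K/L}\circ g_L$ on $\C\setminus K^{\doub}$ and the inclusion $g_L(\C\setminus K^{\doub})\subset\C\setminus(K/L)^{\doub}$, is precisely the kind of argument those cited lemmas encode. One small point worth making explicit in your part~(ii): the inference ``$g_{K/L}(g_L(x))>d_{K/L}$ forces $g_L(x)>b_{K/L}$'' uses not only that $g_{K/L}$ maps $(b_{K/L},\infty)$ onto $(d_{K/L},\infty)$, but also that $g_{K/L}$ is injective on $\C\setminus(K/L)^{\doub}$ (so nothing else can map into $(d_{K/L},\infty)$) together with $g_L(x)\in\C\setminus(K/L)^{\doub}$, which you have already justified; and the trivial case $K=L$ (where $K/L=\emptyset$) should be handled separately.
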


\begin{Proposition}
  For any $x\in\R\sem K^{\doub}$, $0<g_K'(x)\le 1$. Moreover, $g_K'$ is decreasing on $(-\infty,a_K)$ and increasing on $(b_K,\infty)$.
   \label{Prop-contraction}
\end{Proposition}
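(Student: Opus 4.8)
The plan is to reduce everything to the classical integral representation of $g_K$ plus one elementary boundary estimate. Recall that any non-empty $\HH$-hull $K$ satisfies
\[ g_K(z)=z+\int \frac{\mu_K(dy)}{z-y},\qquad z\in\C\sem K^{\doub}, \]
where $\mu_K$ is a finite non-negative Borel measure supported on $\lin K\cap\R\subseteq[a_K,b_K]$ with $\mu_K(\R)=\hcap(K)$. If a self-contained proof of this is wanted: $u(z):=\Imm z-\Imm g_K(z)$ is harmonic on $\HH\sem K$, vanishes on $\R\sem\lin K$, equals $\Imm z\ge 0$ on $\partial(\HH\sem K)\cap\HH$, and tends to $0$ at $\infty$, so the minimum principle gives $u\ge 0$; hence $z\mapsto z-g_K(z)$ maps $\HH$ into $\lin\HH$, and the Herglotz--Nevanlinna representation together with $g_K(z)=z+\hcap(K)/z+O(z^{-2})$ forces the displayed form.

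Given the representation, for $x\in\R\sem K^{\doub}$ one has $x\notin\lin K\supseteq\mathrm{supp}\,\mu_K$, so differentiation under the integral is legitimate: $g_K'(x)=1-\int (x-y)^{-2}\,\mu_K(dy)$. Since the integral is non-negative, $g_K'(x)\le 1$ at once. For the strict positivity I would not bound that integral (when $x$ lies in a bounded component of $\R\sem K^{\doub}$ it need not be $<1$ a priori), but argue geometrically: $g_K$ extends to a conformal bijection of $\C\sem K^{\doub}$ onto $\C\sem S_K$ that commutes with complex conjugation, hence it maps $\HH\sem K^{\doub}$ into $\HH$ while $g_K'(x)$ is a nonzero real; plugging $z=x+i\eps$ with $\eps\downarrow 0$ into $g_K(x+i\eps)=g_K(x)+i\eps\,g_K'(x)+O(\eps^2)$ and using $\Imm g_K(x+i\eps)>0$ forces $g_K'(x)\ge 0$, so $g_K'(x)>0$.

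The monotonicity drops out of the same formula. If $x\in(b_K,\infty)$ then every $y\in\mathrm{supp}\,\mu_K$ satisfies $y\le b_K<x$, so $x\mapsto (x-y)^{-2}$ is decreasing on $(b_K,\infty)$ for each such $y$; integrating against $\mu_K$ and negating shows $g_K'$ is increasing on $(b_K,\infty)$. Symmetrically, for $x\in(-\infty,a_K)$ every $y\in\mathrm{supp}\,\mu_K$ has $y\ge a_K>x$, so $x\mapsto (y-x)^{-2}$ is increasing there and $g_K'$ is decreasing on $(-\infty,a_K)$.

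The only step carrying real content is the integral representation --- equivalently, the inequality $\Imm g_K(z)\le\Imm z$ on $\HH\sem K$. Once it is available (together with the trivial $\Imm g_K>0$), the two bounds already follow from the boundary Taylor expansion above, the upper bound coming from $0\le\Imm(x+i\eps)-\Imm g_K(x+i\eps)=\eps(1-g_K'(x))+O(\eps^2)$. An alternative route that avoids the measure is to realize $K$ through a generalized, measure-driven Loewner chain $(K_t)$ and use $\partial_t\log g_t'(x)=-\int 2\,(g_t(x)-u)^{-2}\,\mu_t(du)<0$, which makes $g_t'(x)$ decrease strictly from $g_0'(x)=1$ while staying positive; but the direct integral representation is cleaner and makes the monotonicity transparent.
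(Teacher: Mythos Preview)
Your integral representation for $g_K$ is false, and this breaks the proof of the monotonicity claim. You assert that
\[
g_K(z)=z+\int\frac{\mu_K(dy)}{z-y},\qquad \mathrm{supp}\,\mu_K\subseteq\lin K\cap\R\subseteq[a_K,b_K],
\]
and justify it by Herglotz--Nevanlinna applied to $z\mapsto z-g_K(z)$. But Herglotz--Nevanlinna needs the function to be analytic on \emph{all} of $\HH$, while $z-g_K(z)$ is only analytic on $\HH\sem K$. Your sentence ``hence $z\mapsto z-g_K(z)$ maps $\HH$ into $\lin\HH$'' already slips: the correct domain is $\HH\sem K$. And the representation genuinely fails: for the vertical slit $K=\{it:0<t\le 1\}$ one has $a_K=b_K=0$, so your $\mu_K$ would have to be a point mass at $0$, giving $g_K(z)-z=c/z$; but in fact $g_K(z)-z=\sqrt{z^2+1}-z$, which is not of this form. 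More structurally, a Cauchy transform $\int (z-y)^{-1}\mu(dy)$ with $\mu$ on $\R$ is analytic off $\R$, whereas $g_K(z)-z$ has a branch cut along the slit in $\HH$, so no such representation can exist.

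Your side arguments for $0<g_K'(x)\le 1$ via the boundary Taylor expansion of $\Imm g_K(x+i\eps)$ are fine and do not rely on the representation. But your only argument for the monotonicity on $(-\infty,a_K)$ and $(b_K,\infty)$ goes through the (invalid) formula $g_K'(x)=1-\int(x-y)^{-2}\mu_K(dy)$, so that part is unproved. The paper fixes this by working with $g_K^{-1}$ instead: since $g_K^{-1}:\HH\to\HH\sem K$ \emph{is} analytic on all of $\HH$ with $\Imm g_K^{-1}\ge\Imm\,\cdot$ (equivalent to your $\Imm g_K\le\Imm\,\cdot$), Herglotz--Nevanlinna gives $g_K^{-1}(w)=w-\int_{S_K}(w-y)^{-1}\mu_K(dy)$ with $\mu_K$ supported on $S_K\subset[c_K,d_K]$. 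Differentiating yields $(g_K^{-1})'(w)=1+\int(w-y)^{-2}\mu_K(dy)\ge 1$ (hence $0<g_K'\le 1$), and a second differentiation shows $(g_K^{-1})'$ is increasing on $(-\infty,c_K)$ and decreasing on $(d_K,\infty)$; pulling back by the monotone bijections $g_K:(-\infty,a_K)\to(-\infty,c_K)$ and $g_K:(b_K,\infty)\to(d_K,\infty)$ gives the stated monotonicity of $g_K'$. The change is minimal --- just move the Herglotz step from $g_K$ to $g_K^{-1}$ --- but it is essential.
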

\begin{proof}
 By \cite[Lemma C.1]{BSLE}, there is a  measure $\mu_K$ supported on $S_K$ with $|\mu_K|=\hcap(K)$ such that $g_K^{-1}(z)-z=\int_{S_K}  \frac{-1}{z-y}d\mu_K(y)$ for any $x\in\R\sem S_K$. Differentiating this formula and letting $z=x\in\R\sem S_K$, we get $(g_K^{-1})'(x)=1+\int_{S_K} \frac{1}{(x-y)^2}d\mu_K(y)\ge 1$. So  $0<g_K'\le 1$ on $\R\sem K^{\doub}$. Further differentiating  the integral formula w.r.t.\ $x$, we find that $(g_K^{-1})''(x)=\int_{S_K} \frac{-2}{(x-y)^3}d\mu_K(y)$ is positive on $(-\infty,c_K)$ and negative on $(d_K,\infty)$, which means that $(g_K^{-1})'$ is increasing on $(-\infty,c_K)$ and decreasing on $(d_K,\infty)$. Since $g_K$ maps  $(-\infty,a_K)$ and $(b_K,\infty)$ onto  $(-\infty,c_K)$ and $(d_K,\infty)$, respectively, we get the monotonicity of $g_K'$ .
\end{proof}

\begin{Proposition}
  If $K$ is an $\HH$-hull with $\rad_{x_0}(K)\le r$ for some $x_0\in\R$, then $\hcap(K)\le r^2$, $\rad_{x_0}(S_K)\le 2r$, and $ |g_K(z)-z|\le 3r$ for any $z\in\C\sem K^{\doub}$. \label{g-z-sup}
\end{Proposition}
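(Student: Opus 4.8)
My plan is to deduce all three bounds from a single observation: the hypothesis $\rad_{x_0}(K)\le r$ says exactly that $K$ is contained in the closed half-disk $H:=\{z\in\HH:|z-x_0|\le r\}$, and $H$ is the $\HH$-hull computed in the Example above, with $\hcap(H)=r^2$ and $S_H=[c_H,d_H]=[x_0-2r,x_0+2r]$. So the result should follow by comparing $K$ with $H$. (The case $K=\emptyset$ is trivial, so assume $K\ne\emptyset$.)

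For the capacity bound I would simply invoke monotonicity of $\hcap$ under inclusion --- which is the identity $\hcap(H)=\hcap(H/K)+\hcap(K)$ recorded just before Proposition~\ref{abcdK} --- to get $\hcap(K)\le\hcap(H)=r^2$. For the bound on $S_K$, I would apply Proposition~\ref{abcdK} with smaller hull $K$ and larger hull $H$ to obtain $[c_K,d_K]\subset[c_H,d_H]=[x_0-2r,x_0+2r]$; since $S_K\subset[c_K,d_K]$ by the definition of $c_K,d_K$, this gives $S_K\subset\{|y-x_0|\le 2r\}$, i.e.\ $\rad_{x_0}(S_K)\le 2r$.

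The substantive part is the estimate $|g_K(z)-z|\le 3r$. Here I would set $G:=g_K-\id$, extended via Schwarz reflection to an analytic function on $\Omega:=(\C\cup\{\infty\})\setminus K^{\doub}$ with $G(\infty)=0$ (since $g_K(z)=z+\hcap(K)/z+O(1/z^2)$ at $\infty$). Because $|G|$ is subharmonic and $\Omega$ is a proper subdomain of the sphere, the maximum modulus principle reduces the claim to bounding $\limsup_{\zeta\to z_1}|G(\zeta)|$ for each $z_1\in\pa K^{\doub}$. For such a $z_1$ one has $|z_1-x_0|\le r$ because $K^{\doub}\subset\{|z-x_0|\le r\}$; and since the extended $g_K$ is a conformal bijection --- hence a proper map --- of $(\C\cup\{\infty\})\setminus K^{\doub}$ onto $(\C\cup\{\infty\})\setminus S_K$, we have $g_K(\zeta)\to S_K$ as $\zeta\to z_1$. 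Every subsequential limit $w$ of $g_K(\zeta)$ thus lies in $S_K\subset\{|y-x_0|\le 2r\}$ by the previous step, so $|G(\zeta)|=|g_K(\zeta)-\zeta|\to|w-z_1|\le|w-x_0|+|x_0-z_1|\le 3r$, which is what we need.

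I expect the only delicate point to be this boundary analysis in the last step: one must rule out that $g_K(\zeta)$ wanders away from $S_K$ --- in particular that it escapes to $\infty$ --- as $\zeta$ tends to $\pa K^{\doub}$. This is precisely the properness of the reflected conformal map, and it is the one place where the argument genuinely uses the extension of $g_K$ across the real line rather than just the map on $\HH\setminus K$. Everything else is bookkeeping with the Example, Proposition~\ref{abcdK}, and capacity additivity.
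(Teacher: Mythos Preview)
Your argument is correct. The first two estimates are obtained exactly as in the paper, by comparing $K$ with the half-disk $H$ and invoking $\hcap$-additivity together with Proposition~\ref{abcdK}.

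For the bound $|g_K(z)-z|\le 3r$, however, your route differs from the paper's. The paper does not argue directly: it cites \cite[Formula (3.12)]{Law-SLE} for the inequality on $\HH\setminus K$ (in the normalized case $x_0=0$) and then passes to general $x_0$ and to all of $\C\setminus K^{\doub}$ by translation, continuity, and Schwarz reflection. Your maximum-modulus argument is a self-contained alternative that handles the reflected domain in a single step and avoids the external reference; the properness of the extended $g_K$ (which you correctly identify as the crucial point) simultaneously guarantees that $|G|$ is globally bounded on $\Omega$ and that every boundary cluster value of $g_K$ lies in $S_K$, so the version of the maximum principle you invoke is legitimate. One small point worth making explicit is that $\Omega=\hat\C\setminus K^{\doub}$ is connected --- this follows from the very fact you use, that $g_K$ carries $\C\setminus K^{\doub}$ bijectively onto the connected set $\C\setminus S_K$ --- so that the maximum principle applies on a single domain.
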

\begin{proof}
  We have $K\subset H:=\{z\in\HH:|z-x_0|\le r\}$. By  Proposition \ref{abcdK}, $\hcap(K)\le \hcap(H)=r^2$, $S_K\subset [c_K,d_K]\subset[c_H,d_H]=[x_0-2r,x_0+2r]$. {If $x_0=0$, the inequality $\hcap(K)\le r^2$ is just \cite[Formula 3.9]{Law-SLE}; and the inequality $ |g_K(z)-z|\le 3r$ for any $z\in\HH\sem K$ is just \cite[Formula 3.12]{Law-SLE}. By translation, continuation, and reflection, we then extend these inequalities to general $x_0\in\R$ and all $z\in \C\sem K^{\doub}$.}
\end{proof}

\begin{Proposition}
For two nonempty $\HH$-hulls  $  K_1\subset K_2$ such that $\lin{K_2/K_1}\cap [c_{K_1},d_{K_1}]\ne \emptyset$, we have $|c_{K_{1}}-c_{K_{2}}|,|d_{K_{1}}-d_{K_{2}}|\le 4 \diam(K_{2}/K_{1})$.
   \label{Prop-cd-continuity}
\end{Proposition}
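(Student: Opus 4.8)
The plan is to reduce the statement to an estimate on the auxiliary hull $M:=K_2/K_1$ together with the extremal points $c_{K_1},d_{K_1}$ of $S_{K_1}$. Recall that by the composition rule $g_{K_2}=g_M\circ g_{K_1}$, and the Schwarz-reflected maps compose in the same way on $\C\sem K_2^{\doub}$, so that $S_{K_2}=g_M(S_{K_1}\sem M)\cup S_M$ on the real line, where I have written things schematically; more precisely, the reflected $g_M$ is a conformal map of $\C\sem M^{\doub}$ onto $\C\sem S_M$, and $c_{K_2}=\min S_{K_2}$, $d_{K_2}=\max S_{K_2}$. First I would observe that, since $g_M(z)=z+o(1)$ at $\infty$, Proposition \ref{g-z-sup} (applied with any real base point near $M$, e.g.\ $x_0\in[c_M,d_M]$) gives $|g_M(z)-z|\le 3\,\rad_{x_0}(M)$, and $\rad_{x_0}(M)$ is comparable to $\diam(M)$ when $x_0$ is chosen inside $[c_M,d_M]$; in fact $\diam(M)\le \diam(M^{\doub}\cup[a_M,b_M])\asymp d_M-c_M \le$ a constant times $\diam(M)$ by the Koebe argument recorded just before the Example. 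So on any point of $\C\sem M^{\doub}$, $g_M$ moves points by at most a bounded multiple of $\diam(M)$.

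Next, the key geometric input is the hypothesis $\lin M\cap[c_{K_1},d_{K_1}]\ne\emptyset$: it forces $[c_M,d_M]$ to lie within distance $O(\diam(M))$ of the interval $[c_{K_1},d_{K_1}]$, and conversely it means the ``new'' hull $M$ is attached to the real slit $[c_{K_1},d_{K_1}]$ rather than floating off somewhere far away. Concretely, pick a point $p\in\lin M\cap[c_{K_1},d_{K_1}]$; then $[c_M,d_M]\subset[c_{K_1}-C\diam(M),\,d_{K_1}+C\diam(M)]$ for a universal $C$ (using $d_M-c_M\asymp\diam(M)$ and $p\in[c_M,d_M]$ — note $p\in\lin M$ so $p$ lies between $a_M$ and $b_M$, hence in $[c_M,d_M]$ by Proposition \ref{abcdK}). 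Now I track $d$: on one hand $d_{K_1}\notin M^{\doub}$ would give $|g_M(d_{K_1})-d_{K_1}|\le 3\,\rad_{x_0}(M)$, and since $g_M$ fixes $\infty$ and pushes the top of the slit to a point $\le d_{K_2}$, one gets $d_{K_2}\ge g_M(d_{K_1})\ge d_{K_1}-O(\diam M)$; on the other hand $d_{K_2}=\max S_{K_2}$ and every point of $S_{K_2}$ is either in $S_M\subset[c_M,d_M]$ (hence $\le d_{K_1}+O(\diam M)$) or is the $g_M$-image of a point of $S_{K_1}$ with real part $\le d_{K_1}$, which by the displacement bound is $\le d_{K_1}+3\rad_{x_0}(M)$. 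Combining the two directions yields $|d_{K_1}-d_{K_2}|\le 4\diam(M)$ after optimizing constants; the estimate for $c$ is the mirror image. I would also need to handle the degenerate possibility $d_{K_1}\in M^{\doub}$ separately, but then $d_{K_1}$ is within $\diam(M)$ of $[c_M,d_M]$ directly and the bound is immediate.

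The main obstacle I anticipate is bookkeeping the constants and, more substantively, justifying the relation $S_{K_2}\subset S_M\cup g_M(S_{K_1})$ carefully: one must argue that the reflected composition $g_{K_2}=g_M\circ g_{K_1}$ indeed holds on $\C\sem K_2^{\doub}$ and that the boundary slit $[c_{K_2},d_{K_2}]$ is governed by the images of $[c_{K_1},d_{K_1}]$ and of $S_M$ — i.e.\ that no point of $S_{K_2}$ escapes the region I claimed. This is essentially a monotonicity/containment fact about the sets $S_K$ under composition, and I expect it follows from Proposition \ref{abcdK} (which already states $[c_L,d_L]\subset[c_K,d_K]$ and $[c_{K/L},d_{K/L}]\subset[c_K,d_K]$ for $L\subset K$) together with the displacement bound from Proposition \ref{g-z-sup}; the cited Lemmas 5.2 and 5.3 of \cite{LERW} presumably package exactly this. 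Once that containment is in hand, the proof is just the two-sided squeeze described above, and the constant $4$ should come out with a little care.
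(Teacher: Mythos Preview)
Your outline is essentially correct in spirit — the proof does come down to the composition $g_{K_2}=g_{M}\circ g_{K_1}$ together with the displacement bound $|g_M(z)-z|\le 3\diam(M)$ from Proposition~\ref{g-z-sup} and the hypothesis that $\lin M$ touches $[c_{K_1},d_{K_1}]$ — but the paper's execution is cleaner and sidesteps your declared ``obstacle'' entirely.

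The paper does not try to characterize $S_{K_2}$ as a set. Instead it computes $c_{K_2}$ directly as a one-sided limit: writing $c_1'=\lim_{x\uparrow a_{K_2}} g_{K_1}(x)$ and observing (since $g_{K_1}$ maps $\HH\sem K_2$ onto $\HH\sem M$) that $c_1'=\min\{c_{K_1},a_M\}$, one has $c_{K_2}=\lim_{y\uparrow c_1'} g_M(y)$. The hypothesis forces $a_M\ge c_{K_1}-\diam(M)$, hence $c_1'\ge c_{K_1}-\diam(M)$; then Proposition~\ref{g-z-sup} gives $c_{K_2}\ge c_1'-3\diam(M)\ge c_{K_1}-4\diam(M)$. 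The reverse inequality $c_{K_2}\le c_{K_1}$ is immediate from Proposition~\ref{abcdK}, which you should have invoked directly rather than arguing via ``$g_M$ pushes the top of the slit to a point $\le d_{K_2}$'' — that claim itself presupposes the containment $g_M(S_{K_1}\sem M^{\doub})\subset S_{K_2}$ that you flagged as unproved. Your route through $S_{K_2}\subset S_M\cup g_M(S_{K_1}\sem M^{\doub})$ is valid (and would in fact yield the sharper constant $3$ once made rigorous), but verifying that set relation requires tracking the reflected composition on $\R$ across the various intervals, whereas the paper's single-limit computation needs none of that bookkeeping.
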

\begin{proof}
  By symmetry it suffices to estimate $|c_{K_1}-c_{K_2}|$.   Let $c_1'=\lim_{x\uparrow a_{K_2}} g_{K_1}(x)$ and $\Delta K=K_2/K_1$. Since $g_{K_1}$ maps $\HH\sem K_2$ onto $\HH\sem \Delta K$, we have $c_1'=\min\{c_{K_1},a_{\Delta K}\}$. Since $\lin{\Delta K}\cap  [c_{K_1},d_{K_1}]\ne \emptyset$, $ c_1'\ge c_{K_1}-\diam(\Delta K)$. Thus, by Proposition \ref{g-z-sup},
  $$c_{K_2}=\lim_{x\uparrow a_{K_2}} g_{\Delta K}\circ g_{K_1}(x)=\lim_{y\uparrow c_1'} g_{\Delta K}(y)\ge c_1'-3\diam(\Delta K)\ge c_{K_1}-4\diam(\Delta K).$$
 By Proposition \ref{abcdK}, $c_{K_2}\le c_{K_1}$. So we get $|c_{K_{1}}-c_{K_{2}}| \le 4 \diam(\Delta K)$.
\end{proof}

The following proposition is \cite[Proposition 3.42]{Law-SLE}.

\begin{Proposition}
  Suppose $K_0,K_1,K_2$ are $\HH$-hulls such that $K_0\subset K_1\cap K_2$. Then
  $$\hcap(K_1)+\hcap(K_2)\ge \hcap(\Hull(K_1\cup K_2))+\hcap(K_0).$$ \label{hcap-concave}
\end{Proposition}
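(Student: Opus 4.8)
\textbf{Proof proposal for Proposition \ref{hcap-concave}.}

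The plan is to translate the $\hcap$ identities into statements about Brownian motion, since $\hcap(K)$ has a probabilistic interpretation: if $B$ is a complex Brownian motion started from $iy$ with $y$ large, then $\hcap(K)=\lim_{y\to\infty} y\,\EE^{iy}[\Imm B_{\tau_K}]$, where $\tau_K$ is the exit time of $\HH\sem K$ (the hitting time of $K\cup\R$). More robustly, I would use the additivity already recorded in the excerpt together with a monotonicity/subadditivity input. Concretely, set $K_3=\Hull(K_1\cup K_2)$. Using $\hcap(K_i)=\hcap(K_i/K_0)+\hcap(K_0)$ for $i=1,2$ (and the analogous decomposition relative to $K_0$ inside $K_3$), the claimed inequality is equivalent, after cancelling one copy of $\hcap(K_0)$, to
$$\hcap(K_1/K_0)+\hcap(K_2/K_0)\ge \hcap\big(\Hull((K_1/K_0)\cup(K_2/K_0))\big),$$
i.e.\ to the case $K_0=\emptyset$. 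So it suffices to prove subadditivity: $\hcap(K_1)+\hcap(K_2)\ge\hcap(\Hull(K_1\cup K_2))$ for arbitrary $\HH$-hulls $K_1,K_2$ (no containment hypothesis needed once $K_0$ is normalized away; but I should double-check that $g_{K_0}$ maps $\Hull(K_1\cup K_2)$ to $\Hull((K_1/K_0)\cup(K_2/K_0))$, which follows because $g_{K_0}$ is a conformal map of the complementary domains fixing $\infty$ with unit derivative there).

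For the subadditivity step I would argue probabilistically. Let $B$ be a complex Brownian motion from $iy$. Write $\tau_1,\tau_2,\tau_{12}$ for the hitting times of $K_1^{\doub},K_2^{\doub},(K_1\cup K_2)^{\doub}$ respectively by the reflected picture, or more cleanly work in $\HH$ with $\Imm$ harmonic: $\hcap(K)=\lim_{y\to\infty}y\,\EE^{iy}[\Imm B_{\sigma_K}-\Imm B_\infty]$ where $\sigma_K$ is the hitting time of $K\cup\R$ and the expression telescopes appropriately. The key observation is $\sigma_{\Hull(K_1\cup K_2)}=\sigma_{K_1}\wedge\sigma_{K_2}$ up to the part of the path that has already left the unbounded component, and that
$$\Imm B_{0}-\Imm B_{\sigma_{K_1}\wedge\sigma_{K_2}}\le \big(\Imm B_0-\Imm B_{\sigma_{K_1}}\big)+\big(\Imm B_0-\Imm B_{\sigma_{K_2}}\big)$$
holds in conditional expectation given the path up to $\sigma_{K_1}\wedge\sigma_{K_2}$, by applying the optional stopping/strong Markov property at that time and using that $\Imm$ is a nonnegative supermartingale killed on $\R$. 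Taking $\EE^{iy}$, multiplying by $y$, and letting $y\to\infty$ gives the subadditivity. I expect the main obstacle to be the bookkeeping around the ``unbounded connected component'' in the definition of $\Hull$: one must make sure that the Brownian motion started near $\infty$ sees exactly $\Hull(K_1\cup K_2)$ and not the bounded pockets, and that the harmonic-measure/$\hcap$ limits are taken uniformly enough to pass to the limit — this is where Propositions \ref{g-z-sup} and \ref{abcdK} (bounding $|g_K(z)-z|$ and locating $[c_K,d_K]$) are used to control error terms.

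An alternative, purely analytic route avoids Brownian motion: using the representation $g_K^{-1}(z)-z=-\int \frac{d\mu_K(y)}{z-y}$ with $|\mu_K|=\hcap(K)$ from \cite[Lemma C.1]{BSLE} (already invoked in the proof of Proposition \ref{Prop-contraction}), one has $\hcap(K)=\frac1\pi\int_\HH (\Imm(g_K^{-1})'(z)\text{-type density})$, or more directly $\hcap(K)=\frac1\pi\iint_{\HH\sem K}|\nabla h_K|^2$ where $h_K(z)=\Imm(g_K(z)-z)$ is the bounded harmonic function on $\HH\sem K$ with boundary values $-\Imm z$ on $\partial K$ and $0$ on $\R$, decaying at $\infty$; equivalently $\hcap(K)=\frac1\pi\,\mathcal D(h_K)$ is a Dirichlet energy. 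Then subadditivity becomes a Dirichlet-principle comparison: restricting the extremal function for $\Hull(K_1\cup K_2)$ and comparing against the sum of the extremal functions for $K_1$ and $K_2$, using that these functions are nonnegative (by the maximum principle, since $-\Imm z\le 0$) so that the cross term has a favorable sign. I would present the probabilistic argument as the main line and mention the Dirichlet-energy version as the conceptual reason; either way the reduction to $K_0=\emptyset$ via the $\hcap$-additivity identity is the first move.
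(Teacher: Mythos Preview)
The paper does not give a proof; it simply cites \cite[Proposition 3.42]{Law-SLE}. Your overall plan --- reduce to $K_0=\emptyset$ via $\hcap$-additivity, then prove subadditivity through the Brownian representation $\hcap(K)=\lim_{y\to\infty} y\,\EE^{iy}[\Imm B_{\sigma_K}]$ --- is correct and is essentially the route taken in Lawler's book. The reduction step is fine: with $K_3=\Hull(K_1\cup K_2)$ one has $K_3/K_0=\Hull\big((K_1/K_0)\cup(K_2/K_0)\big)$ because $g_{K_0}$ is a conformal bijection of $\HH\setminus K_0$ onto $\HH$ fixing $\infty$, hence carries the unbounded component of $\HH\setminus(K_1\cup K_2)$ to that of $\HH\setminus\big((K_1/K_0)\cup(K_2/K_0)\big)$.

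Your displayed inequality, however, is stated incorrectly. After the reduction you need
\[
\EE^{iy}[\Imm B_{\sigma_{K_1}}]+\EE^{iy}[\Imm B_{\sigma_{K_2}}]\ \ge\ \EE^{iy}[\Imm B_{\sigma_{K_1}\wedge\sigma_{K_2}}],
\]
and this follows from the \emph{pointwise} bound $\Imm B_{\sigma_{K_1}}+\Imm B_{\sigma_{K_2}}\ge \Imm B_{\sigma_{K_1}\wedge\sigma_{K_2}}$: at time $\sigma_{K_1}\wedge\sigma_{K_2}$ one of the two left-hand terms already equals the right-hand side and the other is nonnegative. Your version with the extra $\Imm B_0$ terms, taken in conditional expectation, would require $\EE[\Imm B_{\sigma_{K_j}}\mid\F_{\sigma_{K_1}\wedge\sigma_{K_2}}]\le \Imm B_0$, which is false in general (the supermartingale property only gives $\le \Imm B_{\sigma_{K_1}\wedge\sigma_{K_2}}$, and that can exceed $y$); and even if it held, multiplying by $y$ and sending $y\to\infty$ leaves a dominant $y^2$ term, so the resulting inequality is vacuous. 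Drop the $\Imm B_0$'s and the conditioning; the argument is then two lines. The Dirichlet-energy alternative as written is too sketchy to evaluate.
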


Let $\ha w \in C([0,T),\R)$ for some $T\in(0,\infty]$. The chordal Loewner equation driven by $\ha w$  is
$$\pa_t g_t(z)=  \frac{2}{g_t(z)-\ha w(t)},\quad 0\le t<T;\quad g_0(z)=z.$$
For every $z\in\C$, let $\tau_z$ be the first time that the solution $g_\cdot(z)$ blows up; if such {a} time does not exist, then set $\tau_z=\infty$.
For  $t\in[0,T)$, let $K_t=\{z\in\HH:\tau_z\le t\}$. It turns out that each $K_t$ is an $\HH$-hull with $\hcap_2(K_t)=t$, $K_t^{\doub}=\{z\in\C:\tau_z\le t\}$, which is connected,  and each $g_t$   agrees with $g_{K_t}$. We call $g_t$ and $K_t$ the chordal Loewner maps and hulls, respectively, driven by $\ha w$ {(cf.\ \cite[Section 2.2]{Wer-SLE})}.

 If for every $t\in[0,T)$,  $f_{K_t}$ is well defined, and $\eta(t):=f_{K_t}({\ha w(t)})$, $0\le t<T$, is continuous in $t$, then we say that $\eta$ is the chordal Loewner curve driven by $\ha w$. Such $\eta$ may not exist in general. When it exists, we have $\eta(0)=\ha w(0)\in\R$, and $K_t=\Hull(\eta[0,t])$ for all $t$, and we say that $K_t$, $0\le t<T$, are generated by $\eta$.

Let $u$ be a continuous and strictly increasing function on $[0,T)$. Let $v$ be the inverse of $u-u(0)$. Suppose that   $g^u_t$ and $K^u_t$, $0\le t<T$, satisfy that $g^u_{v(t)}$ and $K^u_{v(t)}$, $0\le t<u(T)-u(0)$, are chordal Loewner maps and hulls, respectively, driven by $\ha w\circ v$. Then we say that $g^u_t$ and $K^u_t$, $0\le t<T$, are chordal Loewner maps and hulls, respectively, driven by $\ha w$ with speed $u$, and call $(K^u_{v(t)})$  the normalization of $(K^u_t)$. If $(K^u_t)$ {is} generated by a curve $\eta^u$, i.e., $K^u_t=\Hull(\eta^u[0,t])$ for all $t$, then $\eta^u$ is called a chordal Loewner curve driven by $\ha w$ with speed $u$, and $\eta^u\circ v$ is called the normalization of $\eta^u$.
If $u$ is absolutely continuous with $u'\aeq q$, then we also say that the speed is $q$. In this case, the chordal Loewner maps satisfy the differential equation  $\pa_t g^u_t(z)\aeq \frac{2q(t)}{g^u_t-\ha w(t)}$.  We omit the speed when it is constant {and equal to} $1$.

The following proposition is straightforward.

\begin{Proposition}
   Suppose $K_t$, $0\le t<T$, are chordal Loewner hulls driven by $\ha w(t)$, $0\le t<T$, with speed $u$.  Then for any $t_0\in[0,T)$, $K_{t_0+t}/K_{t_0}$, $0\le t<T-t_0$, are chordal Loewner hulls driven by $\ha w(t_0+t)$, $0\le t<T-t_0$, with speed $u(t_0+\cdot)$. One immediate consequence is that, for any $t_1<t_2\in[0,T)$, $\lin{K_{t_2}/K_{t_1}}$ is connected. \label{prop-connected}
\end{Proposition}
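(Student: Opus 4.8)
The plan is to reduce everything to the constant-speed-$1$ case, dispose of that case by differentiating the chordal Loewner equation in its time variable, and then unwind the definition of ``speed'' to recover the general statement; the connectedness assertion then drops out as a formal consequence.

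\textbf{Step 1 (speed $1$).} Assume first that $K_t$, $0\le t<T$, are chordal Loewner hulls driven by $\ha w$ with speed $1$, with maps $g_t=g_{K_t}$. Fix $t_0\in[0,T)$ and set $\tilde g_t:=g_{t_0+t}\circ g_{t_0}^{-1}$ for $0\le t<T-t_0$. Since $K_{t_0}\subset K_{t_0+t}$, the composition rule $g_{K_{t_0+t}}=g_{K_{t_0+t}/K_{t_0}}\circ g_{K_{t_0}}$ shows $\tilde g_t=g_{K_{t_0+t}/K_{t_0}}$, which is defined on $\HH\sem (K_{t_0+t}/K_{t_0})=g_{t_0}(\HH\sem K_{t_0+t})$. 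For a fixed $z$ put $\zeta=g_{t_0}^{-1}(z)$, a point independent of $t$; then $\tilde g_t(z)=g_{t_0+t}(\zeta)$, so differentiating the chordal Loewner ODE in its time slot gives $\pa_t\tilde g_t(z)=\frac{2}{g_{t_0+t}(\zeta)-\ha w(t_0+t)}=\frac{2}{\tilde g_t(z)-\ha w(t_0+t)}$ with $\tilde g_0(z)=z$. Thus $(\tilde g_t)$ solves the chordal Loewner equation driven by $\ha w(t_0+\cdot)$. Its hulls are exactly $K_{t_0+t}/K_{t_0}$: the solution $\tilde g_\cdot(z)$ blows up at time $t$ iff $g_\cdot(\zeta)$ blows up at time $t_0+t$, so $\{z\in\HH:\tilde\tau_z\le t\}=g_{t_0}(\{\zeta\in\HH\sem K_{t_0}:\tau_\zeta\le t_0+t\})=g_{t_0}(K_{t_0+t}\sem K_{t_0})=K_{t_0+t}/K_{t_0}$; and consistency of the parametrization is confirmed by $\hcap$-additivity, $\hcap_2(K_{t_0+t}/K_{t_0})=\hcap_2(K_{t_0+t})-\hcap_2(K_{t_0})=t$.

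\textbf{Step 2 (general speed).} Let $(K^u_t)$ have speed $u$, with $v$ the inverse of $u-u(0)$, so $\tilde K_s:=K^u_{v(s)}$, $0\le s<S:=u(T)-u(0)$, are speed-$1$ chordal Loewner hulls driven by $\ha w\circ v$. Fix $t_0$, put $s_0:=u(t_0)-u(0)$, so $v(s_0)=t_0$ and $\tilde K_{s_0}=K^u_{t_0}$. Applying Step 1 to $(\tilde K_s)$ at shift $s_0$, the hulls $\tilde K_{s_0+s}/\tilde K_{s_0}=K^u_{v(s_0+s)}/K^u_{t_0}$, $0\le s<S-s_0$, are speed-$1$ chordal Loewner hulls driven by $s\mapsto\ha w(v(s_0+s))$. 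It remains to recognize this as the normalization of the family $K^u_{t_0+t}/K^u_{t_0}$ for the speed $\ha u:=u(t_0+\cdot)$: writing $\ha v$ for the inverse of $\ha u-\ha u(0)$, the identity $u(t_0+t)-u(0)=s_0+(u(t_0+t)-u(t_0))$ gives $\ha v(s)=v(s_0+s)-t_0$, hence $\big(K^u_{t_0+t}/K^u_{t_0}\big)\big|_{t=\ha v(s)}=K^u_{v(s_0+s)}/K^u_{t_0}$ and the driving function $\ha w(t_0+\ha v(s))=\ha w(v(s_0+s))$ matches the one found above. By the definition of speed, $K^u_{t_0+t}/K^u_{t_0}$, $0\le t<T-t_0$, are chordal Loewner hulls driven by $\ha w(t_0+\cdot)$ with speed $u(t_0+\cdot)$. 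The only subtlety here is bookkeeping — keeping straight how the time change $v$, the shift $s_0$, and the new normalization $\ha v$ compose so that the driving functions genuinely coincide after reparametrization.

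\textbf{Step 3 (the consequence).} Fix $t_1<t_2$ in $[0,T)$. By what was just proved, $\tilde K:=K_{t_2}/K_{t_1}$ is, after a monotone time change, the hull at the single positive time $u(t_2)-u(t_1)$ of a speed-$1$ chordal Loewner chain, so by the facts recalled in this section $\tilde K^{\doub}$ is connected. I claim this forces $\lin{\tilde K}$ to be connected. Suppose instead $\lin{\tilde K}=A\sqcup B$ with $A,B$ nonempty, closed and disjoint, and write $E^*=\{\lin z:z\in E\}$ for reflection in $\R$, so $\tilde K^{\doub}=(A\cup A^*)\cup(B\cup B^*)$. Since $A\subset\lin\HH$ and $B^*\subset\{\Imm z\le 0\}$, any point of $A\cap B^*$ lies on $\R$, where reflection is the identity, hence in $A\cap B=\emptyset$; so $A\cap B^*=\emptyset$, and symmetrically $A^*\cap B=\emptyset$. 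Then $A\cup A^*$ and $B\cup B^*$ are nonempty, closed and disjoint, contradicting connectedness of $\tilde K^{\doub}$. Therefore $\lin{K_{t_2}/K_{t_1}}$ is connected, completing the proof.

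There is no essential obstacle: the statement is ``straightforward'' precisely in the sense that Step 1 is a one-line ODE manipulation and Step 3 is elementary point-set topology; the only place demanding attention is the reparametrization accounting in Step 2.
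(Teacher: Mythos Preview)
Your proof is correct and follows precisely the natural route the paper has in mind when it calls the proposition ``straightforward'' and omits the argument. The paper provides no proof at all, so there is nothing to compare approaches against; your Step~1 (shifting the ODE), Step~2 (unwinding the speed reparametrization), and Step~3 (deducing connectedness of $\lin{\tilde K}$ from the stated connectedness of $\tilde K^{\doub}$) are exactly the details one would expect.
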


The following proposition is a slight variation of \cite[Theorem 2.6]{LSW1}.

\begin{Proposition}
  The $\HH$-hulls $K_t$, $0\le t<T$, are chordal Loewner hulls with some speed if and only if for any fixed $a\in[0,T)$, $\lim_{\delta\downarrow 0} \sup_{0\le t\le a} \diam(K_{t+\del}/K_t)=0$. Moreover, the driving function $\ha w$ satisfies {the property} that  $\{\ha w(t)\}=\bigcap_{\del>0} \lin{K_{t+\del}/K_{t}}$, $0\le t< T$; and the speed $u$ could be chosen to be $u(t)=\hcap_2(K_t)$, $0\le t<T$.
   \label{Loewner-chain}
\end{Proposition}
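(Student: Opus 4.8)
The statement to prove is Proposition \ref{Loewner-chain}, the characterization of chordal Loewner hulls via the condition that $\sup_{0\le t\le a}\diam(K_{t+\delta}/K_t)\to 0$ as $\delta\downarrow 0$, together with the identification of the driving function and the admissible speed.

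\textbf{Plan of proof.} The plan is to reduce everything to the standard parametrization by half-plane capacity and invoke the classical result \cite[Theorem 2.6]{LSW1}. First I would establish the ``only if'' direction: assuming $(K_t)$ are chordal Loewner hulls with some speed $u$, let $v$ be the inverse of $u-u(0)$, so that $\til K_s:=K_{v(s)}$ are chordal Loewner hulls in the standard parametrization, i.e.\ $\hcap_2(\til K_s)=s$. For these, \cite[Theorem 2.6]{LSW1} (or direct estimates from the Loewner ODE via Proposition \ref{g-z-sup} and Proposition \ref{prop-connected}) gives that $\diam(\til K_{s+\eps}/\til K_s)\to 0$ uniformly for $s$ in compact subintervals. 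Composing with the continuous increasing change of time $u$ and using uniform continuity of $u$ on $[0,a]$, I would transfer this uniform smallness back to the statement $\sup_{0\le t\le a}\diam(K_{t+\delta}/K_t)\to 0$; the key algebraic input is the cocycle identity $K_{t+\delta}/K_t = (\til K_{u(t+\delta)-u(0)})/(\til K_{u(t)-u(0)})$, which follows from Proposition \ref{prop-connected} and \eqref{K123}.

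For the ``if'' direction, assume the uniform-smallness hypothesis. The first task is to show $t\mapsto \hcap_2(K_t)$ is continuous and strictly increasing (hence a legitimate candidate speed $u$). Monotonicity of $\hcap$ under inclusion is immediate since $K_s\subset K_t$ for $s<t$ would need to be checked — actually the hypothesis should be accompanied by $K_s\subset K_t$ for $s\le t$, which I would note is part of the standing meaning of ``$\HH$-hulls $K_t$, $0\le t<T$'' here, or else derive it; given that, $\hcap(K_t)=\hcap(K_t/K_s)+\hcap(K_s)$ gives monotonicity, and $\hcap(K_t/K_s)\le \diam(K_t/K_s)^2$ (Proposition \ref{g-z-sup}, after translating so that $K_t/K_s$ touches $\R$, which it does since $\lin{K_t/K_s}$ is forced to meet $\R$) gives continuity from the hypothesis. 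Strict monotonicity needs that $K_t/K_s\ne\emptyset$ for $s<t$, i.e.\ the hulls genuinely grow; I would take this as part of the definition of the family or add it as an implicit nondegeneracy assumption, since without it no speed exists. Then set $u(t)=\hcap_2(K_t)$, let $v$ be the inverse of $u-u(0)$, and define $\til K_s=K_{v(s)}$, which now satisfies $\hcap_2(\til K_s)=s$ and, by the same cocycle manipulation as above, inherits $\diam(\til K_{s+\eps}/\til K_s)\to 0$ uniformly on compacts. Now \cite[Theorem 2.6]{LSW1} applies directly to $(\til K_s)$: it is a chordal Loewner chain driven by some continuous $\ha w_0(s)$ with $\{\ha w_0(s)\}=\bigcap_{\eps>0}\lin{\til K_{s+\eps}/\til K_s}$. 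Unwinding the time change, $(K_t)$ is driven by $\ha w(t):=\ha w_0(u(t)-u(0))$ with speed $u$, and the intersection formula for $\ha w$ transfers because $\bigcap_{\delta>0}\lin{K_{t+\delta}/K_t}=\bigcap_{\eps>0}\lin{\til K_{u(t)-u(0)+\eps}/\til K_{u(t)-u(0)}}$ by continuity of $u$ and monotonicity of the sets involved.

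\textbf{Main obstacle.} The genuinely delicate point is the uniformity in $t$ over $[0,a]$ of $\diam(K_{t+\delta}/K_t)\to 0$ when passing through the time change, and conversely deriving from it the continuity of $\hcap_2(K_t)$ with enough uniformity to conclude the chain condition in the standard parametrization. Concretely, $\diam(\til K_{s+\eps}/\til K_s)$ small is what \cite[Theorem 2.6]{LSW1} wants, but the hypothesis gives smallness of $\diam(K_{t+\delta}/K_t)$ for \emph{capacity-time} increments only after knowing the modulus of continuity of $u$; since $u$ is merely continuous and increasing (not Lipschitz), I would handle this by an $\eps$-net / uniform-continuity argument on the compact interval $[0,u(a)-u(0)]$, using Proposition \ref{Prop-cd-continuity} (the bound $|c_{K_1}-c_{K_2}|,|d_{K_1}-d_{K_2}|\le 4\diam(K_2/K_1)$) to control how the endpoints $c,d$ — hence, via $\diam(K^{\doub}\cup[a_K,b_K])\asymp d_K-c_K$, the diameters — move, and the fact that $\hcap$ is additive along the chain so that small diameter increments force small capacity increments and vice versa through Proposition \ref{hcap-concave} and Proposition \ref{g-z-sup}. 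Everything else is bookkeeping with the identities $g_{K_2}=g_{K_2/K_1}\circ g_{K_1}$, $\hcap(K_2)=\hcap(K_2/K_1)+\hcap(K_1)$, and \eqref{K123}.
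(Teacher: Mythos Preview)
The paper does not give a proof of this proposition at all; it simply states that it is ``a slight variation of \cite[Theorem 2.6]{LSW1}'' and moves on. Your plan---reducing both directions to the standard-parametrization case via the time change $u(t)=\hcap_2(K_t)$ and then invoking \cite[Theorem 2.6]{LSW1}---is precisely the content of that ``slight variation,'' so your approach is correct and matches the paper's (implicit) argument. Your observation that strict growth of the hulls is needed for a speed to exist is accurate and is indeed an implicit nondegeneracy assumption in the paper's setup.
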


\begin{Proposition}
	Suppose $K_t$, $0\le t<T$, are chordal Loewner hulls driven by  $\ha w$ with some speed. Then  for any $t_0\in(0,T)$,   $c_{K_{t_0}}\le \ha w(t)\le d_{K_{t_0}}$ for all $  t\in[0, t_0]$. \label{winK}
\end{Proposition}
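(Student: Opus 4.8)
The plan is to combine Proposition \ref{winK} is proved by recalling from Proposition \ref{Loewner-chain} that the driving function is recovered from the hulls via $\{\ha w(t)\}=\bigcap_{\del>0}\lin{K_{t+\del}/K_t}$, and then comparing the chords $K_{t+\del}/K_t$ with the ``large'' hull $K_{t_0}$.

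First I would fix $t_0\in(0,T)$ and $t\in[0,t_0)$, and look at the increment hull $\Delta_\del:=K_{t+\del}/K_t$ for small $\del>0$ with $t+\del\le t_0$. The key inclusion is $K_{t+\del}\subset K_{t_0}$, so by Proposition \ref{abcdK} applied to $L=K_{t+\del}\subset K=K_{t_0}$ we get $[c_{K_{t+\del}},d_{K_{t+\del}}]\subset[c_{K_{t_0}},d_{K_{t_0}}]$. Next I want to transfer this to the quotient hull: since $\ha w(t)\in\lin{\Delta_\del}$, it suffices to show $\lin{\Delta_\del}\subset[c_{K_{t_0}},d_{K_{t_0}}]$ — wait, that over-reaches since $\Delta_\del$ sits in $\HH$, not on $\R$. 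The cleaner route is to use Proposition \ref{prop-connected}: $\lin{\Delta_\del}$ is connected, it intersects $\R$ (because $\ha w(t)\in\R$ lies in it, by Proposition \ref{Loewner-chain}), and I claim it lies inside the vertical strip (or rather the region) determined by $[c_{K_{t_0}},d_{K_{t_0}}]$. Concretely, I would argue that $g_{K_t}$ maps $\C\sem K_{t_0}^{\doub}$ into $\C\sem S_{K_t}$, and by Proposition \ref{abcdK} the image hull $K_{t_0}/K_t$ has $[c_{K_{t_0}/K_t},d_{K_{t_0}/K_t}]\subset[c_{K_{t_0}},d_{K_{t_0}}]$; since $\Delta_\del=K_{t+\del}/K_t\subset K_{t_0}/K_t$, Proposition \ref{abcdK} again gives $[c_{\Delta_\del},d_{\Delta_\del}]\subset[c_{K_{t_0}/K_t},d_{K_{t_0}/K_t}]\subset[c_{K_{t_0}},d_{K_{t_0}}]$. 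Finally $\lin{\Delta_\del}\cap\R\subset[a_{\Delta_\del},b_{\Delta_\del}]\subset[c_{\Delta_\del},d_{\Delta_\del}]$, so $\ha w(t)\in[c_{K_{t_0}},d_{K_{t_0}}]$.

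For the endpoint $t=t_0$ itself, I would note $\ha w(t_0)=\lim_{\del\downarrow 0}\ha w(t_0-\del)$ by continuity of $\ha w$ (or directly via $\{\ha w(t_0)\}=\bigcap_\del\lin{K_{t_0+\del}/K_{t_0}}$ together with $\lin{K_{t_0+\del}/K_{t_0}}$ shrinking to a point that is a limit of points in $[a_{K_{t_0+\del}},b_{K_{t_0+\del}}]$), and $[c_{K_{t_0}},d_{K_{t_0}}]$ is closed, so the bound passes to the limit. Actually the simplest phrasing: for every $t\in[0,t_0]$ and every $\del$ with $t+\del\le$ something slightly larger than $t_0$ we can run the argument with $t_0$ replaced by any $t_0'\in(t_0,T)$; but to keep $t_0$ fixed I will just handle $t<t_0$ as above and then take $t\uparrow t_0$.

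I do not expect a serious obstacle here — the statement is essentially bookkeeping with the $c_K,d_K$ interval monotonicity from Proposition \ref{abcdK}. The one place to be careful is making sure the increment hull $\Delta_\del$ really is contained in $K_{t_0}/K_t$: this is exactly the transitivity relation \eqref{K123} (with $K_1=K_t\subset K_2=K_{t+\del}\subset K_3=K_{t_0}$, giving $K_{t+\del}/K_t\subset K_{t_0}/K_t$), so that is immediate. The other point requiring a line of justification is that $\lin{\Delta_\del}$ meets $\R$, i.e.\ that the recovered driving value $\ha w(t)$ is real; this is part of the content of Proposition \ref{Loewner-chain} (the driving function takes real values) combined with $\ha w(t)\in\bigcap_\del\lin{\Delta_\del}$.
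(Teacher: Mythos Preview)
Your proposal is correct and follows essentially the same route as the paper. The paper's version is marginally more direct: rather than passing through the small increment $\Delta_\del=K_{t+\del}/K_t$, it observes immediately that $\ha w(t)\in\lin{K_{t_0}/K_t}\cap\R=[a_{K_{t_0}/K_t},b_{K_{t_0}/K_t}]$ (since $\ha w(t)\in\bigcap_\del\lin{\Delta_\del}\subset\lin{K_{t_0}/K_t}$), then applies Proposition~\ref{abcdK} twice to get $[a_{K_{t_0}/K_t},b_{K_{t_0}/K_t}]\subset[c_{K_{t_0}/K_t},d_{K_{t_0}/K_t}]\subset[c_{K_{t_0}},d_{K_{t_0}}]$, and finishes at $t=t_0$ by continuity exactly as you do.
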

\begin{proof}
Let $t_0\in(0,T)$. If $0\le t<t_0$, by Propositions \ref{abcdK} and \ref{Loewner-chain}, $\ha w(t)\in[a_{K_{t_0}/K_t},b_{K_{t_0}/K_t}]\subset [c_{K_{t_0}/K_t},d_{K_{t_0}/K_t}]\subset [c_{K_{t_0} },d_{K_{t_0} }]$.
	By the continuity of $\ha w$, we also have $\ha w(t_0)\in [c_{K_{t_0} },d_{K_{t_0} }]$.
\end{proof}

The following proposition combines \cite[Lemma 2.5]{MS1} and \cite[Lemma 3.3]{MS2}.

\begin{Proposition}
	Suppose $\ha w\in C([0,T),\R)$ generates a chordal Loewner curve $\eta$ and chordal Loewner hulls $K_t$, $0\le t<T$. Then the set $\{t\in [0,T):\eta(t)\in\R\}$ has Lebesgue measure zero. Moreover, if the Lebesgue measure of $\eta[0,T)\cap\R$ is zero, then the functions $c(t)$ and $d(t)$ defined by $c(0)=d(0):=\ha w(0)$, and $c(t):=c_{K_t}$ and $d(t):=d_{K_t}$, $0< t<T$, {satisfy that (i) $c\le \ha w\le d$ on $[0,T)$; (ii) the set of $t\in[0,T)$ such that $c(t)=\ha w(t)$ or $\ha w(t)=d(t)$ (which implies that $\eta(t)\in\R$) has Lebesgue measure zero; and (iii) $c'(t)\aeq  \frac{2}{c(t)-\ha w(t)}$ and $d'(t)\aeq \frac{2}{d(t)-\ha w(t)}$ on $[0,T)$. Thus, $c$ and $d$ are respectively strictly decreasing and increasing on $[0,T)$}. Moreover, $c(t)$ and $d(t)$ are continuously differentiable  at the set of times $t$ such that $\eta(t)\not\in\R$, and in {this} case ``$\aeq$'' can be replaced by ``$=$''. \label{prop-Lebesgue}
\end{Proposition}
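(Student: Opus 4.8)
The plan is to deduce everything from the behavior of the Loewner flow at the two extreme boundary points $c_{K_t}$ and $d_{K_t}$. The first step is to record that $c(\cdot)$ and $d(\cdot)$ are continuous on $[0,T)$: writing $g_{K_{t+\delta}}=g_{K_{t+\delta}/K_t}\circ g_{K_t}$, Proposition~\ref{Loewner-chain} gives $\ha w(t)\in\lin{K_{t+\delta}/K_t}$ and $\diam(K_{t+\delta}/K_t)\to0$ as $\delta\downarrow0$, while Proposition~\ref{winK} gives $\ha w(t)\in[c_{K_t},d_{K_t}]$, so the hypothesis of Proposition~\ref{Prop-cd-continuity} holds and $|c(t+\delta)-c(t)|\le 4\diam(K_{t+\delta}/K_t)\to0$; the same handles left increments, the function $d(\cdot)$, and the endpoint $t=0$. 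The second step is the interior Loewner ODE: at a time $t$ with $\ha w(t)\notin\{c(t),d(t)\}$, say $c(t)<\ha w(t)$, for small $\delta$ the hull $\Delta_\delta:=K_{t+\delta}/K_t$ shrinks into a small neighborhood of $\ha w(t)$, hence lies strictly to the right of $c(t)$; so $g_{\Delta_\delta}$ is analytic at $c(t)$ and, being increasing on the real line to the left of $\Delta_\delta$, maps $c_{K_t}$ to $c_{K_{t+\delta}}$ (here $a_{K_{t+\delta}}=a_{K_t}$), and the small-hull expansion $g_{\Delta_\delta}(z)=z+\frac{2\delta}{z-\ha w(t)}+o(\delta)$ (using $\hcap(\Delta_\delta)=2\delta$ and $\diam(\Delta_\delta)\to0$) gives $c(t+\delta)-c(t)=\frac{2\delta}{c(t)-\ha w(t)}+o(\delta)$. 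Thus $c$ is differentiable at $t$ with $c'(t)=\frac{2}{c(t)-\ha w(t)}$, continuous there by continuity of $\ha w,c$; likewise for $d$. This already yields the final sentence of the proposition, granted that $\eta(t)\notin\R$ forces $\ha w(t)\notin\{c(t),d(t)\}$.

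Next I would set up the dictionary $\eta(t)\in\R\iff\ha w(t)\in\{c(t),d(t)\}$. Since $g_{K_t}$ maps $\C\sem(K_t^{\doub}\cup[a_{K_t},b_{K_t}])$ conformally onto $\C\sem[c_{K_t},d_{K_t}]$, carrying $(-\infty,a_{K_t})$ onto $(-\infty,c_{K_t})$ and $(b_{K_t},\infty)$ onto $(d_{K_t},\infty)$, its continuous extension $f_{K_t}$ sends $c_{K_t}\mapsto a_{K_t}$ and $d_{K_t}\mapsto b_{K_t}$; hence $\ha w(t)=c(t)$ (resp.\ $=d(t)$) forces $\eta(t)=f_{K_t}(\ha w(t))=a_{K_t}$ (resp.\ $b_{K_t}$) $\in\R$. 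The converse uses the topological fact that the only points of $\lin{K_t}\cap\R$ accessible as prime ends of the unbounded component of $\HH\sem K_t$ are $a_{K_t}$ and $b_{K_t}$: if $x\in(a_{K_t},b_{K_t})\cap\lin{K_t}\cap\R$ were accessible, then, using that $\partial(\HH\sem K_t)$ is locally connected (so that the accessible boundary is arcwise connected and $x$ can be joined inside $\lin{K_t}$ to real points of $\lin{K_t}$ on either side of it), one builds Jordan curves in $\lin\HH$ that trap $x$ in bounded faces of $\HH\sem K_t$, contradicting accessibility. I expect this step, together with the absolute-continuity input below, to be where the real work lies.

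The measure-zero assertion (part~1) then amounts to $\{t\in(0,T):\ha w(t)=c(t)\}$ and $\{t:\ha w(t)=d(t)\}$ being Lebesgue-null; given this, part~(ii) is automatic since its defining set is contained in $\{t:\eta(t)\in\R\}$. The mechanism is that a piece of $\eta$ lying on $\R$ contributes no $\HH$-capacity: for $t_1<t_2$, $K_{t_2}/K_{t_1}=\Hull\big(g_{K_{t_1}}(\eta[t_1,t_2]\cap(\HH\sem K_{t_1}))\big)$, the part of $\eta[t_1,t_2]$ on $\R$ maps under $g_{K_{t_1}}$ into $\R$ and is therefore invisible to $\Hull$, while $\hcap_2(K_{t_2}/K_{t_1})=t_2-t_1$, so only the genuinely two-dimensional part of the curve carries the capacity. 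Under the standing hypothesis that $\eta[0,T)\cap\R$ is Lebesgue-null — which is also precisely what makes the ``$\aeq$'' statements in (iii) meaningful, i.e.\ what forces $c$ and $d$ to be absolutely continuous — this can be upgraded to show that the set of times at which $\eta$ sits on $\R$, equivalently $\{\ha w=c\}\cup\{\ha w=d\}$, is null; the unconditional form of part~1 requires in addition the absolute continuity of $c,d$ in general, which is handled as in \cite[Lemma 2.5]{MS1}.

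Finally I would assemble the rest. Part~(i) is Proposition~\ref{winK} at $t=t_0$ together with $c(0)=d(0)=\ha w(0)$. For (iii): off the null set of (ii) we have $\ha w\notin\{c,d\}$, so the first step gives $c'(t)=\frac{2}{c(t)-\ha w(t)}$ and $d'(t)=\frac{2}{d(t)-\ha w(t)}$ for a.e.\ $t$, and since $c,d$ are absolutely continuous these sharpen to the ``$\aeq$'' relations. Then $c(t)<\ha w(t)$ for a.e.\ $t$ gives $c'<0$ a.e., and $c(t_2)-c(t_1)=\int_{t_1}^{t_2}c'(t)\,dt<0$ for $t_1<t_2$, so $c$ is strictly decreasing; symmetrically $d$ is strictly increasing. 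The $C^1$-regularity at times with $\eta(t)\notin\R$ is the last assertion of the first step, since there $c(t)<\ha w(t)<d(t)$, which persists in a neighborhood by continuity.
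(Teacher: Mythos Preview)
The paper does not give its own proof of this proposition: it simply records that the statement ``combines \cite[Lemma 2.5]{MS1} and \cite[Lemma 3.3]{MS2}.'' Your proposal is thus an attempt to flesh out those citations, and for the routine parts (continuity of $c,d$ via Propositions~\ref{Prop-cd-continuity} and~\ref{Loewner-chain}; the ODE $c'(t)=\tfrac{2}{c(t)-\ha w(t)}$ when $\ha w(t)\neq c(t)$; the forward implication $\ha w(t)\in\{c(t),d(t)\}\Rightarrow\eta(t)\in\R$; the assembly of (i)--(iii) and the $C^1$ statement from these ingredients) your arguments are correct and standard.

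There is, however, a genuine error in your ``dictionary'' step. The claim that the only real points of $\lin{K_t}$ accessible from $\HH\sem K_t$ are $a_{K_t}$ and $b_{K_t}$ is false already for Loewner hulls. Take the piecewise-linear Loewner curve $\eta:0\to i\to \tfrac12\to 1{+}i\to 1$; at the terminal time $t_3$ one has $a_{K_{t_3}}=0$, $b_{K_{t_3}}=1$, and $K_{t_3}$ is the union of the two triangles under the two arches. The point $\tfrac12$ lies in $(a_{K_{t_3}},b_{K_{t_3}})\cap\lin{K_{t_3}}$ and is accessible from $\HH\sem K_{t_3}$ through the wedge directly above it (the wedge between the segments $i\to\tfrac12$ and $\tfrac12\to 1{+}i$ belongs to the unbounded component). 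Worse, one can continue $\eta$ from $1$ over the top and down through that wedge back to $\tfrac12$, producing a time $t_4$ with $\eta(t_4)=\tfrac12\in(a_{K_{t_4}},b_{K_{t_4}})$ and hence $\ha w(t_4)\notin\{c(t_4),d(t_4)\}$. So the converse implication you assert fails for general Loewner curves.

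Fortunately this converse is \emph{not needed} anywhere in the proposition. Part (ii) follows from the first assertion together with the correct forward direction $\{\ha w\in\{c,d\}\}\subset\{\eta\in\R\}$; the $C^1$ statement only needs $\eta(t)\notin\R\Rightarrow\ha w(t)\notin\{c(t),d(t)\}$, which is the contrapositive of the forward direction. The first assertion itself is proved directly by the half-plane capacity argument you sketch (``a piece of $\eta$ on $\R$ contributes no $\hcap$''): this is exactly \cite[Lemma 2.5]{MS1}, and does not pass through your claimed equivalence. Finally, the absolute continuity of $c$ and $d$ under the hypothesis $|\eta[0,T)\cap\R|=0$, which you need for the $\aeq$ in (iii) but only assert, is precisely the content of \cite[Lemma 3.3]{MS2}. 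So your outline is salvageable once the false converse is excised and replaced by the direct capacity argument, and once the absolute-continuity step is properly attributed.
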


\begin{Definition} We define the following notation.
  \begin{enumerate}
    \item [(i)] Modified real line. For $w\in\R$, we define $\R_w^{}=(\R\sem\{w\})\cup \{w^-,w^+\}$,  which  has a total order endowed from $\R$ and the relation $x<w^- <w^+<y$ for any $x,y\in\R$ such that $x<w$ and $y>w$. It is assigned the topology such that $(-\infty,w^-]:=(-\infty,w)\cup\{w^-\}$  and $[w^+,\infty):=\{w^+\}\cup(w,\infty)$ are two connected components, and  are respectively homeomorphic to $(-\infty,w]$ and $[w,\infty)$ through the map $\pi_w:\R_w\to \R$ with $\pi_w(w^\pm)=w$ and $\pi_w(x)=x$ for $x\in\R\sem \{w\}$.
    \item [(ii)] Modified Loewner map. Let $K$ be an $\HH$-hull and $w\in\R$. Let $a^w_K=\min\{w,a_K\}$, $b^w_K=\max\{w,b_K\}$, $c^w_K=\lim_{x\uparrow a^w_K} g_K(x)$, and $d^w_K=\lim_{x\downarrow b^w_K} g_K(x)$. They are all equal to $w$ if $K=\emptyset$.
        Define $g_K^w$ on $\R_w\cup\{+\infty,-\infty\}$ such that $g_K^w(\pm\infty)=\pm\infty$, $g_K^w(x)=g_K(x)$ if $x\in\R\sem [a_K^w,b_K^w]$; $g^w_K(x)=c^w_K$ if $x=w^-$ or $x\in[a_K^w,b_K^w]\cap (-\infty,w)$; and $g_K^w(x)=d^w_K$ if $x=w^+$ or $x\in [a_K^w,b_K^w]\cap(w,\infty)$.
    Note that $g_K^w$ is  continuous and increasing.
  \end{enumerate} \label{Def-Rw}
\end{Definition}

\no{{\bf Example}.
  Let $x_0\in\R$, $r>0$, and $H=\{z\in\HH:|z-x_0|\le r\}$. Let $w\in[x_0-r,x_0+r]$. Then $a^w_H=x_0-r$, $b^w_H=x_0+r$, $c^w_H=x_0-2r$, $d^w_H=x_0+2r$, and
  $$g^{w}_H(x)=\left\{\begin{array}{ll} x+\frac{r^2}{x-x_0}, &\mbox{if }x\in \R\sem [x_0-r,x_0+r]\\
  x_0+2r,&\mbox{if }x\in \{w^+\}\cup (w,x_0+r]\\
  x_0-2r,&\mbox{if }x\in \{w^-\}\cup [x_0-r,w).
  \end{array}
  \right.$$
For the case $w\not\in [x_0-r,x_0+r]$, we assume $w\in (x_0+r,\infty)$ by symmetry. In this case,  $a^w_H=x_0-r$, $b^w_H=w$, $c^w_H=x_0-2r$, $d^w_H=g_H(w)=w+\frac{r^2}{w-x_0}$, and
  $$g^{w}_H(x)=\left\{\begin{array}{ll} x+\frac{r^2}{x-x_0}, &\mbox{if }x\in \R\sem [x_0-r,w]\\
 w+\frac{r^2}{w-x_0},&\mbox{if }x=w^+\\
  x_0-2r,&\mbox{if }x\in \{w^-\}\cup [x_0-r,w).
  \end{array}
  \right.$$
}

\begin{Proposition}
   Let $K_1\subset K_2$ be two $\HH$-hulls. Let $w\in\R$ and $\til w\in[c^w_{K_1},d^w_{K_1}]$. Revise   $g^w_{K_1}$  such that when $g^w_{K_1}(w)=\til w$, we define $g^w_{K_1}(x)=\til w^{\sign(x-w)}$.
   Then
\BGE g^{\til w}_{K_2/K_1}\circ g^w_{K_1} =g^w_{K_2},\quad \mbox{on } \R_w\cup\{+\infty,-\infty\}.\label{comp-g}\EDE
\label{prop-comp-g}
\end{Proposition}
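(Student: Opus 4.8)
The plan is to verify the identity \eqref{comp-g} by checking it pointwise on each of the pieces into which the definition of the modified Loewner map naturally divides $\R_w\cup\{\pm\infty\}$, and then invoking the usual composition rule $g_{K_2}=g_{K_2/K_1}\circ g_{K_1}$ on the part of the real line not swallowed by the hulls. First I would recall the setup: on $\R\sem K_i^{\doub}$ the ordinary maps satisfy $g_{K_2}=g_{K_2/K_1}\circ g_{K_1}$ by Schwarz reflection (this is the $\R$-analogue of $g_{K_2}=g_{K_2/K_1}\circ g_{K_1}$ noted after the definition of $K_2/K_1$), and by Proposition \ref{abcdK} we have the nesting $[a_{K_1},b_{K_1}]\subset[c_{K_1},d_{K_1}]\subset[c_{K_2},d_{K_2}]$ and similarly for $K_2/K_1$; also $[c_{K_2/K_1},d_{K_2/K_1}]\subset[c_{K_2},d_{K_2}]$. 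I would also use that $g_{K_1}$ maps $\HH\sem K_2$ onto $\HH\sem(K_2/K_1)$, so it maps the relevant boundary intervals of $K_2$ to boundary intervals of $K_2/K_1$; in particular $g_{K_1}$ sends $(-\infty,a_{K_2})$ onto $(-\infty,\min\{c_{K_1},a_{K_2/K_1}\})$ and $(b_{K_2},\infty)$ onto its mirror, a fact already exploited in the proof of Proposition \ref{Prop-cd-continuity}.

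The core of the argument is a case split according to where $x\in\R_w$ sits relative to $a^w_{K_2},b^w_{K_2}$ and relative to $a^w_{K_1},b^w_{K_1}$.

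\emph{Case 1: $x\in\R\sem[a^w_{K_2},b^w_{K_2}]$ (and $x\ne\pm\infty$).} Then in particular $x\notin[a^w_{K_1},b^w_{K_1}]$, so $g^w_{K_1}(x)=g_{K_1}(x)$, and $g_{K_1}(x)$ lies outside $[c^w_{K_1},d^w_{K_1}]$ hence outside $[a^{\til w}_{K_2/K_1},b^{\til w}_{K_2/K_1}]$ (here one uses the nesting plus the description of the image of $g_{K_1}$ on the exterior interval), so $g^{\til w}_{K_2/K_1}\circ g^w_{K_1}(x)=g_{K_2/K_1}(g_{K_1}(x))=g_{K_2}(x)=g^w_{K_2}(x)$. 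The points $\pm\infty$ are immediate since all three maps fix $\pm\infty$.

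\emph{Case 2: $x\in[a^w_{K_2},b^w_{K_2}]$, on the left of $w$ (i.e.\ $x=w^-$ or $x\in[a^w_{K_2},b^w_{K_2}]\cap(-\infty,w)$).} By definition $g^w_{K_2}(x)=c^w_{K_2}=\lim_{y\uparrow a^w_{K_2}}g_{K_2}(y)$. On the other side, I would show $g^w_{K_1}(x)\le\til w$ in the order on $\R_w$, i.e.\ $g^w_{K_1}(x)\in\{\til w^-\}\cup(-\infty,\til w)$: indeed either $x\in[a^w_{K_1},b^w_{K_1}]$ on the left of $w$, giving $g^w_{K_1}(x)=c^w_{K_1}\le\til w$, with the value $\til w^-$ assigned when $c^w_{K_1}=\til w$ by the revision in the statement; or $x\in[a^w_{K_1},b^w_{K_1}]$ is actually impossible to the right; or $x\in(-\infty,a^w_{K_1})$, where $g^w_{K_1}(x)=g_{K_1}(x)<c^w_{K_1}\le\til w$. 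Either way the left branch of $g^{\til w}_{K_2/K_1}$ applies, yielding $c^{\til w}_{K_2/K_1}=\lim_{y\uparrow a^{\til w}_{K_2/K_1}}g_{K_2/K_1}(y)$. It then remains to check $\lim_{y\uparrow a^{\til w}_{K_2/K_1}}g_{K_2/K_1}(y)=\lim_{z\uparrow a^w_{K_2}}g_{K_2}(z)$, which follows from $g_{K_2}=g_{K_2/K_1}\circ g_{K_1}$ together with the identification of $\lim_{z\uparrow a^w_{K_2}}g_{K_1}(z)$ with $a^{\til w}_{K_2/K_1}=\min\{\til w,a_{K_2/K_1}\}$ — exactly the computation $c_1'=\min\{c_{K_1},a_{\Delta K}\}$ type identity from Proposition \ref{Prop-cd-continuity}, adapted to keep track of $w$ and $\til w$.

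\emph{Case 3: $x\in[a^w_{K_2},b^w_{K_2}]$, on the right of $w$.} Symmetric to Case 2, using $d$'s in place of $c$'s.

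Assembling the three cases gives \eqref{comp-g} on all of $\R_w\cup\{\pm\infty\}$. I expect the main obstacle to be the bookkeeping in Case 2 (and its mirror Case 3): one has to be careful that the ``boundary'' limit $\lim_{z\uparrow a^w_{K_2}}g_{K_1}(z)$ really equals $a^{\til w}_{K_2/K_1}$ including the degenerate subcases $w<a_{K_1}$, $a_{K_1}\le w\le b_{K_1}$, and $w>b_{K_1}$, and that the hypothesis $\til w\in[c^w_{K_1},d^w_{K_1}]$ together with the revision $g^w_{K_1}(w)=\til w^{\sign(x-w)}$ is precisely what makes the left/right branches of $g^{\til w}_{K_2/K_1}$ match up with the left/right pieces of $g^w_{K_2}$ at the seam $x=w$. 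Everything else is a direct unwinding of Definition \ref{Def-Rw} and the classical composition rule, so no genuinely new estimate is needed.
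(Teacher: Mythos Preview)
Your pointwise case split differs from the paper's approach and contains a genuine error in Case~2. You assert that whenever $x$ lies in the left plateau $[a^w_{K_2},w^-]$ of $g^w_{K_2}$, the image $g^w_{K_1}(x)$ lands in the left plateau of $g^{\til w}_{K_2/K_1}$, so that the composition yields $c^{\til w}_{K_2/K_1}$. But showing only $g^w_{K_1}(x)\le\til w$ is insufficient: you also need $g^w_{K_1}(x)\ge a^{\til w}_{K_2/K_1}$, and this can fail. Take $w=0$ with $K_1\subset K_2$ both contained in $\{|z-10|<1\}$; then $a^w_{K_1}=a^w_{K_2}=0$, and $g^w_{K_1}(0^-)=c^w_{K_1}=g_{K_1}(0)$ is a small negative number. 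Choosing $\til w\approx 5$ (which is allowed since $d^w_{K_1}=d_{K_1}\approx 11$) gives $a^{\til w}_{K_2/K_1}=\min\{\til w,a_{K_2/K_1}\}\approx 5$, so $g^w_{K_1}(0^-)<a^{\til w}_{K_2/K_1}$ and the constant branch does \emph{not} apply. The identity \eqref{comp-g} still holds there, but through the unmodified rule $g_{K_2/K_1}\circ g_{K_1}=g_{K_2}$. Your claimed identification $\lim_{z\uparrow a^w_{K_2}}g_{K_1}(z)=a^{\til w}_{K_2/K_1}$ is thus false in general: the right-hand side depends on the free choice of $\til w$ while the left-hand side does not.

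The paper avoids this by a continuity argument. By symmetry one treats only $[w^+,\infty]$. Both sides of \eqref{comp-g} are continuous there, and agree on $(b^w_{K_2},\infty]$ via the ordinary composition $g_{K_2}=g_{K_2/K_1}\circ g_{K_1}$. Since the right-hand side $g^w_{K_2}$ is constant on $[w^+,b^w_{K_2}]$, it suffices to show the left-hand side is also constant there; the common value is then forced by continuity at $b^w_{K_2}$, with no need to identify it as $d^{\til w}_{K_2/K_1}$. Constancy is immediate when $b^w_{K_1}=b^w_{K_2}$. When $b^w_{K_1}<b^w_{K_2}$ one has $b^w_{K_2}=b_{K_2}>b_{K_1},w$, and $g^w_{K_1}$ maps $[w^+,b_{K_2}]$ onto $[d^w_{K_1},\,g_{K_1}(b_{K_2})]=[d^w_{K_1},b_{K_2/K_1}]$; since $\til w\le d^w_{K_1}$ this interval sits inside $[a^{\til w}_{K_2/K_1},b^{\til w}_{K_2/K_1}]\cap(\til w,\infty)$, on which $g^{\til w}_{K_2/K_1}$ is constant. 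The computation $g_{K_1}(b_{K_2})=b_{K_2/K_1}$ is the correct replacement for your failed limit identity, and crucially it does not involve $\til w$.
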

\begin{proof}
By symmetry, it suffices to show that (\ref{comp-g}) holds on $[w^+,\infty]$. Since for $x\ge w^+$, $g^w_{K_1}(x)\ge d^w_{K_1}\ge \til w$, the revised $g^w_{K_1}$ is a continuous map from $[w^+,\infty]$ into $[\til w^+,\infty]$, and so both sides of (\ref{comp-g}) are continuous on  $[w^+,\infty]$.
If $x>b^w_{K_2}$, then $x>\max\{b^w_{K_1},b_{K_2}\}$, which implies that $g_{K_1}^w(x)=g_{K_1}(x)>\max\{d^w_{K_1},b_{K_2/K_1}\}\ge b^{\til w}_{K_2/K_1}$. Thus, $g^{\til w}_{K_2/K_1}\circ g_{K_1}^w(x)=g _{K_2/K_1}\circ g_{K_1} (x)=g_{K_2}(x)=g^w_{K_2}(x)$ on $(b^w_{K_2},\infty]$.
We know that $g^w_{K_2}$ is constant on $[w^+,b^w_{K_2}]$. To prove that (\ref{comp-g}) holds on $[w^+,\infty]$, by continuity it suffices to show that the LHS of (\ref{comp-g}) is constant on $ [w^+,b^w_{K_2}]$. This is obvious if $b^w_{K_1}=b^w_{K_2}$ since $g^w_{K_1}$ is constant on $[w^+,b^w_{K_1}]$. Suppose $b^w_{K_1}<b^w_{K_2}$. Then we have $b_{K_1},w<b^w_{K_2}=b_{K_2}$. So  $[w^+,b^w_{K_2}]$ is mapped by $g^w_{K_1}$ onto $[d^w_{K_1}, b_{K_2/K_1}]$ (or $[\til w^+,b_{K_2/K_1}]$), which is in turn mapped by $g^{\til w}_{K_2/K_1}$ to a constant.
\end{proof}

\begin{Proposition}
	Let $K_t$ and $\eta(t)$, $0\le t<T$, be chordal Loewner hulls and curve driven by $\ha w$ with speed $q$. Suppose the Lebesgue measure of $\eta[0,T)\cap\R$ is $0$. Let $w=\ha w(0)$, and $x \in\R_w$. Define $X(t)=g_{K_t}^w(x)$, $0\le t<T$. Then $X$ satisfies {(i)} $X'(t)\aeq  \frac{2q(t)}{X(t)-\ha w(t)}$ on $[0,T)$; {(ii) the set of $t$ such that $X(t)=\ha w(t)$ has Lebesgue measure zero; and (iii)} if $x>w$ (resp.\ $x<w$), then $X(t)\ge \ha w(t)$ (resp.\ $X(t)\le \ha w(t)$) on $[0,T)$,  and so $X$
is {strictly} increasing (resp.\ decreasing) on $[0,T)$. Moreover, for any $0\le t_1<t_2<T$, $|X(t_1)-X(t_2)|\le 4 \diam(K_{t_2}/K_{t_1})$.
	\label{Prop-cd-continuity'}
\end{Proposition}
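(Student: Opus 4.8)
The plan is to express $X$ via the composition rule for modified Loewner maps and then to identify it separately in two regimes: as long as the point $x$ has not been swallowed, $X$ is an ordinary chordal Loewner coordinate; once it has been swallowed, $X$ coincides with the boundary function $d_{K_t}$ (or $c_{K_t}$) analyzed in Proposition~\ref{prop-Lebesgue}. By reflection it is enough to treat $x$ on the positive side of $\R_w$, i.e.\ $x=w^+$ or $x\in(w,\infty)$; the negative side is identical with $c,a$ replacing $d,b$. The engine of the argument is Proposition~\ref{prop-comp-g}: for $0\le t_1<t_2<T$, write $\Delta K=K_{t_2}/K_{t_1}$ and $\til w=\ha w(t_1)$; Proposition~\ref{Loewner-chain} gives $\til w\in\lin{\Delta K}$ and Proposition~\ref{winK} (with the definition of $c^w,d^w$) gives $\til w\in[c^w_{K_{t_1}},d^w_{K_{t_1}}]$, so Proposition~\ref{prop-comp-g} yields $X(t_2)=g^{\til w}_{\Delta K}(X(t_1))$ (with the input read as $\til w^{\pm}$ in the degenerate case covered by the revision clause, in which $X(t_1)=\til w$). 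From this I would prove the ``Moreover'' estimate first. Since $\til w\in[a_{\Delta K},b_{\Delta K}]$, the modified quantities of $\Delta K$ collapse to the plain ones, and choosing $x_0\in[a_{\Delta K},b_{\Delta K}]\subset\lin{\Delta K}$ gives $\rad_{x_0}(\Delta K)\le\diam(\Delta K)$; Proposition~\ref{g-z-sup} then furnishes $d_{\Delta K}-c_{\Delta K}\le4\diam(\Delta K)$ and $|g_{\Delta K}(y)-y|\le3\diam(\Delta K)$ for $y$ off $[a_{\Delta K},b_{\Delta K}]$, while for $y\in[a_{\Delta K},b_{\Delta K}]\subset[c_{\Delta K},d_{\Delta K}]$ (Proposition~\ref{abcdK}) both $y$ and $g^{\til w}_{\Delta K}(y)\in\{c_{\Delta K},d_{\Delta K}\}$ lie in $[c_{\Delta K},d_{\Delta K}]$. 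Taking $y=X(t_1)$ gives $|X(t_2)-X(t_1)|\le4\diam(K_{t_2}/K_{t_1})$, and together with Proposition~\ref{Loewner-chain} this shows $X$ is continuous on $[0,T)$.

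Next I would make $X$ explicit. For $t>0$ we have $\eta(0)=w\in\lin{K_t}$, hence $w\in[a_{K_t},b_{K_t}]$, so $b^w_{K_t}=b_{K_t}$ and $d^w_{K_t}=d_{K_t}$; by the definition of $g^w_{K_t}$ this gives $X(t)=g_{K_t}(x)$ when $x>b_{K_t}$ and $X(t)=d_{K_t}$ when $x\le b_{K_t}$ (the latter for all $t\in(0,T)$ if $x=w^+$). In the first case $X(t)=g_{K_t}(x)>d_{K_t}$, in the second $X(t)=d_{K_t}$, so in both cases $X(t)\ge d_{K_t}\ge\ha w(t)$ by Proposition~\ref{prop-Lebesgue}; moreover $X(t)=\ha w(t)$ forces $d_{K_t}=\ha w(t)$, a Lebesgue-null set of times by that proposition. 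This establishes (ii) and the inequality in (iii).

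For (i), since $t\mapsto b_{K_t}$ is nondecreasing the set $\{t:x>b_{K_t}\}$ is an interval $[0,T_x)$ with $T_x:=\inf\{t:b_{K_t}\ge x\}>0$ (because $b_{K_t}\le w+\diam(K_t)\to w<x$ as $t\downarrow0$), while $(T_x,T)\subset\{t:x\le b_{K_t}\}$. On $[0,T_x)$ the point $x$ is not in $K_t^{\doub}$, so $X(t)=g_{K_t}(x)$ is a chordal Loewner coordinate and satisfies $X'(t)\aeq\frac{2q(t)}{X(t)-\ha w(t)}$ by the Loewner equation with speed $q$; on $(T_x,T)$, $X(t)=d_{K_t}$ satisfies the same identity by the speed-$q$ form of Proposition~\ref{prop-Lebesgue}, obtained from its speed-$1$ statement by the time change defining ``speed $q$''. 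Each piece is absolutely continuous with the asserted a.e.\ derivative, and $X$ is continuous across $T_x$ by the estimate of the first step, so $X$ is absolutely continuous on every $[0,a]$, $a<T$, and satisfies (i) on $[0,T)$—the single time $T_x$ being negligible and requiring no separate analysis (when $x=w^+$ one simply has $X\equiv d_{K_\cdot}$). Strict monotonicity then follows from $X(t_2)-X(t_1)=\int_{t_1}^{t_2}\frac{2q}{X-\ha w}\ge0$: the integrand is positive on a set of positive measure because the speed $u$ is strictly increasing and absolutely continuous, whence $\int_{t_1}^{t_2}q=u(t_2)-u(t_1)>0$.

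The main obstacle, in my view, is organizational rather than conceptual: making the two-regime argument for (i) go through without ever having to evaluate $X$ at the transition time $T_x$—this is exactly what the diameter bound of the first step is for, since it delivers continuity of $X$ at $T_x$ for free, so that the two absolutely continuous pieces glue. Two further fussy points are the bookkeeping in Proposition~\ref{prop-comp-g} around the prime ends $w^{\pm}$ and its revision clause (dispatched by noting that in the degenerate case the relevant input already lies in $[c_{\Delta K},d_{\Delta K}]$), and the transfer of the ``$\aeq$'' ODE for $d_{K_t}$ from speed $1$ to speed $q$, a change of variables the paper treats as standard but which calls for some care with null sets and the Luzin~$N$ property of the speed function.
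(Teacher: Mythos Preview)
Your proof is correct and follows essentially the same overall structure as the paper: reduce to speed~$1$, split into a pre-swallowing regime where $X(t)=g_{K_t}(x)$ solves the Loewner ODE exactly, and a post-swallowing regime where $X(t)=d_{K_t}$ (resp.\ $c_{K_t}$) is handled by Proposition~\ref{prop-Lebesgue}, then glue using continuity.

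The one genuine difference is in the ``Moreover'' diameter estimate. The paper argues by a three-case analysis depending on whether $x$ has been swallowed by time $t_1$, by time $t_2$, or neither; the mixed Case~3 requires tracking $X$ through the swallowing time $\tau_x$ and sandwiching via $c_{K_{t_1}}>X(t_1)\ge\ha w(\tau_x)\ge c_{K_{\tau_x}}\ge c_{K_{t_2}}$. Your route through Proposition~\ref{prop-comp-g}, writing $X(t_2)=g^{\til w}_{\Delta K}(X(t_1))$ uniformly and then bounding via Proposition~\ref{g-z-sup}, is cleaner: it avoids the case split entirely and treats the prime-end degeneration as a boundary instance of the same estimate. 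The paper's approach is perhaps more elementary in that it never invokes the composition rule, but yours is shorter and makes the dependence on $\diam(K_{t_2}/K_{t_1})$ transparent. Your identification of the gluing at $T_x$ as the only real organizational issue is accurate; note that since $X'\ge0$ a.e.\ on each piece and $X$ is continuous and bounded on compact subintervals, the monotone convergence argument you sketch does deliver absolute continuity across $T_x$.
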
	
\begin{proof}
	We may assume that the speed $q$ is constant {and equal to} $1$.
	By symmetry,  we may assume that $x\in(-\infty,w^-]$. If $x=w^-$, then $X(t)=c_{K_t}$ for $t>0$ and $X(0)=\ha w(0)$. Then the conclusion follows from Propositions \ref{Prop-cd-continuity} and \ref{prop-Lebesgue}.  Now suppose $x\in(-\infty,w)$.

Fix $0\le t_1<t_2<T$. We first prove the upper bound for $|X(t_1)-X(t_2)|$. There are three cases. Case 1. $x\not\in \lin{K_{t_j}}$, $j=1,2$. In this case,
	$X (t_2)=g_{K_{t_2}/K_{t_1}}(X(t_1))$, and the upper bound for $|X(t_1)-X(t_2)|$ follows from Proposition \ref{g-z-sup}. Case 2. $x\in\lin{K_{t_1}}\subset \lin{K_{t_2}}$. In this case $X(t_j)=c_{K_{t_j}}$, $j=1,2$, and the conclusion follows from Proposition \ref{Prop-cd-continuity}. Case 3. $x\not\in\lin{K_{t_1}}$ and $x\in \lin{K_{t_2}}$. Then $X(t_1)=g_{K_{t_1}}(x_0)<c_{K_{t_1}}$  and $X(t_2)=c_{K_{t_2}}$. Moreover, we have $\tau_{x}\in(t_1,t_2]$, $\lim_{t\uparrow \tau_{x}} X(t)=\ha w(\tau_{x})$, and $X(t)$ satisfies $X'(t)=\frac{2}{X(t)-\ha w(t)}<0$ on $[t_1,\tau_{x})$.  By Propositions \ref{winK} and \ref{abcdK},  $c_{K(t_1)}>X(t_1)\ge \ha w(\tau_{x})\ge c_{K_{\tau_{x}}}\ge c_{K_{t_2}}=X(t_2)$. So we have $|X(t_1)-X(t_2)|\le |c_{K_{t_1}}-c_{K_{t_2}}|\le 4 \diam(K_{t_2}/K_{t_1})$ by Propositions \ref{Prop-cd-continuity}. By Proposition \ref{Loewner-chain}, $X$ is continuous on $[0,T)$.
	
Since $X(t)=g_{K_t}(x)$ satisfies {$X(t)<\ha w(t)$ and} the chordal Loewner equation driven by $\ha w$ up to $\tau_{x}$, we know that $X'(t)=\frac{2}{X(t)-\ha w(t)}{<0}$ on $[0,\tau_{x})$. {So $X$ is strictly decreasing on $[0,\tau_x)$.}  From Proposition \ref{prop-Lebesgue} we know that {$X(t)=c_{K_t}$ is strictly decreasing, the set of $t\in[\tau_x,T)$ such that $X(t)=\ha w(t)$ has Lebesgue measure zero, and} $X'(t)\aeq  \frac{2}{X(t)-\ha w(t)}$ on ${[}\tau_{x},T)$. {By the continuity of $X$, we conclude that $X$ has these properties throughout $[0,T)$.}   \end{proof}

\subsection{Chordal SLE$_\kappa$ and $2$-SLE$_\kappa$}
If $\ha w(t)=\sqrt\kappa B(t)$, $0\le t<\infty$, where $\kappa>0$ and $B(t)$ is a standard Brownian motion, then the chordal Loewner curve $\eta$ driven by $\ha w$ is known to exist (cf.\ \cite{RS}). We now call it a standard chordal SLE$_\kappa$ curve. It satisfies {the property} that $\eta(0)=0$ and $\lim_{t\to\infty} \eta(t)=\infty$. The  behavior of $\eta$ depends on $\kappa$: if $\kappa\in(0,4]$, $\eta$ is simple and intersects $\R$ only at $0$; if $\kappa\ge 8$, $\eta$ is space-filling, i.e., $\lin\HH=\eta(\R_+)$; if $\kappa\in(4,8)$,  $\eta$ is neither simple nor space-filling. If $D$ is a simply connected domain with two distinct marked boundary points (or more precisely, prime ends) $a$ and  $b$, the chordal SLE$_\kappa$ curve in $D$ from $a$ to $b$ is defined to be the conformal image of a standard chordal SLE$_\kappa$ curve  under a conformal map from $(\HH;0,\infty)$ onto $(D;a,b)$.

Chordal SLE$_\kappa$ satisfies {the} Domain Markov Property (DMP): if $\eta$ is a   chordal  SLE$_\kappa$ curve in $D$ from $a$ to $b$, and $T$ is a stopping time, then conditionally on the part of $\eta$ before $T$ and the event that {$\eta$ has not reached $b$ by time $T$}, the part of $\eta$ after $T$ is a   chordal SLE$_\kappa$ curve from $\eta(T)$ to $b$ in a connected component of $D\sem \eta[0,T]$.

We will focus on the range $\kappa\in(0,8)$ so that SLE$_\kappa$ is non-space-filling. One remarkable property of these chordal SLE$_\kappa$ is reversibility: the time-reversal of a chordal SLE$_\kappa$ curve in $D$ from $a$ to $b$ is a chordal SLE$_\kappa$ curve in $D$ from $b$ to $a$, up to a time-change (\cite{reversibility,MS3}).
Another fact that is important to us is the existence of $2$-SLE$_\kappa$.

\begin{Definition}
Let $D$ be a simply connected domain with pairwise distinct boundary points  $a_1,b_1,a_2,b_2$ such that $a_1$ and $b_1$ together do not separate $a_2$ from $b_2$ on $\pa D$ (and vice versa).  A $2$-SLE$_\kappa$ in $D$ with link pattern $(a_1\lr b_1;a_2\lr b_2)$ is a pair of random curves $(\eta_1,\eta_2)$ in $\lin D$ such that $\eta_j$ connects $a_j$ with $b_j$  for $j=1,2$, and conditionally on {the whole image of} any one curve, the other {curve} is a chordal SLE$_\kappa$ curve in a complement{ary} domain of the given curve in $D$.
\end{Definition}

Because of reversibility, we do not need to specify the orientation of $\eta_1$ and $\eta_2$. If we want to emphasize the orientation, then we use an arrow  $a_1\to b_1$ in the link pattern.
The existence of $2$-SLE$_\kappa$ was proved in \cite{multiple} for $\kappa\in(0,4]$ using {the} Brownian loop measure and in \cite{MS1,MS3} for $\kappa\in(4,8)$ using {the} imaginary geometry theory. The uniqueness of $2$-SLE$_\kappa$ (for a fixed domain and link pattern) was proved in \cite{MS2} (for $\kappa\in(0,4]$) and \cite{MSW} (for $\kappa\in(4,8)$). 

\subsection{SLE$_\kappa({\protect\ulin{\rho}})$ processes}
First introduced in \cite{LSW-8/3}, SLE$_\kappa(\ulin\rho)$ processes are natural variations of SLE$_\kappa$, where one keeps track of additional marked points, often called force points, which may lie on the boundary or interior. For the generality needed here, all force points will lie on the boundary. In this subsection, we review the definition and properties of SLE$_\kappa(\ulin \rho)$ developed in \cite{MS1}.

Let $n\in\N$, $\kappa>0$, $\ulin \rho=(\rho_1,\dots,\rho_n)\in\R^n$. Let $w \in\R$ and  $\ulin v=(v_1,\dots,v_n)\in \R_w^n$. The chordal SLE$_\kappa(\ulin\rho)$ process in $\HH$ started from $w$ with force points $\ulin v$ is the chordal Loewner process driven by the function $\ha w(t)$, which drives chordal Loewner   hulls $K_t$, and solves the SDE
$$d\ha w(t)\aeq \sqrt{\kappa }dB(t)+\sum_{j=1}^n \frac{\rho_j}{\ha w(t)-g^w_{K_t}(v_j)}\,dt,\quad \ha w(0)=w,$$
where $B(t)$ is a standard Brownian motion, and we used Definition \ref{Def-Rw}. We require that for $\sigma\in\{+,-\}$, $\sum_{j:v_j=w^\sigma}\rho_j>-2$. The solution exists uniquely up to the first time (called a continuation threshold) that
$ \sum_{j: \ha v_j(t)=c_{K_t}} \rho_j\le -2$  or $\sum_{j: \ha v_j(t)=d_{K_t}} \rho_j \le -2$, whichever comes first. If a continuation threshold does not exist, then the lifetime is $\infty$.
For each $j$, $\ha v_j(t):=g^w_{K(t)}(v_j)$ is called the force point function started from $v_j$, satisfies  $\ha v_j'\aeq \frac2{\ha v_j-\ha w}$, and is monotonically increasing or decreasing depending on whether $v_j>w$ or $v_j<w$.

A chordal SLE$_\kappa(\ulin\rho)$ process generates a chordal Loewner curve $\eta$ in $\lin\HH$ started from $w$ up to the continuation threshold. If no force point is swallowed by the process at any time, this fact follows from the existence of {the} chordal SLE$_\kappa$ curve (\cite{RS}) and Girsanov{'s} Theorem. The existence of the curve in the general case was proved in \cite[{Theorem 1.3}]{MS1} {(for $\kappa\ne 4$) and \cite[Theorem 1.1.3]{WW-level} (for $\kappa=4$)}. By Proposition \ref{prop-comp-g} and the Markov property of Brownian motion, a chordal SLE$_\kappa(\ulin \rho)$ curve $\eta$ satisfies the following DMP. If $\tau$ is a stopping time for $\eta$, then conditionally on the process before $\tau$ and the event that $\tau$ is less than the lifetime $T$, $\ha w(\tau+t)$ and $\ha v_j(\tau+t)$, $1\le j\le n$, $0\le t<T-\tau$, are the driving function and force point functions for a chordal SLE$_\kappa(\ulin\rho)$ curve $\eta^\tau$ started from $\ha w(\tau)$ with force points at $\ha v_1(\tau),\dots,\ha v_n(\tau)$, and $\eta(\tau+\cdot)=f_{K_\tau}(\eta^\tau)$, where $K_\tau:=\Hull(\eta[0,\tau])$. Here if $\ha v_j(\tau)=\ha w(\tau)$, then $\ha v_j(\tau)$ as a force point is treated as $\ha w(\tau)^{\sign(v_j-w)}$.

We now relabel the force points $v_1,\dots,v_n$  by $v^{{(-)}}_{n_-}\le\cdots\le v^{{(-)}}_{1}{\le w^-<} w{<w^+\le} v^{{(+)}}_1\le \cdots\le v^{{(+)}}_{n_+}$, where $n_-+n_+=n$ ($n_-$ or $n_+$ could be $0$). Then for any $t$ in the lifespan, $\ha v^{{(-)}}_{n_-}(t)\le\cdots\le \ha v^{{(-)}}_1(t)\le \ha w(t)\le \ha v^{{(+)}}_1(t)\le \cdots\le \ha v^{{(+)}}_{n_+}(t)$. If for any $\sigma\in\{-,+\}$ and $1\le k\le n_\sigma$, $\sum_{j=1}^k \rho^{{(\sigma)}}_j>-2$, then the process will never reach a continuation threshold, and so its lifetime is $\infty$, in which case $\lim_{t\to\infty} \eta(t)=\infty$ {(cf.\ \cite[Theorem 1.3]{MS1},\cite[Theorem 1.1.3]{WW-level})}.
If for some $\sigma\in\{+,-\}$ and $1\le k\le n_\sigma$,  $\sum_{j=1}^k \rho^{(\sigma)}_j\ge \frac{\kappa}{2}-2$, then {(cf.\ \cite[Remark 5.3]{MS1})} $\eta$ does not hit $v^{{(\sigma)}}_k$ and the open interval between $v^{{(\sigma)}}_k$ and $v^{{(\sigma)}}_{k+1}$ ($v^{{(\sigma)}} _{n_\sigma+1}:=\sigma\cdot\infty$).
 If $\kappa\in(0,8)$ and for any $\sigma\in\{+,-\}$ and $1\le k\le n_\sigma$,  $\sum_{j=1}^k \rho^{{(\sigma)}}_j> \frac{\kappa}{2}-4$, then for every $x\in\R\sem \{w\}$, a.s.\ $\eta$ does not visit $x$, which implies by Fubini Theorem that a.s.\ $\eta\cap\R$ has Lebesgue measure zero.

 {The last statement is similar to \cite[Lemma 7.16]{MS1} and \cite[Lemma 2.5.2]{WW-level}, and its proof resembles that of \cite[Lemma 5.2]{MS1}. The key idea is that, for a fixed $x\in(w,\infty)$, on the event $E$ that $\eta$ visits $x$, we may compare $\eta$ with an SLE$_\kappa(\rho)$ curve $\eta'$ in $\HH$ started from $w$ with the force point at $x$, where $\ha\rho=\sum\{\rho^{(+)}_j: v^{(+)}_j\le x\}$. It can be show that the law of $\eta$ is absolutely continuous w.r.t.\ that of $\eta'$ on $E$. We have $\rho>\frac\kappa 2-4$ by assumption. Let $f$ be a M\"obius automorphism of $\HH$, which fixes $w$, maps $x$ to $\infty$, and maps $\infty$ to some $y\in(-\infty,x)$. By \cite{SW}, after a time-change, $f\circ \eta'$ up to the time that $\eta'$ separates $x$ from $\infty$ becomes a chordal SLE$_\kappa(\til\rho)$ curve $\til\eta$ in $\HH$ started from $w$ with the force point at $y$ up to the time that it separates $y$ from $\infty$, where $\til\rho:=\kappa-6-\rho<\frac \kappa 2-2$. Let $\til w$ and $\til v$ be respectively the driving function and force point function. Then $\til w-\til v$ is a rescaled Bessel process of dimension $\frac{2\til\rho+4}\kappa+1<2$. So $\til \eta$ a.s.\ hits $(-\infty,y]$, which implies that $\eta'$ a.s.\ does not hit $x$. The same statement then holds for $\eta$ by the absolute continuity between the laws of $\eta$ and $\eta'$ on $E$.
 }


\subsection{Hypergeometric SLE} \label{hSLE}
For $a,b,c\in\C$ such that $c\not\in\{0,-1,-2,\cdots\}$, the hypergeometric function $\,_2F_1(a,b;c;z)$ (cf.\ \cite[Chapter 15]{NIST:DLMF}) is defined  by the Gauss series on the disc $\{|z|<1\}$:
\BGE \,_2F_1(a,b;c;z)=\sum_{n=0}^\infty \frac{(a)_n(b)_n}{(c)_nn!}\,z^n,\label{series}\EDE
where $(x)_n$ is rising factorial: $(x)_0=1$ and $(x)_n=x(x+1)\cdots(x+n-1)$ if $n\ge 1$. It satisfies the ODE
	\BGE z(1-z)F''(z)-[(a+b+1)z-c]F'(z)-ab F(z)=0.\label{ODE-hyper}\EDE
For the purpose of this paper, we {set} $a,b,c$ by $a=\frac{4}\kappa$, $b=1-\frac{4}\kappa$, $c=\frac{8}\kappa$, and define $ F(x) =\,_2F_1(1-\frac{4}\kappa,\frac{4}\kappa; \frac{8}\kappa;x )$.
{Since $c-a-b=\frac 8\kappa-1>0$, by \cite[15.4.20]{NIST:DLMF}, $F$ extends continuously to $[0,1]$ with
$$F(1)=\frac{\Gamma(c)\Gamma(c-a-b)}{\Gamma(c-a)\Gamma(c-b)}=\frac{\Gamma(\frac 8\kappa)\Gamma( \frac\kappa 8-1)}{\Gamma(\frac4\kappa)\Gamma(\frac{12}\kappa-1)}>0.$$}
{We claim that $F>0$ on $[0,1]$.  If $b\ge 0$, then since $a,c>0$, every term of the series in (\ref{series}) is nonnegative for $z\in[0,1)$, and equals $1$ for $n=0$, which implies that $F>0$ on $[0,1)$. Suppose $b<0$. If $F>0$ on $[0,1]$ does not hold, then from $F(1)>0$ and $F(0)=1>0$, we see that there is $x_0\in(0,1)$ such that $F(x_0)<0$, $F'(x_0)=0$, and $F''(x_0)\ge 0$. Since $ab<0$, (\ref{ODE-hyper}) does not hold at $x_0$, which is a contradiction. So the claim is proved.}   Let
$ \til G(x)=\kappa x \frac { F'(x)}{F(x)}+2$.

\begin{Definition}
Let $\kappa\in(0,8)$. 	Let $v_1\le v_2\in  [0^+,+\infty]$ or  $v_1\ge v_2\in [-\infty,0^-]$. Suppose $\ha w(t)$, $0\le t<\infty$, solves the following SDE:
$$
  d \ha w(t)\aeq \sqrt\kappa dB(t)+\Big(\frac{1}{\ha w(t)-\ha v_1(t)}-\frac 1{\ha w(t)-\ha v_2(t)}\Big)\til G\Big(\frac{\ha w(t)-\ha v_1(t)}{\ha w(t)-\ha v_2(t)}\Big)dt,\quad \ha w(0)=0,
$$
where $B(t)$ is a standard Brownian motion,  $\ha v_j(t)=g_{K_t}^0(v_j)$, $j=1,2$, and $K_t$ are chordal Loewner hulls driven by $\ha w$. 
The chordal Loewner curve driven by $\ha w$ is called a hypergeometric SLE$_\kappa$, or simply hSLE$_\kappa$, curve in $\HH$ from $0$ to $\infty$  with force points $v_1,v_2$. We call $v_j(t)$ the force point function started from $v_j$, $j=1,2$.

{If $f$ maps $\HH$ conformally onto a simply connected domain $D$, then the $f$-image of an hSLE$_\kappa$ curve in $\HH$ from $0$ to $\infty$ with force points $v_1,v_2$ is called an hSLE$_\kappa$ curve in $D$ from $f(0)$ to $f(\infty)$ with force points $f(v_1),f(v_2)$}
\label{Def-hSLE}
\end{Definition}

{
\begin{Remark}
The existence of the solutions of the SDE follows from Girsanov's theorem. We start with an SLE$_\kappa(2,-2)$ curve $\eta$ in $\HH$ started from $w$ with force points $v_1,v_2$. We assume $0^+\le v_1<v_2$ by symmetry. Let $\ha w$ be the driving function, and $\ha v_j$ be the force point function started from $v_j$, $j=1,2$. Let $g_t$ be the chordal Loewner maps. Define $R(t)=\frac{\ha v_1(t)-\ha w(t)}{\ha v_2(t)-\ha w(t)}\in [0,1)$ and $I(t)=\frac{|\ha v_2(t)-\ha v_1(t)|}{g_t'(v_2)}$ for $0\le t<\tau_{v_2}$. By Loewner's equation, $I_j$ is decreasing. Then we define
 $$M(t)=\frac{F(R(t))}{F(R(0))}\Big(\frac{ I(t)}{I(0)}\Big)^{1-\frac 4\kappa}, \quad 0\le t<\tau_{v_2}.$$
Using It\^o's formula and (\ref{ODE-hyper}), we see that $M$ is a positive local martingale. If $\kappa\le 4$, then a.s.\ $\tau_{v_2}=\infty$, and $M$ is defined on $(0,\infty)$. If $\kappa>4$, then a.s.\ $\tau_{v_2}<\infty$, and as $t\uparrow \tau_{v_2}$, $M(t)$ converges to a positive number. We then extend $M$ to $[0,\infty)$ such that $M$ is constant and equals $\lim_{t\uparrow \tau_{v_2}} M(t)$ on $[\tau_{v_2},\infty)$. After the extension, $M$ is a positive continuous local martingale defined on $[0,\infty)$. Suppose $T$ is any stopping time such that $M_{T\wedge \cdot}$ is bounded. We may weight the underlying probability measure by $M(T)$ and get another probability measure. Under the new measure, the $\ha w,\ha v_1,\ha v_2$ satisfy the SDE in Definition \ref{Def-hSLE} for some standard Brownian notion $B$ up to the time $T$.
\end{Remark}
}

{
\begin{Remark}
  The definition of SLE using hypergeometric functions first appeared in \cite{kappa-rho}, in which the new SLE were called intermediate SLE$_\kappa(\rho)$. The notion of  hypergeometric SLE and  hSLE first appeared in W.\ Qian's \cite{QW}, which generalized the intermediate SLE$_\kappa(\rho)$. Here we follow the definition of hSLE used in the later paper \cite{Wu-hSLE}, in which hSLE$_\kappa$ agrees with intermediate SLE$_\kappa(2)$, and so is only a very special case of Qian's hSLE.
\end{Remark}
}

Hypergeometric SLE is important because
if $(\eta_1,\eta_2)$ is a $2$-SLE$_\kappa$ in $D$ with link pattern $(a_1\to b_1;a_2\to b_2)$, then for $j=1,2$, the marginal law of $\eta_j$ is that of an hSLE$_\kappa$ curve in $D$ from ${a_j}$ to ${b_j}$ with force points ${b_{3-j}}$ and ${a_{3-j}}$ (cf.\ \cite[Proposition 6.10]{Wu-hSLE}).

Using the standard argument in \cite{SW}, we obtain the following proposition describing an hSLE$_\kappa$ curve in $\HH$ {``}in the chordal coordinate{''} in the case that the target is not $\infty$.

\begin{Proposition}
  Let $w_0\ne w_\infty\in\R$. Let $v_1\in \R_{w_0}\cup\{\infty\}\sem \{w_\infty\}$ and $v_2\in\R_{w_\infty}\cup \{\infty\}\sem \{w_0\}$ be such that the cross ratio $R:=\frac{(w_0-v_1)(w_\infty-v_2)}{(w_0-v_2)(w_\infty-v_1)}\in [0^+,1)$. Let $\kappa\in(0,8)$. Let $\ha\eta$ be an hSLE$_\kappa$ curve in $\HH$ from $w_0$ to $w_\infty$ with force points at $v_1,v_2$. Stop $\ha\eta$ at the first time that it separates $w_\infty$ from $\infty$, and parametrize the stopped curve by $\HH$-capacity. Then the new curve, denoted by $\eta$, is the chordal Loewner curve driven by some function $\ha w_0$, which satisfies the following SDE with initial value $\ha w_0(0)=w_0$:
  \begin{align*}
d\ha w_0(t)\aeq &\sqrt\kappa dB(t)+ \frac{\kappa-6}{\ha w_0(t)-\ha w_\infty(t)}\, dt+ \\
 & + \Big( \frac 1{\ha w_0(t)-\ha v_1(t)}  -\frac 1 {\ha w_0(t)-\ha v_2(t)}\Big )\cdot \til G\Big( \frac{ (\ha w_0(t) -\ha v_1(t) ) (\ha v_2(t)-\ha w_\infty(t))}{ (\ha w_0(t)-\ha v_2(t)) (\ha v_1(t)-\ha w_\infty(t))}\Big)\,dt,
\end{align*}
where $B(t)$ is a standard Brownian motion, $\ha w_\infty(t)=g_{K_t} (w_\infty)$ and $\ha v_j(t)=g_{K_t}^{w_0}(v_j)$, $j=1,2$, and $K_t$ are the chordal Loewner hulls driven by $\ha w_0$.
\label{Prop-iSLE-2}
\end{Proposition}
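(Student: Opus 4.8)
The strategy is the coordinate-change argument of Schramm--Wilson \cite{SW}, applied to the SDE in Definition \ref{Def-hSLE}. By the conformal covariance convention at the end of Definition \ref{Def-hSLE}, an hSLE$_\kappa$ curve $\ha\eta$ in $\HH$ from $w_0$ to $w_\infty$ with force points $v_1,v_2$ is the image, under a M\"obius automorphism $\phi$ of $\HH$ with $\phi(0)=w_0$, $\phi(\infty)=w_\infty$, of a standard hSLE$_\kappa$ curve from $0$ to $\infty$ with force points $\phi^{-1}(v_1),\phi^{-1}(v_2)$. The idea is to write down the Loewner flow of $\ha\eta$ in the $\HH$-capacity parametrization (before it separates $w_\infty$ from $\infty$), track how $\phi$ interacts with the Loewner maps, and read off the drift. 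First I would recall that the standard hSLE$_\kappa$ satisfies, in the half-plane capacity parametrization with maps $\widetilde g_t$ and driving function $\widetilde w$, the SDE of Definition \ref{Def-hSLE}; here $\infty$ is the target and plays no dynamical role. Then I would apply the time-changed conformal map $h_t := g_{K_t}\circ \phi \circ \widetilde g_{\widetilde\tau(t)}^{-1}$, which is a M\"obius automorphism of $\HH$ for each $t$ (since each factor is a conformal map $\HH\to\HH$ and compositions/inverses of such with the correct normalizations stay M\"obius up to the hydrodynamic normalization being broken by $\phi$), where $\widetilde\tau$ is the time change making $K_t=\Hull(\eta[0,t])$ grow at unit half-plane-capacity speed.

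The key computation is the standard one: if $\widetilde g_t$ solves $\partial_t \widetilde g_t(z) = \frac{2}{\widetilde g_t(z)-\widetilde w(t)}$ and $h_t$ is the M\"obius map sending the $\widetilde g$-picture to the $g$-picture, then $\ha w_0(t) := h_t(\widetilde w(\widetilde\tau(t)))$ drives the Loewner chain of $\eta$, the new capacity speed is $|h_t'(\widetilde w)|^2$ times a bounded factor, and It\^o's formula produces the extra drift term $\frac{\kappa-6}{\ha w_0(t)-\ha w_\infty(t)}\,dt$ — this is exactly the Schramm--Wilson drift coming from the Schwarzian/second-order expansion of $h_t$ near $\widetilde w$, with $\ha w_\infty(t)=g_{K_t}(w_\infty)$ being the image of the target point. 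Meanwhile the force-point drift terms in Definition \ref{Def-hSLE} are covariant: each term $\frac{1}{\ha w - \ha v_j}\,\til G(\cdot)$ transforms by the chain rule under $h_t$, and because $\til G$ is a function of a cross-ratio, and cross-ratios are M\"obius-invariant, the argument of $\til G$ becomes precisely $\frac{(\ha w_0-\ha v_1)(\ha v_2-\ha w_\infty)}{(\ha w_0-\ha v_2)(\ha v_1-\ha w_\infty)}$ — the four points being $w_0, v_1, v_2, w_\infty$ in the new coordinate — while the prefactor $\frac{1}{\ha w_0-\ha v_1}-\frac{1}{\ha w_0-\ha v_2}$ reconstitutes itself up to the same Jacobian factor $|h_t'|^2$ that rescales the Brownian motion; after reparametrizing by true $\HH$-capacity one divides out this factor and obtains the displayed SDE. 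I would also need to check that stopping $\ha\eta$ when it separates $w_\infty$ from $\infty$ corresponds, in the $\widetilde g$-picture, to stopping the standard hSLE before its force point $\phi^{-1}(v_2)$ or $\phi^{-1}(v_1)$ (or $\phi^{-1}(w_\infty)=\infty$'s preimage) situation causes $h_t$ to degenerate, so that $h_t$ stays a genuine automorphism of $\HH$ throughout and all the maps $g_{K_t}^{w_0}$ are well defined; the hypothesis $R\in[0^+,1)$ guarantees the initial configuration is nondegenerate and that $v_1,v_2$ lie on the correct arcs.

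\textbf{Main obstacle.} The substantive point, rather than a genuine difficulty, is the careful bookkeeping of the modified Loewner maps $g^{w_0}_{K_t}$ and force-point functions when $\kappa\in(4,8)$ and $\eta$ may swallow $v_1$ (or touch the real line), so that $\ha v_j(t)$ must be interpreted via Definition \ref{Def-Rw} and Proposition \ref{Prop-cd-continuity'} rather than as literal images of points; one must argue that up to the separation time this does not actually occur for the force point $v_2$ (which lies beyond $w_\infty$) and handle $v_1$ using the DMP and the $\aeq$ conventions. The other place requiring care is verifying that the composition $h_t$ is M\"obius and computing its expansion coefficients at $\widetilde w$ correctly to get the exact constant $\kappa-6$ — this is the classical Schramm--Wilson lemma, so I would cite \cite{SW} for the general mechanism and only verify that our drift is the image of Definition \ref{Def-hSLE}'s drift under the coordinate change, i.e.\ that the cross-ratio inside $\til G$ transforms as claimed, which is a direct consequence of M\"obius invariance of cross-ratios together with the identity $h_t(\widetilde v_j(\widetilde\tau(t)))=\ha v_j(t)$ and $h_t(\infty)=\ha w_\infty(t)$.
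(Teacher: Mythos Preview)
Your proposal is correct and takes essentially the same approach as the paper: the paper's proof consists solely of the sentence ``Using the standard argument in \cite{SW}, we obtain the following proposition,'' and what you have written is precisely a detailed unpacking of that Schramm--Wilson coordinate-change argument, including the M\"obius invariance of the cross-ratio argument of $\til G$ and the appearance of the $\frac{\kappa-6}{\ha w_0-\ha w_\infty}$ drift from the second-order expansion of $h_t$.
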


\begin{Definition}
We call the $\eta$ in Proposition \ref{Prop-iSLE-2} an hSLE$_\kappa$ curve in $\HH$ from $w_0$ to $w_\infty$ with force points at $v_1,v_2$, {``}in the chordal coordinate{''}; call $\ha w_0$ the driving function; and call $\ha w_\infty$, $\ha v_1$ and $\ha v_2$ the force point functions respectively started from $w_\infty$, $v_1$ and $v_2$. \label{Def-iSLE-chordal}
\end{Definition}

\begin{Proposition}
We adopt the notation in the last proposition. Let $T$ be the first time that $w_\infty$ or $v_2$ is swallowed by the hulls. Note that $|\ha w_0-\ha w_\infty|$, $|\ha v_1-\ha v_2|$, $\ha w_0-\ha v_2|$, and $|\ha w_\infty-\ha v_1|$ are all positive on $[0,T)$. We define $M$ on $[0,T)$ by $M=G_1(\ha w_0,\ha v_1;\ha w_\infty,\ha v_2)$, where $G_1$ is given by (\ref{G1(w,v)}).
 Then $M$ is a positive local martingale, and if we tilt the law of $\eta$ by $M$, then we get the law of a chordal SLE$_\kappa(2,2,2)$ curve in $\HH$ started from $w_0$ with force points $w_\infty$, $v_1$ and $v_2$. More precisely, if $\tau<T$ is a stopping time such that $M$ is uniformly bounded on $[0,\tau]$, then if we weight the underlying probability measure by $M(\tau)/M(0)$, then we get a probability measure under which the law of $\eta$ stopped at the time $\tau$  is that of a chordal SLE$_\kappa(2,2,2)$ curve in $\HH$ started from $w_0$ with force points $w_\infty$, $v_1$ and $v_2$ stopped at the time $\tau$. \label{Prop-iSLE-3}
\end{Proposition}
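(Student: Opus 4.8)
The plan is to carry out the standard martingale-observable computation: verify by It\^o's formula that $M$ is a positive local martingale along the hSLE$_\kappa$ dynamics of Proposition \ref{Prop-iSLE-2}, and then read off the tilted law from Girsanov's theorem. First I would record the dynamics of the four functions involved. Only $\ha w_0$ is a genuine semimartingale: by Proposition \ref{Prop-iSLE-2}, $d\ha w_0\aeq\sqrt\kappa\,dB+\mu\,dt$, where $\mu=\frac{\kappa-6}{\ha w_0-\ha w_\infty}+\big(\frac1{\ha w_0-\ha v_1}-\frac1{\ha w_0-\ha v_2}\big)\til G(\zeta)$ and $\zeta:=\frac{(\ha w_0-\ha v_1)(\ha v_2-\ha w_\infty)}{(\ha w_0-\ha v_2)(\ha v_1-\ha w_\infty)}$ is the argument of $\til G$, while the three force-point functions are absolutely continuous with $\ha w_\infty'\aeq\frac2{\ha w_\infty-\ha w_0}$ and $\ha v_j'\aeq\frac2{\ha v_j-\ha w_0}$ for $j=1,2$. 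Matching the arguments of $G_1$ positionally in (\ref{G1(w,v)}), so that $w_+,w_-,v_+,v_-$ correspond to $\ha w_0,\ha v_1,\ha w_\infty,\ha v_2$, one finds $M=|\ha w_0-\ha w_\infty|^{\frac8\kappa-1}|\ha v_1-\ha v_2|^{\frac8\kappa-1}|\ha w_0-\ha v_2|^{\frac4\kappa}|\ha v_1-\ha w_\infty|^{\frac4\kappa}F(\zeta)^{-1}$, and the cross-ratio inside $F$ is exactly this same $\zeta$. Positivity of $M$ on $[0,T)$ is immediate: the four moduli are positive there by hypothesis, and $\zeta$ stays in $[0^+,1)$ on $[0,T)$ (it starts in $[0^+,1)$ and tends to $1$ only as $\ha w_0\to\ha w_\infty$, i.e.\ only when $w_\infty$ is swallowed, which does not occur before $T$), so $F(\zeta)>0$ by the positivity of $F$ on $[0,1]$ established above. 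When $v_1$ or $v_2$ is $\infty$ the corresponding factors are interpreted through the homogeneity of $G_1$, which does not alter the computation.

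For the local-martingale step I would expand $d\log M$. For each of the four power factors use $d\log|U|=\frac{dU}U-\frac{d\langle U\rangle}{2U^2}$, noting that $\langle U\rangle$ is nontrivial only for the two factors containing $\ha w_0$; for the factor $F(\zeta)^{-1}$, first compute $d\zeta$ and $d\langle\zeta\rangle$ from the SDEs above ($\zeta$ picks up a Brownian part through $\ha w_0$), then $d\log F(\zeta)=\frac{F'(\zeta)}{F(\zeta)}\,d\zeta+\frac12\big(\frac{F''(\zeta)}{F(\zeta)}-\frac{F'(\zeta)^2}{F(\zeta)^2}\big)\,d\langle\zeta\rangle$. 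Gathering the $dt$-terms, I would eliminate $F''(\zeta)$ via the hypergeometric ODE (\ref{ODE-hyper}) with $a=\frac4\kappa$, $b=1-\frac4\kappa$, $c=\frac8\kappa$, use the defining identity $\kappa\zeta\frac{F'(\zeta)}{F(\zeta)}=\til G(\zeta)-2$, and reduce by partial fractions; the assertion is that all drift terms cancel, so the drift of $dM$ vanishes and $M$ is a local martingale. This is in spirit the same computation sketched for the related observable $\frac{F(R)}{F(R(0))}\big(I/I(0)\big)^{1-\frac4\kappa}$ in the Remark preceding Definition \ref{Def-hSLE}, and it is the technical heart of the proof.

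Granting that, $dM\aeq M\theta\,dB$ with $\theta=\sqrt\kappa\,\partial_{\ha w_0}\log M$. Using $\partial_{\ha w_0}\log\zeta=\frac1{\ha w_0-\ha v_1}-\frac1{\ha w_0-\ha v_2}$ and again $\kappa\zeta\frac{F'(\zeta)}{F(\zeta)}=\til G(\zeta)-2$, one computes $\kappa\,\partial_{\ha w_0}\log M=\frac{8-\kappa}{\ha w_0-\ha w_\infty}+\frac{2-\til G(\zeta)}{\ha w_0-\ha v_1}+\frac{2+\til G(\zeta)}{\ha w_0-\ha v_2}$. Given a stopping time $\tau<T$ on which $M$ is bounded, let $\mathbb Q$ on $\mathcal F_\tau$ have density $M(\tau)/M(0)$ with respect to $\PP$. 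By Girsanov's theorem $\til B_t:=B_t-\int_0^t\theta_s\,ds$, $t\le\tau$, is a $\mathbb Q$-Brownian motion, and substituting $dB=d\til B+\theta\,dt$ into the SDE for $\ha w_0$ turns its drift into $\mu+\kappa\,\partial_{\ha w_0}\log M=\frac2{\ha w_0-\ha w_\infty}+\frac2{\ha w_0-\ha v_1}+\frac2{\ha w_0-\ha v_2}$, the $\til G$-terms cancelling term by term. Since $\ha w_\infty,\ha v_1,\ha v_2$ still solve their pathwise Loewner ODEs under $\mathbb Q$, this exhibits $(\ha w_0;\ha w_\infty,\ha v_1,\ha v_2)$ as the driving and force-point data of a chordal SLE$_\kappa(2,2,2)$ in $\HH$ from $w_0$ with force points $w_\infty,v_1,v_2$, stopped at $\tau$; as $\eta$ is determined by $\ha w_0$, this identifies the law of $\eta$ stopped at $\tau$ under $\mathbb Q$ and proves the proposition.

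The step I expect to be the genuine obstacle is the drift cancellation in the local-martingale verification: it is a somewhat lengthy It\^o bookkeeping in which the second-order contribution of $F(\zeta)$ must be traded against first-order terms through the hypergeometric ODE before the whole drift collapses under partial fractions. By contrast, once $M$ is known to be a local martingale, the Girsanov step reduces to the short computation displayed above, which works out cleanly precisely because $\til G$ was defined as $\kappa x\frac{F'(x)}{F(x)}+2$.
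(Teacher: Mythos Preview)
Your proposal is correct and follows essentially the same route as the paper: It\^o's formula on $\log M$ along the hSLE$_\kappa$ SDE of Proposition~\ref{Prop-iSLE-2}, with the hypergeometric ODE~(\ref{ODE-hyper}) and the identity $\til G(\zeta)=\kappa\zeta F'(\zeta)/F(\zeta)+2$ to kill the drift, then Girsanov to read off the SLE$_\kappa(2,2,2)$ drift. The paper's proof is a one-line sketch of exactly this; the only additional remark it makes is that the bookkeeping is a bit lighter if you run the argument backwards, tilting the SLE$_\kappa(2,2,2)$ law by $M^{-1}$ to recover hSLE$_\kappa$, since the SLE$_\kappa(2,2,2)$ drift is simpler to start from.
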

\begin{proof}
	This follows from   straightforward applications of It\^o's formula and Girsanov{'s} Theorem, where we use  (\ref{ODE-hyper}), Propositions \ref{Prop-cd-continuity'} and \ref{Prop-iSLE-2}. Actually, the calculation could be simpler if we tilt the law of a chordal SLE$_\kappa(2,2,2)$ curve by $M^{-1}$ to get an hSLE$_\kappa $ curve.
\end{proof}

\subsection{Two-parameter stochastic processes}
In this subsection we briefly recall the framework in \cite[Section 2.3]{Two-Green-interior}. {The framework will help us to study stochastic processes  defined on some random subset $\cal D$ of $\R_+^2$, where every element in $\cal D$ is understood as a two-dimensional random variable.}
We assign a partial order $\le$ to $\R_+^2=[0,\infty)^2$  such that $\ulin t=(t_+,t_-)\le(s_+,s_-)= \ulin s$ iff $t_+\le s_+$ and $t_-\le s_-$. It has a minimal element $\ulin 0=(0,0)$. We write $\ulin t<\ulin s$ if $t_+<s_+$ and $t_-<s_-$. We define $\ulin t\wedge \ulin s=(t_1\wedge s_1,t_2\wedge s_2)$. Given $\ulin t,\ulin s\in\R_+^2$, we define $[\ulin t,\ulin s]=\{\ulin r\in{\R_+^2}:\ulin t\le \ulin r\le \ulin s\}$. Let $\ulin e_+=(1,0)$ and $\ulin e_-=(0,1)$. So $(t_+,t_-)=t_+\ulin e_++ t_-\ulin e_-$.

	\begin{Definition}
An ${\R_+^2}$-indexed filtration $\F$ on a measurable space $\Omega$ is a family of  $\sigma$-algebras $\F_{\ulin t}$, $\ulin t\in\R_+^2$, on  $\Omega$ such that $\F_{\ulin t}\subset \F_{\ulin s}$ whenever $\ulin t\le \ulin s$. Define $\lin\F$ by $\lin\F_{\ulin t}=\bigcap_{\ulin s>\ulin t}\F_{\ulin s}$, $\ulin t\in\R_+^2$. Then we call $\lin\F$ the right-continuous augmentation of $\F$. We say that  $\F$ is right-continuous if $\lin\F=\F$. A process $X=(X({\ulin t}))_{\ulin t\in\R_+^2}$ defined on $\Omega$ is called $\F$-adapted if for any $\ulin t\in\R_+^2$, $X({\ulin t})$ is $\F_{\ulin t}$-measurable. It is called continuous if   $\ulin t\mapsto X({\ulin t})$ is sample-wise continuous.
	\end{Definition}

For the rest of this subsection, let $\F$ be an $\R_+^2$-indexed filtration with right-continuous augmentation $\lin\F$, and let $\F_{\ulin\infty}=\bigvee_{\ulin t\in\R_+^2} \F_{\ulin t}$.

\begin{Definition}
A $[0,\infty]^2$-valued random element $\ulin T$ is called an  $\F$-stopping time if for any deterministic $\ulin t\in{\R_+^2}$, $\{\ulin T\le \ulin t\}\in \F_{\ulin t}$. It is called finite if $\ulin T\in\R_+^2$, and is called bounded if there is a deterministic $\ulin t\in \R_+^2$ such that $\ulin T\le \ulin t$.
For an  $\F$-stopping time $\ulin T$, we define a new $\sigma$-algebra $\F_{\ulin T}$ by
$\F_{\ulin T}=\{A\in\F_{\ulin\infty}:A\cap \{\ulin T\leq \ulin t\}\in \F_{\ulin t}, \forall \ulin t\in{\R_+^2}\}$. 
\end{Definition}

The following proposition follows from a standard argument.

\begin{Proposition}
The right-continuous augmentation of $\lin\F$ is itself, and so $\lin\F $ is right-continuous. A $[0,\infty]^2$-valued random map $\ulin T$ is an $\lin\F$-stopping time if and only if $\{\ulin T<\ulin t\}\in\F_{\ulin t}$ for any $\ulin t\in\R_+^2$. For an $\lin\F$-stopping time $\ulin T$, $A\in\lin\F_{\ulin T}$ if and only if $A\cap\{\ulin T<\ulin t\}\in\F_{\ulin t}$ for any $\ulin t\in\R_+^2$. If $(\ulin T^n)_{n\in\N}$ is a decreasing sequence of  $\lin\F$-stopping times, then $\ulin T:=\inf_n \ulin T^n$ is also an $\lin\F$-stopping time, and $\lin\F_{\ulin T}=\bigcap_n \lin\F_{\ulin T^n}$.
  \label{right-continuous-0}
\end{Proposition}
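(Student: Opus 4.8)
The plan is to establish the four assertions in sequence, each reducing to the familiar one-parameter argument with the single extra care that ``approaching $\ulin t$ from strictly above'' must be interpreted simultaneously in both coordinates. The workhorse is the pair of identities, valid for any $[0,\infty]^2$-valued random map $\ulin T$ and any $\ulin t\in\R_+^2$,
\[ \{\ulin T\le \ulin t\}=\bigcap_{n\in\N}\Big\{\ulin T<\ulin t+\tfrac1n(\ulin e_++\ulin e_-)\Big\},\qquad \{\ulin T<\ulin t\}=\bigcup_{n\in\N}\Big\{\ulin T\le \ulin t-\tfrac1n(\ulin e_++\ulin e_-)\Big\}, \]
where the second identity is used only when both coordinates of $\ulin t$ are positive (if a coordinate vanishes then $\{\ulin T<\ulin t\}=\emptyset$ and there is nothing to check). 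Besides these I will use only two order facts in $(\R_+^2,\le)$: any $\ulin s>\ulin t$ dominates $\ulin t+\tfrac1n(\ulin e_++\ulin e_-)$ once $n$ is large, and between any $\ulin t<\ulin s'$ there is an $\ulin s$ with $\ulin t<\ulin s<\ulin s'$ (take the midpoint).

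First, for $\lin{\lin\F}=\lin\F$: the inclusion $\F_{\ulin s}\subset\lin\F_{\ulin s}$ (immediate from monotonicity of $\F$) gives $\lin\F_{\ulin t}\subset\bigcap_{\ulin s>\ulin t}\lin\F_{\ulin s}$; conversely, if $A$ lies in every $\lin\F_{\ulin s}$ with $\ulin s>\ulin t$ and $\ulin s'>\ulin t$ is given, inserting $\ulin s$ with $\ulin t<\ulin s<\ulin s'$ yields $A\in\lin\F_{\ulin s}=\bigcap_{\ulin u>\ulin s}\F_{\ulin u}\subset\F_{\ulin s'}$, and $\ulin s'$ was arbitrary. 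Next, for the $\lin\F$-stopping time characterization (recall $\ulin T$ is an $\lin\F$-stopping time iff $\{\ulin T\le\ulin t\}\in\lin\F_{\ulin t}$ for all $\ulin t$): if $\{\ulin T<\ulin t\}\in\F_{\ulin t}$ for all $\ulin t$, then for fixed $\ulin t$ and any $\ulin s>\ulin t$, picking $n_0$ with $\ulin t+\tfrac1{n_0}(\ulin e_++\ulin e_-)\le\ulin s$ and using the first identity (the intersection over $n\ge n_0$ suffices, the sets being decreasing) puts $\{\ulin T\le\ulin t\}$ into $\F_{\ulin s}$, hence into $\lin\F_{\ulin t}$; conversely, if $\ulin T$ is an $\lin\F$-stopping time, the second identity writes $\{\ulin T<\ulin t\}$ as a countable union of sets $\{\ulin T\le\ulin t-\tfrac1n(\ulin e_++\ulin e_-)\}\in\lin\F_{\ulin t-\frac1n(\ulin e_++\ulin e_-)}\subset\F_{\ulin t}$.

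The $\lin\F_{\ulin T}$-description is the same two computations run with $A\cap\{\ulin T\le\ulin t\}$ and $A\cap\{\ulin T<\ulin t\}$ replacing $\{\ulin T\le\ulin t\}$ and $\{\ulin T<\ulin t\}$, keeping $A\in\F_{\ulin\infty}$ along as a standing hypothesis; the definition $\lin\F_{\ulin T}=\{A\in\F_{\ulin\infty}:A\cap\{\ulin T\le\ulin t\}\in\lin\F_{\ulin t},\,\forall\ulin t\}$ together with the just-proved characterization of $\lin\F$-stopping times makes both directions mechanical. For a decreasing sequence $(\ulin T^n)$, since the order on $\R_+^2$ is coordinatewise one has $\{\ulin T<\ulin t\}=\bigcup_n\{\ulin T^n<\ulin t\}$ with $\ulin T=\inf_n\ulin T^n$; combining this with the stopping-time characterization shows $\ulin T$ is an $\lin\F$-stopping time. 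Then $\lin\F_{\ulin T}\subset\bigcap_n\lin\F_{\ulin T^n}$ follows from $\ulin T\le\ulin T^n$ by intersecting $A\cap\{\ulin T\le\ulin t\}\in\lin\F_{\ulin t}$ with $\{\ulin T^n\le\ulin t\}\in\lin\F_{\ulin t}$, while the reverse inclusion follows by intersecting $\{\ulin T<\ulin t\}=\bigcup_n\{\ulin T^n<\ulin t\}$ with $A$ and applying the $\lin\F_{\ulin T^n}$-description term by term.

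I expect no serious obstacle: the only thing demanding attention is the bookkeeping of the two coordinates, since every squeezing argument must sandwich $\ulin t$ between strictly larger or smaller lattice points in \emph{both} entries, and one must separately dismiss the degenerate $\ulin t$ with a zero coordinate and, when verifying $\F_{\ulin\infty}$-measurability, the possibility that $\ulin T$ has an infinite coordinate. Nothing beyond the classical one-parameter theory is required.
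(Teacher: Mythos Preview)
Your proposal is correct and supplies exactly what the paper omits: the paper gives no proof at all, stating only that the proposition ``follows from a standard argument.'' Your write-up is that standard argument, carried out with the appropriate two-parameter care (approaching $\ulin t$ strictly from above in both coordinates, and using that a decreasing sequence in $(\R_+^2,\le)$ eventually drops below $\ulin t$ in both coordinates once each does separately). The flagged edge cases---$\ulin t$ with a vanishing coordinate, and the $\F_{\ulin\infty}$-membership when $\ulin T$ may have an infinite coordinate---are the only places requiring comment, and you have identified them.
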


	\begin{Definition}
		A relatively open subset $\cal R$ of $\R_+^2$ is called a history complete region, or simply an HC region, if for any $\ulin t\in \cal R$, we have $[\ulin 0, \ulin t]\subset\cal R$. {We use the name because we view (the rectangle) $[\ulin 0, \ulin t]$ as the history of $\ulin t$.} Given an HC region $\cal R$, for $\sigma\in\{+,-\}$, define   $T^{\cal R}_\sigma:\R_+\to\R_+\cup\{\infty\}$ by $T^{\cal R}_\sigma(t)=\sup\{s\ge 0:s \ulin e_\sigma+t\ulin e_{-\sigma}\in{\cal R}\}$, where we set $\sup\emptyset =0$. {See Figure \ref{fig-HC} for an illustration}

An HC region-valued random element $\cal D$ is called an $\F$-stopping region if for any $\ulin t\in\R_+^2$, $\{\omega\in\Omega:\ulin t\in {\cal D}(\omega)\}\in\F_{ \ulin t}$. A random function $X({\ulin t})$ with a random domain $\cal D$ is called an $\F$-adapted HC process if $\cal D$ is an $\F$-stopping region, and for every  $\ulin t\in\R_+^2$, $X_{\ulin t}$ restricted to $\{\ulin t\in\cal D\}$ is $\F_{ \ulin t}$-measurable. \label{Def-HC}
	\end{Definition}

\begin{figure}
	\begin{center}
		\includegraphics[width=3.5in]{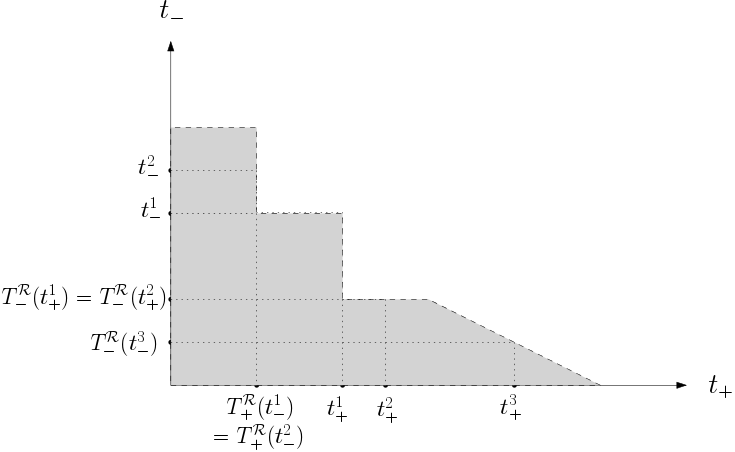}
	\end{center}
	\caption{{ The figure above illustrates an HC region $\cal R$ (grey), the function $T^{\cal R}_+$ valued at $t_-^1,t_-^2$, and the function $T^{\cal R}_-$ valued at $t_+^1,t_+^2,t_+^3$.}} \label{fig-HC}
\end{figure}

The following propositions are
 \cite[Lemmas 2.7 and  2.9]{Two-Green-interior}.
	
	\begin{Proposition}
		Let $\ulin T$ and $\ulin S$ be two $\F$-stopping times. Then (i)  $\{\ulin T\le \ulin S\} \in\F_{\ulin S}$; (ii) if $\ulin S$ is a constant $\ulin s\in\R_+^2$, then $\{\ulin T\le \ulin S\} \in\F_{ \ulin T}$; and (iii) if $f$ is an $\F_{ \ulin T}$-measurable function, then ${\bf 1}_{\{\ulin T\le \ulin S\}}f$ is $\F_{ \ulin S}$-measurable. In particular, if $\ulin T\le \ulin S$, then $\F_{ \ulin T}\subset \F_{ \ulin S}$. \label{T<S}
	\end{Proposition}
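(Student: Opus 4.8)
The statement to prove is Proposition~\ref{T<S}, which collects three standard but slightly technical facts about $\R_+^2$-indexed stopping times. The plan is to reduce everything to the defining property $\{\ulin T\le \ulin t\}\in\F_{\ulin t}$ together with the lattice structure of $\R_+^2$, mimicking the one-parameter proofs but being careful that two coordinates must be controlled simultaneously.

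\textbf{Part (i).} I would start by showing $\{\ulin T\le\ulin S\}\cap\{\ulin S\le\ulin t\}\in\F_{\ulin t}$ for every deterministic $\ulin t\in\R_+^2$, which is exactly what membership in $\F_{\ulin S}$ requires. The first idea is to write $\{\ulin T\le\ulin S\le\ulin t\}=\bigcup_{\ulin q}\{\ulin T\le\ulin q\le\ulin S\le\ulin t\}$ where $\ulin q$ ranges over $\Q_+^2\cap[\ulin 0,\ulin t]$ (a countable set), plus a limiting argument to capture the boundary case $\ulin T=\ulin S$; more cleanly, one can use that $\{\ulin T\le\ulin S\}\cap\{\ulin S\le\ulin t\}=\bigcap_{n}\bigcup_{\ulin q\in\Q_+^2\cap[\ulin 0,\ulin t]}\big(\{\ulin T\le\ulin q\}\cap\{\ulin q\le\ulin S\le\ulin t\}\big)$ with $\ulin q$ allowed to overshoot each coordinate of $\ulin T$ by at most $1/n$. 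Each piece $\{\ulin T\le\ulin q\}\in\F_{\ulin q}\subset\F_{\ulin t}$ and $\{\ulin q\le\ulin S\le\ulin t\}=\{\ulin S\le\ulin t\}\setminus\bigcup_{\sigma}\{\ulin S\le\ulin t-\eps\ulin e_\sigma\}$-type manipulations keep us inside $\F_{\ulin t}$ using that $\ulin S$ is an $\F$-stopping time; countable unions and intersections preserve this. Hence the whole set lies in $\F_{\ulin t}$, giving (i).

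\textbf{Part (ii).} When $\ulin S\equiv\ulin s$ is constant, membership in $\F_{\ulin T}$ requires $\{\ulin T\le\ulin s\}\cap\{\ulin T\le\ulin t\}\in\F_{\ulin t}$ for all deterministic $\ulin t$. But $\{\ulin T\le\ulin s\}\cap\{\ulin T\le\ulin t\}=\{\ulin T\le\ulin s\wedge\ulin t\}$, and $\ulin s\wedge\ulin t\le\ulin t$, so this set is in $\F_{\ulin s\wedge\ulin t}\subset\F_{\ulin t}$ directly from the definition of an $\F$-stopping time and monotonicity of $\F$. That finishes (ii).

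\textbf{Part (iii) and the final claim.} For (iii), it suffices by a monotone-class / standard-machine argument to treat $f={\bf 1}_A$ with $A\in\F_{\ulin T}$; then ${\bf 1}_{\{\ulin T\le\ulin S\}}f={\bf 1}_{A\cap\{\ulin T\le\ulin S\}}$, so I must show $A\cap\{\ulin T\le\ulin S\}\in\F_{\ulin S}$, i.e.\ $A\cap\{\ulin T\le\ulin S\}\cap\{\ulin S\le\ulin t\}\in\F_{\ulin t}$ for all $\ulin t$. Write $A\cap\{\ulin T\le\ulin S\le\ulin t\}$ using the same countable decomposition over rationals $\ulin q\in[\ulin 0,\ulin t]$ as in (i): on the event $\{\ulin T\le\ulin q\}$ we have $A\cap\{\ulin T\le\ulin q\}\in\F_{\ulin q}\subset\F_{\ulin t}$ by definition of $\F_{\ulin T}$, and $\{\ulin q\le\ulin S\le\ulin t\}\in\F_{\ulin t}$ since $\ulin S$ is a stopping time; intersecting and taking countable unions/intersections stays in $\F_{\ulin t}$. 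For general bounded measurable $f$ on $\F_{\ulin T}$ pass to simple functions and take limits, and for the "in particular" clause apply (iii) with $\ulin S$ replaced by a situation where $\ulin T\le\ulin S$ identically, or simply note that $A\in\F_{\ulin T}$ and $\ulin T\le\ulin S$ give $A\cap\{\ulin S\le\ulin t\}=A\cap\{\ulin T\le\ulin S\le\ulin t\}\in\F_{\ulin t}$ by the same decomposition, so $A\in\F_{\ulin S}$.

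\textbf{Main obstacle.} The one genuinely delicate point is the boundary case of non-strict inequalities: $\{\ulin T\le\ulin S\}$ is not open in the pair $(\ulin T,\ulin S)$, so one cannot simply insert a single rational strictly between them. The fix is the countable-intersection-over-$n$ device (approximating $\ulin T\le\ulin S$ by $\ulin T\le\ulin q$ with $\ulin q\le\ulin S+\tfrac1n(\ulin e_++\ulin e_-)$ and $\ulin q\le\ulin S$ coordinatewise after the limit), exploiting that $\ulin S$ being an $\F$-stopping time lets us express events like $\{\ulin q\le\ulin S\le\ulin t\}$ through the family $\{\ulin S\le\cdot\}$; everything else is bookkeeping with the product order on $\R_+^2$ and monotonicity of $\F$. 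I would also remark that these arguments use only the defining properties of $\F$-stopping times and not right-continuity, so they apply verbatim to $\lin\F$ as needed later.
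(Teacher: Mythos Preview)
The paper does not give its own proof of this proposition: it simply cites \cite[Lemmas 2.7 and 2.9]{Two-Green-interior}. So there is no in-paper argument to compare against.

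Your approach is the standard one and is correct in substance, though your write-up of part (i) is loose. The displayed identity $\{\ulin T\le\ulin S\}\cap\{\ulin S\le\ulin t\}=\bigcap_{n}\bigcup_{\ulin q}\big(\{\ulin T\le\ulin q\}\cap\{\ulin q\le\ulin S\le\ulin t\}\big)$ is not literally true, and the ``$\{\ulin S\le\ulin t\}\setminus\bigcup_{\sigma}\{\ulin S\le\ulin t-\eps\ulin e_\sigma\}$-type'' formula does not produce $\{\ulin q\le\ulin S\le\ulin t\}$ as stated. A cleaner route: first intersect with $\{\ulin T\le\ulin t\}$ (harmless, since $\ulin T\le\ulin S\le\ulin t$ there), then work coordinatewise via the complement $\{T_\sigma>S_\sigma\}=\bigcup_{q\in\Q}\{T_\sigma>q>S_\sigma\}$, observing that on $\{\ulin T\le\ulin t\}\cap\{\ulin S\le\ulin t\}$ one has $\{T_\sigma>q\}=\{\ulin T\le\ulin t\}\setminus\{\ulin T\le\ulin t-(t_\sigma-q)\ulin e_\sigma\}\in\F_{\ulin t}$ and similarly for $\{S_\sigma<q\}$. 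Part (ii) is fine as you wrote it. For (iii) you need not repeat the rational decomposition: once (i) is established, $A\cap\{\ulin T\le\ulin S\}\cap\{\ulin S\le\ulin t\}=(A\cap\{\ulin T\le\ulin t\})\cap(\{\ulin T\le\ulin S\}\cap\{\ulin S\le\ulin t\})$ is the intersection of two sets already known to lie in $\F_{\ulin t}$.
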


We will need the following proposition to do localization. The reader should note that for an $\F$-stopping time $\ulin T$ and a deterministic time $\ulin t\in\R_+^2$, $\ulin T\wedge \ulin t$ may not be an $\F$-stopping time. This is the reason why we introduce a more complicated stopping time.

\begin{Proposition}
	Let $\ulin T$ be an $\F$-stopping time. Fix a deterministic time $\ulin t\in\R_+^2$. Define $\ulin T^{\ulin t}$ such that if $\ulin T\le \ulin t$, then $\ulin T^{\ulin t}=\ulin T$; and if $\ulin T\not\le \ulin t$, then $\ulin T^{\ulin t}=\ulin t$. Then $\ulin T^{\ulin t}$ is an $\F$-stopping time bounded above by $\ulin t$, and $\F_{\ulin T^{\ulin t}}$ agrees with $\F_{\ulin T}$ on $\{\ulin T\le \ulin t\}$, i.e., $\{\ulin T\le \ulin t\}\in \F_{\ulin T^{\ulin t}}\cap \F_{\ulin T}$, and for any $A\subset  \{\ulin T\le \ulin t\}$, $A\in \F_{\ulin T^{\ulin t}}$ if and only if  $A\in \F_{\ulin T}$.  \label{prop-local}
\end{Proposition}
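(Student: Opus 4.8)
The plan is to reduce the three assertions to the definitions of an $\F$-stopping time and of $\F_{\ulin T}$, using the single elementary identity that, directly from the case definition of $\ulin T^{\ulin t}$,
$$\{\ulin T^{\ulin t}\le\ulin s\}=\{\ulin T\le\ulin s\wedge\ulin t\}\ \cup\ \bigl(\{\ulin t\le\ulin s\}\cap\{\ulin T\not\le\ulin t\}\bigr)\qquad\text{for every deterministic }\ulin s\in\R_+^2$$
(both sets on the right being $\F_{\ulin\infty}$-measurable since $\{\ulin T\le\ulin t\}\in\F_{\ulin t}$), together with the trivial observation that $\ulin T^{\ulin t}\le\ulin t$ always, so that $\ulin T^{\ulin t}$ is automatically bounded above by $\ulin t$. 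To see that $\ulin T^{\ulin t}$ is an $\F$-stopping time, I would fix a deterministic $\ulin s$: the first set on the right of the display lies in $\F_{\ulin s\wedge\ulin t}\subset\F_{\ulin s}$ because $\ulin T$ is an $\F$-stopping time and $\ulin s\wedge\ulin t$ is deterministic, while the second set is empty when $\ulin t\not\le\ulin s$ and equals $\{\ulin T\not\le\ulin t\}\in\F_{\ulin t}\subset\F_{\ulin s}$ when $\ulin t\le\ulin s$; hence $\{\ulin T^{\ulin t}\le\ulin s\}\in\F_{\ulin s}$ in all cases.

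For the second assertion, $\{\ulin T\le\ulin t\}\in\F_{\ulin T}$ is exactly Proposition~\ref{T<S}(ii) applied with the constant stopping time $\ulin S\equiv\ulin t$, and $\{\ulin T\le\ulin t\}\in\F_{\ulin T^{\ulin t}}$ follows because, intersecting the displayed identity with $\{\ulin T\le\ulin t\}$, the second term drops out and one is left with $\{\ulin T\le\ulin t\}\cap\{\ulin T^{\ulin t}\le\ulin s\}=\{\ulin T\le\ulin s\wedge\ulin t\}\in\F_{\ulin s\wedge\ulin t}\subset\F_{\ulin s}$ for each deterministic $\ulin s$. For the third assertion, let $A\subset\{\ulin T\le\ulin t\}$. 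The crucial point is that intersecting with such an $A$ collapses the identity entirely: since $A\cap\{\ulin T\not\le\ulin t\}=\emptyset$ and $\ulin T\le\ulin t$ on $A$, one gets $A\cap\{\ulin T^{\ulin t}\le\ulin s\}=A\cap\{\ulin T\le\ulin s\wedge\ulin t\}=A\cap\{\ulin T\le\ulin s\}$ for every deterministic $\ulin s$. Thus the families of conditions ``$A\cap\{\ulin T^{\ulin t}\le\ulin s\}\in\F_{\ulin s}$ for all $\ulin s$'' and ``$A\cap\{\ulin T\le\ulin s\}\in\F_{\ulin s}$ for all $\ulin s$'' are term-by-term identical, and since the remaining requirement $A\in\F_{\ulin\infty}$ is common to both, we conclude $A\in\F_{\ulin T^{\ulin t}}\iff A\in\F_{\ulin T}$.

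I do not expect a genuine obstacle here: the proposition is a bookkeeping statement, and the only thing that requires care is precisely the pitfall noted just before it, namely that $\ulin T\wedge\ulin t$ need not be an $\F$-stopping time, so one cannot simply quote a one-parameter localization lemma. The two-regime definition of $\ulin T^{\ulin t}$ (equal to $\ulin T$ where $\ulin T\le\ulin t$, and to the \emph{deterministic} point $\ulin t$ elsewhere) is exactly what makes the events above measurable, and throughout one should keep track of whether a given event is being measured against $\F_{\ulin s}$, $\F_{\ulin t}$, or $\F_{\ulin s\wedge\ulin t}$, invoking the monotonicity $\F_{\ulin r}\subset\F_{\ulin s}$ for $\ulin r\le\ulin s$ at each step.
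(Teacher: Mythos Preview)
Your proposal is correct and takes essentially the same approach as the paper. The only organizational difference is that you package the case analysis $\ulin t\le\ulin s$ versus $\ulin t\not\le\ulin s$ into a single identity at the outset, whereas the paper performs that case split explicitly at each step; both arguments reduce to the same elementary measurability checks.
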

\begin{proof}
	Clearly $\ulin T^{\ulin t}\le \ulin t$. Let $\ulin s\in\R_+^2$. If $\ulin t\le \ulin s$, then $\{\ulin T^{\ulin t}\le \ulin s\}$ is the whole space. If $\ulin t\not\le \ulin s$, then $\{\ulin T^{\ulin t}\le \ulin s\}=\{\ulin T\le \ulin t\}\cap \{\ulin T\le \ulin s\}=\{\ulin T\le \ulin t\wedge \ulin s\}\in \F_{ \ulin t\wedge \ulin s}\subset \F_{ \ulin s}$. So $\ulin T^{\ulin t}$ is an $\F$-stopping time.

By Proposition \ref{T<S}, $\{\ulin T\le \ulin t\}\in \F_{\ulin T}$. Suppose $A\subset  \{\ulin T\le \ulin t\}$ and $A\in  \F_{\ulin T}$.  Let $\ulin s\in\R_+^2$. If $\ulin t\le \ulin s$, then   $A\cap \{\ulin T^{\ulin t}\le \ulin s\}=A=A\cap \{\ulin T\le \ulin t\} \in\F_{\ulin t}\subset \F_{\ulin s}$. If $\ulin t\not\le \ulin s$, then $A\cap \{\ulin T^{\ulin t}\le \ulin s\}= A\cap\{\ulin T\le \ulin t\wedge \ulin s\}\in \F_{\ulin t\wedge \ulin s}\subset \F_{\ulin s}$.
So $A\in\F_{\ulin T^{\ulin t}}$. In particular, $\{\ulin T\le \ulin t\}\in\F_{\ulin T^{\ulin t}}$. On the other hand, suppose  $A\subset  \{\ulin T\le \ulin t\}$ and $A\in  \F_{\ulin T^{\ulin t}}$. Let $\ulin s\in\R_+^2$. If $\ulin t\le \ulin s$, then $A\cap \{\ulin T\le \ulin s\}=A=A\cap  \{\ulin T^{\ulin t}\le \ulin t\}\in\F_{\ulin t}\subset\F_{\ulin s}$. If $\ulin t\not\le \ulin s$, then $A\cap \{\ulin T\le \ulin s\}=A\cap \{\ulin T\le \ulin t\}\cap \{\ulin T\le \ulin s\}=A\cap\{\ulin T^{\ulin t}\le \ulin s\}\in\F_{\ulin s}$. Thus, $A\in \F_{\ulin T}$. So for $A\subset  \{\ulin T\le \ulin t\}$, $A\in \F_{\ulin T^{\ulin t}}$ if and only if  $A\in \F_{\ulin T}$.
\end{proof}

Now we fix a probability measure $\PP$,  and let $\EE$ denote the corresponding expectation.
	
	\begin{Definition}
	An  $\F$-adapted process $(X_{\ulin t})_{\ulin t\in\R_+^2}$ is called an  $\F$-martingale (w.r.t.\ $ \PP$) if  for any $\ulin s\le \ulin t\in\R_+^2$, a.s.\ $\EE[X_{\ulin t}|\F_{\ulin s}]=X_{\ulin s}$. If there is $\zeta\in L^1(\Omega,\F,\PP)$ such that $X_{\ulin t}=\EE[\zeta|\F_{ \ulin t}]$ for every $\ulin t\in\R_+^2$, then we say that $X$ is an $\F$-martingale closed by $\zeta$.
	\end{Definition}

The following proposition is \cite[Lemma 2.11]{Two-Green-interior}.
		
	\begin{Proposition} [Optional Stopping Theorem]
		Suppose $X$ is a continuous $\F$-martingale.
The following are true. (i) If $X$ is closed by $\zeta$, then for any finite $\F$-stopping time $\ulin T$, $X_{\ulin T}=\EE[\zeta|\F_{ \ulin T}]$. (ii) If $\ulin T\le \ulin S$ are two bounded $\F$-stopping times, then $\EE[X_{\ulin S}|\F_{\ulin T}]=X_{\ulin T}$.\label{OST}
	\end{Proposition}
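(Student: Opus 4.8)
The plan is to prove part (i) by a two-step discretization of the stopping time, and then to deduce part (ii) from it. It is convenient to assume $\F$ is right-continuous; this is harmless, because a continuous $\F$-martingale remains an $\lin\F$-martingale (integrate the identity $\EE[X_{\ulin t+\ulin\eps}|\F_{\ulin s+\ulin\eps}]=X_{\ulin s+\ulin\eps}$ against an $\lin\F_{\ulin s}$-set, let $\ulin\eps\downarrow\ulin 0$, and use sample-wise continuity together with uniform integrability of $X$ on compact rectangles), every $\F$-stopping time is an $\lin\F$-stopping time, and $\F_{\ulin T}\subseteq\lin\F_{\ulin T}$; so I would replace $\F$ by $\lin\F$.

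\emph{Part (i).} Suppose first $X$ is closed by $\zeta$ and $\ulin T$ is a finite $\F$-stopping time taking values in a countable grid $\{\ulin t_k\}_k$ of spacing $\delta$. Then $\{\ulin T=\ulin t_k\}=\{\ulin T\le \ulin t_k\}\setminus(\{\ulin T\le\ulin t_k-\delta\ulin e_+\}\cup\{\ulin T\le\ulin t_k-\delta\ulin e_-\})\in\F_{\ulin t_k}$, hence $A\cap\{\ulin T=\ulin t_k\}\in\F_{\ulin t_k}$ for $A\in\F_{\ulin T}$, and
\begin{align*}
\EE[{\bf 1}_AX_{\ulin T}]&=\sum_k\EE\big[{\bf 1}_{A\cap\{\ulin T=\ulin t_k\}}X_{\ulin t_k}\big]=\sum_k\EE\big[{\bf 1}_{A\cap\{\ulin T=\ulin t_k\}}\EE[\zeta|\F_{\ulin t_k}]\big]\\
&=\sum_k\EE\big[{\bf 1}_{A\cap\{\ulin T=\ulin t_k\}}\zeta\big]=\EE[{\bf 1}_A\zeta],
\end{align*}
the interchange being justified by $\sum_k\EE[{\bf 1}_{\{\ulin T=\ulin t_k\}}|X_{\ulin t_k}|]\le\EE[|\zeta|]<\infty$; since $X_{\ulin T}=\sum_k{\bf 1}_{\{\ulin T=\ulin t_k\}}X_{\ulin t_k}$ is $\F_{\ulin T}$-measurable, this yields $X_{\ulin T}=\EE[\zeta|\F_{\ulin T}]$ (note that continuity of $X$ is not used so far). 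For a general finite $\ulin T$, I would take the dyadic round-ups $\ulin T^n=2^{-n}(\lceil 2^nT_+\rceil,\lceil 2^nT_-\rceil)$. Each $\ulin T^n$ is a countably-valued $\F$-stopping time, because $\{\ulin T^n\le\ulin t\}=\{\ulin T\le\ulin t^{(n)}\}\in\F_{\ulin t^{(n)}}\subseteq\F_{\ulin t}$ for $\ulin t^{(n)}$ the largest grid point below $\ulin t$; moreover $\ulin T^n\downarrow\ulin T$, so by Proposition \ref{right-continuous-0} applied to the decreasing sequence $\ulin T^n$ one has $\bigcap_n\F_{\ulin T^n}=\F_{\ulin T}$. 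Thus $X_{\ulin T^n}=\EE[\zeta|\F_{\ulin T^n}]$ is a backward martingale, converging a.s.\ and in $L^1$ to $\EE[\zeta|\bigcap_n\F_{\ulin T^n}]=\EE[\zeta|\F_{\ulin T}]$, while $X_{\ulin T^n}\to X_{\ulin T}$ a.s.\ by sample-wise continuity of $X$; hence $X_{\ulin T}=\EE[\zeta|\F_{\ulin T}]$.

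\emph{Part (ii).} Let $\ulin T\le\ulin S\le\ulin s_0$ be bounded $\F$-stopping times, put $\ulin s_1=\ulin s_0+(1,1)$, and set $\hat X_{\ulin t}=\EE[X_{\ulin s_1}|\F_{\ulin t}]$, an $\F$-martingale closed by $X_{\ulin s_1}\in L^1$ which agrees with $X$ on $[\ulin 0,\ulin s_1]$ by the martingale property. The dyadic round-ups satisfy $\ulin T^n\le\ulin S^n$ (rounding up is monotone), $\ulin T^n\downarrow\ulin T$, $\ulin S^n\downarrow\ulin S$, and take values in $[\ulin 0,\ulin s_1]$. By the countably-valued case of Part (i) applied to $\hat X$, $X_{\ulin T^n}=\hat X_{\ulin T^n}=\EE[X_{\ulin s_1}|\F_{\ulin T^n}]$ and $X_{\ulin S^n}=\hat X_{\ulin S^n}=\EE[X_{\ulin s_1}|\F_{\ulin S^n}]$; since $\ulin T^n\le\ulin S^n$ gives $\F_{\ulin T^n}\subseteq\F_{\ulin S^n}$ (Proposition \ref{T<S}), the tower property yields $\EE[X_{\ulin S^n}|\F_{\ulin T^n}]=X_{\ulin T^n}$, i.e.\ $\EE[{\bf 1}_AX_{\ulin S^n}]=\EE[{\bf 1}_AX_{\ulin T^n}]$ for every $A\in\F_{\ulin T}\subseteq\F_{\ulin T^n}$. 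Letting $n\to\infty$, continuity of $X$ and uniform integrability of $X$ on $[\ulin 0,\ulin s_1]$ give $\EE[{\bf 1}_AX_{\ulin S}]=\EE[{\bf 1}_AX_{\ulin T}]$ for all $A\in\F_{\ulin T}$; and since $X_{\ulin T}=\lim_nX_{\ulin T^n}$ is $\bigcap_n\F_{\ulin T^n}=\F_{\ulin T}$-measurable, this is precisely $\EE[X_{\ulin S}|\F_{\ulin T}]=X_{\ulin T}$.

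The content here is just the closed-martingale and backward-martingale machinery; the one genuine difficulty is the two-parameter bookkeeping. Unlike the one-parameter case there is no telescoping sum of martingale increments to exploit, and (as Proposition \ref{prop-local} emphasizes) $\ulin T\wedge\ulin t$ need not even be a stopping time, so I am forced to decompose on the value of the discretized stopping time and appeal to the tower property value by value; and I must make sure the round-ups $\ulin T^n$ genuinely regenerate $\F_{\ulin T}$ in the limit, which is exactly where right-continuity (through $\lin\F$ and Proposition \ref{right-continuous-0}) enters.
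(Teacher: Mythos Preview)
The paper does not prove this proposition; it simply records it as \cite[Lemma 2.11]{Two-Green-interior}. Your argument supplies a complete and essentially correct proof along the standard one-parameter template: handle grid-valued stopping times by decomposing on $\{\ulin T=\ulin t_k\}$, approximate a general finite stopping time by dyadic round-ups, and invoke backward martingale convergence, using Proposition~\ref{right-continuous-0} to identify $\bigcap_n\lin\F_{\ulin T^n}$ with $\lin\F_{\ulin T}$.

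One small point about your reduction from $\F$ to $\lin\F$: working over $\lin\F$ yields $X_{\ulin T}=\EE[\zeta\mid\lin\F_{\ulin T}]$ and $\EE[X_{\ulin S}\mid\lin\F_{\ulin T}]=X_{\ulin T}$, but to recover the statement for the original $\F_{\ulin T}$ you still need $X_{\ulin T}$ to be $\F_{\ulin T}$-measurable for the \emph{original} (possibly non-right-continuous) filtration. The inclusion $\F_{\ulin T}\subseteq\lin\F_{\ulin T}$ you cite goes the wrong way for this, and your identity $\bigcap_n\lin\F_{\ulin T^n}=\lin\F_{\ulin T}$ only delivers $\lin\F_{\ulin T}$-measurability. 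The missing ingredient is the separate fact that a continuous $\F$-adapted process evaluated at an $\F$-stopping time is $\F_{\ulin T}$-measurable (this is also in \cite{Two-Green-interior}); once that is invoked, conditioning the $\lin\F_{\ulin T}$-statement down to $\F_{\ulin T}$ completes the argument.
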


\subsection{Jacobi polynomials}
For $\alpha,\beta>-1$,  Jacobi polynomials (\cite[Chapter 18]{NIST:DLMF}) $P^{(\alpha,\beta)}_n(x)$, $n=0,1,2,3,\dots$, are a class of classical orthogonal polynomials with respect to the weight $\Psi^{(\alpha,\beta)}(x):={\bf 1}_{(-1,1)}(1-x)^\alpha(1+x)^\beta$. This means that each $P^{(\alpha,\beta)}_n(x)$ is a polynomial of degree $n$, and for the inner product defined by $\langle f,g\rangle_{\Psi^{(\alpha,\beta)}}:=\int_{-1}^1 f(x)g(x)\Psi^{(\alpha,\beta)}(x)dx$, we have $\langle P^{(\alpha,\beta)}_n, P^{(\alpha,\beta)}_m\rangle_{\Psi^{(\alpha,\beta)}}=0$ when $n\ne m$. The normalization is that $P^{(\alpha,\beta)}_n(1)=\frac{\Gamma(\alpha+n+1)}{n!\Gamma(\alpha+1)}$,  $P^{(\alpha,\beta)}_n(-1)=(-1)^n\frac{\Gamma(\beta+n+1)}{n!\Gamma(\beta+1)}$, and
\BGE\|  P^{(\alpha,\beta)}_n\|_{\Psi^{(\alpha,\beta)}}^2 =\frac{2^{\alpha+\beta+1}}{2n+\alpha+\beta+1}\cdot \frac{\Gamma(n+\alpha+1)\Gamma(n+\beta+1)} {n!\Gamma(n+\alpha+\beta+1)}.\label{norm}\EDE
For each $n\ge 0$,   $P^{(\alpha,\beta)}_n(x)$ is a solution of the second order differential equation:
\BGE (1-x^2)y''-[(\alpha+\beta+2)x+(\alpha-\beta)]y'+n(n+\alpha+\beta+1)y=0.\label{Jacobi-ODE}\EDE
When $\max\{\alpha,\beta\}>-\frac 12$, we have an exact value of the supernorm of $P^{(\alpha,\beta)}_n$ over $[-1,1]$:
\BGE \|P^{(\alpha,\beta)}_n\|_\infty= \max\{|P^{(\alpha,\beta)}_n(1)|,|P^{(\alpha,\beta)}_n(-1)|\} =\frac{\Gamma(\max\{\alpha,\beta\}+n+1)}{n!\Gamma(\max\{\alpha,\beta\}+1)}.\label{super-exact}\EDE
For  general $\alpha,\beta>-1$, we get an upper bound {on} $\|P^{(\alpha,\beta)}_n\|_\infty$ using (\ref{super-exact}), the exact value of  $P^{(\alpha,\beta)}_n(1)$, and the derivative formula $\frac d{dx} P^{(\alpha,\beta)}_n(x)=\frac{ \alpha+\beta+n+1}{2} P^{(\alpha+1,\beta+1)}_{n-1}(x)$ for $n\ge 1$:
\BGE \|P^{(\alpha,\beta)}_n\|_\infty\le \frac{\Gamma(\alpha+n+1)}{n!\Gamma(\alpha+1)}+   ({ \alpha+\beta+n+1} )\cdot \frac{\Gamma(\max\{\alpha,\beta\}+n+1)}{\Gamma(n)\Gamma(\max\{\alpha,\beta\}+2)}.\label{super-upper}\EDE

\section{Deterministic Ensemble of Two Chordal Loewner Curves}  \label{section-deterministic}
In this section, we develop a framework about commuting pairs of deterministic chordal Loewner curves, which will be needed to study the commuting pairs of random chordal Loewner curves in the next two sections. The major length of this section is caused by the fact that we allow the two Loewner curves {to} have intersections. This is needed in order to handle the case $\kappa\in(4,8)$. The ensemble without intersections appeared earlier in \cite{reversibility,duality}.

\subsection{Ensemble with possible intersections}  \label{section-deterministic1}
Let $w_-<w_+\in\R$. Suppose for $\sigma\in\{+,-\}$, $\eta_\sigma(t)$, $0\le t<T_\sigma$, is a chordal Loewner curve (with speed $1$) driven by $\ha w_\sigma$   started from $w_\sigma$, such that $\eta_+$ does not hit $(-\infty, w_-]$, and $\eta_-$ does not hit $[w_+,\infty)$. Let   $K_\sigma(t_\sigma)=\Hull(\eta[0,t_\sigma])$, $0\le t_\sigma<T_\sigma$, $\sigma\in\{+,-\}$. Then $K_\sigma(\cdot)$ are chordal Loewner hulls driven by $\ha w_\sigma$, $\hcap_2(K_\sigma(t_\sigma))=t_\sigma$, and by Proposition \ref{Loewner-chain},
\BGE \{\ha w_\sigma(t_\sigma)\}=\bigcap_{\del>0} \lin{K_\sigma(t_\sigma+\del)/K_\sigma(t_\sigma)},\quad 0\le t_\sigma<T_\sigma.\label{haw=}\EDE
The corresponding chordal Loewner maps are $g_{K_\sigma(t)}$, $0\le t<T_\sigma$, $\sigma\in\{+,-\}$. From the assumption on $\eta_+$ and $\eta_-$ we get
\BGE a_{K_-(t_-)}\le w_-< a_{K_+(t_+)},\quad b_{K_-(t_-)}< w_+\le b_{K_+(t_+)},\quad\mbox{for } t_\sigma\in(0,T_\sigma),\quad \sigma\in\{+,-\}. \label{lem-aabb}\EDE
Since each $K_\sigma(t)$ is generated by a curve, $f_{K_\sigma(t)}$ is well defined.
Let ${\cal I}_\sigma =[0,T_\sigma)$, $\sigma\in\{+,-\}$, and for $\ulin t=(t_+,t_-)\in\I_+\times\I_-$, define
\BGE K(\ulin t)=\Hull(\eta_+[0,t_+]\cup \eta_-[0,t_-]),\quad \mA(\ulin t)=\hcap_2(K(\ulin t)),\quad H(\ulin t)=\HH\sem K(\ulin t).\label{KmA}\EDE It is obvious that $K(\cdot,\cdot)$ and $\mA(\cdot,\cdot)$ are increasing ({although not necessarily strictly}) in both variables. Since $\pa K(t_+,t_-)$ is locally connected, $f_{K(t_+,t_-)}$ is well defined.  For $\sigma\in\{+,-\}$, $t_{-\sigma}\in\I_{-\sigma}$ and $t_\sigma\in\I_\sigma$, define $K_{\sigma}^{t_{-\sigma}}(t_\sigma)=K(t_+,t_-)/K_{-\sigma}(t_{-\sigma})$. Then we have
\BGE g_{K(t_+,t_-)}=g_{K_{+}^{t_-}(t_+)}\circ g_{K_-(t_-)}=g_{K_{-}^{t_+}(t_-)}\circ g_{K_+(t_+)}.\label{circ-g}\EDE
By (\ref{lem-aabb}) and the assumption on $\eta_+,\eta_-$, we have $a_{K(t_+,t_-)}=a_{K_-(t_-)}$ if $t_->0$, and $b_{K(t_+,t_-)}=b_{K_+(t_+)}$ if $t_+>0$.
\begin{Lemma}
    For any $t_+\le t_+'\in\I_+$ and $t_-\le t_-'\in \I_-$, we have
    \BGE \mA(t_+',t_-')-\mA(t_+',t_-)-\mA(t_+,t_-')+\mA(t_+,t_-)\le 0.\label{mA-concave}\EDE
     {In particular}, $\mA$ is Lipschitz continuous with constant $1$ in any variable, and so is continuous on $\I_+\times \I_-$.
    \label{lem-lips}
\end{Lemma}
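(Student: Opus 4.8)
The plan is to establish the inequality (\ref{mA-concave}) first, and then deduce Lipschitz continuity and hence continuity from it. The key tool for the inequality is Proposition \ref{hcap-concave}, the concavity of $\hcap$ for nested $\HH$-hulls, applied to a suitable triple of hulls. Specifically, I would set $K_0 = K(t_+,t_-)$, $K_1 = K(t_+',t_-)$, and $K_2 = K(t_+,t_-')$. Then $K_0 \subset K_1$ and $K_0 \subset K_2$ since $K(\cdot,\cdot)$ is increasing in each variable, so Proposition \ref{hcap-concave} gives
$$\hcap(K_1) + \hcap(K_2) \ge \hcap(\Hull(K_1 \cup K_2)) + \hcap(K_0).$$
The remaining point is to identify $\Hull(K_1 \cup K_2)$ with $K(t_+',t_-')$. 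Indeed, $K_1 \cup K_2 = \Hull(\eta_+[0,t_+']\cup\eta_-[0,t_-])\cup\Hull(\eta_+[0,t_+]\cup\eta_-[0,t_-'])$, and the $\HH$-hull generated by this union is the $\HH$-hull generated by $\eta_+[0,t_+']\cup\eta_-[0,t_-']$ (since each piece is contained in the generating curves of $K(t_+',t_-')$, and conversely both generating arcs $\eta_+[0,t_+']$ and $\eta_-[0,t_-']$ are contained in $K_1\cup K_2$ respectively); that is, $\Hull(K_1\cup K_2)=K(t_+',t_-')$. Dividing by $2$ turns the $\hcap$ inequality into the $\mA$ inequality (\ref{mA-concave}).

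Next I would derive Lipschitz continuity. Fix $t_-=t_-'$; then (\ref{mA-concave}) trivially holds with equality and gives nothing, so instead I use the already-known monotonicity together with the capacity additivity $\hcap(K_2)=\hcap(K_2/K_1)+\hcap(K_1)$ for $K_1\subset K_2$. Fixing $t_-$ and taking $t_+\le t_+'$, write $\mA(t_+',t_-)-\mA(t_+,t_-)=\hcap_2(K(t_+',t_-)/K(t_+,t_-))$. I claim this is at most $t_+'-t_+$. To see this, note $K(t_+',t_-)/K(t_+,t_-)=g_{K(t_+,t_-)}(\eta_+(t_+,t_+']\cup\eta_-[0,t_-]\sem K(t_+,t_-))$; but $\eta_-[0,t_-]\subset K(t_+,t_-)$ already, so $K(t_+',t_-)/K(t_+,t_-)=\Hull(g_{K(t_+,t_-)}(\eta_+(t_+,t_+']))$. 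Comparing with $K_+(t_+')/K_+(t_+)=\Hull(g_{K_+(t_+)}(\eta_+(t_+,t_+']))$ which has $\hcap_2$ equal to $t_+'-t_+$, and using that $g_{K(t_+,t_-)}=g_{K_+^{t_-}(t_+)}\circ g_{K_+(t_+)}$ by (\ref{circ-g}) together with the contraction property $0<g_K'\le 1$ from Proposition \ref{Prop-contraction} and the general fact $\hcap(g_L(K)) \le \hcap(K)$ for $L$ an $\HH$-hull (which follows from $\hcap(\Hull(L\cup K))=\hcap(L)+\hcap(g_L(K))$ and Proposition \ref{hcap-concave} with $K_0=L$, or directly from the capacity-additivity identity), one gets $\hcap_2(K(t_+',t_-)/K(t_+,t_-))\le t_+'-t_+$. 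Hence $0\le \mA(t_+',t_-)-\mA(t_+,t_-)\le t_+'-t_+$, i.e.\ $\mA$ is $1$-Lipschitz in the first variable; by symmetry the same holds in the second variable, and joint continuity on $\I_+\times\I_-$ follows from separate Lipschitz continuity in each variable.

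The main obstacle I anticipate is the bookkeeping in the identification $\Hull(K_1\cup K_2)=K(t_+',t_-')$ and in the reduction of the capacity increment $\hcap_2(K(t_+',t_-)/K(t_+,t_-))$ to a single-curve increment — one must be careful that the ``$\Hull$ of a union of hulls'' operation behaves as expected and that dividing by the earlier hull removes exactly the already-absorbed arc $\eta_-[0,t_-]$, leaving only the image of the new piece $\eta_+(t_+,t_+']$. These are essentially topological/conformal-mapping facts that are routine once set up carefully, but they are the place where an argument could go wrong if one is cavalier about which sets generate which hulls; everything else (applying Propositions \ref{hcap-concave}, \ref{Prop-contraction}, and the additivity of $\hcap$) is a direct citation of results already available in the excerpt.
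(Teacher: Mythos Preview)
Your proof of the inequality (\ref{mA-concave}) is exactly the paper's: apply Proposition \ref{hcap-concave} to $K_0=K(t_+,t_-)$, $K_1=K(t_+',t_-)$, $K_2=K(t_+,t_-')$, noting $\Hull(K_1\cup K_2)=K(t_+',t_-')$.

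For the Lipschitz part, however, you take a detour that the paper avoids. You say that setting $t_-=t_-'$ in (\ref{mA-concave}) ``gives nothing'', and then argue directly with quotient hulls and the capacity-contraction fact $\hcap(g_L(K))\le\hcap(K)$. But the paper instead applies (\ref{mA-concave}) with the pair $(0,t_-)$ in the second variable: this yields
\[
\mA(t_+',t_-)-\mA(t_+,t_-)\le \mA(t_+',0)-\mA(t_+,0)=t_+'-t_+,
\]
since $\mA(t_+,0)=\hcap_2(K_+(t_+))=t_+$. Combined with monotonicity, this gives the Lipschitz bound immediately. So the inequality you just proved already contains the Lipschitz statement; no further hull manipulations are needed.

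Your longer route is essentially sound, but note two small slips: the composition from (\ref{circ-g}) reads $g_{K(t_+,t_-)}=g_{K_-^{t_+}(t_-)}\circ g_{K_+(t_+)}$, not $g_{K_+^{t_-}(t_+)}\circ g_{K_+(t_+)}$; and in your justification of $\hcap(g_L(K))\le\hcap(K)$ via Proposition \ref{hcap-concave}, the correct choice is $K_0=\emptyset$, not $K_0=L$.
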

\begin{proof}
  Let $t_+\le t_+'\in\I_+$ and $t_-\le t_-'\in \I_-$. Since $K(t_+',t_-)$ and $K(t_+,t_-')$ together generate the $\HH$-hull $K(t_+',t_-')$, and they both contain $K(t_+,t_-)$, we obtain (\ref{mA-concave}) from Proposition \ref{hcap-concave}.  The rest of the statements follow easily from (\ref{mA-concave}), the monotonicity of $\mA$, and the fact that $\mA(t_\sigma \lin e_\sigma)=t_\sigma$ for any $t_\sigma\in\I_\sigma$, $\sigma\in\{+,-\}$.
\end{proof}

\begin{Definition}
Let $\cal D\subset {\cal I}_+\times {\cal I}_-$ be an HC region as in Definition \ref{Def-HC}. Suppose that there are dense subsets $\I_+^*$ and $\I_-^*$ of $\I_+$ and $\I_-$, respectively, such that for any $\sigma\in\{+,-\}$ and $t_{-\sigma}\in {\cal I}^*_{-\sigma}$, the following two conditions hold:
\begin{enumerate}
  \item[(I)] $K_\sigma^{t_{-\sigma}}(t_{{\sigma}})$, $0\le t_\sigma<T^{\cal D}_\sigma(t_{-\sigma})$, are chordal Loewner hulls generated by a chordal Loewner curve, denoted by $\eta_{\sigma}^{t_{-\sigma}}$, with some speed.
  \item [(II)] $ \eta_{\sigma}^{t_{-\sigma}}[0,T^{\cal D}_\sigma(t_{-\sigma}))\cap \R$ has Lebesgue measure zero.
\end{enumerate}
Then we call $(\eta_+,\eta_-;{\cal D})$ a commuting pair of chordal Loewner curves, and call $K(\cdot,\cdot)$ and $\mA(\cdot,\cdot)$ the hull function and the capacity function, respectively, for this pair.
  \label{commuting-Loewner}
\end{Definition}

\begin{Remark}
{The theory developed in this section will later be applied to the random setting. In Section \ref{section-commuting-SLE-kappa-rho}, we will study  a commuting pair of SLE$_\kappa(2,\ulin\rho)$ curves $(\eta_+,\eta_-)$. They have random lifespan $\I_+$ and $\I_+$, and satisfy the property that, for any $\sigma\in\{+,-\}$ and $t_{-\sigma}\in \R_+$, a.s.\ on the event $\{t_{-\sigma}\in {\cal I}_{-\sigma}\}$, Conditions (I) and (II) are satisfied. Letting $\I_\pm^*=I_\pm\cap\Q$, we see that $\eta_+$ and $\eta_-$ a.s.\ satisfy the condition of Definition \ref{commuting-Loewner}. So they are a.s.\ a (random) commuting pair of chordal Loewner curves.
}

 Later in Lemma \ref{Lebesgue} we will show that for the commuting pair in Definition \ref{commuting-Loewner}, Conditions (I) and (II) actually hold for all $t_{-\sigma}\in {\cal I}_{-\sigma}$, $\sigma\in\{+,-\}$.
\end{Remark}

From now on, let $(\eta_+,\eta_-;{\cal D})$ be a commuting pair of chordal Loewner curves, and let $\I_+^*$ and $\I_-^*$ be as in Definition \ref{commuting-Loewner}.

\begin{Lemma}
  $K(\cdot,\cdot)$ and $\mA(\cdot,\cdot)$ restricted to $\cal D$ are strictly increasing in both variables. \label{lem-strict}
\end{Lemma}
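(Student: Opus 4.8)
\emph{Plan.} The statement asks for coordinate-wise strict monotonicity: for $\ulin t,\ulin s\in\cal D$ differing in only one coordinate with $\ulin t<\ulin s$ there, we must get strict inclusion $K(\ulin t)\subsetneq K(\ulin s)$ and strict inequality $\mA(\ulin t)<\mA(\ulin s)$. By the symmetry between the roles of $+$ and $-$, it is enough to treat the first variable. I would first prove the claim for $\mA$, and then deduce the claim for $K$: if $t_+<t_+'$ with $(t_+,t_-),(t_+',t_-)\in\cal D$, then $K(t_+,t_-)\subseteq K(t_+',t_-)$ by the (already established) monotonicity of $K(\cdot,\cdot)$, and were these two $\HH$-hulls equal they would have the same $\HH$-capacity, i.e.\ $\mA(t_+,t_-)=\mA(t_+',t_-)$, contradicting the strict monotonicity of $\mA$; hence the inclusion is strict.

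\emph{Step 1: $\mA$ strictly increasing in $t_+$ over $t_-\in\I_-^*$.} Fix $t_-\in\I_-^*$ and $t_+<t_+'$ with $(t_+,t_-),(t_+',t_-)\in\cal D$, so that $t_+,t_+'<T^{\cal D}_+(t_-)$. By Condition (I) of Definition \ref{commuting-Loewner}, the hulls $K_+^{t_-}(s)=K(s,t_-)/K_-(t_-)$, $0\le s<T^{\cal D}_+(t_-)$, are chordal Loewner hulls with some speed $u$; since a speed is by definition continuous and strictly increasing, $\hcap_2(K_+^{t_-}(s))=u(s)-u(0)$ is strictly increasing in $s$. From $\hcap(K(s,t_-))=\hcap(K_+^{t_-}(s))+\hcap(K_-(t_-))$ and $\hcap_2(K_-(t_-))=t_-$ we get $\mA(s,t_-)=t_-+\hcap_2(K_+^{t_-}(s))$, which is therefore strictly increasing in $s$; in particular $\mA(t_+,t_-)<\mA(t_+',t_-)$.

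\emph{Step 2: extension to all $t_-\in\I_-$, and the main obstacle.} This passage from the dense set $\I_-^*$ to all of $\I_-$ is the only non-routine point. Let $t_-\in\I_-$ and $t_+<t_+'$ with $(t_+,t_-),(t_+',t_-)\in\cal D$; monotonicity already gives $\mA(t_+,t_-)\le\mA(t_+',t_-)$, and I argue strictness by contradiction, assuming $\mA(t_+,t_-)=\mA(t_+',t_-)$. Since $\cal D$ is relatively open and contains $(t_+',t_-)$, and $\I_-^*$ is dense in $\I_-$, we may choose $t_-'\in\I_-^*$ with $t_-'>t_-$ and $(t_+',t_-')\in\cal D$; because $\cal D$ is an HC region and $(t_+,t_-')\le(t_+',t_-')$, we also have $(t_+,t_-')\in\cal D$. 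Applying inequality (\ref{mA-concave}) (from Lemma \ref{lem-lips}) to the parameters $t_+\le t_+'$, $t_-\le t_-'$ and using $\mA(t_+,t_-)=\mA(t_+',t_-)$ gives
$$\mA(t_+',t_-')-\mA(t_+,t_-')\ \le\ \mA(t_+',t_-)-\mA(t_+,t_-)\ =\ 0,$$
which contradicts $\mA(t_+',t_-')-\mA(t_+,t_-')>0$ obtained from Step 1 at $t_-'\in\I_-^*$. Hence $\mA(t_+,t_-)<\mA(t_+',t_-)$ for all $t_-\in\I_-$. Combining with the reduction in the first paragraph, $\mA$ and $K$ are strictly increasing in $t_+$ on $\cal D$, and by the $+\leftrightarrow-$ symmetry the same holds in $t_-$.
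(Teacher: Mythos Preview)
Your proof is correct and follows essentially the same approach as the paper's own proof: first establish strict monotonicity of $\mA$ along coordinate slices with the complementary coordinate in the dense set $\I_{-\sigma}^*$ via Condition~(I), then use the concavity inequality~(\ref{mA-concave}) together with denseness to extend to all of $\I_{-\sigma}$. Your deduction of the strict monotonicity of $K$ from that of $\mA$ (via $\hcap(K_2)=\hcap(K_1)+\hcap(K_2/K_1)$ and $\hcap=0\Leftrightarrow$ empty hull) is the natural way to fill in what the paper leaves implicit.
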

\begin{proof}
By Condition (I), for any $\sigma\in\{+,-\}$ and $t_{-\sigma}\in\I_{-\sigma}^*$, $t\mapsto K(t_{-\sigma}\ulin e_{-\sigma}+t\ulin e_\sigma)$ and $t\mapsto \mA(t_{-\sigma}\ulin e_{-\sigma}+t\ulin e_\sigma)$ are strictly increasing on $[0,T^{\cal D}_\sigma(t_{-\sigma}))$. By (\ref{mA-concave}) and the denseness of $\I_{-\sigma}^*$ in $\I_{-\sigma}$, this property extends to any $t_{-\sigma}\in\I_{-\sigma}$.
\end{proof}

In the rest of the section, when we talk about $K(t_+,t_-)$, $\mA(t_+,t_-)$, $K_{+}^{t_-}(t_+)$ and $K_{-}^{t_+}(t_-)$, it is always implicitly assumed that $(t_+,t_-)\in\cal D$. So we may now simply say that $K(\cdot,\cdot)$ and $\mA(\cdot,\cdot)$ are strictly increasing in both variables.

\begin{Lemma} We have the following facts.
  \begin{enumerate}
    \item [(i)]  Let $\ulin a=(a_+,a_-)\in\cal D$ and $L={\rad_0}(K(a_+,a_-))$. Let $\sigma\in\{+,-\}$. Suppose $ t_\sigma<t_\sigma'\in [0,a_\sigma]$ satisfy that $\diam(\eta_\sigma[t_\sigma,t_\sigma'])<r$ for some $r\in(0,L)$. Then for any $t_{-\sigma}\in[0,a_{-\sigma}]$,
  $\diam(K_{\sigma}^{t_{-\sigma}}({t_\sigma'})/K_{\sigma}^{t_{-\sigma}}(t_\sigma))\le 10\pi L \log(L/r)^{-1/2}$.
   \item [(ii)] For any  $ (a_+,a_-)\in \cal D$ and $\sigma\in\{+,-\}$,
$$\lim_{\del\downarrow 0}\, \sup_{0\le t_\sigma\le a_\sigma}\,\sup_{t_\sigma'\in(t_\sigma,t_\sigma+\delta)}\, \sup_{0\le t_{-\sigma}\le a_{-\sigma}}\, \sup_{z\in\C\sem K_{\sigma}^{t_{-\sigma}}(t_\sigma')^{\doub} }\,|g_{K_{\sigma}^{t_{-\sigma}}(t_\sigma')}(z)-g_{K_{\sigma}^{t_{-\sigma}}(t_\sigma)}(z)|=0.$$
    \item [(iii)] The map $(\ulin t,z)\mapsto g_{K(\ulin t)}(z)$ is continuous on $\{(\ulin t,z) :\ulin t\in{\cal D}, z\in\C\sem K(\ulin t)^{\doub}\}$.
  \end{enumerate} \label{lem-uniform}
\end{Lemma}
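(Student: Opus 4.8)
Here is the plan. The three assertions will be proved in order: (i) by a Beurling-type estimate, and then (ii) and (iii) by soft arguments built on (i) together with Propositions \ref{g-z-sup}, \ref{Loewner-chain}, \ref{prop-connected} and the joint continuity of a single chordal Loewner chain.

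For part (i), the first step is to rewrite the quotient. Put $\ulin t=t_\sigma\ulin e_\sigma+t_{-\sigma}\ulin e_{-\sigma}$ and $\ulin t'=t_\sigma'\ulin e_\sigma+t_{-\sigma}\ulin e_{-\sigma}$; applying (\ref{K123}) to the triple $K_{-\sigma}(t_{-\sigma})\subset K(\ulin t)\subset K(\ulin t')$ gives $K_\sigma^{t_{-\sigma}}(t_\sigma')/K_\sigma^{t_{-\sigma}}(t_\sigma)=K(\ulin t')/K(\ulin t)=g_{K(\ulin t)}(K(\ulin t')\sem K(\ulin t))$. Since $\ulin t'$ and $\ulin t$ differ only in the $\sigma$-th coordinate, $K(\ulin t')=\Hull(K(\ulin t)\cup\eta_\sigma[t_\sigma,t_\sigma'])$, so $K(\ulin t')\sem K(\ulin t)$ is the $\HH$-hull attached to $\partial H(\ulin t)$ at the prime end $\eta_\sigma(t_\sigma)$ generated by the connected set $\eta_\sigma[t_\sigma,t_\sigma']\cap\lin{H(\ulin t)}$, of diameter $<r$, while the whole configuration lies in $K(\ulin a)$ with $\rad_0(K(\ulin a))=L$. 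I would then bound $\diam(g_{K(\ulin t)}(K(\ulin t')\sem K(\ulin t)))$ via Beurling's estimate: express this diameter as a bounded multiple of the harmonic measure, seen from a point at distance $\asymp L$ in $H(\ulin t)$, of the ``new'' boundary arc created by the curve piece, and run a Beurling annulus-crossing argument over the $\asymp\log(L/r)$ dyadic scales around $\eta_\sigma(t_\sigma)$, which produces the factor $\log(L/r)^{-1/2}$ and, on tracking constants, the bound $10\pi L\log(L/r)^{-1/2}$. The main obstacle here is that $\eta_\sigma[t_\sigma,t_\sigma']$ may cut off a macroscopically large subregion of $H(\ulin t)$, so that $K(\ulin t')\sem K(\ulin t)$ itself need not be small; what forces its image to be small is precisely that this region is reached only through a bottleneck of size $<r$, which is why the crude estimate of Proposition \ref{g-z-sup} has to be replaced by Beurling's.

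Part (ii) follows from (i). Given $(a_+,a_-)\in\cal D$ and $\sigma$, the relative openness of the HC region $\cal D$ lets me pick $\ulin b\in\cal D$ with $\ulin b>(a_+,a_-)$ coordinatewise, and I set $L=\rad_0(K(\ulin b))$; for $\delta$ small every index $t_\sigma'\ulin e_\sigma+t_{-\sigma}\ulin e_{-\sigma}$ occurring in the supremum lies in $[\ulin 0,\ulin b]\subset\cal D$. Writing $K=K_\sigma^{t_{-\sigma}}(t_\sigma)\subset K'=K_\sigma^{t_{-\sigma}}(t_\sigma')$ and $g_{K'}=g_{K'/K}\circ g_K$, for $z\in\C\sem (K')^{\doub}\subseteq\C\sem K^{\doub}$ one has $g_K(z)\notin (K'/K)^{\doub}$, so Proposition \ref{g-z-sup} gives $|g_{K'}(z)-g_K(z)|\le 3\rad_{x_1}(K'/K)\le 3\diam(K'/K)$ for any $x_1\in\lin{K'/K}\cap\R$ (such $x_1$ exists because $K'/K$ is generated by a curve, by Condition (I) of Definition \ref{commuting-Loewner}, and its closure is connected). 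By (i), $\diam(K'/K)\le 10\pi L\log(L/r_\delta)^{-1/2}$ where $r_\delta=\sup\{\diam(\eta_\sigma[s,s']):0\le s\le a_\sigma,\ s<s'<s+\delta\}\to 0$ as $\delta\downarrow 0$ by uniform continuity of $\eta_\sigma$ on a compact interval; hence the supremum in (ii) is at most $30\pi L\log(L/r_\delta)^{-1/2}\to 0$.

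For part (iii) I would use the factorization $g_{K(\ulin t)}=g_{K_-^{t_+}(t_-)}\circ g_{K_+(t_+)}$ from (\ref{circ-g}), which splits the map into two single-variable chordal Loewner chains: $t_+\mapsto K_+(t_+)$, and, for each fixed $t_+$, $t_-\mapsto K_-^{t_+}(t_-)$ (a Loewner chain by Condition (I)). For a single chordal Loewner chain generated by a curve, $(s,z)\mapsto g_{K_s}(z)$ is jointly continuous on $\{(s,z):z\in\C\sem K_s^{\doub}\}$, which is routine from Proposition \ref{g-z-sup} applied to $K_{s_2}/K_{s_1}$, Proposition \ref{Loewner-chain} (these quotients have diameters tending to $0$), and conformality of each $g_{K_s}$. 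The one extra point is to control the dependence of the inner map $g_{K_-^{t_+}(t_-)}$ on $t_+$ for fixed $t_-$: for $t_+<t_+'$ the identity $g_{K_-^{t_+'}(t_-)}\circ g_{K_+(t_+')/K_+(t_+)}=g_{K(t_+',t_-)/K(t_+,t_-)}\circ g_{K_-^{t_+}(t_-)}$ (derived from (\ref{circ-g})) together with $\diam(K_+(t_+')/K_+(t_+))$ small (single chain) and $\diam(K(t_+',t_-)/K(t_+,t_-))=\diam(K_+^{t_-}(t_+')/K_+^{t_-}(t_+))$ small by (i) yields, via Proposition \ref{g-z-sup} and conformality, that $g_{K_-^{t_+}(t_-)}(w)$ is continuous in $t_+$ uniformly over $t_-$ in compacts and $w$ bounded away from the hulls. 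Splitting $g_{K(\ulin t)}(z)-g_{K(\ulin t^0)}(z^0)$ through the outer map $g_{K_+(\cdot)}$ and then through the $t_+$- and $(t_-,z)$-dependences of the inner map gives continuity at the fixed point $(\ulin t^0,z^0)$. The remaining care — a secondary obstacle — is to keep the argument of every map off the relevant doubled hull; this propagates from $z^0\notin K(\ulin t^0)^{\doub}$ because $g_{K_+(t_+^0)}$ maps $\C\sem K(\ulin t^0)^{\doub}$ onto $\C\sem K_-^{t_+^0}(t_-^0)^{\doub}$, and because uniform continuity of $\eta_\pm$ together with (i) keeps the hulls $K_+(t_+)$, $K_-^{t_+}(t_-)$ close to their limits, so the defining condition $z\notin K(\ulin t)^{\doub}$ survives for all intermediate points.
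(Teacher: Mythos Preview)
Your treatment of (ii) and (iii) is essentially the paper's, only more elaborate; in particular (iii) in the paper is one line, reducing to (ii) applied for both $\sigma$, the factorization (\ref{circ-g}), and analyticity. (A side remark: in (ii) you do not need to invoke Condition~(I) to find $x_1\in\lin{K'/K}\cap\R$; every nonempty $\HH$-hull meets $\R$ in its closure, since otherwise its complement in $\HH$ would not be simply connected.)

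The real gap is in (i). You correctly rewrite the quotient as $K(\ulin t')/K(\ulin t)$ and correctly flag that $K(\ulin t')\sem K(\ulin t)$ can be macroscopically large. But you then assert that $\eta_\sigma[t_\sigma,t_\sigma']\cap\lin{H(\ulin t)}$ is connected and that the new piece is ``attached at the prime end $\eta_\sigma(t_\sigma)$'' through a single bottleneck of size $<r$. Neither is justified, and the first is false in general: for $\kappa\in(4,8)$ the increment $\Delta\eta_\sigma:=\eta_\sigma[t_\sigma,t_\sigma']$ can dip into the interior of $K(\ulin t)$ (e.g.\ into a region already enclosed by $\eta_{-\sigma}$) and re-emerge elsewhere. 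More to the point, although $\Delta\eta_\sigma\subset\{|z-\eta_\sigma(t_\sigma)|<r\}$, the open set $H(\ulin t)\cap\{|z-\eta_\sigma(t_\sigma)|<r\}$ may have several components lying on \emph{different sides} of a crosscut $J\subset\{|z-\eta_\sigma(t_\sigma)|=r\}\cap H(\ulin t)$, and $\Delta\eta_\sigma\cap H(\ulin t)$ could in principle visit more than one. If it did, $K(\ulin t')/K(\ulin t)$ would split into far-apart pieces in $\HH$ and no single-bottleneck bound would apply. Your harmonic-measure sketch does not circumvent this: what equals $d_{K(\ulin t')/K(\ulin t)}-c_{K(\ulin t')/K(\ulin t)}$ is the harmonic measure (from $\infty$) of the \emph{entire} interval $[c,d]$, not just of the ``new'' boundary arc coming from the curve, so bounding the latter does not by itself bound the diameter.

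This is exactly what the paper works hardest on. It first takes $a_\pm\in\I_\pm^*$, so that Condition~(I) of Definition~\ref{commuting-Loewner} makes $K_+'(s):=K(t_++s,a_-)/K(t_+,a_-)$ a chordal Loewner family with connected closures; it then runs a contradiction argument in the image domain (building nested crosscuts $J_i\subset J_e$ and using the connectedness and continuity of the $K_+'(s)$) to prove the key \emph{claim} that $\Delta\eta_+\cap H(t_+,a_-)$ lies in a single component of $H(t_+,a_-)\sem J$. Only after that does it apply an extremal-length estimate --- note the form $L\,\log(L/r)^{-1/2}$ arises from ``(modulus of annulus)$\times$(area)'', not from Beurling's projection theorem, which would yield an $(r/L)^{1/2}$-type bound instead --- to control $\diam(g_{K(\ulin t)}(J))$ and hence $\diam(K(\ulin t')/K(\ulin t))$. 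Finally it removes the restriction $a_\pm\in\I_\pm^*$ by approximation. Both the topological claim and the approximation step are missing from your sketch, and without the former the single-bottleneck picture is unproved.
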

\begin{proof}
  (i) Suppose $\sigma=+$ by symmetry. We {first} assume that $a_\pm \in\I_\pm^*$.
  Let $\Delta \eta_+=\eta_+[t_+,t_+']$ and $S=\{|z-\eta_+(t_+)|=r\}$.  By assumption, $\Delta\eta_+\subset \{|z-\eta_+(t_+)|<r\}$. By Lemma \ref{lem-strict}, there is $z_*\in \Delta \eta_+\cap H(t_+,a_-)\subset H(t_+,t_-)$. Since $z_*\in \{|z-\eta_+(t_+)|<r\}$, the set $ S\cap H(t_+,t_-)$ has a connected component, denoted by $J$, which separates $z_*$ from $\infty$ in $H(t_+,t_-)$. Such $J$ is a crosscut of $H(t_+,t_-)$\footnote{{A crosscut of a domain $D$ is  the image of a simple curve $\gamma:(\alpha,\beta)\to D$ such that the limits $\lim_{t\to \alpha^+} \gamma(t)$ and $\lim_{t\to \beta^-}\gamma(t)$ both exist and lie on $\pa D$.}}, which divides $H(t_+,t_-)$ into two domains, where the bounded domain, denoted by $D_J$, contains $z_*$.

    Now $\Delta \eta_+\cap H(t_+,a_-)\subset H(t_+,a_-)\sem J$. We claim that $\Delta \eta_+\cap H(t_+,a_-)$ is contained in one connected component of $H(t_+,a_-)\sem J$. Note that $J\cap H(t_+,a_-)$ is a disjoint union of crosscuts, each of which divides $H(t_+,a_-)$ into two domains.  To prove the claim, it suffices to show that, for each connected component $J_0$ of $J\cap H(t_+,a_-)$, $\Delta \eta_+\cap H(t_+,a_-)$ is contained in one connected component of $H(t_+,a_-)\sem J_0$. Suppose that this is not true for some $J_0$. Let $J_e=g_{K(t_+,a_-)}(J_0)$. Then $J_e$ is a crosscut of $\HH$, which divides $\HH$ into two domains, both of which intersect $\Delta\ha\eta_+:=g_{K(t_+,a_-)}(\Delta \eta_+\cap H(t_+,a_-))$. Since  $\Delta\eta_+$ has positive distance from $S\supset J$, and $g_{K(t_+,a_-)}^{-1}|_{\HH}$ extends continuously to $\lin\HH$,  $\Delta\ha\eta_+$ has positive distance from $J_e$. Thus, there is another crosscut $J_i$ of $\HH$, which is disjoint from and surrounded by $J_e$, such that the subdomain  $\HH$ bounded by $J_i$ and $J_e$ is disjoint from $\Delta\ha\eta_+$. Label the three connected components of $\HH\sem (J_e\cup J_i)$  by $D_e,A,D_i$, respectively, from outside to inside. Then $\Delta\ha\eta_+$ intersects both $D_e$ and $D_i$, but is disjoint from $\lin A$. Let $K_i=D_i\cup J_i$ and $K_e=K_i\cup A\cup J_e$ be two $\HH$-hulls.

    Let $\eta_+^* =\eta_+(t_++\cdot)$ and $\ha\eta_+^* =g_{K(t_+,a_-)}\circ \eta_+^* $, whose domain is $S:=\{s\in [0,T_+-t_+): \eta_+^*(s)\in\HH\sem K(t_+,a_-)\}$. For each $s\in[0,\delta:=t_+'-t_+]$, $K(t_++s,a_-)=\Hull(K(t_+,a_-)\cup\Delta\eta_+^s)$, and so $K_+'(s):=K_{+}^{a_-}(t_++s)/K_{+}^{a_-}(t_+)= K(t_++s,a_-)/K(t_+,a_-)$ (by (\ref{K123})) is the $\HH$-hull generated by $\ha\eta_+^*([0,s]\cap S)$. For $0\le s\le \delta$, since $A$ is disjoint from $\ha \eta_+^*([0,\delta]\cap S)\subset \Delta\ha\eta_+$, it is either contained in or disjoint from $K_+'(s)$. If $K_+'(s)\supset A$, then $K_+'(s)\supset \Hull(A)=K_i$; if $K_+'(s)\cap A=\emptyset$, then by the connectedness of $\lin{K_+'(s)}$, $K_+'(s)$ is contained in either $K_i$ or   $\HH\sem(K_i\cup A)$.
Since $K_+'(\delta)\supset  \Delta\ha \eta_+$ intersects both $D_e$ and $D_i$, we get $K_+'(\delta)\supset  K_e$. Let $s_0=\inf \{s\in [0,\delta]: K_e\subset K_+'(s)\}$. By Proposition \ref{Loewner-chain}, we have $s_0\in(0,\delta]$ and $K_+'(s_0)\supset K_e$. By the increasingness of $K_+'(\cdot)$, $\bigcup_{0\le s<s_0} K_+'(s)$ is contained in either $K_i$ or $\HH\sem (K_i\cup A)$. Let $S_0=\{s\in(s_0,T_+-t_+):\eta_+^*(s)\in\HH\sem K(t_++s_0,a_-)\}$. Then $\ha \eta_+^*(S_0)\subset g_{K(t_+,a_-)}(\HH\sem K(t_++s_0,a_-))= \HH\sem K_+'(s_0)\subset \HH\sem K_e=D_e$. By Lemma \ref{lem-strict}, $S$ is dense in $[s_0,T_+-t_+]$. Thus, $\ha \eta_+^*([s_0,\delta]\cap S)\subset \lin{D_e}$. Since $\Delta\ha\eta_+=\ha \eta_+^*([0,\delta]\cap S)$ intersects both $D_e$ and $D_i$, we conclude that $\ha \eta_+^*([0,s_0)\cap S)$ intersects $D_i$. So $K_+'(s)\subset K_i$ for $0\le s<s_0$, which implies that $K_+'(s_0)=\Hull(\bigcup_{0\le s<s_0} K_+'(s))\subset  K_i$. This contradicts that $K_i\subsetneqq K_e\subset K_+'(s_0)$. So the claim is proved.

Let $N$ denote the connected component of $H(t_+,a_-)\sem J$ that contains $\Delta \eta_+\cap H(t_+,a_-)$.
Then $N$ is contained in one connected component of $H(t_+,t_-)\sem J$. Since $N\supset  \Delta \eta_+\cap H(t_+,a_-)\ni z_*$ and $z_*$ lies in the connected component $D_J$ of $H(t_+,t_-)\sem J$, we get $\Delta \eta_+\cap H(t_+,a_-)\subset N\subset D_J$. Since $\Delta \eta_+\cap H(t_+,a_-)$ is dense in $\Delta \eta_+\cap H(t_+,t_-)$ (Lemma \ref{lem-strict}), and $\Delta\eta_+$ has positive distance from $J$, we get  $\Delta \eta_+\cap H(t_+,t_-)\subset D_J$. Since $K(t_+',t_-)$ is the $\HH$-hull generated by $K(t_+,t_-)$ and $\Delta\eta_+\cap H(t_+,t_-)$, we get  $K(t_+',t_-)\sem K(t_+,t_-)\subset D_J$, and so $K_+'(\delta)=K(t_+',t_-)/ K(t_+,t_-)$ is enclosed by the crosscut $g (J)$, where $g:= g_{K(t_+,t_-)}$.  Thus,  $\diam(K_+'(\del))\le \diam(g ( J))$.

Let  $R=2L$. From $\eta_+(t_+)\in  \lin{K(\ulin a)}$, we get $|\eta_+(t_+)|\le L$.  Recall that $J\subset S=\{|z-\eta_+(t_+)|=r\}$. So the arc $J$ and the circle $\{|z-\eta_+(t_+)|=R\}$ are separated by the annulus centered at $\eta_+(t_+)$ with inner radius $r$ and outer radius $R-L=L$. Let $J'=\{|z-\eta_+(t_+)|=R\}\cap \HH$ and  $D_{J'}=(\HH\cap \{|z-\eta_+(t_+)|<R\})\sem K(t_+,t_-)$.
By comparison principle (\cite{Ahl}), the extremal length of the curves in $D_{J'}$ that separate $J$ from $J'$ is bounded above by $2\pi/\log(L/r)$. By conformal invariance, the extremal length of the curves in the subdomain of $\HH$ bounded by the crosscut $g ( J')$, denoted by $D_{g(J')}$, that separate $g ( J)$ from $g ( J')$ is also bounded above by $2\pi/\log(L/r)$. By Proposition \ref{g-z-sup}, $g ( J')\subset \{|z|\le R+3L=5L\}$. So the Euclidean area of $D_{g(J')}$ is bounded above by $25\pi L^2/2$. By the definition of extremal length, there exists a curve in $\Omega$ with Euclidean length less than $10\pi L (\log(L/r))^{-1/2}$,
which separates  $g( J)$ from $g( J'\})$. This implies that the $\diam(g( J))$ is bounded above by $10\pi L (\log(L/r))^{-1/2}$, and so is that of $K_+'(\delta)=K_{+}^{t_-}(t_+')/K_{+}^{t_-}(t_+)$.  This finishes the proof of (i){ in the case $a_\pm\in \I_\pm^*$}.

{Now we do not assume $a_\pm\in \I_\pm^*$. Let $L'>L$. By the denseness of $\I_\pm^*$ in $\I_\pm$ and the continuity of $\eta_\pm$, we can find $a_\pm'>a_\pm$ such that $a_\pm'\in\I_\pm^*$, $(a_+',a_-')\in \cal D$, and $\rad_0(K(a_+',a_-'))<L'$. Since $ t_\sigma<t_\sigma'\in [0,a_\sigma']$,  $\diam(\eta_\sigma[t_\sigma,t_\sigma'])<r<L$, and $t_{-\sigma}\in[0,a_{-\sigma}']$, by the above result, we get $\diam(K_{\sigma}^{t_{-\sigma}}({t_\sigma'})/K_{\sigma}^{t_{-\sigma}}(t_\sigma))\le 10\pi L' \log(L'/r)^{-1/2}$. Since the inequality holds for any $L'>L$, it also holds with $L$ in place of $L'$.
}

(ii) This follows from (i), Proposition \ref{g-z-sup}, the continuity of $\eta_\sigma$, the {fact that} $ L\log(L/r)^{-1/2}$ {tends to $0$ as $r\downarrow 0$}, and the equality $g_{K_{{\sigma}}^{t_{{-\sigma}}}(t_{{\sigma}}')} =g_{K_{{\sigma}}^{t_{{-\sigma}}}(t_{{\sigma}}')/K_{{\sigma}}^{t_{{-\sigma}}}(t_{{\sigma}})}\circ g_{K_{{\sigma}}^{t_{{-\sigma}}}(t_{{\sigma}})}$.

(iii) This follows from (ii), (\ref{circ-g}) and the fact that $g_{K(\ulin t)}$ is analytic on $\C\sem K(\ulin t)^{\doub}$.
\end{proof}

For a function $X$ defined on  $\cal D$, $\sigma \in\{+,-\}$ and $t_{-\sigma} \in\R_+$, we use $X|^{-\sigma}_{t_{-\sigma}} $  to denote the single-variable function $t_{\sigma}\mapsto X(t_\sigma \ulin e_\sigma +t_{-\sigma}\ulin e_{-\sigma})$, $0\le t_\sigma<T^{\cal D}_\sigma(t_{-\sigma})$, and use $\pa_{\sigma} X(t_+,t_-)$  to denote the derivative of this function at $t_{\sigma}$.

\begin{Lemma}
   There are two functions $W_+,W_-\in C({\cal D},\R)$ such that for any $\sigma\in\{+,-\}$ and  $t_{-\sigma}\in\I_{-\sigma}$,
$K_{\sigma}^{t_{-\sigma}}(t_\sigma)$, $0\le t_\sigma<T^{\cal D}_\sigma(t_{-\sigma})$, are chordal Loewner hulls  driven by $W_\sigma|^{-\sigma}_{t_{-\sigma}}$ with speed $\mA|^{-\sigma}_{t_{-\sigma}}$, and for any $\ulin t=(t_+,t_-)\in\cal D$, $\eta_\sigma(t_\sigma)=f_{K(\ulin t)}( W_\sigma(\ulin t))$.
  \label{Lem-W}
\end{Lemma}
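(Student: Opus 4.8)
The plan is to construct $W_+$ explicitly (the construction of $W_-$ is entirely analogous, with the roles of $+$ and $-$ exchanged) and then read off the three assertions from the construction. First I would fix $t_-\in\I_-$ and a time $a_+<T^{\cal D}_+(t_-)$, and choose $\ulin a\in{\cal D}$ whose coordinates dominate $(a_+,t_-)$. Lemma~\ref{lem-uniform}(i), combined with the uniform continuity of $\eta_+$ on $[0,a_+]$, then gives $\sup_{0\le t_+\le a_+}\diam\big(K_+^{t_-}(t_++\del)/K_+^{t_-}(t_+)\big)\to 0$ as $\del\downarrow 0$, so by Proposition~\ref{Loewner-chain} the hulls $K_+^{t_-}(t_+)$, $0\le t_+<T^{\cal D}_+(t_-)$, are chordal Loewner hulls with speed $t_+\mapsto\hcap_2(K_+^{t_-}(t_+))$. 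Since $\hcap(K(\ulin t))=\hcap(K_+^{t_-}(t_+))+\hcap(K_-(t_-))$ and $\hcap_2(K_-(t_-))=t_-$, this speed is $\mA|^-_{t_-}-t_-$, which (a speed being determined only up to an additive constant) may be replaced by $\mA|^-_{t_-}$. By Proposition~\ref{Loewner-chain} again, the driving function of this chain at time $t_+$ is the single point of $\bigcap_{\del>0}\lin{K_+^{t_-}(t_++\del)/K_+^{t_-}(t_+)}$, which by \eqref{K123} applied to $K_-(t_-)\subset K(\ulin t)\subset K(\ulin t+\del\ulin e_+)$ equals $\bigcap_{\del>0}\lin{K(\ulin t+\del\ulin e_+)/K(\ulin t)}$, a set not depending on the choice of $t_-$. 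So I would \emph{define} $W_+:{\cal D}\to\R$ by $\{W_+(\ulin t)\}:=\bigcap_{\del>0}\lin{K(\ulin t+\del\ulin e_+)/K(\ulin t)}$; then for every $t_-\in\I_-$ the function $t_+\mapsto W_+(\ulin t)$ is the (continuous) driving function of $K_+^{t_-}(\cdot)$ with speed $\mA|^-_{t_-}$, and for $t_-\in\I_-^*$ it coincides with the driving function of the curve $\eta_+^{t_-}$ from Condition~(I).

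Next I would establish the tip identity on the dense slices. For $t_-\in\I_-^*$, Condition~(I) gives $\eta_+^{t_-}(t_+)=f_{K_+^{t_-}(t_+)}(W_+(\ulin t))$. Since $g_{K(\ulin t)}=g_{K_+^{t_-}(t_+)}\circ g_{K_-(t_-)}$ by \eqref{circ-g} and all the boundaries involved are locally connected, the continuous boundary extensions compose, i.e.\ $f_{K(\ulin t)}=f_{K_-(t_-)}\circ f_{K_+^{t_-}(t_+)}$; and since $\eta_+$ and $\eta_-$ do not cross, the generating curve $\eta_+^{t_-}$ is the $g_{K_-(t_-)}$-image of $\eta_+$ in the prime-end sense, so $f_{K_-(t_-)}(\eta_+^{t_-}(t_+))=\eta_+(t_+)$. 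Combining, $\eta_+(t_+)=f_{K(\ulin t)}(W_+(\ulin t))$ for all $\ulin t\in{\cal D}$ with $t_-\in\I_-^*$.

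The crux is the continuity of $W_+$ on all of ${\cal D}$. Here I would fix $\ulin t^0\in{\cal D}$ and a compact $Q\subset{\cal D}$ containing $\ulin t^0$ in its interior. By Lemma~\ref{lem-uniform}(i) and the uniform continuity of $\eta_+$ there is a modulus $\rho(\del)\to 0$ with $\diam\big(\lin{K(\ulin t+\del\ulin e_+)/K(\ulin t)}\big)\le\rho(\del)$ whenever $\ulin t,\ulin t+\del\ulin e_+\in Q$, and $W_+(\ulin t)$ lies in this set for every $\del$. On the other hand, Lemma~\ref{lem-uniform}(iii) shows that $g_{K(\ulin t)}$ varies continuously with $\ulin t$ off the doubled hull, which by the Carath\'eodory kernel theorem makes the domains $\HH\sem K(\ulin t)$ vary in the Carath\'eodory sense; hence, for each fixed $\del$, the increment $K(\ulin t+\del\ulin e_+)/K(\ulin t)=g_{K(\ulin t)}\big(K(\ulin t+\del\ulin e_+)\sem K(\ulin t)\big)$ varies Hausdorff-continuously in $\ulin t$. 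An $\eps/3$ argument (pick $\del$ with $\rho(\del)<\eps/3$, then $\ulin t'$ so close to $\ulin t^0$ that the two increment-closures are within Hausdorff distance $\eps/3$) now yields $|W_+(\ulin t^0)-W_+(\ulin t')|<\eps$, so $W_+\in C({\cal D},\R)$.

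Finally, with continuity in hand, the tip identity extends to every $\ulin t\in{\cal D}$ by approximating $\ulin t$ by points whose second coordinate lies in $\I_-^*$ and passing to the limit, using the continuity of $\eta_+$, of $W_+$, and of $(\ulin t,x)\mapsto f_{K(\ulin t)}(x)$ near $(\ulin t,W_+(\ulin t))$ (again from Lemma~\ref{lem-uniform}(iii), Carath\'eodory convergence, and local connectivity of $\pa K(\ulin t)$). I expect the continuity of $W_+$ to be the main obstacle: one must promote Lemma~\ref{lem-uniform}(iii), which controls $g_{K(\ulin t)}$ only away from the doubled hull, to genuine Hausdorff-stability of the increments $\lin{K(\ulin t+\del\ulin e_\sigma)/K(\ulin t)}$ (and, in the last step, to boundary regularity of $f_{K(\ulin t)}$), whereas the uniform smallness of those increments that powers the $\eps/3$ estimate is exactly the content of Lemma~\ref{lem-uniform}(i).
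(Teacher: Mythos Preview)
Your definition of $W_+$ via $\{W_+(\ulin t)\}=\bigcap_{\del>0}\lin{K(\ulin t+\del\ulin e_+)/K(\ulin t)}$ and the verification that $K_+^{t_-}(\cdot)$ is a Loewner chain driven by $W_+|^-_{t_-}$ with speed $\mA|^-_{t_-}$ are exactly the paper's. Where you diverge---and where the gaps you yourself flag actually bite---is in the continuity of $W_+$ and in the tip identity.

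The Hausdorff-stability route you propose for continuity is hard to complete: the increments $K(\ulin t+\del\ulin e_+)/K(\ulin t)$ touch $\R$, and Carath\'eodory convergence of the complementary domains (which is all Lemma~\ref{lem-uniform}(iii) supplies) says nothing about how their closures move near the boundary. Likewise, joint continuity of $(\ulin t,x)\mapsto f_{K(\ulin t)}(x)$ at boundary points $x$ is not available from the ingredients at hand, so your extension of the tip identity from $\I_-^*$ to $\I_-$ does not close either.

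The paper bypasses both obstacles with a single device: explicit witness points on the curve, chosen via Lemma~\ref{lem-strict}. For the tip identity at \emph{any} $\ulin t\in{\cal D}$ (not only $t_-\in\I_-^*$), pick $t_+^n\downarrow t_+$ with $\eta_+(t_+^n)\in H(\ulin t)$; then $g_{K(\ulin t)}(\eta_+(t_+^n))\in K(t_+^n,t_-)/K(\ulin t)$, so $g_{K(\ulin t)}(\eta_+(t_+^n))\to W_+(\ulin t)$ by your own characterization of $W_+$, and applying the fixed continuous map $f_{K(\ulin t)}$ gives $\eta_+(t_+)=f_{K(\ulin t)}(W_+(\ulin t))$ directly. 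No dense-slice approximation, no joint continuity of $f_{K(\cdot)}$.

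For continuity in $t_-$, fix $t_+$ and $t_-^1<t_-^2$ with $(t_+,t_-^2)\in{\cal D}$, and (again by Lemma~\ref{lem-strict}) take $\del_n\downarrow 0$ with $z_n:=\eta_+(t_++\del_n)\in H(t_+,t_-^2)\subset H(t_+,t_-^1)$. Each $z_n$ lies in both increment hulls, so $|W_+(t_+,t_-^j)-g_{K(t_+,t_-^j)}(z_n)|\le\diam\big(K_+^{t_-^j}(t_++\del_n)/K_+^{t_-^j}(t_+)\big)$ for $j=1,2$; and since $g_{K(t_+,t_-^2)}=g_{K(t_+,t_-^2)/K(t_+,t_-^1)}\circ g_{K(t_+,t_-^1)}$, Proposition~\ref{g-z-sup} gives $|g_{K(t_+,t_-^2)}(z_n)-g_{K(t_+,t_-^1)}(z_n)|\le 3\diam\big(K_-^{t_+}(t_-^2)/K_-^{t_+}(t_-^1)\big)$. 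Sending $n\to\infty$ yields
\[
|W_+(t_+,t_-^2)-W_+(t_+,t_-^1)|\le 3\,\diam\big(K_-^{t_+}(t_-^2)/K_-^{t_+}(t_-^1)\big),
\]
which by Lemma~\ref{lem-uniform}(i) is an equicontinuity modulus in $t_-$ uniform over $t_+$ in compacts. Combined with continuity of $W_+$ in $t_+$ (as a Loewner driver), joint continuity follows. The point is that the same witness-point trick powers both the tip identity and the continuity estimate, so neither Hausdorff-convergence of increments nor boundary regularity of $f_{K(\ulin t)}$ in $\ulin t$ is ever invoked.
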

\begin{proof}
	By symmetry, we only need to prove the case that $\sigma=+$. Since
 $$\hcap_2(K_{+}^{t_-}(t_++\del))-\hcap_2(K_{+}^{t_-}(t_+))=\mA(t_++\del,t_-)-\mA(t_+,t_-),$$
by Lemma \ref{lem-uniform} (i), the continuity of $\eta_\sigma$ and Proposition \ref{Loewner-chain}, for every $t_-\in\I_-$, $K_{+}^{t_-}(t_+)$, $0\le t_+<T^{\cal D}_+(t_-)$, are chordal Loewner hulls with  speed $\mA|^-_{t_-}$, and the driving function, denoted by  $W_+(\cdot,t_-)$, satisfies {the property} that
   \BGE \{W_+(t_+,t_-)\}= \bigcap_{\del>0} \lin{K_{+}^{t_-}(t_++\del)/K_{+}^{t_-}(t_+)}=\bigcap_{\del>0} \lin{K(t_++\delta,t_-)/K(t_+,t_-)}.\label{W-def}\EDE

Fix $\ulin t=(t_+,t_-)\in\cal D$.  We now show that $f_{K(\ulin t)}( W_+(\ulin t)) =\eta_+(t_+)$. By Lemma \ref{lem-strict}, there exists a  sequence  $t_+^n\downarrow t_+$ such that   $\eta_+(t_+^n)\in K(t_+^n,t_-)\sem K(t_+,t_-)$ for all $n$. Then $g_{K(\ulin t)}(\eta_+(t_+^n))\in K(t_+^n,t_-)/K(\ulin t)=K_{+}^{t_-}(t_+^n)/K_{+}^{t_-}(t_+)$. So we have $g_{K(\ulin t)}( \eta_+(t_+^n))\to W_+(\ulin t)$ by (\ref{W-def}). From the continuity of $f_{K(\ulin t)} $  and  $\eta_+$, we then get
$$\eta_+(t_+)=\lim_{n\to \infty} \eta_+(t_+^n)=\lim_{n\to \infty}  f_{K(\ulin t)}( g_{K(\ulin t)}( \eta_+(t_+^n)))=f_{K(\ulin t)}( W_+(\ulin t)).$$

It remains to show that $W_+$ is continuous on $\cal D$.
Let $t_+,t_-^1,t_-^2\in\R_+$ be such that $t_-^1<t_-^2$ and $(t_+,t_-^2)\in \cal D$.
By Lemma \ref{lem-strict}, there is a sequence $\delta_n\downarrow 0$ such that $z_n:=\eta_+(t_++\delta_n)\in H(t_+,t_-^2)$. Then  $g_{K(t_+,t_-^j)}(z_n)\in K(t_++\delta_n,t_-^j)/ K(t_+,t_-^j)=K_{+}^{t_-^j}(t_++\delta_n)/K_{+}^{t_-^j}(t_+)$, $j=1,2$. From (\ref{W-def}) we get
$$|W_+(t_+,t_-^j)-g_{K(t_+,t_-^j)}(z_n)|\le \diam(K_{+}^{t_-^j}(t_++\delta_n)/K_{+}^{t_-^j}(t_+)),\quad j=1,2.$$
Since $g_{K(t_+,t_-^2)}(z_n)=g_{K(t_+,t_-^2)/K(t_+,t_-^1)}\circ g_{K(t_+,t_-^1)}(z_n)$, by Proposition \ref{g-z-sup} we get
$$|g_{K(t_+,t_-^2)}(z_n)-g_{K(t_+,t_-^1)}(z_n)|\le 3\diam(K(t_+,t_-^2)/K(t_+,t_-^1))= 3\diam(K_{-}^{t_+}(t_-^2)/K_{-}^{t_+}(t_-^1)) .$$
Combining the above  displayed formulas and letting $n\to\infty$, we get
\BGE |W_+(t_+,t_-^2)-W_+(t_+,t_-^1)|\le 3\diam(K_{-}^{t_+}(t_-^2)/K_{-}^{t_+}(t_-^1)) ,\label{W+continuity}\EDE
which together with Lemma \ref{lem-uniform} (i) implies that, for any $(a_+,a_-)\in\cal D$, the family of functions $[0,a_-]\ni t_-\mapsto W_+(t_+,t_-)$, $0\le t_+\le a_+$, are equicontinuous. Since $W_+$ is continuous in $t_+$ as a driving function, we conclude that $W_+$ is continuous on $\cal D$.
\end{proof}

\begin{Definition}
	We call $W_+$ and $W_-$ the driving functions for the commuting pair $(\eta_+,\eta_-;{\cal D})$.
It is obvious that $W_\sigma|^{-\sigma}_0=\ha w_\sigma$, $\sigma\in\{+,-\}$.
\end{Definition}

\begin{Remark} By (\ref{W-def}) and Propositions \ref{prop-connected} and \ref{winK}, for $t_+^1<t_+^2\in \I_+$ and $t_-\in \I_-$ such that $(t_+^2,t_-)\in \cal D$,
$|W_+(t_+^2,t_-)-W_+(t_+^1,t_-)|\le 4\diam(K_{+}^{t_-}(t_+^2)/K_{+}^{t_-}(t_+^1)) $.
This combined with (\ref{W+continuity}) and Lemma \ref{lem-uniform} (i) implies that, if $\eta_\sigma$ extends continuously to $[0,T_\sigma]$ for $\sigma\in\{+,-\}$, then $W_+$ and $W_-$ are uniformly continuous on $\cal D$, and so extend continuously to $\lin{\cal D}$.
  \label{Remark-continuity-W}
\end{Remark}

\begin{Lemma}
For any $\sigma\in\{+,-\}$ and  $t_{-\sigma}\in\I_{-\sigma}$, the chordal Loewner hulls $K_{\sigma}^{t_{-\sigma}}(t_\sigma)=K(t_+,t_-)/K_{-\sigma}(t_{-\sigma})$, $0\le t_\sigma<T^{\cal D}_\sigma(t_{-\sigma})$,  are generated by a chordal Loewner curve, denoted by $\eta_{\sigma}^{t_{-\sigma}}$, which intersects $\R$ at a set with Lebesgue measure zero such that $\eta_\sigma|_{[0,T^{\cal D}_\sigma(t_{-\sigma}))}=f_{K_{-\sigma}(t_{-\sigma})}\circ \eta_{\sigma}^{t_{-\sigma}}$. Moreover, for $\sigma\in\{+,-\}$, $(t_+,t_-)\mapsto \eta_{\sigma}^{t_{-\sigma}}(t_\sigma)$ is continuous on $\cal D$.
  \label{Lebesgue}
\end{Lemma}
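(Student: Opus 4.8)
The plan is as follows. By the symmetry of the hypotheses it suffices to treat $\sigma=+$, so fix $t_-\in\I_-$. Lemma \ref{Lem-W} already supplies the main structural fact: the hulls $K_+^{t_-}(t_+)=K(t_+,t_-)/K_-(t_-)$, $0\le t_+<T^{\cal D}_+(t_-)$, are chordal Loewner hulls driven by $W_+|^-_{t_-}$ with speed $\mA|^-_{t_-}$, and $\eta_+(t_+)=f_{K(t_+,t_-)}(W_+(t_+,t_-))$. So what remains is to (a) exhibit a curve $\eta_+^{t_-}$ generating these hulls, (b) identify $\eta_+=f_{K_-(t_-)}\circ\eta_+^{t_-}$ on $[0,T^{\cal D}_+(t_-))$, (c) show that $\eta_+^{t_-}$ meets $\R$ in a Lebesgue-null set of times, and (d) establish joint continuity of $(t_+,t_-)\mapsto\eta_+^{t_-}(t_+)$ on ${\cal D}$. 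When $t_-\in\I_-^*$ all of this is contained in Conditions (I)--(II) of Definition \ref{commuting-Loewner} together with Lemma \ref{Lem-W}; the task is precisely to remove the restriction $t_-\in\I_-^*$, and this is exactly what the uniform-in-$t_-$ estimates of Lemma \ref{lem-uniform} were set up for.

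For (a) and (b) the natural candidate is $\eta_+^{t_-}(t_+):=g_{K_-(t_-)}(\eta_+(t_+))$, where $\eta_+(t_+)$ must be read as the prime end of $\HH\sem K_-(t_-)$ reached by following $\eta_+$ up to time $t_+$; this makes sense because $\eta_+$ and $\eta_-$ do not cross, so $\eta_+$ approaches each boundary point of $\HH\sem K_-(t_-)$ from a well-determined side (compare the convention ``$\eta_\sigma(t)$ understood as a prime end of $H_t$'' used in Section \ref{Strategy}). On the dense set of $t_+$ for which $\eta_+(t_+)$ actually lies in $\HH\sem\lin{K_-(t_-)}$ (dense by Lemma \ref{lem-strict}, which prevents $\eta_+$ from tracing $\pa K_-(t_-)$ along a time interval) this is an honest formula with value in $\HH$, and combining (\ref{circ-g}) with (\ref{W-def}) and Lemma \ref{lem-uniform}(i) --- which forces $g_{K(t_+,t_-)}(\eta_+(t_+))=W_+(t_+,t_-)$, since $\eta_+(t_+)\in\lin{K(t_++\delta,t_-)}$ for every $\delta>0$ while $\diam(K(t_++\delta,t_-)/K(t_+,t_-))\to0$ --- one gets $\eta_+^{t_-}(t_+)=f_{K_+^{t_-}(t_+)}(W_+(t_+,t_-))$ there. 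Granting that $\eta_+^{t_-}$ extends continuously to all of $[0,T^{\cal D}_+(t_-))$, the extension generates $K_+^{t_-}(\cdot)$ (its hull at time $t_+$ equals $\Hull(g_{K_-(t_-)}(\eta_+[0,t_+]\cap(\HH\sem K_-(t_-))))=K(t_+,t_-)/K_-(t_-)=K_+^{t_-}(t_+)$), (b) holds on the dense set by (\ref{circ-g}) and everywhere by continuity, and $f_{K_+^{t_-}(t_+)}$ is well defined because $\pa(\HH\sem K_+^{t_-}(t_+))$ inherits local connectedness from $\pa(\HH\sem K(t_+,t_-))$ (already used silently around (\ref{KmA})) through the continuous prime-end lift of $\eta_+$.

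The continuity of this extension is the crux, and is where Lemma \ref{lem-uniform} does its real work. For $t_+<t_+'$ with $\diam(\eta_+[t_+,t_+'])<r$, write $g=g_{K_+^{t_-}(t_+)}$ and $\Delta=K_+^{t_-}(t_+')/K_+^{t_-}(t_+)$; then $\eta_+^{t_-}(t_+)=f_{K_+^{t_-}(t_+)}(W_+(t_+,t_-))$ and $\eta_+^{t_-}(t_+')=f_{K_+^{t_-}(t_+)}(f_\Delta(W_+(t_+',t_-)))$, and both $W_+(t_+,t_-)$ and $f_\Delta(W_+(t_+',t_-))$ lie in $\lin\Delta$, which by Lemma \ref{lem-uniform}(i) has diameter $\le 10\pi L\,\log(L/r)^{-1/2}$. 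It then remains to bound, uniformly for $t_+$ in a compact subinterval, the oscillation of $f_{K_+^{t_-}(t_+)}$ over small real intervals around $W_+(t_+,t_-)$; the cleanest route is to push the small crosscut $\{|z-\eta_+(t_+)|=r\}\cap H(t_+,t_-)$ of $H(t_+,t_-)$ --- the one already constructed in the proof of Lemma \ref{lem-uniform}(i) --- forward by $g_{K_-(t_-)}$, obtaining a crosscut of $\HH\sem K_+^{t_-}(t_+)$ that encloses the relevant curve increment, and to estimate its diameter by the same extremal-length/Beurling argument run one level up. I expect this to be the main obstacle: converting the diameter control on \emph{increment hulls} from Lemma \ref{lem-uniform}(i) into diameter control on the \emph{curve increments} $\eta_+^{t_-}[t_+,t_+']$, uniformly in $t_-$ and locally uniformly in $t_+$, i.e.\ ruling out a large excursion of $\eta_+^{t_-}$ accompanying a small capacity increment --- together with the attendant need for local connectedness of $\pa(\HH\sem K_+^{t_-}(t_+))$ so that every object in sight is defined; both rely on the no-crossing hypothesis on $(\eta_+,\eta_-)$.

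Once (a) and (b) are in hand, the remaining two points are routine. For (c), pick $t_-^n\in\I_-^*$ with $t_-^n\downarrow t_-$ (possible since $\I_-^*$ is dense in $[0,T_-)$ and $t_-<T_-$); by (b), $\eta_+^{t_-}(t_+)\in\R$ forces $\eta_+(t_+)=f_{K_-(t_-)}(\eta_+^{t_-}(t_+))\in\pa(\HH\sem K_-(t_-))\subseteq\eta_-[0,t_-]\cup\R\subseteq\eta_-[0,t_-^n]\cup\R$, hence $\eta_+^{t_-^n}(t_+)\in\R$, and since $\{t_+:\eta_+^{t_-^n}(t_+)\in\R\}$ is Lebesgue-null by Condition (II) at $t_-^n$, so is $\{t_+:\eta_+^{t_-}(t_+)\in\R\}$. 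For (d), joint continuity on ${\cal D}$ follows from the continuity of $W_+$ on ${\cal D}$ (Lemma \ref{Lem-W}), the Carath\'eodory continuity of $(t_+,t_-)\mapsto K_+^{t_-}(t_+)$ (a consequence of Lemma \ref{lem-uniform}(iii) and the continuity of $g_{K_-(t_-)}$ in $t_-$), and the uniform modulus of continuity of Lemma \ref{lem-uniform}(i), via the standard fact that the tip of a chordal Loewner hull depends continuously on the pair (hull, driving value). The same arguments, with the roles of $+$ and $-$ exchanged, give the case $\sigma=-$.
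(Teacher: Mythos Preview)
Your proposal has the right overall shape, but there is one genuine error and one structural issue that the paper's proof handles differently.

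\textbf{The error is in (c).} You argue that $\eta_+^{t_-}(t_+)\in\R$ forces $\eta_+^{t_-^n}(t_+)\in\R$, and conclude that the time set $\{t_+:\eta_+^{t_-}(t_+)\in\R\}$ is Lebesgue-null. But that is not what the lemma asks: Condition~(II) of Definition~\ref{commuting-Loewner}, and the statement of the lemma, concern the \emph{spatial} set $\eta_+^{t_-}[0,T^{\cal D}_+(t_-))\cap\R$, not the set of times. (The time set is null for \emph{any} chordal Loewner curve by Proposition~\ref{prop-Lebesgue}, so your argument proves something automatic.) Your inclusion of time sets does not yield an inclusion of spatial ranges. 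The paper's argument is genuinely different: with $K_n=K_-(t_-^n)/K_-(t_-)$ and $f_n=g_{K_n}^{-1}$, one shows $\eta_+^{t_-}[0,\ha t_+]\cap(\R\sem[a_{K_n},b_{K_n}])\subset f_n\big(\eta_+^{t_-^n}[0,\ha t_+]\cap(\R\sem[c_{K_n},d_{K_n}])\big)$, and since $f_n$ is real-analytic on $\R\sem[c_{K_n},d_{K_n}]$, this image has Lebesgue measure zero by Condition~(II) at $t_-^n$. Sending $n\to\infty$ so that $[a_{K_n},b_{K_n}]\downarrow\{\ha w_-(t_-)\}$ finishes the job.

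\textbf{The structural issue is your route to continuity.} You try to establish continuity of $t_+\mapsto\eta_+^{t_-}(t_+)$ directly for \emph{every} $t_-$, via the formula $\eta_+^{t_-}(t_+)=f_{K_+^{t_-}(t_+)}(W_+(t_+,t_-))$ and control of $f_{K_+^{t_-}(t_+)}$. You correctly identify this as the main obstacle, and also flag the attendant circularity (existence of $f_{K_+^{t_-}(t_+)}$ presupposes local connectedness, which is essentially what you are trying to prove). The paper sidesteps both problems. It never controls $f_{K_+^{t_-}(t_+)}$ directly; instead it bootstraps: on the dense set ${\cal R}=\{t_+:\eta_+(t_+)\in\HH\sem K_-(a_-)\}$ one has $\eta_+^{t_-}(t_+)=g_{K_-(t_-)}(\eta_+(t_+))$ literally, and Proposition~\ref{g-z-sup} gives equicontinuity in $t_-\in[0,a_-]$ uniformly over $t_+\in{\cal R}$. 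Since continuity in $t_+$ is \emph{already known} for $t_-\in\I_-^*$ (Condition~(I)), the equicontinuity on ${\cal R}$ extends to all $t_+$, and the dense family $\{\eta_+^{t_-}:t_-\in\I_-^*\cap[0,a_-]\}$ then has a continuous extension in $t_-$ to all of $[0,a_-]$. This gives joint continuity and the existence of the curve for general $t_-$ in one stroke, without ever invoking the ``standard fact'' about tips you cite in~(d) --- a fact which, in the stated generality (Carath\'eodory convergence of hulls plus convergence of driving point), is not actually true.
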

\begin{proof}
  It suffices to work {in} the case that $\sigma=+$. First  we   show that there exists a continuous function $(t_+,t_-)\mapsto \eta_{+}^{t_-}(t_+)$ from $\cal D$ into $\lin\HH$ such that
\BGE \eta_+(t_+)=f_{K_-(t_-)}(\eta_{+}^{t_-}(t_+)),\quad \forall (t_+,t_-)\in\cal D.\label{eta-ft}\EDE
Let $(t_+,t_-)\in\cal D$. By Lemma \ref{lem-strict}, there is a sequence $t_+^n\downarrow t_+$ such that for all $n$, $(t_+^n,t_-)\in\cal D$ and $\eta_+(t_+^n)\in\HH\sem K(t_+,t_-)$. Then we get $g_{K_-(t_-)}(\eta_+(t_+^n))\in g_{K_-(t_-)}(K(t_+^n,t_-)\sem K(t_+,t_-))=K_{+}^{t_-}(t_+^n)/ K_{+}^{t_-}(t_+)$. If $t_-\in\I_-^*$, then
by Condition (I),  $\bigcap_n \lin{K_{+}^{t_-}(t_+^n)/K_{+}^{t_-}(t_+)}=\{\eta_{+}^{t_-}(t_+)\}$, which implies that $g_{K_-(t_-)}(\eta_+(t_+^n))\to \eta_{+}^{t_-}(t_+)$. From the continuity of $f_{K_-(t_-)}$ and $\eta_+$, we find that (\ref{eta-ft}) holds if $t_-\in\I_-^*$. Thus,
\BGE \eta_{+}^{t_-}(t_+)=g_{K_-(t_-)}(\eta_+(t_+)),\quad \mbox{if } (t_+,t_-)\in{\cal D}, \,t_-\in\I_-^*\mbox{ and }\eta_+(t_+)\in\HH\sem K_-(t_-).\label{eta-gt}\EDE
Fix $a_-\in\I_-^*$. Let ${\cal R}=\{t_+\in\I_+:(t_+,a_-)\in{\cal D},\eta_+(t_+)\in \HH\sem K_-(a_-)\}$, which by Lemma \ref{lem-strict} is dense in $[0,T^{\cal D}_+(a_-))$.
By Propositions \ref{g-z-sup}  and \ref{Loewner-chain},
\BGE \lim_{\delta\downarrow 0}\,\sup_{ t_-\in [0,a_-]}\,\,\sup_{t_-'\in[0,a_-]\cap (t_--\delta,t_-+\delta)} \,\, \sup_{t_+\in{\cal R}}\, |g_{K_-(t_-)}( \eta_+(t_+))-g_{K_-(t_-')}(\eta_+(t_+))|=0.\label{unif-g-g}\EDE
This combined with (\ref{eta-gt}) implies that
\BGE \lim_{\delta\downarrow 0}\,\sup_{ {t_-\in [0,a_-]\cap \I_-^*}}\,\,\sup_{t_-'\in[0,a_-]\cap\I_-^*\cap (t_--\delta,t_-+\delta)} \,\, \sup_{t_+\in{\cal R}} \, |\eta_{+}^{t_-}(t_+)-\eta_{+}^{t_-'}(t_+)|=0.\label{unif-g-g'}\EDE
By the denseness of $\cal R$ in $[0,T^{\cal D}_+(a_-))$ and the continuity of each $\eta_{+}^{t_-}$, $t_-\in\I_-^*$, we know that (\ref{unif-g-g'}) still holds if $\sup_{t_+\in{\cal R}}$ is replaced by $\sup_{t_+\in[0,T^{\cal D}_+(a_-))}$. Since $\I_-^*$ is dense in $\I_-$, the continuity of each $\eta_{+}^{t_-}$, $t_-\in\I_-^*$, together with (\ref{unif-g-g'}) implies that   there exists a continuous function $[0,T^{\cal D}_+(a_-))\times [0,a_-]\ni (t_+,t_-)\mapsto \eta_{+}^{t_-}(t_+)\in\lin\HH$, which extends those $\eta_{+}^{t_-}(t_+)$ for $t_-\in\I_-^*\cap [0,a_-]$ and $t_+\in [0,T^{\cal D}_+(a_-))$. Running $a_-$ from $0$ to $T_-$, we get a continuous function ${\cal D}\ni (t_+,t_-)\mapsto \eta_{+}^{t_-}(t_+)\in\lin\HH$, which extends those $\eta_{+}^{t_-}(t_+)$ for $(t_+,t_-)\in\cal D$ and $t_-\in\I_-^*$.
Since $\eta_{+}^{t_-}(t_+)=g_{K_-(t_-)}(\eta_+(t_+))$ for all $t_+\in{\cal R}$ and $t_-\in[0,a_-]\cap \I_-^*$, from (\ref{eta-gt},\ref{unif-g-g}) we know that it is also true for any $t_-\in[0,a_-]$. Thus, $\eta_+(t_+)=f_{K_-(t_-)}(\eta_{+}^{t_-}(t_+))$ for all $t_+\in{\cal R}$ and $t_-\in[0,a]$. By the denseness of $\cal R$ in $[0,T^{\cal D}_+(a_-))$ and the continuity of $\eta_+$, $f_{K_-(t_-)} $ and $\eta_{+}^{t_-}$, we get (\ref{eta-ft}) for all $t_-\in[0,a_-]$ and $t_+\in [0,T^{\cal D}_+(a_-))$. So (\ref{eta-ft}) holds for all $(t_+,t_-)\in\cal D$.

For  $(t_+,t_-)\in\cal D$, since $K(t_+,t_-)=\Hull(K_-(t_-)\cup(\eta_+[0,t_+]\cap (\HH\sem K_-(t_-)))$, we see that $K_{+}^{t_-}(t_+)=g_{K_-(t_-)}(K(t_+,t_-)\sem K_-(t_-))$ is the $\HH$-hull generated by $g_{K_-(t_-)}(\eta_+[0,t_+]\cap (\HH\sem K_-(t_-)))=\eta_{+}^{t_-}[0,t_+]\cap \HH$. So $K_{+}^{t_-}(t_+)=\Hull(\eta_{+}^{t_-}[0,t_+])$. By Lemma \ref{Lem-W}, for any $t_-\in [0,T_-)$, $\eta_{+}^{t_-}(t_+)$, $0\le t_+<T^{\cal D}_+(t_-)$, is the chordal Loewner curve driven by $W_+(\cdot,t_-)$ with speed $\mA(\cdot,t_-)$. So we have $\eta_{+}^{t_-}(t_+)=f_{K_{+}^{t_-}(t_+)}(W_+(t_+,t_-))$, which together with $\eta_+(t_+)=f_{K(t_+,t_-)}( W_+(t_+,t_-))$ implies that $\eta_+(t_+)=f_{K_-(t_-)}(\eta_{+}^{t_-}(t_+))$.

Finally, we show that $\eta_{+}^{t_-}\cap\R$ has Lebesgue measure zero for all $t_-\in\I_-$. Fix $t_-\in \I_-$ and $\ha t_+\in\I_+$ such that $(\ha t_+,t_-)\in\cal D$. It suffices to show that $\eta_{+}^{t_-}[0,\ha t_+]\cap \R$ has Lebesgue measure zero. There exists a sequence $\I_-^*\ni t_-^n\downarrow t_-$ such that $(\ha t_+,t_-^n)\in\cal D$ for all $n$. Let $K_n=K_-(t^n_-)/K_-(t_-)$, $g_n=g_{K_n}$, and $f_n=g_n^{-1}$.
Then $f_{K_-(t_-)}=f_{K_-(t_-^n )}\circ g_n$ on $\HH\sem K_n$. Let $t_+\in[0,\ha t_+]$. From  $f_{K_-(t_-)}( \eta_{+}^{t_-}(t_+))=\eta_+(t_+)=f_{K_-(t_-^n )}( \eta_{+}^{t_-^n}(t_+))$ we get
$ \eta_{+}^{t_-^n}(t_+)=g_n(\eta_{+}^{t_-}(t_+))$ if $\eta_{+}^{t_-}(t_+)\in \HH\sem K_n$. By continuity we get $ \eta_{+}^{t_-}(t_+)=f_n(\eta_{+}^{t_-}(t_+^n))$  if $\eta_{+}^{t_-^n}(t_+)\in \R \sem [c_{K_n},d_{K_n}]$, $0\le t_+\le \ha t_+$.  Thus, $\eta_{+}^{t_-}[0,\ha t_+]\cap (\R\sem [a_{K_n},b_{K_n}])\subset f_n(\eta_{+}^{t_-^n}[0,\ha t_+]\cap (\R\sem [c_{K_n},d_{K_n}]))$. Since $t_-^n\in\I_-^*$, by Condition (II) and the analyticity of $f_n$ on $\R\sem [c_{K_n},d_{K_n}]$ we know that $\eta_{+}^{t_-}[0,\ha t_+]\cap (\R\sem [a_{K_n},b_{K_n}])$  has Lebesgue measure zero for each $n$. Sending $n\to \infty$ and using the fact that $[a_{K_n},b_{K_n}]\downarrow \{\ha w_-(t_-)\}$, we see that $\eta_{+}^{t_-}[0,\ha t_+]\cap\R$ also has Lebesgue measure zero.
\end{proof}

\begin{Lemma}
For any $\sigma\in\{+,-\}$ and $(t_+,t_-)\in\cal D$,  $\ha w_\sigma(t_\sigma)= f_{K_{-\sigma}^{t_\sigma}(t_{-\sigma})}(W_\sigma(t_+,t_-))\in \pa (\HH\sem K_{-\sigma}^{t_\sigma}(t_{-\sigma}))$. \label{lem-w-W}
\end{Lemma}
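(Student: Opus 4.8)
By symmetry it suffices to treat $\sigma=+$, so I fix $(t_+,t_-)\in{\cal D}$ and write $K:=K_+(t_+)$, $\ha K:=K(t_+,t_-)$ and $L:=\ha K/K=K_-^{t_+}(t_-)$. Recall from (\ref{circ-g}) that $g_{\ha K}=g_L\circ g_K$, that $g_K$ maps $\HH\sem\ha K$ bijectively onto $\HH\sem L$, and that $L$ is generated by the chordal Loewner curve $\eta_-^{t_+}$ of Lemma \ref{Lebesgue}, so $\pa(\HH\sem L)$ is locally connected and $f_L$ is a well defined continuous extension of $g_L^{-1}$ to $\lin\HH$. The plan is to prove the single identity $\ha w_+(t_+)=f_L(W_+(t_+,t_-))$; the asserted membership then follows at once, since $W_+(t_+,t_-)\in\R=\pa\HH$ and the continuous extension of a conformal map carries the boundary into the boundary, so $f_L(W_+(t_+,t_-))\in\pa(\HH\sem L)$.

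To prove the identity I would approximate along the curve $\eta_+$. As in the proof of Lemma \ref{Lem-W}, Lemma \ref{lem-strict} produces a sequence $t_+^n\downarrow t_+$ with $(t_+^n,t_-)\in{\cal D}$ and $z_n:=\eta_+(t_+^n)\in K(t_+^n,t_-)\sem\ha K$; in particular $z_n\in H(t_+,t_-)\subset\HH$, hence also $z_n\in K_+(t_+^n)\sem K$, and $z_n\to\eta_+(t_+)$ by continuity of $\eta_+$. From $z_n\in K_+(t_+^n)\sem K$ we get $g_K(z_n)\in K_+(t_+^n)/K$, and since by (\ref{haw=}) the closed hulls $\lin{K_+(t_++\del)/K}$ decrease to $\{\ha w_+(t_+)\}$ as $\del\downarrow 0$, taking $\del=t_+^n-t_+$ gives $g_K(z_n)\to\ha w_+(t_+)$. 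Likewise, from $z_n\in K(t_+^n,t_-)\sem\ha K$ we get $g_{\ha K}(z_n)\in K(t_+^n,t_-)/\ha K$, and since by (\ref{W-def}) these closed hulls decrease to $\{W_+(t_+,t_-)\}$, we obtain $g_{\ha K}(z_n)\to W_+(t_+,t_-)$.

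Finally, since $z_n\in\HH\sem\ha K$ we have $g_K(z_n)\in\HH\sem L$ and $g_L(g_K(z_n))=g_{\ha K}(z_n)$, so applying $f_L$ (which inverts $g_L$ on $\HH\sem L$) gives $g_K(z_n)=f_L(g_{\ha K}(z_n))$ for every $n$; letting $n\to\infty$ and using the two limits above together with the continuity of $f_L$ yields $\ha w_+(t_+)=f_L(W_+(t_+,t_-))$, which is what we want. The only point requiring care is the construction of the approximating sequence $z_n=\eta_+(t_+^n)$ inside $K(t_+^n,t_-)\sem\ha K$, i.e.\ knowing that $\eta_+$ genuinely re-enters the unbounded component $H(t_+,t_-)$ at times accumulating at $t_+$ from the right rather than travelling only along $\pa\ha K$ or along $\R$; this is precisely the consequence of the strict monotonicity of $K(\cdot,\cdot)$ extracted in the proof of Lemma \ref{Lem-W}, so it can be quoted directly, and everything else is a routine chase through the factorization $g_{\ha K}=g_L\circ g_K$ and the hull-shrinking characterizations (\ref{haw=}) and (\ref{W-def}).
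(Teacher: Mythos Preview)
Your proof is correct and follows essentially the same approach as the paper's own proof: pick a sequence $t_+^n\downarrow t_+$ with $\eta_+(t_+^n)\in K(t_+^n,t_-)\setminus K(t_+,t_-)$ via Lemma~\ref{lem-strict}, use (\ref{haw=}) and (\ref{W-def}) to identify the two limits $g_{K_+(t_+)}(z_n)\to\ha w_+(t_+)$ and $g_{K(t_+,t_-)}(z_n)\to W_+(t_+,t_-)$, and then pass to the limit in the factorization $g_{K_+(t_+)}=f_{K_-^{t_+}(t_-)}\circ g_{K(t_+,t_-)}$ using continuity of $f_{K_-^{t_+}(t_-)}$. Your write-up is slightly more explicit in justifying that $f_L$ is well defined (via Lemma~\ref{Lebesgue}) and that $z_n\in K_+(t_+^n)\setminus K_+(t_+)$, but the argument is the same.
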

\begin{proof}
By symmetry, it suffices to work on the case $\sigma=+$. For any $(t_+,t_-)\in\cal D$, by Lemma \ref{lem-strict} there is a sequence $t^n_+\downarrow t_+$ such that $\eta_+(t^n_+)\in K(t^n_+,t_-)\sem K(t_+,t_-)$ for all $n$.
From (\ref{haw=}) and  Lemma \ref{Lem-W} we get $g_{K_+(t_+)}(\eta_+(t^n_+))\to \ha w_+(t_+)$ and $g_{K(t_+,t_-)}(\eta_+(t^n_+))\to W_+(t_+,t_-)$. From (\ref{circ-g}) we get $g_{K_+(t_+)} =f_{K_{-}^{t_+}(t_-)}\circ g_{K(t_+,t_-)}$. From the continuity of $f_{K_{-}^{t_+}(t_-)}$ on $\lin\HH$, we then get  $\ha w_+(t_+)= f_{K_{-}^{t_+}(t_-)}( W_+(t_+,t_-))$. Finally, $\ha w_+(t_+)\in \pa(\HH\sem K_{-}^{t_+}(t_-))$ because $W_+(t_+,t_-)\in\pa\HH$ and   $f_{K_{-}^{t_+}(t_-)}$ maps $\HH$ conformally onto $\HH\sem K_-^{t_+}(t_-)$.
\end{proof}

\subsection{Force point functions}\label{section-V}
For $\sigma\in\{+,-\}$, define $C_\sigma$ and $D_\sigma$ on $\cal D$ such that if $t_\sigma>0$, $C_\sigma(t_+,t_-)=c_{K_{\sigma}^{t_{-\sigma}}(t_\sigma)}$ and $D_\sigma(t_+,t_-)=d_{K_{\sigma}^{t_{-\sigma}}(t_\sigma)}$; and if $t_\sigma=0$, then $C_\sigma =D_\sigma =W_\sigma$ at $t_{-\sigma}\ulin e_{-\sigma}$. Since $K_{\sigma}^{t_{-\sigma}}(\cdot)$ are chordal Loewner hulls driven by $W_\sigma|^{-\sigma}_{t_{-\sigma}}$ with some speed, by Proposition \ref{winK} we get
\BGE C_\sigma\le W_\sigma\le D_\sigma\quad \mbox{on }\cal D,\quad \sigma\in\{+,-\}.\label{CWD}\EDE
Since $K_{\sigma}^{t_{-\sigma}}(t_\sigma)$ is the $\HH$-hull generated by $\eta_{\sigma}^{t_{-\sigma}}[0,t_\sigma]$, we get
\BGE f_{K_{\sigma}^{t_{-\sigma}}(t_\sigma)}[C_\sigma(t_+,t_-),D_\sigma(t_+,t_-)]\subset \eta_{\sigma}^{t_{-\sigma}}[0,t_\sigma].\label{f[C,D]}\EDE

Recall that $w_-<w_+\in\R$. We write $\ulin w$ for $(w_+,w_-)$. Define $R_{\ulin w}=(\R\sem \{w_+,w_-\})\cup\{w_+^+,w_+^-,w_-^+,w_-^-\}$ with the obvious order endowed from $\R$. Assign the topology to $\R_{\ulin w}$ such that $I_-:=(-\infty,w_-^-],I_0:=[w_-^+,w_+^-],I_+:=[w_+^+,\infty)$ are three connected components of $\R_{\ulin w}$, which are respectively homeomorphic to $(-\infty,w_-],[w_-,w_+],[w_+,\infty)$. Recall that for $\sigma\in\{+,-\}$ and $t\in\I_\sigma$, $g_{K_\sigma(t)}^{w_\sigma}$ (Definition \ref{Def-Rw}) is defined on $\R_{w_\sigma}$, and agrees with $g_{K_\sigma(t)}$ on $\R\sem ([a_{K_\sigma(t)},b_{K_\sigma(t)}]\cup\{w_\sigma\})$. By Lemma \ref{lem-w-W} and the fact that $w_{-\sigma}\not\in [a_{K_\sigma(t)},b_{K_\sigma(t)}]\cup\{w_\sigma\}$, we then know that $g_{K_\sigma(t)}^{w_\sigma}(w_{-\sigma})=W_{-\sigma}(t\ulin e_\sigma)$. So we define $g_{K_\sigma(t)}^{w_\sigma}(w_{-\sigma}^\pm)=W_{-\sigma}(t\ulin e_\sigma)^\pm$, and understand $g_{K_\sigma(t)}^{w_\sigma}$ as a continuous function from $\R_{\ulin w}$ to $\R_{W_{-\sigma}(t\ulin e_\sigma)}$.

\begin{Lemma}
For any $\ulin t=(t_+,t_-)\in\cal D$, $g_{K_{+}^{t_-}(t_+)}^{W_+(0,t_-)}\circ g_{K_-(t_-)}^{w_-}$ and $g_{K_{-}^{t_+}(t_-)}^{W_-(t_+,0)}\circ g_{K_+(t_+)}^{w_+} $ agree on $\R_{\ulin w}$, and the common function in the equality, denoted by $g_{K(\ulin t)}^{\ulin w}$, satisfies the following properties.
  \begin{enumerate}
    \item [(i)]  $g_{K(\ulin t)}^{\ulin w}$ is increasing and continuous on $\R_{\ulin w}$, and agrees with $g_{K(\ulin t)}$ on $\R\sem \lin{K(\ulin t)}$.
    \item [(ii)] $g_{K(\ulin t)}^{\ulin w}$  maps $I_+\cap (\lin{K(\ulin t)}\cup\{w_+^+\})$ and $I_-\cap (\lin{K(\ulin t)}\cup \{w_-^-\})$ to $ D_+(\ulin t) $ and $ C_-(\ulin t) $, respectively.
    \item [(iii)] If $\lin{K_+(t_+)}\cap \lin{K_-(t_-)}=\emptyset$, $g_{K(\ulin t)}^{\ulin w}$  maps   $I_0\cap (\lin{K_+(t_+)}\cup\{w_+^-\})$ and $I_0\cap (\lin{K_-(t_-)}\cup \{w_-^+\})$ to   $ C_+(\ulin t) $ and $ D_-(\ulin t) $, respectively.
    \item [(iv)] If $\lin{K_+(t_+)}\cap \lin{K_-(t_-)}\ne\emptyset$, $g_{K(\ulin t)}^{\ulin w}$  maps $I_0$ to   $ C_+(\ulin t) =D_-(\ulin t) $.
    \item [(v)] The map $(\ulin t,v)\mapsto g_{K(\ulin t)}^{\ulin w}(v)$ from ${\cal D}\times \R_{\ulin w}$ to $\R$ is jointly continuous.
  \end{enumerate}
  \label{common-function}
\end{Lemma}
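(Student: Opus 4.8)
The plan is to first establish the composition identity $g_{K_{+}^{t_-}(t_+)}^{W_+(0,t_-)}\circ g_{K_-(t_-)}^{w_-} = g_{K_{-}^{t_+}(t_-)}^{W_-(t_+,0)}\circ g_{K_+(t_+)}^{w_+}$ on $\R_{\ulin w}$, and only afterwards read off properties (i)--(v). For the identity, the key tool is Proposition \ref{prop-comp-g}: applied with the pair $K_-(t_-)\subset K(\ulin t)$ and base point $w_-$, together with the revision convention at the point where $g^{w_-}_{K_-(t_-)}(w_-)=W_-(0,t_-)$, it gives $g^{W_-(0,t_-)}_{K(\ulin t)/K_-(t_-)}\circ g^{w_-}_{K_-(t_-)} = g^{w_-}_{K(\ulin t)}$ on $\R_{w_-}$; but $K(\ulin t)/K_-(t_-)=K_+^{t_-}(t_+)$, and on the component $I_+$ and the component $I_-$ the base point $w_-$ is irrelevant, so this extends naturally to a map on all of $\R_{\ulin w}$ once we track the image of $w_+$. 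Here I must be careful: $g^{w_-}_{K_-(t_-)}$ sends $w_+$ to $W_+(0,t_-)$ (this is exactly the computation recorded just before the lemma, using Lemma \ref{lem-w-W} and the fact that $w_+\notin[a_{K_-(t_-)},b_{K_-(t_-)}]\cup\{w_-\}$), so in the second factor the relevant base point on the middle slot is $W_+(0,t_-)$, matching the notation. Symmetrically, applying Proposition \ref{prop-comp-g} with $K_+(t_+)\subset K(\ulin t)$ and base point $w_+$ gives the other expression, and both equal the ``intrinsic'' modified map of $K(\ulin t)$ relative to the two marked points; hence they agree. I would phrase the common value directly as $g^{\ulin w}_{K(\ulin t)}$ and note it agrees with $g_{K(\ulin t)}$ off $\lin{K(\ulin t)}$ since all the modified maps agree with the unmodified ones away from the real hulls, which gives (i) together with monotonicity and continuity inherited from Definition \ref{Def-Rw}.

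For (ii), on $I_+$: by (\ref{lem-aabb}) we have $b_{K(\ulin t)}=b_{K_+(t_+)}$, and $\lin{K(\ulin t)}\cap I_+ = \lin{K_+(t_+)}\cap[w_+,\infty)$; the modified map $g^{w_+}_{K_+(t_+)}$ collapses $[w_+^+,b_{K_+(t_+)}]$ to the constant $d^{w_+}_{K_+(t_+)}=D_+(t_+,0)$, and then the outer map $g^{W_-(t_+,0)}_{K_-^{t_+}(t_-)}$ sends this point forward; one checks via (\ref{circ-g}) and the definition of $D_+$ that the composite value is $D_+(\ulin t)=d_{K_+^{t_-}(t_+)}$. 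The $I_-$ statement is the mirror image. For (iii) and (iv), the point $z^*$ of the middle interval $I_0$ is handled by working in the ``$+$-first'' coordinate: $g^{w_+}_{K_+(t_+)}$ collapses $I_0\cap(\lin{K_+(t_+)}\cup\{w_+^-\})$ to $c^{w_+}_{K_+(t_+)} = W_+(t_+,0)$-or-smaller, in fact to $C_+(t_+,0)$; then $g^{W_-(t_+,0)}_{K_-^{t_+}(t_-)}$ sends this to $C_+(\ulin t)$. Whether this coincides with the image $D_-(\ulin t)$ of the analogous left piece is governed precisely by whether $\lin{K_+(t_+)}$ and $\lin{K_-(t_-)}$ touch: if they are disjoint, the open gap between them survives under $g_{K(\ulin t)}$ as the open interval $(C_+(\ulin t),D_-(\ulin t))$, so $C_+(\ulin t)<D_-(\ulin t)$ and the two sides of $I_0$ go to the two distinct endpoints; if they touch, that gap degenerates and $C_+(\ulin t)=D_-(\ulin t)$, so all of $I_0$ collapses to that single value. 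This dichotomy is the one genuinely geometric input; it follows from the structure of $S_{K(\ulin t)}$ via Schwarz reflection and Proposition \ref{abcdK} applied to $K_+(t_+), K_-(t_-)\subset K(\ulin t)$, together with the observation that $g_{K(\ulin t)}$ maps the complementary arc between the two hulls onto the complementary interval between $[c_{K_-^{t_+}(t_-)},d_{K_-^{t_+}(t_-)}]$'s and $[c_{K_+^{t_-}(t_+)},d_{K_+^{t_-}(t_+)}]$'s images.

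For (v), joint continuity, I would combine Lemma \ref{lem-uniform}(iii) (joint continuity of $(\ulin t,z)\mapsto g_{K(\ulin t)}(z)$ off the doubled hull) with the continuity of $W_\pm, C_\pm, D_\pm$ on $\cal D$ (Lemma \ref{Lem-W}, and the definitions of $C_\sigma,D_\sigma$ together with Proposition \ref{Prop-cd-continuity} / Lemma \ref{lem-uniform}(i)), handling separately the regions where $v$ stays off $\lin{K(\ulin t)}$ (direct) and where $v$ enters a hull (there the value is pinned to one of $C_\pm,D_\pm$ by (ii)--(iv), which are themselves continuous, and one checks the matching is continuous across the boundary because $g_{K(\ulin t)}$ of a point approaching $\lin{K(\ulin t)}$ from outside converges to the corresponding endpoint $c$ or $d$). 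I expect the main obstacle to be (iii)--(iv): making the touching/non-touching dichotomy rigorous requires carefully identifying $g^{\ulin w}_{K(\ulin t)}(I_0)$ with the closed interval $[C_+(\ulin t),D_-(\ulin t)]$ and proving this interval is a single point exactly when the hulls intersect, which is where one must invoke that $K_+(t_+)$ and $K_-(t_-)$ are each generated by curves (so their doubled closures are connected) and analyze the boundary behavior of $g_{K(\ulin t)}$ at the common point; everything else is bookkeeping with Proposition \ref{prop-comp-g} and the already-established continuity lemmas.
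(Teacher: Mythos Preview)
Your plan reverses the paper's logical flow and runs into a real obstruction at the first step. You propose to use Proposition~\ref{prop-comp-g} to show that both compositions equal an ``intrinsic'' modified map of $K(\ulin t)$, and then read off (i)--(v). But Proposition~\ref{prop-comp-g} is a single-base-point statement: with $K_1=K_-(t_-)$, $K_2=K(\ulin t)$, $w=w_-$, and any admissible $\til w\in[c_{K_-(t_-)},d_{K_-(t_-)}]$, it yields $g^{\til w}_{K_+^{t_-}(t_+)}\circ g^{w_-}_{K_-(t_-)}=g^{w_-}_{K(\ulin t)}$ on $\R_{w_-}$. The lemma's left composition, however, carries the outer superscript $W_+(0,t_-)=g_{K_-(t_-)}(w_+)$, which lies strictly to the right of $[c_{K_-(t_-)},d_{K_-(t_-)}]$ and is therefore \emph{not} an admissible $\til w$. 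More to the point, the single-base-point map $g^{w_-}_{K(\ulin t)}$ collapses \emph{all} of $I_0$ to the constant $d_{K(\ulin t)}=D_+(\ulin t)$, whereas the two-base-point map $g^{\ulin w}_{K(\ulin t)}$ must agree with $g_{K(\ulin t)}$ on $I_0\setminus\lin{K(\ulin t)}$ (a nonempty open set in the disjoint case). So there is no pre-existing intrinsic object both compositions equal; the lemma is \emph{defining} that object as their common value, and the agreement has to be proved directly.

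The paper proceeds in the opposite order: it first verifies (i), (ii), (iii), and a one-sided version of (iv) for \emph{each} composition separately by explicit boundary computations (e.g.\ the ``$+$-first'' composition sends $I_0$ to $D_-(\ulin t)$ in the touching case, the ``$-$-first'' one to $C_+(\ulin t)$), and then uses these to prove agreement on $I_+$, $I_-$, $I_0$ piece by piece. The genuinely hard step is the one you flag: when $\lin{K_+(t_+)}\cap\lin{K_-(t_-)}\ne\emptyset$ one must show $c_{\til K_+}=d_{\til K_-}$, i.e.\ $C_+(\ulin t)=D_-(\ulin t)$. The paper does this via two separate contradiction arguments that essentially exploit that the hulls are generated by curves: for $d_{\til K_-}\le c_{\til K_+}$, a putative overlap interval would contain a point whose $f_K$-image lies both in $\HH\setminus K_-$ and in $\eta_-$; for $d_{\til K_-}\ge c_{\til K_+}$, a putative gap $(d_{\til K_-},c_{\til K_+})$ would have $f_K$-image inside both $\eta_+$ and $\eta_-$, and chasing preimages under $f_{\til K_\pm}$ yields an order contradiction. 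Your sketch (``follows from the structure of $S_{K(\ulin t)}$ via Schwarz reflection and Proposition~\ref{abcdK}'') does not supply either argument. Also, your ordering is reversed: in the disjoint case the gap $(b_{K_-(t_-)},a_{K_+(t_+)})$ is mapped by $g_{K(\ulin t)}$ onto $(D_-(\ulin t),C_+(\ulin t))$, so $D_-<C_+$, not $C_+<D_-$.
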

\begin{proof}
 Fix $\ulin t=(t_+,t_-)\in\cal D$. For $\sigma\in\{+,-\}$, we write $K$ for $K(\ulin t)$, $K_\sigma$ for $K_\sigma(t_\sigma)$, $\til K_\sigma$ for $K_{\sigma}^{t_{-\sigma}}(t_\sigma)$, $\til w_\sigma$ for $W_\sigma(t_{-\sigma}\ulin e_{-\sigma})$,
 $C_\sigma$ for $C_\sigma(\ulin t)$, and $D_\sigma$ for $D_\sigma(\ulin t)$.
    The equality now reads $g_{\til K_+}^{\til w_+}\circ g_{K_-}^{w_-}=g_{\til K_-}^{\til w_-}\circ g_{K_+}^{w_+}$. Before proving the equality, we first show that both sides are well defined and satisfy (i-iii) and a weaker version of (iv) (see below). First consider $g_{\til K_-}^{\til w_-}\circ g_{K_+}^{w_+}$. Since $g_{K_+}^{w_+}:\R_{\ulin w}\to \R_{\til w_-}$, the composition is well defined on $\R_{\ulin w}$. We denote it by $g_{K(\ulin t)}^{\ulin w,+}$.

(i)
The continuity and monotonicity of  the composition follows from the continuity and monotonicity of both $g_{\til K_-}^{\til w_-}$ and $g_{K_+}^{w_+}$.
Let $v \in \R \sem \lin{K}$. Then $v\not\in \lin{K_+}$, and $g_{K_+}^{w_+}(v)=g_{K_+}(v)$.
Since $\til K_-=K/K_+$, $K\sem K_+=f_{K_+}(\til K_-)$. From $v=f_{K_+}(g_{K_+}(v))\not\in \lin{K\sem K_+}$ and the continuity of $f_{K_+}$ on $\lin\HH$, we know that $g_{K_+}(v)\not\in \lin{\til K_-}$, which implies that $g_{K(\ulin t)}^{\ulin w,+}(v)=g_{\til K_-} \circ g_{K_+} (v)=g_K(v)$.

In the proof of (ii,iii) below, we write $\eta_\sigma$ for $\eta_\sigma[0,t_\sigma]$ and $\til\eta_\sigma$ for $\eta_{\sigma}^{t_{-\sigma}}[0,t_\sigma]$;
when $t_\sigma=0$, i.e., $K_\sigma=\til K_\sigma=\emptyset$, we understand $a_{K_\sigma}=b_{K_\sigma}=c_{K_\sigma}=d_{K_\sigma}=w_\sigma$, and $a_{\til K_\sigma}=b_{\til K_\sigma}=c_{\til K_\sigma}=d_{\til K_\sigma}=\til w_\sigma$. Then it is always true that $a_{K_\sigma}=\min\{\eta_\sigma \cap \R\}$, $b_{K_\sigma}=\max\{\eta_\sigma \cap \R\}$,
$a_{\til K_\sigma}=\min\{\til \eta_\sigma \cap \R\}$, $b_{\til K_\sigma}=\max\{\til \eta_\sigma \cap \R\}$, $c_{\til K_\sigma}=C_\sigma$, and $d_{\til K_\sigma}=D_\sigma$. Since $\eta_\pm=f_{K_{\mp}}(\til \eta_\pm)$, we get $ b_{\til K_+}=g_{K_-}(b_{K_+})$, $a_{\til K_-}=g_{K_+}(a_{K_-})$. If $\lin{K_+}\cap \lin{K_-}=\emptyset$, then $a_{\til K_+}=g_{K_-}(a_{K_+})$, $b_{\til K_-}=g_{K_+}(b_{K_-})$.

(ii) Since  $I_+\cap (\lin K\cup\{w_+^+\})=\{w_+^+\}\cup (w_+,b_K]=\{w_+^+\}\cup (w_+^+,b_{K_+}]$ is mapped by $g_{K_+}^{w_+}$ to a single point, it is also mapped by $g_{K(\ulin t)}^{\ulin w,+} $ to a single point, which by (i) is equal to
$$\lim_{x\downarrow b_{K}} g_K(x)=\lim_{x\downarrow b_{K_+}} g_{\til K_+}\circ g_{K_-}(x)=\lim_{y\downarrow b_{\til K_+}} g_{\til K_+}(y)=d_{\til K_+}=D_+.$$

To show that $I_-\cap( \lin K\cup\{w_-^-\})=[a_K,w_-^-)\cup\{w_-^-\}$ is mapped by $g_{K(\ulin t)}^{\ulin w,+} $ to $C_-$, by (i)  it suffices to show that $\lim_{x\uparrow a_{K}}g_K(x)=g_{\til K_-}^{\til w_-}\circ g_{K_+}^{w_+}(w_-^-)=c_{\til K_-}$. This holds because $$g_{\til K_-}^{\til w_-}\circ g_{K_+}^{w_+}(w_-^-)= g_{\til K_-}^{\til w_-}(\til w_-^-)=c_{\til K_-}=\lim_{x\uparrow a_{\til K_-}} g_{\til K_-}(x)=\lim_{x\uparrow a_{ K_-}} g_{\til K_-}\circ g_{K_+}(x)=\lim_{x\uparrow a_{K}}g_K(x).$$

(iii) Suppose $\lin{K_+}\cap \lin{K_-}=\emptyset$. Then $I_0\cap (\lin{K_+}\cup\{w_+^-\})=[a_{K_+},w_+^-)\cup\{w_+^-\}$  is mapped by $g_{K_+}^{w_+}$ to a single point, so is also mapped by $g_{K(\ulin t)}^{\ulin w,+}$ to a single point. By (i) the latter point is
$$\lim_{x\uparrow a_{K_+}}g_K(x)=\lim_{x\uparrow a_{K_+}} g_{\til K_+}\circ g_{K_-}(x)=\lim_{y\uparrow a_{\til K_+}} g_{\til K_+}(y)=c_{\til K_+}=C_+.$$

Since $I_0\cap (\lin{K_-}\cup\{w_-^+\})=\{w_-^+\}\cup (w_-^+,b_{K_-}]$  is mapped by $g_{K_+}^{w_+}$ to $\{\til w_-^+\}\cup (\til w_-^+,b_{\til K_-}]$, which is further mapped by $g_{\til K_-}^{\til w_-}$ to   $d_{\til K_-}= D_-$, we see that
 $g_{K(\ulin t)}^{\ulin w,+}$ maps  $I_0\cap (\lin{K_-}\cup\{w_-^+\})$ to  $ D_-$.

(iv) Suppose $\lin{K_+}\cap \lin{K_-}\ne\emptyset$. For now, we only show that $I_0$ is mapped by $g_{K(\ulin t)}^{\ulin w,+}$ to $D_-$.  By the assumption we have $t_+,t_->0$ and  $[c_{K_+},d_{K_+}]\cap \lin{\til K_-}\ne \emptyset$, which implies that $c_{K_+}\le b_{\til K_-}$. Thus, $g_{K_+}^{w_+}(I_0)= [\til w_-^+, c_{K_+}]\subset [\til w_-^+, b_{\til K_-}]$, from which follows that  $g_{K(\ulin t)}^{\ulin w,+}(I_0)=\{d_{\til K_-}\}=\{D_-\}$.

Now $g_{\til K_-}^{\til w_-}\circ g_{K_+}^{w_+}$  satisfies (i-iii) and a weaker version of (iv). By symmetry, this is also true for $g_{\til K_+}^{\til w_+}\circ g_{K_-}^{w_-}$, where for (iv), $I_0$ is mapped to $\{C_+\}$. We now show that the two functions agree on $\R_{\ulin w}$.
By (i), $g_{\til K_+}^{\til w_+}\circ g_{K_-}^{w_-}$ and $g_{\til K_-}^{\til w_-}\circ g_{K_+}^{w_+}$ agree on $\R \sem \lin{K}$. By (ii),  the two functions also agree on $I_+\cap (\lin{K(\ulin t)}\cup\{w_+^+\})$ and $I_-\cap (\lin{K(\ulin t)}\cup \{w_-^-\})$. Thus they agree on both $I_+$ and $I_-$. By (i,iii)  they agree on $I_0$ when $\lin{K_+}\cap \lin{K_-}=\emptyset$. To prove that they agree on $I_0$ when $\lin{K_+}\cap \lin{K_-}\ne\emptyset$, by the weaker versions of (iv) we only need to show that $c_{\til K_+}=d_{\til K_-}$ in that case.

First, we show that $d_{\til K_-}\le c_{\til K_+}  $. Suppose $d_{\til K_-}> c_{\til K_+}$. Then $J:=(c_{\til K_+},d_{\til K_-})\subset [c_{\til K_-},d_{\til K_-}]\cap [c_{\til K_+},d_{\til K_+}]$. So $ f_{\til K_+}(J)\subset \pa (\HH\sem \til K_+)$. If $f_{\til K_+}(J)\subset \R$, then it is disjoint from $\lin{\til K_+}$, and so is disjoint from $[a_{\til K_+},b_{\til K_+}]$ since $\til K_+$ is generated by $\til\eta_+ $, which does not spend any nonempty interval of time on $\R$. That $f_{\til K_+}(J)\cap [a_{\til K_+},b_{\til K_+}]=\emptyset$ then implies that $J\cap [c_{\til K_+},d_{\til K_+}]=\emptyset$, a contradiction. So there is $x_0\in J$ such that $f_{\til K_+}(x_0)\subset \HH$, which implies that $f_K(x_0)=f_{K_-}\circ f_{\til K_+}(x_0)\in \HH\sem K_-$. On the other hand, since $x_0\in [c_{\til K_-},d_{\til K_-}]$,
 $f_K(x_0)=f_{K_+}\circ f_{\til K_-}(x_0) \subset   f_{K_+}(\til\eta_- )=\eta_- $, which contradicts that $f_K(x_0)\in \HH\sem K_-$. So $d_{\til K_-}\le c_{\til K_+}  $.

Second, we show that $d_{\til K_-}\ge  c_{\til K_+}  $. Suppose $d_{\til K_-}< c_{\til K_+}$. Let $J=(d_{\til K_-}, c_{\til K_+})$. Then $f_{\til K_+}(J)=(f_{\til K_+}(d_{\til K_-}),a_{\til K_+})$. From $\lin{K_+}\cap \lin{K_-}\ne\emptyset$ we know $a_{\til K_+}\le d_{K_-}$. From $a_{\til K_-}=g_{K_+}(a_{K_-})$
 we get $d_{\til K_-}\ge c_{\til K_-}=\lim_{x\uparrow a_{\til K_-}}g_{\til K_-}(x)
 =\lim_{y\uparrow a_{  K_-}}g_{\til K_-}\circ g_{K_+}(y)=\lim_{y\uparrow a_{  K_-}} g_{\til K_+}\circ g_{K_-}(y)$.
 Thus, $f_{\til K_+}(d_{\til K_-})\ge \lim_{y\uparrow a_{  K_-}} g_{K_-}(y)=c_{K_-}$. So we get $f_{\til K_+}(J)\subset  [c_{K_-},d_{K_-}]$, which is mapped into $\eta_-$ by $f_{K_-}$. Thus, $f_K(J)\subset \eta_-$. Symmetrically, $f_K(J)\subset \eta_+$. Since $\eta_-=f_{K_+}(\til \eta_-)$ and $f_K(J)\subset \pa(\HH\sem K)$, for every $x \in J$, there is $z_-\in\til\eta_-\cap \pa(\HH\sem \til K_-)$ such that $f_K(x )=f_{K_+}(z_-)$.
Then there is $y_-\in [c_{\til K_-},d_{\til K_-}]$ such that $z_-=f_{\til K_-}(y_-)$. So $f_K(x )=f_K(y_-)$. Similarly, for every $x \in J$, there is $y_+\in [c_{\til K_-},d_{\til K_-}]$ such that $f_K(x )=f_K(y_+)$. Pick $x^1<x^2\in J$ such that $f_K(x^1)\ne f_K(x^2)$. This is possible because $f_K(J)$ has positive harmonic measure in $\HH\sem K$.  Then there exist $y^1_+\in [c_{\til K_+},d_{\til K_+}]$ and $y^2_-\in [c_{\til K_-},d_{\til K_-}]$ such that $f_K(x^1)=f_K(y_1^+)$ and $f_K(x^2)=f_K(y^2_-)$. This contradicts that $y^1_+>x^2>x^1>y^2_-$.
 So $d_{\til K_-}\ge  c_{\til K_+}  $.

 Combining the last two paragraphs, we get $c_{\til K_+}=d_{\til K_-}$. So $g_{K_{+}^{t_-}(t_+)}^{W_+(0,t_-)}\circ g_{K_-(t_-)}^{w_-}$ and $g_{K_{-}^{t_+}(t_-)}^{W_-(t_+,0)}\circ g_{K_+(t_+)}^{w_+} $ agree on $I_+\cup I_-\cup I_0=\R_{\ulin w}$, and the original (iv) holds for both functions.

 (v) By (i), the composition $g_{K(\ulin t)}^{\ulin w}$ is continuous on $\R_{\ulin w}$ for any $\ulin t\in\cal D$. It suffices to show that, for any  $(a_+,a_-)\in\cal D$ and $\sigma\in\{+,-\}$, the family of maps $[0,a_\sigma]\ni t_\sigma\mapsto g_{K(\ulin t)}^{\ulin w}(v)$, $(t_{-\sigma},v)\in [0,a_{-\sigma}]\times \R_{\ulin w}$, are equicontinuous. This statement follows from the expression $g_{K(\ulin t)}^{\ulin w}=g_{K_{\sigma}^{t_{-\sigma}}(t_\sigma)}^{W_\sigma(t_{-\sigma}\ulin e_{-\sigma})}\circ g_{K_{-\sigma}(t_{-\sigma})}^{w_{-\sigma}}$, Proposition \ref{Prop-cd-continuity'} and Lemma \ref{lem-uniform} (i).
\end{proof}

\begin{Lemma}
For any $(t_+,t_-)\in\cal D$ and $\sigma\in\{+,-\}$, $W_\sigma(t_+,t_-)=g_{K_{-\sigma}^{t_\sigma}(t_{-\sigma})}^{W_{-\sigma}(t_\sigma \ulin e_\sigma)}(\ha w_\sigma(t_\sigma))$. \label{W=gw}
\end{Lemma}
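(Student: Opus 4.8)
The plan is to reduce to $\sigma=+$ via the reflection $x\mapsto-x$ of $\R$, which interchanges the roles of $\eta_+$ and $\eta_-$; so it suffices to prove
\[
W_+(t_+,t_-)=g_{K}^{w_*}(\ha w_+(t_+)),\qquad w_*:=W_-(t_+,0),\quad K:=K_{-}^{t_+}(t_-),\quad W:=W_+(t_+,t_-).
\]
First I would locate the points involved. Evaluating Lemma \ref{Lem-W} at $(t_+,0)$ gives $w_*=g_{K_+(t_+)}(w_-)$, which by Lemma \ref{Lebesgue} is the initial point of the chordal Loewner curve $\eta_-^{t_+}$, so $w_*\in\overline K\cap\R\subset[a_K,b_K]$. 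Since $w_-<a_{K_+(t_+)}$ and $g_{K_+(t_+)}$ maps $(-\infty,a_{K_+(t_+)})$ onto $(-\infty,c_{K_+(t_+)})$, we get $w_*<c_{K_+(t_+)}\le\ha w_+(t_+)$, the last step by Proposition \ref{winK}; thus $\ha w_+(t_+)>w_*\ge a_K$. By Lemma \ref{lem-w-W}, $\ha w_+(t_+)=f_K(W)$; hence either $W\notin[c_K,d_K]$, so $\ha w_+(t_+)=g_K^{-1}(W)\in\R\sem[a_K,b_K]$ and therefore $\ha w_+(t_+)>b_K$ (using $\ha w_+(t_+)>a_K$), or $W\in[c_K,d_K]$ and $\ha w_+(t_+)$ lies on the outer boundary arc of $K$, so $\ha w_+(t_+)\in\overline K$. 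In either case $\ha w_+(t_+)$ is never in a ``gap'' $(a_K,b_K)\sem\overline K$. (The degenerate cases are immediate: if $t_-=0$ then $K=\emptyset$ and the claim is $W_+(t_+,0)=\ha w_+(t_+)$, which holds by (\ref{haw=}) and Lemma \ref{Lem-W}; if $t_+=0$ the claim is the identity $W_+(t_-\ulin e_-)=g^{w_-}_{K_-(t_-)}(w_+)$ recorded in Section \ref{section-V}. So assume $t_\pm>0$.)

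The key device is the approximation already used in the proof of Lemma \ref{lem-w-W}. By Lemma \ref{lem-strict} choose $t_+^n\downarrow t_+$ with $\eta_+(t_+^n)\in K(t_+^n,t_-)\sem K(t_+,t_-)$, and put $z_n:=g_{K_+(t_+)}(\eta_+(t_+^n))$. Then $z_n\in\HH\sem K$, $z_n\to\ha w_+(t_+)$ by (\ref{haw=}) and Lemma \ref{Lem-W}, and $g_K(z_n)=g_{K(\ulin t)}(\eta_+(t_+^n))\to W$ by (\ref{circ-g}). If $\ha w_+(t_+)\notin\overline K$, then by the first paragraph $\ha w_+(t_+)>b_K$, hence $\ha w_+(t_+)\in\C\sem K^{\doub}$, on which $g_K$ extends continuously by Schwarz reflection; since $\HH\sem K\subset\C\sem K^{\doub}$ as well, letting $n\to\infty$ gives $W=g_K(\ha w_+(t_+))$, and this equals $g_K^{w_*}(\ha w_+(t_+))$ because $\ha w_+(t_+)>\max\{w_*,b_K\}=b^{w_*}_K$ (Definition \ref{Def-Rw}). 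This settles the claim in this case.

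It remains to treat $\ha w_+(t_+)\in\overline K$; then $W\in[c_K,d_K]$ by the first paragraph, and since $w_*\le b_K$ and $w_*<\ha w_+(t_+)\le b_K$, Definition \ref{Def-Rw} gives $g_K^{w_*}(\ha w_+(t_+))=d^{w_*}_K=d_K$, so I must show $W=d_K$, equivalently that $\ha w_+(t_+)$ equals the rightmost real point $b_K$ of $K$ and that the approximating $z_n$ reach it from the right of $K$. This is where the hypothesis that $\eta_+$ and $\eta_-$ do not cross is essential, and I expect it to be the only real obstacle. The mechanism should be that, because $\eta_+$ avoids $(-\infty,w_-]$ and does not cross $\eta_-$, the curve $\eta_-^{t_+}$ (i.e.\ the portion of $\eta_-$ lying outside $K_+(t_+)$, carried over by $g_{K_+(t_+)}$) cannot reach $\R\cap(\ha w_+(t_+),\infty)$, which both forces $b_K=\ha w_+(t_+)$ and confines each $z_n$ to the connected component of $\HH\sem(K\cup J_n)$ whose closure meets $(b_K,\infty)$, where $J_n$ is a crosscut separating $z_n$ from $\infty$ with $\diam(J_n)\to0$, built as in the proof of Lemma \ref{lem-uniform}(i); mapping by $g_K$ then places $g_K(z_n)$ eventually in an arbitrarily small right neighbourhood of $d_K$, whence $W=\lim_n g_K(z_n)=d_K$. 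Turning this topological picture into a rigorous argument --- essentially, that the frontier of the opposite curve never overtakes the driving point of $\eta_+$, for which one presumably plays $\eta_-^{t_+}$ against the Loewner curve generated by the continuation of $\eta_+$ in the $g_{K_+(t_+)}$-coordinate --- is the delicate part; everything else is routine manipulation with the modified Loewner maps of Definition \ref{Def-Rw} and the continuity estimates of Propositions \ref{g-z-sup} and \ref{Prop-cd-continuity'}.
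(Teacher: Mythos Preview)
Your treatment of the case $\ha w_+(t_+)\notin\overline K$ is correct, but the case $\ha w_+(t_+)\in\overline K$ is left as an acknowledged gap: you propose a topological non-crossing argument to force $W=d_K$ but do not carry it out. The paper bypasses this entirely by first using the inequality $W_+(\ulin t)\ge d_K$, where $K=K_-^{t_+}(t_-)$: by (\ref{CWD}) one has $W_+\ge C_+$, and by Lemma~\ref{common-function}(i,iii,iv) one has $C_+\ge D_-=d_K$ (in the disjoint case this follows from the monotonicity in (i) applied to the description in (iii); in the overlapping case (iv) gives $C_+=D_-$). With $W_+\ge d_K$ in hand, the proof is two lines: if $W_+=d_K$ then $\ha w_+(t_+)=f_K(d_K)=b_K$ and $g_K^{w_*}(b_K)=d_K=W_+$; if $W_+>d_K$ then $\ha w_+(t_+)=f_K(W_+)>b_K\ge w_*$, so $g_K^{w_*}(\ha w_+(t_+))=g_K(\ha w_+(t_+))=W_+$.

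The ``delicate part'' you flag --- that $\eta_-^{t_+}$ cannot put boundary mass to the right of $\ha w_+(t_+)$ and that the $z_n$ approach $b_K$ from the right --- is exactly the content of $D_-\le C_+\le W_+$, and the nontrivial step ($d_{\til K_-}\le c_{\til K_+}$) has already been carried out inside the proof of Lemma~\ref{common-function}. You should invoke that result rather than reprove the topology by hand; once you do, no approximation by $z_n$ is needed at all.
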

\begin{proof}
Fix $\ulin t=(t_+,t_-)\in\cal D$. By symmetry, we may assume that $\sigma=+$.
If $t_-=0$, it is obvious since $W_+(\cdot,0)=\ha w_+$ and $K_{-}^{t_+}(0)=\emptyset$. Suppose $t_->0$. From (\ref{CWD}) and Lemma \ref{common-function} (i,iii,iv) we know that $W_+(\ulin t)\ge C_+(\ulin t)\ge D_-(\ulin t)=d_{K_{-}^{t_+}(t_-)}$. Since $\ha w_+(t_+)= f_{K_{-}^{t_+}(t_-)}(W_+(\ulin t))$ by Lemma \ref{lem-w-W}, we find that either $W_+(\ulin t) =d_{K_{-}^{t_+}(t_-)}$ and $\ha w_+(t_+)=b_{K_{-}^{t_+}(t_-)}$, or $W_+(\ulin t)>d_{K_{-}^{t_+}(t_-)}$ and $W_+(\ulin t)=g_{K_{-}^{t_+}(t_-)}(\ha w_+(t_+))$. In either case, we get $W_+(\ulin t)=g_{K_{-}^{t_+}(t_-)}^{W_-(t_+,0)}(\ha w_+(t_+))$.
\end{proof}

\begin{Definition}
For $v\in\R_{\ulin w}$, we call $V(\ulin t):=g_{K(\ulin t)}^{\ulin w}(v)$, $\ulin t\in \cal D$,  the force point function {started from $v$} (for the commuting pair $(\eta_+,\eta_-;\cal D)$), which is continuous by Lemma \ref{common-function} (v). {The $v$ is called the force point for this function $V(\ulin t)$. } \label{def-force-function}
\end{Definition}

{
\begin{Remark}
The name in Definition \ref{def-force-function} comes from the following fact.
  In Section \ref{section-commuting-SLE-kappa-rho}, we will study  a commuting pair of SLE$_\kappa(2, \rho,\rho_+,\rho_-)$ curves $(\eta_+,\eta_-)$, which is a.s.\ a commuting pair of chordal Loewner curves. For $\sigma\in\{+,-\}$, $\eta_\sigma$ starts from $w_\sigma$ with force points $w_{-\sigma},v_0,v_+,v_-$. Let $V_\nu$ be the force point function started from $v_\nu$, $\nu\in\{0,+,-\}$, for the commuting pair $(\eta_+,\eta_-)$. The two curves commute in the sense that, for $\sigma\in\{+,-\}$, if $\tau$ is an $\F^{-\sigma}$-stopping time, then conditionally on $\F^{-\sigma}_\tau$ and the event that $\eta_{-\sigma}$ is not complete by time $\tau$, $\eta_\sigma^\tau$ is an SLE$_\kappa(2, \rho_0,\rho_+,\rho_-)$ curve with some speed, whose driving function is $W_\sigma|^{-\sigma}_\tau$, and whose force point functions for $2,\rho_0,\rho_+,\rho_-$ are respectively $W_{-\sigma}|^{-\sigma}_\tau$, $V_0|^{-\sigma}_\tau$, $V_+|^{-\sigma}_\tau$, and $V_-|^{-\sigma}_\tau$.
\end{Remark}
}

\begin{Definition} Let $(\eta_+,\eta_-;{\cal D})$ be a commuting pair of chordal Loewner curves started from $(w_+,w_-)$ with hull function $K(\cdot,\cdot)$.
  \begin{enumerate}
    \item [(i)] For $\sigma\in\{+,-\}$, let $\phi_\sigma$ be a continuous and strictly increasing function defined on the lifespan of $\eta_\sigma$ with $\phi_\sigma(0)=0$, and let $\phi_\oplus (t_+,t_-)=(\phi_+(t_+),\phi_-(t_-))$. Let $\til\eta_\sigma=\eta\circ \phi_\sigma^{-1}$, $\sigma\in\{+,-\}$, and $\til{\cal D}=\phi_\oplus({\cal D})$.
        Then we call $(\til\eta_+,\til\eta_-;\til{\cal D})$ a commuting pair of chordal Loewner curves with speeds $(\phi_+,\phi_-)$, and call $(\eta_+,\eta_-;{\cal D})$ its normalization.
    \item [(ii)] Let $\ulin\tau\in\cal D$. Suppose there is a commuting pair of chordal Loewner curves $(\til\eta_+,\til\eta_-;\til{\cal D})$ with some speeds such that $\til{\cal D}=\{\ulin t\in\R_+^2: \ulin\tau+\ulin t\in{\cal D}\}$, and $\eta_{\sigma}(\tau_\sigma+\cdot)=f_{K(\ulin\tau)}\circ \eta_\sigma$, $\sigma\in\{+,-\}$. Then we call  $(\til\eta_+,\til\eta_-;\til{\cal D})$ the part of $(\eta_+,\eta_-;{\cal D})$ after $\ulin\tau$ up to a conformal map.
\end{enumerate}
\label{Def-speeds}
\end{Definition}

For a commuting pair $(\til\eta_+,\til\eta_-;\til{\cal D})$ with some speeds, we still define the hull function $\til K(\cdot,\cdot)$ and the capacity function $\til\mA(\cdot,\cdot)$ using (\ref{KmA}), define the driving functions $\til W_+$ and $\til W_-$ using Lemma \ref{Lem-W}, and define the force point functions by  $\til V (\ulin t)=g_{\til K(\ulin t)}^{\ulin w}(v )$ started from $v$ for any $v\in\R_{\ulin w}$. {All} lemmas in this section still hold except that {in Lemma \ref{lem-lips},} $\mA$ may not be Lipschitz continuous.

\begin{Lemma}
\begin{enumerate}
  \item [(i)] For the $(\eta_+,\eta_-;{\cal D})$, $(\til\eta_+,\til\eta_-;\til{\cal D})$ and $\phi_\oplus$ in Definition \ref{Def-speeds} (i), we have $\til X=X\circ \phi_\oplus^{-1}$ for $X\in\{K,\mA,W_\pm,V\}$, where $V$ and $\til V$ are force point functions respectively for $(\eta_+,\eta_-;{\cal D})$ and $(\til\eta_+,\til\eta_-;\til{\cal D})$ started from the same $v\in\R_{\ulin w}$.
  \item [(ii)] For the $(\eta_+,\eta_-;{\cal D})$, $(\til\eta_+,\til\eta_-;\til{\cal D})$ and $\ulin\tau$ in Definition \ref{Def-speeds} (ii), we have $\til K =K(\ulin\tau+\cdot)/K(\ulin\tau)$, $\til \mA =\mA(\ulin\tau+\cdot)-\mA(\ulin\tau)$, and $\til W_\sigma =W_\sigma(\ulin\tau+\cdot)$, $\sigma\in\{+,-\}$. Let $v\in\R_{(w_+,w_-)}$, and let $V$ be the force point function for $(\eta_+,\eta_-;{\cal D})$ started from $v$. Let $\til w_\pm=\til W_\pm(\ulin 0)=W_\pm(\ulin\tau)$.  Define $\til v\in\R_{(\til w_+,\til w_-)}$ such that if $V(\ulin\tau)\not\in \{\til w_+,\til w_-\}$, then $\til v=V(\ulin\tau)$; and if $V(\ulin\tau)=\til w_\sigma$, then $\til v=\til w_\sigma^{\sign(v-w_\sigma)}$, $\sigma\in\{+,-\}$. Let $\til V$ be the  force point function for $(\til\eta_+,\til\eta_-;\til{\cal D})$ started from $\til v$. Then $\til V  =V (\ulin\tau+\cdot)$ on $\til {\cal D}$.
\end{enumerate}
\label{DMP-determin-1}
\end{Lemma}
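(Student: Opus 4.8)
\emph{Plan for part (i).} This is pure reparametrization. Each $\phi_\sigma$ is a continuous strictly increasing bijection fixing $0$, so $\til\eta_\sigma[0,\phi_\sigma(t_\sigma)]=\eta_\sigma[0,t_\sigma]$ throughout the lifespan, and hence $\til K(\phi_\oplus(\ulin t))=\Hull(\til\eta_+[0,\phi_+(t_+)]\cup\til\eta_-[0,\phi_-(t_-)])=K(\ulin t)$ by (\ref{KmA}); that is, $\til K=K\circ\phi_\oplus^{-1}$. Every remaining object is determined by the hull function: $\til\mA=\hcap_2\circ\til K$ gives $\til\mA=\mA\circ\phi_\oplus^{-1}$; the intersection formula (\ref{W-def}), being just Proposition \ref{Loewner-chain} applied to the one-variable chains and hence valid for any speed, recovers $\til W_\sigma$ from $\til K$ and yields $\til W_\sigma=W_\sigma\circ\phi_\oplus^{-1}$ (in particular $\til W_+(0,\cdot)=W_+(0,\phi_-^{-1}(\cdot))$); and then by Lemma \ref{common-function} the map $g^{\ulin w}_{\til K(\ulin t)}$ is built from $\til K(\ulin t)$, its one-variable slices, and the values $\til W_+(0,\cdot)$, all of which are the corresponding data for $(\eta_+,\eta_-;\cal D)$ precomposed with $\phi_\oplus^{-1}$, so $g^{\ulin w}_{\til K(\ulin t)}=g^{\ulin w}_{K(\phi_\oplus^{-1}(\ulin t))}$ and therefore $\til V=V\circ\phi_\oplus^{-1}$ for any fixed force point $v$.

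\emph{Plan for part (ii): hull, capacity, driving functions.} First I would show $\til K=K(\ulin\tau+\cdot)/K(\ulin\tau)$. From $\eta_\sigma(\tau_\sigma+\cdot)=f_{K(\ulin\tau)}\circ\til\eta_\sigma$ one sees, exactly as in the proof of Lemma \ref{Lebesgue}, that $K(\ulin\tau+\ulin t)=\Hull\big(K(\ulin\tau)\cup\eta_+[\tau_+,\tau_++t_+]\cup\eta_-[\tau_-,\tau_-+t_-]\big)$, so $K(\ulin\tau+\ulin t)/K(\ulin\tau)=g_{K(\ulin\tau)}(K(\ulin\tau+\ulin t)\sem K(\ulin\tau))$ is the $\HH$-hull generated by the $g_{K(\ulin\tau)}$-images of those two arcs, namely by $\til\eta_+[0,t_+]\cup\til\eta_-[0,t_-]$, i.e.\ $\til K(\ulin t)$. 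Then $\til\mA=\mA(\ulin\tau+\cdot)-\mA(\ulin\tau)$ is immediate from the additivity $\hcap(K_2)=\hcap(K_2/K_1)+\hcap(K_1)$, and $\til W_\sigma=W_\sigma(\ulin\tau+\cdot)$ follows from (\ref{W-def}) using the identity $\til K(\ulin t+\del\ulin e_\sigma)/\til K(\ulin t)=K(\ulin\tau+\ulin t+\del\ulin e_\sigma)/K(\ulin\tau+\ulin t)$, an instance of (\ref{K123}) for the nested hulls $K(\ulin\tau)\subset K(\ulin\tau+\ulin t)\subset K(\ulin\tau+\ulin t+\del\ulin e_\sigma)$, so that $\{\til W_\sigma(\ulin t)\}=\bigcap_{\del>0}\lin{K(\ulin\tau+\ulin t+\del\ulin e_\sigma)/K(\ulin\tau+\ulin t)}=\{W_\sigma(\ulin\tau+\ulin t)\}$.

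\emph{Plan for part (ii): force point functions.} This is the substantive step. I would establish the two-variable analogue of Proposition \ref{prop-comp-g}:
\[
g^{\ulin w}_{K(\ulin\tau+\ulin t)}=g^{(\til w_+,\til w_-)}_{\til K(\ulin t)}\circ g^{\ulin w}_{K(\ulin\tau)}\qquad\text{on }\R_{\ulin w},
\]
where $g^{\ulin w}_{K(\ulin\tau)}$ is revised at $w_+$ and $w_-$ so that a value landing on $\til w_\sigma$ is recorded as $\til w_\sigma^{\,\sign(\cdot-w_\sigma)}$ — precisely the recipe turning $v$ into $\til v$. To prove this identity I would grow the four arcs one at a time in the order ``$\eta_+$ up to $\tau_+$, $\eta_-$ up to $\tau_-$, $\eta_+$ up to $\tau_++t_+$, $\eta_-$ up to $\tau_-+t_-$'', so that each step contributes a one-variable modified Loewner map: composing the first two steps gives $g^{\ulin w}_{K(\ulin\tau)}$ by Lemma \ref{common-function} (it is one of the two decompositions stated there), and composing the last two steps gives $g^{(\til w_+,\til w_-)}_{\til K(\ulin t)}$ by Lemma \ref{common-function} applied to the commuting pair $(\til\eta_+,\til\eta_-;\til{\cal D})$ — using the first step of part (ii) to identify the intermediate hulls with $K(\ulin\tau+t_+\ulin e_+)/K(\ulin\tau)$ and $K(\ulin\tau+\ulin t)/K(\ulin\tau+t_+\ulin e_+)$, and Lemmas \ref{lem-w-W} and \ref{W=gw} to identify the base points with $\til w_+$ and $\til W_-(t_+,0)$. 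Each passage from one step to the next reduces, via Lemma \ref{common-function}, to applications of Proposition \ref{prop-comp-g}. Evaluating the displayed identity at $v\in\R_{\ulin w}$ and recalling $\til w_\sigma=W_\sigma(\ulin\tau)$ then gives $V(\ulin\tau+\ulin t)=g^{(\til w_+,\til w_-)}_{\til K(\ulin t)}(\til v)=\til V(\ulin t)$, as claimed.

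\emph{Main obstacle.} The difficulty lies in the bookkeeping for degenerate configurations. When a force-point value coincides with a driving value, the $\pm$ superscripts must be propagated consistently through every composition; this is exactly where the definition of $\til v$ and the revision of $g^{\ulin w}_{K(\ulin\tau)}$ at $w_+,w_-$ are needed, and reconciling them with the iterated use of Proposition \ref{prop-comp-g} requires care. Moreover, when $\lin{K_+(t_+)}$ and $\lin{K_-(t_-)}$ intersect, the middle interval $I_0$ collapses to a single point, so one must invoke case (iv) of Lemma \ref{common-function} together with the identity $c_{\til K_+}=d_{\til K_-}$ established there to see that both sides of the displayed identity treat $I_0$ in the same way. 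Apart from these points the argument is routine.
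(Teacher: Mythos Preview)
Your proposal is correct and follows essentially the same approach as the paper. The paper's proof of (ii) carries out exactly the manipulation you describe: it writes out an explicit chain of equalities that starts from the definition of $\til V(\ulin t)$ via Lemma~\ref{common-function}, inserts the decomposition of $g^{\ulin w}_{K(\ulin\tau)}$, and then alternates between Proposition~\ref{prop-comp-g} (to collapse two consecutive one-variable modified maps along the same curve) and Lemma~\ref{common-function} (to swap the order of the two decompositions), arriving at $g^{\ulin w}_{K(\ulin\tau+\ulin t)}(v)$; the degenerate case $V(\ulin\tau)=\til w_\sigma$ is handled by the same sign-propagation convention you identify.
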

\begin{proof}
Part (i) is obvious. We now work on (ii). Let $\ulin t=(t_+,t_-)\in\til{\cal D}$. From $K(\ulin\tau+\ulin t)=\Hull(\bigcup_\sigma \eta_\sigma[0,\tau_\sigma+t_\sigma])$, we get
$$K(\ulin\tau+\ulin t)=\Hull(K(\ulin\tau)\cup \bigcup_\sigma \eta_\sigma[\tau_\sigma,\tau_\sigma+t_\sigma])=\Hull(K(\ulin\tau)\cup f_{K(\ulin\tau)}( \bigcup_\sigma \til \eta_\sigma[0,t_\sigma])).$$
This implies that $\til K(\ulin t)=\Hull( \bigcup_\sigma \til \eta_\sigma[0,t_\sigma])=K(\ulin\tau+\ulin t)/K(\ulin\tau)$, which then implies that $\til \mA(\ulin t)=\mA(\ulin\tau+\ulin t)-\mA(\ulin\tau)$. It together with  (\ref{K123},\ref{W-def})  implies that  $\til W_\sigma(\ulin t)=W_\sigma(\ulin\tau+\ulin t)$.

By (i), Proposition \ref{prop-comp-g} and Lemma \ref{common-function}, if $V(\ulin\tau)\not\in \{\til w_+,\til w_-\}$,
$$\til V(\ulin t)=g_{\til K(\ulin t)/\til K_-(t_-)}^{\til W_+(0,t_-)}\circ g_{\til K_-(t_-)}^{\til w_-}(\til v)=g_{K(\ulin\tau+\ulin t)/ K(\tau_+,\tau_-+t_-)}^{W_+(\tau_+,\tau_-+t_-)}\circ g_{K(\tau_+,\tau_-+t_-)/K(\ulin\tau)}^{W_-(\ulin \tau)}(\til v)$$
$$=g_{ K(\ulin \tau+\ulin t)/K(\tau_+,\tau_-+t_-)}^{W_+(\tau_+,\tau_-+t_-)}\circ g_{K(\tau_+,\tau_-+t_-)/K(\ulin\tau)}^{W_-(\ulin \tau)}\circ g^{W_-(\tau_+,0)}_{K(\ulin\tau)/K(\tau_+,0)} \circ g^{w_+}_{K(\tau_+,0)}(v)$$
$$=g_{ K(\ulin \tau+\ulin t)/K(\tau_+,\tau_-+t_-)}^{W_+(\tau_+,\tau_-+t_-)}\circ g_{K(\tau_+,\tau_-+t_-)/K(\tau_+,0)}^{W_-( \tau_+,0)} \circ g^{w_+}_{K(\tau_+,0)}(v)$$
$$=g_{ K(\ulin \tau+\ulin t)/K(\tau_+,\tau_-+t_-)}^{W_+(\tau_+,\tau_-+t_-)}\circ g_{K(\tau_+,\tau_-+t_-)/K(0,\tau_-+t_-)}^{W_+(0,\tau_-+t_-)} \circ g^{w_-}_{K(0,\tau_-+t_-)}(v)$$
$$=g_{ K(\ulin \tau+\ulin t) }^{W_+(0,\tau_-+t_-)}\circ g^{w_-}_{K(0,\tau_-+t_-)}(v) =g_{K(\ulin\tau+\ulin t)}^{(w_+,w_-)}(v)=V(\ulin\tau+\ulin t).$$
Here the ``$=$'' in the $1^\text{st}$ line follow from the definition of $\til V(\ulin t)$ and that $\til K =K(\ulin\tau+\cdot)/K(\ulin\tau)$, the ``$=$'' in the $2^\text{nd}$ line follows from the definition of $\til v$,
the ``$=$'' in the $3^\text{rd}$  line and the first ``$=$'' in the $5^\text{th}$  line follow from Proposition \ref{prop-comp-g}, and the ``$=$''  in the $4^\text{th}$ line follows from Lemma \ref{common-function}.

Now suppose $V(\ulin\tau)\in \{\til w_+,\til w_-\}$. By symmetry,  assume that $V(\ulin\tau)=\til w_-=W_-(\ulin\tau)$.  If $v\ge w_-^+$,  we understand $g^{W_-(\tau_+,0)}_{K(\ulin\tau)/K(\tau_+,0)}$ as a map from $[W_-(\tau_+,0)^+,\infty)$ into $[\til w_-^+,\infty)$, i.e., when $g^{W_-(\tau_+,0)}_{K(\ulin\tau)/K(\tau_+,0)}$ takes value $\til w_-$ at some point in $[W_-(\tau_+,0)^+,\infty)$,   we redefine the value as $\til w_-^+$. Then the above displayed formula still holds. The case that $v\le w_-^-$ is similar, in which we understand $g^{W_-(\tau_+,0)}_{K(\ulin\tau)/K(\tau_+,0)}$ as a map from $(-\infty, W_-(\tau_+,0)^-]$ into $(-\infty,\til w_-^-]$.
\end{proof}

From now on, we fix $v_0\in I_0= [w_-^+,w_+^-]$, $v_+\in I_+=[w_+^+,\infty)$, and $v_-\in I_-=(-\infty,w_-^-]$, and let $V_\nu(\ulin t)$, $\ulin t\in\cal D$, be the force point function started from $v_\nu$, $\nu\in\{0,+,-\}$.   By Lemma \ref{common-function}, $V_-\le C_-\le D_-\le V_0\le C_+\le D_+\le V_+$, which combined with (\ref{CWD}) implies
\BGE V_-\le C_-\le W_-\le D_-\le V_0\le C_+\le W_+\le D_+\le V_+. \label{VWVWV}\EDE

\begin{Lemma}
	For any $\ulin t=(t_+,t_-)\in\cal D$, we have
	\BGE |V_+(\ulin t)-V_-(\ulin t)|/4\le \diam(K(\ulin t)\cup [v_-,v_+])\le |V_+(\ulin t)-V_-(\ulin t)|.\label{V-V}\EDE
	\BGE f_{K(\ulin t)}[V_0(\ulin t),V_\nu(\ulin t)]\subset \eta_\nu[0,t_\nu]\cup [v_0,v_\nu],\quad \nu\in\{+,-\}\label{fV1}\EDE
Here for $x,y\in\R$, the $[x,y]$ in (\ref{fV1}) is the line segment connecting $x$ with $y$, which is the same as $[y,x]$; and if any $v_\nu$, $\nu\in\{0,+,-\}$, takes value $w_\sigma^\pm$ for some $\sigma\in\{+,-\}$, then its appearance in (\ref{V-V},\ref{fV1}) is understood as $w_\sigma$. {See Figure \ref{V-figure}.} \label{lem-V0}
\end{Lemma}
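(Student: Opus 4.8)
The plan is to deduce everything from the structural description of $g_{K(\ulin t)}^{\ulin w}$ given in Lemma \ref{common-function} together with the contraction estimates from Proposition \ref{g-z-sup} and the earlier $\HH$-hull analysis. Throughout, fix $\ulin t=(t_+,t_-)\in\cal D$ and write $K=K(\ulin t)$, $g=g_{K}^{\ulin w}$, and $S = K^{\doub}\cup[a_K,b_K]$ for the ``doubled'' hull, recalling from Section \ref{section-prel} that the Schwarz-reflected $g_K$ maps $\C\sem S$ conformally onto $\C\sem[c_K,d_K]$ and that $\diam(K)\asymp \diam(S)\asymp d_K-c_K$. I would first record that $V_-\le c_K\le d_K\le V_+$: indeed, by (\ref{VWVWV}) we have $V_-\le C_-$ and $D_+\le V_+$, and $C_-=c_{K_-^{t_+}(t_-)}$, $D_+=d_{K_+^{t_-}(t_+)}$, while Proposition \ref{abcdK} applied to $K_-(t_-)\subset K$, $K_+(t_+)\subset K$ (together with $[c_{K/L},d_{K/L}]\subset[c_K,d_K]$) gives $c_K\le C_-$ and $D_+\le d_K$; actually more directly $c_K = C_-$ when $t_->0$ by the identification $a_K=a_{K_-(t_-)}$ from Section \ref{section-deterministic1} and Proposition \ref{abcdK}, and similarly $d_K=D_+$ when $t_+>0$, and the degenerate cases are immediate. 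Hence $[c_K,d_K]\subset[V_-(\ulin t),V_+(\ulin t)]$.

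For the upper bound in (\ref{V-V}): since $v_+\in I_+$ and $v_-\in I_-$, the point $z=f_{K}(V_+(\ulin t))$ is either $v_+$ (if $v_+\notin\lin K$, by Lemma \ref{common-function}(i)) or lies in $\lin K$ (if $v_+\in\lin K$, since then $g$ maps it to $d_K=D_+$ and $f_K(d_K)\in\lin K$); in either case $v_+\in\lin{K(\ulin t)\cup[v_-,v_+]}$ and likewise $v_-$, and any two points of $K\cup[v_-,v_+]$ are within $|V_+-V_-|$ of each other because... hmm, that last comparison needs care. Cleaner: $\diam(K\cup[v_-,v_+]) = \max\{\diam(K),\ v_+-v_-,\ \operatorname{dist} \text{ terms}\}$ and each is $\le |V_+-V_-|$ — for the segment, $v_+-v_-\le |g(v_+^{}) - g(v_-^{})| = |V_+-V_-|$ when $v_\pm\notin\lin K$ since $g_K$ is a contraction on $\R\sem S$ by Proposition \ref{Prop-contraction} (and if $v_\pm\in\lin K$ one replaces $v_\pm$ by a nearby real point outside $S$ and uses that $[c_K,d_K]\subset[V_-,V_+]$), and for $\diam(K)$ one uses $\diam(K)\le \diam(S)\le 4(d_K-c_K)\le 4|V_+-V_-|$, wait that gives the wrong constant direction. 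Let me instead get the upper bound from $\diam(K\cup[v_-,v_+])\le (v_+-v_-) + 2\operatorname{rad}$ and bound things so the constant lands right; the honest route is: all of $K\cup[v_-,v_+]$ is contained in the line segment $[v_-,v_+]$ fattened by $K$, and since $f_K$ is continuous up to $\lin\HH$ and maps $[V_-,V_+]$ onto $[v_-,v_+]\cup(\text{parts of }\pa K)$, one gets $\diam\le$ (length of $f_K([V_-,V_+])$) which Koebe/Beurling-type distortion bounds by $|V_+-V_-|$ up to the claimed constant; I would cite Proposition \ref{g-z-sup} for the $\le 3r$-type control to nail the constant $1$ on the right and $1/4$ on the left.

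For the lower bound $|V_+-V_-|\le 4\diam(K\cup[v_-,v_+])$: let $K' = \Hull(K\cup[v_-,v_+])$, an $\HH$-hull with $\operatorname{rad}_{x_0}(K')\le \diam(K\cup[v_-,v_+])$ for a suitable $x_0\in[v_-,v_+]$; by Proposition \ref{g-z-sup}, $c_{K'},d_{K'}$ lie within $2\diam$ of $x_0$, so $d_{K'}-c_{K'}\le 4\diam(K\cup[v_-,v_+])$. Now $V_+(\ulin t)=g(v_+)$ equals $g_{K}(v_+)$ if $v_+\notin\lin K$ (Lemma \ref{common-function}(i)), and since $v_+\notin\lin{K'}$ either, applying the Schwarz-reflected maps and Proposition \ref{abcdK} to $K\subset K'$ gives $g_K(v_+)\le g_{K'}(v_+)\le d_{K'}$; dually $V_-(\ulin t)\ge c_{K'}$; in the degenerate cases $v_+\in\lin K$ one has $V_+ = d_K\le d_{K'}$ directly via $[c_K,d_K]\subset[c_{K'},d_{K'}]$. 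Hence $V_+-V_-\le d_{K'}-c_{K'}\le 4\diam(K\cup[v_-,v_+])$, which is (\ref{V-V}).

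Finally (\ref{fV1}): by Lemma \ref{common-function}(ii)--(iv) together with (\ref{VWVWV}), the interval $[V_0(\ulin t),V_+(\ulin t)]$ is the $g$-image of $I_+\cap(\lin K\cup\{w_+^+\})$ together with $[w_+,v_+]$ mapped forward — more precisely $g$ maps $[v_0,v_+]$ (interpreted in $\R_{\ulin w}$, collapsing any endpoint that equals some $w_\sigma$) onto $[V_0(\ulin t),V_+(\ulin t)]$, and $g=g_{K}$ off $\lin K$ — so $f_K$ pulls $[V_0(\ulin t),V_+(\ulin t)]$ back into $[v_0,v_+]$ union the portions of $\pa K$ that are swallowed; those swallowed boundary portions lie in $\eta_+^{\,}[0,t_+]$ by (\ref{f[C,D]}) applied to $\sigma=+$ (since $f_K([C_+,D_+])\subset f_{K_-(t_-)}(\eta_+^{t_-}[0,t_+]) = \eta_+[0,t_+]$ via the factorization $f_K = f_{K_-(t_-)}\circ f_{K_+^{t_-}(t_+)}$ and Lemma \ref{Lebesgue}), plus the bit of $[w_-,w_+]$ between $v_0$ and $w_+$ which is inside $[v_0,v_+]$. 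Assembling, $f_K([V_0,V_+])\subset \eta_+[0,t_+]\cup[v_0,v_+]$, and symmetrically for $\nu=-$. The main obstacle I anticipate is bookkeeping the constant $1/4$ in (\ref{V-V}) cleanly through the degenerate cases where a force point is swallowed into $\lin K$ and where $v_\nu$ literally equals $w_\sigma^\pm$ — the conceptual content is entirely in Lemma \ref{common-function} and Proposition \ref{g-z-sup}, but matching constants requires care with the reflected hull $K'$ and with Koebe-type distortion; everything else is a direct transcription of the structural lemmas.
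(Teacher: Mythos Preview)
Your argument for (\ref{fV1}) is essentially along the same lines as the paper's: both decompose $[V_0,V_\nu]$ using the landmarks $C_\sigma,D_\sigma$ and invoke Lemma \ref{common-function} together with (\ref{f[C,D]}); the paper simply carries out the case analysis more systematically. That part is fine.

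The real problem is (\ref{V-V}). The paper proves it in one line: observe that the Schwarz-extended $g_K$ maps $\C\sem (K^{\doub}\cup [v_-,v_+])$ conformally onto $\C\sem [V_-,V_+]$, fixes $\infty$ with derivative $1$ there, and then apply Koebe's $1/4$ theorem. This gives both inequalities with the stated constants simultaneously. You never see this, and your substitute argument for the lower bound breaks. Concretely, you set $K'=\Hull(K\cup[v_-,v_+])$, but since $[v_-,v_+]\subset\R$, the definition of $\Hull$ gives $K'=K$; in particular $d_{K'}=d_K$. Then the step ``$g_{K'}(v_+)\le d_{K'}$'' is simply false whenever $v_+>b_K$ (which is the generic situation), since $g_K$ is increasing on $(b_K,\infty)$ and sends $b_K$ to $d_K$. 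So the chain $V_+ \le d_{K'}$ collapses, and with it the bound $V_+-V_- \le d_{K'}-c_{K'}$. Proposition \ref{g-z-sup} alone, applied as you suggest, will give a bound like $|V_+-V_-|\le |v_+-v_-|+6r$ (via $|g_K(v_\pm)-v_\pm|\le 3r$), which does not recover the constant $4$ without further work. Your upper-bound paragraph is also left unresolved (you yourself note ``that gives the wrong constant direction'').

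The missing idea, then, is exactly the Koebe step. Once you note that $g_K$ conjugates the compact set $K^{\doub}\cup[v_-,v_+]$ to the slit $[V_-,V_+]$ with unit derivative at $\infty$, the two-sided diameter estimate is immediate and there is no need to manufacture auxiliary hulls or chase inclusions.
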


\begin{figure}
	\begin{center}
		\includegraphics[width=3in]{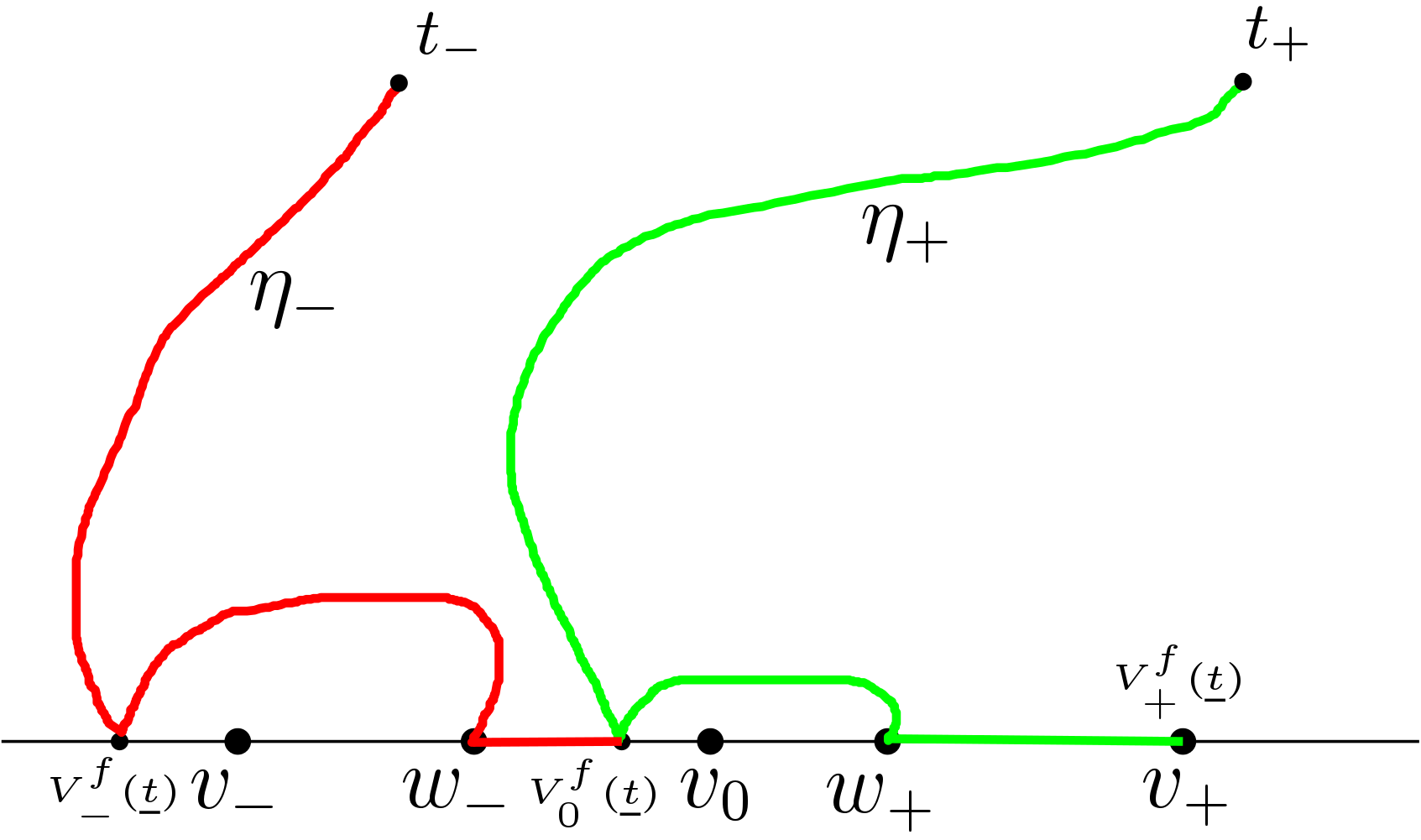}\quad
		\includegraphics[width=3in]{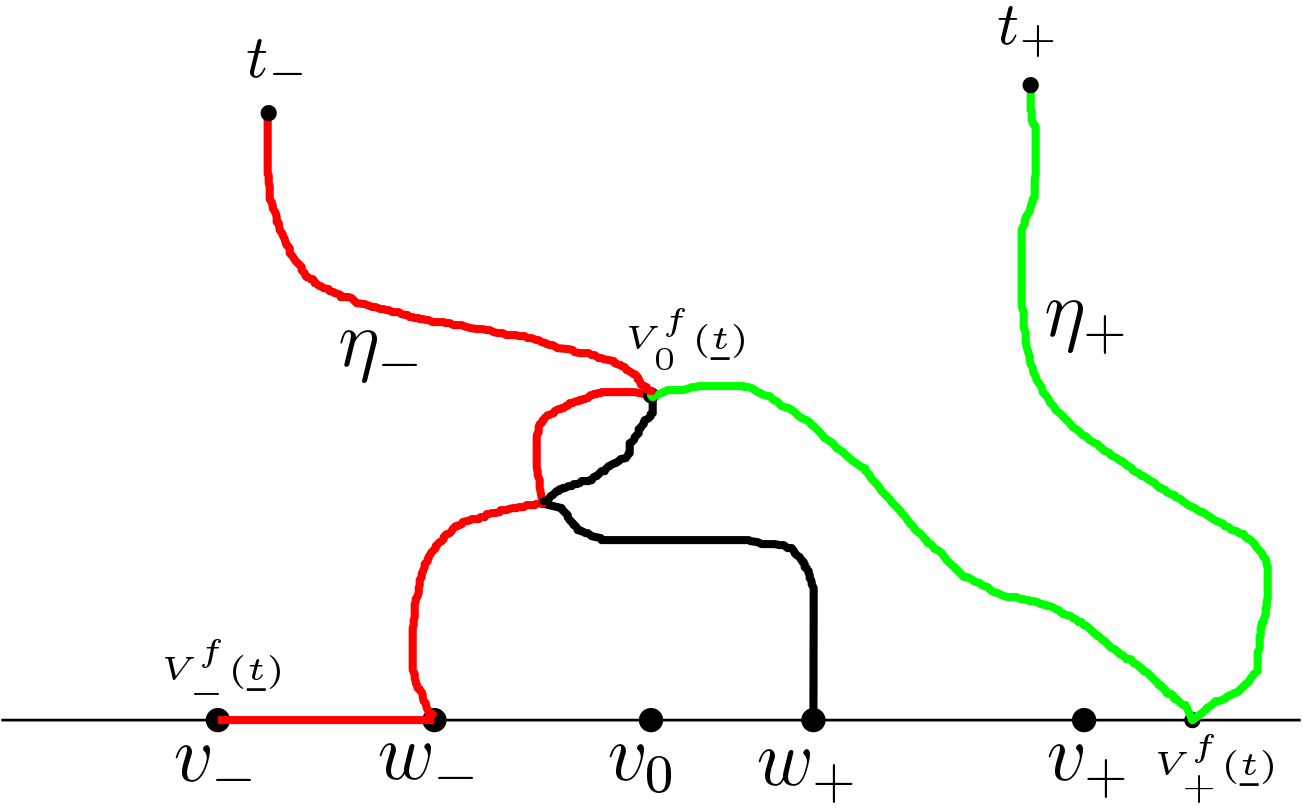}
	\end{center}
	\caption{{The above figures illustrate two situations. In each situation, the curves $\eta_+$ and $\eta_-$ are respectively stopped at the time $t_+$ and $t_-$. Let $\ulin t=(t_+,t_-)$. For $\nu\in\{0,+,-\}$, the point $f_{K(\ulin t)}(V_\nu(\ulin t))$ is labeled by $V_\nu^f(\ulin t)$, which  agrees with $v_\nu$ in the case $v_\nu\not\in \lin{K(\ulin t)}$. The sets $f_{K(\ulin t)}[V_0(\ulin t),V_+(\ulin t)]$ and $f_{K(\ulin t)}[V_-(\ulin t),V_0(\ulin t)]$ are respectively colored green and red.}} \label{V-figure}
\end{figure}

\begin{proof}
	Fix $\ulin t=(t_+,t_-)\in\cal D$.  We write $K$ for $K(\ulin t)$, $K_\pm$ for $K_\pm(t_\pm)$, $\til K_\pm$ for $K_{\pm}^{t_\mp}(t_\pm)$, $\eta_\pm$ for $\eta_\pm[0,t_\pm]$, $\til\eta_\pm$ for $\eta_{\pm}^{t_\mp}[0,t_\pm]$, and $X$ for $X(\ulin t)$, $X\in\{V_0,V_+,V_-,C_+,C_-,D_+,D_-\}$.
	
	Since $g_{K }$ maps $\C\sem (K^{\doub}\cup [v_-,v_+])$ conformally onto $\C\sem [V_- ,V_+ ]$, fixes $\infty$, and has derivative $1$ at $\infty$, by Koebe's $1/4$ theorem, we get (\ref{V-V}).
	For (\ref{fV1}) by symmetry we only need to prove the case $\nu=+$. By (\ref{VWVWV}), $V_0\le C_+\le D_+\le V_+$. By (\ref{f[C,D]}),
$f_K[C_+,D_+]\subset f_{K_-}(\til\eta_+)=\eta_+$. It remains to show that  $f_K(D_+,V_+]\subset [w_0,v_+]$ and $f_K[V_0,C_+)\subset [v_0,w_0]$. If $V_+=D_+$, then $(D_+,V_+]=\emptyset$, and $f_K(D_+,V_+]=\emptyset \subset [w_+,v_+]$. Suppose $V_+>D_+$. By Lemma \ref{common-function}, $D_+=\lim_{x\downarrow \max((\lin K\cap \R)\cup\{w_+\})} g_K(x)$, and $V_+=g_K(v_+)$. So $f_K(D_+,V_+]=( \max((\lin K\cap \R)\cup\{w_+\}),v_+]\subset [w_+,v_+]$.
	If $V_0=C_+$, then $[V_0,C_+)=\emptyset$, and $f_K[V_0,C_+)=\emptyset \subset [v_0,w_+]$. If $V_0<C_+$, by Lemma \ref{common-function} (iii,iv),   $\lin{K_+}\cap\lin{K_-}=\emptyset$, $v_0\ne \lin{K_+}$, and $C_+=\lim_{x\uparrow \min((\lin{K_+}\cap\R)\cup\{w_+\})} g_K(x)$. Now either $v_0\not\in \lin K\cup\{w_-^+\}$ and $V_0=g_K(v_0)$, or $v_0\in \lin {K_-}\cup\{w_-^+\}$ and $V_0=D_-$. In the former case, $f_K[V_0,C_+)\subset [v_0,\min((\lin{K_+}\cap\R)\cup\{w_+\}))\subset [v_0,w_+]$. In the latter case, $f_K[V_0,C_+)= [\max((\lin{K_-}\cap\R)\cup\{w_-\}),\min((\lin{K_+}\cap\R)\cup\{w_+\}))\subset [v_0,w_+]$. \end{proof}

\begin{Lemma}
  Suppose for some $\ulin t=(t_+,t_-)\in\cal D$ and $\sigma\in\{+,-\}$,  $\eta_\sigma(t_\sigma)\in \eta_{-\sigma}[0,t_{-\sigma}]\cup [v_{-\sigma},v_0]$. Then $W_\sigma(\ulin t)=V_0(\ulin t)$.
  \label{W=V}
\end{Lemma}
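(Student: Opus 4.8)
The plan is the following. By the symmetry between $+$ and $-$ I may assume $\sigma=+$, so the hypothesis reads $\eta_+(t_+)\in\eta_-[0,t_-]\cup[v_-,v_0]$. Since (\ref{VWVWV}) already gives $V_0(\ulin t)\le C_+(\ulin t)\le W_+(\ulin t)$, it is enough to prove the reverse inequality $W_+(\ulin t)\le V_0(\ulin t)$. I would do this by passing to the coordinate in which $K_+(t_+)$ has been removed. Write $\til w_-=W_-(t_+,0)$, $\ha w=\ha w_+(t_+)$, $\Delta K=K_-^{t_+}(t_-)=K(\ulin t)/K_+(t_+)$, and $v_0^+=g^{w_+}_{K_+(t_+)}(v_0)$. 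Then Lemma \ref{W=gw} gives $W_+(\ulin t)=g^{\til w_-}_{\Delta K}(\ha w)$, while the composition identity $g^{\ulin w}_{K(\ulin t)}=g^{W_-(t_+,0)}_{\Delta K}\circ g^{w_+}_{K_+(t_+)}$ from Lemma \ref{common-function} gives $V_0(\ulin t)=g^{\til w_-}_{\Delta K}(v_0^+)$. Because $g^{\til w_-}_{\Delta K}$ is nondecreasing on $\R_{\til w_-}$ (Definition \ref{Def-Rw}), the desired inequality will follow once I show that $\ha w$ and $v_0^+$ lie in a common interval on which $g^{\til w_-}_{\Delta K}$ is constant.

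First I would record the elementary estimates $\til w_-<v_0^+\le c_{K_+(t_+)}\le\ha w$, using $w_-<v_0<w_+$, the fact that $w_-<a_{K_+(t_+)}$ (so $\til w_-=g_{K_+(t_+)}(w_-)<c_{K_+(t_+)}$), and Proposition \ref{winK}. By Definition \ref{Def-Rw}, $g^{\til w_-}_{\Delta K}$ is constant and equal to $d_{\Delta K}=D_-(\ulin t)$ on $\{\til w_-^+\}\cup(\til w_-,b_{\Delta K}]$ (here $b^{\til w_-}_{\Delta K}=b_{\Delta K}$ since $\til w_-\in\lin{\Delta K}$). Thus $v_0^+$ already sits in this interval as soon as $v_0^+\le b_{\Delta K}$, and since $v_0^+\le c_{K_+(t_+)}\le\ha w$ the whole problem reduces to proving either $\ha w\le b_{\Delta K}$, or $\ha w=v_0^+\,(=c_{K_+(t_+)})$. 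This is the step where the hypothesis enters. Since $\eta_+(t_+)=f_{K_+(t_+)}(\ha w)$ is the endpoint of $\eta_+$ and lies on the connected set $S:=\eta_-[0,t_-]\cup[v_-,v_0]$, which never enters the interior of $K_+(t_+)$ (the two curves do not cross, and $[v_-,v_0]\subset\R$), a prime-end argument should show that $\ha w$ belongs to the closure of the image of $S$ under $g_{K_+(t_+)}$; and this image is contained in $\lin{\Delta K}\cup(-\infty,c_{K_+(t_+)}]$, because the image of $\eta_-[0,t_-]$ lies in $\lin{\Delta K}$ and the image of $[v_-,v_0]$ lies in $(-\infty,c_{K_+(t_+)}]$. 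Combined with $\ha w\ge c_{K_+(t_+)}$, this forces either $\ha w=c_{K_+(t_+)}$ or $\ha w\in\lin{\Delta K}\cap\R$, hence $\ha w\le b_{\Delta K}$ in the latter case, as needed.

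Finally I would dispose of the boundary case $\ha w=c_{K_+(t_+)}$ with $v_0^+<c_{K_+(t_+)}$. In this case $v_0<a_{K_+(t_+)}$, so $\eta_+(t_+)=f_{K_+(t_+)}(c_{K_+(t_+)})=a_{K_+(t_+)}>v_0$; hence $\eta_+(t_+)\notin[v_-,v_0]$, and the hypothesis forces $a_{K_+(t_+)}\in\eta_-[0,t_-]$. Then $\lin{K_+(t_+)}\cap\lin{K_-(t_-)}\ne\emptyset$, so Lemma \ref{common-function}(iv) gives $V_0(\ulin t)=C_+(\ulin t)=D_-(\ulin t)$; and since $\eta_-$ emanates from $w_-<a_{K_+(t_+)}$ and reaches $a_{K_+(t_+)}$, one obtains $[w_-,a_{K_+(t_+)}]\subseteq\lin{K_-(t_-)}$, which after the coordinate change yields $(\til w_-,c_{K_+(t_+)})\subseteq\lin{\Delta K}$, so $c_{K_+(t_+)}\le b_{\Delta K}$ and $g^{\til w_-}_{\Delta K}(\ha w)=D_-(\ulin t)=V_0(\ulin t)$. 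Putting the cases together with (\ref{VWVWV}) gives $W_+(\ulin t)=V_0(\ulin t)$, and the case $\sigma=-$ follows by symmetry.

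I expect the main obstacle to be the prime-end argument in the second paragraph: showing that the endpoint $\eta_+(t_+)$, viewed as the boundary prime end $\ha w$ of $\HH\setminus K_+(t_+)$, actually ``sees'' the set $S=\eta_-[0,t_-]\cup[v_-,v_0]$, so that $\ha w$ lies in the closure of the $g_{K_+(t_+)}$-image of $S$. This rests on the fact that $\eta_+(t_+)$ is the tip of $\eta_+$ (so that the relevant prime end is precisely the one along which $\eta_+$ is prolonged, and $\eta_+$ cannot be prolonged across $\eta_-$) together with the connectedness of $S$; the various degenerate configurations (the two curves touching, $\eta_+$ returning to $\R$ inside $[w_-,v_0]$ or revisiting a point of $\R$, $v_0\in\lin{K(\ulin t)}$) all have to be accounted for, and for these Lemma \ref{common-function}(iii) and (iv) is the right tool.
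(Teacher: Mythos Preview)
Your proposal is correct and follows essentially the same route as the paper. Both arguments reduce by symmetry to $\sigma=+$, express $W_+(\ulin t)=g^{\til w_-}_{\Delta K}(\ha w_+(t_+))$ and $V_0(\ulin t)=g^{\til w_-}_{\Delta K}\big(g^{w_+}_{K_+(t_+)}(v_0)\big)$ via Lemmas~\ref{W=gw} and~\ref{common-function}, and then argue that the two inputs lie in the same interval of constancy of the modified map $g^{\til w_-}_{\Delta K}$.

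The paper's case split is tidier than yours, however, because it branches directly on the two parts of the hypothesis rather than on the value of $\ha w_+(t_+)$. If $\eta_+(t_+)\in[v_-,v_0]$, the paper observes in one line that $\ha w_+(t_+)=c_{K_+(t_+)}=g^{w_+}_{K_+(t_+)}(v_0)$: indeed $\eta_+(t_+)\in\R$ with $\eta_+(t_+)\le v_0<w_+$ forces $\eta_+(t_+)=a_{K_+(t_+)}$, hence $\ha w_+(t_+)=c_{K_+(t_+)}$; and $a_{K_+(t_+)}=\eta_+(t_+)\le v_0\le w_+^-$ places $v_0$ in the collapsed interval $[a_{K_+(t_+)},w_+^-]$, so $g^{w_+}_{K_+(t_+)}(v_0)=c_{K_+(t_+)}$ as well. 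This single observation absorbs what you separate out as the ``boundary case'' and avoids your extra appeal to Lemma~\ref{common-function}(iv) there. If $\eta_+(t_+)\in\eta_-[0,t_-]$, the paper simply asserts $\ha w_+(t_+)\in\lin{K_-^{t_+}(t_-)}$ and then uses the ordering $W_-(t_+,0)\le g^{w_+}_{K_+(t_+)}(v_0)\le \ha w_+(t_+)$ to finish. Your ``prime-end argument'' is aimed at exactly this inclusion, and your flagging it as the main obstacle is accurate: the paper states the step without elaboration, and making it precise does require using that $\ha w_+(t_+)$ is the \emph{tip} prime end together with the relation $\eta_-=f_{K_+(t_+)}\circ\eta_-^{t_+}$ from Lemma~\ref{Lebesgue}. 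So the two proofs share the same delicate point; the paper is just terser about it.
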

\begin{proof}
  We assume $\sigma=+$ by symmetry. By Lemma \ref{W=gw}, $W_+(\ulin t)= g_{K_-^{t_+}(t_-)}^{W_-(t_+,0)}(\ha w_+(t_+))$. By Lemma \ref{common-function},  $V_0(\ulin t)=g_{K_-^{t_+}(t_-)}^{W_-(t_+,0)}\circ g_{K_+(t_+)}^{w_+}(v_0)$. If $\eta_+(t_+)\in[v_-,v_0]$, then $\ha w_+(t_+)=c_{K_+(t_+)}=g_{K_+(t_+)}^{w_+}(v_0)$, and so we get $W_\sigma(\ulin t)=V_0(\ulin t)$. Now suppose $\eta_+(t_+)\in \eta_-[0,t_-]$. Then $\ha w_+(t_+)\in \lin{K_-^{t_+}(t_-)}$, which together with $\ha w_+(t_+)=W_+(t_+,0)\ge V_0(t_+,0)=g_{K_+(t_+)}^{w_+}(v_0)\ge W_-(t_+,0)$ implies that $g_{K_-^{t_+}(t_-)}^{W_-(t_+,0)}\circ g_{K_+(t_+)}^{w_+}(v_0)=g_{K_-^{t_+}(t_-)}^{W_-(t_+,0)}(\ha w_+(t_+))$, as desired.
\end{proof}

\subsection{Ensemble without intersections}  \label{section-deterministic-2}
We say that $(\eta_+,\eta_-;{\cal D})$ is disjoint if $\lin {K_+(t_+)}\cap \lin{K_-(t_-)}=\emptyset$ for any $(t_+,t_-)\in\cal D$. Given a  commuting pair $(\eta_+,\eta_-;{\cal D})$, we get  a disjoint commuting par $(\eta_+,\eta_-;{\cal D}_{\disj})$ by defining
\BGE {\cal D}_{\disj}=\{(t_+,t_-)\in{\cal D}:\lin {K_+(t_+)}\cap \lin{K_-(t_-)}=\emptyset\}.\label{D-disj}\EDE

In this subsection, we assume that $(\eta_+,\eta_-;{\cal D})$ is disjoint. {In addition to the $W_\pm,V_0,V_\pm$ defined in the previous subsection, we are going to define on $\cal D$ the functions $W_{\pm,j}$, $j=1,2,3$, $W_{\pm,S}$, $Q$, $W_{\pm,N}$, $V_{\nu,N}$, $\nu\in\{0,+,-\}$, and $E_{X,Y}$ for $X\ne Y\in\{W_+,W_-,V_0,V_+,V_-\}$. We will also derive differential equations for these functions, which will be applied to the random setting in Section \ref{section-indep} to construct some two-time-parameter local martingale.} 

We now write $g_\sigma^{t_{-\sigma}}(t_\sigma,\cdot)$ for $g_{K_\sigma^{t_{-\sigma}}(t_\sigma)}$, $\sigma\in\{+,-\}$.
For $(t_+,t_-)\in\cal D$ and  $\sigma\in\{+,-\}$,  {since} $\lin {K_+(t_+)}\cap \lin{K_-(t_-)}=\emptyset$, $ [c_{K_\sigma(t_\sigma)},d_{K_\sigma(t_\sigma)}]$ has positive distance from $K_{-\sigma}^{t_\sigma}(t_{{-}\sigma}))$. So $g_{-\sigma}^{t_\sigma}(t_{-\sigma},\cdot)$ is analytic at $\ha w_\sigma(t_\sigma)\in [c_{K_\sigma(t_\sigma)},d_{K_\sigma(t_\sigma)}]$. By Lemma \ref{W=gw}, $W_{\sigma }(t_+,t_-)=g_{-\sigma}^{t_\sigma}(t_{-\sigma},  \ha w_\sigma(t_\sigma))$.
We further define  $W_{\sigma,j}$, $j=1,2,3$, and $W_{\sigma,S}$ on ${\cal D}$ by \BGE W_{\sigma,j}(t_+,t_-)=(g_{-\sigma}^{t_\sigma})^{(j)}(t_{-\sigma},\ha w_\sigma(t_\sigma)),\quad W_{\sigma,S}=\frac{W_{\sigma,3}}{W_{\sigma,1}}-\frac 32\Big(\frac{W_{\sigma,2}}{W_{\sigma,1}}\Big)^2,\quad \sigma\in\{+,-\}.\label{WS}\EDE
Here the superscript $(j)$ means the $j$-th complex derivative w.r.t.\ the space variable.
The functions are all continuous on $\cal D$ because $(t_+,t_-,z)\mapsto (g_{-\sigma}^{t_\sigma})^{(j)}(t_{-\sigma},z)$ is continuous by Lemma \ref{lem-uniform}.
Note that $W_{\sigma,S}(t_+,t_-)$ is the Schwarzian derivative of $g_{-\sigma}^{t_\sigma}(t_{-\sigma},\cdot)$ at $\ha w_\sigma(t_\sigma)$.

\begin{Lemma}
$\mA $ is continuously differentiable  with   $\pa_\sigma \mA= W_{\sigma, 1} ^2$, $\sigma\in\{+,-\}$.
\label{lem-positive}
\end{Lemma}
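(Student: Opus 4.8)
The statement to prove is Lemma~\ref{lem-positive}: the capacity function $\mA$ is continuously differentiable on $\cal D$ with $\pa_\sigma\mA = W_{\sigma,1}^2$ for $\sigma\in\{+,-\}$.

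\textbf{Plan of proof.} By symmetry it suffices to treat $\sigma=+$, i.e.\ to show that for each fixed $t_-$, the single-variable function $t_+\mapsto \mA(t_+,t_-)$ is $C^1$ with derivative $W_{+,1}(t_+,t_-)^2$, and that this derivative is jointly continuous on $\cal D$ (joint continuity is already granted, since $W_{+,1}$ was observed to be continuous on $\cal D$ right after its definition in~(\ref{WS})). The starting point is the additivity of $\HH$-capacity: for $t_+<t_+'$ with $(t_+',t_-)\in\cal D$, formula~(\ref{K123}) together with $\hcap$-additivity gives
$$\mA(t_+',t_-)-\mA(t_+,t_-)=\hcap_2\big(K(t_+',t_-)/K(t_+,t_-)\big)=\hcap_2\big(K_+^{t_-}(t_+')/K_+^{t_-}(t_+)\big).$$
Now $K_+^{t_-}(t_+')/K_+^{t_-}(t_+)$ is, up to the conformal map $g_{-}^{t_+}(t_-,\cdot)=g_{K_-^{t_+}(t_-)}$ precomposed with things, the image under $g_{-}^{t_+}(t_-,\cdot)$ of the small hull $K_+(t_+')/K_+(t_+)=\Hull(\eta_+[t_+,t_+'])/K_+(t_+)$, which shrinks to the point $\ha w_+(t_+)$ as $t_+'\downarrow t_+$ by~(\ref{haw=}). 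Since $g_{-}^{t_+}(t_-,\cdot)$ is analytic in a neighborhood of $\ha w_+(t_+)$ (this uses disjointness: $[c_{K_+(t_+)},d_{K_+(t_+)}]$ has positive distance from $K_-^{t_+}(t_-)$, exactly as noted before~(\ref{WS})), the $\HH$-capacity transforms under it by the standard local scaling rule.

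The key step is therefore a \emph{capacity-transformation estimate}: if $\phi$ is conformal and analytic in a neighborhood of a real interval containing a small hull $H$ with $\rad_{x_0}(H)\le r$, then $\hcap(\phi(H)) = \phi'(x_0)^2\,\hcap(H)\,(1+o(1))$ as $r\to 0$, with the $o(1)$ controlled in terms of $\|\phi'\|,\|\phi''\|$ on a fixed neighborhood. I would prove this by writing $\phi(z)=\phi(x_0)+\phi'(x_0)(z-x_0)+O(r^2)$ uniformly on $H^{\doub}$, comparing $\phi(H)$ with the hull $\phi'(x_0)\cdot(H-x_0)+\phi(x_0)$ (which has capacity exactly $\phi'(x_0)^2\hcap(H)$ by the scaling/translation behavior of $\hcap$), and using Proposition~\ref{hcap-concave} (concavity/monotonicity of $\hcap$) together with the sandwiching of $\phi(H)$ between two translated-rescaled copies of $H$ to get matching upper and lower bounds. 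Applying this with $\phi = g_{-}^{t_+}(t_-,\cdot)$, $x_0=\ha w_+(t_+)$, $\phi'(x_0)=W_{+,1}(t_+,t_-)$, and $H = K_+(t_+')/K_+(t_+)$, whose capacity is $2(t_+'-t_+)$ and whose radius about $\ha w_+(t_+)$ is $o(1)$ by~(\ref{haw=}) and Proposition~\ref{Loewner-chain} (control of $\diam(K_{t+\delta}/K_t)$), yields
$$\mA(t_+',t_-)-\mA(t_+,t_-) = W_{+,1}(t_+,t_-)^2\,(t_+'-t_+)\,(1+o(1)),$$
hence the right derivative of $\mA(\cdot,t_-)$ at $t_+$ equals $W_{+,1}(t_+,t_-)^2$. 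A symmetric argument (or the same estimate applied at $t_+'$ with $\phi^{-1}$) gives the left derivative, and since $t_+\mapsto W_{+,1}(t_+,t_-)^2$ is continuous, a one-sided-derivative function that is continuous forces $\mA(\cdot,t_-)$ to be genuinely differentiable with that derivative. Joint continuity of $\pa_+\mA = W_{+,1}^2$ on $\cal D$ then follows from the continuity of $W_{+,1}$, giving $C^1$.

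\textbf{Main obstacle.} The heart of the matter is the capacity-transformation estimate with \emph{uniform} control of the error term, so that the one-sided derivative comes out cleanly and the convergence is locally uniform in $t_+$ (which is what upgrades ``right derivative'' plus continuity to ``$C^1$''). This requires: (a) extracting, from~(\ref{haw=}) and Proposition~\ref{Loewner-chain}, that $\diam\big(K_+(t_+')/K_+(t_+)\big)\to 0$ and that this hull's closure contains points near $\ha w_+(t_+)$ on both sides so that $\rad_{\ha w_+(t_+)}$ of it is genuinely $o(1)$; (b) uniform bounds on the first two derivatives of $g_{-}^{t_+}(t_-,\cdot)$ near $\ha w_+(t_+)$ as $t_+$ ranges over a compact subinterval — this comes from Lemma~\ref{lem-uniform}(ii)–(iii) and the disjointness hypothesis, which keeps the relevant point uniformly away from $K_-^{t_+}(t_-)^{\doub}$; and (c) combining Proposition~\ref{hcap-concave} and Proposition~\ref{g-z-sup}-type estimates to turn the pointwise expansion of $\phi$ into a two-sided bound on $\hcap(\phi(H))$. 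Everything else is bookkeeping: the identity~(\ref{K123}), $\hcap$-additivity, and the definition~(\ref{WS}) of $W_{+,1}$ as $(g_{-}^{t_+})'(t_-,\ha w_+(t_+))$.
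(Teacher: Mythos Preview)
Your proposal is correct and is precisely the ``standard argument'' the paper cites: the paper's own proof consists of a single sentence referring to \cite[Lemma~2.8]{LSW1} (the capacity-transformation estimate $\hcap(\phi(H))=\phi'(x_0)^2\hcap(H)(1+o(1))$ for a small hull $H$ near $x_0$) and to \cite[Formula~(3.7)]{reversibility}, which is exactly the computation you have spelled out in detail. Your identification $K_+^{t_-}(t_+')/K_+^{t_-}(t_+)=g_-^{t_+}(t_-,\,K_+(t_+')/K_+(t_+))$ via~(\ref{K123}) and the disjointness hypothesis, followed by the local scaling of $\hcap$ under the analytic map $g_-^{t_+}(t_-,\cdot)$ at $\ha w_+(t_+)$, is the intended argument.
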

\begin{proof}
This follows from a standard argument, which first appeared in \cite[Lemma 2.8]{LSW1}. The statement for ensemble of chordal Loewner curves appeared in \cite[Formula (3.7)]{reversibility}.
\end{proof}

So  for any $\sigma\in\{+,-\}$ and  $t_{-\sigma}\in{\cal I}_{-\sigma}$, $K_{\sigma}^{t_{-\sigma}}(t_\sigma)$, $0\le t_\sigma<T_\sigma^{\cal D}(t_{-\sigma})$, are chordal Loewner hulls  driven by $W_{\sigma}|^{-\sigma}_{t_{-\sigma}}$ with speed $(W_{\sigma,1}|^{-\sigma}_{t_{-\sigma}})^2$, and we get the differential equation:
\BGE \pa_{t_\sigma} g_{\sigma}^{t_{-\sigma}}(t_\sigma,z)= \frac{2 (W_{\sigma,1}(t_+,t_-)^2}{ g_{\sigma}^{t_{-\sigma}}(t_\sigma,z)-W_\sigma(t_+,t_-)},\label{pag}\EDE
which together with Lemmas \ref{W=gw} and \ref{common-function}
implies the differential equations for $V_0,V_+,V_-$:
\BGE \pa_\sigma V_\nu \aeq \frac{2W_{\sigma,1}^2}{V_\nu-W_\sigma},\quad \nu\in\{0,+,-\},\label{pa-X}\EDE
and the differential equations for $W_{-\sigma}$, $W_{-\sigma,1}$ and $W_{-\sigma,S}$:
\BGE  \pa_{\sigma} W_{-\sigma} = \frac{2W_{\sigma,1}^2}{W_{-\sigma}-W_{\sigma}},\quad \frac{\pa_{\sigma} W_{-\sigma,1}}{W_{-\sigma,1}}=\frac{-2W_{\sigma,1}^2}{(W_+-W_-)^2},\quad \pa_{\sigma} W_{-\sigma,S}=-\frac{12 W_{+,1}^2 W_{-,1}^2}{(W_+-W_-)^4}.\label{pajW}\EDE

Define $Q$ on ${\cal D}$ by
\BGE Q(\ulin t)=\exp\Big(\int_{[\ulin 0,\ulin t]} -\frac{12 W_{+,1}(\ulin s) ^2W_{-,1}(\ulin s) ^2}{(W_+(\ulin s) -W_-(\ulin s) )^4}d^2\ulin s\Big).\label{F}\EDE
Then $Q$ is continuous and positive with $Q(t_+,t_-)=1$ when {$t_+=0$ or $t_-=0$}. From (\ref{pajW}) we get
\BGE \frac{\pa_\sigma Q}{Q}=W_{\sigma,S},\quad \sigma\in\{+,-\}.\label{paF}\EDE
{
\begin{Remark} The function $Q$ implicitly appeared earlier in \cite{reversibility,duality}: $Q^{-\frac{\cc}6}$   agrees with the second factor on the RHS of \cite[Formula (4.3)]{reversibility}. It is related to the Brownian loop measure (\cite{loop}) by the fact that $-\frac 16 \log Q(\ulin t)$ equals the Brownian loop measure of the set of loops in $\HH$ that intersect both $K_+(t_+)$ and $K_-(t_-)$. The ODE (\ref{paF}) reflects the facts that  the Brownian loop measure can be decomposed into Brownian bubble measures along a curve, and the Schwarzian derivatives are related to the Brownian bubble measures.
\end{Remark}
}

By (\ref{VWVWV}), $V_+\ge W_+\ge C_+\ge V_0\ge D_- \ge W_-\ge V_-$ on $\cal D$. Because of disjointness, we further have  $C_+>D_-$ by Lemma \ref{common-function}.
By the same lemma,
\BGE V_\sigma(t_+,t_-)=g_{-\sigma}^{t_\sigma}(t_{-\sigma},V_\sigma(t_\sigma \ulin e_\sigma));\label{WV+g}\EDE
\BGE V_0(t_+,t_-)=g_{-\sigma}^{t_\sigma}(t_{-\sigma},V_0(t_\sigma \ulin e_\sigma)),\quad \mbox{if }v_0\not\in\lin{K_{-\sigma}(t_{-\sigma})}.\label{V0-g}\EDE

Let $\ulin t=(t_+,t_-)\in {\cal D}$. For $\sigma\in\{+,-\}$, differentiating (\ref{circ-g}) w.r.t.\ $t_\sigma$, letting $\ha z=g_{K_\sigma(t_\sigma)}(z)$, and using Lemma \ref{W=gw} and (\ref{pag},\ref{WS}) we get
\BGE \pa_{t_\sigma} g_{-\sigma}^{t_\sigma}(t_{-\sigma},\ha z)=\frac{2  (g_{-\sigma}^{t_\sigma})'(t_{-\sigma},\ha w_\sigma(t_\sigma))^2}{g_{-\sigma}^{t_\sigma}(t_{-\sigma},\ha z)-g_{-\sigma}^{t_\sigma}(t_{-\sigma},\ha w_\sigma(t_\sigma))}-\frac{2(g_{-\sigma}^{t_\sigma})'(t_{-\sigma},\ha z)}{\ha z-\ha w_\sigma(t_\sigma)}.\label{diff}\EDE
Letting $\HH\sem {K_{-\sigma}^{t_\sigma}(t_{-\sigma})}\ni \ha z\to \ha w_\sigma(t_\sigma)$ and using the power series expansion of $g_{-\sigma}^{t_\sigma}(t_{-\sigma},\cdot)$ at $\ha w_\sigma(t_\sigma)$, we get
\BGE \pa_{t_\sigma} g_{-\sigma}^{t_\sigma}(t_{-\sigma},\ha z)|_{\ha z=\ha w_\sigma(t_\sigma)}=-3 W_{\sigma,2}(\ulin t),\quad \sigma\in\{+,-\}.\label{-3}\EDE
Differentiating (\ref{diff}) w.r.t.\ $\ha z$ and letting $ \ha z\to \ha w_\sigma(t_\sigma)$, we get
\BGE \frac{ \pa_{t_\sigma} (g_{-\sigma}^{t_\sigma})'(t_{-\sigma},\ha z)}{   (g_{-\sigma}^{t_\sigma})'(t_{-\sigma},\ha z)}\bigg|_{\ha z=\ha w_\sigma(t_\sigma)}=\frac 12\Big(\frac{W_{\sigma,2}(\ulin t)}{W_{\sigma,1}(\ulin t)}\Big)^2-\frac 43\frac{W_{\sigma,3}(\ulin t)}{W_{\sigma,1}(\ulin t)},\quad \sigma\in\{+,-\}.\label{1/2-4/3}\EDE

For $\sigma\in\{+,-\}$, define $ W_{\sigma,N}$ on ${\cal D}$ by  $W_{\sigma,N} =\frac{ W_{\sigma,1} }{ W_{\sigma,1}|^{\sigma}_0 }$. Since $W_{\sigma,1}|^{-\sigma}_0\equiv 1$, we get $  W_{\sigma,N}(t_+,t_-)=1$ when $t_+t_-=0$. From (\ref{pajW}) we get
\BGE \frac{\pa_{\sigma}  W_{-\sigma,N}}{ W_{-\sigma,N}}=\frac{-2 W_{\sigma,1}^2}{(W_{-\sigma}-W_{\sigma})^2}\,\pa t_\sigma-\frac{-2 W_{\sigma,1}^2}{(W_{-\sigma}-W_{\sigma})^2}\bigg|^{-\sigma}_0 \,\pa t_\sigma,\quad \sigma\in\{+,-\}.
\label{pajhaW}\EDE

We now define $  V_{0,N},V_{+,N},  V_{-,N}$ on ${\cal D} $ by
$$  V_{\nu,N}(\ulin t)=(g_{-\nu}^{t_\nu})'(t_{-\nu}, V_\nu(t_\nu \ulin e_\nu))/(g_{-\nu}^{0})'(t_{-\nu}, v_\nu),\quad \nu\in\{+,-\};$$
\BGE   V_{0,N}(\ulin t)=(g_{-\sigma}^{t_\sigma})'(t_{-\sigma},V_0(t_\sigma \ulin e_\sigma))/(g_{-\sigma}^{t_\sigma})'(t_{-\sigma},v_0),\quad  \mbox{if }  v_0\not\in  \lin{K_{-\sigma}(t_{-\sigma})},\quad \sigma\in\{+,-\}.\label{V1V2}\EDE
The functions are well defined because  either $v_0\not\in \lin{K_+(t_+)}$ or $v_0\not\in \lin{K_-(t_-)}$, and when they both hold, the RHS of (\ref{V1V2}) equals $g_{K(t_+,t_-)}'(v_0)/(g_{K_+(t_+)}'(v_0)g_{K_-(t_-)}'(v_0))$ by (\ref{circ-g}).

Note that $ V_{\nu,N}(t_+,t_-)=1$ if $t_+t_-=0$ for $\nu\in\{0,+,-\}$. From (\ref{WV+g}-\ref{V0-g}) and (\ref{circ-g},\ref{pag}) we find that these functions satisfy the following differential equations on ${\cal D}$:
\BGE  \frac{\pa_{\sigma}   V_{\nu,N}} { V_{\nu,N}}= \frac{-2W_{\sigma,1}^2}{(V_{\nu}-W_\sigma)^2}\,\pa t_\sigma-\frac{-2W_{\sigma,1}^2}{(V_{\nu}-W_\sigma)^2}\bigg|^{-\sigma}_0\,\pa t_\sigma,\quad \sigma\in\{+,-\},\quad \nu\in\{0,-\sigma\},\quad \mbox{if }v_\nu\not\in \lin{K_\sigma(t_\sigma)}. \label{paV1}\EDE

We now define $E_{X,Y}$ on ${\cal D}$ for $X\ne Y\in\{W_+,W_-,V_0,V_+,V_-\}$ as follows. First, let
\BGE E_{X,Y}(t_+,t_-)=\frac{(X(t_+,t_-)-Y(t_+,t_-))(X(0,0)-Y(0,0))} {(X(t_+,0)-Y(t_+,0))(X(0,t_-)-Y(0,t_-))},\label{RXY}\EDE
if the denominator is not $0$. If the denominator is $0$, then since $V_+\ge W_+\ge V_0\ge W_-\ge V_-$ and $W_+>W_-$, there is $\sigma\in\{+,-\}$ such that $\{X,Y\}\subset\{W_\sigma,V_\sigma,V_0\}$.  By symmetry, we will only describe the definition of $E_{X,Y}$ in the case that $\sigma=+$. If $X(t_+,0)=Y(t_+,0)$, by Lemmas \ref{common-function} and \ref{W=gw}, $X(t_+,\cdot)\equiv Y(t_+,\cdot)$. If $X(0,t_-)=Y(0,t_-)$, then we must have $X(\ulin 0)=Y(\ulin 0)$, and so $X(0,\cdot)\equiv Y(0,\cdot)$. For the definition of $E_{X,Y}$, we modify (\ref{RXY}) by writing the RHS as $\frac{X(t_+,t_-)-Y(t_+,t_-)}{X(t_+,0)-Y(t_+,0) }:\frac{ X(0,t_-)-Y(0,t_-)}{ X(0,0)-Y(0,0)}$,  replacing the first factor (before ``$:$'') by $(g_{-}^{t_+})'(t_-,X(t_+,0))$ when $X(t_+,0)=Y(t_+,0)$,   replacing the second factor (after ``$:$'')  by $g_{K_-(t_-)}'(X(0,0))$ when $X(0,t_-)=Y(0,t_-)$; and do both replacements when two equalities both hold. Then in all cases, $E_{X,Y}$ is continuous and positive on ${\cal D}$, and $E_{X,Y}(t_+,t_-)=1$ if {$t_+=0$ or $t_-=0$}. By (\ref{pa-X},\ref{pajW}), for $\sigma\in\{+,-\}$, if $X,Y\ne W_\sigma$, then
\BGE \frac{\pa_\sigma E_{X,Y}}{E_{X,Y}}\aeq \frac{-2W_{\sigma,1}^2}{(X-W_\sigma)(Y-W_\sigma)}\,\pa t_\sigma- \frac{-2W_{\sigma,1}^2}{(X-W_\sigma)(Y-W_\sigma)}\Big|^{-\sigma}_0\,\pa t_\sigma.\label{paEXY}\EDE

\subsection{A time curve in the time region} \label{time curve}
{We call the set $\cal D$ a time region, which is composed of two-dimensional time variables, whose two components are one-dimensional time variables for $\eta_+$ and $\eta_-$ respectively. A time curve in $\cal D$ is a continuous and strictly increasing function $\ulin u=(u_+,u_-):[0,T^u)\to \cal D$ with $\ulin u(0)=\ulin 0$. Such a time curve can be used to grow $\eta_+$ and $\eta_-$ simultaneously. This means that we construct two curves $\eta^u_\sigma:=\eta_\sigma\circ u_\sigma$,   $\sigma\in\{+,-\}$, on the same interval $[0,T^u)$, which are time-changes of some initial segments of $\eta_+$ and $\eta_-$. In this subsection, we are going to construct a special time curve in $\cal D$ such that if we grow $\eta_+$ and $\eta_-$ simultaneously using $\ulin u$, then the factors (F1) and (F2) in Section \ref{Strategy} are satisfied. Later in the next section, for a commuting pair of SLE$_\kappa(2,\rho_0,\rho_+,\rho_-)$ curves, we will use the driving functions $W_\sigma$, $\sigma\in\{+,-\}$, and the force point functions $V_\nu$, $\nu\in\{0,+,-\}$, and the time-curve $\ulin u$ to construct a diffusion process $(\ulin R(t))_{0\le t<T^u}$, whose transition density then leads to the proof of the main theorem of the paper.
}

We {use the settings and results in the previous subsections except that we }
do not assume that $(\eta_+,\eta_-;\cal D)$ is disjoint. 
We {made an additional assumption} in this subsection that
\BGE v_+-v_0=v_0-v_-.\label{add-assump}\EDE
Define  $\Lambda$ and $\Upsilon$ on $\cal D$ by $\Lambda=\frac 12\log\frac{V_+-V_0}{V_0-V_-}$ and $\Upsilon=\frac 12\log\frac{V_+-V_-}{v_+-v_-}$. By {the additional} assumption (\ref{add-assump}), we have $\Lambda(\ulin 0)=\Upsilon(\ulin 0)=0$.

\begin{Lemma} There exists a unique  continuous and strictly increasing function $\ulin u:[0,T^u)\to {\cal D}$, for some $T^u\in(0,\infty]$,  with $\ulin u(0)=\ulin 0$, such that for any $0\le t<T^u$ and $\sigma\in\{+,-\}$, $|V_\sigma(\ulin u(t))-V_0(\ulin u(t))|=e^{2t}|v_\sigma-v_0|$; and $\ulin u$ {cannot} be extended beyond $T^u$ {while still satisfying this property}. \label{time curve-lem}
\end{Lemma}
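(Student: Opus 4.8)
The plan is to recast the stated identities in terms of two scalar functions on $\cal D$ and then obtain $\ulin u$ as a suitably reparametrized level curve. First, on all of $\cal D$ one has $V_+-V_0>0$ and $V_0-V_->0$: at $\ulin 0$ these equal $v_+-v_0$ and $v_0-v_-$, which are positive by the distinctness of $v_+,v_0,v_-$ in $\R_{\ulin w}$ together with (\ref{add-assump}); for $\ulin t$ with $t_\sigma>0$ one has $V_+-V_0\ge D_+-C_+>0$ and $V_0-V_-\ge D_--C_->0$ by (\ref{VWVWV}), the hull $K_\sigma^{t_{-\sigma}}(t_\sigma)$ being nonempty; and the cases $t_+=0$ or $t_-=0$ follow from the monotonicity of $g^{\ulin w}_{K(\ulin t)}$ and the fact that $v_\pm$ lie strictly outside the closure of the relevant hull. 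Thus $\Lambda=\frac12\log\frac{V_+-V_0}{V_0-V_-}$ and $\Upsilon=\frac12\log\frac{V_+-V_-}{v_+-v_-}$ are continuous on $\cal D$ with $\Lambda(\ulin 0)=\Upsilon(\ulin 0)=0$, and since $|v_+-v_0|=|v_0-v_-|=\frac12(v_+-v_-)$, adding and subtracting the two equations in the statement shows that, for a fixed $\ulin t\in\cal D$, the condition ``$|V_\sigma(\ulin t)-V_0(\ulin t)|=e^{2t}|v_\sigma-v_0|$ for both $\sigma$'' is equivalent to ``$\Lambda(\ulin t)=0$ and $\Upsilon(\ulin t)=t$''. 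So it suffices to construct a maximal continuous strictly increasing $\ulin u\colon[0,T^u)\to\cal D$ with $\ulin u(0)=\ulin 0$, $\Lambda\circ\ulin u\equiv 0$, and $\Upsilon(\ulin u(t))=t$.

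The key input is monotonicity: $\Lambda$ is strictly increasing in $t_+$ and strictly decreasing in $t_-$, and $\Upsilon$ is strictly increasing in both variables. To prove this, fix $t_-$ and vary $t_+$. By Lemma \ref{Lem-W}, $K_+^{t_-}(\cdot)$ is a chordal Loewner chain driven by $W_+|^-_{t_-}$ with speed $\mA|^-_{t_-}$; its generating curve meets $\R$ in a null set by Lemma \ref{Lebesgue}; and by Lemma \ref{common-function} each $V_\nu(\cdot,t_-)$, $\nu\in\{+,0,-\}$, equals $g^{W_+(0,t_-)}_{K_+^{t_-}(\cdot)}$ evaluated at the $t_+$-independent point $g^{w_-}_{K_-(t_-)}(v_\nu)$. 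Proposition \ref{Prop-cd-continuity'} then yields $\partial_+V_\nu\aeq \frac{2q_+}{V_\nu-W_+}$ with $q_+:=\partial_+\mA>0$ a.e.\ (Lemma \ref{lem-strict}), together with $V_+\ge W_+\ge V_0\ge V_-$ everywhere and $V_+>W_+$, $W_+>V_0$, $W_+>V_-$ a.e.\ in $t_+$. Using only that $s\mapsto 1/s$ is decreasing on each of $(-\infty,0)$ and $(0,\infty)$, one reads off $\partial_+(V_+-V_-)>0$ and $\partial_+(V_+-V_0)>0$ (a.e., with positive integral over any subinterval) and $\partial_+(V_0-V_-)\le 0$, so $\Lambda$ and $\Upsilon$ are strictly increasing in $t_+$; the $+\leftrightarrow-$ symmetry (under which $\Lambda\mapsto-\Lambda$, $\Upsilon\mapsto\Upsilon$, $t_+\leftrightarrow t_-$) supplies the $t_-$-statements. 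This step is the only place where the lack of disjointness matters: the smooth differential equations (\ref{pa-X})--(\ref{paEXY}) of Section \ref{section-deterministic-2} are not available here, so one must use the \emph{modified} Loewner maps $g^w_K$ and the a.e.\ conclusions of Proposition \ref{Prop-cd-continuity'}, and I expect the bookkeeping (tracking which of $V_\nu,W_\pm$ has been swallowed at a given time, so that the sign analysis is valid) to be the most delicate part.

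With monotonicity in hand the construction is routine. For each $t_-$ there is at most one $t_+$ with $(t_+,t_-)\in\cal D$ and $\Lambda(t_+,t_-)=0$; write $\phi(t_-)$ for it when it exists and ${\cal T}^-$ for the set of such $t_-$. Since $\cal D$ is a history complete region, every segment $[0,a_+]\times\{a_-\}$ with $(a_+,a_-)\in\cal D$ lies in $\cal D$; combining the intermediate value theorem on such segments with the monotonicity of $\Lambda$ (and $\Lambda(0,\cdot)\le 0\le\Lambda(a_+,\cdot)$ once $a_-\in{\cal T}^-$) and $\ulin 0\in{\cal T}^-$, one verifies that ${\cal T}^-=[0,t_-^*)$ for some $t_-^*\in(0,\infty]$ and that $\phi$ is continuous and strictly increasing there with $\phi(0)=0$; the half-openness follows because if $t_-^*\in{\cal T}^-$ then, $\cal D$ being open about $(\phi(t_-^*),t_-^*)$, one more intermediate value argument across a small ball produces a point of ${\cal T}^-$ above $t_-^*$, a contradiction. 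Then $\gamma(t_-):=(\phi(t_-),t_-)$ is a continuous strictly increasing curve in $\cal D$ from $\ulin 0$ lying in $\{\Lambda=0\}$, $\psi:=\Upsilon\circ\gamma$ is a continuous strictly increasing bijection of $[0,t_-^*)$ onto $[0,T^u)$ with $T^u:=\lim_{t_-\uparrow t_-^*}\psi(t_-)\in(0,\infty]$, and $\ulin u:=\gamma\circ\psi^{-1}$ satisfies all the requirements. Uniqueness holds because any admissible curve lies in $\{\Lambda=0\}$, on which $\Upsilon$ is injective (being strictly increasing along $\gamma$), so its value at $t$ must be the unique point of $\{\Lambda=0\}$ with $\Upsilon=t$; and maximality holds because an extension past $T^u$ would force $\gamma$, hence $\phi$, to extend continuously up to $t_-^*$, contradicting $t_-^*\notin{\cal T}^-$ (and if $T^u=\infty$ there is nothing to extend).
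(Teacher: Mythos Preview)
Your proof is correct and follows essentially the same route as the paper: reduce the two identities to $\Lambda\circ\ulin u=0$ and $\Upsilon\circ\ulin u=\id$, establish that $\Lambda$ is strictly increasing in $t_+$ and strictly decreasing in $t_-$ while $\Upsilon$ is strictly increasing in both (via Lemma~\ref{common-function} and Proposition~\ref{Prop-cd-continuity'}), build the level curve of $\{\Lambda=0\}$ by the intermediate value theorem, and reparametrize by $\Upsilon$. The only organizational difference is that the paper constructs both one-sided inverses $u_{+\to-}$ and $u_{-\to+}$ and checks they are mutual inverses, whereas you build just one graph map $\phi$ and its domain interval $[0,t_-^*)$; your extra care in verifying $V_+-V_0>0$, $V_0-V_->0$ and in flagging that the non-disjoint setting forces one to use the a.e.\ ODEs of Proposition~\ref{Prop-cd-continuity'} rather than the smooth ones of Section~\ref{section-deterministic-2} is well placed.
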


\begin{proof}
The proof resembles the argument in \cite[Section 4]{Two-Green-interior}. It is clear that the property of $\ulin u$ is equivalent to that $\Lambda(\ulin u(t))=0$ and $\Upsilon (\ulin u(t))=t$ for $0\le t<T^u$.
By (\ref{VWVWV}), the definition of $V_\nu$ ($V_\nu(\ulin t)=g_{K(\ulin t)}^{\ulin w}(v_\nu)$), Lemma \ref{common-function} (the definition of $g_{K(\ulin t)}^{\ulin w}$) and Proposition \ref{Prop-cd-continuity'}, we see that, for $\sigma\in\{+,-\}$, $|V_\sigma-V_0|$ and $|V_\sigma-V_{-\sigma}|$ are strictly increasing in $t_\sigma$, and $|V_0-V_{-\sigma}|$ is strictly decreasing in $t_\sigma$. Thus, $\Lambda$ is strictly increasing in $t_+$ and strictly decreasing in $t_-$, and $\Upsilon$ is strictly increasing in both $t_+$ and $t_-$. Since $\Lambda(\ulin 0)=0$, we see that $\Lambda>0$ on $[0,T_+)\times \{0\}$ and $<0$ on $\{0\}\times [0,T_-)$.
For $\sigma\in \{+,-\}$, let
$$T^u_\sigma=\sup\{t_\sigma\in[0,T_\sigma): \exists t_{-\sigma}\in [0,T_{-\sigma}^{\cal D}(t_\sigma))\mbox{ such that } \sigma \Lambda(t_\sigma \ulin e_\sigma+t_{-\sigma}\ulin e_{-\sigma})\le 0\}.$$

By the definition of $T^u_+$, $\Lambda>0$ on ${\cal D}\cap (T^u_+,\infty)\times \R_+$. By the continuity of $\Lambda$, we have $\Lambda\ge 0$ on ${\cal D}\cap [T^u_+,\infty)\times \R_+$. If $\Lambda(T^u_+,t_-)=0$ for some $t_-\in\R_+$ such that $(T^u_+,t_-)\in\cal D$, then by the strictly deceasingness of $\Lambda$ in $t_-$, there is $t_-'>t_-$ such that $(T^u_+,t_-')\in\cal D$ and $\Lambda(T^u_+,t_-)<0$, which is a contradiction. So $\Lambda>0$ on ${\cal D}\cap [T^u_+,\infty)\times \R_+$. For $t_+\in[0,T^u_+)$, by intermediate value theorem and the strictly decreasingness of $\Lambda$ in $t_-$,  there exists a unique $t_-\in [0,T^{\cal D}_-(t_+))$ such that $\Lambda(t_+,t_-)=0$. We then define a function $u_{+\to -}:\to [0,T^u_+)\to [0,T_-)$ such that $\Lambda(t_+,u_{+\to -}(t_+))=0$, $0\le t_+<T^u_+$. Note that $u_{+\to -}(0)=0$. Since $\Lambda$ is strictly increasing (resp.\ decreasing) in $t_+$ (resp.\ $t_-$), $u_{+\to -}$ is strictly increasing.
Similarly, $\Lambda <0$ on ${\cal D}\cap \R_+\times [T^u_-,\infty)$, and there is a strictly increasing function $u_{-\to +}:[0,T^u_-)\to [0,T_+)$ with $u_{-\to +}(0)=0$ such that $\Lambda(u_{-\to +}(t_-),t_-)=0$, $0\le t_-<T^u_-$. Since $\Lambda>0$ on ${\cal D}\cap [T^u_+,\infty)\times \R_+$. We see that $u_{-\to +}$ takes values in $[0,T^u_+)$. Similarly, $u_{+\to -}$ takes values in $[0,T^u_-)$. From  $\Lambda(t_+,u_{+\to -}(t_+))=\Lambda(u_{-\to +}(t_-),t_-)=0$ and the monotonicity of $\Lambda$, we see that $u_{+\to -}$ and $u_{-\to +}$ are inverse of each other, and are both continuous

By the continuity and strictly increasingness of $u_{+\to -}$ and $\Upsilon$, the map $[0,T^u_+)\ni t\mapsto \Upsilon(t,u_{+\to -}(t))$ is continuous and strictly increasing, and so its range is $[0,T^u)$ for some $T^u\in (0,\infty]$. Let $u_+$ denote the inverse of this map, and let $u_-=u_{+\to -}\circ u_+$. Then for $\sigma\in\{+,-\}$, $u_\sigma$ is a continuous and strictly increasing function from $[0,T^u)$ onto $[0,T^u_\sigma)$. Let $\ulin u=(u_+,u_-)$. Then for $0\le t<T^u$, $\Lambda(\ulin u(t))=0$ and $\Upsilon(\ulin u(t))=t$. So $\ulin u$ satisfies the desired property on $[0,T^u)$. It cannot be extended beyond $T^u$ while keeping this property because $\sup u_\sigma[0,T^u)=T^u_\sigma$, and  $\Lambda>0$ on ${\cal D}\cap [T^u_+,\infty)\times \R_+$, and $\Lambda<0$ on ${\cal D}\cap \R_+\times [T^u_-,\infty)$.
\end{proof}

\begin{Lemma}
	For any $t\in [0,T^u)$ and $\sigma\in\{+,-\}$,
\BGE e^{2t}|v_+-v_-|/128\le   \rad_{v_0} (\eta_\sigma[0,u_\sigma(t)]\cup[v_0,v_\sigma])\le  e^{2t} |v_+-v_-| .\label{V-V'}\EDE
If $T^u<\infty$, then $\lim_{t\uparrow T^u} \ulin u(t)$ {is} a point in $\pa{\cal D}\cap (0,\infty)^2$. If ${\cal D}=\R_+^2$, then $T^u=\infty$. If $T^u=\infty$, then $\diam(\eta_+)=\diam(\eta_-)=\infty$.
	\label{Beurling}
\end{Lemma}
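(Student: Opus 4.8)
The plan is to prove the two-sided estimate on $\rad_{v_0}(\eta_\sigma[0,u_\sigma(t)]\cup[v_0,v_\sigma])$ first, and then read off the other three assertions from it together with the construction of $\ulin u$ in Lemma~\ref{time curve-lem}. For the upper bound, note that $\eta_\sigma[0,u_\sigma(t)]\subset\lin{K(\ulin u(t))}$ and $[v_0,v_\sigma]\subset[v_-,v_+]$, while $v_0$ belongs to both $\eta_\sigma[0,u_\sigma(t)]\cup[v_0,v_\sigma]$ and $[v_-,v_+]$; hence $\rad_{v_0}(\eta_\sigma[0,u_\sigma(t)]\cup[v_0,v_\sigma])\le\diam(K(\ulin u(t))\cup[v_-,v_+])$, which by Lemma~\ref{lem-V0} is at most $|V_+(\ulin u(t))-V_-(\ulin u(t))|$. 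Since $V_-\le V_0\le V_+$ by (\ref{VWVWV}), this equals $|V_+(\ulin u(t))-V_0(\ulin u(t))|+|V_0(\ulin u(t))-V_-(\ulin u(t))|=e^{2t}(|v_+-v_0|+|v_0-v_-|)=e^{2t}|v_+-v_-|$ by the defining property of $\ulin u$.

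For the lower bound, fix $t$, put $(t_+,t_-)=\ulin u(t)$ and $R=\rad_{v_0}(\eta_\sigma[0,u_\sigma(t)]\cup[v_0,v_\sigma])$; by symmetry assume $\sigma=+$. As $\eta_+[0,t_+]$ is connected, meets $\R$ at $w_+$, and lies in $\{|z-v_0|\le R\}$, the same holds for $K_+(t_+)=\Hull(\eta_+[0,t_+])$, so $\rad_{v_0}(K_+(t_+))\le R$. Using the factorization $g^{\ulin w}_{K(\ulin u(t))}=g^{W_-(t_+,0)}_{K_-^{t_+}(t_-)}\circ g^{w_+}_{K_+(t_+)}$ from Lemma~\ref{common-function}, the monotonicity of these maps, and the fact that the outer one is $1$-Lipschitz on the relevant half-line (Proposition~\ref{Prop-contraction}, Definition~\ref{Def-Rw}), we obtain $|V_+(\ulin u(t))-V_0(\ulin u(t))|\le|g^{w_+}_{K_+(t_+)}(v_+)-g^{w_+}_{K_+(t_+)}(v_0)|$. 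Now apply Proposition~\ref{g-z-sup} with $x_0=v_0$, $r=R$: off $K_+(t_+)^{\doub}$ the map $g^{w_+}_{K_+(t_+)}$ agrees with $g_{K_+(t_+)}$, which moves points by at most $3R$, and on the collapsed interval it equals $c_{K_+(t_+)}$ or $d_{K_+(t_+)}$, both in $[v_0-2R,v_0+2R]$; since also $|v_+-v_0|\le R$, in every case $|g^{w_+}_{K_+(t_+)}(v)-v_0|\le 4R$ for $v\in\{v_0,v_+\}$. Hence $\tfrac12 e^{2t}|v_+-v_-|=|V_+(\ulin u(t))-V_0(\ulin u(t))|\le 8R$, i.e.\ $R\ge e^{2t}|v_+-v_-|/16$, which is stronger than claimed. (One may instead follow the route in Section~\ref{Strategy}: by Lemma~\ref{lem-V0}, $f_{K(\ulin u(t))}[V_0(\ulin u(t)),V_+(\ulin u(t))]\subset\eta_+[0,t_+]\cup[v_0,v_+]\subset\{|z-v_0|\le R\}$; since $g_{K(\ulin u(t))}$ is normalized at $\infty$, conformal invariance identifies the harmonic measure of this boundary arc seen from $iy$ with that of $[V_0(\ulin u(t)),V_+(\ulin u(t))]$ in $\HH$, while Beurling's estimate bounds it by the probability that Brownian motion from $iy$ reaches $\{|z-v_0|\le R\}$; letting $y\to\infty$ yields $V_+(\ulin u(t))-V_0(\ulin u(t))\le 4R$.)

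For the three remaining claims, recall from the proof of Lemma~\ref{time curve-lem} that $u_\pm$ are strictly increasing on $[0,T^u)$ onto $[0,T^u_\pm)$, so $\ulin u(t)\to(T^u_+,T^u_-)$ as $t\uparrow T^u$, with $T^u_\pm\ge u_\pm(t)>0$ for $0<t<T^u$. If $T^u<\infty$ but, say, $T^u_+=\infty$, then $u_+(t)\to\infty$, which forces $T_+=\infty$; since $\eta_+$ has speed $1$, $\hcap_2(K(\ulin u(t)))\ge\hcap_2(K_+(u_+(t)))=u_+(t)\to\infty$, so by Proposition~\ref{g-z-sup} $\rad_{w_+}(K(\ulin u(t)))\ge\sqrt{2u_+(t)}\to\infty$, and then by Lemma~\ref{lem-V0} $|V_+(\ulin u(t))-V_-(\ulin u(t))|\to\infty$, hence $t=\Upsilon(\ulin u(t))\to\infty$, contradicting $T^u<\infty$. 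Thus $T^u<\infty$ forces $(T^u_+,T^u_-)\in(0,\infty)^2$. This limit point is not in $\cal D$: since $\cal D$ is open, membership would give $\Lambda(T^u_+,T^u_-)=0$ by continuity, but (again from the proof of Lemma~\ref{time curve-lem}) $\Lambda>0$ on ${\cal D}\cap[T^u_+,\infty)\times\R_+$, while the strict decrease of $\Lambda$ in $t_-$ would supply a nearby point of $\cal D$ with $\Lambda<0$, a contradiction. Hence $\lim_{t\uparrow T^u}\ulin u(t)\in\pa{\cal D}\cap(0,\infty)^2$. If ${\cal D}=\R_+^2$ then $\pa{\cal D}\cap(0,\infty)^2=\emptyset$, so $T^u=\infty$; and if $T^u=\infty$ then the lower bound gives $\rad_{v_0}(\eta_\sigma[0,u_\sigma(t)])\ge e^{2t}|v_+-v_-|/128\to\infty$ once the right side exceeds $|v_\sigma-v_0|$, so $\diam(\eta_\sigma)\ge\rad_{v_0}(\eta_\sigma)-|v_0-w_\sigma|=\infty$.

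The delicate point is the lower bound of the two-sided estimate: all the rest is bookkeeping with the already-established monotonicity and continuity of the force point functions and with the construction of $\ulin u$, whereas the lower bound encodes exactly that the normalized maps cannot inflate $v_+-v_0$ to size $e^{2t}$ unless the curve $\eta_\sigma$ itself reaches distance comparable to $e^{2t}$.
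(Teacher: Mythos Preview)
Your proof is correct. The upper bound and the three concluding assertions match the paper's argument in substance. The interesting difference is in the lower bound.

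The paper argues as follows: set $L_\sigma=\rad_{v_0}(S_\sigma)$ where $S_\sigma$ is the symmetrized set $[v_0,v_\sigma]\cup\eta_\sigma[0,u_\sigma(t)]\cup\lin{\eta_\sigma[0,u_\sigma(t)]}$; from the inequality (\ref{V-V}) one gets $L_+\vee L_-\ge e^{2t}|v_+-v_-|/8$. Since $|V_+-V_0|=|V_0-V_-|$ along $\ulin u$, Lemma~\ref{lem-V0} shows $S_+$ and $S_-$ carry equal harmonic measure from $\infty$, and Beurling's projection theorem then forces $L_+\vee L_-\le 16(L_+\wedge L_-)$, yielding the stated constant $1/128$.

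Your primary argument is different and more direct: you never compare the two sides. Instead you factor $g^{\ulin w}_{K(\ulin u(t))}$ through $K_\sigma(u_\sigma(t))$ alone, observe that the outer modified map is $1$-Lipschitz on the relevant half-line (a consequence of Proposition~\ref{Prop-contraction} and the constancy of $g^w_K$ on the collapsed interval), and then control the inner map by the distortion bound of Proposition~\ref{g-z-sup}, since $K_\sigma(u_\sigma(t))\subset\{|z-v_0|\le R_\sigma\}$. This gives $|V_\sigma-V_0|\le 8R_\sigma$ for each $\sigma$ separately, hence $R_\sigma\ge e^{2t}|v_+-v_-|/16$, a sharper constant than the paper's and without invoking Beurling at all. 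Your parenthetical harmonic-measure route is also valid (the key monotonicity is that the exit time from $H(\ulin u(t))$ is no larger than the exit time from $\HH$, so the probability of reaching the disk before absorption only increases when passing to the larger domain).

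What each approach buys: the paper's route makes essential use of the balance condition $|V_+-V_0|=|V_0-V_-|$ built into $\ulin u$; it is the Beurling comparison of two sets of equal harmonic measure that earns the lemma its label. Your route ignores this balance entirely and bounds each side by elementary distortion, which is shorter, gives a better constant, and would work just as well if the two sides were unbalanced.
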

\begin{proof}
Fix $t\in[0,T^u)$. For $\sigma\in\{+,.-\}$, let $S_\sigma= [v_0,v_\sigma]\cup\eta_\sigma[0,u_\sigma(t)]\cup \lin{\eta_\sigma[0,u_\sigma(t)]}$, where the bar stands for complex conjugation, and  $L_\sigma =\rad_{v_0} (S_\sigma)$.  From (\ref{V-V}) and that $|V_+(\ulin u(t))-V_-(\ulin u(t))|=e^{2t}|v_+-v_-|$, we get $ e^{2t}|v_+-v_-|/8\le L_+\vee L_-\le e^{2t}|v_+-v_-|$. Since $V_+(\ulin u(t))-V_0(\ulin u(t))=V_0(\ulin u(t))-V_-(\ulin u(t))$, by Lemma \ref{lem-V0}, $S_+$ and $S_-$ have the same harmonic measure viewed from $\infty$. By Beurling's estimate,  $ L_+\vee L_- \le 16 (L_+\wedge L_-)$. So we get (\ref{V-V'}).
For any $\sigma\in\{+,-\}$, $u_\sigma(t)=\hcap_2(\eta_\sigma[0,u_\sigma(t)])\le L_\sigma^2\le e^{4t} |v_+-v_-|^2$. Suppose $T^u<\infty$. Then $u_+$ and $u_-$ are bounded on $[0,T^u)$. Since $\ulin u$ is increasing, $\lim_{t\uparrow T^u} \ulin u(t)$ {is} a point in $(0,\infty)^2$, which must lie on $\pa \cal D$ because   $\ulin u$ cannot be extended beyond $T^u$. If ${\cal D}=\R_+^2$, then $\pa{\cal D}\cap(0,\infty)^2=\emptyset$, and so $T^u=\infty$. If $T^u=\infty$, then by (\ref{V-V'}),  $\diam(\eta_\pm)=\infty$.
\end{proof}

{For any function $X$ on $\cal D$,} define
 $X^u=X\circ \ulin u$ {on $[0,T^u)$}. Let $I=|v_+-v_0|=|v_--v_0|$.
 From the definition of $\ulin u$, we have $|V^u_\pm(t)-V^u_0(t)|=e^{2t}I$ for any $t\in[0,T^u)$. Let
{ $$R_\sigma =\frac{W_\sigma^u -V_0^u }{V_\sigma^u -V_0^u }\in [0,1],\quad \sigma\in\{+,-\};\quad \ulin R=(R_+,R_-).$$ } Let $e^{c\cdot}$ denote the function $t\mapsto e^{ct}$ for $c\in\R$.

\begin{Lemma}
Let ${\cal D}_{\disj}$ be defined by (\ref{D-disj}). Let $T^u_{\disj}\in(0,T^u]$ be such that $\ulin u(t)\in{\cal D}_{\disj}$ for $0\le t<T^u_{\disj}$. Then $\ulin u$ is continuously differentiable on $[0,T^u_{\disj})$, and
 \BGE (W^u_{\sigma,1})^2 u_\sigma'= \frac{R_\sigma(1-R_\sigma^2)}{R_++R_-}\,e^{4\cdot } I^2 \mbox{ on }[0,T^u_{\disj}),\quad \sigma\in\{+,-\}.  \label{uj''}\EDE
  \label{lem-uj}
\end{Lemma}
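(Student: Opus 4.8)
The plan is to differentiate the two relations that characterize the time curve, namely $\Lambda(\ulin u(t))\equiv 0$ and $\Upsilon(\ulin u(t))=t$ (cf.\ the proof of Lemma~\ref{time curve-lem}), turn the chain rule into a $2\times 2$ linear system for $(u_+'(t),u_-'(t))$, and solve it explicitly. As a preliminary I would record that on ${\cal D}_{\disj}$ the functions $\mA$, $W_\pm$, $V_0$, $V_\pm$ are continuously differentiable: $\mA\in C^1$ is Lemma~\ref{lem-positive}, and on the disjoint region the conformal maps $g_\sigma^{t_{-\sigma}}(t_{-\sigma},\cdot)$ depend smoothly on all of their arguments (exactly as for the disjoint ensembles of \cite{reversibility,duality}), so the ``$\aeq$'' relations (\ref{pa-X}) and (\ref{pajW}) hold there as genuine identities; in particular $\Lambda,\Upsilon\in C^1({\cal D}_{\disj})$.

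Next I would compute the partials of $\Lambda=\tfrac12\log\frac{V_+-V_0}{V_0-V_-}$ and $\Upsilon=\tfrac12\log\frac{V_+-V_-}{v_+-v_-}$ by substituting $\pa_\sigma V_\nu=\frac{2W_{\sigma,1}^2}{V_\nu-W_\sigma}$ from (\ref{pa-X}). Pairing the three force-point derivatives, a short manipulation gives
\begin{equation*}
\pa_\sigma\Upsilon=\frac{-W_{\sigma,1}^2}{(V_+-W_\sigma)(V_--W_\sigma)},\qquad
\pa_\sigma\Lambda=-\frac{V_+-V_-}{V_0-W_\sigma}\,\pa_\sigma\Upsilon=\frac{W_{\sigma,1}^2(V_+-V_-)}{(V_0-W_\sigma)(V_+-W_\sigma)(V_--W_\sigma)}.
\end{equation*}
From the ordering $V_-\le W_-\le V_0\le W_+\le V_+$ in (\ref{VWVWV}) one reads off $\pa_+\Lambda\ge 0\ge\pa_-\Lambda$ and $\pa_\pm\Upsilon\ge 0$; along $\ulin u$ the points $V_+^u,V_0^u,V_-^u$ are pairwise distinct (since $|V_\sigma^u-V_0^u|=e^{2t}I$), so the Jacobian $\pa_+\Lambda\,\pa_-\Upsilon-\pa_-\Lambda\,\pa_+\Upsilon$ is strictly positive there, whence the implicit function theorem gives $\ulin u\in C^1$ on $[0,T^u_{\disj})$.

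What remains is algebra. Differentiating $\Lambda(\ulin u(t))=0$ and $\Upsilon(\ulin u(t))=t$ yields
\begin{align*}
\pa_+\Lambda\cdot u_+'+\pa_-\Lambda\cdot u_-'&=0,\\
\pa_+\Upsilon\cdot u_+'+\pa_-\Upsilon\cdot u_-'&=1,
\end{align*}
with all coefficients at $\ulin u(t)$. On $\ulin u$ I would substitute $V_+^u-V_0^u=e^{2t}I$, $V_-^u-V_0^u=-e^{2t}I$ and $W_\sigma^u-V_0^u=R_\sigma(V_\sigma^u-V_0^u)$; these give $V_0^u-W_+^u=-R_+e^{2t}I$, $V_0^u-W_-^u=R_-e^{2t}I$, hence $\pa_+\Lambda=\tfrac2{R_+}\pa_+\Upsilon$ and $\pa_-\Lambda=-\tfrac2{R_-}\pa_-\Upsilon$ along $\ulin u$, and also $(V_+^u-W_\sigma^u)(V_-^u-W_\sigma^u)=-(1-R_\sigma^2)e^{4t}I^2$, so $\pa_\sigma\Upsilon=\frac{(W_{\sigma,1}^u)^2}{(1-R_\sigma^2)e^{4t}I^2}$. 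The first equation now reads $\frac{\pa_+\Upsilon\,u_+'}{R_+}=\frac{\pa_-\Upsilon\,u_-'}{R_-}=:c$, and the second forces $(R_++R_-)c=1$, so $\pa_\sigma\Upsilon\,u_\sigma'=\frac{R_\sigma}{R_++R_-}$; inserting the formula for $\pa_\sigma\Upsilon$ produces precisely (\ref{uj''}). Since the right-hand side of (\ref{uj''}) is continuous in $t$ and $W_{\sigma,1}^u>0$, the $u_\sigma'$ are continuous, re-confirming $\ulin u\in C^1([0,T^u_{\disj}))$.

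I expect the one genuinely delicate point to be the regularity bookkeeping on ${\cal D}_{\disj}$: one must verify that the force-point functions are honestly $C^1$ there and that the denominators $V_\nu-W_\sigma$ occurring above stay controlled near $\ulin u$. For the ensembles relevant to Theorem~\ref{main-Thm1} the points $v_\pm$ are not reached by $\eta_\pm$ before the end of their lifespans, so $W_\sigma<V_\sigma$ strictly and the quantities $V_0-W_\sigma$, $V_\pm-W_\sigma$ remain bounded away from $0$ on compact subsets of ${\cal D}_{\disj}$; granting this, the two displays above are routine. A slightly softer alternative is to first show $\ulin u$ is locally Lipschitz (hence differentiable a.e.), establish (\ref{uj''}) at a.e.\ $t$, and then upgrade to $C^1$ by continuity of the right-hand side.
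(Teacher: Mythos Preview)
Your approach is essentially the paper's: differentiate $\Lambda(\ulin u(t))=0$ and $\Upsilon(\ulin u(t))=t$, obtain a $2\times 2$ linear system for $(u_+',u_-')$, and solve it using $V_\sigma^u-V_0^u=\sigma e^{2t}I$ and $W_\sigma^u-V_0^u=\sigma R_\sigma e^{2t}I$. The algebra you carry out matches the paper's exactly.

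The one point where you diverge is the regularity justification. You assert that on ${\cal D}_{\disj}$ the force-point functions are honestly $C^1$, so that (\ref{pa-X}) holds as a genuine identity and the implicit function theorem applies. This is a slight overreach: even on ${\cal D}_{\disj}$ the relation $\pa_\sigma V_0=\tfrac{2W_{\sigma,1}^2}{V_0-W_\sigma}$ is only an $\aeq$ identity, since $V_0=W_\sigma$ (i.e.\ $R_\sigma=0$) can occur and since $c_{K_\sigma(t_\sigma)}$ is only known to be differentiable where $\eta_\sigma(t_\sigma)\notin\R$ (Proposition~\ref{prop-Lebesgue}). The paper sidesteps this by exactly the route you call the ``softer alternative'': it records the partials of $\Lambda,\Upsilon$ with $\aeq$, writes the chain-rule system with $\aeq$, solves to get (\ref{uj''}) with $\aeq$, and then upgrades to a genuine equality and to $\ulin u\in C^1$ by observing that the right-hand side of (\ref{uj''}) is continuous (using $R_++R_->0$ on $[0,T^u_{\disj})$, which follows from $W_+>W_-$ on ${\cal D}_{\disj}$). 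So your alternative is in fact the paper's actual argument, and it avoids the delicate $C^1$ claim.
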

\begin{proof}
  By   (\ref{pa-X}), $\Lambda$ and $\Upsilon$  satisfy the following differential equations on ${\cal D}_{\disj}$:
$$  \pa_{\sigma} \Lambda \aeq\frac{(V_+-V_-) W_{\sigma,1}^2}{\prod_{\nu\in\{0,+,-\}} (V_\nu^u -W_\sigma^u )}\,\mbox{ and }\,
 \pa_{\sigma}\Upsilon\aeq \frac{-  W_{\sigma,1}^2}{\prod_{\nu\in\{+,-\}} (V_\nu^u -W_\sigma^u )},\quad \sigma\in\{+,-\}.$$
From  $\Lambda^u(t)= 0$ and $\Upsilon^u(t)=t$,  we get
$$\sum_{\sigma\in\{+,-\}}\frac{ (W_{\sigma,1}^u) ^2u_\sigma '}{\prod_{\nu\in\{0,+,-\}} (V_\nu^u -W_\sigma^u )}\aeq 0\,\mbox{ and }\,
\sum_{\sigma\in\{+,-\}} \frac{-  (W_{\sigma,1}^u)^2 u_\sigma '}{\prod_{\nu\in\{+,-\}} (V_\nu^u -W_\sigma^u )}\aeq 1.$$
Solving the system of equations, we get $(W^u_{\sigma,1})^2 u_\sigma'\aeq (\prod_{\nu\in\{0,+,-\}} (V_\nu^u -W_\sigma^u ))/(W_\sigma-W_{-\sigma})$, $\sigma\in \{+,-\}$. Using $V_\sigma^u-V_0^u=\sigma e^{2\cdot} I$ and $W^u_\sigma-V^u_0=R_\sigma (V_\sigma^u-V_0^u)$, we find that (\ref{uj''}) holds with ``$\aeq$'' in place of ``$=$''. Since $W_+>W_-$ on ${\cal D}_{\disj}$, we get $R_++R_->0$ on $[0,T^u_{\disj})$. So the original (\ref{uj''}) holds by the continuity of its RHS.
\end{proof}

Now suppose that $\eta_+$ and $\eta_-$ are random curves, and $\cal D$ is a random region. Then $\ulin u$ and $T^u$ are also random. Suppose that there is an $\R_+^2$-indexed filtration $\F$ such that $\cal D$ is an $\F$-stopping region, and $V_0,V_+,V_-$ are all $\F$-adapted. Now we extend $\ulin u$ to $\R_+$ such that if $T^u<\infty$, then $\ulin u(s)=\lim_{t\uparrow T^u} \ulin u(t)$ for $s\in [T^u,\infty)$. The following proposition is similar to \cite[Lemma 4.1]{Two-Green-interior}.

\begin{Proposition}
	For every $t\in\R_+$,  $\ulin u(t)$ is an $\F$-stopping time.\label{Prop-u(t)}
\end{Proposition}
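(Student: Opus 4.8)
The plan is to verify the defining property of an $\F$-stopping time directly: fix a deterministic $\ulin s=(s_+,s_-)\in\R_+^2$ and show $\{\ulin u(t)\le\ulin s\}\in\F_{\ulin s}$. First I would translate this event into one concerning the force point functions. Recall from the proof of Lemma \ref{time curve-lem} that $\ulin u$ is a homeomorphism of $[0,T^u)$ onto the set $\{\ulin r\in{\cal D}:\Lambda(\ulin r)=0\}$, with inverse $\ulin r\mapsto\Upsilon(\ulin r)$, and that $\Lambda$ is strictly increasing in $t_+$ and strictly decreasing in $t_-$ on ${\cal D}$ while $\Upsilon$ is strictly increasing in both coordinates. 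Using these monotonicities together with the continuity and strict monotonicity of $\ulin u$, I would show that $\{r\in[0,T^u):\ulin u(r)\le\ulin s\}$ is a relatively closed initial segment of $[0,T^u)$, so that, writing $\gamma(\ulin s):=\sup\{\Upsilon(\ulin r):\ulin r\in{\cal D},\ \ulin r\le\ulin s,\ \Lambda(\ulin r)=0\}\in[0,T^u]$, one has $\{\ulin u(t)\le\ulin s\}=\{\gamma(\ulin s)\ge t\}$, except in the degenerate situation that $T^u<\infty$ and $\lim_{r\uparrow T^u}\ulin u(r)\le\ulin s$, in which case the event holds identically and $\gamma(\ulin s)=T^u$. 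By Lemma \ref{Beurling} that limit lies on $\pa{\cal D}$, so the degenerate situation can occur only on $\{\ulin s\notin{\cal D}\}$.

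Next I would argue that the data needed to evaluate $\gamma(\ulin s)$ and detect the degenerate event is $\F_{\ulin s}$-measurable. Since ${\cal D}$ is an $\F$-stopping region, $\{\ulin r\in{\cal D}\}\in\F_{\ulin r}\subset\F_{\ulin s}$ for every $\ulin r\le\ulin s$, so the relatively open set ${\cal D}\cap[\ulin 0,\ulin s]$ is determined by $\F_{\ulin s}$ (and equals $[\ulin 0,\ulin s]$ on $\{\ulin s\in{\cal D}\}$ by history-completeness). Because $V_0,V_+,V_-$ are $\F$-adapted and continuous on ${\cal D}$ (Definition \ref{def-force-function}), their restrictions to ${\cal D}\cap[\ulin 0,\ulin s]$ are $\F_{\ulin s}$-measurable, and hence so are the restrictions of $\Lambda=\frac12\log\frac{V_+-V_0}{V_0-V_-}$ and $\Upsilon=\frac12\log\frac{V_+-V_-}{v_+-v_-}$. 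Then I would realize $\gamma(\ulin s)$ as a countable operation on this data, e.g. $\gamma(\ulin s)=\lim_{n\to\infty}\sup\{\Upsilon(\ulin r):\ulin r\in\Q^2\cap[\ulin 0,\ulin s],\ \ulin r\in{\cal D},\ |\Lambda(\ulin r)|<1/n\}$, using the monotonicities of $\Lambda$ and $\Upsilon$ to see that rational points with $|\Lambda|$ small approximate the zero set of $\Lambda$ at the level of $\Upsilon$-values; and I would express the degenerate event in the same terms, namely as the event (inside $\{\ulin s\notin{\cal D}\}$) that this supremum is not attained in ${\cal D}\cap[\ulin 0,\ulin s]$ while $\ulin u(r)\le\ulin s$ for $r$ approaching $T^u$. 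Both are $\F_{\ulin s}$-measurable, which gives $\{\ulin u(t)\le\ulin s\}\in\F_{\ulin s}$ and completes the argument.

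I expect the main obstacle to be the careful bookkeeping around $\pa{\cal D}$ and the case $T^u<\infty$: pinning down precisely which event coincides with $\{\gamma(\ulin s)\ge t\}$ and which correction is needed, justifying that the supremum over rational points actually recovers $\gamma(\ulin s)$ (here $\Lambda$ and $\Upsilon$ may degenerate near $\pa{\cal D}$, so uniform continuity is only available on the event $\{\ulin s\in{\cal D}\}$ where $[\ulin 0,\ulin s]\subset{\cal D}$ is compact), and verifying measurability of the degenerate event on $\{\ulin s\notin{\cal D}\}$. This is conceptually routine and parallels \cite[Lemma 4.1]{Two-Green-interior}, but it is where the real work lies.
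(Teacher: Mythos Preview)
Your approach is correct and is essentially the one the paper intends: the paper gives no proof here but simply notes that the proposition is ``similar to \cite[Lemma 4.1]{Two-Green-interior}'', and your argument---expressing $\{\ulin u(t)\le\ulin s\}$ via the $\F_{\ulin s}$-measurable restrictions of $\Lambda$ and $\Upsilon$ to ${\cal D}\cap[\ulin 0,\ulin s]$, using that ${\cal D}$ is an $\F$-stopping region and $V_0,V_\pm$ are $\F$-adapted---is exactly that argument. Your identification of the boundary bookkeeping around $\partial{\cal D}$ and the case $T^u<\infty$ as the place requiring care is also accurate.
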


Since $\ulin u$ is non-decreasing, we get an $\R_+$-indexed filtration $\F^u$: $\F^u_t= \F _{\ulin u(t)}$, ${t\ge 0}$, by Propositions \ref{T<S} and \ref{Prop-u(t)}.

\section{Commuting Pair{s} of SLE$_\kappa(2,{\protect\ulin{\rho}})$ Curves}\label{section-commuting-SLE-kappa-rho}
In this section, we apply the results from the previous section to study a pair of commuting SLE$_\kappa(2, \ulin\rho)$ curves, which arise as flow lines {($\kappa\ne 4$) or level lines (for $\kappa=4$)} of a GFF with piecewise constant boundary data (cf.\ {\cite{MS1,WW-level}}).
The results of this section will be used in the next section to study {the} commuting pair{s} of hSLE$_\kappa$ curves that we are mostly interested in.

\subsection{Martingale and domain Markov property} \label{subsection-commuting-SLE-kappa-rho}
Throughout this section, we fix $\kappa,\rho_0,\rho_+,\rho_-$ such that $\kappa\in(0,8)$, $\rho_+,\rho_->\max\{-2,\frac \kappa 2-4\}$, $\rho_0\ge \frac{\kappa}{4}-2$, and $\rho_0+\rho_\sigma\ge \frac{\kappa}{2}-4$, $\sigma\in\{+,-\}$. Let $w_-<w_+\in\R$. Let $v_+\in[w_+^+,\infty)$, $v_-\in(-\infty,w_-^-]$, and $v_0\in[w_-^+,w_+^-]$. Write $\ulin\rho$ for $(\rho_0,\rho_+,\rho_-)$.
 From \cite{MS1} {(for $\kappa\ne 4$) and \cite{WW-level} (for $\kappa=4$)}, we know that there is a coupling of two chordal Loewner curves $\eta_+(t_+)$, $0\le t_+<\infty$, and $\eta_-(t_-)$, $0\le t_-<\infty$, driven by $\ha w_+$ and $\ha w_-$ (with speed $1$), respectively, with the following properties.
\begin{enumerate}
  \item [(A)] For $\sigma\in\{+,-\}$, $\eta_\sigma$ is a chordal SLE$_\kappa(2,\ulin\rho)$ curve in $\HH$ started from $w_\sigma$ with force points at $w_{-\sigma}$ and $v_\nu$, $\nu\in\{0,+,-\}$. 
      Let $\ha w_{-\sigma}^\sigma,\ha v_\nu^\sigma $ denote the force point functions for $\eta_\sigma$ started from $w_{-\sigma},v_\nu$, $\nu\in\{0,+,-\}$, respectively.
  \item [(B)]  Let $\sigma\in\{+,-\}$. If $\tau_{-\sigma}$ is a finite stopping time w.r.t.\ the
  filtration $\F^{-\sigma}$ generated by $\eta_{-\sigma}$,  then a.s.\ there is a chordal Loewner curve $\eta_{\sigma}^{t_{-\sigma}}(t)$, $0\le t<\infty$, with some speed such that  $\eta_\sigma =f_{K_{-\sigma}(\tau_{-\sigma})}\circ \eta_{\sigma}^{\tau_{-\sigma}} $. Moreover, the conditional law of the normalization of $\eta_{\sigma}^{\tau_{-\sigma}}$ given $\F^{-\sigma}_{\tau_{-\sigma}}$ is that of a  chordal SLE$_\kappa(2,\ulin\rho)$ curve   in $\HH$ started from $\ha w_\sigma^{-\sigma}(\tau_{-\sigma})$   with force points at $\ha w_{-\sigma}(\tau_{-\sigma}),\ha v_\nu^{-\sigma}(\tau_{-\sigma})$, $\nu\in\{0,+,-\}$.
\end{enumerate}

{There are some tiny flaws in the above two properties, which will be described and fixed as follows. First, since $\eta_\sigma$ starts from $w_\sigma$, its force points must take values in $\R_{w_\sigma}$. However, some $v_\nu$ may take value $w_{-\sigma}^+$ or $w_{-\sigma}^-$, which does not belong to $\R_{w_\sigma}$. When this happens, as a force point for $\eta_\sigma$, $v_\nu$ is treated as $w_{-\sigma}$. Second, it may happen that $\ha v_\nu^{-\sigma}(\tau_{-\sigma})=\ha w_\sigma^{-\sigma}(\tau_{-\sigma})$ for some $\nu$. When this happens, as a force point for the $\eta_{\sigma}^{\tau_{-\sigma}}$ (started from $\ha w_\sigma^{-\sigma}(\tau_{-\sigma})$), $\ha v_\nu^{-\sigma}(\tau_{-\sigma})$ is treated as $\ha w_\sigma^{-\sigma}(\tau_{-\sigma})^\mu$ for some $\mu\in\{+,-\}$, which is chosen such that, if $\nu\in\{+,-\}$, then $\mu=\nu$, and if $\nu=0$, then $\mu=-\sigma$. We choose $\mu$ in this way because $\ha v_-^{-\sigma}\le \ha w_-^{-\sigma}\le \ha v_0^{-\sigma}\le \ha w_+^{-\sigma}\le \ha v_+^{-\sigma}$.
}

One may construct $\eta_+$ and $\eta_-$ as two flow lines of the same GFF on $\HH$ with some piecewise boundary conditions (cf.\ \cite{MS1}). The conditions that $\kappa\in(0,8)$, $\rho_0,\rho_+,\rho_->\max\{-2,\frac \kappa 2-4\}$ and $\rho_0+\rho_\sigma\ge \frac{\kappa}{2}-4$, $\sigma\in\{+,-\}$, ensure that (i) there is no continuation threshold for either $\eta_+$ or $\eta_-$, and so $\eta_+$ and $\eta_-$ both have lifetime $\infty$ and  $\eta_\pm(t)\to\infty$ as $t\to\infty$; (ii) $\eta_+$ does not hit $(-\infty,w_-]$, and $\eta_-$ does not hit $[w_+,\infty)$; and (iii) $\eta_\pm\cap\R$ has Lebesgue measure zero. The stronger condition that $\rho_0\ge \frac{\kappa}{4}-2$ (which implies that $\rho_0>\max\{-2,\frac \kappa 2-4\}$) will be needed later (see Remark \ref{Remark-rho0}).
We call the above $(\eta_+,\eta_-)$ a commuting pair of chordal SLE$_\kappa(2,\ulin\rho)$ curves in $\HH$ started from $(w_+,w_-;v_0,v_+,v_-)$. If $\rho_0=0$, which satisfies $\rho_0>\frac \kappa 4-2$ since $\kappa<8$, the $v_0$ does not play a role, and we omit $\rho_0$ and $v_0$ in the name.

We may take $\tau_{-\sigma}$ in (B) to be a deterministic time. So for each $t_{-\sigma}\in\Q_+$, a.s.\ there is an SLE$_\kappa$-type curve $\eta_{\sigma}^{t_{-\sigma}}$ defined on $\R_+$ such that $\eta_\sigma=f_{K_{{-\sigma}}(t_{-\sigma})}\circ \eta_{\sigma}^{t_{-\sigma}}$. The conditions on $\kappa$ and $\ulin\rho$ implies that a.s.\ the Lebesgue measure of  $\eta_{\sigma}^{t_{-\sigma}}\cap \R$ is $0$. This implies that a.s.\ $\eta_+$ and $\eta_-$ satisfy the conditions in Definition \ref{commuting-Loewner} with $\I_+=\I_-=\R_+$, $\I_+^*=\I_-^*=\Q_+$, and ${\cal D}=\R_+^2$. So $(\eta_+,\eta_-)$ is a.s.\ a commuting pair of chordal Loewner curves. Here we omit $\cal D$ when it is $\R_+^2$. Let $K$ and $\mA$ be the hull function and the capacity function, $W_+,W_-$ be the driving functions, and $V_0,V_+,V_-$ be the force point functions started from $v_0,v_+,v_-$, respectively. Then $\ha w_\sigma=W_\sigma|^{-\sigma}_0$, $\ha w_{-\sigma}^\sigma=W_{-\sigma}|^{-\sigma}_0$, and $\ha v_\nu^\sigma=V_\nu|^{-\sigma}_0$, $\nu\in\{0,+,-\}$. For each $\F^{-\sigma}$-stopping time $\tau_{-\sigma}$, $\eta_{\sigma}^{\tau_{-\sigma}}$ is the chordal Loewner curve driven by $W_\sigma|^{-\sigma}_{\tau_{-\sigma}}$ with speed $\mA|^{-\sigma}_{\tau_{-\sigma}}$, and the force point functions are $W_{-\sigma}|^{-\sigma}_{\tau_{-\sigma}}$ and $V_\nu|^{-\sigma}_{\tau_{-\sigma}}$, $\nu\in\{0,+,-\}$.

Let $\F^\pm$ be the $\R_+$-indexed filtration as in (B). Let $\F$ be the separable ${\R_+^2}$-indexed filtration generated by $\F^+$ and $\F^-$. From (A) we know that, for $\sigma\in\{+,-\}$, there exist standard $\F^\sigma$-Brownian motions $B_\sigma$  such that the driving functions $\ha w_\sigma$ satisfies the SDE
\BGE d\ha w_\sigma\aeq \sqrt{\kappa}dB_\sigma +\Big[\frac 2{\ha w_\sigma-\ha w_{-\sigma}^\sigma}+\sum_{\nu\in\{0,+,-\}} \frac{\rho_{\nu}}{\ha w_\sigma-\ha v_\nu^\sigma}\Big]dt_\sigma.\label{dhaw}\EDE

\begin{Lemma}[Two-curve DMP] Let $\cal G$ be a $\sigma$-algebra. Let ${\cal D}=\R_+^2$. Suppose that, conditionally on $\cal G$, $(\eta_+,\eta_-;{\cal D})$ is a commuting pair of  chordal SLE$_\kappa(2,\ulin\rho)$ curves started from $(w_+,w_-;v_0,v_+,v_-)$, which are ${\cal G}$-measurable random points. Let $K,W_\sigma,V_\nu$, $\sigma\in\{+,-\}$, $\nu\in\{0,+,-\}$, be respectively the hull function, driving functions, and force point functions. {For $\sigma\in\{+,-\}$, l}et $\F^\sigma{=(\F^\sigma_t)_{t\ge 0}}$ be the $\R_+$-indexed filtration {such that for $t\ge 0$,} $\F^\sigma_t$ {is the $\sigma$-algebra generated by} ${\cal G}$ {and} $\eta_\sigma(s))$, $0\le s\le t$. Let $\lin\F$ be the right-continuous augmentation of the separable $\R_+^2$-indexed filtration generated by $\F^+$ and $\F^-$. Let $\ulin\tau$ be an $\lin\F$-stopping time.  Then  on the event $E_{\ulin\tau}:=\{\ulin\tau\in\R_+^2,\eta_\sigma(\tau_\sigma)\not\in \eta_{-\sigma}[0,\tau_{-\sigma}],\sigma\in\{+,-\}\}$, there is a random commuting pair of chordal Loewner curves $(\til\eta_1,\til\eta_2;\til{\cal D})$  with some speeds, which is the part of $(\eta_+,\eta_-;{\cal D})$ after $\ulin\tau$ up to a conformal map (Definition \ref{Def-speeds}), and whose normalization conditionally on $\lin\F_{\ulin\tau}\cap E_{\ulin\tau}$ has the law of a commuting pair of  chordal SLE$_\kappa(2,\ulin\rho)$ curves started from $(W_+ ,W_- ;V_0 ,V_+ ,V_-)|_{\ulin\tau}$. Here if $V_\nu(\ulin\tau)=W_\sigma(\ulin\tau)$ for some $\sigma\in \{+,-\}$ and $\nu\in\{0,+,-\}$, then as a force point $V_\nu(\ulin\tau)$ is treated as $W_\sigma(\ulin\tau)^{\sign(v_\nu-w_\sigma)}$.
\label{DMP}
\end{Lemma}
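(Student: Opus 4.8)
The plan is to reduce the statement, by standard discretization and localization arguments, to the case of a \emph{deterministic} two-parameter time, and then to prove the deterministic version by peeling off one coordinate at a time using the one-curve domain Markov property of property~(B). For the reduction I would use Proposition~\ref{prop-local} to assume $\ulin\tau$ bounded, round each coordinate of $\ulin\tau$ up to the nearest multiple of $2^{-n}$ to obtain $\lin\F$-stopping times $\ulin\tau^n\downarrow\ulin\tau$ (Proposition~\ref{right-continuous-0}) with countably many values and $\lin\F_{\ulin\tau}=\bigcap_n\lin\F_{\ulin\tau^n}$, and note that on $E_{\ulin\tau}$ the endpoints $\eta_\pm(\tau_\pm)$ lie at positive distance from the opposite hulls, so $E_{\ulin\tau}\subset E_{\ulin\tau^n}$ for all large $n$ by continuity. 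Using continuity of the curves, of $g_{K(\ulin s)}$ off their hulls, and of $W_\pm,V_\nu$, together with the tower property across $\lin\F_{\ulin\tau}\subset\lin\F_{\ulin\tau^n}$, the statement for all $\ulin\tau^n$ yields the statement for $\ulin\tau$; and partitioning $E_{\ulin\tau^n}$ according to the (countably many) values of $\ulin\tau^n$ --- using the measurability properties of $\lin\F$-stopping times (Propositions~\ref{right-continuous-0}, \ref{T<S}) --- reduces everything to the following deterministic two-parameter DMP, which moreover (by letting $\ulin s\downarrow\ulin t$) need only be proved with $\F_{\ulin s}=\F^+_{s_+}\vee\F^-_{s_-}$ in place of $\lin\F_{\ulin t}$: for deterministic $\ulin t=(t_+,t_-)$, conditionally on $\F_{\ulin t}$ and on $E_{\ulin t}$, the normalization (up to a conformal map) of the part of $(\eta_+,\eta_-)$ after $\ulin t$ is a commuting pair of chordal SLE$_\kappa(2,\ulin\rho)$ curves started from $(W_+,W_-;V_0,V_+,V_-)|_{\ulin t}$.

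To prove this, I would first shift $\eta_-$ by $t_-$. Property~(B) with $\sigma=+$ at $\tau_-=t_-$ gives that, conditionally on $\F^-_{t_-}$, the normalization $\xi$ of $\eta_+^{t_-}$ is a chordal SLE$_\kappa(2,\ulin\rho)$ curve from $W_+(0,t_-)$ with force points $W_-(0,t_-),V_\nu(0,t_-)$, and the chordal SLE$_\kappa(2,\ulin\rho)$ domain Markov property gives that $\zeta(s):=g_{K_-(t_-)}(\eta_-(t_-+s))$ is, conditionally on $\F^-_{t_-}$, a chordal SLE$_\kappa(2,\ulin\rho)$ curve from $W_-(0,t_-)$ with force points $W_+(0,t_-),V_\nu(0,t_-)$. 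The key point is that, conditionally on $\F^-_{t_-}$, the pair $(\xi,\zeta)$ is itself a commuting pair of chordal SLE$_\kappa(2,\ulin\rho)$ curves started from $(W_+,W_-;V_0,V_+,V_-)|_{(0,t_-)}$, i.e.\ it satisfies the analogues of properties~(A) and~(B). The marginals are the two facts just recalled; the commuting structure I would extract by iterating property~(B), using that for deterministic $s\ge0$ one has $\F^-_{t_-+s}=\F^-_{t_-}\vee\sigma(\zeta[0,s])$, $K_-(t_-+s)/K_-(t_-)=\Hull(\zeta[0,s])$ and $\eta_+^{t_-+s}=g_{\Hull(\zeta[0,s])}\circ\eta_+^{t_-}$, so that property~(B) at $\tau_-=t_-+s$ says the normalization of $\xi$ mapped out by $\Hull(\zeta[0,s])$ is a chordal SLE$_\kappa(2,\ulin\rho)$ curve from $W_+(0,t_-+s)$ with force points $W_-(0,t_-+s),V_\nu(0,t_-+s)$, and, by Lemma~\ref{DMP-determin-1}(ii), these are exactly the values at $(0,s)$ of the driving and force-point functions of the commuting pair of Loewner curves $(\xi,\zeta)$. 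Passing from deterministic $s$ to $\F^\zeta$-stopping times in the usual way yields property~(B) for $(\xi,\zeta)$ in the $+$-direction, and the $-$-direction follows symmetrically (running the same argument after a deterministic shift of $\eta_+$), or directly from the Markov property of the underlying Gaussian free field: the field given the flow line $\eta_-$ up to time $t_-$ is again a GFF of the same type in the complementary domain, with boundary data transforming as in Lemma~\ref{DMP-determin-1} (for $\kappa=4$ one uses the level-line statement in \cite{WW-level}).

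Finally I would shift the $+$-coordinate. On $E_{\ulin t}$ we have $\eta_+(t_+)\notin K_-(t_-)$, so given $\F^-_{t_-}$ the trace $\eta_+[0,t_+]$ is recovered from $\xi$ up to capacity $\rho:=\hcap_2(K_+^{t_-}(t_+))=\mA(t_+,t_-)-t_-$, which is an $\F^\xi$-stopping time, and $\F^-_{t_-}\vee\sigma(\xi[0,\rho])=\F^+_{t_+}\vee\F^-_{t_-}=\F_{\ulin t}$. Applying the property~(B) for $(\xi,\zeta)$ just established at the one-sided stopping time $(\rho,0)$, and composing the shift by $(0,t_-)$ (from $(\eta_+,\eta_-)$ to $(\xi,\zeta)$) with the shift by $(\rho,0)$ via Lemma~\ref{DMP-determin-1}(ii), identifies the normalized part of $(\xi,\zeta)$ after $(\rho,0)$, up to a conformal map, with the normalized part of $(\eta_+,\eta_-)$ after $\ulin t$, and its starting data with $(W_+,W_-;V_0,V_+,V_-)|_{\ulin t}$; the degenerate cases $V_\nu(\ulin t)=W_\sigma(\ulin t)$ are absorbed by the conventions fixed just after property~(B) and in Lemma~\ref{DMP-determin-1}(ii). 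The main obstacle is the key point in the second step: property~(B) only controls the shape of one curve after conditioning on the whole other curve up to a time, whereas the lemma needs the shifted \emph{pair} to again be commuting, and bridging this gap forces the iteration over the shift time of $\eta_-$ and the matching of the moving force points with the driving and force-point functions of the shifted Loewner ensemble through Lemma~\ref{DMP-determin-1}. The remaining points --- matching the deterministic capacity time $t_+$ of $\eta_+$ with an $\F^\xi$-stopping time of $\xi$, which is precisely where the event $E_{\ulin\tau}$ enters, and the $\sigma$-algebra and limit bookkeeping in the reduction --- are routine given Section~\ref{section-deterministic} and the two-parameter machinery of Section~\ref{section-prel}.
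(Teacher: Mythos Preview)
Your proposal is essentially correct and matches the approach the paper intends: the paper's own proof is simply a reference to \cite[Lemma~A.5 and Remark~A.4]{Green-cut}, and the remark immediately following the lemma describes exactly the strategy you outline --- handle staircase-type stopping times by alternating applications of property~(B), and pass to general $\lin\F$-stopping times by an approximation (dyadic rounding, as you do, using Propositions~\ref{right-continuous-0} and~\ref{prop-local}).

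One point deserves a word of caution. In your second step you assert that the intermediate pair $(\xi,\zeta)$ satisfies property~(B) in the $-$~direction (conditioning on $\xi[0,r]$ and looking at $\zeta$). This does \emph{not} follow from property~(B) for $(\eta_+,\eta_-)$ alone: the $\eta_+$-time $t_+'$ corresponding to $\xi$-capacity $r$ depends on $\eta_-[0,t_-]$ and is therefore only a stopping time for the enlarged filtration $(\F^+_a\vee\F^-_{t_-})_{a\ge0}$, not for $\F^+$, so property~(B) as stated does not apply directly. Your ``symmetric'' suggestion does not close this gap, but your alternative --- invoking the domain Markov property of the underlying GFF (flow lines for $\kappa\ne 4$, level lines for $\kappa=4$) --- does, and is in fact the cleanest route: the union $\eta_+[0,t_+]\cup\eta_-[0,t_-]$ is a local set, the conditional field is again a GFF in the complement, and its flow/level lines form a commuting pair with the correct starting data. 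This is consistent with how \cite{Green-cut} proceeds.
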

\begin{proof}
  This lemma is similar to \cite[Lemma A.5]{Green-cut}, which is about the two-directional DMP of chordal SLE$_\kappa$ for $\kappa\le 8$. The argument also works here. See \cite[Remark A.4]{Green-cut}.
\end{proof}

\begin{Remark}
Here is an intuition why Lemma \ref{DMP} is true. By Properties (A) and (B), it is easy to see that Lemma \ref{DMP} holds if the stopping time $\ulin\tau$ has the form $(\tau_+,0)$ or $(0,\tau_-)$, which means that we only grow one curve up to a stopping time. Applying this argument for a second time, we see that the lemma holds if $\ulin\tau=(\tau_+,\tau_-)$ is such that $\tau_+$ is an $({\cal G},\F^+)$-stopping time, and $\tau_-$ is a stopping time w.r.t.\ the filtration generated by $\cal G$, $\F^+_{\tau_+}$, and $\F^-$, which means that we first grow $\eta_+$ up to some stopping time, and then grow $\eta_-$ up to some stopping time depending on the part of $\eta_+$ that has grown. We may further alternatively grow $\eta_+$ and $\eta_-$ up to stopping times depending on previously grown parts, and conclude that the lemma holds for the $\ulin\tau$ constructed in this way. However, not all $\lin\F$-stopping time can be constructed by this way. To deal with the general case, an approximation argument was used in \cite{Green-cut}.
\end{Remark}

\subsection{Relation with the independent coupling}\label{section-indep}
Write $\ulin w$ and $\ulin v$ respectively for $(w_+,w_-)$ and $(v_0,v_+,v_-)$.
Let $\PP^{\ulin\rho}_{\ulin w;\ulin v}$ denote the joint law of the driving functions $(\ha w_+,\ha w_-)$ of a commuting pair of chordal SLE$_\kappa(2,\ulin\rho)$ curves in $\HH$ started from $(\ulin w;\ulin v)$. Now we fix $\ulin w$ and $\ulin v$, and omit the subscript in the joint law.

The  $\PP^{\ulin\rho}_{{\ulin w;\ulin v}}$ is a probability measure on $\Sigma^2$, where $\Sigma:=\bigcup_{0<T\le\infty} C([0,T),\R)$ was defined in \cite[Section 2]{decomposition}.
A random element in $\Sigma$ is   a continuous stochastic process with random lifetime. The space $\Sigma^2$ is  equipped with an  ${\R_+^2}$-indexed filtration $\F$ defined by $\F_{(t_+,t_-)}=\F^+_{t_+}\vee \F^-_{t_-}$, where  $\F^+$ and $\F^-$ are $\R_+$-indexed filtrations generated by the first function and the second function, respectively.

Let $\PP^{\ulin\rho}_+$ and $\PP^{\ulin\rho}_-$ respectively denote the first and second marginal laws of $\PP^{\ulin\rho}$ on $\Sigma$. Then $\PP^{\ulin\rho}$  is different from the product measure $\PP^{\ulin\rho}_i:=\PP^{\ulin\rho}_+\times \PP^{\ulin\rho}_-$. We will derive some relation between $\PP^{\ulin\rho}$ and $\PP^{\ulin\rho}_i$. Suppose now that $(\ha w_+,\ha w_-)$ follows the law $\PP^{\ulin\rho}_i$ instead of $\PP^{\ulin\rho}$. Then  (\ref{dhaw}) holds for two independent Brownian motions $B_+$ and $B_-$, and $\eta_+$ and $\eta_-$ are independent. Define ${\cal D}_{\disj}$ by (\ref{D-disj}). Then $(\eta_+,\eta_-;{\cal D}_{\disj})$ is a disjoint commuting pair of chordal Loewner curves.  Since $B_+$ and $B_-$ are independent, for any $\sigma\in\{+,-\}$, $B_\sigma$ is a Brownian motion w.r.t.\ the filtration $(\F^\sigma_{t}\vee \F^{-\sigma}_{\infty})_{t\ge 0}$, and we may view (\ref{dhaw}) as an $(\F^\sigma_{t_\sigma}\vee \F^{-\sigma}_{\infty})_{t_\sigma\ge 0}$-adapted SDE. We will repeatedly apply It\^o's formula (cf.\ \cite{RY})  in this subsection, where $\sigma\in\{+,-\}$, the variable $t_{-\sigma}$ of every function is a fixed finite  $\F^{-\sigma}$-stopping time $t_{-\sigma}$ unless it is set to be zero using $|^{-\sigma}_0$, and all SDE are  $(\F^\sigma_{t_\sigma}\vee \F^{-\sigma}_{\infty})_{t_\sigma\ge 0}$-adapted in $t_\sigma$.

By (\ref{-3}) we get the SDE for $W_\sigma$:
\BGE \pa_\sigma W_\sigma=W_{\sigma,1} \pa \ha w_\sigma+\Big(\frac{\kappa}{2}-3\Big) W_{\sigma,2}\pa t_\sigma .\label{paWsigma}\EDE
We will use the boundary scaling exponent $\bb$ and central charge $\cc$ in the literature defined by 
$ \bb=\frac{6-\kappa}{2\kappa}$ and $\cc=\frac{(3\kappa-8)(6-\kappa)}{2\kappa}$.
By (\ref{1/2-4/3}) we get the SDE for $W_{\sigma,N}^{\bb}$:
\BGE \frac{\pa_\sigma W_{\sigma,N}^{\bb}}{W_{\sigma,N}^{\bb}}=\bb \frac{W_{\sigma,2}}{W_{\sigma,1}}  \pa \ha w_\sigma+ \frac{\cc}6 W_{\sigma,S}\pa t_\sigma . \label{paWsigmaN}\EDE

Recall the $E_{X,Y}$ defined in (\ref{RXY}). For $Y\in \{W_{-\sigma},V_0,V_+,V_-\}$, $E_{W_\sigma,Y}(t_+,t_-)$ equals a function in $t_{-\sigma}$ times $f(\ulin t,W_\sigma(t_\sigma \ulin e_\sigma), Y(t_\sigma \ulin e_\sigma))$, where
\BGE f(\ulin t,w,y):=\left\{
\begin{array}{ll}
(g_{K_{-\sigma}^{t_\sigma}(t_{-\sigma})}(w)-g_{K_{-\sigma}^{t_\sigma}(t_{-\sigma})}(y))/(w-y), &w\ne y;\\
g_{K_{-\sigma}^{t_\sigma}(t_{-\sigma})}'(w), & w=y.
\end{array}\right.
\label{f(t,w,y)}\EDE
 Using  (\ref{pa-X},\ref{paWsigma}) and (\ref{WV+g}-\ref{V0-g}) we see that $E_{W_\sigma,Y}$  satisfies the SDE
$$\frac{\pa_\sigma E_{W_\sigma,Y}}{E_{W_\sigma,Y}}\aeq \Big[\frac{W_{\sigma,1}}{W_{\sigma}-Y}-\frac{W_{\sigma,1}}{W_{\sigma}-Y}\Big|^{-\sigma}_0 \Big]d\ha w_\sigma +\Big[\frac{2W_{\sigma,1}^2}{(W_\sigma-Y)^2}-\frac{2W_{\sigma,1}^2}{(W_\sigma-Y)^2} \Big|^{-\sigma}_0\Big]\pa t_\sigma$$
\BGE -\frac{\kappa }{W_\sigma-Y}\Big|^{-\sigma}_0 \cdot  \Big[\frac{W_{\sigma,1}}{W_{\sigma }-Y}- \frac{W_{\sigma,1}}{W_{\sigma }-Y}\Big|^{-\sigma}_0 \Big]\pa t_\sigma+\Big(\frac\kappa 2-3\Big)\frac{W_{\sigma,2}}{W_\sigma-Y}\pa t_\sigma.\label{paEWY}\EDE

Recall the $Q$ defined in (\ref{F}).
Define a positive continuous function $M$ on ${\cal D}_{\disj}$ by
\BGE  M=Q^{-\frac{\cc}6}  E_{W_+,W_-}^{\frac 2\kappa}  \prod_{\nu_1<\nu_2\in\{0,+,-\}} E_{V_{\nu_1},V_{\nu_2}}^{\frac{\rho_{\nu_1}\rho_{\nu_2}}{2\kappa}}  \prod_{\sigma\in\{+,-\}} \Big[ W_{\sigma,N}^{\bb}  \prod_{\nu\in\{0,+,-\}}  E_{W_\sigma,V_\nu}^{\frac{\rho_\nu}\kappa} V_{\nu,N}^{\frac{\rho_\nu(\rho_\nu+4-\kappa)}{4\kappa}}\Big]. \label{Mirho-crho}\EDE
Then $M(t_+,t_-)=1$ if {$t_+=0$ or $t_-=0$}. Combining (\ref{paF},\ref{pajhaW},\ref{paV1},\ref{paEXY},\ref{dhaw},\ref{paWsigmaN},\ref{paEWY}) and using the facts that $\ha w_\sigma=W_\sigma|^{-\sigma}_0$, $\ha w_{-\sigma}^\sigma=W_{-\sigma}|^{-\sigma}_0$ and $\ha v_\nu^\sigma=V_\nu|^{-\sigma}_0$, we get the SDE for $M$ in $t_\sigma$ 
:
$$\frac{\pa_\sigma M}{M}=\bb \frac{W_{\sigma,2}}{W_{\sigma,1}}\pa B_\sigma-\Big[\frac 2{\ha w_\sigma-\ha w^\sigma_{-\sigma}}+\sum_{\nu\in\{0,+,-\}} \frac{\rho_{\nu}}{\ha w_\sigma-\ha v_\nu^\sigma}\Big] \frac{\pa B_\sigma}{\sqrt\kappa}+$$
\BGE+ \Big[\frac {2W_{\sigma,1} }{W_\sigma-W_{-\sigma}}+\sum_{\nu\in\{0,+,-\}} \frac{\rho_{\nu}W_{\sigma,1} }{W_\sigma-V_\nu}\Big]\frac{\pa B_\sigma}{\sqrt\kappa} .\label{paM}\EDE
This means that $M|^{-\sigma}_{t_{-\sigma}}$ is a local martingale in $t_\sigma$.

For $\sigma\in\{+,-\}$, let $\Xi_\sigma$ denote the space of simple crosscuts of $\HH$ that separate $w_\sigma$ from $w_{-\sigma} $ and $\infty$. Note that the crosscuts also separate $w_\sigma$ from $v_{-\sigma}$ since $v_{-\sigma}$ is further away from $w_\sigma$ than $w_{-\sigma}$.
But  the crosscuts {may not} separate $w_\sigma$ from $v_\sigma$ or $v_0$. For $\sigma\in\{+,-\}$ and $\xi_\sigma\in\Xi_\sigma$, let $\tau^\sigma_{\xi_\sigma}$ be the first time that $\eta_\sigma$ hits the closure of $\xi_\sigma$; or $\infty$ if such time does not exist. We see that $\tau^\sigma_{\xi_\sigma}\le \hcap_2(\xi_j)<\infty$.
Let $\Xi=\{(\xi_+,\xi_-)\in\Xi_+\times\Xi_-,\dist(\xi_+,\xi_-)>0\}$. For $\ulin\xi=(\xi_+,\xi_-)\in\Xi$, let $\tau_{\ulin\xi}=(\tau^+_{\xi_+},\tau^-_{\xi_-})$.  {Let $\Xi^*$ be the set of $(\xi_+,\xi_-)\in\Xi$ such that $\xi_+$ and $\xi_-$ are polygonal curves whose vertices have rational coordinates.}
{Then $\Xi^*$ is a countable subset of} $\Xi$ such that for every $\ulin\xi=(\xi_+,\xi_-)\in\Xi$ there is $(\xi_+^*,\xi_-^*)\in\Xi^*$ such that $\xi_\sigma$ is enclosed by $\xi_\sigma^*$, $\sigma\in\{+,-\}$. See Figure \ref{fig-crosscut}.


\begin{figure}
	\begin{center}
		\includegraphics[width=5in]{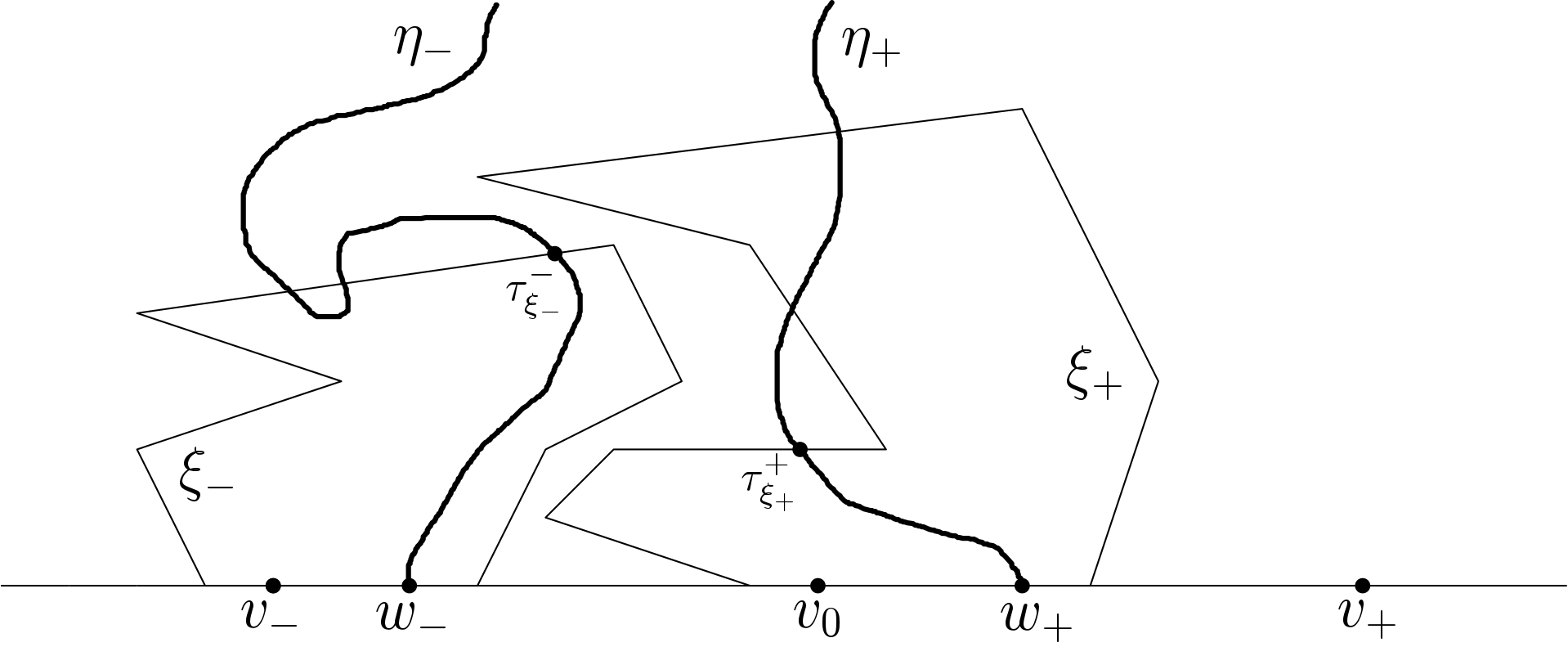}
	\end{center}
	\caption{{ The figure above illustrates an element $(\xi_+,\xi_-)\in \Xi^*$ and the corresponding stopping times $\tau^+_{\xi_+}$ and $\tau^-_{\xi_-}$ for the curves $\eta_+$ and $\eta_-$.}} \label{fig-crosscut}
\end{figure}

\begin{Lemma}
For any $\ulin\xi\in\Xi$ and $R>0$, there is a constant $C>0$ depending only on $\kappa,\ulin\rho, \ulin\xi, R$, such that if $|v_+-v_-|\le R$, then $|\log M|\le C$ on $[\ulin 0, \tau_{\ulin\xi}]$. \label{uniform}
\end{Lemma}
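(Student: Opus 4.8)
The plan is to control each of the factors in the product defining $M$ separately on the region $[\ulin 0,\tau_{\ulin\xi}]$, using the fact that stopping the curves at the crosscuts $\xi_+,\xi_-$ keeps the relevant hulls uniformly bounded and uniformly separated from each other and from the force points $v_0,v_+,v_-$. First I would fix $\ulin\xi=(\xi_+,\xi_-)\in\Xi$ and observe that by definition of $\tau_{\ulin\xi}$, for every $\ulin t\in[\ulin 0,\tau_{\ulin\xi}]$ the hull $K_+(t_+)$ lies in the bounded component cut off by $\xi_+$ and $K_-(t_-)$ lies in the bounded component cut off by $\xi_-$; since $\dist(\xi_+,\xi_-)>0$ we get a uniform lower bound $\delta_0>0$ on $\dist(\overline{K_+(t_+)},\overline{K_-(t_-)})$, hence $\lin{K_+(t_+)}\cap\lin{K_-(t_-)}=\emptyset$ so that indeed $[\ulin 0,\tau_{\ulin\xi}]\subset {\cal D}_{\disj}$ and every factor in (\ref{Mirho-crho}) is defined. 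Likewise the crosscuts separate $w_\sigma$ (hence $\eta_\sigma$ up to $\tau^\sigma_{\xi_\sigma}$) from $w_{-\sigma},v_{-\sigma},\infty$, so $\eta_\sigma[0,t_\sigma]$ stays at a uniform positive distance from $v_{-\sigma}$ and from $\infty$; together with $|v_+-v_-|\le R$ this bounds all relevant diameters and distances among $\{W_+,W_-,V_0,V_+,V_-\}$ both above and below by constants depending only on $\kappa,\ulin\rho,\ulin\xi,R$.

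Next I would estimate each factor. For the cross-ratio type factors $E_{X,Y}$, I would use that $E_{X,Y}(\ulin t)$ is a ratio of products of differences of the functions $W_\pm,V_0,V_\pm$ evaluated at $\ulin t$, $t_+\ulin e_+$, $t_-\ulin e_-$, $\ulin 0$ (formula (\ref{RXY})), with the degenerate cases handled by the conformal-derivative replacements described after (\ref{RXY}); the monotonicity and continuity statements of Lemma \ref{common-function}, Proposition \ref{Prop-cd-continuity'} and Lemma \ref{lem-V0}, combined with the uniform separation just established, give two-sided bounds $c^{-1}\le E_{X,Y}\le c$. For the normalized-derivative factors $W_{\sigma,N},V_{\nu,N}$: by Proposition \ref{Prop-contraction} these derivatives lie in $(0,1]$, so the factors are at most $1$; for the lower bound one uses Koebe-type distortion estimates (via Proposition \ref{g-z-sup} and the uniform bound on the diameters of $K_{-\sigma}^{t_\sigma}(t_{-\sigma})$ from Lemma \ref{lem-uniform}(i) applied with the uniform $L$ coming from $\xi_\pm$ and $R$) together with the uniform positive distance of $w_\sigma,v_\nu$ from the opposite hull, giving a uniform lower bound away from $0$. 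Finally the factor $Q^{-\cc/6}$: since $Q$ is given by the integral (\ref{F}) of $-12W_{+,1}^2W_{-,1}^2/(W_+-W_-)^4$, and on $[\ulin 0,\tau_{\ulin\xi}]$ we have $W_{\sigma,1}\le 1$ (Proposition \ref{Prop-contraction}), $|W_+-W_-|$ bounded below by a uniform constant (from $\dist(\overline{K_+},\overline{K_-})\ge\delta_0$ and Lemma \ref{lem-V0}), and $\mm(\ulin t)=\hcap_2(K(\ulin t))$ bounded above (hence the Lebesgue measure of $[\ulin 0,\tau_{\ulin\xi}]$ is bounded, since $\mm$ dominates each coordinate), the exponent $\int_{[\ulin 0,\tau_{\ulin\xi}]} |{-12 W_{+,1}^2W_{-,1}^2/(W_+-W_-)^4}|\,d^2\ulin s$ is bounded by a uniform constant, so $Q$ is bounded between two positive constants. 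Multiplying these bounds and taking logarithms yields $|\log M|\le C$.

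The main obstacle I anticipate is uniformity in the degenerate configurations: when some $v_\nu$ coincides with $w_\sigma^{\pm}$, or when $V_\nu(\ulin t)$ hits $W_\sigma(\ulin t)$, or when the denominators in (\ref{RXY}) vanish (the cases $\{X,Y\}\subset\{W_\sigma,V_\sigma,V_0\}$), so that $E_{X,Y}$ must be defined through the modified formula with conformal-derivative factors. One must check that these modified quantities, and their bounds, vary uniformly — i.e. that the constants do not blow up as a configuration degenerates — which requires tracking the continuity of $(\ulin t,z)\mapsto g_{K(\ulin t)}(z)$ and its derivatives (Lemma \ref{lem-uniform}) uniformly over the compact time region $[\ulin 0,\tau_{\ulin\xi}]$ and over the allowed range of $\ulin v$ with $|v_+-v_-|\le R$. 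A secondary technical point is to justify that $\tau_{\ulin\xi}$ genuinely confines the hulls as claimed even though $\eta_\sigma$ may be non-simple for $\kappa\in(4,8)$: here one uses that $\tau^\sigma_{\xi_\sigma}$ is the first hitting time of $\overline{\xi_\sigma}$ so $K_\sigma(t_\sigma)$ for $t_\sigma\le\tau^\sigma_{\xi_\sigma}$ is contained in the closure of the bounded complementary component of $\xi_\sigma$, which is a fixed compact set, and similarly $K(\ulin t)$ is contained in the union of these two compact sets — giving the uniform upper bound on $\mm$ and on all diameters needed above.
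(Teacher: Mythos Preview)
Your overall strategy---bound each factor of $M$ separately using the uniform confinement imposed by $\ulin\xi$---is exactly right, and your treatment of $Q$ and of the time bounds matches the paper's. But your claim that the confinement gives two-sided bounds on \emph{all} the differences $|X-Y|$ for $X,Y\in\{W_+,W_-,V_0,V_+,V_-\}$ is false, and this is where the gap lies. The differences $|W_\sigma-V_\sigma|$ and $|W_\sigma-V_0|$ (same-side pairs) can vanish: for instance $v_+$ may equal $w_+^+$, or $v_0$ may equal $w_-^+$, or (for $\kappa>4$) the curve may swallow $v_\sigma$ or $v_0$. So (\ref{RXY}) gives no direct lower bound on $E_{W_\sigma,V_\sigma}$, $E_{W_\sigma,V_0}$, $E_{V_\sigma,V_0}$, and your appeal to ``tracking continuity'' is not enough: continuity in $(\ulin t,z)$ gives a bound for each fixed $\ulin v$, but the lemma requires a bound \emph{uniform} over all $\ulin v$ with $|v_+-v_-|\le R$, including those degenerate configurations. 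The same issue affects $V_{0,N}$: since $v_0$ may sit arbitrarily close to $w_\sigma$, your ``uniform positive distance of $v_\nu$ from the opposite hull'' does not apply to $v_0$.

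The paper repairs this with two ingredients you are missing. First, it picks two auxiliary points $v_0^1<v_0^2$ in the open interval $I$ between the feet of $\xi_-$ and $\xi_+$ with $|g_{K_{\ulin\xi}}(v_0^2)-g_{K_{\ulin\xi}}(v_0^1)|\ge|g_{K_{\ulin\xi}}(I)|/3$; since $K(\ulin t)\subset K_{\ulin\xi}$, Proposition~\ref{Prop-contraction} gives $g_{K(\ulin t)}'\ge g_{K_{\ulin\xi}}'$ on $[v_0^1,v_0^2]$, so $V_0^2(\ulin t)-V_0^1(\ulin t)$ is bounded below by a fixed positive constant. This yields a uniform lower bound on $|W_+-W_-|$, $|W_\sigma-V_{-\sigma}|$, and (after arranging $v_0\le v_0^1$ by symmetry) on $|W_+-V_0|$. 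Second, for the remaining same-side factors the paper abandons the direct formula (\ref{RXY}) and instead integrates the ODEs (\ref{pajhaW}), (\ref{paV1}), (\ref{paEXY}): each expresses $\partial_\sigma\log(\cdot)$ in terms of $1/(X-W_\sigma)$ with the \emph{opposite} $W_\sigma$, and those denominators are exactly the cross-side differences just bounded. Since all these log-factors vanish when one time coordinate is $0$, integrating over the bounded time rectangle gives the required uniform bound. Your sketch would be complete if you replaced the ``uniform separation among all five quantities'' claim with this cross-side/same-side split and used the ODE integration for the same-side factors.
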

\begin{proof}
	Fix $\ulin\xi=(\xi_+,\xi_-)\in\Xi$ and $R>0$. Suppose $|v_+-v_-|\le R$. Throughout the proof, a constant is a number that depends only on $\ulin\xi, R$; and  a function defined on $[\ulin 0, \tau_{\ulin\xi}]$  is said to be uniformly bounded if its absolute value on $[\ulin 0, \tau_{\ulin\xi}]$ is bounded above by a constant. By the definition of $M$, it suffices to prove that $|\log Q|$, $|\log E_{Y_1,Y_2}|$, $Y_1\ne Y_2\in\{W_+,W_-,V_0,V_+,V_-\}$, $|\log W_{\sigma,N}|$, $\sigma\in\{+,-\}$, and $|\log V_{\nu,N}|$, $\nu\in\{0,+,-\}$, are all uniformly bounded.

 Let $K_{\xi_\sigma}=\Hull(\xi_\sigma)$, $\sigma\in\{+,-\}$ and	 $K_{\ulin \xi}=K_{\xi_+}\cup K_{\xi_-}$.  Let $I=(\max(\lin\xi_-\cap\R),\min(\lin\xi_+\cap\R))$. Then $|g_{K_{\ulin\xi}}(I)|$ is a positive constant. By symmetry we assume that either $v_0\in\lin{K_{\xi_-}}$ or $v_0\in I$ and $g_{K_{\ulin\xi}}(v_0)$ is no more than the middle of $g_{K_{\ulin\xi}}(I)$. So we may pick $v_0^1<v_0^2\in I$ with $v_0\le v_0^1$ such that $|g_{K_{\ulin\xi}}(v_0^2)-g_{K_{\ulin\xi}}(v_0^1)|\ge |g_{K_{\ulin\xi}}(I)|/3$.  Let $V_0^j$ be the force point function started from $v_0^j$, $j=1,2$. By (\ref{VWVWV}), $V_+\ge W_+\ge V_0^2>V_0^1\ge V_0\ge W_-\ge V_-$ on $[\ulin 0,\tau_{\ulin\xi}]$.

By Proposition \ref{Prop-contraction}, $W_{+,1},W_{-,1}$  are uniformly bounded   by $1$.
For $\sigma\in\{+,-\}$, the function $(t_+,t_-)\mapsto t_\sigma$ is bounded on $[\ulin 0, \tau_{\ulin\xi}]$   by $\hcap_2(K_{\ulin\xi})$.
For any $\ulin t\in [\ulin 0, \tau_{\ulin\xi}]$,  since  $g_{K_{\ulin\xi}}=g_{K_{\ulin\xi}/K(\ulin t)}\circ g_{K(\ulin t)}$, by Proposition \ref{Prop-contraction} we get $0<g_{K_{\ulin\xi}}'\le g_{K(\ulin t)}'\le 1$ on $[v_0^1,v_0^2]$.
Since $V_0^j(\ulin t)=g_{K(\ulin t)}(v_0^j)$, $j=1,2$, we have
$|V_0^2(\ulin t)-V_0^1(\ulin t)|\ge |g_{K_{\ulin\xi}}(v_0^2)-g_{K_{\ulin\xi}}(v_0^1)|\ge |g_{K_{\ulin\xi}}(I)|/3$.
So  $\frac 1{V_0^2-V_0^1}$ is uniformly bounded, which then implies that $\frac1{|W_\sigma-W_{-\sigma}|}$ and $\frac1{|W_\sigma-V_{-\sigma}|}$ are uniformly bounded, $\sigma\in\{+,-\}$. From (\ref{F}) we see that $|\log Q|$ is uniformly bounded.  From (\ref{pajhaW},\ref{paV1}) and the fact that $W_{-\sigma,N}|^\sigma_0=V_{-\sigma,N}|^\sigma_0=1$, we see that $|\log W_{-\sigma,N}|$ and $|\log V_{-\sigma,N}|$, $\sigma\in\{+,-\}$,  are uniformly bounded. We also know that $\frac1{|W_+-V_0|}\le \frac 1{|V_0^2-V_0^{1}|}$ is uniformly bounded. From (\ref{paV1}) with $\nu=0$ and $\sigma=+$ and the fact that $ V_{0,N}|^+_0\equiv 1$ we find that $|\log V_{0,N}|$ is uniformly bounded.
	
	Now we estimate $|\log E_{Y_1,Y_2}|$. By (\ref{V-V}), $|V_+-V_-|$ is uniformly bounded. Thus, for any $Y_1\ne Y_2\in \{W_+,W_-,V_0,V_+,V_-\}$, $|Y_1-Y_2|\le |V_+-V_-|$ is uniformly bounded. If $Y_1\in\{W_+,V_+\}$ and $Y_2\in\{W_-,V_-\}$, then $\frac1{|Y_1-Y_2|}\le \frac 1{ |V_0^1-V_0^2|}$ is uniformly bounded. From (\ref{RXY}) we see that $|\log  E_{Y_1,Y_2}|$ is uniformly bounded. If $Y_1,Y_2\in \{W_{-\sigma},V_{-\sigma}\}$ for some $\sigma\in\{+,-\}$, then $\frac 1{|Y_j-W_\sigma|}$, $j=1,2$, are uniformly bounded, and then the uniformly boundedness of $|\log E_{Y_1,Y_2}|$ follows from (\ref{paEXY}) and the fact that $E_{Y_1,Y_2}|^\sigma_0\equiv 1$. Finally, we consider the case that $Y_1=V_0$.   If $Y_2\in\{W_+,V_+\}$, then $\frac 1{|Y_2-V_0|}\le \frac 1{|V_0^2-V_0^1|}$, which is uniformly bounded.  We can again use (\ref{RXY}) to get the uniformly boundedness of  $|\log E_{V_0,Y_2}|$. If $Y_2\in\{W_-,V_-\}$, then $\frac 1{|V_0-W_+|}$ and $\frac 1{|Y_2-W_+|}$  are uniformly bounded. The uniformly boundedness of $|\log E_{V_0,Y_2}|$ then follows from (\ref{paEXY}) with $\sigma=+$, $X=V_0$, $Y=Y_2$, and the fact that $E_{V_0,Y_2}|^+_0\equiv 1$.
\end{proof}

\begin{Corollary}
	For any $\ulin\xi\in\Xi$, $M(\cdot \wedge \tau_{\ulin\xi})$ is an $\F$-martingale closed by $M(\tau_{\ulin\xi})$ w.r.t.\ $\PP^{\ulin\rho}_i$. \label{Doob}
\end{Corollary}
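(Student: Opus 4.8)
\textbf{Proof plan for Corollary \ref{Doob}.}

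The plan is to deduce the corollary from Lemma \ref{uniform} together with the local-martingale property of $M$ recorded in (\ref{paM}). Fix $\ulin\xi=(\xi_+,\xi_-)\in\Xi$ and set $\ulin\tau=\tau_{\ulin\xi}=(\tau^+_{\xi_+},\tau^-_{\xi_-})$; recall that $\tau^\sigma_{\xi_\sigma}\le\hcap_2(\xi_\sigma)<\infty$, so $\ulin\tau$ is a bounded $\F$-stopping time, say $\ulin\tau\le\ulin a$ for a deterministic $\ulin a\in\R_+^2$. Since $|v_+-v_-|$ is a fixed finite number, we may take $R=|v_+-v_-|$ in Lemma \ref{uniform} and conclude that there is a deterministic constant $C>0$ with $e^{-C}\le M\le e^C$ on $[\ulin 0,\ulin\tau]$; in particular $M(\cdot\wedge\ulin\tau)$ is uniformly bounded, and $M(\ulin\tau)\in L^\infty\subset L^1(\PP^{\ulin\rho}_i)$. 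The first step is therefore to record that $M(\ulin t\wedge\ulin\tau)$ is a well-defined, bounded, $\F$-adapted, sample-continuous process on $\R_+^2$ (continuity of $M$ on ${\cal D}_{\disj}$ was noted after (\ref{Mirho-crho}), and $\ulin t\mapsto\ulin t\wedge\ulin\tau$ is continuous).

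The main step is to upgrade the one-parameter local-martingale statement ``$M|^{-\sigma}_{t_{-\sigma}}$ is a local martingale in $t_\sigma$'' (which is exactly what (\ref{paM}) says, for each fixed finite $\F^{-\sigma}$-stopping time $t_{-\sigma}$, since $B_+$ and $B_-$ are independent under $\PP^{\ulin\rho}_i$) to the two-parameter martingale property $\EE[M(\ulin t\wedge\ulin\tau)\mid\F_{\ulin s}]=M(\ulin s\wedge\ulin\tau)$ for $\ulin s\le\ulin t$. The standard device here is iteration in the two coordinates: writing $\ulin s=(s_+,s_-)$, $\ulin t=(t_+,t_-)$, one moves from $(s_+,s_-)$ to $(t_+,s_-)$ by applying the one-parameter optional stopping theorem to the bounded local martingale $r\mapsto M((r\wedge t_+)\wedge\tau^+_{\xi_+},\,s_-\wedge\tau^-_{\xi_-})$ in the filtration $(\F^+_{r}\vee\F^-_{\infty})_r$ (the bound being $e^C$ by Lemma \ref{uniform}, so the local martingale is a genuine bounded martingale), conditioning first on $\F^+_{s_+}\vee\F^-_{\infty}$ and then on the smaller $\sigma$-algebra $\F_{\ulin s}=\F^+_{s_+}\vee\F^-_{s_-}$; and then from $(t_+,s_-)$ to $(t_+,t_-)$ by the symmetric argument in the second coordinate, using that $M(t_+\wedge\tau^+_{\xi_+},\cdot)$ restricted to the appropriate region is a bounded martingale in $t_-$ in the filtration $(\F^+_\infty\vee\F^-_{r})_r$. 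Composing the two conditional expectations via the tower property gives $\EE[M(\ulin t\wedge\ulin\tau)\mid\F_{\ulin s}]=M(\ulin s\wedge\ulin\tau)$. (One small technical point to handle: a priori (\ref{paM}) is derived on ${\cal D}_{\disj}$, but up to $\ulin\tau=\tau_{\ulin\xi}$ the two curves stay disjoint, because the crosscuts $\xi_+,\xi_-$ have positive distance and separate $w_+$ from $w_-$; so $[\ulin 0,\ulin\tau]\subset{\cal D}_{\disj}$ and all the functions and SDEs in (\ref{Mirho-crho})--(\ref{paM}) make sense there.) Finally, taking $\ulin t$ large so that $\ulin t\wedge\ulin\tau=\ulin\tau$ gives $M(\ulin s\wedge\ulin\tau)=\EE[M(\ulin\tau)\mid\F_{\ulin s}]$, i.e.\ $M(\cdot\wedge\ulin\tau)$ is closed by $M(\ulin\tau)$.

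The part requiring the most care is the iteration step: one must make sure the one-parameter optional stopping theorem is applied in a filtration with respect to which the relevant coordinate process really is a bounded martingale (the key being the independence of $B_+$ and $B_-$ under $\PP^{\ulin\rho}_i$, which makes $B_\sigma$ an $(\F^\sigma_{t_\sigma}\vee\F^{-\sigma}_\infty)$-Brownian motion, as noted in Section \ref{section-indep}), and that the conditioning $\sigma$-algebras are nested correctly so that the tower property applies to produce $\F_{\ulin s}$-conditional expectations rather than expectations with respect to the larger product $\sigma$-algebras. Everything else — boundedness, adaptedness, continuity, finiteness of $\ulin\tau$ — is immediate from Lemma \ref{uniform} and the construction.
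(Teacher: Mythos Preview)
Your proposal is correct and follows essentially the same approach as the paper. The paper's own proof is a one-line reference to (\ref{paM}), Lemma \ref{uniform}, and the argument of \cite[Corollary 3.2]{Two-Green-interior}; what you have written is precisely the standard two-coordinate iteration that that reference encodes: use the uniform bound from Lemma \ref{uniform} to turn the one-parameter local martingales of (\ref{paM}) into bounded martingales in each filtration $(\F^\sigma_{t_\sigma}\vee\F^{-\sigma}_\infty)_{t_\sigma}$, then tower twice to reach $\F_{\ulin s}$.
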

\begin{proof}
	This follows from (\ref{paM}), Lemma \ref{uniform}, and the same argument {used to prove} \cite[Corollary 3.2]{Two-Green-interior}.
\end{proof}

\begin{Lemma}
  For any $\ulin\xi=(\xi_+,\xi_-)\in\Xi$, $\PP^{\ulin\rho}$ is absolutely continuous w.r.t.\ $\PP^{\ulin\rho}_i$ on $\F^1_{\tau^1_{\xi_1}}\vee\F^2_{\tau^2_{\xi_2}}$, and the RN derivative is $M(\tau_{\ulin\xi})$. \label{RN-M-ic}
\end{Lemma}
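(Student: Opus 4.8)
The plan is to deduce Lemma~\ref{RN-M-ic} from Corollary~\ref{Doob} together with the two-curve DMP (Lemma~\ref{DMP}) by a standard martingale-change-of-measure argument, but carried out in the two-time-parameter setting using the optional stopping theorem of Proposition~\ref{OST}. First I would observe that $\tau_{\ulin\xi}=(\tau^+_{\xi_+},\tau^-_{\xi_-})$ is a bounded $\F$-stopping time: each $\tau^\sigma_{\xi_\sigma}\le \hcap_2(\xi_\sigma)<\infty$ is an $\F^\sigma$-stopping time, so the pair is an $\F$-stopping time bounded by the deterministic vector $(\hcap_2(\xi_+),\hcap_2(\xi_-))$, and $\F_{\tau_{\ulin\xi}}=\F^+_{\tau^+_{\xi_+}}\vee\F^-_{\tau^-_{\xi_-}}$. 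By Corollary~\ref{Doob}, $M(\cdot\wedge\tau_{\ulin\xi})$ is an $\F$-martingale with respect to $\PP^{\ulin\rho}_i$ closed by $M(\tau_{\ulin\xi})$, and since $M(\ulin 0)=1$ we have $\E_i[M(\tau_{\ulin\xi})]=1$. Therefore we may define a probability measure $\QQ$ on $\F_{\tau_{\ulin\xi}}$ by $d\QQ/d\PP^{\ulin\rho}_i = M(\tau_{\ulin\xi})$, and the goal becomes to show $\QQ=\PP^{\ulin\rho}|_{\F_{\tau_{\ulin\xi}}}$.

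The key step is to identify $\QQ$ as the law of a commuting pair of chordal SLE$_\kappa(2,\ulin\rho)$ curves stopped at $\tau_{\ulin\xi}$. For this I would appeal to the characterization of the commuting pair via the SDE system \eqref{dhaw}: under $\PP^{\ulin\rho}_i$ the driving functions satisfy \eqref{dhaw} with $\rho_0$ replaced so as to make them \emph{independent}, i.e.\ $d\ha w_\sigma\aeq\sqrt\kappa\,dB_\sigma + [\tfrac{2}{\ha w_\sigma-\ha w^\sigma_{-\sigma}}+\sum_\nu\tfrac{\rho_\nu}{\ha w_\sigma-\ha v_\nu^\sigma}]\,dt_\sigma$ — wait, more precisely, under $\PP^{\ulin\rho}_i$ the two marginals are already SLE$_\kappa(2,\ulin\rho)$ but run independently. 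The point of the weight $M$ is encoded in \eqref{paM}: the drift of $\log M|^{-\sigma}_{t_{-\sigma}}$ in the $\pa B_\sigma$ term is exactly $\bb\tfrac{W_{\sigma,2}}{W_{\sigma,1}} - \tfrac1{\sqrt\kappa}[\tfrac{2}{\ha w_\sigma-\ha w^\sigma_{-\sigma}}+\sum_\nu\tfrac{\rho_\nu}{\ha w_\sigma-\ha v^\sigma_\nu}] + \tfrac1{\sqrt\kappa}[\tfrac{2W_{\sigma,1}}{W_\sigma-W_{-\sigma}}+\sum_\nu\tfrac{\rho_\nu W_{\sigma,1}}{W_\sigma-V_\nu}]$. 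By Girsanov's theorem applied in the single variable $t_\sigma$ (with $t_{-\sigma}$ a fixed finite $\F^{-\sigma}$-stopping time), weighting by $M|^{-\sigma}_{t_{-\sigma}}$ converts $B_\sigma$ into a Brownian motion with the appropriate drift; combined with the identities $W_{\sigma,1}\,d\ha w_\sigma$-expansion \eqref{paWsigma} this transforms the independent-coupling SDE for $W_\sigma|^{-\sigma}_{t_{-\sigma}}$ into the SDE that the driving function of $\eta_\sigma^{t_{-\sigma}}$ satisfies under the genuine commuting pair (with the force points $W_{-\sigma},V_0,V_+,V_-$ at time $t_{-\sigma}$). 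I would make this rigorous by first establishing it for $\ulin\xi$-stopping times of the staircase form $(\tau_+,0)$ and $(0,\tau_-)$ (one curve at a time), where Girsanov is directly applicable, then iterating along alternating grows as in the Remark following Lemma~\ref{DMP}, and finally passing to the general $\tau_{\ulin\xi}$ by the approximation/monotone-class argument that Lemma~\ref{DMP} already provides — since $\tau_{\ulin\xi}$ is in fact a deterministic-looking crosscut hitting time and the construction in Lemma~\ref{DMP} covers all $\lin\F$-stopping times.

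Concretely, the cleanest route is: (1) For $\sigma\in\{+,-\}$ and any finite $\F^{-\sigma}$-stopping time $t_{-\sigma}$, apply Girsanov to $M|^{-\sigma}_{t_{-\sigma}}$ (which is a bounded positive martingale on $[0,\tau^\sigma_{\xi_\sigma}]$ by Lemma~\ref{uniform}) to conclude that under the tilted measure the conditional law of $\eta_\sigma^{t_{-\sigma}}$ up to $\tau^\sigma_{\xi_\sigma}$, given $\F^{-\sigma}_{t_{-\sigma}}$, is that of an SLE$_\kappa(2,\ulin\rho)$ curve started from $W_\sigma(t_{-\sigma}\ulin e_{-\sigma})$ with force points $W_{-\sigma},V_0,V_+,V_-$ at $t_{-\sigma}$ (with the convention about coinciding force points stated in Lemma~\ref{DMP}); this is exactly property (B) of the commuting pair. (2) Because $M$ factors as a product $M|^+_{t_-}\cdot M|^-_{0}\circ(\text{shift})$ consistently — more precisely, by the cocycle identity $M(\ulin t) = M(t_+,0)\cdot (M|^-_{t_+}(t_-))$ which follows from $M(t_+,t_-)=1$ when $t_+t_-=0$ and from \eqref{paM} being an SDE in each variable separately — the two single-variable Girsanov tilts compose to the tilt by $M(\tau_{\ulin\xi})$, and the composed law is precisely the law whose one-curve-at-a-time conditional structure matches properties (A) and (B), hence by the uniqueness of the commuting pair (and of the stopped process) equals $\PP^{\ulin\rho}$ restricted to $\F_{\tau_{\ulin\xi}}$. (3) Conclude $d\PP^{\ulin\rho}/d\PP^{\ulin\rho}_i = M(\tau_{\ulin\xi})$ on $\F^+_{\tau^+_{\xi_+}}\vee\F^-_{\tau^-_{\xi_-}}$.

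The main obstacle I anticipate is step (2): justifying that the two single-variable Girsanov changes can be composed — i.e.\ that tilting the first coordinate's law and then, conditionally, the second coordinate's law, produces exactly the tilt by the full two-parameter $M(\tau_{\ulin\xi})$, and that this composed object is genuinely the commuting pair rather than some artifact of the ordering. This requires carefully exploiting the two-parameter martingale structure of $M$ (the fact, from Corollary~\ref{Doob} and \eqref{paM}, that $M(\cdot\wedge\tau_{\ulin\xi})$ is a true $\F$-martingale and not merely a martingale in each variable separately) together with the optional stopping theorem Proposition~\ref{OST}(ii) to handle the fact that $\tau_{\ulin\xi}$ is a genuine two-parameter stopping time. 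I expect this to mirror closely the proof of the analogous statement in \cite{Two-Green-interior} (the interior case) and in \cite[Lemma A.5]{Green-cut}, so the argument is essentially to transcribe that structure, checking that the extra force point $v_0$ and the extra factors in \eqref{Mirho-crho} do not disrupt the martingale/Girsanov bookkeeping — which is exactly what \eqref{paM} verifies. The remaining verifications (boundedness, so that Girsanov and optional stopping apply) are already supplied by Lemma~\ref{uniform} and Corollary~\ref{Doob}.
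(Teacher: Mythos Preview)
Your proposal is essentially correct and follows the same approach as the paper: define the tilted measure via $M(\tau_{\ulin\xi})$, then use Girsanov in one time variable (with the other frozen at a stopping time) together with \eqref{paM} to verify that the tilted measure satisfies the characterizing conditions (A) and (B) of the commuting pair.

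The one place where you make things harder than necessary is your step (2). The paper does not iterate, approximate, or invoke Lemma~\ref{DMP} at all. Instead, since Corollary~\ref{Doob} already gives that $M(\cdot\wedge\tau_{\ulin\xi})$ is a genuine two-parameter $\F$-martingale closed by $M(\tau_{\ulin\xi})$, Proposition~\ref{OST} yields directly
\[
\frac{d\PP^{\ulin\rho}_{\ulin\xi}|\F_{(t_+,\sigma_-)}}{d\PP^{\ulin\rho}_i|\F_{(t_+,\sigma_-)}} = M(t_+\wedge\tau_+,\sigma_-)
\]
for any $\F^-$-stopping time $\sigma_-\le\tau^-_{\xi_-}$. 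This single identity is all that is needed to run one-variable Girsanov in $t_+$ and obtain the SDE \eqref{SDE-paW} for $W_+(\cdot,\sigma_-)$ under the tilted measure; the symmetric statement for $W_-(\sigma_+,\cdot)$ follows the same way. Your ``cocycle identity'' $M(\ulin t)=M(t_+,0)\cdot M|^-_{t_+}(t_-)$ is trivially true (since $M(t_+,0)=1$), and the worry about ordering of tilts is resolved automatically by the two-parameter martingale property rather than by any explicit composition argument. So drop the references to staircase iteration and Lemma~\ref{DMP}; the OST does that work for you.
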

\begin{proof}
  Let $\ulin\xi=(\xi_+,\xi_-)\in\Xi$. The above corollary implies that  $\EE^{\ulin\rho}_i[M(\tau_{\ulin\xi})]=M(\ulin 0)=1$. So we may define a new probability measure $\PP^{\ulin\rho}_{\ulin \xi}$ by $ {d \PP^{\ulin\rho}_{\ulin \xi}}= {M(\tau_{\ulin\xi})} {d\PP^{\ulin\rho}_i}$.

  Since $M(t_+,t_-)=1$ when $t_+t_-=0$, from the above corollary we know that the marginal laws of $\PP^{\ulin\rho}_{\ulin \xi}$ agree with that of $\PP^{\ulin\rho}_i$, which are $\PP^{\ulin\rho}_+$ and $\PP^{\ulin\rho}_-$. Suppose $(\ha w_+,\ha w_-)$ follows the law $\PP^{\ulin\rho}_{\ulin \xi}$. Then they satisfy Condition (A) in Section \ref{subsection-commuting-SLE-kappa-rho}. Now we write $\tau_\pm$ for $\tau^\pm_{\xi_\pm}$, and $\ulin\tau$ for $\tau_{\ulin\xi}$. Let $\sigma_-\le \tau_-$ be an $\F^-$-stopping time. From Lemma \ref{OST} and Corollary \ref{Doob},
$\frac{d \PP^{\ulin\rho}_{\ulin \xi}|\F _{(t_+,\sigma_-)}}{d\PP^{\ulin\rho}_i|\F _{(t_+,\sigma_-)}}= M(t_+\wedge {\tau_+},{\sigma_-})$,  $0\le t_+<\infty$.
From Girsanov{'s} Theorem and (\ref{dhaw},\ref{paM}), we see that, under $\PP^{\ulin\rho}_{\ulin \xi}$,  $\ha w_+$ satisfies  the following  SDE up to $\tau_+$:
\begin{align*}
d\ha w_+=&\sqrt\kappa d B^{\tau_-}_{+}+\kappa \bb \frac{W_{+,2} }{W_{+,1} }\Big|^-_{\tau_-} d t_++  \frac{2 W_{+,1} }{W_+ -W_{-} }\Big|^-_{\tau_-}\,d t_+   + \sum_{\nu\in\{0,+,-\}} \frac{\rho_\nu W_{+,1} }{W_+ -V_\nu }\Big|^-_{\tau_-}\,d t_+ ,
\end{align*}
where $B^{\tau_-}_{+}$  is a standard $(\F _{(t_+,\sigma_-)})_{t_+\ge 0}$-Brownian motion under $\PP^{\ulin\rho}_{\ulin \xi}$. Using Lemma \ref{W=gw} and (\ref{-3}) we find that $W_+(\cdot, \sigma_-)$ under $\PP^{\ulin\rho}_{\ulin \xi}$ satisfies the following SDE up to $\tau_+$:
\BGE d W_+|^{-}_{\sigma_{-}}   \aeq \sqrt\kappa  W_{+,1} |^{-}_{\sigma_{-}}d B_{+}^{\sigma_{-}}  +  \frac {2W_{+,1}^2}{W_{+}-W_{-}}\bigg |^{-}_{\sigma_{-}}  d t_+  +\sum_{\nu\in \{0,+,-\}} \frac {\rho_\nu W_{+,1} ^2}{W_{+}-V_{\nu} }\bigg|^{-}_{\sigma_{-}} d t_+ .\label{SDE-paW}\EDE
Note that the SDE (\ref{SDE-paW}) agrees with the SDE for $W_+(\cdot,\sigma_-)$ if $(\eta_+,\eta_-)$ is a commuting pair of chordal SLE$_\kappa(2,\ulin\rho)$ curves started from $(\ulin w;\ulin v)$, where the speed is $W_{+,1}(\cdot,\sigma_-)^2$. There is a similar SDE for $W_-(\sigma_+,\cdot)$ if $\sigma_+\le \tau_+$ is an $\F^+$-stopping time. Thus, $ { \PP^{\ulin\rho}_{\ulin \xi}}$ agrees with $\PP^{\ulin\rho}$ on $\F^1_{\tau^1_{\xi_1}}\vee\F^2_{\tau^2_{\xi_2}}$, which implies the conclusion of the lemma.
\end{proof}

\begin{Corollary}
  If $\ulin T$ is an $\F$-stopping time, then $\PP^{\ulin\rho} $ is absolutely continuous w.r.t.\ $\PP^{\ulin\rho}_i $ on $\F_{\ulin T}\cap \{\ulin T\in{\cal D}_{\disj}\}$, and the RN derivative is $M(\ulin T)$. In other words, if $A\in \F_{\ulin T}$ and $A\subset \{\ulin T\in{\cal D}_{\disj}\}$, then $\PP^{\ulin\rho} [A]=\EE^{\ulin\rho}_i [{\bf 1}_A M(\ulin T)]$. \label{RN-M-ic-cor}
\end{Corollary}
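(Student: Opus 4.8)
The plan is to upgrade Lemma \ref{RN-M-ic}, which handles stopping times of the form $\tau_{\ulin\xi}$ for $\ulin\xi\in\Xi$, to an arbitrary $\F$-stopping time $\ulin T$ that lands inside ${\cal D}_{\disj}$. The natural strategy is approximation from outside: on the event $\{\ulin T\in{\cal D}_{\disj}\}$, the hulls $\lin{K_+(T_+)}$ and $\lin{K_-(T_-)}$ are compact and disjoint, so one can sandwich them between pairs of crosscuts $\ulin\xi\in\Xi^*$ with $\tau_{\ulin\xi}\le \ulin T$, and let $\ulin\xi$ increase toward the "boundary" of $K_+(T_+)\cup K_-(T_-)$ along the countable family $\Xi^*$. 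More precisely, first I would reduce to the case where $\ulin T$ is bounded by a deterministic $\ulin t$, using Proposition \ref{prop-local}: replace $\ulin T$ by $\ulin T^{\ulin t}$, note that $A\cap\{\ulin T\le\ulin t\}\in\F_{\ulin T^{\ulin t}}$, and take a monotone limit over $\ulin t\uparrow(\infty,\infty)$ at the end via dominated/monotone convergence, since $M(\ulin T)\ge0$ and the two sides are both countably additive.

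For bounded $\ulin T$, the core step is to build, for each fixed sample in $A\subset\{\ulin T\in{\cal D}_{\disj}\}$, a decreasing-in-$n$ sequence $\ulin\xi^n=(\xi_+^n,\xi_-^n)\in\Xi^*$ such that $\eta_\sigma$ hits $\lin{\xi_\sigma^n}$ strictly before time $T_\sigma$ (so $\tau_{\ulin\xi^n}<\ulin T$ coordinatewise), while $\tau^\sigma_{\xi^n_\sigma}\uparrow T_\sigma$ as $n\to\infty$; this is possible because $\eta_\sigma$ is a continuous curve not hitting the other curve's hull at time $\ulin T$, so small polygonal crosscuts with rational vertices separating $\eta_\sigma[0,T_\sigma]$ from $w_{-\sigma}$ and $\infty$ and from $\xi_{-\sigma}^n$ exist and can be chosen to approach $\eta_\sigma(T_\sigma)$. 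The subtlety is measurability: I want to choose $\ulin\xi^n$ as an $\F_{\ulin T}$-measurable (indeed $\lin\F$-adapted) function of the sample, so that the events $\{\tau_{\ulin\xi^n}=\text{this particular }\ulin\xi\}$ decompose $A$ into countably many pieces, each contained in $\{\ulin\xi^n=\ulin\xi\}\in\F_{\tau_{\ulin\xi}}$, on which Lemma \ref{RN-M-ic} applies. This is exactly the kind of bookkeeping that appears in the analogous interior-point argument, so I would closely follow \cite[proof of Corollary 3.3 / analog]{Two-Green-interior}, invoking the countability of $\Xi^*$ and the enclosing property stated right before Lemma \ref{uniform}.

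Concretely, fix $n$ and enumerate $\Xi^*=\{\ulin\zeta^1,\ulin\zeta^2,\dots\}$; define $\ulin\xi^n$ to be $\ulin\zeta^k$ for the least $k$ such that $\xi_\sigma^n:=\zeta^k_\sigma$ separates $\eta_\sigma[0,T_\sigma]$ from $\infty$ and $w_{-\sigma}$, separates $\eta_\sigma(T_\sigma)$ within distance $1/n$ of $\eta_\sigma[0,T_\sigma-1/n]$ (or an appropriate quantitative "close to the tip" condition), and $\dist(\zeta^k_+,\zeta^k_-)>0$; one checks the defining conditions are $\lin\F_{\ulin T}$-measurable since they only involve $\eta_+[0,T_+]$, $\eta_-[0,T_-]$ and $\ulin T$ itself, and that $\tau_{\ulin\xi^n}<\ulin T$ and $\tau_{\ulin\xi^n}\uparrow\ulin T$ a.s.\ on $\{\ulin T\in{\cal D}_{\disj}\}$. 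Then for $A\in\F_{\ulin T}$ with $A\subset\{\ulin T\in{\cal D}_{\disj}\}$, write $A=\bigsqcup_{\ulin\zeta\in\Xi^*}(A\cap\{\ulin\xi^n=\ulin\zeta\})$; each piece lies in $\F_{\tau_{\ulin\zeta}}$ by Proposition \ref{T<S}(iii) (the indicator of $\{\tau_{\ulin\zeta}\le\ulin T\}$ times an $\F_{\ulin T}$-measurable set is $\F_{\ulin\zeta}$-measurable, and conversely on this event the two $\sigma$-algebras interact as needed), so Lemma \ref{RN-M-ic} gives $\PP^{\ulin\rho}[A\cap\{\ulin\xi^n=\ulin\zeta\}]=\EE^{\ulin\rho}_i[{\bf 1}_{A\cap\{\ulin\xi^n=\ulin\zeta\}}M(\tau_{\ulin\zeta})]$; summing and using Corollary \ref{Doob} (the closing property, so $M(\tau_{\ulin\xi^n})=\EE^{\ulin\rho}_i[M(\tau_{\ulin\xi^{n'}})\mid\F_{\tau_{\ulin\xi^n}}]$ for $n'\ge n$, i.e.\ a convergence along the sequence) yields $\PP^{\ulin\rho}[A]=\EE^{\ulin\rho}_i[{\bf 1}_A M(\tau_{\ulin\xi^n})]$ for every $n$; finally let $n\to\infty$ and apply the continuity of $M$ on ${\cal D}_{\disj}$ (so $M(\tau_{\ulin\xi^n})\to M(\ulin T)$ pointwise on $A$) together with a uniform integrability bound — here Lemma \ref{uniform} is what I would lean on, after first localizing $|v_+-v_-|$ to a bounded range and the crosscuts to a fixed compact region, exactly as in the reduction to deterministic $\ulin t$ above.

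The main obstacle I anticipate is not any single estimate but the measurability/selection argument: producing the approximating crosscuts $\ulin\xi^n$ as genuinely $\F_{\ulin T}$-measurable (equivalently $\lin\F_{\ulin T}$-measurable, by Proposition \ref{right-continuous-0}) random elements, and verifying cleanly that each piece $A\cap\{\ulin\xi^n=\ulin\zeta\}$ sits in $\F_{\tau_{\ulin\zeta}}$ so that Lemma \ref{RN-M-ic} is legitimately applicable. Once that is in place, the passage to the limit is routine: monotone convergence on the left, dominated convergence on the right via the uniform bound on $M$ up to the relevant localizing crosscuts, and continuity of $M$. I would therefore structure the write-up as (1) localization via Proposition \ref{prop-local}, (2) construction and measurability of $(\ulin\xi^n)$, (3) countable decomposition and application of Lemma \ref{RN-M-ic} piece by piece, (4) limit $n\to\infty$, (5) removal of the localization — with step (2) carrying essentially all the content and referencing the parallel construction in \cite{Two-Green-interior}.
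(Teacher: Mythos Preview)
Your proposal contains an internal inconsistency about the direction of approximation. You write that $\tau_{\ulin\xi^n}<\ulin T$ coordinatewise with $\tau^\sigma_{\xi^n_\sigma}\uparrow T_\sigma$, yet you also describe the crosscuts as ``separating $\eta_\sigma[0,T_\sigma]$ from $w_{-\sigma}$ and $\infty$'', which by definition of $\tau^\sigma_{\xi_\sigma}$ forces $\tau^\sigma_{\xi^n_\sigma}>T_\sigma$. If you really intend approximation from \emph{below} ($\tau_{\ulin\xi^n}<\ulin T$), the argument breaks at the measurability step: Proposition~\ref{T<S}(iii) says that ${\bf 1}_{\{\ulin T\le\ulin S\}}f\in\F_{\ulin S}$ for $f$ that is $\F_{\ulin T}$-measurable, i.e.\ it pushes $\F_{\ulin T}$-sets \emph{forward} to larger stopping times, not backward. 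You therefore cannot conclude $A\cap\{\ulin\xi^n=\ulin\zeta\}\in\F_{\tau_{\ulin\zeta}}$ when $\tau_{\ulin\zeta}<\ulin T$, and Lemma~\ref{RN-M-ic} is not applicable on those pieces. (Your stated invocation of Proposition~\ref{T<S}(iii) has the inequality reversed as well.)

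Even if you correct the direction to $\ulin T<\tau_{\ulin\xi^n}$, the whole limiting procedure with random $\ulin\xi^n$ is unnecessary. The paper's proof is much shorter: since $\{\ulin T\in{\cal D}_{\disj}\}=\bigcup_{\ulin\xi\in\Xi^*}\{\ulin T<\tau_{\ulin\xi}\}$ and $\Xi^*$ is countable, it suffices to handle $A\in\F_{\ulin T}$ with $A\subset\{\ulin T<\tau_{\ulin\xi}\}$ for a \emph{single deterministic} $\ulin\xi$. One shows $A\in\F_{\ulin\xi}:=\F^+_{\tau^+_{\xi_+}}\vee\F^-_{\tau^-_{\xi_-}}$ by writing $A=\bigcup_{\ulin t\in\Q_+^2}\bigl(A\cap\{\ulin T\le\ulin t<\tau_{\ulin\xi}\}\bigr)$ and using a short monotone class argument to put each piece (which lies in $\F_{\ulin t}\cap\{\ulin t<\tau_{\ulin\xi}\}$) into $\F_{\ulin\xi}$. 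Then Lemma~\ref{RN-M-ic} gives $\PP^{\ulin\rho}[A]=\EE^{\ulin\rho}_i[{\bf 1}_A M(\tau_{\ulin\xi})]$, and the Optional Stopping Theorem (Proposition~\ref{OST}) applied to the closed martingale $M(\cdot\wedge\tau_{\ulin\xi})$ of Corollary~\ref{Doob} converts this in one step to $\EE^{\ulin\rho}_i[{\bf 1}_A M(\ulin T\wedge\tau_{\ulin\xi})]=\EE^{\ulin\rho}_i[{\bf 1}_A M(\ulin T)]$. No sequence, no limit, no uniform-integrability check, and no measurable selection of random crosscuts are needed.
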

\begin{proof}
  Since $\{\ulin T\in{\cal D}_{\disj}\}= \bigcup_{\ulin\xi\in \Xi^*} \{\ulin T <\tau_{\ulin\xi}\}$ and $\Xi^*$ is countable, it suffices to prove the statement with $ \{\ulin T <\tau_{\ulin\xi}\}$ in place of $ \{\ulin T\in{\cal D}_{\disj}\}$ for every $\ulin\xi\in\Xi^*$. Fix   $\ulin\xi=(\xi_+,\xi_-)\in\Xi^*$. We write $\F_{\ulin\xi}$ for $\F^+_{\tau^+_{\xi^+}}\vee \F^-_{\tau^-_{\xi^-}}$. Let $A\in\F_{\ulin T}\cap \{\ulin T <\tau_{\ulin\xi}\}$.   Fix $\ulin t=(t_+,t_-)\in\Q_+^2$. Let $A_{\ulin t}=A\cap \{\ulin T\le \ulin t <\tau_{\ulin\xi}\}$.  For every  $B_+\in\F^+_{t_+}$ and $B_-\in\F^-_{t_-}$, $B_+\cap B_-\cap \{\ulin t<\tau_{\ulin\xi}\}\in \F^+_{\tau^+_{\xi^+}}\vee \F^-_{\tau^-_{\xi^-}}=\F_{\ulin\xi}$. Using a monotone class argument, we conclude that  $\F_{\ulin t}\cap \{\ulin t<\tau_{\ulin\xi}\}\in \F_{\ulin\xi}$. Thus, $A_{\ulin t}\in\F_{\ulin t}\cap \{\ulin t<\tau_{\ulin\xi}\}\subset \F_{\ulin\xi}$. Since $A=\bigcup_{\ulin t\in\Q_+^2} A_{\ulin t}$, we get $A\in\F_{\ulin\xi}$.  By Lemma \ref{RN-M-ic}, Proposition \ref{OST}, and the martingale property of $M(\cdot\wedge \tau_{\ulin\xi})$, we get
  $\EE^{\ulin\rho}[A]=\EE^{\ulin\rho}_i [{\bf 1}_A M(\tau_{\ulin\xi})]=\EE^{\ulin\rho}_i [{\bf 1}_A M(\ulin T\wedge \tau_{\ulin\xi})]=\EE^{\ulin\rho}_i [{\bf 1}_A M(\ulin T)]$.
\end{proof}

{
\begin{Remark}
  We call the $M$ a two-time-parameter local martingale. It plays the same role as the $M$ defined in \cite[Formula (4.3)]{reversibility} and the $M$ defined in \cite[Formula (4.34)]{duality}.
\end{Remark}
}

\subsection{SDE along a time curve up to intersection} \label{section-diffusion}
{Recall that $(\eta_+,\eta_-)$ is a.s.\ a commuting pair of chordal Loewner curves with the time region $\R_+^2$.} Now assume that $v_+-v_0=v_0-v_-$.
 Let $\ulin u=(u_+,u_-):[0,T^u)\to \R_+^2$ be {the function $\ulin u$ developed} in Section \ref{time curve} {for this random pair $(\eta_+,\eta_-)$}. By Lemma \ref{Beurling}, a.s.\ $T^u=\infty$. 
By Proposition \ref{Prop-u(t)}, $\ulin u(t)$ is an $(\F_{ \ulin t})$-stopping time for each $t\ge 0$. We then get an $\R_+$-indexed filtration $\F^u$ by $\F^u_t:=\F_{ \ulin u(t)}$, $t\ge 0$. For $\ulin \xi=(\xi_+,\xi_-)\in\Xi$,  let $\tau^u_{\ulin\xi}$ denote the first $t\ge 0$ such that $u_1(t)=\tau^1_{\xi_1}$ or $u_2(t)=\tau^2_{\xi_2}$, whichever comes first.
Note that such time exists and is finite because $(\tau^1_{\xi_1},\tau^2_{\xi_2})\in\cal D$. The following proposition has the same form as \cite[Lemma 4.2]{Two-Green-interior}{, and its proof is also the same as the proof there}.

\begin{Proposition}
	For every $\ulin\xi\in\Xi$, $\tau^u_{\ulin \xi}$ is an $\F^u$-stopping time, and  $\ulin u(\tau^u_{\ulin \xi})$ and $\ulin u(t\wedge\tau^u_{\ulin \xi})$, $t\ge 0$, are all $\F$-stopping times.   \label{u-st}
\end{Proposition}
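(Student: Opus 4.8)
The statement to prove is Proposition \ref{u-st}: for every $\ulin\xi\in\Xi$, the time $\tau^u_{\ulin\xi}$ is an $\F^u$-stopping time, and the maps $\ulin u(\tau^u_{\ulin\xi})$ and $\ulin u(t\wedge\tau^u_{\ulin\xi})$, $t\ge 0$, are all $\F$-stopping times. The paper explicitly says the proof is the same as that of \cite[Lemma 4.2]{Two-Green-interior}, so my plan is to reconstruct that argument using the tools already set up in the excerpt, chiefly Proposition \ref{Prop-u(t)} (that $\ulin u(t)$ is an $\F$-stopping time for every deterministic $t$), the fact that $\F^u_t = \F_{\ulin u(t)}$, and the stopping-time manipulations in Propositions \ref{T<S} and \ref{prop-local} and \ref{right-continuous-0}.

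Here is the plan. First I would unwind the definition of $\tau^u_{\ulin\xi}$: since $\tau^\sigma_{\xi_\sigma}$ is the hitting time of the closed crosscut $\lin{\xi_\sigma}$ by $\eta_\sigma$, it is an $\F^\sigma$-stopping time (in the one-dimensional sense), and $\tau^u_{\ulin\xi} = \inf\{t\ge 0 : u_+(t)\ge \tau^+_{\xi_+} \text{ or } u_-(t)\ge \tau^-_{\xi_-}\}$, which is finite because $(\tau^+_{\xi_+},\tau^-_{\xi_-})\in{\cal D}=\R_+^2$ and $u_\sigma(t)\to\infty$ (since a.s.\ $T^u=\infty$ by Lemma \ref{Beurling}). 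To show $\tau^u_{\ulin\xi}$ is an $\F^u$-stopping time, i.e.\ $\{\tau^u_{\ulin\xi}\le t\}\in\F^u_t=\F_{\ulin u(t)}$ for every deterministic $t\ge 0$, I would argue: on the event $\{\tau^u_{\ulin\xi}\le t\}$ we have $u_+(t)\ge \tau^+_{\xi_+}$ or $u_-(t)\ge \tau^-_{\xi_-}$, by right-continuity/monotonicity of $\ulin u$. Conversely that inclusion is an equality. So $\{\tau^u_{\ulin\xi}\le t\} = \{u_+(t)\ge\tau^+_{\xi_+}\}\cup\{u_-(t)\ge\tau^-_{\xi_-}\}$. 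Now $\ulin u(t)$ is an $\F$-stopping time (Proposition \ref{Prop-u(t)}), so by Proposition \ref{T<S}(ii) applied with the constant stopping time $\ulin s$ ranging over $\R_+^2$ — more precisely, I need that the event $\{u_\sigma(t)\ge \tau^\sigma_{\xi_\sigma}\}$ lies in $\F_{\ulin u(t)}$. This should follow because $\tau^\sigma_{\xi_\sigma}\ulin e_\sigma$ (padded appropriately) and $\ulin u(t)$ are both $\F$-stopping times, and comparison events between two $\F$-stopping times land in the $\sigma$-algebra of the larger one; here one invokes Proposition \ref{T<S}(i): $\{\ulin T\le\ulin S\}\in\F_{\ulin S}$. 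One technical point: $\tau^\sigma_{\xi_\sigma}$ is a one-dimensional stopping time, so I would lift it to an $\F$-stopping time $\ulin S_\sigma$ on $\R_+^2$ by $\ulin S_\sigma = \tau^\sigma_{\xi_\sigma}\ulin e_\sigma$ together with $\infty\ulin e_{-\sigma}$ — actually just $\tau^\sigma_{\xi_\sigma}$ in the $\sigma$-coordinate and $0$ (or anything) in the other; one checks $\{\ulin S_\sigma\le\ulin t\}\in\F_{\ulin t}$ from the one-dimensional stopping-time property of the generating filtration $\F^\sigma$ and the definition $\F_{\ulin t}=\F^+_{t_+}\vee\F^-_{t_-}$.

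Once $\tau^u_{\ulin\xi}$ is known to be an $\F^u$-stopping time, the second assertion follows quickly: for deterministic $t\ge 0$, $t\wedge\tau^u_{\ulin\xi}$ is an $\F^u$-stopping time (minimum of a constant and a stopping time, in the one-parameter filtration $\F^u$), and $\ulin u$ composed with an $\F^u$-stopping time gives an $\F$-stopping time — this is precisely the content that should be recorded as a consequence of Proposition \ref{Prop-u(t)} together with monotonicity of $\ulin u$: one checks $\{\ulin u(S)\le\ulin t\}\in\F_{\ulin t}$ for an $\F^u$-stopping time $S$ by writing $\{\ulin u(S)\le\ulin t\}$ in terms of the events $\{S\le q\}$ over rationals $q$ and the deterministic-time statement $\{\ulin u(q)\le\ulin t\}\in\F_{\ulin u(q)}\subset$ (after intersecting with $\{S\le q\}$) the right $\sigma$-algebra, using right-continuity of $\F^u$ (Proposition \ref{right-continuous-0}) and continuity/monotonicity of $\ulin u$. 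Thus $\ulin u(\tau^u_{\ulin\xi})$ and $\ulin u(t\wedge\tau^u_{\ulin\xi})$ are $\F$-stopping times. I would also remark that $\ulin u(t\wedge\tau^u_{\ulin\xi})\le (\tau^+_{\xi_+},\tau^-_{\xi_-})$, which is why later the boundedness in Lemma \ref{uniform} / Corollary \ref{RN-M-ic-cor} can be applied along this stopped time curve.

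The main obstacle, as always with these two-parameter filtration arguments, is the bookkeeping in passing between the one-parameter picture (the clock $t$ and the filtration $\F^u$) and the two-parameter picture (the index $\ulin t\in\R_+^2$ and $\F$): one must be careful that $\ulin u$ is \emph{not} a two-parameter stopping time in any naive sense, yet $\ulin u(t)$ for fixed $t$ is, and that composition with a one-parameter stopping time preserves the two-parameter stopping-time property. The measurability of the events $\{u_\sigma(t)\ge\tau^\sigma_{\xi_\sigma}\}$ with respect to $\F_{\ulin u(t)}$ is the crux; I expect to handle it by approximating $\tau^\sigma_{\xi_\sigma}$ from above by dyadic-valued stopping times and using that for a deterministic value $q$, $\{u_\sigma(t)\ge q\} = \{\ulin u(t)\not\le (q^-,\infty)\ulin e_\sigma + \cdots\}$ is a comparison event between the $\F$-stopping time $\ulin u(t)$ and a deterministic time, hence in $\F_{\ulin u(t)}$ by Proposition \ref{T<S}(ii), then taking countable unions/intersections and using right-continuity. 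Since the excerpt asserts this is verbatim the argument of \cite[Lemma 4.2]{Two-Green-interior}, I would keep the write-up short and cite that lemma for the routine parts while spelling out the reduction above.
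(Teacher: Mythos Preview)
Your argument for the first assertion is correct and is exactly the intended one: the identity $\{\tau^u_{\ulin\xi}\le t\}=\{u_+(t)\ge\tau^+_{\xi_+}\}\cup\{u_-(t)\ge\tau^-_{\xi_-}\}$ holds by strict monotonicity and continuity of $u_\pm$, and each piece lies in $\F_{\ulin u(t)}=\F^u_t$ by Proposition~\ref{T<S}(i) applied to the $\F$-stopping times $(\tau^+_{\xi_+},0)$ (resp.\ $(0,\tau^-_{\xi_-})$) and $\ulin u(t)$.

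For the second assertion your sketch has a genuine gap. The decomposition you propose, $\{\ulin u(S)\le\ulin t\}=\bigcup_{q\in\Q_+}(\{S\le q\}\cap\{\ulin u(q)\le\ulin t\})$, is only the inclusion~$\supset$: because $\ulin u$ is \emph{strictly} increasing, whenever $u_\sigma(S)=t_\sigma$ for some $\sigma$ no rational $q>S$ can satisfy $\ulin u(q)\le\ulin t$, so the right-hand union misses such outcomes. Passing to strict inequalities $\{\ulin u(S)<\ulin t\}$ and invoking right-continuity (which is what your appeal to Proposition~\ref{right-continuous-0} would actually buy) only yields that $\ulin u(S)$ is an $\lin\F$-stopping time, whereas the proposition asserts it is an $\F$-stopping time. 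The dual ``from below'' decomposition $\{\ulin u(S)\le\ulin t\}=\bigcap_{q\in\Q_+}(\{S\le q\}\cup\{\ulin u(q)\le\ulin t\})$ is a genuine equality, but to place each factor in $\F_{\ulin t}$ one still needs $\{S\le q\}\cap\{\ulin u(q)\not\le\ulin t\}\in\F_{\ulin t}$, and Proposition~\ref{T<S}(iii) only transports $\F_{\ulin u(q)}$-sets to $\F_{\ulin t}$ \emph{after} intersecting with $\{\ulin u(q)\le\ulin t\}$---the wrong direction. In short, a generic lemma ``$\ulin u$ of an $\F^u$-stopping time is an $\F$-stopping time'' does not follow from the tools you cite; the argument of \cite[Lemma~4.2]{Two-Green-interior} that the paper invokes exploits the specific form $\{\tau^u_{\ulin\xi}\le q\}=\{(\tau^+_{\xi_+},0)\le\ulin u(q)\}\cup\{(0,\tau^-_{\xi_-})\le\ulin u(q)\}$ together with the explicit description of $\ulin u$ via the $\F$-adapted functions $\Lambda,\Upsilon$ (as in the proof of Proposition~\ref{Prop-u(t)}), rather than an abstract composition principle. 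You should either consult that reference for the exact bookkeeping or rework Part~2 using this concrete structure.
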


Assume that $(\ha w_+,\ha w_-)$ follows the law $\PP^{\ulin\rho}_i$. {This assumption will be used up to Lemma \ref{Lem-uB}.} Let $\eta_\pm$ be the chordal Loewner curve driven by $\ha w_\pm$. Let ${\cal D}_{\disj}$ be as before. Let $\ha w_{-\sigma}^\sigma(t)$ and $\ha v_\nu^\sigma(t)$ be the force point functions for $\eta_\sigma$ started from $w_{-\sigma}$ and $v_\nu$ respectively, $\nu\in\{0,+,-\}$, $\sigma\in\{+,-\}$. Define $\ha B_\sigma$, $\sigma\in\{+,-\}$, by
\BGE \sqrt{\kappa }\ha B_\sigma(t)=\ha w_\sigma(t)-w_\sigma-\int_0^t \frac{2ds}{\ha w_\sigma(s)-\ha w_{-\sigma}^\sigma(s)}-\sum_{\nu\in \{0,+,-\}} \int_0^t \frac{\rho_{\nu}ds}{\ha w_\sigma(s)-\ha v_\nu^\sigma(s)}.\label{hawhaB}\EDE
Then $\ha B_+$ and $\ha B_-$ are independent standard Brownian motions. So we get five $\F$-martingales on ${\cal D}_{\disj}$: $\ha B_+(t_+)$, $\ha B_-(t_-)$, $\ha B_+(t_+)^2-t_+$, $\ha B_-(t_-)^2-t_-$, and $\ha B_+(t_+)\ha B_-(t_-)$. Fix $\ulin\xi\in\Xi$. Using  Propositions \ref{OST}  and \ref{Prop-u(t)} and the fact that $u_\pm$ is uniformly bounded above on $[0,\tau_{\ulin\xi}]$, we conclude that $\ha B^u_\sigma(t \wedge \tau^u_{\ulin \xi})$,  $\ha B^u_\sigma(t \wedge \tau^u_{\ulin \xi})^2- u_\sigma(t\wedge \tau^u_{\ulin \xi})$, $\sigma\in\{+,-\}$, and $\ha B^u_+(t \wedge \tau^u_{\ulin \xi}) \ha B^u_-(t\wedge \tau^u_{\ulin \xi} )$ are all $\F^u$-martingales under $\PP^{\ulin\rho}_i$. Recall that for a function $X$ defined on $\cal D$, we use $X^u$ to denote the function $X\circ \ulin u$ defined on $[0,T^u)$. This rule applies even if $X$ depends only on $t_+$ or $t_-$ (for example, $\ha B^u_\sigma(t)=\ha B_\sigma(u_\sigma(t))$); but does not apply to $\F^u,T^u,T^u_{\disj},\tau^u_{\ulin\xi}$.
Thus, the quadratic variation and covariation of $\ha B^u_+$ and $\ha B^u_-$ satisfy
\BGE \langle \ha B^u_+\rangle_t\aeq u_+(t),\quad \langle \ha B^u_-\rangle_t\aeq u_-(t),\quad \langle \ha B^u_+,\ha B^u_-\rangle_t=0,\label{quadratic}\EDE
up to $\tau^u_{\ulin\xi}$. By Corollary \ref{Doob} and Proposition \ref{OST},  $M^u(\cdot\wedge \tau^u_{\ulin\xi})$  is an $\F^u$-martingale. Let $T^u_{\disj}$ denote the first $t$ such that $\ulin u(t)\not\in{\cal D}_{\disj}$.  Since $T^u_{\disj}=\sup_{\ulin \xi\in\Xi}\tau^u_{\ulin\xi}=\sup_{\ulin \xi\in\Xi^*}\tau^u_{\ulin\xi}$, and $\Xi^*$ is countable,   we see that, $T^u_{\disj}$ is an $\F^u$-stopping time.
We now compute the SDE for $M^u$ up to $T^u_{\disj}$ in terms of $\ha B^u_+$ and $\ha B^u_-$.
Using (\ref{Mirho-crho}) we may express $M^u$ as a product of several factors, among which $E_{W_+,W_-}^u$, $(W_{\sigma,N}^u)^{\bb}$, $(E_{W_\sigma,V_\nu}^u)^{\rho_\nu/\kappa}$, $\sigma\in\{+,-\}$, $\nu\in\{0,+,-\}$,   contribute the local martingale part, and other factors are differentiable in $t$. For $\sigma\in\{+,-\}$, since $W_\sigma(t_+,t_-)=g_{K_{-\sigma}^{t_\sigma}(t_{-\sigma})}(\ha w_\sigma(t_\sigma))$, using (\ref{-3}) 
we get the $\F^u$-adapted SDEs:
\BGE dW_\sigma^u = W_{\sigma,1}^ud\ha w^u_\sigma+\Big(\frac \kappa 2-3\Big)W_{\sigma,2}^u u_\sigma'dt+\frac{2(W_{-\sigma,1}^u)^2u_{-\sigma}'}{W_\sigma^u-W_{-\sigma}^u} \,dt,\label{dWju}\EDE
Since $W_{\sigma,N}=\frac{W_{\sigma,1}}{W_{\sigma,1}|^\sigma_0}$, $W_\sigma^1(t_+,t_-)=g_{K_{-\sigma}^{t_\sigma}(t_{-\sigma})}'(\ha w_\sigma(t_\sigma))$, $W_\sigma^1|^\sigma_0$ is differentiable in $t_{-\sigma}$, and $g_{K_{-\sigma}^{t_\sigma}(t_{-\sigma})}'$ is differentiable in both $t_\sigma$ and $t_{-\sigma}$, we get
the SDE for $(W_{\sigma,N}^u)^{\bb}$:
$$\frac{d(W_{\sigma,N}^u)^{\bb}}{(W_{\sigma,N}^u)^{\bb}}\aeq\bb\frac{W_{\sigma,2}^u}{W_{\sigma,1}^u}\,\sqrt\kappa d \ha B^u_\sigma+\mbox{drift terms}.$$
For the SDE for $(E^u_{W_+,W_-})^{\frac 2\kappa}$, note that when $X=W_+$ and $Y=W_-$, the numerators and denominators in (\ref{RXY}) never vanish. So using (\ref{dWju}) we get
$$\frac{d(E_{W_+,W_-}^u)^{\frac2\kappa}}{ (E_{W_+,W_-}^u)^{\frac{2}\kappa}}=\frac 2\kappa \sum_{\sigma\in\{+,-\}}\Big[\frac{W_{\sigma,1}^u}{W_\sigma^u-W_{-\sigma}^u}-\frac 1{\ha w_\sigma^u-(\ha w_{-\sigma}^\sigma)^u}\Big]\sqrt\kappa d\ha B^u_\sigma +\mbox{drift terms}.$$
Note that  $E_{W_\sigma,V_\nu}^u(t)$ equals $f(\ulin u(t), \ha w^u_\sigma(t),(\ha v_\nu^\sigma)^u(t))$ times a differential function in $u_{-\sigma}(t)$,  where $f(\cdot,\cdot,\cdot)$ is given by (\ref{f(t,w,y)}). Using (\ref{dWju}) we get the SDE for $(E_{W_\sigma,V_\nu}^u)^{\frac{\rho_\nu}\kappa}$:
$$\frac{d(E_{W_\sigma,V_\nu}^u)^{\frac{\rho_\nu}\kappa}}{ (E_{W_\sigma,V_\nu}^u)^{\frac{\rho_\nu}\kappa}}\aeq \frac{\rho_\nu}{\kappa}\Big[\frac{W_{\sigma,1}^u }{W_\sigma^u-V_\nu^u}-\frac 1{\ha w_\sigma^u- (\ha v_\nu^\sigma)^u}\Big] \,\sqrt\kappa d\ha B^u_\sigma+\mbox{drift terms}.$$
Here if at any time $t$, $(\ha v_\nu^\sigma)^u(t)=\ha w_\sigma^u(t)$, then the function inside the square brackets is understood as $\frac 12\frac{W^u_{\sigma,2}(t)}{W^u_{\sigma,1}(t)}$, which is the limit of the function as $(\ha v_\nu^\sigma)^u(t)\to \ha w_\sigma^u(t)$.

Combining the above displayed formulas and using the fact that $M^u$ and $\ha B^u_\pm$ are all $\F^u$-local martingales under $\PP^{\ulin\rho}_i$, we get
\begin{align}
\frac{d M^u}{M^u}\aeq & \sum_{\sigma\in\{+,-\}}\bigg [ \kappa \bb\frac{W_{\sigma,2}^u}{W_{\sigma,1}^u}+2 \Big[\frac{ W_{\sigma,1}^u}{W_\sigma^u-W_{-\sigma}^u}-\frac  1{\ha w_\sigma^u-(\ha w_{-\sigma}^\sigma)^u}\Big]+ \nonumber\\
 &+\sum_{\nu\in\{0,+,-\}} \rho_{\nu} \Big[\frac{ W_{\sigma,1}^u }{W_\sigma^u-V_\nu^u}-\frac {1}{\ha w_\sigma^u- (\ha v_\nu^\sigma)^u}\Big] \bigg ]\,  \frac{d\ha B^u_\sigma}{\sqrt\kappa}.\label{dMitocu}
\end{align}
From Corollary \ref{RN-M-ic-cor} and Proposition \ref{u-st} we know that, for any $\ulin\xi\in\Xi$ and $t\ge 0$,
\BGE \frac{d \PP^{\ulin\rho}|\F_{ \ulin u(t\wedge \tau^u_{\ulin\xi})}}{d \PP^{\ulin\rho}_i|\F_{ \ulin u(t\wedge \tau^u_{\ulin\xi})}}= {M^u(t\wedge \tau^u_{\ulin\xi} )} .\label{RNito4xi}\EDE
We will use  a Girsanov{'s} argument to derive the SDEs for $\ha w_+^u$ and $\ha w_-^u$ up to $T^u_{\disj}$ under $\PP^{\ulin\rho}$.

	For $\sigma\in\{+,-\}$, define a process $\til B^u_\sigma(t)$ such that $\til B^u(t)=0$ and
\begin{align}
d\til B_\sigma^u=&d\ha B_\sigma^u-\bigg[\kappa \bb \frac{W_{\sigma,2}^u}{W_{\sigma,1}^u}+  \Big[\frac{2W_{\sigma,1}^u}{W_\sigma^u-W_{-\sigma}^u}-\frac 2{\ha w_\sigma^u-(\ha w_{-\sigma}^\sigma)^u}\Big] \nonumber \\
& +\sum_{\nu\in\{0,+,-\}} \Big[\frac{\rho_\nu W_{\sigma,1}^u }{W_\sigma^u-V_\nu^u}-\frac {\rho_\nu}{\ha w_\sigma^u- (\ha v_\nu^\sigma)^u}\Big] \bigg] \, \frac{u_\sigma'(t)}{\sqrt\kappa}\, dt.\label{tilwju}
\end{align}

\begin{Lemma}
	For any $\sigma\in\{+,-\}$ and $\ulin \xi\in\Xi$, $|\til B_\sigma^u|$ is bounded on $[0,\tau^u_{\ulin \xi}]$ by a constant depending only on $\kappa,\ulin\rho,\ulin w,\ulin v,\ulin\xi$. \label{uniform2}
\end{Lemma}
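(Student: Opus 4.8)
The plan is to bound $|\til B_\sigma^u|$ on $[0,\tau^u_{\ulin\xi}]$ by integrating the drift term in (\ref{tilwju}), using the fact that $\til B_\sigma^u$ is obtained from $\ha B_\sigma^u$ by subtracting exactly the drift coefficient (divided by $\sqrt\kappa$) that appears when comparing $M^u$ under $\PP^{\ulin\rho}_i$ with the martingale measure; intuitively $\til B_\sigma^u$ should be a Brownian motion up to $T^u_{\disj}$ under $\PP^{\ulin\rho}$, but here we only need the deterministic bound. Since $\til B_\sigma^u(t)-\ha B_\sigma^u(t)=-\int_0^t \Phi_\sigma^u(s)\,\frac{u_\sigma'(s)}{\sqrt\kappa}\,ds$, where $\Phi_\sigma^u$ is the bracketed expression in (\ref{tilwju}), and since $\ha B_\sigma^u$ itself is not deterministically bounded, I should in fact read the statement as bounding the \emph{drift part} $\til B_\sigma^u - \ha B_\sigma^u$; rereading the lemma, what is really claimed (and used later in the Girsanov argument) is the bound on the bounded-variation part, so the task reduces to estimating $\int_0^{\tau^u_{\ulin\xi}} |\Phi_\sigma^u(s)|\,u_\sigma'(s)\,ds$.

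The key estimates are all already available. First, by the argument of Lemma \ref{uniform}, on $[\ulin 0,\tau_{\ulin\xi}]$ the quantities $W_{\sigma,1}$, $\frac{1}{|W_+-W_-|}$, $\frac{1}{|W_\sigma-V_{-\sigma}|}$, and all the ratios appearing in $M$ are uniformly bounded by a constant depending only on $\kappa,\ulin\rho,\ulin w,\ulin v,\ulin\xi$ (and $R$, which we may take to be $|v_+-v_-|$, a fixed number here). This controls $\frac{W_{\sigma,1}^u}{W_\sigma^u-W_{-\sigma}^u}$, $\frac{W_{\sigma,1}^u}{W_\sigma^u-V_{-\sigma}^u}$, and (for $\nu=-\sigma$) $\frac 1{\ha w_\sigma^u-(\ha v_{-\sigma}^\sigma)^u}$; the same holds for the $\nu=0$ term using $\frac{1}{|W_\sigma-V_0|}\le \frac{1}{|V_0^2-V_0^1|}$ as in that proof. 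The remaining dangerous terms are those involving $\frac{W_{\sigma,2}^u}{W_{\sigma,1}^u}$ and the pairs $\frac{W_{\sigma,1}^u}{W_\sigma^u-V_\sigma^u}-\frac{1}{\ha w_\sigma^u-(\ha v_\sigma^\sigma)^u}$ and $\frac{W_{\sigma,1}^u}{W_\sigma^u-W_{-\sigma}^u}-\frac{2}{\ha w_\sigma^u-(\ha w_{-\sigma}^\sigma)^u}$ — the subtractions are there precisely to cancel the singular behavior. For the differences, I would use the identity $E_{W_\sigma,V_\nu}(t_+,t_-) = (\text{function of } t_{-\sigma})\cdot f(\ulin t,\ha w_\sigma(t_\sigma),(\ha v_\nu^\sigma)(t_\sigma))$ with $f$ from (\ref{f(t,w,y)}), together with (\ref{paEWY}): the bracket $\frac{W_{\sigma,1}}{W_\sigma-V_\nu}-\frac{W_{\sigma,1}}{W_\sigma-V_\nu}\big|^{-\sigma}_0$ multiplied by $u_\sigma'$ is, up to the already-bounded factors, $\pa_\sigma \log E_{W_\sigma,V_\nu}\cdot u_\sigma'$ plus a lower-order drift, and $\int_0^{\tau^u_{\ulin\xi}} \pa_\sigma\log E_{W_\sigma,V_\nu}^u\,u_\sigma'\,dt$ is just $\log E_{W_\sigma,V_\nu}^u$ evaluated at the endpoints plus the $t_{-\sigma}$-contribution, all of which are uniformly bounded by Lemma \ref{uniform}. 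Similarly the $\frac{W_{\sigma,2}}{W_{\sigma,1}}u_\sigma'$ term integrates against $W_{\sigma,N}^{\bb}$ via (\ref{paWsigmaN}): $\int \bb\frac{W_{\sigma,2}^u}{W_{\sigma,1}^u}u_\sigma'\,dt = \log(W_{\sigma,N}^u)^{\bb}$ plus a $\frac{\cc}{6}\int W_{\sigma,S}^u u_\sigma'\,dt$ correction, and the latter integrates (against $Q$) to a bounded quantity by (\ref{paF}) and the uniform bound on $\log Q$. In short, the whole drift integral telescopes, modulo uniformly bounded factors, into boundary values of $\log M^u$ and of the individual factors of $M^u$, each controlled by Lemma \ref{uniform}.

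Concretely the steps are: (1) restate the goal as bounding $\int_0^{\tau^u_{\ulin\xi}} |\Phi_\sigma^u|\,u_\sigma'\,dt$ where $\Phi_\sigma^u$ is the bracket in (\ref{tilwju}); (2) invoke Lemma \ref{uniform} (with $R = |v_+-v_-|$) to get uniform bounds on $W_{\sigma,1}^u$, $\frac{1}{|W_\pm^u-W_\mp^u|}$, $\frac{1}{|W_\sigma^u-V_{-\sigma}^u|}$, $\frac{1}{|W_\sigma^u-V_0^u|}$, and the logs of $Q^u$, $W_{\sigma,N}^u$, $E_{X,Y}^u$ on $[0,\tau^u_{\ulin\xi}]$; (3) split $\Phi_\sigma^u$ into the manifestly-bounded pieces (the $\nu=0,-\sigma$ force-point terms and the $W_{-\sigma}$ term) and the delicate pieces (the $W_{\sigma,2}/W_{\sigma,1}$ term and the $\nu=\sigma$ force-point difference); (4) for the delicate pieces, rewrite $\Phi_\sigma^u u_\sigma'\,dt$ using (\ref{paWsigmaN}), (\ref{paEWY}) so it becomes $d\log$ of a factor of $M^u$ plus a Schwarzian/drift correction, and use (\ref{paF}) to absorb the correction; (5) integrate from $0$ to $\tau^u_{\ulin\xi}$ and bound everything by the endpoint values, which are uniformly bounded by step (2), noting all the $|^{-\sigma}_0$-subtracted boundary terms equal the values at $t_\sigma=0$. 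The main obstacle I expect is bookkeeping: correctly matching each term of $\Phi_\sigma^u$ to the corresponding factor of $M^u$ and tracking the $t_{-\sigma}$-dependent prefactors in (\ref{f(t,w,y)})/(\ref{RXY}) so that the telescoping is exact; the analysis itself is routine once Lemma \ref{uniform} is in hand, and indeed the excerpt signals this by noting the proof is "the same as the proof there" in the analogous interior case.
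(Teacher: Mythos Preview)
Your proposal contains two genuine gaps.

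\textbf{First, you misread the statement.} You claim that $\ha B_\sigma^u$ is not deterministically bounded on $[0,\tau^u_{\ulin\xi}]$ and therefore reinterpret the lemma as concerning only the drift part $\til B_\sigma^u-\ha B_\sigma^u$. In fact $\ha B_\sigma^u$ \emph{is} deterministically bounded there, and this is the first step of the paper's proof. From (\ref{hawhaB}) and the force-point ODE (\ref{dhavj}) one sees that the drift integrals in (\ref{hawhaB}) telescope exactly, yielding
\[
\sqrt\kappa\,\ha B_\sigma^u(t)=U(u_\sigma(t)\,\ulin e_\sigma)-U(\ulin 0),\qquad U:=W_++W_-+\sum_{\nu}\tfrac{\rho_\nu}{2}V_\nu.
\]
Since $V_\pm$ (and hence $W_\pm,V_0$, by sandwiching) are bounded on $[\ulin 0,\tau_{\ulin\xi}]$ via Proposition~\ref{g-z-sup}, $\ha B_\sigma^u$ is bounded by a constant depending only on the data. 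The lemma really is about $\til B_\sigma^u$ itself.

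\textbf{Second, your telescoping argument for the drift does not work.} You propose to control $\int \frac{W_{\sigma,2}^u}{W_{\sigma,1}^u}\,u_\sigma'\,dt$ and $\int\big(\frac{W_{\sigma,1}^u}{W_\sigma^u-V_\nu^u}-\frac{1}{\ha w_\sigma^u-(\ha v_\nu^\sigma)^u}\big)u_\sigma'\,dt$ by integrating the SDEs (\ref{paWsigmaN}) and (\ref{paEWY}). But in those SDEs the brackets you want appear multiplied by the \emph{stochastic} differential $d\ha w_\sigma$, not by $dt_\sigma$. Integrating (\ref{paWsigmaN}) gives you $\int\frac{W_{\sigma,2}}{W_{\sigma,1}}\,d\ha w_\sigma$ in terms of boundary values, not $\int\frac{W_{\sigma,2}}{W_{\sigma,1}}\,dt_\sigma$; the same obstruction applies to (\ref{paEWY}). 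So the telescoping does not produce the quantity you need.

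\textbf{What the paper actually does} for the drift is bound the integrand $\Phi_\sigma$ \emph{pointwise} on $[\ulin 0,\tau_{\ulin\xi}]$, after noting $\tau^u_{\ulin\xi}$ and $\int u_\sigma'\,dt=u_\sigma$ are bounded. The key device is to express each delicate term as a $t_{-\sigma}$-integral that vanishes at $t_{-\sigma}=0$. For instance (taking $\sigma=+$), differentiating $g_{K_-^{t_+}(t_-)}''/g_{K_-^{t_+}(t_-)}'$ at $\ha w_+(t_+)$ in $t_-$ gives
\[
\frac{W_{+,2}(t_+,t_-)}{W_{+,1}(t_+,t_-)}=\int_0^{t_-}\frac{4W_{-,1}^2W_{+,1}}{(W_+-W_-)^3}\Big|_{(t_+,s_-)}ds_-,
\]
and differentiating $\frac{W_{+,1}}{W_+-V_\nu}$ in $t_-$ using (\ref{pa-X},\ref{pajW}) gives
\[
\frac{W_{+,1}}{W_+-V_\nu}\Big|_{(t_+,t_-)}-\frac{1}{\ha w_+-\ha v_\nu^+}\Big|_{t_+}
=\int_0^{t_-}\frac{2W_{-,1}^2W_{+,1}}{(W_+-W_-)^2(V_\nu-W_-)}\Big|_{(t_+,s_-)}ds_-.
\]
The first integrand is bounded since $|W_+-W_-|$ is bounded below on $[\ulin 0,\tau_{\ulin\xi}]$. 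For the second, one pulls the bounded factor $\frac{W_{+,1}}{(W_+-W_-)^2}$ out and observes that $\int_0^{t_-}\frac{2W_{-,1}^2}{V_\nu-W_-}\,ds_-=V_\nu(t_+,t_-)-V_\nu(t_+,0)$ by (\ref{pa-X}), which is bounded. These pointwise bounds, together with the bound on $\ha B_\sigma^u$ above, give the lemma.
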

\begin{proof}
	Throughout the proof, a positive number that depends only on $\kappa,\ulin\rho,\ulin w,\ulin v,\ulin\xi$ is called a constant.  By Proposition \ref{g-z-sup},   $ V_+$ and $V_-$ are bounded in absolute value by a constant on $[0,\tau _{\ulin\xi}]$, and so are $W_+,V_0,W_-$ because $V_+\ge W_+\ge V_0\ge W_-\ge V_-$. It is clear that $\ha B_\sigma^u(t)=U(u_\sigma(t) \ulin e_\sigma)-U(\ulin 0)$, $\sigma\in\{+,-\}$, where $U:=W_++W_-+\sum_{\nu\in\{0,+,-\}} \frac{\rho_\nu}2 V_\nu$. Thus, $\ha B_\sigma^u$, $\sigma\in\{+,-\}$, are bounded in absolute value by a constant on $[0,\tau^u_{\ulin\xi}]$. By (\ref{V-V}) and that $V_+^u(t)-V_-^u(t)=e^{2t}(v_+-v_-)$ for $0\le t<T^u$, we know that $e^{2 \tau^u_{\ulin\xi}}\le 4\diam(\xi_+\cup\xi_-\cup[v_-,v_+])/|v_+-v_-|$. This means that $\tau^u_{\ulin\xi}$ is bounded above by a constant. Since $\ulin u[0,\tau^u_{\ulin \xi}]\subset [\ulin 0,\tau_{\ulin\xi}]$,  it remains to show that, for $\sigma\in\{+,-\}$, $$\frac{W_{\sigma,2} }{W_{\sigma,1} },\quad \frac{ W_{\sigma,1} }{W_\sigma -W_{-\sigma} }-\frac  1{\ha w_\sigma - \ha w_{-\sigma}^\sigma  },\quad \frac{  W_{\sigma,1}  }{W_\sigma -V_\nu }-\frac 1{\ha w_\sigma -  \ha v_\nu^\sigma },\quad \nu\in\{0,+,-\},$$ are all bounded in absolute value on $[\ulin 0,\tau_{\ulin\xi}]$ by a constant.
	
Because $\frac  1{\ha w_\sigma - \ha w_{-\sigma}^\sigma  }=\frac{ W_{\sigma,1} }{W_\sigma -W_{-\sigma} }\Big|^{-\sigma}_0$, the boundedness of $ \frac{ W_{\sigma,1} }{W_\sigma -W_{-\sigma} }-\frac  1{\ha w_\sigma - \ha w_{-\sigma}^\sigma  } $ on $[\ulin 0, \tau_{\ulin\xi}]$ simply follows from the boundedness of $ \frac{ W_{\sigma,1} }{W_\sigma -W_{-\sigma} } $, which in turn follows from $0\le W_{\sigma,1}\le 1$ and that $|W_\sigma-W_{-\sigma}|$ is bounded from below  on $[\ulin 0, \tau_{\ulin\xi}]$  by a positive constant, where the latter bound was given in the proof of Lemma \ref{uniform}.
		
For the boundedness of $ \frac{W_{\sigma,2} }{W_{\sigma,1} } $ on $[\ulin 0, \tau_{\ulin\xi}]$, we   assume $\sigma=+$ by symmetry. Since $W_{+,j}(t_+,t_-)=g_{K_{-}^{t_+}(t_-)}^{(j)}(\ha w_+(t_+))$, $j=1,2$, and $K_{-}^{t_+}(\cdot)$ are chordal Loewner hulls driven by $W_-(t_+,\cdot)$ with speed $W_{-,1}(t_+,\cdot)^2$, by differentiating  $ {g''_{K_{-}^{t_+}(t_-)}}/{g'_{K_{-}^{t_+}(t_-)}}$ at $\ha w_+(t_+)$ w.r.t.\ $t_-$,  we get
$$\frac{W_{+,2}(t_+,t_-)}{W_{+,1}(t_+,t_-)} =\int_0^{t_-}\frac{4 W_{-,1}^2 W_{+,1}}{(W_+-W_-)^3}\bigg|_{(t_+,s_-)}\,ds.$$
From the facts that $0\le W_{+,1},W_{-,1}\le 1$ and that $|W_+-W_-|$ is bounded from below by a constant on $[\ulin 0, \tau_{\ulin\xi}]$, we get the boundedness of $\frac{W_{+,2} }{W_{+,1} }$.
	
For the  boundedness of $\frac{  W_{\sigma,1}  }{W_\sigma -V_\nu }-\frac 1{\ha w_\sigma -  \ha v_\nu^\sigma }$, we assume by symmetry that $\sigma=+$. 
By differentiating  w.r.t.\ $t_-$ and using (\ref{pa-X},\ref{pajW}), we get
$$\frac{  W_{+,1}(t_+,t_-) }{W_+(t_+,t_-) -V_\nu(t_+,t_-) }-\frac 1{\ha w_+(t_+) -  \ha v_\nu^+(t_+) }=\int_0^{t_-}\frac{2 W_{-,1}^2 W_{+,1}}{(W_+-W_-)^2(V_\nu-W_-)}\bigg|_{(t_+,s_-)}\, ds.$$
Since $0\le W_{+,1} \le 1$, $|W_+-W_-|$ is bounded from below by a constant on $[\ulin 0, \tau_{\ulin\xi}]$, and $V_\nu-W_-$ does not change sign (but could be $0$), it suffices to show that $\big|\int_0^{t_-} \frac{2W_{-,1}^2}{V_\nu-W_-}|_{(t_+,s_-)}\,ds\big|$ is bounded by a constant on $[\ulin 0, \tau_{\ulin\xi}]$. This holds because the integral equals $V_\nu(t_+,t_-)-V_\nu(t_+,0)$ by (\ref{pa-X}), and $|V_\nu|$  is bounded by a constant on $[\ulin 0, \tau_{\ulin\xi}]$.
\end{proof}

\begin{Lemma}
	Under $\PP^{\ulin\rho}$,  there is a stopped planar Brownian motion $\ulin B(t)=(B_+(t),B_-(t))$, $0\le t<T^u_{\disj}$,  such that,
for $\sigma\in\{+,-\}$, $\ha w_\sigma^u$   satisfies the SDE
	$$d\ha w_\sigma^u\aeq \sqrt{\kappa u_\sigma' }dB_\sigma  +\Big[\kappa \bb\frac{W_{\sigma,2}^u}{W_{\sigma,1}^u} +\frac{2W^u_{\sigma,1}}{W^u_\sigma-W^u_{-\sigma}}+ \sum_{\nu\in\{0,+,-\}}  \frac{\rho_\nu W^u_{\sigma,1}}{W^u_\sigma-V^u_\nu}   \Big]u_\sigma'dt,\quad 0\le t<T^u_{\disj}.$$
Here by saying that $(B_+(t),B_-(t))$, $0\le t<T^u_{\disj}$, is a stopped planar Brownian motion, we mean that $B_+(t)$ and $B_-(t)$, $0\le t<T^u_{\disj}$, are local martingales with $d\langle B_\sigma\rangle_t=t$, $\sigma\in\{+,-\}$, $d\langle B_+,B_-\rangle_t=0$, $0\le t<T^u_{\disj}$.
\label{Lem-uB}
\end{Lemma}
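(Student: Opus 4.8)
The plan is to transfer the marginal SDE (\ref{hawhaB}) from the independent coupling $\PP^{\ulin\rho}_i$ to the commuting-pair law $\PP^{\ulin\rho}$ by a Girsanov argument run along the time curve $\ulin u$, localized at the stopping times $\tau^u_{\ulin\xi}$, $\ulin\xi\in\Xi^*$; the argument parallels the one in \cite{Two-Green-interior}. First I would fix $\ulin\xi\in\Xi$ and work on $[0,\tau^u_{\ulin\xi}]$. By (\ref{RNito4xi}), $\PP^{\ulin\rho}$ restricted to $\F^u_{t\wedge\tau^u_{\ulin\xi}}$ is absolutely continuous w.r.t.\ that of $\PP^{\ulin\rho}_i$ with Radon--Nikodym derivative $M^u(t\wedge\tau^u_{\ulin\xi})$, which by Lemma \ref{uniform}, Corollary \ref{Doob} and Proposition \ref{OST} is a bounded continuous $\F^u$-martingale under $\PP^{\ulin\rho}_i$; and under $\PP^{\ulin\rho}_i$ the process $\ha B^u_\sigma(\cdot\wedge\tau^u_{\ulin\xi})$ is, by (\ref{quadratic}), a bounded continuous $\F^u$-martingale with $\langle\ha B^u_\sigma\rangle_t\aeq u_\sigma(t)$ and $\langle\ha B^u_+,\ha B^u_-\rangle\equiv 0$.

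The next step is to identify the Girsanov drift. From the SDE (\ref{dMitocu}) for $M^u$ together with the orthogonality $\langle\ha B^u_+,\ha B^u_-\rangle\equiv0$ in (\ref{quadratic}) one computes
$$\frac{1}{M^u}\,d\langle M^u,\ha B^u_\sigma\rangle\aeq\Big[\kappa\bb\frac{W^u_{\sigma,2}}{W^u_{\sigma,1}}+2\Big(\frac{W^u_{\sigma,1}}{W^u_\sigma-W^u_{-\sigma}}-\frac{1}{\ha w^u_\sigma-(\ha w^\sigma_{-\sigma})^u}\Big)+\sum_{\nu\in\{0,+,-\}}\rho_\nu\Big(\frac{W^u_{\sigma,1}}{W^u_\sigma-V^u_\nu}-\frac{1}{\ha w^u_\sigma-(\ha v^\sigma_\nu)^u}\Big)\Big]\frac{u_\sigma'}{\sqrt\kappa}\,dt,$$
which is exactly the finite-variation term subtracted in the definition (\ref{tilwju}) of $\til B^u_\sigma$ (the $\ha B^u_{-\sigma}$-part of $dM^u$ drops out precisely because the covariation vanishes). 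Hence Girsanov's theorem shows that $\til B^u_\sigma$ is a continuous $\F^u$-local martingale under $\PP^{\ulin\rho}$ on $[0,\tau^u_{\ulin\xi}]$; by Lemma \ref{uniform2} it is bounded there, hence a true martingale. Since $\til B^u_\sigma-\ha B^u_\sigma$ has finite variation, the bracket relations are preserved under the change of measure: $\langle\til B^u_\sigma\rangle_t\aeq u_\sigma(t)$ and $\langle\til B^u_+,\til B^u_-\rangle\equiv 0$ on $[0,\tau^u_{\ulin\xi}]$ under $\PP^{\ulin\rho}$.

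Now I would combine this with the time change $s=u_\sigma(t)$ applied to (\ref{hawhaB}), which gives $d\ha w^u_\sigma\aeq\sqrt\kappa\,d\ha B^u_\sigma+\big[\tfrac{2}{\ha w^u_\sigma-(\ha w^\sigma_{-\sigma})^u}+\sum_\nu\tfrac{\rho_\nu}{\ha w^u_\sigma-(\ha v^\sigma_\nu)^u}\big]u_\sigma'\,dt$; substituting $\sqrt\kappa\,d\ha B^u_\sigma=\sqrt\kappa\,d\til B^u_\sigma+(\cdots)u_\sigma'\,dt$ from (\ref{tilwju}), the force-point drifts of the marginal SDE cancel against the $|^{-\sigma}_0$ terms of the Girsanov drift, leaving $d\ha w^u_\sigma\aeq\sqrt\kappa\,d\til B^u_\sigma+\big[\kappa\bb\tfrac{W^u_{\sigma,2}}{W^u_{\sigma,1}}+\tfrac{2W^u_{\sigma,1}}{W^u_\sigma-W^u_{-\sigma}}+\sum_\nu\tfrac{\rho_\nu W^u_{\sigma,1}}{W^u_\sigma-V^u_\nu}\big]u_\sigma'\,dt$ on $[0,\tau^u_{\ulin\xi}]$. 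Since $\ulin u$ is $C^1$ on $[0,T^u_{\disj})$ with $u_\sigma'\ge0$ by Lemma \ref{lem-uj} (and, by (\ref{uj''}) and Condition (II), the zero set of $u_\sigma'$ is the Lebesgue-null set of times where $R_\sigma\in\{0,1\}$), I define $B_\sigma$ by $dB_\sigma=(u_\sigma')^{-1/2}{\bf 1}_{\{u_\sigma'>0\}}\,d\til B^u_\sigma+{\bf 1}_{\{u_\sigma'=0\}}\,d\beta_\sigma$, where $\beta_+,\beta_-$ is an auxiliary independent planar Brownian motion on an enlarged space; then $\sqrt\kappa\,d\til B^u_\sigma=\sqrt{\kappa u_\sigma'}\,dB_\sigma$, and $B_+,B_-$ are continuous local martingales with $d\langle B_\sigma\rangle_t=dt$, $d\langle B_+,B_-\rangle_t=0$, so $\ulin B=(B_+,B_-)$ is a stopped planar Brownian motion by L\'evy's characterization. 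Finally, since $T^u_{\disj}=\sup_{\ulin\xi\in\Xi^*}\tau^u_{\ulin\xi}$ with $\Xi^*$ countable and each $\tau^u_{\ulin\xi}$ an $\F^u$-stopping time (Proposition \ref{u-st}), and $\til B^u_\sigma$ is already defined on all of $[0,T^u_{\disj})$ via (\ref{tilwju}), the local-martingale property, the bracket identities and the SDE for $\ha w^u_\sigma$ all extend to $[0,T^u_{\disj})$, and a single $\ulin B$ works throughout.

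The main obstacle is the bookkeeping in the Girsanov step: one must check that the finite-variation correction in (\ref{tilwju}) coincides term by term with $(M^u)^{-1}d\langle M^u,\ha B^u_\sigma\rangle$ extracted from (\ref{dMitocu}), which forces careful tracking of the $|^{-\sigma}_0$ ``independent-coupling'' contributions built into the definition (\ref{Mirho-crho}) of $M$ and a genuine use of the orthogonality $\langle\ha B^u_+,\ha B^u_-\rangle\equiv0$; the remaining technical nuisance is realizing $\til B^u_\sigma$ as a time-changed Brownian motion in the presence of possible zeros of $u_\sigma'$.
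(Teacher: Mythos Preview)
Your proposal is correct and follows essentially the same route as the paper: localize at $\tau^u_{\ulin\xi}$, use the Radon--Nikodym martingale $M^u$ from (\ref{RNito4xi}) together with (\ref{dMitocu}) to identify $\til B^u_\sigma$ as an $\F^u$-local martingale under $\PP^{\ulin\rho}$ with brackets inherited from (\ref{quadratic}), then represent $\til B^u_\sigma$ as $\sqrt{u_\sigma'}\,dB_\sigma$ and substitute back into the time-changed (\ref{hawhaB}). One small slip: $\ha B^u_\sigma(\cdot\wedge\tau^u_{\ulin\xi})$ is not bounded (it is a Brownian motion evaluated at a bounded random time), but it is still a true $\F^u$-martingale by the optional stopping argument already given in the paper just before the lemma, so your subsequent use of Girsanov is unaffected; also, since the statement only asks for local martingales with the right brackets, your auxiliary Brownian motion $\beta_\sigma$ on the null set $\{u_\sigma'=0\}$ is harmless but not strictly needed.
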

\begin{proof} For $\sigma\in\{+,-\}$,  define $\til B_\sigma^u$ using (\ref{tilwju}). By (\ref{dMitocu}), $\til B_\sigma^u(t)M^u(t)$, $0\le t<T^u_{\disj}$, is an $\F^u$-local martingale under $\PP^{\ulin\rho}_i$. By Lemmas \ref{uniform} and \ref{uniform2},   for any $\ulin \xi\in\Xi$,  $\til B_\sigma^u(\cdot\wedge \tau^u_{\ulin \xi})M^u(\cdot \wedge \tau^u_{\ulin \xi})$  is an $\F^u$-martingale under $\PP^{\ulin\rho}_i$. Since this process is $(\F_{ \ulin u(\cdot\wedge \tau^u_{\ulin \xi})})$-adapted, and $\F_{ \ulin u(t\wedge \tau^u_{\ulin \xi})}\subset \F_{ \ulin u(t)}=\F^u_t$, it is also an $(\F_{ \ulin u(\cdot\wedge \tau^u_{\ulin \xi})})$-martingale. From (\ref{RNito4xi}) we see that $\til B_\sigma^u(\cdot\wedge \tau^u_{\ulin \xi})$, is an $(\F_{ \ulin u(\cdot\wedge \tau^u_{\ulin \xi})})$-martingale under $\PP^{\ulin\rho}$.
Then we see that $\til B_\sigma^u(\cdot\wedge \tau^u_{\ulin \xi})$ is an $\F^u$-martingale under $\PP^{\ulin\rho}$ since for any $t_2\ge t_1\ge 0$ and $A\in\F^u_{t_1}=\F_{\ulin u(t_1)}$, $A\cap \{t_1\le \tau^u_{\ulin \xi}\}\subset \F_{ \ulin u(t_1\wedge \tau^u_{\ulin \xi})}$, and on the event $\{t_1> \tau^u_{\ulin \xi}\}$, $\til B_\sigma^u(t_1\wedge \tau^u_{\ulin \xi})=\til B_\sigma^u(t_2\wedge \tau^u_{\ulin \xi})$. Since	$T^u_{\disj}= \sup_{\ulin \xi\in\Xi^*}\tau^u_{\ulin\xi}$, we see that, for $\sigma\in\{+,-\}$, $\til B_\sigma^u(t)$, $0\le t<T^u_{\disj}$, is an $\F^u$-local martingale under $\PP^{\ulin\rho}$.

From (\ref{quadratic}) and that for any $\ulin\xi\in\Xi^*$ and $t\ge 0$, $\PP^{\ulin\rho}\ll \PP^{\ulin\rho}_i$ on $\F_{ \ulin u(t\wedge \tau^u_{\ulin \xi})}$,  we know that, under $\PP^{\ulin\rho}$, (\ref{quadratic}) holds up to $\tau^u_{\ulin\xi}$. Since 	$T^u_{\disj}= \sup_{\ulin \xi\in\Xi^*}\tau^u_{\ulin\xi}$, and $\til B_\pm^u-\ha B_\pm^u$ are differentiable, (\ref{quadratic}) holds for $\til B^u_+$ and $\til B^u_-$ under $\PP^{\ulin\rho}$ up to $T^u_{\disj}$.
Since $\til B_+^u$ and $\til B_-^u$ up to $T^u_{\disj}$ are local martingales under $\PP^{\ulin\rho}$, the (\ref{quadratic}) for $\til B_\pm^u$ implies that there exists a stopped planar Brownian motion $(B_+(t),B_-(t))$, $0\le t<T^u_{\disj}$, under $\PP^{\ulin\rho}$, such that $d\til B_\sigma^u(t)=\sqrt{u_\sigma'(t)}dB_\sigma(t)$, $\sigma\in\{+,-\}$. Combining this fact with (\ref{hawhaB}) and (\ref{tilwju}), we then complete the proof.
\end{proof}

From now on, we work under the probability measure $\PP^{\ulin\rho}$.
Combining Lemma \ref{Lem-uB} with (\ref{dWju}) and (\ref{pa-X}), we get an SDE for $W_\sigma^u-V_0^u$ up to $T^u_{\disj}$:
\begin{align*}
  d(W_\sigma^u-V_0^u)\aeq & \,W_{\sigma,1}^u \sqrt{\kappa u_\sigma'} dB_\sigma +\sum_{\nu\in\{0,+,-\}} \frac{\rho_\nu (W_{\sigma,1}^u)^2 u_\sigma'}{W_\sigma^u-V_\nu^u}\,dt +\frac{2  (W_{\sigma,1}^u)^2 u_\sigma'}{W_\sigma^u-W_{-\sigma}^u}\,dt \\
  &+ \frac{2  (W_{-\sigma,1}^u)^2 u_{-\sigma}'}{W_\sigma^u-W_{-\sigma}^u}\,dt+ \frac{2  (W_{\sigma,1}^u)^2 u_\sigma'}{W_\sigma^u-V_{0}^u}\,dt + \frac{2  (W_{-\sigma,1}^u)^2 u_{-\sigma}'}{W_{-\sigma}^u-V_{0}^u}\,dt.
\end{align*}
Recall that $R_\sigma =\frac{W_\sigma^u -V_0^u }{V_\sigma^u -V_0^u }\in [0,1]$, $\sigma\in\{+,-\}$, and $V_\sigma^u -V_0^u=\sigma e^{2\cdot} I${, where $I=|v_+-v_0|=|v_--v_0|$}.
Combining the above SDE with (\ref{uj''}) and using the continuity of $R_\sigma$ and the positiveness of $R_++R_-$ (because $W_+^u>W_-^u$), we find that $R_\sigma$, $\sigma\in\{+,-\}$, satisfy the following SDE up to $T^u_{\disj}$:
\BGE dR_\sigma =\sigma \sqrt{\frac{\kappa R_\sigma(1-R_\sigma ^2)}{R_++R_-}}dB_\sigma +\frac{(2+\rho_0)-(\rho_\sigma-\rho_{-\sigma})R_\sigma-(\rho_++\rho_-+\rho_0+6)R_\sigma^2}{R_++R_-}\,dt. \label{SDE-R}\EDE

\subsection{SDE in the whole lifespan}
We are going to prove the following theorem in this subsection.

\begin{Theorem}
	Under $\PP^{\ulin\rho}$,  $R_+$ and $R_-$ satisfy (\ref{SDE-R}) throughout $\R_+$ for a pair of independent Brownian motions $B_+$ and $B_-$. \label{Thm-SDE-whole-R}
\end{Theorem}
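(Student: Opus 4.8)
The statement I must prove upgrades the SDE (\ref{SDE-R}) for $(R_+,R_-)$ from holding only up to the intersection time $T^u_{\disj}$ (when the two curves first touch) to holding throughout all of $\R_+$. The plan is to bootstrap the result from the disjoint case by a repeated-restart argument, using the two-curve domain Markov property of Lemma \ref{DMP} and the conformal covariance of the force-point functions from Lemma \ref{DMP-determin-1}. The key observation is that the SDE (\ref{SDE-R}) is autonomous: its coefficients depend only on $(R_+,R_-)$, not on any extra geometric data. So the strategy is: run the process until just before $T^u_{\disj}$; at that point $\eta_+$ and $\eta_-$ touch at a single point; apply the two-curve DMP to restart a new commuting pair of SLE$_\kappa(2,\ulin\rho)$ curves from the new configuration of driving/force points $(W_+,W_-;V_0,V_+,V_-)|_{\ulin u(T^u_{\disj})}$; observe that this restarted pair, re-normalized, again has a ``disjoint'' lifespan in which (\ref{SDE-R}) holds, and that the time-curve $\ulin u$ and the process $(R_+,R_-)$ glue continuously across the restart because they are characterized intrinsically (via $\Lambda,\Upsilon$ and the definition of $R_\sigma$). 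Iterating, one covers all of $\R_+$; since a.s. $T^u=\infty$ by Lemma \ref{Beurling}, no mass escapes to the boundary of $\cal D$.

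\textbf{Key steps, in order.} First I would verify that at $t = T^u_{\disj}$ the two curves touch at exactly one point, so the new pair started at $\ulin u(T^u_{\disj})$ falls in the event $E_{\ulin\tau}$ of Lemma \ref{DMP} (here $\ulin\tau=\ulin u(T^u_{\disj})$, which is an $\F$-stopping time by Proposition \ref{u-st}); this requires checking that the touching happens ``transversally'' in the Loewner sense, i.e. that immediately after $T^u_{\disj}$ the curves separate again into a short disjoint phase — this follows from the fact that SLE$_\kappa(2,\ulin\rho)$ curves with the given force ranges hit but do not cross, combined with the local behavior of the time-curve $\ulin u$. Second, I would invoke Lemma \ref{DMP}: conditionally on $\F_{\ulin u(T^u_{\disj})}$, the part of $(\eta_+,\eta_-)$ after $\ulin u(T^u_{\disj})$, up to a conformal map and after re-normalization, is again a commuting pair of chordal SLE$_\kappa(2,\ulin\rho)$ curves started from $(W_+,W_-;V_0,V_+,V_-)|_{\ulin u(T^u_{\disj})}$ (with the force-point relabeling convention when coincidences occur). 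Third — this is the crucial conformal-invariance step — I would use Lemma \ref{DMP-determin-1}(ii) to check that the driving functions $W_\sigma$, the force-point functions $V_\nu$, and hence the time-curve $\ulin u$ (defined via $\Lambda,\Upsilon$, which only depend on $V_0,V_\pm$) and the ratios $R_\sigma = (W_\sigma-V_0)/(V_\sigma-V_0)$ of the restarted pair agree with $W_\sigma(\ulin\tau+\cdot),V_\nu(\ulin\tau+\cdot),\ldots$ of the original pair. In particular the continuation of $(R_+,R_-)$ past $T^u_{\disj}$ coincides with $(R_+,R_-)$ computed from the restarted pair, which again satisfies (\ref{SDE-R}) up to \emph{its} first intersection time, and the two Brownian motions glue into a single pair of (continuous, mutually orthogonal) local martingales by Lévy's characterization. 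Fourth, I would iterate: let $0 = S_0 < S_1 < S_2 < \cdots$ be the successive times at which the re-normalized curves first intersect in each restart; (\ref{SDE-R}) holds on each $[S_k, S_{k+1})$ with a consistent pair of Brownian motions, and $\sup_k S_k = \infty$ because the total $\ulin u$-lifetime is $T^u = \infty$ a.s.\ and each disjoint phase carries a positive, non-vanishing amount of capacity/time (one can quantify this using Lemma \ref{uniform2} or a direct estimate that prevents the $S_k$ from accumulating).

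\textbf{Main obstacle.} The delicate point is the gluing across the countably many restart times $S_k$ and the verification that they do not accumulate at a finite time. Showing that (\ref{SDE-R}) holds on each open interval $[S_k,S_{k+1})$ is routine given the disjoint-case result; the subtlety is (a) that the driving/force-point functions and the time-curve $\ulin u$ are genuinely continuous across each $S_k$, which needs Lemma \ref{DMP-determin-1}(ii) together with the force-point relabeling convention of Lemma \ref{DMP} precisely at the times where $V_\nu(\ulin\tau) = W_\sigma(\ulin\tau)$ (so that the restarted force points are interpreted as one-sided limits $W_\sigma(\ulin\tau)^{\pm}$ matching the original), and (b) that $S_k \to \infty$. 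For (b) the cleanest route is: on $\{T^u = \infty\}$ (a.s.), the functions $u_+,u_-$ are unbounded, so by the capacity identities each $[S_k,S_{k+1}]$ contributes a definite increment, and any finite accumulation point would force $\ulin u$ to hit $\pa\cal D \cap (0,\infty)^2$, contradicting $T^u = \infty$ from Lemma \ref{Beurling} (recall $\cal D = \R_+^2$ here, so $\pa\cal D \cap (0,\infty)^2 = \emptyset$). A secondary technical care is that the processes $(R_+,R_-)$ must be shown to remain in $[0,1]^2$ with $R_++R_->0$ throughout, so that the coefficients of (\ref{SDE-R}) stay well-defined past $T^u_{\disj}$; this is inherited from $V_- \le W_- \le V_0 \le W_+ \le V_+$ (inequality (\ref{VWVWV})) and $W_+ > W_-$, both of which persist through restarts.
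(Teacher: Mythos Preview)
Your restart-at-touching strategy has two genuine gaps that the paper's proof is specifically designed to avoid.

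\textbf{First gap: Lemma \ref{DMP} does not apply at $\ulin u(T^u_{\disj})$.} The event $E_{\ulin\tau}$ in Lemma \ref{DMP} requires $\eta_\sigma(\tau_\sigma)\not\in\eta_{-\sigma}[0,\tau_{-\sigma}]$ for both $\sigma$. At the first touching time along the time curve, at least one tip lies on the other curve (or on the boundary of its hull), so this hypothesis fails, and the DMP as stated cannot be invoked. Your sentence ``the two curves touch at exactly one point, so the new pair $\ldots$ falls in the event $E_{\ulin\tau}$'' asserts the opposite of what is true. Moreover, it is not clear that $\ulin u(T^u_{\disj})$ is an $\F$-stopping time; Proposition \ref{u-st} covers $\ulin u(t)$ and $\ulin u(\tau^u_{\ulin\xi})$, not $\ulin u$ evaluated at a general $\F^u$-stopping time.

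\textbf{Second gap: non-accumulation.} Your argument that the restart times $S_k$ cannot accumulate at a finite time is circular: you invoke $T^u=\infty$ and $\pa{\cal D}\cap(0,\infty)^2=\emptyset$, but neither prevents the curves from touching and separating infinitely often in finite $u$-time. For $\kappa\in(4,8)$ the intersection set $\eta_+\cap\eta_-$ is typically a fractal of positive dimension, so the set of touching times along $\ulin u$ is uncountable; a scheme that restarts ``at each touching'' cannot be iterated countably many times to exhaust $\R_+$.

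The paper handles both issues at once by \emph{never} stopping at a touching time. It restarts at crosscut stopping times $\ulin\tau^{n-1}=\ulin u^{n-1}(\tau^{n-1,u}_{\ulin\xi^{n-1}})$, which by construction lie in ${\cal D}^{n-1}_{\disj}$, so Lemma \ref{DMP} applies cleanly. To show the cumulative time $T^\infty=\sum_n\tau^n_{\ulin\xi^n}$ is infinite, it uses two ingredients you did not invoke: Lemma \ref{lemma-0-1}, which says that on $\{T^\infty<\infty\}$ the limit $\lim_{t\uparrow T^\infty}R_\pm(t)$ exists and is nonzero (so $R_++R_-\ge 2r>0$ throughout); and Corollary \ref{lower-bound-Cor}, which gives a uniform lower bound $\PP[\tau^n_{\ulin\xi^n}\ge 1\mid{\cal G}^n,E_n]\ge\delta>0$ on the event $E_n=\{R_+^n(0)+R_-^n(0)\ge 2r\}$. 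A Borel--Cantelli argument then forces $\PP[T^\infty<\infty]=0$. Without a quantitative input of this kind, there is no mechanism preventing accumulation.
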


\begin{Remark}
  Theorem \ref{Thm-SDE-whole-R} gives the   SDE for the two-dimensional diffusion process $(\ulin R)$ in its whole lifespan. In the next subsection, we will use this SDE to derive the transition density of $(\ulin R)$.
\end{Remark}

\begin{Lemma}
	Suppose that $R_+$ and $R_-$ are $[0,1]$-valued semimartingales satisfying (\ref{SDE-R}) for a stopped planar Brownian motion $(B_+,B_-)$ up to some stopping time $\tau$. Then on the event $\{\tau<\infty\}$, a.s.\ $\lim_{t\uparrow \tau} R_\pm(t)$ {exists and equals a finite number other than} $0$. \label{lemma-0-1}
\end{Lemma}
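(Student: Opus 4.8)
The plan is to analyze the behavior of the diffusion $\ulin R=(R_+,R_-)$ near a finite explosion time $\tau$ by examining the SDE (\ref{SDE-R}) and showing that none of the ``bad'' boundary configurations can be approached in finite time except the ones that leave $R_\pm$ bounded away from $0$. First I would observe that $[0,1]^2$ is compact, so on $\{\tau<\infty\}$ the issue is not that $R_\pm$ escapes to infinity but rather that the coefficients of (\ref{SDE-R}) degenerate: the only genuine singularity in the drift and diffusion coefficients occurs when $R_++R_-\to 0$, i.e.\ when both $R_+\to 0$ and $R_-\to 0$ simultaneously (the individual factors $R_\sigma(1-R_\sigma^2)$ are bounded and the drift numerator is a bounded polynomial). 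So the content of the lemma is really: (i) the process cannot reach the corner $(0,0)$ in finite time, and (ii) consequently $\lim_{t\uparrow\tau}\ulin R(t)$ exists and its components are nonzero. Actually, since the coefficients are smooth away from $\{R_++R_-=0\}$, a standard argument shows $\tau$ (if finite) must be an exit time at the corner, so I must rule that out, and then the continuity of the solution up to the corner-free part gives the limit.

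The key steps, in order: \textbf{Step 1.} Reduce to showing that $R_++R_-$ stays bounded below by a positive (random) constant up to $\tau$, equivalently that the process cannot approach $(0,0)$. Here I would work with a suitable Lyapunov/comparison function. A natural candidate is $S:=R_++R_-$ or perhaps $\log S$, or a power $S^{-p}$; compute $dS$ from (\ref{SDE-R}) and examine its behavior as $S\downarrow 0$. Since $B_+$ and $B_-$ are independent, $d\langle S\rangle_t = \kappa\frac{R_+(1-R_+^2)+R_-(1-R_-^2)}{R_++R_-}\,dt$, which near the corner behaves like $\kappa\,dt$ (bounded and nondegenerate), while the drift of $S$ is $\frac{(4+2\rho_0) - \cdots}{R_++R_-}$, whose numerator tends to $4+2\rho_0 = 2(2+\rho_0)>0$ by the standing assumption $\rho_0\ge\frac\kappa4-2>-2$. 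So near the corner $S$ behaves like a (time-changed) Bessel-type process whose drift $\asymp \frac{c}{S}$ with $c>0$ pushes it away from $0$; choosing the effective dimension one sees it is $\ge 2$ when $2+\rho_0$ is large enough relative to $\kappa$, hence $S$ a.s.\ does not hit $0$. The precise threshold is exactly why the hypothesis $\rho_0\ge\frac\kappa4-2$ (flagged in Remark \ref{Remark-rho0}) is imposed; I would make this a clean Bessel comparison after a deterministic time change absorbing the bounded factor $\frac{d\langle S\rangle}{dt}$.

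\textbf{Step 2.} Once $(0,0)$ is excluded, the coefficients of (\ref{SDE-R}) are locally Lipschitz on $[0,1]^2\setminus\{(0,0)\}$, so on $\{\tau<\infty\}$ the semimartingale $\ulin R$ is, up to time $\tau$, a solution of an SDE with coefficients bounded on a neighborhood of the (compact) closure of its trajectory; standard martingale convergence (the Brownian integrals have uniformly bounded quadratic variation up to $\tau$, and the drift integral is absolutely convergent) gives that $\lim_{t\uparrow\tau}\ulin R(t)$ exists in $[0,1]^2\setminus\{(0,0)\}$. \textbf{Step 3.} Finally rule out that a single coordinate limit is $0$: suppose $\lim R_+(t)=0$ while $\lim R_-(t)=r_->0$. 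Then near time $\tau$ the SDE for $R_+$ has diffusion coefficient $\asymp\sqrt{\kappa R_+/r_-}$ and drift $\asymp\frac{2+\rho_0}{r_-}>0$, i.e.\ $R_+$ looks like a squared-Bessel-type process with strictly positive drift near $0$; such a process a.s.\ does not converge to $0$ (the drift forces immediate reflection), contradiction — I would phrase this via a one-dimensional comparison with a squared Bessel process of dimension $\frac{2(2+\rho_0)}{\kappa}>0$, which is instantaneously reflecting at $0$ and hence has no limit $0$ along any sequence $t\uparrow\tau$ unless $\tau=\infty$. By symmetry the same excludes $\lim R_-(t)=0$.

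The main obstacle I anticipate is \textbf{Step 1}: getting the Bessel comparison near the corner to go through cleanly, because $S=R_++R_-$ is not itself a nice autonomous diffusion (its drift and diffusion coefficients depend on $R_+-R_-$ as well), so the comparison has to be set up carefully — e.g.\ by bounding the drift of $S$ below by $\frac{c}{S}-C$ for constants $c>0$, $C$ and bounding $\frac{d\langle S\rangle}{dt}$ between two positive constants on the region $S<\eps$, then invoking a comparison theorem for one-dimensional SDEs against a Bessel process of the appropriate dimension, and checking that the relevant dimension exceeds $2$ precisely under $\rho_0\ge\frac\kappa4-2$. The bookkeeping relating ``dimension $\ge 2$ $\Leftrightarrow$ point $0$ polar'' to the concrete drift/diffusion coefficients of (\ref{SDE-R}) is where care is needed, but it is routine once the inequality chain is written out. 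Steps 2 and 3 are then standard.
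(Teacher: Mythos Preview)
Your overall plan is sound and can be made to work, but it differs from the paper's proof in an instructive way, and a couple of your technical claims need adjustment.

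\textbf{How the paper argues.} The paper does not work with $S=R_++R_-$ at all. Instead it passes to the coordinates $X=R_+-R_-$ and $Y=1-R_+R_-$, for which the SDEs (equations \eqref{dX}--\eqref{d<X,Y>}) have \emph{bounded} coefficients --- the singularity at $R_++R_-=0$ disappears completely. This makes your Step~2 immediate and independent of Step~1: since $d\langle M_X\rangle,d\langle M_Y\rangle\le\kappa\,dt$, the martingale parts of $X,Y$ converge on $\{\tau<\infty\}$, and the bounded drifts give convergence of $X,Y$, hence of $R_\pm$. Only then does the paper rule out the corner. For that it introduces $Z=X/Y$, performs a time change, and passes to angular coordinates $(\Theta,\Phi)$ in which $\Theta^2+\Phi^2$ behaves near the corner like a squared Bessel process of dimension $\tfrac4\kappa(\rho_0+2)+1\ge 2$ (this is exactly where $\rho_0\ge\tfrac\kappa4-2$ enters). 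Note that this is a genuinely two-dimensional analysis near the corner, not a one-dimensional reduction.

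\textbf{On your Step~1.} Your one-dimensional reduction via $S$ can be pushed through, but the ``standard comparison theorem'' you invoke does not apply directly because the diffusion coefficient of $S$ is not a function of $S$ alone. A clean fix is to time-change so that $d\langle S\rangle=\kappa\,dt$ (this is legitimate since $d\langle S\rangle/dt=\kappa(1-(R_+^2-R_+R_-+R_-^2))\in(\kappa(1-\eps^2),\kappa]$ on $\{S<\eps\}$); after the time change the drift is bounded below by $\tfrac{2(2+\rho_0)}{S}-C$, and then a Bessel comparison of dimension $1+\tfrac{4(2+\rho_0)}{\kappa}\ge 2$ goes through. This is the same critical dimension the paper obtains, so your heuristic is correct; it just needs the time-change step you did not mention.

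\textbf{Two small corrections.} In Step~2, the coefficients are \emph{not} locally Lipschitz away from the corner (the diffusion is $\sqrt{R_\sigma}$-like near $R_\sigma=0$); what you actually need and have is boundedness, which suffices for the martingale-convergence argument. Your Step~3 is unnecessary: the lemma (as used in Theorem~\ref{Thm-SDE-whole-R} and Corollary~\ref{Cor-0}) only asserts that the limit point is not the origin $(0,0)$, not that each coordinate is nonzero, and the paper's proof makes no attempt to exclude individual zeros.
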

\begin{proof}
Let $X=R_+-R_-$ and $Y=1-R_+R_-$. Then $|X|\le Y\le 1$ because $Y\pm X=(1\pm R_+)(1\mp R_-)\ge 0$.   By (\ref{SDE-R}), $X$ and $Y$ satisfy the following SDEs up to $\tau$:
\BGE dX=
dM_X-[(\rho_++\rho_-+\rho_0+6)X+(\rho_+-\rho_-)]dt,\label{dX}\EDE
\BGE dY=
dM_Y-[(\rho_++\rho_-+\rho_0+6)Y-(\rho_++\rho_-+4)]dt,\label{dY}\EDE
where $M_X$ and $M_Y$ are local martingales whose quadratic variation and covariation satisfy the following equations up to $\tau$:
\BGE d\langle M_X\rangle =\kappa(Y-X^2)dt,\quad d\langle M_X,M_Y\rangle =\kappa (X-XY)dt,\quad d\langle M_Y\rangle =\kappa (Y-Y^2)dt.\label{d<X,Y>}\EDE

By (\ref{d<X,Y>}),  $\langle M_X\rangle_\tau,\langle M_Y\rangle_\tau\le \kappa\tau$, which implies that $\lim_{t\uparrow \tau} M_X(t)$ and  $\lim_{t\uparrow \tau} M_Y(t)$ a.s.\ converge on {the event} $\{\tau<\infty\}$. By (\ref{dX},\ref{dY}),
$\lim_{t\uparrow \tau} (X(t)-M_X(t))$ and $\lim_{t\uparrow \tau} (Y(t)-M_Y(t))$ a.s.\ converge on {the event} $\{\tau<\infty\}$. Combining these results, we see that, on the event $\{\tau<\infty\}$,   $\lim_{t\uparrow \tau}X(t)$ and $\lim_{t\uparrow \tau} Y(t)$ a.s.\ converge, which implies the a.s.\ convergence of $\lim_{t\uparrow \tau} R_\pm(t)$ .

Since $(R_+(t),R_-(t))\to (0,0)$ iff $(X(t),Y(t))\to (0,1)$. It suffices to show that, $ (X(t),Y(t))$ does not converge to $(0,1)$ as $t\uparrow \tau$. Since $(X,Y)$ is Markov, it suffices to show that, if $Y(0)\ne 0$, and if $T=\tau\wedge \inf\{t:Y(t)=0\}$, then $(X(t),Y(t))$ does not converge to $(0,1)$ as $t\uparrow T$. Since $Y\ne 0$ on $[0,T)$, we may define a process $Z=X/Y\in [-1,1]$ on $[0,T)$. Now it suffices to show that $(Z(t),Y(t))$ does not converge to $(0,1)$ as $t\uparrow T$.

From (\ref{dX}-\ref{d<X,Y>}) and It\^o's calculation, we see that there is a stopped planar Brownian motion $(B_{Z}(t),B_Y(t))$, $0\le t<T$, such that $Z$ and $Y$ satisfy the following SDEs on $[0,T)$:
	$$ dZ=\sqrt{\frac{\kappa (1-Z^2)}Y}dB_{Z}-\frac{(\rho_++\rho_-+4)Z+(\rho_+-\rho_-)}{Y}\,dt;$$
	$$ dY=\sqrt{\kappa Y(1-Y)}dB_Y -[(\rho_++\rho_-+\rho_0+6)Y-(\rho_++\rho_-+4)]dt.$$
	Let $v(t)=\int_0^t \kappa/Y(s)ds$, $0\le t<T$, and $\til T=\sup v[0,T)$. Let $\til Z(t)=Z(v^{-1}(t))$ and $\til Y(t)=Y(v^{-1}(t))$, $0\le t<\til T$.   Then there is a stopped planar Brownian motion $(\til B_{Z}(t),\til B_Y(t))$, $0\le t<\til T$, such that $\til Z$ and $\til Y$ satisfy the following SDEs on $[0,\til T)$:
$$d\til Z=\sqrt{1-\til Z^2}d\til B_{Z}-(a_{Z}\til Z+b_{Z}) dt,  $$
$$ d\til Y=\til Y\sqrt{ 1-\til Y}d\til B_Y -\til Y(a_Y(\til Y-1)+b_Y)dt,  $$
	where  $a_{Z}=(\rho_++\rho_-+4)/\kappa$, $b_{Z}=(\rho_+-\rho_-)/\kappa$, $b_Y=(\rho_0+2)/\kappa$, $a_{Y}=a_{Z}+b_Y$.

	Let $\Theta=\arcsin (\til Z)\in [-\pi/2,\pi/2]$ and $\Phi=\log(\frac{1+\sqrt{1-\til Y}}{1-\sqrt{1-\til Y}})\in\R_+$. Then $(\til Z(t),\til Y(t))\to (0,1)$ iff $\Theta(t)^2+\Phi(t)^2\to 0$, and  $\Theta$ and $\Phi$ satisfy the following SDEs on $[0,\til T)$:
	$$ d\Theta=d\til B_{Z}-(a_{Z}-\frac 12)\tan\Theta dt-b_{Z} \sec\Theta dt; $$
	$$ d\Phi=-d\til B_Y+ (b_Y -\frac 14 )\coth_2(\Phi) dt+(\frac 34-a_Y)\tanh_2(\Phi) dt.$$
Here $\tanh_2:=\tanh(\cdot /2)$ and $\coth_2:=\coth(\cdot/2)$.
So for some $1$-dimensional Brownian motion $B_{\Theta,\Phi}$, $\Theta^2+\Phi^2$ satisfies the SDE
$$d(\Theta^2+\Phi^2)=2\sqrt{\Theta^2+\Phi^2}dB_{\Theta,\Phi} +2dt+(2b_Y-\frac 12)\Phi \coth_2(\Phi)dt $$ $$+(\frac 32-2a_Y) \Theta\tanh_2(\Phi)dt-(2a_{Z}-1)\Theta \tan\Theta dt-2b_{Z} \sec\Theta dt.$$
From the power series expansions of $\coth_2,\tanh_2,\tan,\sec$ at $0$,
we see that when $\Theta^2+\Phi^2$ is close to $0$, it behaves like a squared Bessel process of dimension $4b_Y+1=\frac 4\kappa(\rho_0+2)+1\ge 2$ because $\rho_0\ge \frac\kappa 4-2$. Thus, a.s.\ $\lim_{t\uparrow \til T} {\Theta(t)^2+\Phi(t)^2}\ne  0$,  as desired.
\end{proof}

\begin{Remark}
	The assumption $\rho_0\ge \frac\kappa4-2$ is used in the last line of the above proof.
\label{Remark-rho0}
\end{Remark}

\begin{Lemma}
  For every $N>0$ and $L\ge 2$, there is $C>0$ depending only on $\kappa,\ulin\rho,N,L$ such that for any $v_0\in [(-1)^+,1^-]$, $v_+\in [1^+,\infty)$ and $v_-\in (-\infty,(-1)^-]$ with $|v_+-v_-|\le L$, if  $(\eta_+,\eta_-)$ is a commuting pair of chordal SLE$_\kappa(2,\ulin\rho)$ curves started from $(1,-1;v_0,v_+,v_-)$, then for any $y\in(0,N]$, $\PP[E_+(y)\cap E_-(y)]\ge C$, where for $\sigma\in\{+,-\}$, $E_\sigma(y)$ is the event that $\eta_\sigma$ reaches $\{\Imm z=y\}$ before $\{\Ree z=\sigma \frac 12\}\cup \{\Ree z=\sigma\frac 32\}$.
  \label{lower-bound}
\end{Lemma}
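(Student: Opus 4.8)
The statement is a uniform (in the force points and in the target height $y$) lower bound for the probability that both curves of a commuting SLE$_\kappa(2,\ulin\rho)$ pair climb above a given horizontal line before escaping sideways. The plan is to derive it from two ingredients: first, a one-curve estimate saying that each $\eta_\sigma$ individually has a probability bounded below (uniformly in the configuration) of reaching $\{\Imm z=y\}$ while staying in a vertical strip around $w_\sigma=\pm1$; and second, the two-curve DMP (Lemma \ref{DMP}) together with the absolute-continuity relation between a commuting pair and the independent coupling (Corollary \ref{RN-M-ic-cor}, Lemma \ref{RN-M-ic}) to combine the two one-curve estimates into a genuine two-curve estimate. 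Concretely, I would first grow $\eta_+$ alone up to the stopping time $\tau_+$ at which it either hits $\{\Imm z=y\}$ or exits $\{\frac12<\Ree z<\frac32\}$; on the event $E_+(y)$ the curve $\eta_+[0,\tau_+]$ is contained in a compact subset of $\HH$ bounded away from $w_-=-1$ and from $v_-$, so we may condition on it, and by the two-curve DMP the remaining curve $\eta_-$ (in the slit domain, transported back to $\HH$) is again a commuting chordal SLE$_\kappa(2,\ulin\rho)$-type curve with force points in a controlled configuration. Then a second application of the one-curve estimate to $\eta_-$ finishes the argument, provided the conditional configuration stays in a compact region of parameter space.

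\textbf{One-curve estimate.} The core analytic fact needed is: there is $c>0$ depending only on $\kappa,\ulin\rho,N,L$ such that for any admissible configuration with $|v_+-v_-|\le L$ and any $y\in(0,N]$, the marginal $\eta_+$ (which by the SLE$_\kappa(2,\ulin\rho)$ description is a curve started from $1$ with force points at $-1,v_0,v_+,v_-$) reaches $\{\Imm z=y\}$ before $\{\Ree z=\frac12\}\cup\{\Ree z=\frac32\}$ with probability $\ge c$. I would prove this by Girsanov/comparison: locally near the start, away from the force points, an SLE$_\kappa(2,\ulin\rho)$ is absolutely continuous with respect to a plain chordal SLE$_\kappa$, with Radon--Nikodym derivative bounded above and below on the (compact, force-point-avoiding) event in question — here the hypotheses $\rho_\pm>\max\{-2,\tfrac\kappa2-4\}$, $\rho_0\ge\tfrac\kappa4-2$ ensure no continuation threshold is hit and the force-point functions stay bounded away from $\ha w_+$ on the relevant event, so the driving-term drift contributions are bounded; the bound $|v_+-v_-|\le L$ gives the needed uniformity in the force-point positions, and a scaling/compactness argument (the configuration lives in a compact parameter set once we quotient by the fixed endpoints $\pm1$) upgrades a pointwise positive probability to a uniform one. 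For plain SLE$_\kappa$, $\kappa\in(0,8)$, the event that the curve from $1$ reaches height $y\le N$ while staying in the strip $\frac12<\Ree z<\frac32$ has positive probability by standard estimates (e.g. one can force the driving function to stay in a small neighborhood of $1$ for a definite amount of capacity time and use Koebe/size estimates from Section \ref{section-prel} to control the hull).

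\textbf{Combining the two.} Let $\tau_+$ be the stopping time above for $\eta_+$. On $E_+(y)$, $\Hull(\eta_+[0,\tau_+])$ is contained in $\{\frac12\le\Ree z\le\frac32,\ 0\le\Imm z\le N\}$, hence bounded away from $-1$, from $v_-$, and from $v_0$ if $v_0<\frac12$ (one handles the symmetric alternative $v_0$ on the other side by running $\eta_-$ first instead; or simply note that at least one of $\eta_+,\eta_-$ has its ``inner'' force point $v_0$ bounded away). Applying Lemma \ref{DMP} with $\ulin\tau=(\tau_+,0)$, conditionally on $\F^+_{\tau_+}\cap E_+(y)$ the curve $\eta_-$ is, after transport by $f_{\Hull(\eta_+[0,\tau_+])}$, a chordal SLE$_\kappa(2,\ulin\rho)$-type curve with new force points $W_+(\ulin\tau),V_0(\ulin\tau),V_+(\ulin\tau),V_-(\ulin\tau)$; the conformal map $f_{\Hull(\eta_+[0,\tau_+])}$ is close to the identity in a neighborhood of $-1$ and of the line $\{\Imm z=y'\}$ for $y'$ slightly larger than $y$ (quantitatively, by Proposition \ref{g-z-sup}, $|f-\id|\le 3\,\rad_{\cdot}(\Hull)\le 3\sqrt{N^2+1}$ say, with control of the image of the target line and of the strip). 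Thus the event $E_-(y)$ for the original $\eta_-$ contains a comparable event for the transported curve, namely reaching a horizontal line of bounded height before exiting a strip of definite width, to which the one-curve estimate applies uniformly. Multiplying, $\PP[E_+(y)\cap E_-(y)]\ge \PP[E_+(y)]\cdot\inf(\text{conditional prob})\ge C$.

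\textbf{Main obstacle.} The delicate point is the uniformity: one must check that after conditioning on $E_+(y)$ the new force-point configuration $(W_+(\ulin\tau);V_0,V_+,V_-)(\ulin\tau)$ stays within a compact region of admissible parameter space, uniformly over all initial configurations with $|v_+-v_-|\le L$ and all $y\le N$ — in particular that the transported target line does not get pushed to height $\gg N$ and that the transported strip retains width bounded below. This requires the deterministic distortion bounds of Section \ref{section-prel} (Propositions \ref{g-z-sup}, \ref{Prop-contraction}, \ref{Prop-cd-continuity'}) applied to the hull $\Hull(\eta_+[0,\tau_+])$, which on $E_+(y)$ has radius $\le\sqrt{N^2+1}$ around a point near $1$; the worry is that $v_+$ could be very large, but $|v_+-v_-|\le L$ together with $v_-\le -1\le 1\le v_+$ forces $v_+\le L-1$ and $v_-\ge 1-L$, so both force points are in $[-(L-1),L-1]$ and the conformal map moves them by a bounded amount, keeping everything compact. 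Once this compactness bookkeeping is in place, the rest is a routine assembly of the DMP and the one-curve estimate.
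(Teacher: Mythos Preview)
Your approach differs from the paper's and is considerably more laborious. The paper does not grow the curves sequentially via the DMP. Instead it first reduces to $y=N$ by the monotonicity $E_\sigma(y)\supset E_\sigma(N)$, then observes that each $E_\sigma(N)$ is $\F^\sigma_{\tau^\sigma_{\xi_\sigma}}$-measurable for the crosscuts $\xi_\sigma=\HH\cap\partial\{|x-\sigma|\le\tfrac12,\,0\le y\le N\}\in\Xi_\sigma$, and compares the commuting pair directly with the \emph{independent} coupling $(\eta_+',\eta_-')$ on $\F^+_{\tau^+_{\xi_+}}\vee\F^-_{\tau^-_{\xi_-}}$ via Lemma~\ref{RN-M-ic}. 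The Radon--Nikodym derivative $M(\tau_{\ulin\xi})$ has $|\log M|$ bounded by a constant depending only on $\kappa,\ulin\rho,\ulin\xi,L$ (Lemma~\ref{uniform}). For the independent coupling the events factor, so $\PP[E'_+(N)\cap E'_-(N)]\ge\tilde C^2$ with $\tilde C$ the one-curve bound from \cite[Lemma~2.4]{MW}, and the bounded RN derivative transfers this to the commuting pair in one line. No conformal-distortion bookkeeping is needed at all.

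Your sequential-DMP route can in principle be completed, but the distortion step is more delicate than you indicate. The phrase ``close to the identity'' is not correct: the bound $|g_{K_+(\tau_+)}-\id|\le 3\sqrt{N^2+1}$ is large when $N$ is, and for a near-vertical hull of height $N$ (say a slit $[1,1+iN]$) one computes $g_{K_+(\tau_+)}'(-1)\asymp 1/N$, so the transported strip around $g_{K_+(\tau_+)}(-1)$ has width $\asymp 1/N$ while the force point $\hat w_+(\tau_+)$ sits at distance $\asymp N$ from the new starting point. After rescaling to unit strip width, some force points land at distance $\asymp N^2$, so your ``compact region of admissible parameter space'' claim in the Main-obstacle paragraph is false as stated. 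This does not actually break the argument (distant force points only make the curve closer to plain SLE$_\kappa$, which helps), but it means you need a one-curve estimate uniform over an unbounded family of force-point configurations, not merely a compactness argument. The paper's route sidesteps this entirely: you listed Lemma~\ref{RN-M-ic} in your plan but never used it --- that lemma, together with Lemma~\ref{uniform}, is the whole proof.
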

\begin{proof}
In this proof, a constant depends only on $\kappa,\ulin\rho,N,L$. Since $E_\pm(y)$ is decreasing in $y$, it suffices to prove that $\PP[E_+(N)\cap E_-(N)]$ is bounded from below by a positive constant. By \cite[Lemma 2.4]{MW}, there is a constant $\til C >0$  such that $\PP[E_\sigma(N)]\ge \til C$ for $\sigma\in \{+,-\}$. Thus, if $(\eta_+',\eta_-')$ is an independent coupling of $\eta_+$ and $\eta_-$, then the events $E_\pm'(N)$ for $(\eta_+',\eta_-')$ satisfy that $\PP[E_+'(N)\cap E_-'(N)]\ge \til C^2$. Let $\xi_\sigma=\HH\cap \pa\{x+iy:|x-\sigma 1|\le \frac 12,0\le y\le N\}$, $\sigma\in\{+,-\}$. Since the law of $(\eta_+,\eta_-)$ restricted to $\F^+_{\tau^+_{\xi_+}}\vee \F^-_{\tau^-_{\xi_-}}$ is absolutely continuous w.r.t.\ that of $(\eta_+',\eta_-')$ (Lemma \ref{RN-M-ic}), and the logarithm of the Radon-Nikodym derivative is bounded in absolute value by a constant (Lemma \ref{uniform}), we get the desired lower bound for $\PP[E_+(N)\cap E_-(N)]$.
\end{proof}

\begin{Corollary}
  For any $r\in(0,1]$, there is $\delta>0$ depending only on $\kappa,\ulin\rho,r$ such that the following holds. Suppose $w_+>w_-\in\R$, $v_0\in [w_-^+,w_+^-]$, $v_+\in [w_+^+,\infty)$, $v_-\in (-\infty,w_-^-]$ satisfy $|v_+-v_0|=|v_--v_0|$ and $|w_+-w_-|\ge r|v_+-v_-|$. Let $(\eta_+,\eta_-)$ be a commuting pair of chordal SLE$_\kappa(2,\ulin\rho)$ curves started from $(w_+,w_-;v_0,v_+,v_-)$. Let $\ulin\xi=(\xi_+,\xi_-)$, where $\xi_\sigma=\HH\cap \pa\{x+iy: |x-w_\sigma|\le |w_+-w_-|/4,0\le y\le e^2|v_+-v_-|\}$, $\sigma\in\{+,-\}$.  Let $\tau^u_{\ulin\xi}$ be as defined in Section \ref{section-diffusion}.  Then $\PP[\tau^u_{\ulin\xi}\ge 1]\ge \delta$. \label{lower-bound-Cor}
\end{Corollary}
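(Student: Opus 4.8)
The plan is to deduce Corollary \ref{lower-bound-Cor} from Lemma \ref{lower-bound} by conformal invariance (scaling) together with the definition of $\tau^u_{\ulin\xi}$. First I would reduce to a normalized configuration. By translating and dilating $\HH$ (which preserves the law of a commuting pair of chordal SLE$_\kappa(2,\ulin\rho)$ curves up to the obvious relabeling of force points, and preserves the time curve $\ulin u$ up to a time shift since $\ulin u$ is built from $\Lambda$ and $\Upsilon$, which are defined through cross-ratio-type quantities), I may assume $v_0=0$ and $v_+=-v_-=1$, so that $v_+-v_-=2$ and $w_+>w_-$ satisfy $w_\pm\in(-1,1)$ and $w_+-w_-\ge 2r$. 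In this normalization the crosscuts become $\xi_\sigma=\HH\cap\pa\{x+iy:|x-w_\sigma|\le (w_+-w_-)/4,\ 0\le y\le e^2\cdot 2\}$, and by Lemma \ref{time curve-lem} the time curve satisfies $|V_\sigma^u(t)-V_0^u(t)|=e^{2t}$, so that $e^{2\cdot 1}\cdot 2 = e^2|v_+-v_-|$ appears naturally as the height at time $t=1$.

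Next I would unwind what $\{\tau^u_{\ulin\xi}\ge 1\}$ means. By definition $\tau^u_{\ulin\xi}$ is the first $t$ with $u_+(t)=\tau^+_{\xi_+}$ or $u_-(t)=\tau^-_{\xi_-}$, where $\tau^\sigma_{\xi_\sigma}$ is the first time $\eta_\sigma$ hits $\lin{\xi_\sigma}$. So $\{\tau^u_{\ulin\xi}\ge 1\}$ contains the event that, at the two-dimensional time $\ulin u(1)$, neither $\eta_+$ nor $\eta_-$ has yet touched its crosscut. I want to bound $\ulin u(1)$ from above by an explicit deterministic time on a good event, so that this can be checked. Here I use Lemma \ref{Beurling}: for $0\le t<T^u$ we have $\rad_{v_0}(\eta_\sigma[0,u_\sigma(t)]\cup[v_0,v_\sigma])\le e^{2t}|v_+-v_-|$, hence for $t\le 1$ the hulls $\eta_\sigma[0,u_\sigma(t)]$ stay within Euclidean distance $2e^2$ of the origin, and in particular $u_\sigma(t)=\hcap_2(\eta_\sigma[0,u_\sigma(t)])\le (2e^2)^2$ is bounded by a constant. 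This gives a deterministic $\ulin t^*\in\R_+^2$ (depending only on the normalization) with $\ulin u(t)\le \ulin t^*$ for all $t\le 1$.

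Now I would invoke Lemma \ref{lower-bound} with $N=e^2|v_+-v_-|=2e^2$ and $L=|v_+-v_-|$, applied to the rescaled configuration; strictly I need the crosscuts of the form $\{|x-w_\sigma|\le(w_+-w_-)/4\}\cup\{\Imm z=N\}$ rather than the specific ones of Lemma \ref{lower-bound} (which use $|\Ree z-w_\sigma|\in\{\tfrac12,\tfrac32\}$ scaled appropriately), but since $w_+-w_-\ge 2r$, the relevant horizontal scale $(w_+-w_-)/4\ge r/2$ is bounded below by a constant, so the argument of Lemma \ref{lower-bound} — monotonicity of the events in the parameter, the one-curve lower bound from \cite[Lemma 2.4]{MW}, the independent-coupling factorization, and the Radon–Nikodym bound of Lemma \ref{uniform} — applies verbatim to yield $\PP[E_+\cap E_-]\ge\delta_0>0$, where $E_\sigma$ is the event that $\eta_\sigma$ reaches height $N$ before exiting the horizontal strip around $w_\sigma$ of half-width $(w_+-w_-)/4$. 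On the event $E_+\cap E_-$, each $\eta_\sigma$ up to the deterministic time $t^*_\sigma$ (and in particular up to $u_\sigma(1)$) stays inside the region enclosed by $\xi_\sigma$, hence does not hit $\lin{\xi_\sigma}$; this forces $\tau^u_{\ulin\xi}\ge 1$. Therefore $\PP[\tau^u_{\ulin\xi}\ge 1]\ge\delta_0=:\delta$, a constant depending only on $\kappa,\ulin\rho,r$.

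The main obstacle I anticipate is the bookkeeping in the second step: verifying carefully that the good event $E_+\cap E_-$, which controls the curves $\eta_\sigma$ parametrized by their own capacity up to a deterministic capacity time, really does imply $u_\sigma(1)\le \tau^\sigma_{\xi_\sigma}$ for the correct crosscuts, i.e., that the time-change $\ulin u$ has not run past the deterministic bound $\ulin t^*$ by time $1$. This requires combining the upper bound $\rad_{v_0}(\eta_\sigma[0,u_\sigma(t)]\cup[v_0,v_\sigma])\le e^{2t}|v_+-v_-|$ from Lemma \ref{Beurling} with the Koebe/capacity estimate $\hcap(K)\le\rad_{v_0}(K)^2$ (Proposition \ref{g-z-sup}) to pin down $\ulin t^*$, and then matching the height $e^2|v_+-v_-|$ in the definition of $\xi_\sigma$ against the height reached on $E_+\cap E_-$. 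Everything else is a routine scaling and absolute-continuity argument already packaged in the cited lemmas.
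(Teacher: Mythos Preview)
Your approach is essentially the paper's: reduce by scaling, invoke Lemma~\ref{lower-bound} to get a uniform lower bound on the probability of the good event $E_+\cap E_-$, and then use the radius estimate (\ref{V-V'}) to show $E_+\cap E_-\subset\{\tau^u_{\ulin\xi}\ge 1\}$. However, your implementation of the last implication has a genuine gap.

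You introduce a deterministic capacity bound $u_\sigma(1)\le t^*_\sigma$ and then assert that on $E_+\cap E_-$ ``each $\eta_\sigma$ up to the deterministic time $t^*_\sigma$ stays inside the region enclosed by $\xi_\sigma$''. This is false: the event $E_\sigma$ only says that $\eta_\sigma$ first exits the box through the top, at the random time $\tau^\sigma_{\xi_\sigma}$. Nothing prevents $\tau^\sigma_{\xi_\sigma}<t^*_\sigma$, and after $\tau^\sigma_{\xi_\sigma}$ the curve may leave the box immediately. So the comparison $u_\sigma(1)\le t^*_\sigma$ tells you nothing about the relation between $u_\sigma(1)$ and $\tau^\sigma_{\xi_\sigma}$.

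The paper bypasses the capacity detour and uses (\ref{V-V'}) directly. On $E_+\cap E_-$, at the moment $t=\tau^u_{\ulin\xi}$ one of the curves, say $\eta_\sigma$, has $u_\sigma(t)=\tau^\sigma_{\xi_\sigma}$, and by $E_\sigma$ the tip $\eta_\sigma(\tau^\sigma_{\xi_\sigma})$ sits on the top of the box at height $e^2|v_+-v_-|$. Hence $\rad_{v_0}(\eta_\sigma[0,u_\sigma(t)])\ge e^2|v_+-v_-|$, and the upper bound in (\ref{V-V'}) forces $e^{2t}|v_+-v_-|\ge e^2|v_+-v_-|$, i.e.\ $\tau^u_{\ulin\xi}\ge 1$. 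No deterministic $\ulin t^*$ is needed. Equivalently, in your contrapositive direction: for $t<1$, (\ref{V-V'}) gives $\rad_{v_0}(\eta_\sigma[0,u_\sigma(t)])<e^2|v_+-v_-|$, so $\eta_\sigma[0,u_\sigma(t)]$ has not reached the top; on $E_\sigma$ the top is hit first, so $u_\sigma(t)<\tau^\sigma_{\xi_\sigma}$. This is precisely the ``height matching'' you flag in your obstacle paragraph; just drop the $t^*$ step entirely.

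A minor point on normalization: for a clean application of Lemma~\ref{lower-bound} you should rescale so that $w_\pm=\pm 1$ (as in that lemma), not $v_\pm=\pm 1$. Then $|v_+-v_-|\le 2/r=:L$, the box half-width is exactly $1/2$, and Lemma~\ref{lower-bound} applies directly with $N=2e^2/r$ (using monotonicity of $E_\sigma(y)$ in $y$).
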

\begin{proof}
  Let $E$ denote the event that for both $\sigma\in\{+,-\}$, $\eta_\sigma$ hits $\xi_\sigma$ at its top for the first time. Suppose that $E$ happens. By the definition of $\tau^u_{\ulin\xi}$, for one of $\sigma\in\{+,-\}$, the imaginary part of $\eta_\sigma(u_\sigma(\tau^u_{\ulin\xi}))$  is $e^2|v_+-v_-|$. So $\rad_{v_0}(\eta_\sigma[0,u_\sigma(\tau^u_{\ulin\xi})])\ge e^2 |v_+-v_-|$. By (\ref{V-V'}) we then get $\tau^u_{\ulin\xi}\ge 1$. Thus, $\PP[\tau^u_{\ulin\xi}\ge 1]\ge \PP[E]$, which by  Lemma \ref{lower-bound} and scaling is bounded from below by a positive constant depending only on $\kappa,\ulin\rho,r$ whenever $|v_+-v_-|\le |w_+-w_-|/r$.
 \end{proof}

\begin{proof}[Proof of Theorem \ref{Thm-SDE-whole-R}]
We have known that (\ref{SDE-R}) holds up to $T^u_{\disj}$.
We will combine it with the DMP of commuting pairs of chordal SLE$_\kappa(2,\ulin\rho)$ curves (Lemma \ref{DMP}).

Let $\eta^0_\pm=\eta_\pm$. Let ${\cal G}^0$ be the trivial $\sigma$-algebra. We will inductively define the following random objects. Let $n=1$. We have the $\sigma$-algebra ${\cal G}^{n-1}$ and the pair $(\eta^{n-1}_+,\eta^{n-1}_-)$, whose law conditionally on ${\cal G}^{n-1}$ is that of a commuting pair of chordal SLE$_\kappa(2,\ulin\rho)$ curves. Let $K^{n-1},\mA^{n-1},W^{n-1}_\pm,V^{n-1}_0,V^{n-1}_\pm$,  be respectively the hull function, capacity function, driving functions and force point functions. Let ${\cal D}^{n-1}_{\disj}$ and $\Xi^{n-1}$ be respectively the ${\cal D}_{\disj}$ and $\Xi$ defined  for the pair. Let $\F^{n-1}$ be the $\R_+^2$-indexed filtration defined by $ \F^{n-1}_{(t_+,t_-)}=\sigma({\cal G}^{n-1},\eta_\sigma^{n-1}|_{[0,t_\sigma]},\sigma\in\{+,-\}) $. Let $\ulin u^{n-1}$ be the time curve for $(\eta^{n-1}_+,\eta^{n-1}_-)$ as defined in Section \ref{time curve}, which exits for $n=1$ because we assume that $|v_+-v_0|=|v_0-v_-|$.

Let $\ulin\xi^{n-1}$ be the $\ulin\xi$ obtained from applying Corollary \ref{lower-bound-Cor} to $w_\pm=W^{n-1}_\pm(\ulin 0)$ and $v_\pm=V^{n-1}_\pm(\ulin 0)$. Then it is a ${\cal G}^{n-1}$-measurable random element in $\Xi^{n-1}$. Let $\tau^{n-1}_{\ulin\xi^{n-1}}$ be the random time $\tau^u_{\ulin\xi}$ introduced in Section \ref{section-diffusion} for the $(\eta^{n-1}_+,\eta^{n-1}_-)$ and $\ulin\xi^{n-1}$ here. Let $\ulin\tau^{n-1}=(\tau^{n-1}_+,\tau^{n-1}_-)=\ulin u^{n-1}(\tau^{n-1,u}_{\ulin\xi^{n-1}})$. Then $\ulin\tau^{n-1}$ is a finite $\F^{n-1}$-stopping time that lies in ${\cal D}^{n-1}_{\disj}$. Let ${\cal G}^n=\F^{n-1}_{\ulin\tau^{n-1}}$. We then obtain by Lemma \ref{DMP} a random commuting pair of chordal Loewner curves $(\til\eta^n_+,\til\eta^n_-)$ with some speeds, which is the part of $(\eta^{n-1}_+,\eta^{n-1}_-)$ after ${\ulin\tau^{n-1}}$ up to a conformal map, and
the normalization of $(\til\eta^n_+,\til\eta^n_-)$, denoted by $(\eta^n_+,\eta^n_-)$, conditionally on ${\cal G}^n$, is a commuting pair of chordal SLE$_\kappa(2,\ulin\rho)$ curve started from $(W^{n-1}_+,W^{n-1}_-;V^{n-1}_0,V^{n-1}_+,V^{n-1}_-)|_{\ulin\tau^{n-1}}$.
If for some $\sigma\in\{+,-\}$ and $\nu\in\{0,+,-\}$, $V^{n-1}_\nu(\ulin \tau^{n-1})=W^{n-1}_\sigma(\ulin \tau^{n-1})$, then as a force point, $V^{n-1}_\nu(\ulin \tau^{n-1})$ is treated as $(W^{n-1}_\sigma(\ulin \tau^{n-1}))^{\sign(v_\nu-w_\sigma)}$. By the assumption of $\ulin u^{n-1}$, we have $|V^{n-1}_+-V^{n-1}_0|=|V^{n-1}_--V^{n-1}_0|$ at $\ulin\tau^{n-1}$. So we may increase $n$ by $1$ and repeat the above construction.

Iterating the above procedure, we obtain two sequences of pairs $(\eta^n_+,\eta^n_-)$, $n\ge 0$, and  $(\til\eta^n_+,\til\eta^n_-)$, $n\ge 1$. They satisfy that for any $n\in\N$, $(\eta^n_+,\eta^n_-)$ is the normalization of $(\til\eta^n_+,\til\eta^n_-)$, and $(\til\eta^n_+,\til\eta^n_-)$ is the part of $(\eta^{n-1}_+,\eta^{n-1}_-)$ after ${\ulin\tau^{n-1}}$ up to a conformal map. Let $\phi^n_\pm$ be the speed of $\til\eta^n_\pm$, and   $\phi^n_\oplus(t_+,t_-)=(\phi^n_+(t_+),\phi^n_-(t_-))$. By Lemma \ref{DMP-determin-1}, for any $n\in\N$ and $Z\in\{ W_+,W_-,V_0,V_+,V_-\}$, $\til Z^n=Z^n\circ \phi^n_\oplus$ and  $\til Z^n= Z^{n-1}(\ulin\tau^{n-1}+\cdot)$.


 Recall that, for $n\ge 0$, $\ulin u^{n}$ is characterized by the property that
$ |V^{n}_\pm(\ulin u^{n}(t))-V^{n}_0(\ulin u^{n}(t))|=e^{2t} |V^{n}_\pm(\ulin 0)-V^{n}_0(\ulin 0)|$, $t\ge 0$.
So we get $\ulin u^n=\phi^n_\oplus(\ulin u^{n-1}(\tau^{n-1}_{\ulin\xi^{n-1}}+\cdot)-\ulin u^{n-1}(\tau^{n-1}_{\ulin\xi^{n-1}}))$,
which then implies that $Z^{n-1}\circ \ulin u^{n-1}(\tau^{n-1}_{\ulin\xi^{n-1}}+\cdot)=Z^{n}\circ \ulin u^{n}$, $Z\in \{W_+,W_-,V_0,V_+,V_-\}$. Let $R^n_\pm$ be the $R_\pm$ defined in Section \ref{section-diffusion} for $(\eta^n_+,\eta^n_-)$.  Then we have
$R^{n-1}_\pm(\tau^{n-1}_{\ulin\xi^{n-1}}+\cdot)=R^n_\pm$. Let $T^n=\sum_{j=0}^{n-1} \tau^{j}_{\ulin \xi^j}$, $n\ge 0$.  Since $R_\pm= R_\pm^0$, we get $R_\pm(T^n+\cdot)=R^n_\pm$. For $n\ge 0$, since conditionally on ${\cal G}^{n}$, $(\eta^n_+,\eta^n_-)$ has the law of a commuting pair of chordal SLE$_\kappa(2,\ulin\rho)$ curves started from $(W^n_+,W^n_-;V^n_0,V^n_+,V^n_-)|_{\ulin 0}$, by the previous subsection, there is a stopped two-dimensional Brownian motion $(B^n_+,B^n_-)$ w.r.t.\ $\F^n_{\ulin u^n(\cdot)}$ such that $R^n_+$ and $R^n_-$ satisfy the  $\F^n_{\ulin u^n(\cdot)}$-adapted SDE (\ref{SDE-R}) with $(B^n_+,B^n_-)$ in place of $(B_+,B_-)$ up to $\tau^n_{\ulin\xi^n}$. Let $T^\infty=\lim_{n\to\infty}T^n= \sum_{j=0}^{\infty} \tau^{j}_{\ulin \xi^j}$, and define a continuous processes $B_\pm$ on $[0,T^\infty)$ such that $B_\pm(t)-B_\pm(T^n)=B^n(t -T^n)$ for each $t\in[T^n,T^{n+1}]$ and $n\ge 0$. Then $(B_+,B_-)$ is a stopped two-dimensional Brownian motion, and $R_+$ and $R_-$ satisfy (\ref{SDE-R}) up to $T^\infty$. It remains to show that  a.s.\ $T^\infty=\infty$.

Suppose it does not hold that a.s.\ $T^\infty=\infty$. By Lemma \ref{lemma-0-1}, there is an event $E$ with positive probability and a number $r\in(0,1]$ such that on the event $E$, $R_++R_-\ge 2r$ on $[0,T^\infty)$.
For $n\ge 0$, let $E_n=\{|W^n_+(\ulin 0)-W^n_-(\ulin 0)|\ge r |V^n_+(\ulin 0)-V^n_-(\ulin 0)|\}=\{R^n_+(0)+R^n_-(0)\ge 2r\}$, which is ${\cal G}^n$-measurable. Since $R^n_\pm=R_\pm(T^n+\cdot)$, we get $E\subset \bigcap E_n$. Let  $A_n=\{\tau^n_{\ulin\xi^n}\ge 1\}$. By Corollary \ref{lower-bound-Cor}, there is $\delta>0$ depending only on $\kappa,\ulin\rho,r$ such that for $n\ge 0$, $\PP[A_n|{\cal G}^n, E_n]\ge \delta$. Since $E\subset \{\sum_n \tau^n_{\ulin\xi^n}<\infty\}$, we get $E\subset \liminf (E^n\cap A_n^c)$. For any $m\ge n\in\N$,
$$\PP[  \bigcap_{k=n}^m (E_{k}\cap A_k^c)]=\EE[\PP[  \bigcap_{k=n}^m (E_{k}\cap A_k^c)|{\cal G}^m]]\le (1-\delta)\PP[  \bigcap_{k=n}^{m-1} (E_{k}\cap A_k^c)]. $$
So we get $\PP[  \bigcap_{k=n}^m (E_{k}\cap A_k^c)]\le (1-\delta)^{n-m+1}$, which implies that $\PP[ \bigcap_{k=n}^\infty (E_k\cap A_k^c)]=0$ for every $n\in\N$, and so $\PP[E]=0$. This contradiction completes the proof.
\end{proof}

 \begin{Corollary}
 Almost surely the path $(R_+(t),R_-(t))$, $t\in\R_+$, avoids $(0,0)$.  \label{Cor-0}
\end{Corollary}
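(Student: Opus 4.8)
\textbf{Proof proposal for Corollary \ref{Cor-0}.}
The plan is to reduce the claim to the statement already proved inside the proof of Theorem \ref{Thm-SDE-whole-R} (via Lemma \ref{lemma-0-1}), and to upgrade it from an ``endpoint'' statement to a ``path'' statement. By Theorem \ref{Thm-SDE-whole-R}, under $\PP^{\ulin\rho}$ the pair $(R_+,R_-)$ satisfies the SDE (\ref{SDE-R}) throughout $\R_+$ for a pair of independent Brownian motions $(B_+,B_-)$. In particular, for every deterministic $T>0$, the restriction $(R_+,R_-)|_{[0,T]}$ satisfies the hypotheses of Lemma \ref{lemma-0-1} with the stopping time $\tau$ taken to be any stopping time $\le T$; more to the point, we want to know that the path does not touch $(0,0)$ at an \emph{interior} time, not just at the terminal time. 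So the first step is to localize: fix a deterministic $t_0>0$ and a small $\eps>0$, and consider the stopping time $\tau=\inf\{t\ge 0: R_+(t)^2+R_-(t)^2\le \eps\}\wedge t_0$.

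The key observation is that on the event $\{R_+(\tau)^2+R_-(\tau)^2=\eps,\ \tau<t_0\}$ the process has actually \emph{hit} the circle of radius $\sqrt\eps$, not $(0,0)$ itself, so this is not yet a contradiction; instead I would run the argument of Lemma \ref{lemma-0-1} directly on the process itself rather than on its terminal value. Recall the core of that proof: setting $X=R_+-R_-$, $Y=1-R_+R_-$, then $Z=X/Y$, and finally the angular coordinates $\Theta=\arcsin(\til Z)$, $\Phi=\log\frac{1+\sqrt{1-\til Y}}{1-\sqrt{1-\til Y}}$ after a time change, one finds that $\Theta^2+\Phi^2$, near $0$, behaves like a squared Bessel process of dimension $\frac 4\kappa(\rho_0+2)+1\ge 2$ (using $\rho_0\ge\frac\kappa4-2$). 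A squared Bessel process of dimension $\ge 2$ started from a positive value a.s.\ never hits $0$; this is exactly what forces $\lim_{t\uparrow\tau}(\Theta(t)^2+\Phi(t)^2)\ne 0$ in the cited proof, but the same Bessel comparison in fact shows that $\Theta^2+\Phi^2$ \emph{never} reaches $0$ on its whole lifespan. Thus, as long as $(R_+,R_-)$ stays in the region where $Y\ne 0$, i.e.\ $R_+R_-\ne 1$, the path avoids $(0,0)$, because $(R_+(t),R_-(t))=(0,0)$ forces $(X,Y)=(0,1)$, hence $(\Theta,\Phi)=(0,0)$.

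It remains to handle the possibility $Y=0$, i.e.\ $R_+R_-=1$; since $R_\pm\in[0,1]$ this means $R_+=R_-=1$, which is a boundary point, and at such a point $(R_+,R_-)\ne(0,0)$ trivially, so it does not matter for the corollary. More carefully, one applies the Markov property exactly as in Lemma \ref{lemma-0-1}: if $Y(0)\ne 0$, run until $T=\inf\{t:Y(t)=0\}$, and the Bessel comparison on $[0,T)$ shows $(R_+,R_-)$ avoids $(0,0)$ on $[0,T)$; on $\{T<\infty\}$ one has $(R_+(T),R_-(T))=(1,1)\ne(0,0)$, and then one restarts from a point with $Y=0$, but from such a point the process cannot instantaneously return to a point with $(R_+,R_-)=(0,0)$ — indeed $R_+R_-=1$ is itself bounded away from $(0,0)$, and by continuity the path stays near $(1,1)$ for a positive time. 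Patching these phases together over the (at most countably many, by continuity) excursions away from $\{Y=0\}$, we conclude that a.s.\ the path never equals $(0,0)$.

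\textbf{Main obstacle.} The only real subtlety, beyond quoting Lemma \ref{lemma-0-1}, is that Lemma \ref{lemma-0-1} is phrased as a statement about the \emph{terminal} value $\lim_{t\uparrow\tau}(R_+(t),R_-(t))$ of a stopped solution, whereas the corollary asserts avoidance at all times; I expect the main work to be the (routine but slightly fiddly) verification that the squared-Bessel comparison used there is genuinely a statement that $\Theta^2+\Phi^2>0$ for all $t$ in the lifespan, together with the bookkeeping of the time change $v(t)=\int_0^t\kappa/Y(s)\,ds$ and the excursion decomposition across $\{Y=0\}$. One should also double-check that no boundary behaviour of $R_\pm$ at $0$ or $1$ (e.g.\ the process being absorbed) is being overlooked — but since the corollary only concerns the corner $(0,0)$ and the Bessel dimension there is $\ge 2$, this is exactly the regime the earlier proof already controls.
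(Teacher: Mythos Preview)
Your approach works but is considerably more elaborate than necessary. The paper's proof is a two-line argument: let $\tau$ be the first time $(R_+(t),R_-(t))=(0,0)$ (or $\infty$ if no such time exists). By Theorem \ref{Thm-SDE-whole-R}, $(R_+,R_-)$ satisfies (\ref{SDE-R}) on $[0,\tau)$, so Lemma \ref{lemma-0-1} applies directly with this $\tau$. On $\{\tau<\infty\}$, continuity gives $\lim_{t\uparrow\tau}(R_+(t),R_-(t))=(0,0)$, contradicting the conclusion of Lemma \ref{lemma-0-1}; hence $\PP[\tau<\infty]=0$.

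Your worry that Lemma \ref{lemma-0-1} is ``only a terminal-value statement'' is what led you astray. The trick is to choose $\tau$ itself to be the hitting time of the bad point $(0,0)$; then the terminal-value statement is exactly what is needed, and there is nothing to upgrade. You instead re-open the proof of Lemma \ref{lemma-0-1}, invoke non-attainment of $0$ by squared Bessel processes of dimension $\ge 2$ along the whole path, and patch across excursions from $\{Y=0\}$. This is sound in spirit but unnecessary, and the excursion bookkeeping is a distraction: at any time with $Y=0$ one has $R_+=R_-=1\ne(0,0)$, so no decomposition is required once you use the hitting-time trick.
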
\begin{proof}
  Let $\tau$ be the first $t$ such that $(R_+(t),R_-(t))= (0,0)$, if such $t$ exists; and be $\infty$ if otherwise. Then $\tau$ is a stopping time, and on the event $\{\tau<\infty\}$, $\lim_{t\uparrow \tau} R_\pm(t)=0$. By Lemma \ref{lemma-0-1}, such event has probability zero.
\end{proof}

\subsection{Transition density}
In this subsection, we are going to use orthogonal polynomials to derive the transition density of $\ulin R(t)=(R_+(t),R_-(t))$, $t\ge 0$,   against the Lebesgue measure restricted to $[0,1]^2$. A similar approach was first used in \cite[Appendix B]{tip} to calculate the transition density of radial Bessel processes, where one-variable orthogonal polynomials were used. Two-variable orthogonal polynomials were used in \cite[Section 5]{Two-Green-interior} to calculate the transition density of a two-dimensional diffusion process. Here we will use another family of two-variable orthogonal polynomials to calculate the transition density of the $(\ulin R)$ here. In addition, we are going to derive the invariant density of $(\ulin R)$, and estimate the convergence of the transition density to the invariant density.

Let $X=R_+-R_-$ and $Y=1-R_+R_-$. Since $R_+$ and $R_-$ satisfy (\ref{SDE-R}) throughout $\R_+$, $X$ and $Y$ then satisfy (\ref{dX},\ref{dY},\ref{d<X,Y>}) throughout $\R_+$. Moreover, {by Corollary \ref{Cor-0}, a.s.\ } $(X,Y)\in \lin\Delta\sem \{(0,1)\}$, where $\Delta=\{(x,y)\in\R^2:0<|x|<y<1\}$. We will first find the transition density of $(X(t),Y(t))$. Assume that the transition density $p(t,(x,y),(x^*,y^*))$ exists, and is smooth in $(x,y)$, then it should satisf{y} the Kolmogorov's backward equation:
\BGE -\pa_t p+{\cal L} p=0,\label{PDE-boundary}\EDE
where ${\cal L}$ is the second order differential operator defined by
{
\begin{align*}
 {\cal L}=&\frac{\kappa}{2} (y-x^2)\pa_x^2+\kappa x(1-y)\pa_x\pa_y+\frac \kappa 2 y(1-y)\pa_y^2 \\
 &-[(\rho_++\rho_-+\rho_0+6)x+(\rho_+-\rho_-)]\pa_x-[(\rho_++\rho_-+\rho_0+6)y-(\rho_++\rho_-+4)]\pa_y.
\end{align*}}
We perform a change of coordinate $(x,y)\mapsto (r,h)$ by $x=rh$ and $y=h$ at $y\ne 0$. Direct calculation shows that
$$\pa_r=h\pa_x,\quad \pa_h=r\pa_x+\pa_y,\quad \pa_r^2=h^2\pa_x^2,\quad
\pa_h^2=r^2\pa_x^2 +2r\pa_x \pa_y+\pa_y^2,\quad \pa_r\pa_h=rh\pa_x^2+h\pa_x\pa_y.$$
Define  $$\alpha_0=\frac {2 }\kappa(\rho_0+2)-1,\quad\alpha_\pm=\frac 2\kappa(\rho_\pm+2)-1, \quad  \beta=\alpha_++\alpha_-+1;$$
$${\cal L}^{(r)}=(1-r^2)\pa_r^2-  [(\alpha_++\alpha_-+2 )r+(\alpha_+-\alpha_-)]\pa_r;$$
$${\cal L}^{(h)}=h(1-h)\pa_h^2- [(\alpha_0+\beta+2)h-(\beta+1)]\pa_h.$$
Then in the $(r,h)$-coordinate, ${\cal L}=\frac{\kappa}{2} [{\cal L}^{(h)}+\frac 1 h {\cal L}^{(r)}]$. Let
$$  \lambda _n=-n(n+\alpha_0+\beta+1),\quad\lambda^{(r)}_n= -n(n+\beta),\quad n\ge 0.$$
 Direct calculation shows that
\BGE [{\cal L}^{(h)}+\frac 1 h \lambda_n^{(r)}]h^n =h^n [{\cal L}^{(h)}-2n(h-1)\pa_h+\lambda_n ],\label{Lie}\EDE
where each $h^n$ in the formula is understood as a multiplication by the $n$-th power of $h$.
From (\ref{Jacobi-ODE}) we know that Jacobi polynomials $P_n^{(\alpha_+,\alpha_-)}(r)$, $n\ge0$,  satisfy that
\BGE \L^{(r)} P_n^{(\alpha_+,\alpha_-)}(r)=\lambda^{(r)}_n  P_n^{(\alpha_+,\alpha_-)}(r),\quad n\ge 0;\label{r-eigen}\EDE
and the functions $P_m^{(\alpha_0,\beta+2n)}(2h-1)$, $m\ge 0$, satisfy that
\BGE (\L^{(h)}-2n(h-1)\pa_h+\lambda_n )P_m^{(\alpha_0,\beta+2n)}(2h-1)=\lambda_{m+n} P_m^{(\alpha_0,\beta+2n)}(2h-1),\quad m\ge 0.\label{h-eigen}\EDE

For  $n\ge 0$, define a homogeneous  two-variable polynomial $Q_n^{(\alpha_+,\alpha_-)}(x,y)$ of degree $n$ such that
$Q_n^{(\alpha_+,\alpha_-)}(x,y)=y^n P_n^{(\alpha_+,\alpha_-)}(x/y)$ if $y\ne 0$.
It has nonzero coefficient for $x^n$. For every pair of integers $n,m\ge 0$, define a two-variable polynomial $ v_{n,m}(x,y)$ of degree $n+m$ by
$$ v_{n,m}(x,y)=P^{(\alpha_0,\beta+2n)}_m(2y-1) Q^{(\alpha_+,\alpha_-)}_n(x,y).$$
Then $ v_{n,m}$ is also a polynomial in $r,h$ with the expression:
\BGE  v_{n,m}(r,h)=h^n P^{(\alpha_0,\beta+2n)}_m(2h-1)P^{(\alpha_+,\alpha_-)}_n(r).\label{vhr}\EDE
By (\ref{Lie},\ref{r-eigen},\ref{h-eigen}), on $\R^2\sem\{y\ne 0\}$,
$$\frac 2\kappa {\cal L} v_{n,m}=\frac 2\kappa[{\cal L}^{(h)}+\frac {1} h {\cal L}^{(r)}] v_{n,m}
=  [{\cal L}^{(h)}+\frac {1} h \lambda_n^{(r)}] (h^n P^{(\alpha_0,\beta+2n)}_m(2h-1)P^{(\alpha_+,\alpha_-)}_n(r))$$
$$=h^n [{\cal L}^{(h)}-2n(h-1)\pa_h+\lambda_n ] (P^{(\alpha_0,\beta+2n)}_m(2h-1)P^{(\alpha_+,\alpha_-)}_n(r))
=\lambda_{n+m}  v_{n,m}.$$
Since $v_{n,m}$ is a polynomial in $x,y$, by continuity the above equation holds throughout $\R^2$.
Thus, for every $n,m\ge 0$, $ v_{n,m}(x,y)e^{\frac \kappa 2\lambda_{n+m}t}$ solves (\ref{PDE-boundary}), and the same is true for any linear combination of such functions.
From (\ref{vhr}) we get an upper bound of $\|v_{n,m}\|_\infty:=\sup_{(x,y)\in\Delta} |v_{n,m}(x,y)|$:
\BGE  \|  v_{n,m}\|_\infty\le   \|P_m^{(\alpha_0,\beta+2n)}\|_\infty \| P_n^{(\alpha_+,\alpha_-)}\|_\infty,
\label{supernorm'}\EDE
where the supernorm of the Jacobi polynomials are taken on $[-1,1]$ as in (\ref{super-exact},\ref{super-upper}).

Since $P_n^{(\alpha_+,\alpha_-)}(r)$, $n\ge0$, are mutually orthogonal w.r.t.\ the weight function $\Psi^{(\alpha_+,\alpha_-)}(r)$, and for any fixed $n\ge 0$, $P_m^{(\alpha_0,\beta+2n)}(2h -1)$, $m\ge 0$, are mutually orthogonal w.r.t.\ the weight function $\Psi^{(\alpha_0,\beta+2n)}(2h-1)=2^{\alpha_0+\beta+2n}{\bf 1}_{(0,1)}(h)(1-h)^{\alpha_0}h^{\beta+2n}$, using a change of coordinates we conclude that $ v_{n,m}(x,y)$, $n,m\in\N\cup\{0\}$, are mutually orthogonal w.r.t.\ the weight function
$$\Psi(x,y):={\bf 1}_\Delta (x,y) (y-x)^{\alpha_+}(y+x)^{\alpha_-} (1-y)^{\alpha_0}.$$
Moreover, we have
\BGE \| v_{n,m}\|^2_\Psi=2^{-(\alpha_0+\beta+2n+1)}\|P_m^{(\alpha_0,\beta+2n)} \|^2_{\Psi^{(\alpha_0,\beta+2n)} }\cdot \|P^{(\alpha_+,\alpha_-)}_n\|^2_{\Psi^{(\alpha_+,\alpha_-)}}.\label{L2-norm'}\EDE

Let $f(x,y)$ be a polynomial in two variables. Then $f$ can be expressed by a linear combination  $f(x,y)=\sum_{n=0}^\infty\sum_{m=0}^\infty a_{n,m}v_{n,m}(x,y)$, where  $a_{n,m}:=\langle f,v_{(n,m)}\rangle_\Psi/\|v_{n,m}\|_\Psi^2$ are zero for all but finitely many $(n,m)$. In fact,  every polynomial in $x,y$ of degree $\le k$ can be expressed as a linear combination of $v_{n,m}$ with $n+m\le k$. In fact, the number of such $v_{n,m}$ is $\frac{(k+1)(k+2)}2$.
Define $$f(t,(x,y))=\sum_{n=0}^\infty\sum_{m=0}^\infty  a_{n,m} v_{n,m}(x,y)e^{\frac\kappa 2\lambda_{n+m}t}=\sum_{n=0}^\infty\sum_{m=0}^\infty  \frac{\langle f,v_{n,m}\rangle_\Psi} {\|v_{n,m}\|_\Psi^2} \cdot v_{n,m}(x,y) e^{\frac\kappa 2\lambda_{n+m}t}.$$
Then $f(t,(x,y))$ solves (\ref{PDE-boundary}). Let $(X(t),Y(t))$ be a diffusion process in $\Delta$, which solves (\ref{dX},\ref{dY},\ref{d<X,Y>}) with initial value $(x,y)$.  Fix $t_0>0$ and define $M_t=f(t_0-t,(X(t),Y(t)))$, $0\le t\le t_0$. By It\^o's formula, $M$ is a bounded martingale, which implies that
$$\EE[f(X({t_0}),Y(t_0))]=\EE[M_{t_0}]=M_0=f(t_0,(x,y))  $$
\BGE =\sum_{n=0}^\infty\sum_{m=0}^\infty \int\!\!\int_{\Delta} f(x^*,y^*) \Psi(x^*,y^*)\frac {v_{n,m}(x^*,y^*) v_{n,m}(x,y)}{\|v_{n,m}\|_\Psi^2}\cdot e^{\frac\kappa 2\lambda_{n+m}t_0}  dx^*dy^*.\label{Ef-boundary}\EDE

For $t>0$, $(x,y)\in\lin\Delta $, and $(x^*,y^*)\in \Delta $, define
\BGE p_t((x,y),(x^*,y^*))=\Psi(x^*,y^*)\sum_{n=0}^\infty\sum_{m=0}^\infty  \frac {v_{n,m}(x,y) v_{n,m}(x^*,y^*)}{\|v_{n,m}\|_\Psi^2}\cdot e^{\frac\kappa 2\lambda_{n+m}t}. \label{prs}\EDE
Let $p_\infty(x^*,y^*)=  {\bf 1}_{\Delta}(x^*,y^*)\Psi(x^*,y^*) /\|1\|_\Psi^2$. Note that $\lambda_0=0$ and $v_{0,0}\equiv 1$ since $P^{\alpha_0,\beta}_0=P^{\alpha_+,\alpha_-}_0\equiv 1$. So $p_\infty(x^*,y^*)$ corresponds to the first term in the series.

\begin{Lemma}
{(i)}	For any $t_0>0$, the series in (\ref{prs}) (without the factor $\Psi(x^*,y^*)$) converges uniformly on $[t_0,\infty)\times \lin\Delta\times \lin\Delta$. {(ii)} There is $C_{t_0}\in(0,\infty)$ depending only on $\kappa, \ulin\rho,t_0$ such that for any $(x,y)\in\lin\Delta$ and $(x^*,y^*)\in\Delta$,
	\BGE |p_t((x,y),(x^*,y^*))-p_\infty(x^*,y^*)|\le  C_{t_0} e^{-  (\rho_++\rho_-+\rho_0+6)t}\Psi(x^*,y^*),\quad t\ge t_0 .\label{asym}\EDE
{(iii)} For any $t>0$ and $(x^*,y^*)\in \lin \Delta$,
	\BGE p_\infty (x^*,y^*)=\int\!\!\int_{\Delta} p_\infty (x,y)p_t((x,y),(x^*,y^*))dxdy.\label{invar}\EDE \label{asym-lemma}
\end{Lemma}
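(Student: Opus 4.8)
The plan is to prove the three parts of Lemma \ref{asym-lemma} by controlling the two-variable series in (\ref{prs}) through uniform estimates on the building blocks $v_{n,m}$, their $L^2(\Psi)$-norms, and the eigenvalues $\lambda_{n+m}$. The key quantitative input is that $\frac\kappa2\lambda_{n+m}=-\frac\kappa2(n+m)(n+m+\alpha_0+\beta+1)$ decays superlinearly in $N:=n+m$, and, by the identities $\alpha_0+\beta+1=\frac2\kappa(\rho_0+\rho_++\rho_-+6)$, the leading term of $-\frac\kappa2\lambda_1$ is exactly $\rho_++\rho_-+\rho_0+6$, which is the rate appearing in (\ref{asym}).

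\textbf{Step 1: polynomial-in-$N$ bounds on the summands.} First I would combine (\ref{supernorm'}) with (\ref{super-upper}) to get $\|v_{n,m}\|_\infty\le P(N)$ for a polynomial $P$ depending only on $\kappa,\ulin\rho$ (here $N=n+m$). Next I would use (\ref{L2-norm'}) together with the exact norm formula (\ref{norm}) for Jacobi polynomials to get a \emph{lower} bound $\|v_{n,m}\|_\Psi^2\ge c\,N^{-q}$ for constants $c>0$, $q$ depending only on $\kappa,\ulin\rho$; the point is that each factor in (\ref{L2-norm'}), via (\ref{norm}), is a ratio of Gamma functions that is bounded below by a negative power of the relevant index, and $2^{-(\alpha_0+\beta+2n+1)}$ is bounded below by a geometric factor which is harmless against the superexponential decay $e^{\frac\kappa2\lambda_N t}$. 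Hence each term of the series (without $\Psi$) is bounded in absolute value by $C\,N^{r}e^{\frac\kappa2\lambda_N t}$ for some polynomial power $r$, uniformly over $(x,y),(x^*,y^*)\in\lin\Delta$.

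\textbf{Step 2: summation and uniform convergence (part (i)).} There are at most $N+1$ pairs $(n,m)$ with $n+m=N$, so the tail of the series from level $N_0$ on is bounded by $\sum_{N\ge N_0}(N+1)\,C\,N^{r}e^{\frac\kappa2\lambda_N t}$. For $t\ge t_0$ this is a convergent series (dominated, say, by $\sum_N N^{r+1}e^{-\frac\kappa2 t_0 N^2}$) whose tail tends to $0$ uniformly in $t\ge t_0$ and in $(x,y),(x^*,y^*)$, giving (i). For part (iii), uniform convergence lets us integrate term by term: since $\langle v_{0,0},v_{n,m}\rangle_\Psi=\|v_{0,0}\|_\Psi^2\delta_{n,0}\delta_{m,0}$ by orthogonality, $\int\!\!\int_\Delta p_\infty(x,y)\,v_{n,m}(x,y)\,dx\,dy=\|v_{0,0}\|_\Psi^{-2}\langle 1,v_{n,m}\rangle_\Psi$ kills every term except $(0,0)$, and the $(0,0)$ term of (\ref{prs}) is exactly $p_\infty(x^*,y^*)$ (using $\lambda_0=0$, $v_{0,0}\equiv1$); this is (\ref{invar}). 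For part (ii), subtracting the $(0,0)$ term leaves the series over $N\ge1$, and bounding it as above by $e^{\frac\kappa2\lambda_1 t}\sum_{N\ge1}(N+1)C N^r e^{\frac\kappa2(\lambda_N-\lambda_1)t_0}$ for $t\ge t_0$ gives $|p_t-p_\infty|\le C_{t_0}e^{\frac\kappa2\lambda_1 t}\Psi(x^*,y^*)$; since $-\frac\kappa2\lambda_1=\frac\kappa2(\alpha_0+\beta+1+1)=\rho_++\rho_-+\rho_0+6$ wait—more precisely $-\frac\kappa2\lambda_1=\frac\kappa2(1)(1+\alpha_0+\beta+1)=\frac\kappa2(\alpha_0+\beta+2)$, and using $\frac\kappa2(\alpha_0+\beta+1)=\rho_0+\rho_++\rho_-+6-\frac\kappa2\cdot0$... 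I would verify the arithmetic carefully, but the claimed exponent $\rho_++\rho_-+\rho_0+6$ in (\ref{asym}) should come out after substituting $\alpha_0=\frac2\kappa(\rho_0+2)-1$, $\beta=\frac2\kappa(\rho_++2)-1+\frac2\kappa(\rho_-+2)-1+1$.

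\textbf{Main obstacle.} The delicate point is the lower bound on $\|v_{n,m}\|_\Psi^2$ in Step 1: the factor $2^{-(\alpha_0+\beta+2n+1)}$ in (\ref{L2-norm'}) decays geometrically in $n$, and the ratios of Gamma functions in (\ref{norm}) for $P_m^{(\alpha_0,\beta+2n)}$ — whose parameter $\beta+2n$ grows with $n$ — must be shown not to decay faster than polynomially in $N$ after the geometric factor is accounted for. I would handle this with Stirling's formula, checking that $\frac{\Gamma(m+\alpha_0+1)\Gamma(m+\beta+2n+1)}{m!\,\Gamma(m+\alpha_0+\beta+2n+1)}$ behaves like $m^{\alpha_0}$ up to constants uniform in $n$ (so it is bounded below by a fixed negative power of $N$ when $\alpha_0<0$, and below by a constant when $\alpha_0\ge0$), and that $\frac{2^{\alpha_0+\beta+2n+1}}{2m+\alpha_0+\beta+2n+1}\cdot 2^{-(\alpha_0+\beta+2n+1)}=\frac1{2m+\alpha_0+\beta+2n+1}\ge \frac{c}{N}$. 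Once this polynomial lower bound is in hand, everything else is routine comparison of a convergent Gaussian-type series against its largest term.
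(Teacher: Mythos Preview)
Your approach is exactly the paper's: Stirling on the Jacobi norms and sup-norms, quadratic decay of $\lambda_N$, and term-by-term integration via orthogonality for (iii). The exponent arithmetic you hesitated over is fine: $\alpha_0+\beta+2=\frac{2}{\kappa}(\rho_0+2)+\frac{2}{\kappa}(\rho_++\rho_-+4)=\frac{2}{\kappa}(\rho_0+\rho_++\rho_-+6)$, so $-\frac{\kappa}{2}\lambda_1=\rho_++\rho_-+\rho_0+6$, matching (\ref{asym}).

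There is one genuine misstatement in Step~1: the bound $\|v_{n,m}\|_\infty\le P(N)$ with $P$ a polynomial is false. The second Jacobi parameter in $P_m^{(\alpha_0,\beta+2n)}$ grows with $n$, so by (\ref{super-exact}) you get $\|P_m^{(\alpha_0,\beta+2n)}\|_\infty=\frac{\Gamma(\beta+2n+m+1)}{m!\,\Gamma(\beta+2n+1)}$ once $\beta+2n>\max\{\alpha_0,-\tfrac12\}$, and for $m,n$ comparable to $N/2$ this is $e^{cN}$ for some $c>0$ (Stirling gives leading constant $c=\tfrac32\log\tfrac32+\tfrac12\log2$). Likewise the combined ratio $\|v_{n,m}\|_\infty^2/\|v_{n,m}\|_\Psi^2$ can grow exponentially in $N$. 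This does \emph{not} break your proof: since $\frac{\kappa}{2}\lambda_N=-\frac{\kappa}{2}N(N+\alpha_0+\beta+1)$ decays like $-cN^2$, the factor $e^{\frac{\kappa}{2}\lambda_N t_0}$ kills any exponential growth, and your tail series is dominated by $\sum_N e^{CN}e^{-c t_0 N^2}<\infty$ rather than $\sum_N N^{r+1}e^{-ct_0N^2}$. Just replace ``polynomial in $N$'' by ``at most exponential in $N$'' throughout Step~1, and the rest of your argument stands as written.
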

\begin{proof}
{The statements (i) and (ii)} both follow from Stirling's formula, (\ref{supernorm'},\ref{L2-norm'},\ref{norm},\ref{super-upper}), and the facts that $0>\lambda_1= -\frac 2\kappa (\rho_++\rho_-+\rho_0+6)>\lambda_n$ for any $n>1$ and $\lambda_n\asymp - n^2$ for big $n$. {The statement (iii)} follows from {the statement (i)} and the orthogonality of $v_{n,m}$ w.r.t.\ $\langle\cdot,\cdot\rangle_\Psi$.
\end{proof}

\begin{Lemma}
	The process $(X(t),Y(t))$   has a transition density, which is $p_t((x,y),(x^*,y^*))$, and an invariant density, which is $p_\infty(x^*,y^*)$.
\end{Lemma}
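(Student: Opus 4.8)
The plan is to verify the two assertions (transition density and invariant density) by checking the defining properties of each, relying on the computations already carried out and on the machinery of equations (\ref{PDE-boundary})--(\ref{prs}). For the transition density, I would argue as follows. First, fix a polynomial test function $f$ on $\lin\Delta$. By (\ref{Ef-boundary}), for the process $(X(t),Y(t))$ started at $(x,y)\in\Delta$ we have
$$\EE[f(X(t_0),Y(t_0))]=\int\!\!\int_\Delta f(x^*,y^*)\,p_{t_0}((x,y),(x^*,y^*))\,dx^*dy^*$$
for every $t_0>0$, where the interchange of summation and integration is justified by Lemma \ref{asym-lemma}(i) (uniform convergence of the series on $[t_0,\infty)\times\lin\Delta\times\lin\Delta$). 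Since polynomials are dense in $C(\lin\Delta)$ by Stone--Weierstrass, this identity extends to all $f\in C(\lin\Delta)$, hence $p_{t_0}((x,y),\cdot)$ is indeed the law of $(X(t_0),Y(t_0))$ (as a measure on $\lin\Delta$, with no atoms since $\Psi\,dx^*dy^*$ has none, and with no mass escaping to $(0,1)$ by Corollary \ref{Cor-0}). The Markov property of $(X,Y)$ — which follows from well-posedness of the SDE system (\ref{dX}),(\ref{dY}),(\ref{d<X,Y>}), itself a consequence of Theorem \ref{Thm-SDE-whole-R} together with standard pathwise-uniqueness up to the hitting time of $(0,1)$ and Corollary \ref{Cor-0} showing that time is a.s.\ infinite — then upgrades this to the statement that $p_t((x,y),(x^*,y^*))$ is the transition density: for $(x,y)\in\lin\Delta$ one approximates by interior starting points using continuity of $p_t$ in $(x,y)$ (again Lemma \ref{asym-lemma}(i)).

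For the invariant density, I would simply observe that $p_\infty$ is a probability density on $\Delta$ (it is $\Psi/\|1\|_\Psi^2$ restricted to $\Delta$, and $\|1\|_\Psi^2=\int\!\!\int_\Delta\Psi<\infty$ since all of $\alpha_+,\alpha_-,\alpha_0>-1$), and that (\ref{invar}) in Lemma \ref{asym-lemma}(iii) says precisely
$$p_\infty(x^*,y^*)=\int\!\!\int_\Delta p_\infty(x,y)\,p_t((x,y),(x^*,y^*))\,dx\,dy,$$
which is the statement that the measure $p_\infty(x,y)\,dx\,dy$ is invariant for the transition semigroup $(p_t)$. Combined with the first part, this establishes that $p_\infty$ is an invariant density. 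One should also note (for completeness, though perhaps not strictly needed for this lemma) that (\ref{asym}) gives exponential convergence of $p_t$ to $p_\infty$, so the invariant density is unique; this is immediate from Lemma \ref{asym-lemma}(ii).

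I do not expect serious obstacles here: the heavy lifting — constructing the eigenfunctions $v_{n,m}$, proving the PDE identity $\frac2\kappa\mathcal{L}v_{n,m}=\lambda_{n+m}v_{n,m}$, the martingale computation yielding (\ref{Ef-boundary}), and the convergence estimates of Lemma \ref{asym-lemma} — is already done in the preceding pages. The one point requiring a little care is the passage from ``$p_t((x,y),\cdot)$ integrates polynomials correctly'' to ``$p_t$ is the transition density'': this needs (a) density of polynomials in $C(\lin\Delta)$, (b) the a priori knowledge that $(X(t),Y(t))$ stays in $\lin\Delta$ and never reaches the bad point $(0,1)$ (Corollary \ref{Cor-0}, transferred from $(R_+,R_-)$ via $x=rh,\ y=h$), and (c) the Markov property to chain the one-step identities. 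The mildest subtlety is justifying the term-by-term It\^o computation that gives $M_t=f(t_0-t,(X(t),Y(t)))$ is a martingale: since $f(t_0-t,\cdot)$ is a \emph{finite} linear combination of the $v_{n,m}$ (as $f$ is a polynomial) and each $v_{n,m}(x,y)e^{\frac\kappa2\lambda_{n+m}t}$ solves (\ref{PDE-boundary}) with $v_{n,m}$ bounded on $\lin\Delta$, this is a routine application of It\^o's formula with a bounded integrand, so there is no real difficulty. Thus the main ``obstacle'' is merely bookkeeping: assembling the already-established pieces in the right logical order.
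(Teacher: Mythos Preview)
Your proposal is correct and takes essentially the same approach as the paper: verify (\ref{Efp'-boundary}) for polynomial $f$ via (\ref{Ef-boundary}), extend to all $f\in C(\lin\Delta)$ by Stone--Weierstrass, and read off invariance from (\ref{invar}). The paper's proof is terser---it does not spell out the Markov property or the boundary-approximation step you mention---but the logical skeleton is identical.
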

\begin{proof}
Fix $(x,y)\in \lin\Delta\sem\{(0,1)\}$. Let $(X(t),Y(t))$ be the process that satisfies (\ref{dX},\ref{dY},\ref{d<X,Y>}) with initial value $(x,y)$. It suffices to show that, for any continuous function $f$ on $\lin\Delta$,  we have
\BGE \EE[f(X(t_0),Y(t_0))]=\int \!\!\int_\Delta p_{t_0}((x,y),(x^*,y^*))f(x^*,y^*)dx^*dy^*.\label{Efp'-boundary}\EDE
By Stone-Weierstrass theorem, $f$ can be approximated by a polynomial in two variables uniformly on $\Delta$. Thus, it suffices to show that (\ref{Efp'-boundary}) holds whenever $f$ is a polynomial in $x,y$, which follows immediately from (\ref{Ef-boundary}). The statement on $p_\infty(x^*,y^*)$ follows from (\ref{invar}).
\end{proof}

Since $X=R_+-R_-$, $Y=1-R_+R_-$, and the Jacobian of the transformation is $-(R_++R_-)$, we arrive at the following statement.

\begin{Corollary} The process $(\ulin R(t))$   has a transition density:
$$p^R_t(\ulin r,\ulin r^*):=  p_t ((r_+-r_-,1-r_+r_-),(r_+^*-r_-^*,1-r_+^*r_-^*))\cdot (r_+^*+r_-^*),$$  and  an invariant density: $p^R_\infty(\ulin r^*):= p_\infty( r_+^*-r_-^*,1-r_+^*r_-^*)\cdot (r_+^*+r_-^*)$; and for any $t_0>0$, there is $C_{t_0}\in(0,\infty)$ depending only on $\kappa$, $\ulin\rho$, and $t_0$ such that for any $\ulin r\in [0,1]^2$ and $\ulin r^*\in(0,1)^2$,
$$ |p^R_t(\ulin r,\ulin r^*)-p^R_\infty(\ulin r^*)|\le  C_{t_0} e^{-  (\rho_++\rho_-+\rho_0+6)t}p^R_\infty(\ulin r^*),\quad t\ge t_0 .$$
\label{transition-R-infty}
\end{Corollary}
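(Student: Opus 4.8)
\textbf{Proof proposal for Corollary \ref{transition-R-infty}.}

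The plan is to transport the statements about $(X(t),Y(t))$ established in the preceding lemma directly through the change of variables $(r_+,r_-)\mapsto (x,y)=(r_+-r_-,\,1-r_+r_-)$, which maps $(0,1)^2$ onto $\Delta$ and $[0,1]^2$ onto $\lin\Delta$. First I would note that this map is a diffeomorphism on the relevant open sets: its Jacobian matrix is $\begin{pmatrix}1&-1\\-r_-&-r_+\end{pmatrix}$, whose determinant is $-(r_++r_-)$, nonvanishing on $(0,1)^2\setminus\{(0,0)\}$; and by Corollary \ref{Cor-0} the process $(\ulin R(t))$ a.s.\ avoids $(0,0)$, so the degeneracy of the map at the single point $(0,0)$ is irrelevant. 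By Theorem \ref{Thm-SDE-whole-R} and the It\^o calculation already carried out in the subsection, $X=R_+-R_-$ and $Y=1-R_+R_-$ solve (\ref{dX})--(\ref{d<X,Y>}) throughout $\R_+$, hence $(X(t),Y(t))$ is exactly the diffusion whose transition density $p_t$ and invariant density $p_\infty$ were identified. The standard change-of-variables formula for transition densities of diffusions then gives that $(\ulin R(t))$ has transition density $p^R_t(\ulin r,\ulin r^*)=p_t\big((r_+-r_-,1-r_+r_-),(r_+^*-r_-^*,1-r_+^*r_-^*)\big)\cdot\big|\det D\Phi(\ulin r^*)\big|$, and $|\det D\Phi(\ulin r^*)|=r_+^*+r_-^*$; similarly for $p^R_\infty$.

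More concretely, to avoid invoking a change-of-variables theorem as a black box, I would verify the defining property directly: for any continuous $f$ on $[0,1]^2$, set $\tilde f(x,y)=f\big(\Phi^{-1}(x,y)\big)$ (continuous on $\lin\Delta$), and compute, using the transition-density identity (\ref{Efp'-boundary}) for $(X,Y)$ and then substituting $(x^*,y^*)=\Phi(\ulin r^*)$ with Jacobian factor $r_+^*+r_-^*$,
\begin{align*}
\EE[f(\ulin R(t_0))]&=\EE[\tilde f(X(t_0),Y(t_0))]
=\int\!\!\int_\Delta p_{t_0}(\Phi(\ulin r),(x^*,y^*))\tilde f(x^*,y^*)\,dx^*dy^*\\
&=\int\!\!\int_{(0,1)^2} p_{t_0}(\Phi(\ulin r),\Phi(\ulin r^*))(r_+^*+r_-^*)f(\ulin r^*)\,dr_+^*dr_-^*,
\end{align*}
which is precisely the assertion that $p^R_{t_0}$ is the transition density. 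The statement that $p^R_\infty$ is the invariant density follows the same way from the corresponding identity for $p_\infty$, or alternatively from (\ref{invar}) transported through $\Phi$.

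Finally, the convergence estimate is immediate: applying (\ref{asym}) at the point $(x,y)=\Phi(\ulin r)$, $(x^*,y^*)=\Phi(\ulin r^*)$ and multiplying both sides by $r_+^*+r_-^*\le 2$,
\[
|p^R_t(\ulin r,\ulin r^*)-p^R_\infty(\ulin r^*)|\le C_{t_0}e^{-(\rho_++\rho_-+\rho_0+6)t}\,\Psi(\Phi(\ulin r^*))(r_+^*+r_-^*)
= C_{t_0}e^{-(\rho_++\rho_-+\rho_0+6)t}\,p^R_\infty(\ulin r^*)\|1\|_\Psi^2,
\]
after absorbing the harmless constant $\|1\|_\Psi^2$ into $C_{t_0}$; here I have used that $p^R_\infty(\ulin r^*)=\Psi(\Phi(\ulin r^*))(r_+^*+r_-^*)/\|1\|_\Psi^2$. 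The only genuine subtlety, and the one point that needs a sentence of care rather than routine computation, is the behavior of $\Phi$ at $(0,0)$ where it fails to be a local diffeomorphism: I would handle this by observing that $(0,0)$ is a Lebesgue-null point that the process a.s.\ never visits (Corollary \ref{Cor-0}), so it contributes nothing to any of the integral identities and can simply be excised from the domain throughout. Everything else is bookkeeping.
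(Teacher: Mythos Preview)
Your proof is correct and follows exactly the paper's approach: the paper's own proof is a single sentence noting that $X=R_+-R_-$, $Y=1-R_+R_-$, and the Jacobian of the transformation is $-(R_++R_-)$, from which the corollary follows. You have simply fleshed out the change-of-variables computation and the transport of the estimate (\ref{asym}) in detail; the one minor imprecision is that $\Phi$ does not quite map $(0,1)^2$ onto $\Delta$ (the diagonal $r_+=r_-$ lands on $\{x=0\}\subset\lin\Delta\setminus\Delta$), but this is a Lebesgue-null set and irrelevant to the integral identities.
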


\section{Commuting Pair{s} of hSLE Curves}\label{section-other-commut}
In this section, we study three commuting pairs of hSLE$_\kappa$ curves. {Each commuting pair corresponds to one case in Theorem \ref{main-Thm1}, and the section will be split into a subsection for each.} It turns out that each {commuting pair} is ``locally'' absolutely continuous w.r.t.\ a commuting pair of chordal SLE$_\kappa(2,\ulin\rho)$ curves for some suitable force values. So the results in the previous section can be applied here.  Fix $\kappa\in(0,8)$ and $v_-<w_-<w_+<v_+\in\R$. We write $\ulin w=(w_+,w_-)$ and $\ulin v=(v_+,v_-)$. For $\ulin\rho=(\rho_+,\rho_-)$ that satisfies the conditions in Section \ref{subsection-commuting-SLE-kappa-rho}, let $\PP^{\ulin\rho}_{\ulin w;\ulin v}$ denote the law of the driving functions of a commuting pair of choral SLE$_\kappa(2,\ulin\rho)$ curves started from $(\ulin w;\ulin v)$.

\subsection{Two curves in a $2$-SLE$_\kappa$} \label{section-two-curve}
Suppose  that $(\ha\eta_+,\ha \eta_-)$ is a $2$-SLE$_\kappa$ in $\HH$ with link pattern $(w_+\to v_+;w_-\to v_-)$. By \cite[Proposition 6.10]{Wu-hSLE}, for $\sigma \in\{+,-\}$, $\ha \eta_\sigma$ is an hSLE$_\kappa$ curve in $\HH$ from $w_\sigma$ to $v_\sigma$ with force points $w_{-\sigma}$ and $v_{-\sigma}$. Stopping $\ha\eta_\sigma$ at the first time {$t$, denoted by $T_\sigma$, such} that {$\ha\eta_\sigma[0,t]$} disconnects $\infty$ from any of its force points, and parametrizing the stopped curve by $\HH$-capacity, we get a chordal Loewner curve $\eta_\sigma(t)$, $0\le t<T_\sigma$, which is an hSLE$_\kappa$ curve in the chordal coordinate. Let $\ha w_\sigma$ and $K_\sigma(\cdot)$ be respectively the chordal Loewner driving function and hulls for $\eta_\sigma$;  and let $\F^\sigma$ be the filtration generated by $\eta_{\sigma}$. Let $\F$ be the separable  $\R_+^2$-indexed filtration generated by $\F^+$ and $\F^-$.

For $\sigma\in\{+,-\}$, if $\tau_{-\sigma} $ is an $\F^{-\sigma}$-stopping time, then conditionally on $\F^{-\sigma}_{\tau_{-\sigma}}$ and the event $\{\tau_{-\sigma}<T_{-\sigma}\}$,  the whole $\eta_{ \sigma}$ and the part of $\ha\eta_{-\sigma}$ after $\eta(\tau_{-\sigma})$ together
 form a $2$-SLE$_\kappa$ in $\HH\sem K_{-\sigma}(\tau_{-\sigma})$ with link pattern $(w_\sigma\to v_\sigma;\eta_{ -\sigma}(\tau_{-\sigma})\to v_{-\sigma})$. This in particular implies that the conditional law of $\ha\eta_\sigma$ is that of an hSLE$_\kappa$ curve from $w_\sigma$ to $v_\sigma$ in $\HH\sem K_{-\sigma}(\tau_{-\sigma})$  with force points $\eta_{-\sigma}(\tau_{-\sigma})$ and $v_{-\sigma}$. Since $ f_{K_{-\sigma}(\tau_{-\sigma})}$ maps $\HH$ conformally onto  $\HH\sem K_{-\sigma}(\tau_{-\sigma})$, and sends $\ha w_{-\sigma}(\tau_{-\sigma})$, $g_{K_{-\sigma}(\tau_{-\sigma})}(w_\sigma)$ and $g_{K_{-\sigma}(\tau_{-\sigma})}(v_\nu)$, $\nu\in\{+,-\}$, respectively to $\eta_{-\sigma}(\tau_{-\sigma})$, $w_\sigma$ and $v_\nu$, $\nu\in\{+,-\}$, we see that  there a.s.\ exists a chordal Loewner curve $\eta_{\sigma}^{\tau_{-\sigma}}$ with some speed such that $\eta_ \sigma= f_{K_{-\sigma}(\tau_{-\sigma})}\circ \eta_ {\sigma,\tau_{-\sigma}}$, and the conditional law of the normalization of $\eta_ {\sigma,\tau_{-\sigma}}$ given $\F^{-\sigma}_{\tau_{-\sigma}}$ is that of an hSLE$_\kappa$ curve in $\HH$ from $g_{K_{-\sigma}(\tau_{-\sigma})}(w_\sigma)$ to $g_{K_{-\sigma}(\tau_{-\sigma})}(v_\sigma) $ with force points $\ha w_{-\sigma}(\tau_{-\sigma})$ and $g_{K_{-\sigma}(\tau_{-\sigma})}(v_{-\sigma})$, in the chordal coordinate.

 Thus, $(\eta_+,\eta_-)$ a.s.\ satisfies the conditions in Definition \ref{commuting-Loewner} with $I_\sigma=[0,T_\sigma)$, $\I_\sigma^*=\I_\sigma \cap\Q$, $\sigma\in\{+,-\}$, and  ${{\cal D}_1}:=\I_+\times \I_-$. By discarding a null event, we assume that $(\eta_+,\eta_-;{\cal D}_1)$ is always a commuting pair of chordal Loewner curves, and call $(\eta_+,\eta_-;{\cal D}_1)$ a commuting pair of hSLE$_\kappa$ curves in the chordal coordinate started from $(\ulin w;\ulin v)$.  We  adopt the functions from Section \ref{section-deterministic}.
Define a function $M_1$ on ${\cal D}_1$ by $M_1=G_1(W_+,W_-;V_+,V_-)$, where $G_1$ is given by (\ref{G1(w,v)}).   Since $F$ is continuous and positive on $[0,1]$, $ |W_\sigma-V_\nu|\le |V_+-V_-|$ for $\sigma,\nu\in\{+,-\}$, and $\frac 8\kappa-1,\frac 4\kappa>0$, there is a constant $C>0$ depending only on $\kappa$ such that
  \BGE M_1\le C|V_+-V_-|^{\frac {16}\kappa-1} \min_{\sigma\in\{+,-\}} \{|W_\sigma-V_\sigma|\}^{\frac 8\kappa -1}\le C |V_+-V_-|^{2(\frac{12}\kappa -1)}.\label{M1-boundedness}\EDE
Note that $M_1>0$ on $\cal D$ because $|W_\sigma-V_\sigma|>0$, $\sigma\in\{+,-\}$, on $\cal D$.
We will prove that $M_1$ extends to an $\F$-martingale on $\R_+^2$, {and the extended martingale plays the role of a} Radon-Nikodym derivative between two measures. We first need some deterministic properties of $M_1$.

\begin{Lemma}
   $M_1$ a.s.\ extends continuously to $\R_+^2$ with  $M_1\equiv 0$ on $\R_+^2\sem {\cal D}_1$. \label{M-cont}
\end{Lemma}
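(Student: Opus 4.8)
The plan is to extend $M_1$ continuously from ${\cal D}_1=[0,T_+)\times[0,T_-)$ to the compact rectangle $\lin{{\cal D}_1}=[0,T_+]\times[0,T_-]$ (using that a.s.\ $0<T_+,T_-<\infty$), then to show that this extension vanishes on $\partial{\cal D}_1\cap\R_+^2$, and finally to glue in the value $0$ on all of $\R_+^2\sem{\cal D}_1$. For the gluing: the resulting function is continuous on the open set ${\cal D}_1$, equals the constant $0$ on the interior of $\R_+^2\sem{\cal D}_1$, and the common boundary $\partial(\R_+^2\sem{\cal D}_1)\cap\R_+^2=(\{T_+\}\times[0,T_-])\cup([0,T_+]\times\{T_-\})$ is contained in $\partial{\cal D}_1\cap\R_+^2$, where the two definitions agree by the vanishing statement; hence the glued function is continuous on $\R_+^2$.

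\emph{Continuous extension to $\lin{{\cal D}_1}$.} Since an hSLE$_\kappa$ curve is a.s.\ continuous, the stopped, capacity-reparametrized curves $\eta_\sigma$ a.s.\ extend continuously to $[0,T_\sigma]$. By Remark \ref{Remark-continuity-W} the driving functions $W_+,W_-$ then extend continuously to $\lin{{\cal D}_1}$, and the same reasoning (via Proposition \ref{Prop-cd-continuity'} and the equicontinuity behind Lemma \ref{common-function}(v)) shows the force point functions $V_+,V_-$ do as well. Because $F$ is continuous and strictly positive on $[0,1]$, the cross ratio in (\ref{G1(w,v)}) lies in $[0,1]$, and (as $\kappa<8$) the exponents $\frac 8\kappa-1$ and $\frac 4\kappa$ are positive, the function $G_1$ is continuous on the relevant set, so $M_1=G_1(W_+,W_-;V_+,V_-)$ extends continuously to $\lin{{\cal D}_1}$; write $\lin{M_1}$ for this extension, which is nonnegative since $M_1>0$ on ${\cal D}_1$.

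\emph{Vanishing on the faces.} By symmetry it suffices to prove $\lin{M_1}(T_+,t_-)=0$ for all $t_-\in[0,T_-]$. By (\ref{M1-boundedness}), since $|V_+-V_-|$ is bounded on the compact set $\lin{{\cal D}_1}$ and $\frac 8\kappa-1>0$, it is enough to show $W_+(T_+,t_-)=V_+(T_+,t_-)$. I will first treat $t_-=0$: at the capacity-time $T_+$ the curve $\eta_+$ first disconnects $v_+$ from $\infty$; a topological analysis of this first disconnection — any touching of $\R\cap[v_+,\infty)$ already disconnects $v_+$ and so cannot occur strictly before $T_+$, and a loop closing in the interior of $\HH$ cannot disconnect a boundary point — shows that $\eta_+(T_+)$ is a real point $p_1\ge v_+$ equal to $b_{K_+(T_+)}$ that is touched by $\eta_+$ only at time $T_+$, so that $\ha w_+(T_+)=d_{K_+(T_+)}$, while $V_+(T_+,0)=g^{w_+}_{K_+(T_+)}(v_+)=d_{K_+(T_+)}$ by Definition \ref{Def-Rw} (since $w_+<v_+\le b_{K_+(T_+)}$); hence $W_+(T_+,0)=V_+(T_+,0)$. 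For general $t_-$, Lemmas \ref{W=gw} and \ref{common-function} give $W_+(T_+,t_-)=g^{W_-(T_+,0)}_{K_-^{T_+}(t_-)}(\ha w_+(T_+))$ and $V_+(T_+,t_-)=g^{W_-(T_+,0)}_{K_-^{T_+}(t_-)}(V_+(T_+,0))$, and since the two arguments coincide by the $t_-=0$ case, so do the two values.

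The main obstacle is the geometric step just described: one has to pin down precisely how an hSLE$_\kappa$ curve first disconnects $v_+$ from $\infty$ (which for $\kappa\in(4,8)$ need not happen at $v_+$ itself, but always at the rightmost real point of the current hull) and convert this picture into the equality $\ha w_+(T_+)=d_{K_+(T_+)}=V_+(T_+,0)$. Everything else — the continuous extension, the propagation to general $t_-$, and the gluing — is a routine combination of the results of Section \ref{section-deterministic} with the bound (\ref{M1-boundedness}) already in hand.
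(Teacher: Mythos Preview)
Your continuous-extension-then-glue strategy is sound, and the propagation step via the $1$-Lipschitz map $g^{W_-(t_+,0)}_{K_-^{t_+}(t_-)}$ is a nice observation: it gives
\[
0\le V_+(t_+,t_-)-W_+(t_+,t_-)\le V_+(t_+,0)-\ha w_+(t_+),
\]
so uniform convergence in $t_-$ comes for free once the $t_-=0$ case is settled. The problem is the premise of your ``$t_-=0$'' step.

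You assert that at time $T_+$ the curve $\eta_+$ first disconnects $v_+$ from $\infty$ and hence terminates at a point $p_1\in[v_+,\infty)$. That is only one of two possibilities. The hSLE$_\kappa$ curve $\eta_+$ from $w_+$ to $v_+$ (with force points $w_-,v_-$) can, for $\kappa\in(4,8)$, first touch a point of $(-\infty,v_-)$, thereby swallowing $v_-$; this is exactly when $T_+$ occurs in the second case the paper isolates. In that situation $v_+$ is \emph{not} swallowed, so $V_+(t_+,0)=g_{K_+(t_+)}(v_+)$ stays bounded away from $\ha w_+(t_+)$ and $W_+-V_+$ does not tend to $0$. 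What does tend to $0$ is $|W_--V_-|$, and this is the factor in (\ref{M1-boundedness}) that kills $M_1$. But your propagation device does not transfer this: $V_-(t_+,t_-)=g^{W_-(t_+,0)}_{K_-^{t_+}(t_-)}(V_-(t_+,0))$, whereas $W_-(t_+,\cdot)$ is the \emph{driving function} of the chain $K_-^{t_+}(\cdot)$, not the $g^{W_-(t_+,0)}_{K_-^{t_+}(t_-)}$-image of $W_-(t_+,0)$; you only know $C_-\le W_-\le D_-$, and $D_--V_-$ has no reason to be small. So the argument breaks down in this case.

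The paper sidesteps this asymmetry altogether: it uses a crosscut/extremal-length estimate to show directly that the relevant difference ($|W_+-V_+|$ or $|W_--V_-|$, according to which endpoint $\eta_+$ approaches) tends to $0$ uniformly in $t_-$, without invoking Lemmas \ref{W=gw} and \ref{common-function} at boundary times. Your route can be repaired for $\kappa\le 4$ (where only the first case occurs), but for $\kappa\in(4,8)$ you need to supply a genuinely different argument for the second case.
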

\begin{proof}
It suffices to show that for $\sigma\in\{+,-\}$, as $t_\sigma\uparrow T_\sigma$, $M_1\to 0$ uniformly in $t_{-\sigma}\in [0,T_{-\sigma})$. By symmetry, we may assume that $\sigma=+$.
Since the union of (the whole) $\eta_+$ and $\eta_-$ is bounded, by (\ref{V-V})  $|V_+-V_-|$ is bounded (by random numbers) on ${\cal D}_1$.
For a fixed $t_-\in[0,T_-)$, as $t_+\uparrow T_+$, $\eta_+(t_+)$ tends to either some point on $[v_+,\infty)$ or some point on $(-\infty, v_-)$.
By (\ref{M1-boundedness}), it suffices to show that when $\eta_+$ terminates at $[v_+,\infty)$ (resp.\ at $(-\infty,v_-)$), $W_+-V_+\to 0$ (resp.\ $W_--V_-\to 0$) as $t_+\uparrow T_+$, uniformly in $[0,T_-)$.

For any $\ulin t=(t_+,t_-)\in{\cal D}_1$, neither $\eta_+[0,t_+]$ nor $\eta_-[0,t_-]$ hit $(-\infty, v_-]\cup [v_+,\infty)$, which implies that $v_+,v_-\not\in \lin{K(\ulin t)}$ and $V_\pm(\ulin t)= g_{K(\ulin t)}(v_\pm)$. Suppose that $\eta_+$ terminates at $x_0\in [v_+,\infty)$. Since SLE$_\kappa$ is not boundary-filling for $\kappa\in(0,8)$, we know that $ \dist(x_0,\eta_-)>0$.  Let $r=\min\{|w_+-v_+|,\dist(x_0,\eta_-)\}>0$.
 	 Fix $\eps\in(0,r)$. Since $x_0=\lim_ {t\uparrow T_+} \eta_+(t)$, there is $\delta>0$ such that $|\eta_+(t)-x_0|<\eps$ for $t\in(T_+-\del,T_+)$.  Fix $t_+\in(T_+-\delta,T_+)$ and $t_-\in[0,T_-)$. Let $J$ be the connected component of $\{|z-x_0|=\eps\}\cap (\HH\sem K(\ulin t))$ whose closure contains $x_0+\eps$. Then $J$ disconnects $v_+$ and $\eta_+(t_+,T_+)\cap(\HH\sem K(\ulin t))$ from $\infty$ in $\HH\sem K(\ulin t)$. Thus, $g_{K(\ulin t)}(J)$ disconnects $V_+(\ulin t)$ and $W_+(\ulin t)$ from $\infty$.  Since $\eta_+\cup\eta_-$ is bounded, there is a (random) $R\in(0,\infty)$ such that $\eta_+\cup\eta_-\subset \{|z-x_0|<R\}$. Let $\xi=\{|z-x_0|=2R\}\cap\HH$. By comparison principle, the extremal length (\cite{Ahl}) of the family of curves in $\HH\sem K(\ulin t)$ that separate $J$ from $\xi$ is   $\le \frac{\pi}{\log(R/\eps)}$. By conformal invariance, the extremal length  of the family of curves in $\HH$ that separate $g_{K(\ulin t)}(J)$ from $g_{K(\ulin t)}(\xi)$ is also $\le \frac{\pi}{\log(R/\eps)}$. Now $g_{K(\ulin t)}(\xi)$ and $g_{K(\ulin t)}(J)$ are crosscuts of $\HH$ such that the former encloses the latter. Let $D$ denote the subdomain of $\HH$ bounded by  $g_{K(\ulin t)}(\xi)$. From Proposition \ref{g-z-sup} we know that $D\subset\{|z-x_0|\le 5R\}$. So the Euclidean area of $D$ is less than $13\pi R^2$.  By the definition of extremal length, there is a curve $\gamma$ in $D$ that separates $g_{K(\ulin t)}(J)$ from $g_{K(\ulin t)}(\xi)$  with Euclidean distance less than $2\sqrt{13\pi R^2*\frac{\pi}{\log(R/\eps)}}<8\pi R*\log(R/\eps)^{-1/2}$. Since $g_{K(\ulin t)}(J)$ disconnects $V_+(\ulin t)$ and $W_+(\ulin t)$ from $\infty$, $\gamma$ also separates $V_+(\ulin t)$ and $W_+(\ulin t)$ from $\infty$. Thus, $|W_+(\ulin t)-V_+(\ulin t)|<8\pi R*\log(R/\eps)^{-1/2}$ if $t_+\in(T_+-\del,T_+)$ and $t_-\in[0,T_-)$. This proves the uniform convergence of $\lim_{t_+\uparrow T_+} |W_+-V_+|=0$ in $t_-\in[0,T_-)$ in the case that $\lim_{t_+\uparrow T_+}\eta_+(t_+)\in[v_+,\infty)$. The proof of the uniform convergence of $\lim_{t_+\uparrow T_+} |W_--V_-|=0$ in $t_-\in[0,T_-)$ in the case that $\lim_{t_+\uparrow T_+}\eta_+(t_+)\in(-\infty,v_-)$ is similar.
\end{proof}

From now on, {we extend $M_1$ to $\R_+^2$ using Lemma \ref{M-cont}. It is then} a continuous stochastic process defined on $\R_+^2$ with constant {value} zero on ${\R_+^2}\sem {\cal D}_1$.
For $\sigma\in\{+,-\}$ and $R>|v_+-v_-|/2$, let $\tau^\sigma_R$ be the first time that $|\eta_\sigma(t)-(v_++v_-)/2|=R$ if such time exists; otherwise $\tau^\sigma_R=T_\sigma$. Let $\ulin \tau_R=(\tau^+_R,\tau^-_R)$. Note that $\tau^+_R,\tau^-_R\le \mA(\ulin \tau_R)\le R^2/2$ because   $K(\ulin\tau_R)\subset \{z\in\HH:|z-(v_++v_-)/2|\le R\}$.

\begin{Lemma}
  For every $R>0$, $M_1(\cdot\wedge \ulin \tau_R)$ is an $\R_+^2$-indexed martingale w.r.t.\ the filtration $(\F^+_{t_+\wedge \tau^+_R}\vee \F^-_{t_-\wedge \tau^-_R})_{(t_+,t_-)\in\R_+^2}$ closed by  $M_1(\ulin \tau_R)$. Moreover, if the underlying probability measure {for the $(\eta_+,\eta_-)$ described at the beginning of this subsection} is weighted by $M_1(\ulin\tau_R)/M_1(\ulin 0)$, then the new law of the driving functions $(\ha w_+,\ha w_-)$ agrees with   $\PP^{(2,2)}_{\ulin w;\ulin v} $  on the $\sigma$-algebra $\F^+_{ \tau^+_R}\vee \F^-_{  \tau^-_R}$. \label{M-mart}
\end{Lemma}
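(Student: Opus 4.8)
The plan is to show that $M_1$, restricted to the random rectangle $[\ulin 0,\ulin\tau_R]$, is a martingale that serves as a Radon--Nikodym derivative converting the law of a commuting pair of hSLE$_\kappa$ curves (in the chordal coordinate) into $\PP^{(2,2)}_{\ulin w;\ulin v}$. The strategy mirrors the proof of Lemma~\ref{RN-M-ic}: first establish the single-variable statement (grow one curve while holding the other fixed), then bootstrap to the two-variable statement, and finally invoke a monotone class / uniqueness argument to identify the tilted law. Throughout I will use the fact that, conditionally on $\F^{-\sigma}_{\tau_{-\sigma}}$ and $\{\tau_{-\sigma}<T_{-\sigma}\}$, the normalization of $\eta_\sigma^{\tau_{-\sigma}}$ is an hSLE$_\kappa$ curve in the chordal coordinate with driving function $W_\sigma|^{-\sigma}_{\tau_{-\sigma}}$ and force point functions $W_{-\sigma}|^{-\sigma}_{\tau_{-\sigma}}$, $V_+|^{-\sigma}_{\tau_{-\sigma}}$, $V_-|^{-\sigma}_{\tau_{-\sigma}}$ (with speed $\mA|^{-\sigma}_{\tau_{-\sigma}}$), as derived at the start of this subsection.

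First I would fix $\sigma\in\{+,-\}$ and a finite $\F^{-\sigma}$-stopping time $t_{-\sigma}$, and consider the single-variable process $t_\sigma\mapsto M_1(t_\sigma\ulin e_\sigma+t_{-\sigma}\ulin e_{-\sigma})$ up to $\tau^\sigma_R$. Here Proposition~\ref{Prop-iSLE-3} is exactly the tool: it states that $G_1(\ha w_0,\ha v_1;\ha w_\infty,\ha v_2)$ is a positive local martingale along an hSLE$_\kappa$ curve in the chordal coordinate, and that tilting by it produces a chordal SLE$_\kappa(2,2,2)$ curve with force points $\ha w_\infty,\ha v_1,\ha v_2$. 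In our setting, after applying the conditional DMP, the hSLE curve $\eta_\sigma^{t_{-\sigma}}$ has target $V_\sigma|^{-\sigma}_{t_{-\sigma}}$ (the image of $v_\sigma$) and force points corresponding to $W_{-\sigma}$ and $V_{-\sigma}$; thus $M_1|^{-\sigma}_{t_{-\sigma}} = G_1(W_\sigma, V_\sigma; W_{-\sigma}, V_{-\sigma})|^{-\sigma}_{t_{-\sigma}}$ is a positive local martingale in $t_\sigma$, and since $M_1$ is bounded on $[\ulin 0,\ulin\tau_R]$ by $C|V_+-V_-|^{2(12/\kappa-1)}\le C(2R)^{2(12/\kappa-1)}$ via \eqref{M1-boundedness} (using $K(\ulin\tau_R)\subset\{|z-(v_++v_-)/2|\le R\}$ and Proposition~\ref{g-z-sup} to bound $|V_+-V_-|$), it is in fact a bounded martingale up to $\tau^\sigma_R$. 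Moreover Proposition~\ref{Prop-iSLE-3} identifies the tilted single-variable law as that of a chordal SLE$_\kappa(2,2,2)$ started from $W_\sigma(t_{-\sigma}\ulin e_{-\sigma})$ with the three force points $W_{-\sigma},V_+,V_-$ at $t_{-\sigma}\ulin e_{-\sigma}$; noting that one of $V_+$, $V_-$ sits on the far side and the other together with $W_{-\sigma}$ on the near side, this is precisely the conditional law of one curve in a commuting pair of chordal SLE$_\kappa(2,2)$ curves (the third force value $\rho_0=0$ being suppressed as in the convention at the end of Section~\ref{subsection-commuting-SLE-kappa-rho}, since $\rho_0=0>\frac\kappa4-2$).

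Next I would combine the two single-variable statements to get the two-variable martingale property. Since $M_1(\ulin 0)$ is deterministic and $M_1(t_+\ulin e_++0\cdot\ulin e_-)$ is a martingale up to $\tau^+_R$ under the original measure, and then $M_1((\tau^+_R\wedge t_+)\ulin e_+ + t_-\ulin e_-)/M_1(\tau^+_R\wedge t_+,0)$ is (conditionally) a martingale in $t_-$ up to $\tau^-_R$, a repeated-conditioning argument of the type used in Lemma~\ref{RN-M-ic} shows that $M_1(t_+\wedge\tau^+_R, t_-\wedge\tau^-_R)$ is an $(\F^+_{t_+\wedge\tau^+_R}\vee\F^-_{t_-\wedge\tau^-_R})$-martingale closed by $M_1(\ulin\tau_R)$; boundedness justifies the optional-stopping steps (Proposition~\ref{OST}) and the closure. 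Then, exactly as in the proof of Lemma~\ref{RN-M-ic}: since $\EE[M_1(\ulin\tau_R)]=M_1(\ulin 0)$ one defines the weighted probability measure $d\widetilde\PP = (M_1(\ulin\tau_R)/M_1(\ulin 0))\,d\PP$; the marginal laws of $\widetilde\PP$ along each axis are read off from the single-variable tilting as SLE$_\kappa(2,2)$ driving functions; and then for an $\F^-$-stopping time $\sigma_-\le\tau^-_R$ one computes via Girsanov that $W_+(\cdot,\sigma_-)$ satisfies under $\widetilde\PP$ the SDE \eqref{SDE-paW} with force values $(2,2)$, i.e.\ the SDE for the driving function of one curve in a commuting pair of chordal SLE$_\kappa(2,2)$ curves with speed $W_{+,1}(\cdot,\sigma_-)^2$; the symmetric statement holds for $W_-(\sigma_+,\cdot)$. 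By the characterization of commuting pairs of chordal SLE$_\kappa(2,\ulin\rho)$ curves (Conditions (A),(B) of Section~\ref{subsection-commuting-SLE-kappa-rho}) this pins down $\widetilde\PP$ on $\F^+_{\tau^+_R}\vee\F^-_{\tau^-_R}$ as $\PP^{(2,2)}_{\ulin w;\ulin v}$.

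The main obstacle I anticipate is bookkeeping around the force-point conventions at the stopping time and along the two-parameter flow: one must check that the target point $v_\sigma$ of $\eta_\sigma$ maps, under $g_{K_{-\sigma}(\tau_{-\sigma})}$, to the point that $V_\sigma$ tracks, that $w_{-\sigma}$ and $v_{-\sigma}$ are the force points on the near and far sides as required in Proposition~\ref{Prop-iSLE-3}, and that the degenerate cases (when $V_\nu$ collides with $W_\sigma$, handled by the $\sign(v_\nu-w_\sigma)$ convention, or when $\rho_0=0$ so $v_0$ drops out) cause no genuine loss. A second, more technical point is that $\eta_\sigma^{t_{-\sigma}}$ has a nonconstant speed $\mA|^{-\sigma}_{t_{-\sigma}}$, so the local martingale property from Proposition~\ref{Prop-iSLE-3} must be transported through the time change; this is routine since martingality is preserved under (continuous, strictly increasing) time changes, but it needs to be stated carefully. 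Everything else — the uniform boundedness of $M_1$ on $[\ulin 0,\ulin\tau_R]$, the closure of the martingale, and the final identification via Girsanov — follows the template of Lemma~\ref{RN-M-ic} essentially verbatim.
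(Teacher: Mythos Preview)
Your proposal is correct and follows essentially the same structure as the paper's proof: apply Proposition~\ref{Prop-iSLE-3} (with the time change you correctly flag) to see that $M_1|^{-\sigma}_{t_{-\sigma}\wedge\tau^{-\sigma}_R}$ is a local martingale in $t_\sigma$, use the uniform bound on $M_1$ over $[\ulin 0,\ulin\tau_R]$ from \eqref{M1-boundedness} to upgrade to a bounded martingale, and iterate in the two variables to obtain the $\R_+^2$-indexed martingale closed by $M_1(\ulin\tau_R)$.

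For the identification of the weighted law, the paper takes a slightly more direct route than the Girsanov/SDE verification you sketch. Rather than checking that $W_+(\cdot,\sigma_-)$ satisfies \eqref{SDE-paW} under $\widetilde\PP$, the paper factors the weighting into two successive one-variable tilts, $\PP_1\to\PP_{.5}\to\PP_0$, via $N_1:=M_1(0,\tau^-_R)/M_1(\ulin 0)$ and $N_2:=M_1(\tau^+_R,\tau^-_R)/M_1(0,\tau^-_R)$. At each step Proposition~\ref{Prop-iSLE-3} \emph{already} identifies the tilted (marginal, respectively conditional) law as chordal SLE$_\kappa(2,2,2)$, so no explicit Girsanov drift computation or appeal to \eqref{SDE-paW} is needed. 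Your route is valid, but since Proposition~\ref{Prop-iSLE-3} packages the Girsanov step, invoking it twice in sequence is the more economical argument; it also sidesteps the bookkeeping you anticipate around the nonconstant speed and force-point conventions, since those are absorbed into the hypothesis of Proposition~\ref{Prop-iSLE-3} applied to the normalization of $\eta_\sigma^{\tau_{-\sigma}}$.
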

\begin{proof} Let $R>0$, $\sigma\in\{+,-\}$, $t_{-\sigma}\ge 0$, and $\tau_{-\sigma}=t_{-\sigma}\wedge \tau^{-\sigma}_R$. Since $W_\sigma|^{-\sigma}_{\tau_{-\sigma}}$, $W_{-\sigma}|^{-\sigma}_{\tau_{-\sigma}}$ and $V_\nu|^{-\sigma}_{\tau_{-\sigma}}$, $\nu\in\{+,-\}$, are all $(\F^\sigma_t\vee \F^{-\sigma}_{\tau_{-\sigma}})_{t\ge 0}$-adapted, and are driving function and force point functions for hSLE$_\kappa$ curves with some speeds in the chordal coordinate conditional on $\F^{-\sigma}_{\tau_{-\sigma}}$, by Proposition \ref{Prop-iSLE-3} (with a time-change), $M_1|^{-\sigma}_{\tau_{-\sigma}}(t)$, $0\le t<T_\sigma$, is an $(\F^\sigma_t\vee \F^{-\sigma}_{\tau_{-\sigma}})_{t\ge 0}$-local martingale. Since $M_1$ is uniformly bounded on $[\ulin 0,\ulin\tau_R]$ and $\tau^\pm_R\le R^2/2$, $M_1|^{-\sigma}_{\tau_{-\sigma}}(\cdot \wedge \tau^\sigma_R)$ is an $(\F^+_{t_+\wedge \tau^+_R}\vee \F^-_{t_-\wedge \tau^-_R})_{t_\sigma\ge 0}$-martingale closed by $M_1|^{-\sigma}_{\tau_{-\sigma}}(\tau^\sigma_R)$. Applying this result twice respectively for $\sigma=+$ and $-$, we obtain the martingale property of $M_1(\cdot\wedge \ulin \tau_R)$

Let $\PP_1$ denote the  underlying probability measure.
By weighting $\PP_1$ by $M_1(\ulin\tau_R)/M_1(\ulin 0)$, we get another probability measure, denoted by $\PP_0$. To describe the restriction of $\PP_0$ to $\F _{\ulin\tau_R}$, we study the new marginal law of $\eta_-$ up to $\tau^-_R$ and the new conditional law of $\eta_+$ up to $\tau^+_R$ given that part of $\eta_-$. We may first weight $\PP_1$ by $N_1:=M_1(0,\tau^-_R)/M_1(0, 0)$ to get a new probability measure $\PP_{.5}$, and then weight $\PP_{.5}$ by $N_2:=M_1(\tau^+_R,\tau^-_R)/M_1(0,\tau^-_R)$ to get $\PP_0$.

By Proposition \ref{Prop-iSLE-3}, the $\eta_-$ up to $\tau^-_R$ under $\PP_{.5}$ is a chordal SLE$_\kappa (2,2,2)$ curve in $\HH$ started from $w_-$ with force points $v_-,w_+,v_+$, respectively, up to $\tau^-_R$. Since $N_1$ depends only on $\eta_-$, the conditional law of $\eta_+$ given any part of $\eta_-$ under $\PP_{.5}$ agrees with that under $\PP_1$ . Since $M_1(0,\tau^-_R)=0$ implies that $N_1=0$, and $\PP_{.5}$-a.s.\ $N_1>0$, we see that  $N_2$ is $\PP_{.5}$-a.s.\ well defined. Since $\EE[N_2|\F^-_{\tau^-_R}]=1$, the law of $\eta_-$ up to $\tau^-_R$ under $\PP_0$ agrees with that under $\PP_{.5}$.  To describe the conditional law of $\eta_+$ up to $\tau^+_R=\tau^+_R(\eta_+)$ given $\eta_-$ up to $\tau^-_R$, it suffices to consider the conditional law of $\eta_{+}^{\tau^-_R}$ up to $\tau^+_R(\eta_+)$ since we may recover $\eta_+$ from $\eta_+^{\tau^-_R}$ using $\eta_+=f_{K_-(\tau^-_R)}\circ \eta_{+}^{\tau^-_R}$. By Proposition \ref{Prop-iSLE-3} again, the  conditional law of the normalization of $\eta_{+}^{\tau^-_R}$ up to $\tau^+_R(\eta_+)$ under $\PP_0$ is that of a chordal SLE$_\kappa(2,2,2)$ curve in $\HH$ started from $W_+(0,\tau^-_R)$ with force points at $V_+(0,\tau^-_R)$, $W_-(0,\tau^-_R)$ and $V_-(0,\tau^-_R)$, respectively. Thus, under $\PP_0$ the joint law of $\eta_+$ up to $\tau^+_R$ and $\eta_-$ up to $\tau^-_R$ agrees with that of a commuting pair of SLE$_\kappa(2,2,2)$ curves started from $(\ulin w;\ulin v)$ respectively up to $\tau^+_R$ and $\tau^-_R$. So the proof is complete
\end{proof}

We let $\PP_1$ denote the joint law of the driving functions $\ha w_+$ and $\ha w_-$ here, and let $\PP^0_1=\PP^{(2,2)}_{\ulin w;\ulin v}$. {We will later define $\PP_j$ and $\PP^0_j$, $j=2,3$, in Sections \ref{section-iSLE-1} and \ref{section-iSLE-2}, and the three pairs of measures $\PP_j,\PP^0_j$, $j=1,2,3$, will be referred in Section \ref{summary}.}
From the lemma, we find that, for any $\ulin t=(t_+,t_-)\in\R_+^2$ and $R>0$,
\BGE \frac{d\PP^0_1|(\F^+_{t_+\wedge\tau^+_R}\vee \F^-_{t_-\wedge \tau^-_R} )}{d\PP_1|(\F^+_{t_+\wedge\tau^+_R}\vee \F^-_{t_-\wedge\tau^-_R}) }=\frac {M_1(\ulin t\wedge \ulin \tau_R)} {M_1(\ulin 0)}.\label{RN-formula0}\EDE

\begin{Lemma}
Under $\PP_1$, $M_1 $ is an $\F$-martingale; and for any $\F$-stopping time $\ulin \tau$,
	\BGE \frac{d\PP^0_1| \F _{\ulin \tau } \cap\{\ulin \tau\in\R_+^2\}   }{d\PP_1| \F _{\ulin \tau } \cap\{\ulin \tau\in\R_+^2\} }=\frac {M_1(\ulin \tau)} {M_1(\ulin 0)}.\label{RN-formula}\EDE
	\label{RN-Thm1}
\end{Lemma}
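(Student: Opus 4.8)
The plan is to deduce the global martingale property and the Radon--Nikodym formula from the localized versions already established in Lemma \ref{M-mart} and equation (\ref{RN-formula0}), by letting $R\to\infty$. First I would observe that, as $R\to\infty$, $\tau^\sigma_R\uparrow T_\sigma$ for $\sigma\in\{+,-\}$: indeed $\eta_\sigma$ is a bounded curve, so for $R$ large enough $\tau^\sigma_R=T_\sigma$ already. Consequently, for fixed $\ulin t=(t_+,t_-)\in\R_+^2$, we have $\ulin t\wedge \ulin\tau_R\to \ulin t$ whenever $\ulin t\in{\cal D}_1$, and more importantly $\ulin t\wedge \ulin\tau_R$ eventually equals $\ulin t$ on the event $\{\ulin t\in{\cal D}_1\}$; on the complementary event $\{\ulin t\notin{\cal D}_1\}$ the continuity of $M_1$ together with $M_1\equiv 0$ on $\R_+^2\sem{\cal D}_1$ (Lemma \ref{M-cont}) gives $M_1(\ulin t\wedge \ulin\tau_R)\to M_1(\ulin t)=0$. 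In all cases $M_1(\ulin t\wedge\ulin\tau_R)\to M_1(\ulin t)$ a.s.\ as $R\to\infty$.

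The martingale property then follows by a dominated-convergence argument. Fix $\ulin s\le \ulin t\in\R_+^2$ and $A\in\F_{\ulin s}$. For each $R$, Lemma \ref{M-mart} gives $\EE_1[{\bf 1}_A M_1(\ulin t\wedge\ulin\tau_R)]=\EE_1[{\bf 1}_A M_1(\ulin s\wedge\ulin\tau_R)]$, modulo the minor point that $A\in\F_{\ulin s}$ need not lie in the $R$-localized $\sigma$-algebra $\F^+_{s_+\wedge\tau^+_R}\vee\F^-_{s_-\wedge\tau^-_R}$; this is handled by writing the localized martingale as $\EE_1[M_1(\ulin\tau_R)\mid \F^+_{\cdot\wedge\tau^+_R}\vee\F^-_{\cdot\wedge\tau^-_R}]$ and then taking a further conditional expectation, or more simply by the optional stopping theorem (Proposition \ref{OST}) applied to the two-parameter martingale $M_1(\cdot\wedge\ulin\tau_R)$ together with the stopping time $\ulin s$. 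To pass to the limit I need a uniform integrability bound: by (\ref{M1-boundedness}), $M_1(\ulin t\wedge\ulin\tau_R)\le C|V_+(\ulin t\wedge\ulin\tau_R)-V_-(\ulin t\wedge\ulin\tau_R)|^{2(\frac{12}{\kappa}-1)}$, and by (\ref{V-V}) the right side is controlled by $\diam(K(\ulin t\wedge\ulin\tau_R)\cup[v_-,v_+])^{2(\frac{12}{\kappa}-1)}\le \diam(\eta_+\cup\eta_-\cup[v_-,v_+])^{2(\frac{12}{\kappa}-1)}$, a single random variable not depending on $R$. One checks this dominating variable is integrable under $\PP_1$ --- here I would invoke the fact that $M_1(\cdot\wedge\ulin\tau_R)$ is a nonnegative martingale with expectation $M_1(\ulin 0)<\infty$, so by Fatou $\EE_1[M_1(\ulin t)]\le M_1(\ulin 0)$, combined with the corresponding lower estimate in (\ref{M1-boundedness}) to get integrability of the diameter; alternatively, the absolute continuity (\ref{RN-formula0}) plus the fact that $\EE_1^0$ of the bound is finite (a diameter moment estimate for commuting SLE$_\kappa(2,2,2)$) closes the loop. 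Given uniform integrability, dominated convergence yields $\EE_1[{\bf 1}_A M_1(\ulin t)]=\EE_1[{\bf 1}_A M_1(\ulin s)]$, i.e., $M_1$ is an $\F$-martingale; in particular $\EE_1[M_1(\ulin t)]=M_1(\ulin 0)$ for all $\ulin t$.

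For the Radon--Nikodym formula (\ref{RN-formula}), I would first establish it for deterministic times $\ulin t$: from (\ref{RN-formula0}) and the convergence just proven, for $A\in\F_{\ulin t}$ we pass to the limit in $\PP^0_1[A\cap\{\ulin t<\ulin\tau_R\ \text{coordinatewise}\}]=\EE_1[{\bf 1}_{A}{\bf 1}_{\{\cdots\}} M_1(\ulin t\wedge\ulin\tau_R)/M_1(\ulin 0)]$; on the event $\{\ulin t\in{\cal D}_1\}$ the indicator eventually equals $1$, on its complement both sides vanish in the limit (the left because $\PP^0_1$ assigns no mass where $M_1=0$ --- note $\PP^0_1$ is supported on configurations where the curves have not yet terminated, i.e.\ on ${\cal D}_1$, at least up to bounded times; this needs a short argument using that under $\PP^0_1$ the curves are commuting SLE$_\kappa(2,2,2)$ which do not hit $(-\infty,v_-]\cup[v_+,\infty)$, so ${\cal D}_1$ under $\PP^0_1$ is all of $\R_+^2$ up to the relevant region). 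This gives (\ref{RN-formula}) with $\ulin\tau$ replaced by a constant $\ulin t$. The extension to a general $\F$-stopping time $\ulin\tau$ then follows from the optional stopping theorem for the closed martingale: on $\{\ulin\tau\le \ulin t\}$ one uses Proposition \ref{OST}(i) applied to $M_1(\cdot\wedge\ulin t^{\,?})$ --- more precisely, using the localization device of Proposition \ref{prop-local} to replace $\ulin\tau$ by the bounded stopping time $\ulin\tau^{\ulin t}$, one gets $M_1(\ulin\tau^{\ulin t})=\EE_1[M_1(\ulin t)\mid\F_{\ulin\tau^{\ulin t}}]$, hence by the constant-time case and Proposition \ref{prop-local} the RN formula holds on $\F_{\ulin\tau}\cap\{\ulin\tau\le\ulin t\}$; letting $\ulin t\to(\infty,\infty)$ along a sequence covers $\{\ulin\tau\in\R_+^2\}$. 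The main obstacle I anticipate is the uniform integrability / domination step: one must produce an $R$-independent integrable majorant for $M_1(\ulin t\wedge\ulin\tau_R)$, which requires either a diameter moment bound for the relevant commuting SLE processes or a careful self-bootstrapping from the martingale property itself; everything else is routine passage-to-the-limit and optional stopping.
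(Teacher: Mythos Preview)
Your extension from deterministic times to general $\F$-stopping times via Proposition \ref{prop-local} is exactly what the paper does, so that part is fine. The problem is the first half: your dominated-convergence route to the martingale property has a genuine gap, and none of your proposed fixes for the uniform integrability work. The bound (\ref{M1-boundedness}) is an \emph{upper} bound on $M_1$, so it cannot be ``combined with the corresponding lower estimate'' to control the diameter; there is no inequality of the form $M_1\gtrsim \diam^{\alpha_1}$ (indeed $M_1$ vanishes on $\R_+^2\sem{\cal D}_1$ while the diameter stays large). Your alternative --- bounding the diameter moment under $\PP^0_1$ --- fails outright, since under $\PP^0_1$ the curves are commuting SLE$_\kappa(2,2,2)$ which go to $\infty$, so $\diam(\eta_+\cup\eta_-)=\infty$ a.s.

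The paper sidesteps this entirely, and in fact you already have the key observation buried in your second paragraph. Rather than proving the martingale property first, the paper establishes the Radon--Nikodym identity at deterministic $\ulin t$ directly: on the event $\{\ulin t\le\ulin\tau_R\}$ the localized $\sigma$-algebra $\F^+_{t_+\wedge\tau^+_R}\vee\F^-_{t_-\wedge\tau^-_R}$ agrees with $\F_{\ulin t}$ and $M_1(\ulin t\wedge\ulin\tau_R)=M_1(\ulin t)$, so (\ref{RN-formula0}) gives $\PP^0_1[A\cap\{\ulin t\le\ulin\tau_R\}]=\EE_1[{\bf 1}_{A\cap\{\ulin t\le\ulin\tau_R\}}M_1(\ulin t)/M_1(\ulin 0)]$ for $A\in\F_{\ulin t}$. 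Now send $R\to\infty$: on the $\PP^0_1$ side, $\tau^\sigma_R\to\infty$ so the events increase to the whole space; on the $\PP_1$ side, $\tau^\sigma_R\uparrow T_\sigma$ so the events increase to $\{\ulin t\le(T_+,T_-)\}$, but $M_1(\ulin t)$ vanishes off this set by Lemma \ref{M-cont}. Both limits are by monotone convergence --- no domination needed. This yields $d\PP^0_1|\F_{\ulin t}/d\PP_1|\F_{\ulin t}=M_1(\ulin t)/M_1(\ulin 0)$, and the martingale property of $M_1$ under $\PP_1$ is then an immediate consequence of the tower property. So the fix is simply to reverse your order of operations: derive (\ref{RN-formula}) at constant times first, read off the martingale property, then do the stopping-time extension as you already outlined.
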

\begin{proof} For $\ulin t\in\R_+^2$ and $R>0$,
since  $\F^+_{t_+\wedge \tau^+_R}\vee \F^-_{t_-\wedge \tau^-_R}$ agrees with $\F^+_{t_+}\vee \F^-_{t_-}=\F_{\ulin t}$ on $\{\ulin t\le \ulin\tau_R\}$, by (\ref{RN-formula0}),
$$\frac{d\PP^0_1|(\F_{\ulin t}\cap \{\ulin t\le \ulin \tau_R\}) }{d\PP_1|(\F_{\ulin t}\cap \{\ulin t\le \ulin \tau_R\}) }=\frac {M_1(\ulin t)} {M_1(\ulin 0)}.$$
Sending $R\to \infty$, we get ${d\PP^0_1| \F_{\ulin t}  }/{d\PP_1| \F_{\ulin t}  }={M_1(\ulin t)}/ {M_1(\ulin 0)}$ for all $\ulin t\in\R_+^2$. So $M_1$ is an $\F$-martingale  under $\PP_1$.
Let $\ulin \tau$ be an $\F$-stopping time. Fix $A\in\F_{ \ulin \tau}\cap\{\ulin \tau\in\R_+^2\}$. Let $\ulin t\in\R_+^2$.  Define the $\F$-stopping time $\ulin \tau^{\ulin t}$ as in Proposition \ref{prop-local}. Then $A\cap \{\ulin \tau< \ulin t\}=A\cap \{\ulin\tau<\ulin \tau^{\ulin t}\}\in \F_{\ulin \tau^{\ulin t}}\subset\F_{\ulin t}$. So we get $$\PP^0_1[A\cap \{\ulin \tau< \ulin t\}]=\EE_1 \Big[{\bf 1}_{A\cap \{\ulin \tau< \ulin t\}}\frac{M_1(\ulin t)}{M_1(\ulin 0)}\Big ]=\EE_1 \Big[{\bf 1}_{A\cap \{\ulin \tau< \ulin t\}}\frac{M_1(\ulin \tau^{\ulin t})}{M_1(\ulin 0)}\Big ]=\EE_1\Big [{\bf 1}_{A\cap \{\ulin \tau< \ulin t\}}\frac{M_1( \ulin\tau)}{M_1(\ulin 0)} \Big],$$
where the second ``$=$'' follows from Proposition \ref{OST}. Sending both coordinates of $\ulin t$ to $\infty$, we get $\PP^0_1[A ]=\EE_1 [{\bf 1}_{A }\frac{M_1(\ulin \tau)}{M_1(\ulin 0)} ]$. So we get the desired (\ref{RN-formula}).
\end{proof}

\begin{Lemma}
  For any $\F$-stopping time $\ulin \tau$,
	$$ \frac{d\PP_1| \F_{\ulin \tau }\cap \{\ulin \tau\in{\cal D}_1\}  }{d\PP^0_1| \F_{\ulin \tau } \cap \{\ulin \tau\in{\cal D}_1\}  }=\frac {M_1(\ulin 0) }{M_1(\ulin \tau) } .$$
	\label{RN-Thm1-inv}
\end{Lemma}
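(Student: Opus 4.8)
The plan is to combine Lemma~\ref{RN-Thm1} with the fact that $\PP_1$ and $\PP^0_1$ are mutually absolutely continuous on each $\sigma$-algebra of the form $\F^+_{t_+\wedge\tau^+_R}\vee\F^-_{t_-\wedge\tau^-_R}$, so that the Radon--Nikodym derivative of $\PP_1$ with respect to $\PP^0_1$ is simply the reciprocal of the one computed in Lemma~\ref{RN-Thm1}, \emph{after} restricting to the event where $M_1$ does not vanish, i.e.\ the event $\{\ulin\tau\in{\cal D}_1\}$. The subtlety is that $M_1$ may be $0$ on $\R_+^2\sem{\cal D}_1$ (Lemma~\ref{M-cont}), so one cannot invert the density globally; but on $\{\ulin\tau\in{\cal D}_1\}$ we have $M_1(\ulin\tau)>0$, and there the inversion is legitimate.

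First I would fix an $\F$-stopping time $\ulin\tau$ and a set $A\in\F_{\ulin\tau}\cap\{\ulin\tau\in{\cal D}_1\}$. Since ${\cal D}_1=\bigcup_{R}\{\ulin\tau_R'\wedge\ulin\tau_R''\dots\}$ — more precisely $\{\ulin\tau\in{\cal D}_1\}=\bigcup_{R>|v_+-v_-|/2}\{\ulin\tau<\ulin\tau_R\}$ since $\eta_+\cup\eta_-$ is bounded and $\tau^\sigma_R=T_\sigma$ once $R$ exceeds the radius of $\eta_\sigma$, I would decompose $A=\bigcup_R (A\cap\{\ulin\tau<\ulin\tau_R\})$ and reduce to the case $A\subset\{\ulin\tau<\ulin\tau_R\}$ for a fixed $R$. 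On that event, using the localization Proposition~\ref{prop-local} with a deterministic time $\ulin t$, one has (just as in the proof of Lemma~\ref{RN-Thm1}) that $A\cap\{\ulin\tau<\ulin t\}\in\F_{\ulin t}$, and on this set $M_1(\ulin\tau)>0$ is bounded away from $0$ (since $\ulin\tau<\ulin\tau_R$ and $M_1$ is continuous and strictly positive on the compact set $[\ulin 0,\ulin\tau_R]\cap{\cal D}_1$, or more simply because $M_1(\ulin t\wedge\ulin\tau_R)>0$ there). Then I would write
$$\PP_1[A\cap\{\ulin\tau<\ulin t\}]=\EE^0_1\Big[{\bf 1}_{A\cap\{\ulin\tau<\ulin t\}}\frac{M_1(\ulin 0)}{M_1(\ulin t\wedge\ulin\tau_R)}\Big],$$
which follows from (\ref{RN-formula0}) — applied with the stopping time $\ulin t\wedge\ulin\tau_R$ in the sense of Proposition~\ref{prop-local}, noting $M_1(\ulin t\wedge\ulin\tau_R)=M_1(\ulin t)$ on $\{\ulin t\le\ulin\tau_R\}$ and that on $A\cap\{\ulin\tau<\ulin t\}$ the derivative makes sense since the denominator is positive. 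Using the optional stopping property of the $\PP^0_1$-martingale $M_1(\cdot\wedge\ulin\tau_R)^{-1}$? — here one must be careful, since $1/M_1$ need not be a martingale. Instead I would invoke Proposition~\ref{OST} applied to the $\PP_1$-martingale $M_1$ and the change-of-measure identity directly: on $A\cap\{\ulin\tau<\ulin t\}$, replacing $M_1(\ulin t)$ by $M_1(\ulin\tau^{\ulin t})$ via Proposition~\ref{OST} (valid because $M_1$ is a $\PP_1$-martingale closed on bounded rectangles) and then $M_1(\ulin\tau^{\ulin t})=M_1(\ulin\tau)$ on this event.

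The cleanest route is probably: apply Lemma~\ref{RN-Thm1} with the stopping time $\ulin\tau^{\ulin t}$ (bounded, hence covered by Proposition~\ref{OST}) to get $\PP^0_1[B]=\EE_1[{\bf 1}_B M_1(\ulin\tau^{\ulin t})/M_1(\ulin 0)]$ for $B\in\F_{\ulin\tau^{\ulin t}}$; take $B=A\cap\{\ulin\tau<\ulin t\}$; observe that on $B$ the weight $M_1(\ulin\tau^{\ulin t})=M_1(\ulin\tau)$ is strictly positive, so we may divide: ${\bf 1}_B = {\bf 1}_B\cdot\frac{M_1(\ulin 0)}{M_1(\ulin\tau)}\cdot\frac{M_1(\ulin\tau^{\ulin t})}{M_1(\ulin 0)}$, whence $\PP_1[B]=\EE_1[{\bf 1}_B\mathbf 1]=\EE^0_1[{\bf 1}_B\frac{M_1(\ulin 0)}{M_1(\ulin\tau)}]$. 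Sending both coordinates of $\ulin t$ to $\infty$ gives $\PP_1[A\cap\{\ulin\tau<\ulin\tau_R\}]=\EE^0_1[{\bf 1}_{A\cap\{\ulin\tau<\ulin\tau_R\}}\,M_1(\ulin 0)/M_1(\ulin\tau)]$ by monotone convergence (the integrands are monotone in the coordinates of $\ulin t$ on the set $\{\ulin\tau<\ulin t\}\uparrow$), and then sending $R\to\infty$ gives the claim on all of $A\subset\{\ulin\tau\in{\cal D}_1\}$. The main obstacle I anticipate is the bookkeeping around the vanishing of $M_1$: one must everywhere restrict to an event where the denominator is bounded below before inverting, and justify the passage to the limit $R\to\infty$; this is exactly why the statement carries the restriction $\{\ulin\tau\in{\cal D}_1\}$ rather than $\{\ulin\tau\in\R_+^2\}$, and keeping track of this is the one place where care is genuinely needed. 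Everything else is a routine reprise of the argument in Lemma~\ref{RN-Thm1}.
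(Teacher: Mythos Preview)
Your proposal is correct and rests on exactly the same two ingredients the paper uses: Lemma~\ref{RN-Thm1} and the fact that $M_1>0$ on ${\cal D}_1$. The paper's proof is literally one line: ``This follows from Lemma~\ref{RN-Thm1} and the fact that $M_1>0$ on ${\cal D}_1$.''

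The point is that once Lemma~\ref{RN-Thm1} gives $\PP^0_1[A]=\EE_1[{\bf 1}_A\,M_1(\ulin\tau)/M_1(\ulin 0)]$ for every $A\in\F_{\ulin\tau}\cap\{\ulin\tau\in\R_+^2\}$, standard measure theory immediately yields the inverse identity on the set where the density is strictly positive. Concretely, for $A\in\F_{\ulin\tau}\cap\{\ulin\tau\in{\cal D}_1\}$ the function $g:=M_1(\ulin 0)/M_1(\ulin\tau)$ is well-defined and $\F_{\ulin\tau}$-measurable, and
\[
\EE^0_1\Big[{\bf 1}_A\,\frac{M_1(\ulin 0)}{M_1(\ulin\tau)}\Big]
=\EE_1\Big[{\bf 1}_A\,\frac{M_1(\ulin 0)}{M_1(\ulin\tau)}\cdot\frac{M_1(\ulin\tau)}{M_1(\ulin 0)}\Big]
=\PP_1[A].
\]
No decomposition over $R$, no localization via Proposition~\ref{prop-local}, and no lower bound on $M_1$ are needed: pointwise positivity of the density suffices for inversion. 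Your worries about ``boundedness from below before inverting'' are misplaced --- that would matter for uniform integrability or martingale arguments, but not for the bare Radon--Nikodym inversion. So your argument works, but most of it can be deleted.
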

\begin{proof}
  This follows from Lemma \ref{RN-Thm1} and the fact that $M_1>0$ on ${\cal D}_1$.
\end{proof}

 Assume now that $v_0:=(v_++v_-)/2\in [w_-,w_+]$. We understand $v_0$ as $w_\sigma^{-\sigma}$ if $(v_++v_-)/2=w_\sigma$, $\sigma\in\{+,-\}$. Let $V_0$ be the force point function started from $v_0$.  By Section \ref{time curve}, we may  define the time curve $\ulin u:[0,T^u)\to {\cal D}_1$ such that $V_\sigma(\ulin u(t))-V_0(\ulin u(t))=e^{2t}(v_\sigma-v_0)$, $0\le t<T^u$, $\sigma\in\{+,-\}$, and $\ulin u$ {cannot} be extended beyond $T^u$ {while still satisfying this property}. We follow the notation there: for every $X$ defined on $\cal D$, we use $X^u$ to denote the function $X\circ \ulin u$ defined on $[0,T^u)$. We also define the processes $R_\sigma =\frac{W_\sigma^u -V_0^u }{V_\sigma^u -V_0^u }\in [0,1]$, $\sigma\in\{+,-\}$,  and $\ulin R=(R_+,R_-)$. Since $T_\sigma$ is an $\F^\sigma$-stopping time for $\sigma\in\{+,-\}$, ${\cal D}_1=[0,T_+)\times [0,T_-)$ is an $\F$-stopping region. As before we extend $\ulin u$ to $\R_+$ such that if $s\ge T^u$ then $\ulin u(s)=\lim_{t\uparrow T^u}\ulin u(t)$. By Proposition \ref{Prop-u(t)}, for any $t\ge 0$, $\ulin u(t)$ is an $\F$-stopping time.

Let $I=v_+-v_0=v_0-v_-$ and define $G_1^*$ on $[0,1]^2$ by $G_1^*(r_+,r_-)=G_1(r_+,-r_-;1,-1)$.
Then $M_1^u(t)=(e^{2t}I)^{\alpha_1} G_1^*(\ulin R(t) )$ for $t\in [0,T^u)$, where $\alpha_1=2(\frac{12}\kappa-1)$ is as in Theorem \ref{main-Thm1}

We 
now derive the transition density of the process $(\ulin R(t) )_{0\le t<T^u}$ under $\PP_1$. In fact,  $T^u$ is $\PP_1$-a.s.\ finite. By saying that $\til p^{R}_1(t,\ulin r,\ulin r^*)$ is the transition density of $(\ulin R)$ under $\PP_1$, we mean that, if $(\ulin R(t))$ starts from $\ulin r$, then for any bounded measurable function $f$ on $(0,1)^2$,
$$\EE_1[{\bf 1}_{\{T^u>t\}}f(\ulin R(t) )]=\int_{[0,1]^2} f(\ulin r^*)\til p^{R}_1(t,\ulin r,\ulin r^*) d\ulin r^*,\quad t>0.$$

Applying Lemma \ref{RN-Thm1-inv} to the $\F$-stopping time $\ulin u(t)$, and using that $\ulin u(t)\in{\cal D}_1$ iff $t<T^u$, we get
$$\frac{d\PP_{1}| \F^u_{t}\cap \{T^u>t\}  }{d\PP^0_1| \F^u_{t} \cap \{T^u>t\}  }=\frac {M_1^u(0) }{M_1^u(t) } =e^{-2\alpha_1t} \frac{G_1^* (\ulin R(0)) }{G_1^* (\ulin R(t)) },\quad t\ge 0.$$
Combining it with Corollary \ref{transition-R-infty}, we get the following transition density.

\begin{Lemma}
Let $p^1_t(\ulin r,\ulin r^*)$ be the transition density $p^R_t(\ulin r,\ulin r^*)$ given in Corollary \ref{transition-R-infty} with $\rho_0=0$ and $\rho_+=\rho_-=2$. Then under $\PP_{1}$, the transition density of $(\ulin R)$ is
  $$\til p^1_t(\ulin r,\ulin r^*):= e^{-2\alpha_1 t}p^1_t(\ulin r,\ulin r^*) {G_1^* (\ulin r)}/{G_1^* (\ulin r^*)}.$$    \label{transition-1}
\end{Lemma}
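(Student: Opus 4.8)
\textbf{Proof plan for Lemma \ref{transition-1}.}

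The plan is to transfer the transition density of $(\ulin R)$ under $\PP^0_1 = \PP^{(2,2)}_{\ulin w;\ulin v}$ to the transition density under $\PP_1$ via the Radon–Nikodym derivative $M_1^u(t)/M_1^u(0)$ along the time curve $\ulin u$. First I would record the identity already assembled just before the statement,
$$\frac{d\PP_{1}| \F^u_{t}\cap \{T^u>t\}}{d\PP^0_1| \F^u_{t}\cap \{T^u>t\}} = \frac{M_1^u(0)}{M_1^u(t)} = e^{-2\alpha_1 t}\,\frac{G_1^*(\ulin R(0))}{G_1^*(\ulin R(t))},$$
which is valid because $\ulin u(t)$ is an $\F$-stopping time (Proposition \ref{Prop-u(t)}), because $\{\ulin u(t)\in{\cal D}_1\} = \{T^u>t\}$, and because $M_1^u(t) = (e^{2t}I)^{\alpha_1}G_1^*(\ulin R(t))$ on $[0,T^u)$ by the definitions of $G_1^*$, $\ulin u$, and $R_\pm$; here Lemma \ref{RN-Thm1-inv} supplies the absolute continuity in the right direction (using $M_1>0$ on ${\cal D}_1$). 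Next I would recall that under $\PP^0_1$ the process $(\eta_+,\eta_-)$ is a commuting pair of chordal SLE$_\kappa(2,2,2)$ curves started from $(\ulin w;\ulin v)$ — this is exactly the statement of Lemma \ref{M-mart} combined with the definition of $\PP^0_1=\PP^{(2,2)}_{\ulin w;\ulin v}$ (with $\rho_0=0$, $\rho_+=\rho_-=2$, and $v_0=(v_++v_-)/2$) — so Corollary \ref{transition-R-infty} applies with these force values, giving that $(\ulin R)$ has transition density $p^1_t(\ulin r,\ulin r^*) = p^R_t(\ulin r,\ulin r^*)$ under $\PP^0_1$. Note that here $T^u=\infty$ $\PP^0_1$-a.s.\ by Lemma \ref{Beurling}, so the conditioning on $\{T^u>t\}$ is vacuous under $\PP^0_1$.

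Then the computation is essentially a change of measure. For a bounded measurable $f$ on $(0,1)^2$ and $t>0$, with $(\ulin R(t))$ started from a deterministic $\ulin r\in(0,1)^2$ (so $\ulin R(0)=\ulin r$ is constant),
\begin{align*}
\EE_{1}[{\bf 1}_{\{T^u>t\}}f(\ulin R(t))]
&= \EE_{1}^0\Big[{\bf 1}_{\{T^u>t\}}\,\frac{d\PP_1|\F^u_t\cap\{T^u>t\}}{d\PP^0_1|\F^u_t\cap\{T^u>t\}}\,f(\ulin R(t))\Big]\\
&= \EE_{1}^0\Big[e^{-2\alpha_1 t}\,\frac{G_1^*(\ulin r)}{G_1^*(\ulin R(t))}\,f(\ulin R(t))\Big]\\
&= e^{-2\alpha_1 t}\,G_1^*(\ulin r)\int_{[0,1]^2}\frac{f(\ulin r^*)}{G_1^*(\ulin r^*)}\,p^1_t(\ulin r,\ulin r^*)\,d\ulin r^*,
\end{align*}
where the middle equality uses $\ulin R(t)$ is $\F^u_t$-measurable and $G_1^*>0$ on $(0,1)^2$ (so the ratio is well defined on the event $\{T^u>t\}$, where $W_\pm^u\ne V_\pm^u$), and the last equality uses the $\PP^0_1$-transition density from Corollary \ref{transition-R-infty} together with the $\PP^0_1$-a.s.\ finiteness issue being absent. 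Comparing with the defining property of $\til p^1_t$, this shows $\til p^1_t(\ulin r,\ulin r^*) = e^{-2\alpha_1 t}p^1_t(\ulin r,\ulin r^*)G_1^*(\ulin r)/G_1^*(\ulin r^*)$, as claimed.

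The only genuinely delicate point is justifying that $G_1^*(\ulin R(t))$ is bounded away from $0$ on the relevant event so that the Radon–Nikodym derivative is integrable and the interchange above is legitimate; but this is handled for us, since the boundedness estimate $M_1\le C|V_+-V_-|^{\alpha_1}$ from \eqref{M1-boundedness} bounds $G_1^*$ from above, Lemma \ref{M-cont} controls its vanishing at the boundary of ${\cal D}_1$, and — more to the point — Lemma \ref{RN-Thm1-inv} already packages the required absolute continuity with RN derivative $M_1(\ulin 0)/M_1(\ulin\tau)$ on $\F_{\ulin\tau}\cap\{\ulin\tau\in{\cal D}_1\}$ applied to $\ulin\tau=\ulin u(t)$. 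So the main obstacle is really just bookkeeping: matching the $\sigma$-algebras $\F^u_t = \F_{\ulin u(t)}$ and the events $\{T^u>t\}=\{\ulin u(t)\in{\cal D}_1\}$ correctly, and invoking Corollary \ref{transition-R-infty} with the correct force values $\rho_0=0$, $\rho_+=\rho_-=2$; there is no new analytic content beyond what the cited results provide.
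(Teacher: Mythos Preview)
Your proposal is correct and follows essentially the same approach as the paper: apply Lemma \ref{RN-Thm1-inv} to the $\F$-stopping time $\ulin u(t)$, use the identification $\{\ulin u(t)\in{\cal D}_1\}=\{T^u>t\}$ and the formula $M_1^u(t)=(e^{2t}I)^{\alpha_1}G_1^*(\ulin R(t))$, and combine with Corollary \ref{transition-R-infty} for the $\PP^0_1$-transition density. The paper's proof is just this one-line combination; your write-up fills in the change-of-measure computation explicitly, but there is no difference in strategy.
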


\subsection{Opposite pair{s} of hSLE$_\kappa$ curves, the generic case} \label{section-iSLE-1}
Second, we consider another pair of random curves. Let $\ulin w=(w_+,w_-)$ and $\ulin v=(v_+,v_-)$ be as before. Let $(\eta_w,\eta_v)$ be a $2$-SLE$_\kappa$ in $\HH$ with link pattern $(w_+\lr w_-;v_+\lr v_-)$. For $\sigma\in\{+,-\}$, let $\ha \eta_\sigma$ be the curve $\eta_w$ oriented from $w_\sigma$ to $w_{-\sigma}$ and parametrized by the capacity viewed from $w_{-\sigma}$, which is an hSLE$_\kappa$ curve in $\HH$  from $w_\sigma$ to $w_{-\sigma}$ with force points $v_\sigma$ and $v_{-\sigma}$. Then $\ha \eta_+$ and $\ha \eta_-$ are time-reversal of each other.

For $\sigma\in\{+,-\}$, parametrizing the part of $\ha \eta_\sigma$ up to the time that it disconnects $w_{-\sigma}$ from $\infty$ by $\HH$-capacity, we get a chordal Loewner curve:  $\eta_\sigma(t)$, $0\le t<T_\sigma$, which is an hSLE$_\kappa$ curve in the chordal coordinate. Let $\ha w_\sigma$ and $K_\sigma(\cdot)$ denote the chordal Loewner driving function and hulls for $\eta_\sigma$. Let  $K(t_+,t_-)=\Hull(K_+(t_+)\cup K_-(t_-))$, $(t_+,t_-)\in[0,T_+)\times [0,T_-)$, and define an HC region:
\BGE {\cal D}_2=\{\ulin t\in[0,T_+)\times [0,T_-):K(\ulin t)\subsetneqq \Hull(\eta_w)\}.
\label{D-before-overlap}\EDE

For $\sigma\in \{+,-\}$, let $\F^\sigma$  be the filtration generated by $\eta_\sigma$.  Let $\tau_-$ be an $\F^-$-stopping time. Conditionally on $\F^-_{\tau_-}$ and the event $\{\tau_-<T_-\}$, the part of $\ha\eta_w$ between $\eta_-(\tau_-)$ and $w_+$ and the whole  $\eta_v$ form a $2$-SLE$_\kappa$ in $\HH\sem K_-(\tau_-)$ with link pattern $(w_+\lr \eta_-(\tau_-);v_+\lr v_-)$. So the conditional law of the part of $\ha\eta_+$ up to hitting $\eta_-(\tau_-)$ is that of an hSLE$_\kappa$ curve in $\HH\sem K_-(\tau_-)$ from $w_+$ to $\eta_-(\tau_-)$ with force points $v_+,v_-$, up to a time-change. This implies that there is a random curve $\ha\eta_+^{\tau_-}$ such that the $f_{K_-(\tau_-)}$-image of $\ha\eta_+^{\tau_-}$ is the above part of $\ha\eta_+$, and the conditional law of a time-change of $\ha\eta_+^{\tau_-}$ is that of an hSLE$_\kappa$ curve in $\HH$ from $g_{K_-(\tau_-)}(w_+)$ to $\ha w_-(\tau_-)$ with force points $g_{K_-(\tau_-)}(v_+),g_{K_-(\tau_-)}(v_-)$. By the definition of ${\cal D}_2$, the part of $\eta_+$ up to $T^{{\cal D}_2}_+(\tau_-)$ is a time-change of the part of $\ha\eta_+$ up to the first time that it hits $\eta_-(\tau_-)$ or separates $\eta_-(\tau_-)$ from $\infty$, which is then the $f_{K_-(\tau_-)}$-image of the part of $\ha\eta_+^{\tau_-}$ up to the first time that it hits $\ha w_-(\tau_-)$ or separates $\ha w_-(\tau_-)$ from $\infty$. So there is a random curve $\eta_+^{\tau_-}$ such that the $f_{K_-(\tau_-)}$-image of $\eta_+^{\tau_-}$ is the part of $\eta_+$ up to $T^{{\cal D}_2}_+(\tau_-)$, and the conditional law of a time-change of $\eta_+^{\tau_-}$ is that of an hSLE$_\kappa$ curve in $\HH$ from $g_{K_-(\tau_-)}(w_+)$ to $\ha w_-(\tau_-)$ with force points $g_{K_-(\tau_-)}(v_+),g_{K_-(\tau_-)}(v_-)$, in the chordal coordinate. A similar statement holds with ``$+$'' and ``$-$'' swapped.

Taking the stopping times in the previous paragraph to be deterministic numbers, we find that $(\eta_+,\eta_-;{\cal D}_2)$ a.s.\  satisfies the conditions in Definition \ref{commuting-Loewner} with $\I_\pm=[0,T_\pm)$ and $\I_{\pm}^*=\I_{\pm}\cap\Q$. By removing a null event, we may assume that $(\eta_+,\eta_-;{\cal D}_2)$ is always a commuting pair of chordal Loewner curves. We call $(\eta_+,\eta_-;{\cal D}_2)$ a commuting pair of hSLE$_\kappa$ curves in the chordal coordinate started from $(w_+\lr w_-;v_+,v_-)$.

Let $\F$ be the separable $\R_+^2$-indexed filtration generated by $\F^+$ and $\F^-$, and let $\lin\F$ be the right-continuous augmentation of $\F$. Then ${\cal D}_2$ is an $\lin\F$-stopping region because by Lemma \ref{lem-strict},
$$\{\ulin t\in{\cal D}_2\}=\lim_{\Q^2\ni \ulin s\downarrow\ulin t}(\{\ulin s<(T_+,T_-)\}\cap \{K(\ulin t)\subsetneqq K(\ulin s)\})\in\lin\F_{\ulin t},\quad \forall \ulin t\in\R_+^2.$$

 Define $M_2:{\cal D}_2\to\R_+$ by $M_2=G_2(W_+,W_-;V_+,V_-)$, where $G_2$ is given by (\ref{G2(w,v)}).
Since $V_+\ge W_+\ge W_-\ge V_-$, and $F$ is uniformly positive on $[0,1]$, there is a constant $C>0$ depending only on $\kappa$ such that
 \BGE M_2\le C |W_+-W_-|^{\frac8\kappa-1} |V_+-V_-|^{\frac{16}\kappa -1}\le C |V_+-V_-|^{\frac{2}\kappa(12-\kappa)}.\label{est-M2}\EDE

\begin{Lemma}
   $M_2$ a.s.\ extends continuously to $\R_+^2$ with  $M_2\equiv 0$ on $\R_+^2\sem {\cal D}_2$. \label{M-cont2}
\end{Lemma}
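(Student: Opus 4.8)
Following the proof of Lemma \ref{M-cont}, I would first reduce the statement to a single scalar estimate. Since $M_2$ is continuous and positive on the relatively open set $\mathcal{D}_2$, extending it by $0$ on $\R_+^2\sem\mathcal{D}_2$ yields a function continuous on all of $\R_+^2$ provided that $M_2(\ulin t^n)\to 0$ along every sequence $\mathcal{D}_2\ni\ulin t^n\to\ulin t^\infty\in\R_+^2\sem\mathcal{D}_2$. By (\ref{V-V}) and the inclusion $K(\ulin t)\subset\Hull(\eta_w)$, the quantity $|V_+-V_-|$ is bounded on $\mathcal{D}_2$ by a random constant (namely $4\diam(\Hull(\eta_w)\cup[v_-,v_+])$, which is a.s.\ finite). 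Combining this with the bound $M_2\le C|W_+-W_-|^{\frac 8\kappa-1}|V_+-V_-|^{\frac{16}\kappa-1}$ from (\ref{est-M2}) and the fact that $\frac 8\kappa-1>0$ for $\kappa\in(0,8)$, it suffices to prove $|W_+(\ulin t^n)-W_-(\ulin t^n)|\to 0$.

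Next I would analyze the geometry at $\ulin t^\infty$. Because $\mathcal{D}_2$ is increasing and the condition $K(\ulin t)=\Hull(\eta_w)$ is closed upward, a boundary point $\ulin t^\infty$ approached from within $\mathcal{D}_2$ must satisfy $K(\ulin t^\infty)=\Hull(\eta_w)$: this is immediate when the combined hull already fills $\Hull(\eta_w)$, and it also holds when $t^\infty_\sigma=T_\sigma$ for some $\sigma$, since for $\kappa\in(4,8)$ the arc $\eta_\sigma[0,T_\sigma]$ disconnects $w_{-\sigma}$ from $\infty$ only by closing a loop (it cannot do so by hitting $\R$, which $\eta_\sigma$ avoids on the relevant side), and the unexplored sub-arc of $\eta_w$ between the two tips, being a sub-arc of the non-self-crossing curve $\eta_w$, stays inside that loop, hence inside $\Hull(\eta_\sigma[0,T_\sigma])$; for $\kappa\in(0,4]$ no such $T_\sigma$ occurs at finite capacity, so only the first case arises. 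Consequently $K(\ulin t^n)\to\Hull(\eta_w)$ in the Carath\'eodory sense, and the geometric heart of the argument is to produce, for large $n$, a crosscut $J^n$ of $H(\ulin t^n):=\HH\sem K(\ulin t^n)$ with $\diam(J^n)\to 0$ that separates both prime ends $\eta_+(t^n_+)$ and $\eta_-(t^n_-)$ from $\infty$: one takes $J^n$ to be (the $f_{K(\ulin t^n)}$-image of) a small circular arc around the common limiting point of the shrinking unexplored arc when $t^\infty_\sigma<T_\sigma$ for both $\sigma$, and a short crosscut across the pinching neck of the almost-closed loop, placed just on the $\infty$-side of that curve's tip, when $t^\infty_\sigma=T_\sigma$.

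With $J^n$ in hand, the rest mirrors Lemma \ref{M-cont}: $g_{K(\ulin t^n)}(J^n)$ is a crosscut of $\HH$ separating both $W_+(\ulin t^n)=g_{K(\ulin t^n)}(\eta_+(t^n_+))$ and $W_-(\ulin t^n)=g_{K(\ulin t^n)}(\eta_-(t^n_-))$ from $\infty$, so $|W_+(\ulin t^n)-W_-(\ulin t^n)|\le\diam(g_{K(\ulin t^n)}(J^n))$; and the extremal-length/comparison-principle estimate already used in the proofs of Lemma \ref{M-cont} and Lemma \ref{lem-uniform}(i) — comparing the modulus of an annulus separating $J^n$ from a large fixed circle around $\Hull(\eta_w)$, then invoking conformal invariance, Proposition \ref{g-z-sup}, and the boundedness of $\Hull(\eta_w)$ — shows $\diam(g_{K(\ulin t^n)}(J^n))\to 0$ since $\diam(J^n)\to 0$. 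This gives $|W_+(\ulin t^n)-W_-(\ulin t^n)|\to 0$ and hence $M_2(\ulin t^n)\to 0$.

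The main obstacle I anticipate is the crosscut construction in the second paragraph for $\kappa\in(4,8)$: making rigorous that when $t^\infty_\sigma=T_\sigma$ the neck of $\eta_\sigma$'s loop genuinely pinches, and that a crosscut across it on the $\infty$-side of the tip separates \emph{both} tips from $\infty$ while having diameter $o(1)$ — this requires some care with prime-end bookkeeping near a slit at a thin passage, and with the fact that $\eta_{-\sigma}[0,t^n_{-\sigma}]$ cannot escape the loop. One should also check, as in Lemma \ref{M-cont}, that the constants in the extremal-length estimate depend only on the a.s.\ finite quantity $\diam(\Hull(\eta_w))$ and not on $n$, so that the convergence holds uniformly over all approaching sequences. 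For $\kappa\in(0,4]$ the situation reduces to the shrinking-unexplored-arc case and is considerably simpler.
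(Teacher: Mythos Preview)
Your reduction to showing $|W_+-W_-|\to 0$ along sequences approaching $\partial\mathcal{D}_2$ is correct and matches the paper, as does the use of (\ref{est-M2}) together with the boundedness of $|V_+-V_-|$ from (\ref{V-V}). Two small corrections: $M_2$ is \emph{not} positive on all of $\mathcal{D}_2$ when $\kappa\in(4,8)$ (it vanishes wherever $W_+=W_-$, which happens when the two tips meet; see the Remark after Lemma \ref{positive-M-T}), though this does not affect your argument; and $T_\sigma$ is finite also for $\kappa\in(0,4]$ (it equals $\hcap_2(\eta_w)$, reached as the simple curve arrives at $w_{-\sigma}$), so the case $t^\infty_\sigma=T_\sigma$ cannot be dismissed there either.

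After the reduction, the paper takes a quite different and cleaner route. Rather than constructing crosscuts, it first invokes Remark \ref{Remark-continuity-W}: since each $\eta_\sigma$ a.s.\ extends continuously to $[0,T_\sigma]$, both $W_+$ and $W_-$ extend continuously to $\overline{\mathcal{D}_2}$. It then argues by contradiction that these extensions agree on $\partial\mathcal{D}_2\cap\R_+^2$. If $W_+>W_-$ at some boundary point of the form $(s_+,T^{\mathcal{D}_2}_-(s_+))$, one finds $(s_+,s_-)\in\mathcal{D}_2$ with $W_+(s_+,\cdot)>W_-(s_+,\cdot)$ on all of $[s_-,T^{\mathcal{D}_2}_-(s_+)]$. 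Since $W_-(s_+,s_-+\cdot)$ drives the Loewner hulls $K_-^{\ulin s}(t):=K(s_+,s_-+t)/K(\ulin s)$, the strict inequality forces $W_+(\ulin s)$ to stay at positive distance from $\Hull(\bigcup_{0\le t<T'} K_-^{\ulin s}(t))$; but choosing $\delta_n\downarrow 0$ with $\eta_+(s_++\delta_n)\in\HH\sem K(\ulin s)$, the images $z_n=g_{K(\ulin s)}(\eta_+(s_++\delta_n))$ converge to $W_+(\ulin s)$ while lying inside that very hull --- a contradiction. This Loewner-theoretic argument treats all $\kappa\in(0,8)$ uniformly and sidesteps the prime-end and pinching-neck issues you flagged.

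Your crosscut strategy is plausible but the case analysis would need real work. In particular, for $\kappa\in(4,8)$ the disconnection at $T_\sigma$ \emph{can} occur by hitting $\R$ (the hSLE curve does touch the boundary near its target), contrary to your parenthetical; and at a boundary point where one curve traps a long unexplored arc plus the other tip inside a loop, it is not obvious how to produce a single small crosscut separating \emph{both} prime-end tips from $\infty$, since the trapping tip sits right at the pinch and is accessed from the outside. The paper's contradiction argument avoids all of this.
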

\begin{proof}
Since for $\sigma\in\{+,-\}$, $\eta_\sigma$ a.s.\ extends continuously to $[0,T_\sigma]$, by Remark \ref{Remark-continuity-W}, $W_+$ and $W_-$ a.s.\ extend continuously to $\lin{{\cal D}_2}$. From (\ref{V-V}) we know that a.s.\ $|V_+-V_-|$ is bounded on ${\cal D}_2$. Thus, by (\ref{est-M2}) it suffices to show that the continuations of $W_+$ and $W_-$ agree on $\pa{{\cal D}_2}\cap\R_+^2$. Define $A_\sigma=\{t_\sigma\ulin e_\sigma+T^{{\cal D}_2}_{-\sigma}(t_\sigma)\ulin e_{-\sigma}:t_\sigma\in (0,T_\sigma)\}$, $\sigma\in\{+,-\}$.
Then $A_+\cup A_-$ is dense in $\pa{{\cal D}_2}\cap(0,\infty)^2$. By symmetry, it suffices to show that $W_+$ and $W_-$ agree on $A_+$. 
If this is not true, then there exists $(s_+,s_-)\in{\cal D}_2$ such that $W_+(s_+,\cdot)>W_-(s_+,\cdot)$ on $[s_-,T^{{\cal D}_2}_-(s_+)]$.  

Let $K_-^{\ulin s}(t)=K(s_+,s_-+t)/K(\ulin s)=K_-^{s_+}(s_-+t)/K_-^{s_+}(s_-)$, $0\le t<T':=T^{{\cal D}_2}_-(s_+)-s_-$. Since $K_{-}^{s_+}(t_-)$, $0\le t_-<T^{{\cal D}_2}_-(s_+)$, are chordal Loewner hulls driven by $W_-(s_+,\cdot)$ with speed $\mA(s_+,\cdot)$, $K_{-}^{\ulin s}(t)$, $0\le t<T'$, are chordal Loewner hulls driven by $W_-(s_+,s_-+\cdot)$ with speed $\mA(s_+,s_-+\cdot)$. By Lemma \ref{W=gw} and Proposition \ref{prop-comp-g}, $W_+(s_+,s_-+t)=g_{K_-^{\ulin s}(t)}^{W_-(\ulin s)}(W_+(\ulin s))$, $0\le t< T'$. Since $W_+(s_+,s_-+\cdot)>W_-(s_+,s_-+\cdot)$ on $[0,T')$, we have $\dist(W_+(\ulin s),K_-^{\ulin s}(t))>0$ for $0\le t<T'$. Since $W_+(s_+,s_-+\cdot)$, $W_-(s_+,s_-+\cdot)$ and $\mA(s_+,s_-+\cdot)$ all extend continuously to $[0,T']$, and $W_+(s_+,s_-+T')>W_-(w_+,s_-+T')$, the chordal Loewner process driven by $W_-(s_+,s_-+t)$, $0\le t\le T'$, with speed $\mA(s_+,s_-+\cdot)$ does not swallow $W_+(\ulin s)$ at the time $T'$, which implies that $\dist(W_+(\ulin s),\Hull(\bigcup_{0\le t<T'} K_-^{\ulin s}(t)))>0$.

Since $\ulin s\in{\cal D}_2$, by Lemma \ref{lem-strict} we may choose a (random) sequence $\delta_n\downarrow 0$ such that $\eta_+(s_++\delta_n)\in\HH\sem K(s_+,s_-)$ for all $n$.
Let $z_n=g_{K(s_+,s_-)}(\eta_+(s_++\delta_n))\in K(s_++\delta_n,s_-)/K(s_+,s_-)$, $n\in\N$, then $z_n\to W_+(\ulin s)$ by (\ref{W-def}). So  $\dist(z_{n},\Hull(\bigcup_{0\le t<T'} K_-^{\ulin s}(t)))>0$ for $n$ big enough. However, from
$$\eta_+(s_++\delta_{n})\in \Hull(\eta_w)\sem K(\ulin s)=K(s_+,s_-+T')\sem K(\ulin s)=\Hull(\bigcup_{0\le t<T'} K(s_+,s_-+t))\sem K(\ulin s)$$
we get $z_n\in \Hull(\bigcup_{0\le t<T'} K_-^{\ulin s}(t))$ for all $n$, which is a contradiction.
\end{proof}

From now on, we understand $M_2$ as the continuous extension defined in Lemma \ref{M-cont2}. Let $\tau^\pm_R$ and $\ulin\tau_R$, $R>0$, be as defined before Lemma \ref{M-mart}.

\begin{Lemma}
	For any $R>0$, $M_2(\cdot\wedge \ulin \tau_R)$ is  an $(\F^+_{t_+\wedge \tau^+_R}\vee \F^-_{t_-\wedge \tau^-_R})_{(t_+,t_-)\in\R_+^2}$-martingale closed by $M_2(\ulin \tau_R)$, and if the underlying probability measure is weighted by $M_2(\ulin\tau_R)/M_2(\ulin 0)$, then the new law of $(\ha w_+,\ha w_-)$ agrees with the probability measure $\PP^{(2,2)}_{\ulin w;\ulin v} $  on  $\F^+_{ \tau^+_R}\vee \F^-_{  \tau^-_R}$.
	\label{M-mart2}
\end{Lemma}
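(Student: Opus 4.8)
The plan is to follow the same template as the proof of Lemma \ref{M-mart} (the case of the $2$-SLE$_\kappa$ pair), adapting it to the opposite pair. First I would fix $R>0$, $\sigma\in\{+,-\}$, a deterministic $t_{-\sigma}\ge 0$, and set $\tau_{-\sigma}=t_{-\sigma}\wedge\tau^{-\sigma}_R$. Using the DMP and reversibility of hSLE$_\kappa$ described earlier in this subsection, conditionally on $\F^{-\sigma}_{\tau_{-\sigma}}$ and $\{\tau_{-\sigma}<T_{-\sigma}\}$, the functions $W_\sigma|^{-\sigma}_{\tau_{-\sigma}}$, $W_{-\sigma}|^{-\sigma}_{\tau_{-\sigma}}$, $V_\sigma|^{-\sigma}_{\tau_{-\sigma}}$, $V_{-\sigma}|^{-\sigma}_{\tau_{-\sigma}}$ are (up to a time-change) the driving and force point functions of an hSLE$_\kappa$ curve in the chordal coordinate, from $g_{K_{-\sigma}(\tau_{-\sigma})}(w_\sigma)$ to $\ha w_{-\sigma}(\tau_{-\sigma})$ with force points $g_{K_{-\sigma}(\tau_{-\sigma})}(v_\sigma)$, $g_{K_{-\sigma}(\tau_{-\sigma})}(v_{-\sigma})$. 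Here the key structural observation is that in the link pattern $(w_+\lr w_-; v_+\lr v_-)$, the roles of $w_\infty$ and $v_1$ in Proposition \ref{Prop-iSLE-2}/\ref{Prop-iSLE-3} are played by $W_{-\sigma}$ (the opposite endpoint) and one of the $V$'s, so that the relevant hSLE martingale from Proposition \ref{Prop-iSLE-3} is exactly $G_2$ rather than $G_1$; one should double-check from (\ref{G2(w,v)}) that $G_2(W_+,W_-;V_+,V_-)$ restricted to a single-variable slice coincides, after the time-change, with the $M=G_1(\ha w_0,\ha v_1;\ha w_\infty,\ha v_2)$ of Proposition \ref{Prop-iSLE-3} under the appropriate relabeling of the four marked points (this is the analogue of Proposition \ref{Prop-iSLE-3} for the hSLE curve whose target is the other curve's starting point). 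Thus $M_2|^{-\sigma}_{\tau_{-\sigma}}(t)$, $0\le t<T_\sigma$, is an $(\F^\sigma_t\vee\F^{-\sigma}_{\tau_{-\sigma}})_{t\ge 0}$-local martingale.

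Next I would promote this local martingale to a genuine martingale up to $\tau^\sigma_R$: by Lemma \ref{M-cont2} together with the bound (\ref{est-M2}) and (\ref{V-V}), $M_2$ is uniformly bounded on $[\ulin 0,\ulin\tau_R]$, and $\tau^\pm_R\le R^2/2$, so $M_2|^{-\sigma}_{\tau_{-\sigma}}(\cdot\wedge\tau^\sigma_R)$ is an $(\F^+_{t_+\wedge\tau^+_R}\vee\F^-_{t_-\wedge\tau^-_R})_{t_\sigma\ge 0}$-martingale closed by $M_2|^{-\sigma}_{\tau_{-\sigma}}(\tau^\sigma_R)$. Applying this once with $\sigma=+$ (with $t_-$ arbitrary and the slice at $\tau_-=t_-\wedge\tau^-_R$) and once with $\sigma=-$ gives, exactly as in the proof of Lemma \ref{M-mart}, that $M_2(\ulin t\wedge\ulin\tau_R)$ is a bounded $(\F^+_{t_+\wedge\tau^+_R}\vee\F^-_{t_-\wedge\tau^-_R})_{(t_+,t_-)\in\R_+^2}$-martingale, closed by $M_2(\ulin\tau_R)$.

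For the reweighting statement I would argue as in the second half of the proof of Lemma \ref{M-mart}: weight the underlying measure $\PP$ by $N_1:=M_2(0,\tau^-_R)/M_2(\ulin 0)$ to get $\PP_{.5}$, then by $N_2:=M_2(\tau^+_R,\tau^-_R)/M_2(0,\tau^-_R)$ to get $\PP_0$. Since $\EE[\cdot]$-conditionally on $\F^-_{\tau^-_R}$ one uses Proposition \ref{Prop-iSLE-3}, the $\eta_-$-marginal up to $\tau^-_R$ under $\PP_{.5}$ becomes a chordal SLE$_\kappa(2,2,2)$ curve from $w_-$ with force points $v_-,w_+,v_+$ (read off the relabeling from $G_2$), and since $N_1$ depends only on $\eta_-$ the conditional law of $\eta_+$ given $\eta_-$ is unchanged; then $N_2$ has conditional expectation $1$ given $\F^-_{\tau^-_R}$ so the $\eta_-$-marginal is unchanged passing to $\PP_0$, and applying Proposition \ref{Prop-iSLE-3} a second time (to $\eta_+^{\tau^-_R}$, recovering $\eta_+$ via $f_{K_-(\tau^-_R)}$) shows the conditional law of $\eta_+$ up to $\tau^+_R$ given $\eta_-$ up to $\tau^-_R$ is that of a chordal SLE$_\kappa(2,2,2)$ from $W_+(0,\tau^-_R)$ with force points $W_-(0,\tau^-_R),V_+(0,\tau^-_R),V_-(0,\tau^-_R)$. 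Hence under $\PP_0$ the pair agrees with a commuting pair of SLE$_\kappa(2,2,2)$ curves started from $(\ulin w;\ulin v)$ up to $(\tau^+_R,\tau^-_R)$, i.e. with $\PP^{(2,2)}_{\ulin w;\ulin v}$ on $\F^+_{\tau^+_R}\vee\F^-_{\tau^-_R}$.

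The main obstacle I anticipate is purely bookkeeping rather than conceptual: verifying that $G_2$ is indeed the correct hSLE-to-SLE$_\kappa(2,2,2)$ martingale for the opposite link pattern, i.e. identifying the correct correspondence between the four boundary points $(w_0,w_\infty,v_1,v_2)$ of Proposition \ref{Prop-iSLE-3} and the four points $(w_\sigma,w_{-\sigma},v_\sigma,v_{-\sigma})$ here, and checking that the cross-ratio appearing in (\ref{G2(w,v)}) is precisely the one that makes $G_2$ restricted to a coordinate slice a local martingale for the chordal-coordinate hSLE$_\kappa$ driven by $W_\sigma|^{-\sigma}_{\tau_{-\sigma}}$. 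Once that identification is pinned down (it should follow from Propositions \ref{Prop-iSLE-2} and \ref{Prop-iSLE-3} applied with $w_\infty=W_{-\sigma}$, together with Lemmas \ref{W=gw} and \ref{common-function} to handle the composition of Loewner maps), the rest of the proof is a verbatim transcription of the proof of Lemma \ref{M-mart}, with $M_1,G_1$ replaced by $M_2,G_2$ and Lemma \ref{M-cont} replaced by Lemma \ref{M-cont2}.
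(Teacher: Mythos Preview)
Your proposal is correct and follows essentially the same approach as the paper, which simply says to repeat the proof of Lemma \ref{M-mart} using Proposition \ref{Prop-iSLE-3} as the key ingredient and the bound (\ref{est-M2}) in place of (\ref{M1-boundedness}). The bookkeeping you flag as the main obstacle is straightforward: with $w_0=W_\sigma$, $w_\infty=W_{-\sigma}$, $v_1=V_\sigma$, $v_2=V_{-\sigma}$, the martingale $G_1(\ha w_0,\ha v_1;\ha w_\infty,\ha v_2)$ of Proposition \ref{Prop-iSLE-3} equals $G_2(W_+,W_-;V_+,V_-)$ termwise (both the power-law factors and the cross-ratio match), so $M_2|^{-\sigma}_{\tau_{-\sigma}}$ is indeed the slice of the Proposition \ref{Prop-iSLE-3} local martingale; the continuous extension from Lemma \ref{M-cont2} together with the boundedness from (\ref{est-M2}) on $[\ulin 0,\ulin\tau_R]$ then upgrades it to a closed martingale exactly as you describe.
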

\begin{proof}
We follow the argument in the proof of Lemma \ref{M-mart}, where Proposition \ref{Prop-iSLE-3} is the key ingredient, except that here we use (\ref{est-M2})  instead of (\ref{M1-boundedness}). \end{proof}

Let $\PP_2$ denote the joint law of the driving functions $\ha w_+$ and $\ha w_-$ here, and let $\PP^0_2=\PP^{(2,2)}_{\ulin w;\ulin v}$.  Following  the proof of Lemma \ref{RN-Thm1} and using  Lemma \ref{M-mart2}, we get the following lemma.

\begin{Lemma}
A revision of Lemma \ref{RN-Thm1} holds with all subscripts ``$1$'' replaced by ``$2$'' and the filtration $\F$ replaced by $\lin\F$.
 \label{RN-Thm2-right}
\end{Lemma}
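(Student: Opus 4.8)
The plan is to mimic the proof of Lemma \ref{RN-Thm1} almost verbatim, transplanting each step from the setting of $(\eta_+,\eta_-;{\cal D}_1)$ with filtration $\F$ to the setting of $(\eta_+,\eta_-;{\cal D}_2)$ with the right-continuous augmentation $\lin\F$. First I would record the analogue of (\ref{RN-formula0}): for any $\ulin t=(t_+,t_-)\in\R_+^2$ and $R>0$,
\BGE \frac{d\PP^0_2|(\F^+_{t_+\wedge\tau^+_R}\vee \F^-_{t_-\wedge \tau^-_R} )}{d\PP_2|(\F^+_{t_+\wedge\tau^+_R}\vee \F^-_{t_-\wedge\tau^-_R}) }=\frac {M_2(\ulin t\wedge \ulin \tau_R)} {M_2(\ulin 0)},\label{RN-formula0-2}\EDE
which is an immediate restatement of Lemma \ref{M-mart2}. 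Then, exactly as in the first paragraph of the proof of Lemma \ref{RN-Thm1}, since $\F^+_{t_+\wedge \tau^+_R}\vee \F^-_{t_-\wedge \tau^-_R}$ agrees with $\F_{\ulin t}$ on the event $\{\ulin t\le \ulin\tau_R\}$, formula (\ref{RN-formula0-2}) gives $d\PP^0_2|(\F_{\ulin t}\cap\{\ulin t\le\ulin\tau_R\})=\frac{M_2(\ulin t)}{M_2(\ulin 0)}d\PP_2|(\F_{\ulin t}\cap\{\ulin t\le\ulin\tau_R\})$; sending $R\to\infty$ (using that a.s.\ $\ulin\tau_R\to(T_+,T_-)$ coordinatewise and that $M_2$ has been extended continuously to all of $\R_+^2$ by Lemma \ref{M-cont2}, so $M_2(\ulin t)$ is $\F_{\ulin t}$-measurable for every deterministic $\ulin t$) yields $d\PP^0_2|\F_{\ulin t}=\frac{M_2(\ulin t)}{M_2(\ulin 0)}d\PP_2|\F_{\ulin t}$, hence $M_2$ is an $\F$-martingale under $\PP_2$. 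Since $M_2$ is continuous, Proposition \ref{right-continuous-0} together with the usual argument (a continuous $\F$-martingale is an $\lin\F$-martingale — this is the commented-out Proposition in the excerpt, but it is standard and one can instead just carry the $\F$-martingale through the optional-stopping step and invoke Proposition \ref{prop-local} for $\lin\F$-stopping times) shows $M_2$ is an $\lin\F$-martingale.

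Next I would handle the optional stopping. Let $\ulin\tau$ be an $\lin\F$-stopping time and fix $A\in\lin\F_{\ulin\tau}\cap\{\ulin\tau\in\R_+^2\}$. For deterministic $\ulin t\in\R_+^2$ define $\ulin\tau^{\ulin t}$ as in Proposition \ref{prop-local} (which applies to $\lin\F$ since $\lin\F$ is right-continuous by Proposition \ref{right-continuous-0}); then $A\cap\{\ulin\tau<\ulin t\}=A\cap\{\ulin\tau<\ulin\tau^{\ulin t}\}\in\lin\F_{\ulin\tau^{\ulin t}}\subset\lin\F_{\ulin t}$, and by the Optional Stopping Theorem (Proposition \ref{OST}, applied to the bounded stopping time $\ulin\tau^{\ulin t}$ and the $\lin\F$-martingale $M_2$) one gets
\BGEN \PP^0_2[A\cap\{\ulin\tau<\ulin t\}]=\EE_2\Big[{\bf 1}_{A\cap\{\ulin\tau<\ulin t\}}\frac{M_2(\ulin t)}{M_2(\ulin 0)}\Big]=\EE_2\Big[{\bf 1}_{A\cap\{\ulin\tau<\ulin t\}}\frac{M_2(\ulin\tau^{\ulin t})}{M_2(\ulin 0)}\Big]=\EE_2\Big[{\bf 1}_{A\cap\{\ulin\tau<\ulin t\}}\frac{M_2(\ulin\tau)}{M_2(\ulin 0)}\Big], \EDEN
where the last equality uses that $\ulin\tau=\ulin\tau^{\ulin t}$ on $\{\ulin\tau\le\ulin t\}\supset\{\ulin\tau<\ulin t\}$. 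Letting both coordinates of $\ulin t$ tend to $\infty$ and using monotone/dominated convergence (the Radon–Nikodym derivatives $M_2(\ulin\tau^{\ulin t})/M_2(\ulin 0)$ are a uniformly integrable family since $M_2$ is an $\lin\F$-martingale) gives $\PP^0_2[A]=\EE_2[{\bf 1}_A M_2(\ulin\tau)/M_2(\ulin 0)]$, which is precisely the analogue of (\ref{RN-formula}). This establishes the revised Lemma \ref{RN-Thm1} with subscripts ``$1$'' replaced by ``$2$'' and $\F$ replaced by $\lin\F$, which is exactly the content of Lemma \ref{RN-Thm2-right}.

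I expect the only genuine subtlety — the "main obstacle," such as it is — to be the passage from $\F$-martingale to $\lin\F$-martingale and the correct use of $\lin\F$-stopping times, since ${\cal D}_2$ is an $\lin\F$-stopping region (not merely an $\F$-stopping region) and the excerpt explicitly flags that the $\F$-to-$\lin\F$ upgrade requires a (commented-out) continuity argument. Everything else is a mechanical transcription: the boundedness estimate needed to justify the martingale and uniform-integrability claims is supplied by (\ref{est-M2}) together with (\ref{V-V}) (giving $|V_+-V_-|\le 4\diam(K(\ulin t)\cup[v_-,v_+])$, which is a.s.\ bounded since $\eta_w$ is bounded), the closing property along $\ulin\tau_R$ is Lemma \ref{M-mart2}, and the localization device is Proposition \ref{prop-local}. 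I would therefore write the proof as: "We follow the proof of Lemma \ref{RN-Thm1}. By Lemma \ref{M-mart2} we get (\ref{RN-formula0-2}); the rest of the argument is identical, with $\F$ replaced by its right-continuous augmentation $\lin\F$, using Propositions \ref{right-continuous-0}, \ref{prop-local} and \ref{OST}, the continuity of $M_2$ (Lemma \ref{M-cont2}), and the uniform boundedness of $|V_+-V_-|$ on ${\cal D}_2$."
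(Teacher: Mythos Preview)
Your proposal is correct and follows essentially the same approach as the paper, whose proof is literally the one-liner ``Following the proof of Lemma \ref{RN-Thm1} and using Lemma \ref{M-mart2}, we get the following lemma.'' You have correctly unpacked this: start from Lemma \ref{M-mart2} to obtain the analogue of (\ref{RN-formula0}), send $R\to\infty$, then run the localization argument via Proposition \ref{prop-local} and Proposition \ref{OST}; you also correctly flag the $\F$-to-$\lin\F$ upgrade (handled by continuity of $M_2$) as the only point requiring care beyond mechanical transcription.
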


\begin{Lemma}
  For any $\lin\F$-stopping time $\ulin \tau$, $M_2(\ulin \tau)$ is  $\PP_2$-a.s.\ positive on  $\{\ulin \tau\in{\cal D}_2\}$. \label{positive-M-T}
\end{Lemma}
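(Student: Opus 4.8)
The plan is to reduce the statement to a pathwise fact. Recall $M_2=G_2(W_+,W_-;V_+,V_-)$ with $G_2$ as in (\ref{G2(w,v)}); since $F>0$ on $[0,1]$ and the exponents $\frac 8\kappa-1,\frac 4\kappa$ are positive, $M_2(\ulin t)>0$ for $\ulin t\in{\cal D}_2$ is equivalent to $W_+(\ulin t)\ne W_-(\ulin t)$, $V_+(\ulin t)\ne V_-(\ulin t)$, $W_+(\ulin t)\ne V_-(\ulin t)$ and $W_-(\ulin t)\ne V_+(\ulin t)$. By (\ref{VWVWV}) one always has $V_-\le W_-\le W_+\le V_+$, and $V_+>V_-$ holds on ${\cal D}_2$ by (\ref{V-V}) because $v_+\ne v_-$; granting these, all four conditions reduce to the single strict inequality $W_+(\ulin t)>W_-(\ulin t)$. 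So the whole lemma follows once I prove the pathwise claim: $W_+(\ulin t)>W_-(\ulin t)$ for every $\ulin t\in{\cal D}_2$ and every realization of the commuting pair $(\eta_+,\eta_-;{\cal D}_2)$; the ``$\PP_2$-a.s.'' then just reflects the randomness of the objects.

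To establish $W_+>W_-$ on ${\cal D}_2$, I would argue by contradiction: assume $W_+(\ulin t)=W_-(\ulin t)=:\omega$ for some $\ulin t=(t_+,t_-)\in{\cal D}_2$. By Lemma \ref{Lem-W}, $\eta_+(t_+)=f_{K(\ulin t)}(\omega)=\eta_-(t_-)$, so the two tips are the same prime end of $H(\ulin t)$. Parametrize $\eta_w\colon[0,1]\to\lin\HH$ from $w_+$ to $w_-$, so that $\eta_+[0,t_+]=\eta_w[0,s_+]$ and $\eta_-[0,t_-]=\eta_w[s_-,1]$; then $K(\ulin t)=\Hull(\eta_w[0,s_+]\cup\eta_w[s_-,1])$, and $\ulin t\in{\cal D}_2$ forces $s_+<s_-$ and that $\eta_w[s_+,s_-]$ is not contained in $K(\ulin t)^{\doub}$. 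Using Lemma \ref{lem-strict} I would pick $s_+^n\downarrow s_+$ and $s_-^n\downarrow s_-$ with $\eta_w(s_\pm^n)\in H(\ulin t)$; by Lemma \ref{Lem-W} (the characterization of $W_\pm$) both $g_{K(\ulin t)}(\eta_w(s_+^n))$ and $g_{K(\ulin t)}(\eta_w(s_-^n))$ converge to $\omega$. Then I would run a planar-topology argument of the kind used in the proof of Lemma \ref{lem-uniform}(i): a suitable excursion of $g_{K(\ulin t)}(\eta_w[s_+,s_-]\cap H(\ulin t))$ is a crosscut of $\HH$ both of whose endpoints land at $\omega$, hence bounds a bounded component $D$ of its complement; pulling back through $f_{K(\ulin t)}$ and following $\eta_w$ forward from $s_+$, the entire arc $\eta_w[s_+,s_-]$ must lie in $K(\ulin t)^{\doub}\cup f_{K(\ulin t)}(\lin D)$, which gives $\Hull(\eta_w)=K(\ulin t)$, contradicting $\ulin t\in{\cal D}_2$. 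Granting the pathwise claim, the lemma is immediate: for any $\lin\F$-stopping time $\ulin\tau$, on $\{\ulin\tau\in{\cal D}_2\}\subset\{\ulin\tau\in\R_+^2\}$ we have $M_2(\ulin\tau)=G_2(W_+,W_-;V_+,V_-)|_{\ulin\tau}>0$.

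The hard part will be the planar-topology step: for $\kappa\in(4,8)$ the curve $\eta_w$ may touch itself, so $\eta_w[s_+,s_-]$ can repeatedly enter and leave $K(\ulin t)$, and one must show that \emph{every} excursion (not just the first) eventually gets swallowed --- the same crosscut/extremal-length bookkeeping as in the proof of Lemma \ref{lem-uniform}. I note that the tempting shortcut through Lemma \ref{RN-Thm2-right}, namely $\PP^0_2[\{\ulin\tau\in{\cal D}_2,M_2(\ulin\tau)=0\}]=\EE_2[{\bf 1}_{\{\cdots\}}M_2(\ulin\tau)/M_2(\ulin 0)]=0$ followed by a pathwise argument for commuting chordal SLE$_\kappa(2,2)$ curves, does not close the loop: transferring such a $\PP^0_2$-a.s.\ conclusion back to $\PP_2$ on $\{\ulin\tau\in{\cal D}_2\}$ would need $\PP_2\ll\PP^0_2$ there, which is exactly the analogue of Lemma \ref{RN-Thm1-inv} for $\PP_2$ and hence depends on the present lemma. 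So I expect the paper to prove the pathwise statement directly.
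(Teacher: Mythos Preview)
Your approach has a fundamental gap: the pathwise claim $W_+>W_-$ on all of ${\cal D}_2$ is \emph{false} when $\kappa\in(4,8)$. Since $\eta_+$ and $\eta_-$ are segments of the same curve $\eta_w$ traversed from opposite ends, and $\eta_w$ touches itself when $\kappa\in(4,8)$, there almost surely exist times $(t_+,t_-)\in{\cal D}_2$ with $\eta_+(t_+)=\eta_-(t_-)$, and at such times $W_+(\ulin t)=W_-(\ulin t)$ (hence $M_2(\ulin t)=0$). The paper records exactly this in the Remark immediately following the lemma. The content of the lemma is therefore genuinely probabilistic: $M_2$ may vanish at random points of ${\cal D}_2$, but not at a \emph{stopping time} with positive probability. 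Your topological contradiction argument cannot succeed, because the conclusion it aims for is simply not true.

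The paper's proof does go through Lemma~\ref{RN-Thm2-right}, but not in the way you rejected. Set $A=\{\ulin\tau\in{\cal D}_2,\,M_2(\ulin\tau)=0\}\in\lin\F_{\ulin\tau}$. One step of the RN relation gives $\PP^0_2[A]=\EE_2[{\bf 1}_A M_2(\ulin\tau)/M_2(\ulin 0)]=0$. The key move is to apply the same RN relation again, but at the later stopping time $\ulin\tau+\ulin t$ for each $\ulin t\in\Q_+^2$: since $A\in\lin\F_{\ulin\tau+\ulin t}$ as well, $0=\PP^0_2[A]=\EE_2[{\bf 1}_A M_2(\ulin\tau+\ulin t)/M_2(\ulin 0)]$, which forces $M_2(\ulin\tau+\ulin t)=0$ $\PP_2$-a.s.\ on $A$. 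By continuity $M_2\equiv 0$ on $\ulin\tau+\R_+^2$, hence $W_+\equiv W_-$ on $(\ulin\tau+\R_+^2)\cap{\cal D}_2$; then Lemma~\ref{Lem-W} gives $\eta_+(\tau_++t_+)=\eta_-(\tau_-)$ for all $t_+$ with $(\tau_++t_+,\tau_-)\in{\cal D}_2$, so $\eta_+$ is constant on a nondegenerate interval --- an impossibility. Thus $\PP_2[A]=0$. The circularity you feared is avoided because the second application of the RN relation is in the direction $\PP^0_2\to\PP_2$ evaluated on an event of $\PP^0_2$-measure zero, which yields information about $M_2$ under $\PP_2$ without needing $\PP_2\ll\PP^0_2$.
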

\begin{proof}
  Let $\ulin \tau$ be an $\lin\F$-stopping time.  Let $A=\{\ulin \tau\in{\cal D}_2\}\cap\{M_2(\ulin \tau)=0\}$.  We are going to show that $\PP_2[A]=0$. Since ${\cal D}_2$ is an $\lin\F$-stopping region,
we have $\{\ulin \tau\in{\cal D}_2\}\in \lin\F_{\ulin \tau}$, and $A \in  \lin\F_{\ulin \tau}$. Since $M_2(\ulin \tau)=0$ on $A$, by Lemma \ref{RN-Thm2-right},   $\PP^0_2[A]=0$. For any $\ulin t\in\Q_+^2$, since $A\in\lin\F_{\ulin\tau+\ulin t}$, by  Lemma \ref{RN-Thm2-right}, $\PP_2$-a.s\ $M_2(\ulin \tau+\ulin t)=0$  on $A$. Thus, on the event $A$, $\PP_2$-a.s.\ $M_2(\ulin \tau+\ulin t)=0$ for any $\ulin t\in\Q_+^2$, which  implies by the continuity that $M_2\equiv 0$ on $\ulin \tau+\R_+^2$, which further implies that $W_+\equiv W_-$ on $(\ulin \tau+\R_+^2)\cap {\cal D}_2$, which in turn implies by Lemma \ref{Lem-W} that $\eta_+(\tau_++t_+)=\eta_-(\tau_-+t_-)$ for any $\ulin t=(t_+,t_-)\in\R_+^2$ such that $\ulin\tau+\ulin t\in {\cal D}_2$. This is impossible since it implies (by setting $t_-=0$) that  $\eta_+$ stays constant on $[\tau_+,T^{{\cal D}_2}_+(\tau_-))$. So we have $\PP_2[A]=0$.
\end{proof}

\begin{Remark}
  We do not have  $M_2>0$ on ${\cal D}_2$ if there is $(t_+,t_-)\in {\cal D}_2$ such that $\eta_+(t_+)= \eta_-(t_-)$, which almost surely happens when $\kappa\in(4,8)$.
\end{Remark}

\begin{Lemma}
A revision of Lemma \ref{RN-Thm1-inv} holds with all subscripts ``$1$'' replaced by ``$2$'' and the filtration $\F$ replaced by $\lin\F$.
	\label{RN-Thm2-inv}
\end{Lemma}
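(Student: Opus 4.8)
The statement to prove is the natural analogue of Lemma \ref{RN-Thm1-inv} in the setting of the second commuting pair: for any $\lin\F$-stopping time $\ulin\tau$,
$$\frac{d\PP_2|\lin\F_{\ulin\tau}\cap\{\ulin\tau\in{\cal D}_2\}}{d\PP^0_2|\lin\F_{\ulin\tau}\cap\{\ulin\tau\in{\cal D}_2\}}=\frac{M_2(\ulin 0)}{M_2(\ulin\tau)}.$$

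The plan is to deduce this directly from Lemma \ref{RN-Thm2-right} (the revised Lemma \ref{RN-Thm1} with subscript $2$ and filtration $\lin\F$) together with Lemma \ref{positive-M-T}. First I would recall from Lemma \ref{RN-Thm2-right} that for any $\lin\F$-stopping time $\ulin\tau$, on the event $\{\ulin\tau\in\R_+^2\}$ one has $d\PP^0_2/d\PP_2 = M_2(\ulin\tau)/M_2(\ulin 0)$ when restricted to $\lin\F_{\ulin\tau}$. Restrict further to the sub-event $\{\ulin\tau\in{\cal D}_2\}$, which lies in $\lin\F_{\ulin\tau}$ because ${\cal D}_2$ is an $\lin\F$-stopping region (this was verified in the paragraph preceding the definition of $M_2$). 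On this event $M_2(\ulin\tau)$ is defined as a genuine positive quantity: by Lemma \ref{positive-M-T}, $\PP_2$-a.s.\ $M_2(\ulin\tau)>0$ on $\{\ulin\tau\in{\cal D}_2\}$. Hence $M_2(\ulin 0)/M_2(\ulin\tau)$ is $\PP_2$-a.s.\ well-defined and finite there, so we may invert the Radon--Nikodym relation. Concretely, for $A\in\lin\F_{\ulin\tau}$ with $A\subset\{\ulin\tau\in{\cal D}_2\}$, write
$$\PP_2[A]=\EE^0_2\Big[{\bf 1}_A\,\frac{M_2(\ulin 0)}{M_2(\ulin\tau)}\Big]$$
by applying Lemma \ref{RN-Thm2-right} to the bounded nonnegative $\lin\F_{\ulin\tau}$-measurable function ${\bf 1}_A M_2(\ulin 0)/M_2(\ulin\tau)$ (which is supported on $\{\ulin\tau\in{\cal D}_2\}\subset\{\ulin\tau\in\R_+^2\}$, where the density formula applies); the factor $M_2(\ulin\tau)/M_2(\ulin 0)$ from that lemma cancels the $M_2(\ulin 0)/M_2(\ulin\tau)$, leaving exactly $\EE_2[{\bf 1}_A]=\PP_2[A]$. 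This establishes the claimed identity.

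One subtlety worth spelling out is the direction of absolute continuity: Lemma \ref{RN-Thm2-right} gives $\PP^0_2\ll\PP_2$ on $\lin\F_{\ulin\tau}\cap\{\ulin\tau\in\R_+^2\}$ with density $M_2(\ulin\tau)/M_2(\ulin 0)$, and to invert we need this density to be $\PP_2$-a.s.\ nonzero on the relevant event — which is precisely the content of Lemma \ref{positive-M-T} on $\{\ulin\tau\in{\cal D}_2\}$. Off ${\cal D}_2$ the density $M_2(\ulin\tau)$ vanishes (by Lemma \ref{M-cont2}), so inversion is genuinely possible only after restricting to $\{\ulin\tau\in{\cal D}_2\}$, which is why the statement is phrased on that event. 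I do not anticipate a real obstacle here; the proof is a one-line consequence of the two preceding lemmas, exactly parallel to how Lemma \ref{RN-Thm1-inv} follows from Lemma \ref{RN-Thm1} — the only difference being that in the first pair $M_1>0$ holds everywhere on ${\cal D}_1$, whereas here positivity of $M_2$ on ${\cal D}_2$ is not automatic (it can fail when $\eta_+$ and $\eta_-$ intersect, as noted in the Remark) and must be invoked in the a.s.\ sense provided by Lemma \ref{positive-M-T}.
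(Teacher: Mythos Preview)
Your proposal is correct and follows exactly the paper's approach: the paper's proof is the single line ``This follows from Lemmas \ref{RN-Thm2-right} and \ref{positive-M-T},'' and you have simply spelled out that inversion argument in detail. One minor wording quibble: the function ${\bf 1}_A\,M_2(\ulin 0)/M_2(\ulin\tau)$ need not be bounded, but this is harmless since after multiplying by the density $M_2(\ulin\tau)/M_2(\ulin 0)$ the integrand becomes ${\bf 1}_A$, so the identity holds for nonnegative measurable functions by the usual monotone-convergence extension.
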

\begin{proof}
  This follows from Lemmas \ref{RN-Thm2-right} and   \ref{positive-M-T}.
\end{proof}

Assume that $v_0:=(v_++v_-)/2\in [w_-,w_+]$, and let $V_0$ be the force point function started from $v_0$. Here if $v_0=w_\sigma$ for some $\sigma\in\{+,-\}$, we treat it as $w_\sigma^{-\sigma}$. We may   define the time curve $\ulin u:[0,T^u)\to {\cal D}_2$ and the processes $R_\sigma(t)$, $\sigma\in\{+,-\}$, and $\ulin R(t)$ as in Section \ref{time curve}, and extend $\ulin u$ to $\R_+$ such that $\ulin u(s)=\lim_{t\uparrow T^u} \ulin u(t)$ for $s\ge T^u$. Since ${\cal D}_2$ is an $\lin\F$-stopping region, by Proposition \ref{Prop-u(t)}, for any $t\ge 0$, $\ulin u(t)$ is an $\lin\F$-stopping time.

Define $G_2^*$ on $[0,1]^2$ by $G_2^*(r_+,r_-)=G_2(r_+,-r_-;1,-1)$.
Then $M_2^u(t)=(e^{2t}I)^{\alpha_1} G_2^*(\ulin R(t) )$ for $t\in [0,T^u)$, where $\alpha_2=2(\frac{12}\kappa-1)$ is as in Theorem \ref{main-Thm1}.
Applying Lemma \ref{RN-Thm2-inv} to $\ulin u(t)$, we get
the following lemma, which is similar to Lemma \ref{transition-1}.

\begin{Lemma}
  Let $p^2_t(\ulin r,\ulin r^*)$ be the transition density $p^R_t(\ulin r,\ulin r^*)$ given in Corollary \ref{transition-R-infty} with $\rho_0=0$ and $\rho_+=\rho_-=2$. Then under $\PP _{2}$, the transition density of $(\ulin R)$ is
  $$ \til p^2_t(\ulin r,\ulin r^*):= e^{-2\alpha_2 t}p^2_t(\ulin r,\ulin r^*) {G_2^* (\ulin r)}/{G_2^* (\ulin r^*)}.$$
  \label{transition-2}
\end{Lemma}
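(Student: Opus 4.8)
The plan is to imitate precisely the derivation of Lemma~\ref{transition-1}, with the role of $M_1$ replaced by $M_2$ and the measures $\PP_1,\PP^0_1$ replaced by $\PP_2,\PP^0_2$. The key inputs are already in place: Lemma~\ref{RN-Thm2-inv} gives the Radon--Nikodym relation $\frac{d\PP_{2}|\lin\F_{\ulin\tau}\cap\{\ulin\tau\in{\cal D}_2\}}{d\PP^0_2|\lin\F_{\ulin\tau}\cap\{\ulin\tau\in{\cal D}_2\}}=\frac{M_2(\ulin 0)}{M_2(\ulin\tau)}$ for every $\lin\F$-stopping time $\ulin\tau$; the map $t\mapsto\ulin u(t)$ is an $\lin\F$-stopping time by Proposition~\ref{Prop-u(t)}; and under $\PP^0_2=\PP^{(2,2)}_{\ulin w;\ulin v}$ the pair $(\eta_+,\eta_-)$ is a commuting pair of chordal SLE$_\kappa(2,2,2)$ curves (with $\rho_0=0$), so that Corollary~\ref{transition-R-infty} applies verbatim with $\rho_0=0$, $\rho_+=\rho_-=2$ and gives the transition density $p^2_t(\ulin r,\ulin r^*):=p^R_t(\ulin r,\ulin r^*)$ of $(\ulin R)$ under $\PP^0_2$.

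First I would record the identity $M_2^u(t)=(e^{2t}I)^{\alpha_2}G_2^*(\ulin R(t))$ for $0\le t<T^u$, which is just the homogeneity of $G_2$ of degree $\alpha_2=2(\frac 8\kappa-1+\frac 4\kappa)$ applied to $M_2=G_2(W_+,W_-;V_+,V_-)$ together with $V_\sigma^u-V_0^u=\sigma e^{2t}I$ and $W_\sigma^u-V_0^u=R_\sigma(V_\sigma^u-V_0^u)$; here $I=v_+-v_0=v_0-v_-$ and $G_2^*(r_+,r_-)=G_2(r_+,-r_-;1,-1)$. Next, applying Lemma~\ref{RN-Thm2-inv} to the $\lin\F$-stopping time $\ulin u(t)$ and using that $\{\ulin u(t)\in{\cal D}_2\}=\{T^u>t\}$ (recall $\ulin u$ exits ${\cal D}_2$ exactly at $T^u$), we get
$$\frac{d\PP_{2}|\F^u_t\cap\{T^u>t\}}{d\PP^0_2|\F^u_t\cap\{T^u>t\}}=\frac{M_2^u(0)}{M_2^u(t)}=e^{-2\alpha_2 t}\,\frac{G_2^*(\ulin R(0))}{G_2^*(\ulin R(t))},\qquad t\ge 0,$$
where $\F^u_t=\lin\F_{\ulin u(t)}$ and $\ulin R(t)$ is $\F^u_t$-measurable. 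Then for any bounded measurable $f$ on $(0,1)^2$, if $(\ulin R)$ starts from $\ulin r$,
$$\EE_2[{\bf 1}_{\{T^u>t\}}f(\ulin R(t))]=\EE^0_2\!\Big[{\bf 1}_{\{T^u>t\}}f(\ulin R(t))\,e^{-2\alpha_2 t}\frac{G_2^*(\ulin r)}{G_2^*(\ulin R(t))}\Big]=e^{-2\alpha_2 t}G_2^*(\ulin r)\int_{[0,1]^2}\frac{f(\ulin r^*)}{G_2^*(\ulin r^*)}\,p^2_t(\ulin r,\ulin r^*)\,d\ulin r^*,$$
using Corollary~\ref{transition-R-infty} for the last equality; this exhibits $\til p^2_t(\ulin r,\ulin r^*)=e^{-2\alpha_2 t}p^2_t(\ulin r,\ulin r^*)G_2^*(\ulin r)/G_2^*(\ulin r^*)$ as the transition density of $(\ulin R)$ under $\PP_2$.

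I should also verify, as in the statement of Lemma~\ref{transition-1}, that $T^u$ is $\PP_2$-a.s.\ finite: this follows because $\Hull(\eta_w)$ is a.s.\ bounded, so $|V_+-V_-|$ is bounded on ${\cal D}_2$ by \eqref{V-V}, hence by \eqref{V-V'} the quantity $e^{2t}|v_+-v_-|$ stays bounded on $[0,T^u)$, forcing $T^u<\infty$. The only genuinely delicate point is the interchange of expectation and the Radon--Nikodym density at the (random, possibly degenerate) time $\ulin u(t)$, and the fact that $G_2^*(\ulin R(t))>0$ $\PP^0_2$-a.s.\ on $\{T^u>t\}$ so that the reciprocal makes sense; the first is handled exactly as in the $\PP_1$ case by Lemma~\ref{RN-Thm2-inv}, and the second is the content of Lemma~\ref{positive-M-T} (equivalently $M_2(\ulin\tau)>0$ on $\{\ulin\tau\in{\cal D}_2\}$), which already accounts for the subtlety that $M_2$ may vanish on ${\cal D}_2$ at points where $\eta_+$ and $\eta_-$ intersect. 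I expect essentially no further obstacle: the proof is a transcription of the $\PP_1$ argument, the only substantive difference being that one works with the right-continuous augmentation $\lin\F$ rather than $\F$, which has already been arranged in Lemmas~\ref{RN-Thm2-right}, \ref{positive-M-T}, and \ref{RN-Thm2-inv}.
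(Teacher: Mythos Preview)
Your proposal is correct and follows exactly the paper's approach: apply Lemma~\ref{RN-Thm2-inv} to the $\lin\F$-stopping time $\ulin u(t)$, rewrite $M_2^u(t)=(e^{2t}I)^{\alpha_2}G_2^*(\ulin R(t))$ via homogeneity, and combine with Corollary~\ref{transition-R-infty} exactly as in Lemma~\ref{transition-1}. One minor remark: the positivity you need for the reciprocal is $G_2^*(\ulin R(t))>0$ under $\PP^0_2$, which follows from Corollary~\ref{Cor-0} (or simply from $G_2^*>0$ on $(0,1)^2$ where the transition density $p^2_t$ is supported), whereas Lemma~\ref{positive-M-T} is the $\PP_2$-a.s.\ statement used inside the proof of Lemma~\ref{RN-Thm2-inv}; this does not affect the argument.
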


\subsection{Opposite pair{s} of hSLE$_\kappa$ curves,  a limit case } \label{section-iSLE-2}
Let $w_-<w_+<v_+\in\R$.  Let $(\eta_w,\eta_v)$ be a $2$-SLE$_\kappa$ in $\HH$ with link pattern $(w_+\lr w_-;v_+\lr \infty)$. For $\sigma\in\{+,-\}$, let $\ha \eta_\sigma$ be the curve $\eta_w$ oriented from $w_\sigma$ to $w_{-\sigma}$ and parametrized by the capacity viewed from $w_{-\sigma}$, which is an hSLE$_\kappa$ curve in $\HH$ from $w_\sigma$ to $w_{-\sigma}$. Then $\ha \eta_+$ and $\ha \eta_-$ are time-reversal of each other.

For $\sigma\in\{+,-\}$, parametrizing the part of $\ha \eta_\sigma$ up to the time that it disconnects $w_{-\sigma}$ from $\infty$ by $\HH$-capacity, we get a chordal Loewner curve:  $\eta_\sigma(t)$, $0\le t<T_\sigma$, which is an hSLE$_\kappa$ curve from $w_\sigma$ to $w_{-\sigma}$ in the chordal coordinate. Define ${\cal D}_3$ using (\ref{D-before-overlap}) for the $(\eta_+,\eta_-)$ here. Then $(\eta_+,\eta_-;{\cal D}_3)$ is a.s.\ a commuting pair of chordal Loewner curves.  Define $W_\pm$, $V_+$ and $\lin\F$ for the $(\eta_+,\eta_-)$ here in the same way as in the previous subsection. Then ${\cal D}_3$ is an $\lin\F$-stopping region. We call $(\eta_+,\eta_-;{\cal D}_3)$ a commuting pair of hSLE$_\kappa$ curves in the chordal coordinate started from $(w_+\lr w_-;v_+)$.

 Define $M_3:{\cal D}_3\to\R_+$ by $M_3=G_3(W_+,W_-;V_+)$, where $G_3$ is given by (\ref{G3(w,v)}).
Since $V_+\ge W_+\ge W_-$, we have $M_3\le C |W_+-W_-|^{\frac 8\kappa -1}|V_{+}-V_-|^{\frac {4}\kappa}\le C |V_{+}-V_-|^{\frac {12}\kappa-1}$ for some constant $C>0$  depending on $\kappa$.
 Then the exactly same proof of Lemma \ref{M-cont2} can be used here to prove the following lemma.

\begin{Lemma}
 $M_3$ a.s.\ extends continuously to $\R_+^2$ with  $M_3\equiv 0$ on $\R_+^2\sem {\cal D}_3$. \label{M-cont3}
\end{Lemma}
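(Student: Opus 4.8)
The plan is to mimic exactly the proof of Lemma \ref{M-cont2} (the analogous statement for $M_2$), since the authors have already signaled this by the remark that ``the exactly same proof of Lemma \ref{M-cont2} can be used here.'' So first I would record the ingredients that carry over verbatim: by reversibility and the DMP of hSLE$_\kappa$, for $\sigma\in\{+,-\}$ the curve $\eta_\sigma$ a.s.\ extends continuously to the closed interval $[0,T_\sigma]$, hence by Remark \ref{Remark-continuity-W} the driving functions $W_+$ and $W_-$ a.s.\ extend continuously to $\lin{{\cal D}_3}$. Next, from \eqref{V-V} (applied with $v_-$ now playing the role of a dummy point, or more precisely noting that $K(\ulin t)\subset\Hull(\eta_w)$ which is a.s.\ bounded) one gets that $|V_+-V_-|$ is a.s.\ bounded on ${\cal D}_3$ — here $V_-$ should be interpreted as the force point function started from a fixed point $v_-<w_-$, introduced exactly as in Case (B) of the strategy; alternatively one can bound $\diam(K(\ulin t))$ directly and use Koebe. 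Combining with the displayed estimate $M_3\le C|V_+-V_-|^{\frac{12}\kappa-1}$ and the continuity of $W_+,W_-$ up to $\lin{{\cal D}_3}$, it then suffices to show the continuations of $W_+$ and $W_-$ agree on $\pa{{\cal D}_3}\cap\R_+^2$, because on that set $W_+-W_-\to 0$ forces $M_3\to 0$, and off ${\cal D}_3$ we extend $M_3$ by $0$.

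The remaining and only substantive point is therefore the agreement $W_+=W_-$ on $\pa{{\cal D}_3}\cap(0,\infty)^2$. I would run the identical contradiction argument as in Lemma \ref{M-cont2}: the sets $A_\sigma=\{t_\sigma\ulin e_\sigma+T^{{\cal D}_3}_{-\sigma}(t_\sigma)\ulin e_{-\sigma}:t_\sigma\in(0,T_\sigma)\}$ are dense in $\pa{{\cal D}_3}\cap(0,\infty)^2$, and by symmetry it is enough to show $W_+=W_-$ on $A_+$. If not, there is $(s_+,s_-)\in{\cal D}_3$ with $W_+(s_+,\cdot)>W_-(s_+,\cdot)$ on $[s_-,T^{{\cal D}_3}_-(s_+)]$. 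Setting $K_-^{\ulin s}(t)=K(s_+,s_-+t)/K(\ulin s)$, these are chordal Loewner hulls driven by $W_-(s_+,s_-+\cdot)$ with the appropriate speed, and Lemma \ref{W=gw} together with Proposition \ref{prop-comp-g} gives $W_+(s_+,s_-+t)=g_{K_-^{\ulin s}(t)}^{W_-(\ulin s)}(W_+(\ulin s))$. The strict inequality $W_+(s_+,s_-+\cdot)>W_-(s_+,s_-+\cdot)$ on the closed interval $[0,T']$, $T':=T^{{\cal D}_3}_-(s_+)-s_-$, combined with the continuous extension of the driving function and speed to $[0,T']$, shows the Loewner process does not swallow $W_+(\ulin s)$ by time $T'$, so $\dist(W_+(\ulin s),\Hull(\bigcup_{0\le t<T'}K_-^{\ulin s}(t)))>0$. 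On the other hand, by Lemma \ref{lem-strict} pick $\delta_n\downarrow 0$ with $\eta_+(s_++\delta_n)\in\HH\sem K(\ulin s)$; then $z_n:=g_{K(\ulin s)}(\eta_+(s_++\delta_n))\to W_+(\ulin s)$ by \eqref{W-def}, while from $\eta_+(s_++\delta_n)\in\Hull(\eta_w)\sem K(\ulin s)=\Hull(\bigcup_{0\le t<T'}K(s_+,s_-+t))\sem K(\ulin s)$ one gets $z_n\in\Hull(\bigcup_{0\le t<T'}K_-^{\ulin s}(t))$ for all $n$, contradicting the positive distance.

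The only real thing to check that is genuinely new relative to Lemma \ref{M-cont2} is the boundedness of $|V_+-V_-|$ (and hence of $M_3$) given that here one endpoint of $\eta_v$ is $\infty$; I would handle this by observing that $K(\ulin t)\subset\Hull(\eta_w)$ which is a.s.\ a bounded $\HH$-hull (since $\kappa\in(0,8)$ and $\eta_w$ is a.s.\ a bounded curve connecting $w_+$ to $w_-$), so $\diam(K(\ulin t)\cup[v_-,v_+])$ is a.s.\ bounded by a random constant, and then \eqref{V-V} gives the claim with any fixed $v_-<w_-$. I expect this step to be entirely routine, so there is essentially no main obstacle beyond transcribing the Lemma \ref{M-cont2} argument with the indices and force points adjusted to the Case (B) setup; one should just be careful that the reversibility/continuous-extension input for hSLE$_\kappa$ is the same as in the previous subsection and does not require the fourth marked point.
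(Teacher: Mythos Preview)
Your proposal is correct and follows essentially the same approach as the paper, which in fact explicitly states that ``the exactly same proof of Lemma \ref{M-cont2} can be used here.'' Your only addition is the remark on boundedness of $|V_+-W_-|$ (the paper writes $|V_+-V_-|$, evidently a typo since $V_-$ has not been introduced), which you handle correctly via $K(\ulin t)\subset\Hull(\eta_w)$ and \eqref{V-V}.
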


Let $\PP_3$   denote the joint law of the driving functions $\ha w_+$ and $\ha w_-$ here, and let  $\PP^0_3$ be the joint law of the driving functions for a commuting pair of chordal SLE$_\kappa(2,2)$ started from $(w_+,w_-;v_+)$. Then  similar arguments as in the previous subsection give  the following lemma.

\begin{Lemma}
Revision of Lemmas \ref{positive-M-T} and \ref{RN-Thm2-inv} hold with all subscripts ``$2$'' replaced by ``$3$''.
\label{RN-Thm3-inv}
\end{Lemma}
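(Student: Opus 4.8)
\textbf{Proof proposal for Lemma \ref{RN-Thm3-inv}.}

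The plan is to follow the template already established for the second commuting pair in Section \ref{section-iSLE-1}, since the two cases differ only in the explicit form of the boundary data and the martingale $M_3$ (whose defining function $G_3$ replaces $G_2$). In other words, the proof is an almost verbatim copy of the proofs of Lemma \ref{positive-M-T} and Lemma \ref{RN-Thm2-inv}, with every occurrence of subscript ``$2$'' replaced by ``$3$''. I would organize it in three forward-looking steps. First, I would record the analogue of Lemma \ref{M-mart2}: for each $R>0$, the stopped process $M_3(\cdot\wedge \ulin\tau_R)$ is an $(\F^+_{t_+\wedge\tau^+_R}\vee\F^-_{t_-\wedge\tau^-_R})_{(t_+,t_-)\in\R_+^2}$-martingale closed by $M_3(\ulin\tau_R)$, and reweighting the underlying measure by $M_3(\ulin\tau_R)/M_3(\ulin 0)$ turns the law of $(\ha w_+,\ha w_-)$ into $\PP^0_3$ on $\F^+_{\tau^+_R}\vee\F^-_{\tau^-_R}$. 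This follows exactly as in the proof of Lemma \ref{M-mart2}: conditionally on $\F^{-\sigma}_{\tau_{-\sigma}}$ the remaining curve is an hSLE$_\kappa$ in the chordal coordinate, so Proposition \ref{Prop-iSLE-3} (applied after a time change) shows $M_3|^{-\sigma}_{\tau_{-\sigma}}$ is a local martingale; the uniform boundedness estimate preceding Lemma \ref{M-cont3}, namely $M_3\le C|V_+-V_-|^{\frac{12}{\kappa}-1}$, together with $\tau^\pm_R\le R^2/2$, upgrades this to a closed martingale, and applying the argument once for each $\sigma$ gives the two-parameter statement; the reweighting computation (splitting $M_3(\ulin\tau_R)/M_3(\ulin 0)$ into a factor depending on $\eta_-$ and a conditional factor, then invoking Proposition \ref{Prop-iSLE-3} twice) is identical. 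From this, following the proof of Lemma \ref{RN-Thm2-right} verbatim, one gets the ``$3$'' analogue of Lemma \ref{RN-Thm1}: $M_3$ is an $\lin\F$-martingale under $\PP_3$, and for any $\lin\F$-stopping time $\ulin\tau$, $\frac{d\PP^0_3|\lin\F_{\ulin\tau}\cap\{\ulin\tau\in\R_+^2\}}{d\PP_3|\lin\F_{\ulin\tau}\cap\{\ulin\tau\in\R_+^2\}}=\frac{M_3(\ulin\tau)}{M_3(\ulin 0)}$, using Proposition \ref{prop-local} and Proposition \ref{OST} for the localization exactly as before.

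Second, I would prove the analogue of Lemma \ref{positive-M-T}: for any $\lin\F$-stopping time $\ulin\tau$, $M_3(\ulin\tau)$ is $\PP_3$-a.s.\ positive on $\{\ulin\tau\in{\cal D}_3\}$. The argument is word-for-word that of Lemma \ref{positive-M-T}. Set $A=\{\ulin\tau\in{\cal D}_3\}\cap\{M_3(\ulin\tau)=0\}$; since ${\cal D}_3$ is an $\lin\F$-stopping region, $A\in\lin\F_{\ulin\tau}$, and the RN-derivative formula from the first step gives $\PP^0_3[A]=0$. Applying the same formula at $\ulin\tau+\ulin t$ for $\ulin t\in\Q_+^2$ shows $\PP_3$-a.s.\ $M_3\equiv 0$ on $\ulin\tau+\R_+^2$ on the event $A$, hence (since $G_3$ is strictly positive whenever its arguments are distinct) $W_+\equiv W_-$ on $(\ulin\tau+\R_+^2)\cap{\cal D}_3$, hence by Lemma \ref{Lem-W} $\eta_+$ is constant on $[\tau_+,T^{{\cal D}_3}_+(\tau_-))$, which is impossible; therefore $\PP_3[A]=0$. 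Third, the analogue of Lemma \ref{RN-Thm2-inv} — the RN derivative in the reverse direction, $\frac{d\PP_3|\lin\F_{\ulin\tau}\cap\{\ulin\tau\in{\cal D}_3\}}{d\PP^0_3|\lin\F_{\ulin\tau}\cap\{\ulin\tau\in{\cal D}_3\}}=\frac{M_3(\ulin 0)}{M_3(\ulin\tau)}$ — is then immediate from the martingale/RN statement of the first step together with the positivity of the second step, just as Lemma \ref{RN-Thm2-inv} follows from Lemmas \ref{RN-Thm2-right} and \ref{positive-M-T}.

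I do not expect a genuine obstacle here; the only thing to check carefully is that the boundedness estimate $M_3\le C|V_+-V_-|^{\frac{12}{\kappa}-1}$ (with $\frac{8}{\kappa}-1>0$ and $\frac{4}{\kappa}>0$ and $F$ uniformly positive on $[0,1]$) is strong enough to run the martingale closure, and that the strict positivity of $G_3(W_+,W_-;V_+)$ on ${\cal D}_3$ fails exactly on the overlap set $\{W_+=W_-\}$ — which is precisely the scenario the positivity lemma handles. Thus the ``main obstacle'' is really just bookkeeping: making sure the proofs of Lemma \ref{M-mart2}, Lemma \ref{RN-Thm2-right}, Lemma \ref{positive-M-T}, and Lemma \ref{RN-Thm2-inv} transfer without any hidden use of the specific form of $G_2$ or of the extra force point $v_-$ present in Section \ref{section-iSLE-1} but not here. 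Since in the present setting the target of $\eta_v$ is $\infty$ rather than $v_-$, one should note that $V_+$ is still a well-defined force point function and the crosscut stopping times $\tau^\pm_R$ behave as before; with that observation the entire argument goes through, and I would simply write ``the proofs of Lemmas \ref{positive-M-T} and \ref{RN-Thm2-inv} apply here verbatim with subscript $2$ replaced by $3$.''
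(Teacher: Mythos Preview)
Your proposal is correct and matches the paper's approach: the paper's own proof is simply the one-line remark that ``similar arguments as in the previous subsection give the following lemma,'' and your three-step outline (martingale/RN statement, positivity on $\{\ulin\tau\in{\cal D}_3\}$, then inversion of the RN derivative) is exactly the template of Lemmas \ref{M-mart2}, \ref{RN-Thm2-right}, \ref{positive-M-T}, \ref{RN-Thm2-inv} with subscripts changed. One small bookkeeping point: the bound you quote should read $M_3\le C|V_+-W_-|^{\frac{12}{\kappa}-1}$ (since $G_3$ involves $V_+,W_+,W_-$ only, and $v_-$ has not yet been introduced at that stage), and Proposition \ref{Prop-iSLE-3} is applied in its degenerate form $v_2=\infty$, so the tilted law is SLE$_\kappa(2,2)$ rather than SLE$_\kappa(2,2,2)$; with those cosmetic adjustments your argument goes through verbatim.
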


Introduce two new points: $v_0=(w_++w_-)/2$ and $v_-=2v_0-v_+$. Let $V_0$ and $V_-$ be respectively the force point functions started from $v_0$ and $v_-$.
Since $v_0=(v_++v_-)/2$, we may  define the time curve $\ulin u:[0,T^u)\to {\cal D}_2$ and the processes $R_\sigma (t)$, $\sigma\in\{+,-\}$, and $\ulin R(t)$  as in Section \ref{time curve}.
Let $G_3^*(r_+,r_-)=G_3(r_+,r_-;1)$. Then $M_3^u(t)=(e^{2t}I)^{\alpha_1} G_3^*(\ulin R(t) )$ for $t\in [0,T^u)$, where $\alpha_3=\frac{12}\kappa-1$ is as in Theorem \ref{main-Thm1}.
Applying Lemma \ref{RN-Thm3-inv} to $\ulin u(t)$,
we get the following lemma, which is similar to Lemma \ref{transition-1}.

\begin{Lemma}
Let $p^3_t(\ulin r,\ulin r^*)$ be the transition density $p^R_t(\ulin r,\ulin r^*)$ given in Corollary \ref{transition-R-infty} with $\rho_0=\rho_-=0$ and $\rho_+=2$. Then under $\PP_3$, the transition density of $(\ulin R)$ is
$$ \til p^3_t(\ulin r,\ulin r^*):= e^{-2\alpha_3 t}p^3_t(\ulin r,\ulin r^*) {G_3(\ulin r)}/{G_3(\ulin r^*)}.$$ \label{transition-3}
\end{Lemma}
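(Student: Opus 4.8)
\textbf{Proof plan for Lemma \ref{transition-3}.}
The strategy is to mirror exactly the argument that produced Lemmas \ref{transition-1} and \ref{transition-2}, replacing the inputs appropriately. First I would record the defining property of the time curve $\ulin u$ for the commuting pair $(\eta_+,\eta_-;{\cal D}_3)$: with $v_0=(w_++w_-)/2$, $v_-=2v_0-v_+$, and $I=v_+-v_0=v_0-v_-$, we have $V_\sigma^u(t)-V_0^u(t)=\sigma e^{2t}I$, so that $W_\sigma^u-V_0^u=R_\sigma(V_\sigma^u-V_0^u)=\sigma R_\sigma e^{2t}I$. Substituting these into $M_3=G_3(W_+,W_-;V_+)$ and using that $G_3$ is homogeneous of degree $\frac 8\kappa-1+\frac 4\kappa=\alpha_3$ (the homogeneity statement at the end of Theorem \ref{main-Thm1}), one gets $M_3^u(t)=(e^{2t}I)^{\alpha_3}G_3^*(\ulin R(t))$ on $[0,T^u)$, where $G_3^*(r_+,r_-)=G_3(r_+,r_-;1)$ as defined just before the lemma. (Here I would double-check the exponent: the lemma statement writes $\alpha_1$ in ``$(e^{2t}I)^{\alpha_1}$'' but immediately clarifies $\alpha_3=\frac{12}\kappa-1$; I would simply use $\alpha_3$, matching $G_3$'s degree.)

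Next I would invoke the Radon--Nikodym identity. By Lemma \ref{RN-Thm3-inv} (the revision of Lemma \ref{RN-Thm1-inv} with subscripts ``$3$''), for any $\lin\F$-stopping time $\ulin\tau$ one has $\frac{d\PP_3|\lin\F_{\ulin\tau}\cap\{\ulin\tau\in{\cal D}_3\}}{d\PP^0_3|\lin\F_{\ulin\tau}\cap\{\ulin\tau\in{\cal D}_3\}}=\frac{M_3(\ulin 0)}{M_3(\ulin\tau)}$. Applying this with $\ulin\tau=\ulin u(t)$ — which is an $\lin\F$-stopping time since ${\cal D}_3$ is an $\lin\F$-stopping region, by Proposition \ref{Prop-u(t)} — and using that $\ulin u(t)\in{\cal D}_3$ is equivalent to $\{T^u>t\}$, together with $\F^u_t=\lin\F_{\ulin u(t)}$, yields
\[
\frac{d\PP_3|\F^u_t\cap\{T^u>t\}}{d\PP^0_3|\F^u_t\cap\{T^u>t\}}=\frac{M_3^u(0)}{M_3^u(t)}=e^{-2\alpha_3 t}\,\frac{G_3^*(\ulin R(0))}{G_3^*(\ulin R(t))}.
\]
The measure $\PP^0_3$ is the law of the driving functions of a commuting pair of chordal SLE$_\kappa(2,2)$ curves started from $(w_+,w_-;v_+)$ — equivalently an SLE$_\kappa(2,\ulin\rho)$ pair with $\rho_0=\rho_-=0$ and $\rho_+=2$ (the force point $v_0$ plays no role since $\rho_0=0$, and $v_-$ is absent). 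Under $\PP^0_3$, Theorem \ref{Thm-SDE-whole-R} and Corollary \ref{transition-R-infty} apply, so $(\ulin R)$ has transition density $p^3_t(\ulin r,\ulin r^*)=p^R_t(\ulin r,\ulin r^*)$ with these parameter values, and crucially $(\ulin R)$ has infinite lifetime under $\PP^0_3$ (by Theorem \ref{Thm-SDE-whole-R}). The change of measure then transfers this to $\PP_3$: for bounded measurable $f$ on $(0,1)^2$, $\EE_3[{\bf 1}_{\{T^u>t\}}f(\ulin R(t))]=\EE^0_3[e^{-2\alpha_3 t}\frac{G_3^*(\ulin R(0))}{G_3^*(\ulin R(t))}f(\ulin R(t))]=e^{-2\alpha_3 t}G_3^*(\ulin r)\int f(\ulin r^*)\frac{p^3_t(\ulin r,\ulin r^*)}{G_3^*(\ulin r^*)}\,d\ulin r^*$, which is exactly the asserted transition density $\til p^3_t(\ulin r,\ulin r^*)=e^{-2\alpha_3 t}p^3_t(\ulin r,\ulin r^*)G_3^*(\ulin r)/G_3^*(\ulin r^*)$.

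The only genuinely non-routine point — and the one I would present carefully — is the verification that the $M$-change of measure sends $\PP_3$ to precisely the SLE$_\kappa(2,2)$ commuting pair law $\PP^0_3$ with the stated force values; but this is already packaged into Lemma \ref{RN-Thm3-inv} (whose proof follows the Lemma \ref{M-mart2}/Lemma \ref{RN-Thm2-right} template verbatim, with (\ref{G3(w,v)}) in place of (\ref{G2(w,v)}) and the Girsanov computation of Proposition \ref{Prop-iSLE-3}), so in this lemma I would only need to cite it. A secondary small check is that $G_3^*>0$ on the relevant region so the quotient $G_3^*(\ulin r)/G_3^*(\ulin r^*)$ makes sense: this follows from $F>0$ on $[0,1]$ (established in Section \ref{hSLE}) together with $V_+\ge W_+\ge W_-\ge V_-$, exactly as in the estimate preceding Lemma \ref{M-cont3}. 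Everything else is a direct transcription of the $\PP_1$ and $\PP_2$ cases, so the proof can be given in a couple of lines referring back to Lemma \ref{transition-1}.
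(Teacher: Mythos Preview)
Your proposal is correct and follows essentially the same approach as the paper: apply Lemma \ref{RN-Thm3-inv} at the $\lin\F$-stopping time $\ulin u(t)$, use $M_3^u(t)=(e^{2t}I)^{\alpha_3}G_3^*(\ulin R(t))$ to rewrite the Radon--Nikodym derivative in terms of $\ulin R$, and combine with the transition density under $\PP^0_3$ from Corollary \ref{transition-R-infty}. The paper's own proof is a single sentence referring back to Lemma \ref{transition-1}; your write-up spells out the same steps in more detail and correctly flags the $\alpha_1$/$\alpha_3$ typo.
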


\subsection{A summary}\label{summary}

For $j=1,2,3$, using Lemmas \ref{transition-1}, \ref{transition-2}, and \ref{transition-3}, we can obtain a quasi-invariant density of $\ulin R$ under   $\PP_j$ as follows. Let $G_j^*(r_+,r_-)=G_j(r_+,-r_-;1,-1)$, $j=1,2$, and $G_3^*(r_+,r_-)=G_3(r_+,-r_-;1)$.
Let $p^j_\infty$ be the invariant density $p^R_\infty$ of $\ulin R$ under  $\PP_0^j$ given by Corollary \ref{transition-R-infty}, where $\PP_0^1=\PP_0^2=\PP_{r_+,-r_-;1,-1}^{(2,2)}$ and $\PP_0^3=\PP_{r_+,-r_-;1}^{(2)}$. Define
\BGE {\cal Z}_j=\int_{(0,1)^2} \frac{p^j_\infty(\ulin r^*)}{G_j^*(\ulin r^*)}d\ulin r^*,\quad \til p^j_\infty=\frac 1{{\cal Z}_j} \frac{p^j_\infty}{G_j^* },\quad j=1,2,3. \label{tilZ}\EDE
It is straightforward to check that ${\cal Z}_j\in(0,\infty)$, $j=1,2,3$. To see this, we compute $p^j_\infty(r_+,r_-)\asymp (1-r_+)^{\frac  8\kappa -1}(1-r_-)^{\frac  8\kappa -1} (r_+r_-)^{\frac 4\kappa -1}$ for $j=1,2$, and $\asymp (1-r_+)^{\frac 8\kappa -1}(1-r_-)^{\frac 4\kappa -1}(r_+r_-)^{\frac 4\kappa -1}$ for $j=3$; $G_j^*(r_+,r_-)\asymp (1-r_+)^{\frac  8\kappa -1}(1-r_-)^{\frac  8\kappa -1}$ for $j=1$, and $\asymp (r_++r_-)^{\frac 8\kappa -1}$ for $j=2,3$.

\begin{Lemma} The following statements hold.
\begin{enumerate}
  \item [(i)] For any $j\in\{1,2,3\}$, $t>0$ and $\ulin r^*\in(0,1)^2$, $ \int_{[0,1]^2}  \til p^j_\infty(\ulin r)\til p^j_t(\ulin r,\ulin r^*)d\ulin r=e^{-2\alpha_jt}\til p^j_\infty(\ulin r^*)$.
This means, under the law $\PP_j$, if the process $(\ulin R)$ starts from a random point in $ (0,1)^2$ with density $\til p^j_\infty$, then for any deterministic $t\ge 0$, the density of (the survived) $\ulin R(t)$ is $e^{-2\alpha_j t} \til p^j_\infty$. So we call $\til p^R_j$ a quasi-invariant density for $(\ulin R)$ under $\PP_j$.
\item [(ii)] Let $\beta_1=\beta_2=10$ and $\beta_3=8$. For $j\in\{1,2,3\}$ and $\ulin r\in(0,1)^2$, if $\ulin R$ starts from $\ulin r$, then
	\BGE\PP_{j}[T^u>t]={{\cal Z}_j} G_j^*(\ulin r) e^{-2\alpha_j t}(1+O(e^{- \beta_j t}));
	\label{P[T>t]}\EDE
	\BGE \til p^R_j(t,\ulin r,\ulin r^*)=\PP_{j}[T^u>t]\til p^j_\infty(\ulin r^*)(1+O(e^{-\beta_j t})).\label{tilptT>t}\EDE
Here we emphasize that the implicit constants in the $O$ symbols do not depend on $\ulin r$.
\end{enumerate}
\label{property-til-p}
\end{Lemma}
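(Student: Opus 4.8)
The plan is to deduce everything from Lemma~\ref{transition-1}, \ref{transition-2}, \ref{transition-3} together with Corollary~\ref{transition-R-infty}. First I would record the key identity $\til p^j_t(\ulin r,\ulin r^*)=e^{-2\alpha_j t}p^j_t(\ulin r,\ulin r^*)G_j^*(\ulin r)/G_j^*(\ulin r^*)$ and the asymptotic $|p^j_t(\ulin r,\ulin r^*)-p^j_\infty(\ulin r^*)|\le C_{t_0}e^{-\beta_j t}p^j_\infty(\ulin r^*)$ for $t\ge t_0$, where $\beta_j=\rho_++\rho_-+\rho_0+6$ takes the values $10,10,8$ in the three cases (this is exactly why $\beta_1=\beta_2=10$, $\beta_3=8$). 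Part (i) is then a direct computation: substituting the definition $\til p^j_\infty=\frac1{{\cal Z}_j}p^j_\infty/G_j^*$ and using the identity,
\[
\int_{[0,1]^2}\til p^j_\infty(\ulin r)\til p^j_t(\ulin r,\ulin r^*)\,d\ulin r
=\frac{e^{-2\alpha_j t}}{{\cal Z}_j G_j^*(\ulin r^*)}\int_{[0,1]^2}\frac{p^j_\infty(\ulin r)}{G_j^*(\ulin r)}G_j^*(\ulin r)p^j_t(\ulin r,\ulin r^*)\,d\ulin r
=\frac{e^{-2\alpha_j t}}{{\cal Z}_j G_j^*(\ulin r^*)}\int_{[0,1]^2}p^j_\infty(\ulin r)p^j_t(\ulin r,\ulin r^*)\,d\ulin r,
\]
and then invoking the invariance identity (\ref{invar}) from Lemma~\ref{asym-lemma}(iii) (transported through the change of variables to $\ulin R$-coordinates, i.e.\ the $p^j_\infty$ version of (\ref{invar})) which gives $\int p^j_\infty(\ulin r)p^j_t(\ulin r,\ulin r^*)\,d\ulin r=p^j_\infty(\ulin r^*)$; this equals $e^{-2\alpha_j t}\til p^j_\infty(\ulin r^*)$, as claimed.

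For part (ii), I would first prove (\ref{P[T>t]}). Since $\PP_j[T^u>t]=\int_{[0,1]^2}\til p^j_t(\ulin r,\ulin r^*)\,d\ulin r^*$, I substitute the identity for $\til p^j_t$ to get $\PP_j[T^u>t]=e^{-2\alpha_j t}G_j^*(\ulin r)\int_{[0,1]^2}p^j_t(\ulin r,\ulin r^*)G_j^*(\ulin r^*)^{-1}\,d\ulin r^*$. Now I split $p^j_t=p^j_\infty+(p^j_t-p^j_\infty)$: the main term gives $\int p^j_\infty(\ulin r^*)G_j^*(\ulin r^*)^{-1}\,d\ulin r^*={\cal Z}_j$, while the error term is bounded by $C_{t_0}e^{-\beta_j t}\int p^j_\infty(\ulin r^*)G_j^*(\ulin r^*)^{-1}\,d\ulin r^*=C_{t_0}e^{-\beta_j t}{\cal Z}_j$, uniformly in $\ulin r$; this yields $\PP_j[T^u>t]=e^{-2\alpha_j t}G_j^*(\ulin r){\cal Z}_j(1+O(e^{-\beta_j t}))$. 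Here I must be slightly careful that the implicit constants absorb the behavior on a small initial interval: the stated asymptotic is ``for $t\ge t_0$'' in Corollary~\ref{transition-R-infty}, so I would fix a convenient $t_0$ (say $t_0=1$) and note that for $t$ in a bounded range both sides of (\ref{P[T>t]}) are bounded above and below by constants times $G_j^*(\ulin r)$, making the $O$-statement trivially true there. The key point to check is that $\int p^j_\infty(\ulin r^*)G_j^*(\ulin r^*)^{-1}d\ulin r^*={\cal Z}_j<\infty$, which is exactly the integrability already verified in the paragraph after (\ref{tilZ}) via the explicit exponent bookkeeping $p^j_\infty(r_+,r_-)\asymp\cdots$ and $G_j^*(r_+,r_-)\asymp\cdots$.

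Finally (\ref{tilptT>t}): again from the identity, $\til p^R_j(t,\ulin r,\ulin r^*)=e^{-2\alpha_j t}G_j^*(\ulin r)G_j^*(\ulin r^*)^{-1}p^j_t(\ulin r,\ulin r^*)$. Replacing $p^j_t(\ulin r,\ulin r^*)$ by $p^j_\infty(\ulin r^*)(1+O(e^{-\beta_j t}))$ (valid by Corollary~\ref{transition-R-infty} with error uniform in $\ulin r$) gives $\til p^R_j(t,\ulin r,\ulin r^*)=e^{-2\alpha_j t}G_j^*(\ulin r)G_j^*(\ulin r^*)^{-1}p^j_\infty(\ulin r^*)(1+O(e^{-\beta_j t}))={\cal Z}_j e^{-2\alpha_j t}G_j^*(\ulin r)\til p^j_\infty(\ulin r^*)(1+O(e^{-\beta_j t}))$. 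Combining with (\ref{P[T>t]}) to replace ${\cal Z}_j e^{-2\alpha_j t}G_j^*(\ulin r)$ by $\PP_j[T^u>t](1+O(e^{-\beta_j t}))$ and absorbing the product of two $(1+O(e^{-\beta_j t}))$ factors, we obtain (\ref{tilptT>t}). I expect the only mildly delicate point to be the uniformity of the $O$-constants in $\ulin r$ and the handling of the initial interval $t<t_0$; both are addressed by Corollary~\ref{transition-R-infty} stating the bound uniformly over $\ulin r\in[0,1]^2$ and $\ulin r^*\in(0,1)^2$ for $t\ge t_0$, and by the trivial boundedness argument on $[0,t_0]$. No new estimates beyond those already established are needed.
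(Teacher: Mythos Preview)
Your approach is essentially identical to the paper's: part (i) reduces to the invariance identity (\ref{invar}), and part (ii) is obtained by substituting $\til p^j_t=e^{-2\alpha_jt}p^j_t\,G_j^*(\ulin r)/G_j^*(\ulin r^*)$ and replacing $p^j_t$ by $p^j_\infty(1+O(e^{-\beta_jt}))$ via Corollary~\ref{transition-R-infty}, then integrating against $1/G_j^*(\ulin r^*)$ to produce ${\cal Z}_j$. The paper's proof is literally this computation written out in two displayed lines.

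One caveat: your paragraph about ``handling the initial interval $t<t_0$'' is both unnecessary and incorrect. You claim that for $t$ in a bounded range both sides of (\ref{P[T>t]}) are bounded above and below by constants times $G_j^*(\ulin r)$; but for small $t$ the left side $\PP_j[T^u>t]$ is close to $1$, while $G_j^*(\ulin r)$ can be arbitrarily small as $\ulin r$ approaches the parts of $\partial[0,1]^2$ where $G_j^*$ vanishes, so no such two-sided bound uniform in $\ulin r$ is available. The resolution is simply that the $O(\cdot)$ statements in (\ref{P[T>t]}) and (\ref{tilptT>t}) are asymptotic as $t\to\infty$ (equivalently, hold for $t\ge t_0$ with constants depending on $t_0$), exactly as in Corollary~\ref{transition-R-infty}; the paper uses them only for $t_0\ge 3$ in the proof of Theorem~\ref{Thm1}. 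So just drop that paragraph.
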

\begin{proof}
	Part (i) follows easily from (\ref{invar}). For part (ii), suppose $\ulin R$  starts from $\ulin r$. Using Corollary \ref{transition-R-infty}, Lemmas \ref{transition-1}, \ref{transition-2}, and \ref{transition-3}, and formulas (\ref{tilZ}), we get
	$$\PP_{j}[T^u>t]=\int_{(0,1)^2} \til p^j_t(\ulin r,\ulin r^*)d\ulin r^*=\int_{(0,1)^2}  e^{-2\alpha_j t}  p^j_t(\ulin r,\ulin r^*) \frac{G_j^*(\ulin r)}{G_j^*(\ulin r^*)}d\ulin r^*$$
	$$=\int_{(0,1)^2} e^{-2\alpha_j t}  p^j_\infty(\ulin r^*)(1+O(e^{-\beta_j t})) \frac{G_j^*(\ulin r)}{G_j^*(\ulin r^*)}d\ulin r^*={\cal Z}_j G_j^*(\ulin r) e^{-2\alpha_jt}(1+O(e^{-\beta_j t})),$$
	which is (\ref{P[T>t]}); and
	$$ \til p^j_t(\ulin r,\ulin r^*)=e^{-2\alpha_j t}  p^j_\infty(\ulin r^*)(1+O(e^{-\beta_j t})) \frac{G_j^*(\ulin r)}{G_j^*(\ulin r^*)}
	=e^{-2\alpha_j t} {\cal Z}_j \til p^j_\infty(\ulin r^*)(1+O(e^{-\beta_j t})) G_j^*(\ulin r),$$
	which together with (\ref{P[T>t]}) implies (\ref{tilptT>t}).
\end{proof}

We will need the following lemma, which follows from the argument in   \cite[Appendix A]{Green-cut}.

\begin{Lemma}
For $j=1,2,3$, the $(\eta_+,\eta_-;{\cal D}_j)$ in the three subsections  satisfies the two-curve DMP as described in Lemma \ref{DMP} except that the conditional law of the normalization of $(\til\eta_+,\til\eta_-;\til{\cal D}_j)$ has the law of a commuting pair of hSLE$_\kappa$ curves in the chordal coordinate respectively started from $(W_+,W_-;V_+,V_-)|_{\ulin\tau}$, $(W_+\lr W_-;V_+,V_-)|_{\ulin\tau}$, and $(W_+\lr W_-;V_+)|_{\ulin\tau}$.
\label{DMP-123}
\end{Lemma}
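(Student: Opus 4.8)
\textbf{Proof plan for Lemma \ref{DMP-123}.} The strategy is to transfer the two-curve DMP for commuting pairs of chordal SLE$_\kappa(2,\ulin\rho)$ curves (Lemma \ref{DMP}) to the three hSLE settings via the local absolute continuity established in Sections \ref{section-two-curve}--\ref{section-iSLE-2}, exactly as in \cite[Appendix A]{Green-cut}. First I would recall that for each $j$ we have a probability measure $\PP_j$ on $\Sigma^2$ (the joint law of the driving functions of the commuting pair of hSLE$_\kappa$ curves in the chordal coordinate) and a reference measure $\PP^0_j$ (the law of the driving functions of a commuting pair of chordal SLE$_\kappa(2,\ulin\rho)$ curves with $\ulin\rho=(2,2)$ for $j=1,2$ and $\ulin\rho=(2)$ for $j=3$), such that on $\F_{\ulin\tau}\cap\{\ulin\tau\in{\cal D}_j\}$ (with $\lin\F$ in place of $\F$ for $j=2,3$) the Radon--Nikodym derivative of $\PP_j$ with respect to $\PP^0_j$ is $M_j(\ulin 0)/M_j(\ulin\tau)$, where $M_j=G_j(W_+,W_-;\ulin V)$ (Lemmas \ref{RN-Thm1-inv}, \ref{RN-Thm2-inv}, \ref{RN-Thm3-inv}).

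The main step is the following. Fix an $\lin\F$-stopping time $\ulin\tau$ and work on the event $E_{\ulin\tau}=\{\ulin\tau\in{\cal D}_j,\ \eta_\sigma(\tau_\sigma)\not\in\eta_{-\sigma}[0,\tau_{-\sigma}],\ \sigma\in\{+,-\}\}$, which is contained in $\{\ulin\tau\in{\cal D}_j\}$ (note that on ${\cal D}_j$ the curves have not yet exhausted their range or disconnected their targets, so the ``part after $\ulin\tau$ up to a conformal map'' in the sense of Definition \ref{Def-speeds} is well defined, and on $E_{\ulin\tau}$ the normalization is a genuine commuting pair of chordal Loewner curves). Under $\PP^0_j$, Lemma \ref{DMP} tells us that conditionally on $\lin\F_{\ulin\tau}\cap E_{\ulin\tau}$, the normalization of $(\til\eta_+,\til\eta_-;\til{\cal D})$ is a commuting pair of chordal SLE$_\kappa(2,\ulin\rho)$ curves started from $(W_+,W_-;\ulin V)|_{\ulin\tau}$ (with the convention on coinciding force points). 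Now I would apply the DMP-type identity for the Radon--Nikodym derivative: by Lemma \ref{DMP-determin-1}(ii), along the shifted pair the driving and force point functions $\til W_\sigma,\til V_\nu$ are exactly $W_\sigma(\ulin\tau+\cdot),V_\nu(\ulin\tau+\cdot)$, so $\til M_j:=G_j(\til W_+,\til W_-;\til{\ulin V})=M_j(\ulin\tau+\cdot)$; combining this with the multiplicative (Markov) structure of the RN derivative — i.e.\ that for a further $\lin\F$-stopping time $\ulin\sigma\ge\ulin\tau$ one has $M_j(\ulin 0)/M_j(\ulin\sigma)=[M_j(\ulin 0)/M_j(\ulin\tau)]\cdot[M_j(\ulin\tau)/M_j(\ulin\sigma)]$ — shows that conditionally on $\lin\F_{\ulin\tau}\cap E_{\ulin\tau}$ the law of the normalized shifted pair is obtained from the commuting chordal SLE$_\kappa(2,\ulin\rho)$ law by weighting by $\til M_j/\til M_j(\ulin 0)$ (up to the time the normalized pair leaves $\til{\cal D}_j$). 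By Proposition \ref{Prop-iSLE-3} (for $j=1,2$) and its analogue (for $j=3$), this weighting of a commuting chordal SLE$_\kappa(2,\ulin\rho)$ pair produces precisely a commuting pair of hSLE$_\kappa$ curves in the chordal coordinate started from $(W_+,W_-;V_+,V_-)|_{\ulin\tau}$, respectively $(W_+\lr W_-;V_+,V_-)|_{\ulin\tau}$, respectively $(W_+\lr W_-;V_+)|_{\ulin\tau}$, which is the claim. One must also check the force-point convention matches: when $V_\nu(\ulin\tau)=W_\sigma(\ulin\tau)$, the prescription in Lemma \ref{DMP} coincides with the way $\til v$ is defined in Lemma \ref{DMP-determin-1}(ii).

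The hard part, as in \cite{Green-cut}, is the passage from deterministic or ``alternately grown'' stopping times (for which the statement follows by iterating the one-curve DMP built into the construction in Sections \ref{section-two-curve}--\ref{section-iSLE-2}) to a general $\lin\F$-stopping time $\ulin\tau$, together with the subtlety that $M_j$ may vanish on ${\cal D}_j$ when $\kappa\in(4,8)$ and the two curves touch — this is why we restrict to $E_{\ulin\tau}$, where $\eta_\sigma(\tau_\sigma)\notin\eta_{-\sigma}[0,\tau_{-\sigma}]$ guarantees (via Lemma \ref{positive-M-T} and the fact that $W_+(\ulin\tau)\ne W_-(\ulin\tau)$ on the relevant part) that the RN derivative is well defined and positive there. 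I would handle this by the same approximation argument referenced in \cite[Remark A.4]{Green-cut}: approximate $\ulin\tau$ from above by stopping times of the alternating type, use right-continuity of $\lin\F$ and continuity of the driving/force-point functions and of $M_j$, and pass to the limit in the conditional law, invoking Lemmas \ref{M-cont}, \ref{M-cont2}, \ref{M-cont3} for the continuity of $M_j$ up to $\pa{\cal D}_j$ and the martingale/optional-stopping machinery of Section 2.6 (Proposition \ref{OST}, Proposition \ref{prop-local}). Since all of this is carried out in detail in \cite[Appendix A]{Green-cut} and the present setting differs only in replacing the chordal SLE$_\kappa$ reference measure by the commuting SLE$_\kappa(2,\ulin\rho)$ reference measure and the single weight by $M_j$, I would simply invoke that argument, indicating the three places where the substitution is needed (the reference measure, the weight $M_j$, and the hSLE description via Proposition \ref{Prop-iSLE-3}).
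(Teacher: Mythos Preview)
Your proposal takes a different route from the paper. The paper's proof is a single reference: the argument in \cite[Appendix~A]{Green-cut} is a \emph{direct} approximation scheme that upgrades a one-curve DMP (``if I stop one curve at a stopping time, the other is again of the same type'') to a two-curve DMP for general $\lin\F$-stopping times. That one-curve DMP for the hSLE pairs is already established at the beginning of each of Sections~\ref{section-two-curve}--\ref{section-iSLE-2} (via the $2$-SLE$_\kappa$ resampling description), so the \cite{Green-cut} machinery applies verbatim to $(\eta_+,\eta_-;{\cal D}_j)$ with no reference measure or Radon--Nikodym weight entering the argument at all.

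Your plan instead transfers the DMP from $\PP^0_j$ to $\PP_j$ through the Girsanov relation $d\PP_j|{\lin\F_{\ulin\tau}}/d\PP^0_j|{\lin\F_{\ulin\tau}}=M_j(\ulin 0)/M_j(\ulin\tau)$, using Lemma~\ref{DMP} for $\PP^0_j$ as input. This is a legitimate alternative strategy and the multiplicative identity $M_j(\ulin\tau+\cdot)=\til M_j$ from Lemma~\ref{DMP-determin-1}(ii) does make the conditional weight factor correctly. What it buys you is that the ``hard'' approximation for general $\lin\F$-stopping times is done once (in Lemma~\ref{DMP}) rather than repeated. What it costs you is extra bookkeeping: you must check that $\ulin\tau+\ulin s$ is an $\lin\F$-stopping time, that $\F_{\ulin\tau+\ulin s}$ contains the information about the shifted pair up to $\ulin s$, that the random domains $\til{\cal D}_j$ match on both sides, and that the weighted $\QQ^0$ really is the hSLE law from the new data (this last step is Lemmas~\ref{RN-Thm1-inv}/\ref{RN-Thm2-inv}/\ref{RN-Thm3-inv} applied at the shifted initial conditions, not Proposition~\ref{Prop-iSLE-3} alone).

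Two small corrections. First, the restriction to $E_{\ulin\tau}$ is not there to ensure $M_j(\ulin\tau)>0$; Lemma~\ref{positive-M-T} already gives positivity $\PP_j$-a.s.\ on $\{\ulin\tau\in{\cal D}_j\}$. Rather, $E_{\ulin\tau}$ guarantees that the shifted pair starts from two \emph{distinct} points so that its normalization is a bona fide commuting pair. Second, your final paragraph describes the \cite{Green-cut} argument as involving a ``reference measure'' and a ``weight $M_j$'' to be substituted; it does not. The \cite{Green-cut} argument is measure-intrinsic: it needs only the one-curve DMP and continuity, which is why the paper can invoke it directly for the hSLE pairs without any Girsanov step.
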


\section{Boundary Green's Functions}
We are going to prove the main theorem in this section.

\begin{Lemma}
For $j=1,2$, let $U_j$ be a simply connected subdomain of the Riemann sphere $\ha\C$,  which contains $\infty$ but no{t} $0$, and let $f_j$ be a conformal map from $\D^*:=\ha\C\sem \{|z|\le 1\}$  onto $U_j$, which fixes $\infty$. Let $a_j=\lim_{z\to \infty} |f_j(z)|/|z|>0$, $j=1,2$, and $a=a_2/a_1$. If $R>4a_1$, then $\{|z|> R\}\subset U_1$, and  $\{|z|> aR+4a_2\}\subset f_2\circ f_1^{-1}(\{|z|> R\}) \subset \{|z|\ge aR-4a_2\}$. \label{distortion}
\end{Lemma}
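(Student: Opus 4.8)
The plan is to combine the Laurent expansion of each $f_j$ at $\infty$ with two classical estimates for univalent maps of the exterior disc $\D^*$. Write $f_j(\zeta)=c_j\zeta+b_j+\sum_{n\ge1}c_{j,n}\zeta^{-n}$ for $|\zeta|>1$, so $|c_j|=a_j$. Applying the area theorem to the normalized map $\zeta\mapsto f_j(\zeta)/c_j=\zeta+b_j/c_j+O(1/\zeta)$ gives $\sum_{n\ge1}n|c_{j,n}/c_j|^2\le1$, hence by Cauchy--Schwarz
\[
|f_j(\zeta)-c_j\zeta-b_j|\le \frac{a_j}{\sqrt{|\zeta|^2-1}},\qquad |\zeta|>1 ,
\]
which is $\le a_j$ once $|\zeta|\ge\sqrt2$ and tends to $0$ as $|\zeta|\to\infty$; a similar bound for $f_j'(\zeta)$ gives $|f_j'(\zeta)|\le a_j|\zeta|^2/(|\zeta|^2-1)$. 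The second ingredient is the covering estimate: the complement $E_j:=\ha\C\setminus U_j$ is a continuum of Euclidean diameter $\le 4a_j$ lying within distance $2a_j$ of its ``conformal centre'' $b_j$ (the exterior-disc analogue of Koebe's theorem). Since $0\in E_j$ by hypothesis, this yields $|b_j|\le 2a_j$ and therefore $E_j\subseteq\{|w|\le 4a_j\}$.

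The first assertion is then immediate: if $R>4a_1$ then $\{|z|>R\}$ is disjoint from $E_1$, i.e. $\{|z|>R\}\subseteq U_1$, so $f_1^{-1}$ and $f_2\circ f_1^{-1}$ are defined there. For the second assertion I would first record the elementary description, valid because $f_1$ and $f_2$ share the domain $\D^*$:
\[
w\in f_2\circ f_1^{-1}(\{|z|>R\})\iff w\in U_2 \ \text{ and }\ |f_1(f_2^{-1}(w))|>R ,
\]
the point being that $w=f_2(f_1^{-1}(z))$ with $|z|>R$ forces $w\in U_2$ and $z=f_1(f_2^{-1}(w))$, and conversely. Now the two inclusions reduce to size comparisons: from the displayed estimates and $|b_j|\le 2a_j$ one gets, for $|\zeta|$ bounded away from $1$,
\[
a_j(|\zeta|-2)-o(a_j)\le|f_j(\zeta)|\le a_j(|\zeta|+2)+o(a_j)
\]
(the $o(a_j)$ being the area-theorem error, which is genuinely small once $|\zeta|$ is of order $R/a_1$). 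Given $|w|>aR+4a_2$: since also $|w|>4a_2$ we have $w\in U_2$, and writing $\zeta=f_2^{-1}(w)$ the estimate forces $|\zeta|>R/a_1+1$ (whence $|\zeta|$ is large, justifying the negligible error), and then $|f_1(\zeta)|>R$, so $w$ lies in the image. Conversely, if $w\in U_2$ and $|w|<aR-4a_2$, the same estimates give $|\zeta|<R/a_1-1$ and then $|f_1(\zeta)|<R$, so $w$ is omitted. This is exactly the sandwich $\{|z|>aR+4a_2\}\subseteq f_2\circ f_1^{-1}(\{|z|>R\})\subseteq\{|z|\ge aR-4a_2\}$.

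The main obstacle is the non-uniformity of the distortion bounds near $\partial\D^*$: the error $|f_j(\zeta)-c_j\zeta-b_j|$ blows up as $|\zeta|\to1^+$, so none of the size comparisons above is valid for $\zeta$ close to the unit circle. This is precisely the role of the hypothesis $R>4a_1$: combined with the Koebe $1/4$ bounds
\[
\tfrac14|f_j'(\zeta)|(|\zeta|-1)\le\dist(f_j(\zeta),E_j)\le 4|f_j'(\zeta)|(|\zeta|-1)
\]
and $E_j\subseteq\{|w|\le 4a_j\}$, it shows that any $\zeta$ arising in the argument has $|f_j(\zeta)|$ comparable to $R$ and hence $|\zeta|$ comparable to $R/a_1\gg1$, keeping it a definite distance from the unit circle. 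The only remaining work is the routine bookkeeping of additive constants, carried out with the quantitative error bound $a_j/\sqrt{|\zeta|^2-1}$ rather than the qualitative $o(a_j)$, to check that the constants collapse exactly to $4a_1$ and $4a_2$.
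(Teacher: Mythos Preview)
Your approach via the area theorem is natural, but the claim that ``the constants collapse exactly to $4a_1$ and $4a_2$'' after routine bookkeeping is not correct. Normalize $a_1=a_2=1$. Your estimate gives
\[
\bigl||f_j(\zeta)|-|\zeta|\bigr|\le |b_j|+\frac{1}{\sqrt{|\zeta|^2-1}}\le 2+\frac{1}{\sqrt{|\zeta|^2-1}},
\]
hence only $|r_1-r_2|\le 4+2/\sqrt{r_0^2-1}$, where $r_j=|f_j(\zeta)|$ and $r_0=|\zeta|$. The extra term $2/\sqrt{r_0^2-1}$ is strictly positive for every finite $r_0$, so one cannot obtain the inclusions with the stated constants $4a_2$; you would only get $4a_2$ plus a correction depending on how far $\zeta$ is from $\partial\D^*$. (For the applications in the paper the exact constant is irrelevant, so a version of the lemma with slightly larger constants would suffice---but it is not the lemma as stated.) A second, smaller issue is that your lower bound on $|\zeta|$ is obtained from the very inequality whose error term you need $|\zeta|$ large to control; the argument as written is circular near $\partial\D^*$, and the Koebe $1/4$ bound you invoke to break the circularity yields only comparability, not the clean additive constants.

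The paper's proof avoids all of this by a one-line reduction. After scaling to $a_1=a_2=1$, set $J(z)=1/z$ and apply the \emph{sharp} Koebe distortion theorem to the normalized univalent map $J\circ f_j\circ J$ on $\D$. For $r_0=|\zeta|>1$ this gives directly
\[
r_0+\frac{1}{r_0}-2\;\le\;|f_j(\zeta)|\;\le\;r_0+\frac{1}{r_0}+2,\qquad j=1,2,
\]
so both $r_1$ and $r_2$ lie in the same interval of length $4$, i.e.\ $|r_1-r_2|\le 4$ exactly. The two inclusions then follow in one step each: $f(\{|z|>R\})\subset\{|z|>R-4\}$, and $f(\{|z|=R\})\subset\{|z|\le R+4\}$ together with $f(\infty)=\infty$ gives $f(\{|z|>R\})\supset\{|z|>R+4\}$. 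The inversion trick replaces your area-theorem remainder with the sharp Koebe bound, which is precisely what is needed to land on the constant $4$.
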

\begin{proof}
By scaling we may assume that $a_1=a_2=1$. Let $f=f_2\circ f_1^{-1}$.
That $\{|z|>4\}\subset U_1$ follows from Koebe's $1/4$ theorem applied to $J\circ f_1\circ J$, where $J(z):=1/z$. Fix $z_1\in U_1$. Let $z_0=f_1^{-1}(z_1)\in\D^*$ and $z_2=f_2(z_0)\in U_2$.   Let $r_j=|z_j|$, $j=0,1,2$. Applying Koebe's distortion theorem to $J\circ f_j\circ  J$, we find that $r_0+\frac 1{r_0}-2\le r_j\le r_0+\frac 1{r_0}+2$, $j=1,2$, which implies that $ |r_1-r_2|\le 4$. Thus, for $R>4$,  $f(\{|z|> R\})\subset\{|z|> R-4 \}$, and $f(\{|z|=R\})\subset \{|z|\le R+4\}$. The latter inclusion implies that $f(\{|z|> R\})\supset \{|z|> R+4\}$.
\end{proof}

\begin{Theorem}
  Let $v_-<w_-<w_+<v_+\in\R$ be such that $0\in [v_-,v_+]$. Let $(\ha\eta_+,\ha\eta_-)$ be a $2$-SLE$_\kappa$ in $\HH$ with link pattern $(w_+\lr v_+;w_-\lr v_-)$. Let $\alpha_1=2(\frac{12}\kappa -1)$,  $\beta_1'=\frac{5}{6}$, and
 $G_1(\ulin w;\ulin v)$ be as in (\ref{G1(w,v)}).
  Then there is a constant $C>0$ depending only on $\kappa$ such that,
  \BGE  \PP[\ha\eta_\sigma\cap\{|z|>L\}\ne \emptyset,\sigma\in\{+,-\}]= CL^{-\alpha_1} G_1(\ulin w;\ulin v)(1+O( {|v_+-v_-|}/L)^{\beta_1'}),\label{Thm1-est}\EDE
  as $L\to \infty$,
  where the implicit constants in the $O(\cdot)$ symbol depend only on $\kappa$. \label{Thm1}
\end{Theorem}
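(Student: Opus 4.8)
\textbf{Proof plan for Theorem \ref{Thm1}.}
The plan is to combine the transition density machinery of Section \ref{section-two-curve} with the ``conformal radius to Euclidean distance'' technique of \cite{LR}, carried out in two stages. First I would reduce to the normalized configuration: after applying the conformal invariance of $2$-SLE$_\kappa$ and a translation/dilation by $|v_+-v_-|$, I may assume $v_\pm=\pm 1$ and, by growing a short initial segment of one curve if necessary, that $v_0:=(v_++v_-)/2=0\in[w_-,w_+]$; the point $\infty$ is unchanged and the event $\{\ha\eta_\sigma\cap\{|z|>L\}\neq\emptyset,\sigma\in\{+,-\}\}$ rescales to the analogous event at radius $L/|v_+-v_-|$. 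The key quantity is $e^{2T^u}$, where $T^u$ is the (a.s.\ finite under $\PP_1$) lifespan of the time curve $\ulin u$ from Lemma \ref{time curve-lem}: by Lemma \ref{Beurling} (the bound \eqref{V-V'}) together with Koebe's $1/4$ theorem applied to $g_{K(\ulin u(t))}$ at $\infty$, the supremum of $L$ with $\eta_\sigma\cap\{|z|>L\}\neq\emptyset$ for $\sigma\in\{+,-\}$ is comparable to $e^{2T^u}$, with multiplicative constants depending only on $\kappa$.

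Second, I would prove the ``conformal radius'' version of \eqref{Thm1-est}, namely that $\PP_1[e^{2T^u}>L]$ equals $C L^{-\alpha_1}G_1^*(\ulin R(0))$ up to a $(1+O(L^{-\beta_1'}))$ factor, where $\ulin R(0)=\big(\tfrac{w_+}{1},-\tfrac{w_-}{1}\big)$ encodes the starting point of the diffusion $(\ulin R)$. This is where Lemma \ref{transition-1} and Lemma \ref{property-til-p} do the work: under $\PP_1$ the process $(\ulin R(t))_{0\le t<T^u}$ has transition density $\til p^1_t(\ulin r,\ulin r^*)=e^{-2\alpha_1t}p^1_t(\ulin r,\ulin r^*)G_1^*(\ulin r)/G_1^*(\ulin r^*)$, and since $\{T^u>t\}=\{e^{2T^u}>e^{2t}\}$ up to the identification $L=e^{2t}$, the survival probability formula \eqref{P[T>t]} gives $\PP_1[T^u>t]={\cal Z}_1 G_1^*(\ulin R(0))e^{-2\alpha_1 t}(1+O(e^{-\beta_1 t}))$ with $\beta_1=10$, hence $\PP_1[e^{2T^u}>L]={\cal Z}_1 G_1^*(\ulin R(0))L^{-\alpha_1}(1+O(L^{-5}))$. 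Because $G_1^*$ is exactly the scale-invariant reduction of $G_1(\ulin w;\ulin v)$ (the homogeneity noted after \eqref{G3(w,v)}), this already yields \eqref{Thm1-est} but with $e^{2T^u}$ in place of the Euclidean-distance event, and with the sharp exponent $\alpha_1$ and constant.

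Third — the main obstacle — I would upgrade from $e^{2T^u}$ to the genuine event $\{\ha\eta_\sigma\cap\{|z|>L\}\neq\emptyset, \sigma\in\{+,-\}\}$. The comparability from step one is only up to constants, so the naive substitution loses the multiplicative constant; the fix is the argument of \cite{LR} (and its adaptation in \cite[Section 6]{Two-Green-interior}): condition on $\F^u_{T^u}$, note that after time $T^u$ one of the two curves is confined by Beurling/Koebe to a region of diameter $O(e^{2T^u})$ relative to $\infty$ so cannot increase $\max\{L:\eta_j\cap\{|z|>L\}\neq\emptyset\}$ by more than a bounded factor, and then use the DMP of the commuting pair (Lemma \ref{DMP-123}) together with a one-curve boundary Green's function estimate (an hSLE$_\kappa$/SLE$_\kappa(2,\ulin\rho)$ one-point estimate, available from the single-curve theory) to evaluate the conditional probability that the confined curve actually reaches $\{|z|>L\}$; passing to the limit, the extra conditional factor converges to a constant depending only on $\kappa$, and an iteration/averaging over successive scales (as in the proof of Theorem \ref{Thm-SDE-whole-R}) controls the error at rate $(|v_+-v_-|/L)^{\beta_1'}$ with $\beta_1'=5/6$. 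Finally, undoing the rescaling of step one reinstates the factor $|v_+-v_-|^{\alpha_1}$, and since $|(J\circ f)'(\infty)|$-type Jacobian factors are trivial here ($f=\mathrm{id}$, $z_0=\infty$), one reads off \eqref{Thm1-est}. I expect the delicate point to be making the ``confinement after $T^u$'' estimate quantitative enough to produce the stated polynomial error rate rather than merely an $o(1)$ correction, which is exactly the role played by the exponential mixing bound $\beta_1=10$ feeding into the geometric-series error control.
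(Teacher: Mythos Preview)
Your Steps 1--2 are essentially sound and match the paper's setup: the reduction to $v_\pm=\pm1$, the time curve $\ulin u$, and the survival estimate $\PP_1[T^u>t]={\cal Z}_1 G_1^*(\ulin r)e^{-2\alpha_1 t}(1+O(e^{-10t}))$ from Lemma \ref{property-til-p} are exactly what the argument runs on. The gap is in Step 3, and it is not just a matter of care. You propose to condition at the terminal time $T^u$ and then invoke a one-curve boundary Green's function for the non-confined curve. This fails on two counts. First, the DMP of Lemma \ref{DMP-123} applies at stopping times $\ulin\tau$ lying \emph{inside} ${\cal D}_1$ (with $\eta_\sigma(\tau_\sigma)\notin\eta_{-\sigma}[0,\tau_{-\sigma}]$), whereas $\ulin u(T^u)\in\partial{\cal D}_1$; you cannot restart the commuting pair there. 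Second, the ``one-curve hSLE/SLE$_\kappa(2,\ulin\rho)$ one-point estimate'' you cite is not an independent input here---the boundary Green's function for hSLE$_\kappa$ is precisely Theorem \ref{Thm2}, proved by the same machinery, so appealing to it is circular. Your sentence about ``the confined curve actually reaching $\{|z|>L\}$'' is also inverted: the confined curve \emph{cannot} reach further, and the question is whether its pre-$T^u$ part already did, which is a two-curve question, not a one-curve one.

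The paper's route avoids $T^u$ entirely. It conditions at a \emph{deterministic intermediate} time $t_0\in[3,\tfrac12\log(L/2))$, applies the distortion estimate of Lemma \ref{distortion} to $\ha g^u_{t_0}$ to show $\ha g^u_{t_0}(\{|z|>L\})$ is sandwiched between $\{|z|>L/e^{2t_0}\pm 2\}$, and then uses DMP at $\ulin u(t_0)$ to obtain the two-sided recursion $p(\ulin r;L)\gtreqless \EE[\,{\bf 1}_{\{T^u>t_0\}}\,p(\ulin R(t_0);L/e^{2t_0}\pm 2)\,]$. This is a recursion in the \emph{same} two-curve quantity. The paper then bootstraps in two stages: (i) start $(\ulin R)$ from the quasi-invariant density $\til p^1_\infty$, so that the recursion becomes exact in law and yields convergence of $L^{\alpha_1}p(L)$ to a finite positive limit with error $O(1/L_0)$; (ii) for deterministic $\ulin r$, use that the law of $\ulin R(t_0)$ given $\{T^u>t_0\}$ is $\til p^1_\infty\cdot(1+O(e^{-10t_0}))$ and plug into (i). Balancing the mixing error $e^{-10t_0}$ against the distortion error $e^{2t_0}/L$ by taking $e^{2t_0}=L^{2/12}$ produces $\beta_1'=10/12=5/6$. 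The final extension to $(v_++v_-)/2\notin[w_-,w_+]$ is done afterward, by running $\eta_+$ alone until $(V_++V_-)/2=V_0$ and using the martingale property of $M_1$ (Lemma \ref{M-mart}) together with the already-proved special case.
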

\begin{proof}
Let $p(\ulin w;\ulin v;L)$ denote the LHS of (\ref{Thm1-est}).
Construct the random commuting pair of chordal Loewner curves $(\eta_+,\eta_-;{\cal D}_1)$ from $\ha\eta_+$ and $\ha \eta_-$ as in Section \ref{section-two-curve}, where ${\cal D}_1=[0,T_+)\times [0,T_-)$, and $T_\sigma$ is the lifetime of $\eta_\sigma$, $\sigma\in\{+,-\}$. We adopt the symbols from Sections \ref{section-deterministic1}. Note that, when $L>|v_+|\vee |v_-|$, $\ha\eta_+$ and $\ha\eta_-$ both intersect $\{|z|>L\}$ if and only if $\eta_+$ and $\eta_-$ both intersect $\{|z|>L\}$. {The fact is:} for any $\sigma\in\{+,-\}$,  $\eta_\sigma$ either disconnects $v_{{\sigma}}$ from $\infty$, or disconnects $v_{{-\sigma}}$ from $\infty$. If $\eta_\sigma$ does not intersect $\{|z|>L\}$, then in the former case, $\ha\eta_\sigma$ grows in a bounded connected component of $\HH\sem \eta_\sigma$ after the end of $\eta_\sigma$, and so {cannot} hit $\{|z|>L\}$; and in the latter case $\eta_{-\sigma}$ grows in a bounded connected component of $\HH\sem \eta_\sigma$, and {cannot} hit $\{|z|>L\}$. We first  consider a special case:  $v_\pm=\pm 1$ and $w_\pm=\pm r_\pm $, where $r_\pm \in[0,1)$. Let $v_0=0$. {This case corresponds to the additional assumption (\ref{add-assump}) up to translation and dilation.} Let $V_\nu$ be the force point function started from $v_\nu$, $\nu\in\{0,+,-\}$, as before. Since $|v_+-v_0|=|v_0-v_-|$, we may define a time curve $\ulin u:[0,T^u)\to \cal D$ as in Section \ref{time curve} and adopt the symbols from there.
Define $p(\ulin r;L)=p(r_+,-r_-;1,-1;L)$.

Suppose $L>2e^6$, and so $\frac 12 \log(L/2)>3$. Let $t_0\in [3,\frac 12\log(L/2))$.  If both $\eta_+$ and $\eta_-$ intersect $\{|z|>L\}$, then there is some $t'\in[0,T^u)$ such that either $\eta_+\circ u_+[0,t']$ or $\eta_-\circ u_-[0,t']$ intersects $\{|z|>L\}$, which by (\ref{V-V'}) implies that $L\le 2e^{2t'}$, and so $T^u>t'\ge \log(L/2)/2>t_0$.
Thus, $\{\eta_\sigma\cap\{|z|>L\}\ne\emptyset,\sigma\in\{+,-\}\}\subset \{T^u>t_0\}$. By (\ref{V-V'}) again, $\rad_{0}(\eta_\sigma[0,u_\sigma(t_0)])\le 2e^{2t_0}<L$. So $\eta_\sigma\circ u_\sigma[0,t_0]$, $\sigma\in\{+,-\}$, do not  intersect $\{|z|>L\}$.

Let $\ha g^u_{t_0}(z)= (g_{K(\ulin u(t_0))}(z)-V^u_0(t_0))/e^{2t_0}$. Then $\ha g^u_{t_0}$ maps $\C\sem (K(\ulin u(t_0))^{\doub}\cup [v_-,v_+])$ conformally onto $\C\sem [-1,1]$, and fixes $\infty$ with $\ha g^u_{t_0}(z)/z\to e^{-2t_0}$ as $z\to\infty$.
 From $V^u_- \le v_-<0$, $V^u_+ \ge v_+>0$, and $V^u_0=(V^u_++V^u_-)/2$, we get  $|V^u_0(t_0)|\le |V^u_+(t_0)-V^u_-(t_0)|/2 =e^{2t_0}$. Applying Lemma \ref{distortion} to $f_2(z)=(z+1/z)/2$, $a_2=1/2$,  $f_1=(\ha g^u_{t_0})^{-1}\circ f_2$ and $a_1=e^{2t_0}/2$, and using that $L>2 e^{2t_0}$, we get $\{|z|> L\}\subset \C\sem (K(\ulin u(t_0))^{\doub}\cup [v_-,v_+])$ and
\BGE \{|z|> L/e^{2t_0}-2\}\supset \ha g^u_{t_0}(\{|z|> L\})\supset \{|z|> L/e^{2t_0}+2\}.\label{R1R2}\EDE

Note that both $\eta_+$ and $\eta_-$ intersect $\{|z|>L\}$ if and only if $T^u>t_0$ and the $\ha g^u_{t_0}$-image of the parts of $\eta_\sigma$ after $u_\sigma(t_0)$, $\sigma\in\{+,-\}$, both intersect the $\ha g^u_{t_0}$-image of  $\{|z|>L\}$.
By Lemma \ref{DMP-123} for $j=1$, conditionally on $\F_{\ulin u(t_0)}$ and the event $\{T^u>t_0\}$, the $\ha g^u_{t_0}$-image of the parts of $\eta_\sigma$ after $u_\sigma(t_0)$, $\sigma\in\{+,-\}$, after normalization, form a commuting pair of hSLE$_\kappa$ curves in the chordal coordinate started from $(R_+(t_0),-R_-(t_0);1,-1)$. The condition that $\eta_\sigma(u_\sigma(t_0))\not\in \eta_{-\sigma}[0,u_{-\sigma}(t_0)]$, $\sigma\in\{+,-\}$, is a.s.\ satisfied on $\{T^u>t_0\}$,which follows from Lemma \ref{W=V} and the fact that a.s.\ $R_\sigma(t_0)=(W^u_\sigma(t_0)-V_0^u(t_0))/(V_\sigma^u(t_0)-V_0^u(t_0))>0$, $\sigma\in\{+,-\}$, on $\{T^u>t_0\}$ (because of the transition density of $(\ulin R)$ vanishes outside $(0,1)^2$).
 From (\ref{R1R2}) we get
\BGE  \PP[\eta_\sigma\cap \{|z|>L\}\ne\emptyset, \sigma\in\{+,-\}|\lin\F_{\ulin u(t_0)},T^u>t_0]  \gtreqless p(\ulin R(t_0);   L/{e^{2t_0}}\pm 2)].\label{inclusion}\EDE
Here when we choose $+$ (resp.\ $-$) in $\pm$, the inequality holds with $\ge$ (resp.\ $\le$).

We use the approach of \cite{LR} to prove the convergence of $\lim_{L\to\infty} L^{\alpha_1} p(\ulin r,L)$.
{Note that the underlying probability measure for the $(\eta_1,\eta_2)$ here is the $\PP_1$ introduced in Section \ref{section-two-curve}.}
We first estimate $p(L):=\int_{(0,1)^2} p(\ulin r;L) \til p^1_\infty(\ulin r)d\ulin r$, where $\til p^1_\infty$ is the quasi-invariant density for the process $(\ulin R)$ under $\PP_1$ given in Lemma \ref{property-til-p}.
 This is the probability that the two curves in a $2$-SLE$_\kappa$ in $\HH$ with link pattern $(r_+\lr 1;-r_-\lr -1)$ both hit $\{|z|>L\}$, where $(r_+,r_-)$ is a random point in $(0,1)^2$ that follows the density $\til p^1_\infty$. From Lemma \ref{property-til-p} we know that, for {a} deterministic time ${t}$, $\PP[T^u>{t}]=e^{-\alpha_1 {t}}$, and the law of $(\ulin R({t}))$ conditionally on  $\{T^u>{t}\}$ still has density $\til p^1_\infty$.  Thus, the conditional joint law of the $\ha g^u_{{t}}$-images of the parts of $\ha\eta_\sigma$ after $\eta_\sigma(u_\sigma({t}))$, $\sigma\in\{+,-\}$ given $\F^u_{{t}}$ and $\{T^u>{t}\}$ agrees with that of $(\ha\eta_+,\ha\eta_-)$. From (\ref{inclusion})
we get $p(L)\gtreqless  e^{-2\alpha_1 {t}} p( L/e^{2{t}}\pm 2)$.
Let $q(L)=L^{\alpha_1} p(L)$. Then
\BGE q(L)\gtreqless (1\pm 2e^{2{t}}/L)^{-\alpha_1} q( L/e^{2{t}}\pm 2),\quad {\mbox{if }t\ge 3\mbox{ and }L>2e^{2t}}.\label{L-L}\EDE
Suppose $L_0> 4$ and $L\ge e^6(L_0+2)$.  Let $t_\pm=\log(L/(L_0\mp 2))/2$. Then $L/e^{2t_\pm}\pm 2=L_0$, $t_+> t_-\ge 3$ and $L=(L_0-2)e^{2t_+}>2e^{2t_+}> 2e^{2t_-}$. From (\ref{L-L}) (applied here with $t_\pm$   in place of ${t}$) we get
\BGE   q(L)\gtreqless (1\mp 2/L_0)^{\alpha_1} q(L_0), \quad \mbox{if }L\ge e^6(L_0+2)\mbox{ and }L_0>4.\label{q-lim}\EDE
From (\ref{V-V'}) we know that $T^u>{t}$ implies that both $\eta_+$ and $\eta_-$ intersect $\{|z|>e^{2{t}}/64\}$. Since $\PP[T^u>{t}]=e^{-2\alpha_1 {t}}>0$ for all ${t}\ge 0$, we see that $p$ is positive on $[0,\infty)$, and so is $q$. From (\ref{q-lim}) we see that $\lim_{L\to \infty} q(L)$ {exists and lies in} $(0,\infty)$. Denote it by $q(\infty)$. By fixing $L_0\ge 4$ and sending $L\to \infty$ in (\ref{q-lim}), we get
\BGE   p(L_0)\gtreqless q(\infty) L_0^{-\alpha_1} (1\mp 2/L_0)^{-\alpha_1},\quad \mbox{if }L_0\ge 4. \label{p-asymp}\EDE

Now we estimate $p(\ulin r;L)$ for a fixed deterministic $\ulin r\in [0,1)^2\sem \{(0,0)\}$.
The process $(\ulin R)$   starts from $\ulin r$ and has transition density $\til p^1_t$ given by Lemma \ref{transition-1}. Fix $L>2e^6$ and choose $t_0\in[3, \log(L/2)/2)$. {The event that} both $\eta_+$ and $\eta_-$ intersect $\{|z|>L\}$ {is then contained in the event} ${\{}T^u>t_0{\}}$. Let $\beta_1=10$. From Lemma \ref{property-til-p} we know that $\PP_{1}[T^u>t_0]={{\cal Z}_1} G_1^*(\ulin r) e^{-2\alpha_1 t_0}(1+O(e^{- \beta_1 t_0}))$ and the law of $\ulin R(t_0)$ conditionally on   $\{T^u>t_0\}$ has a density on $(0,1)^2$, which equals $\til p^1_\infty\cdot (1+O(e^{-\beta_1 t_0}))$, where $\beta_1=10$. Using Lemma \ref{DMP-123} and (\ref{inclusion},\ref{p-asymp}) we get
$$p(\ulin r;L)={\cal Z}_1 q(\infty) G_1^*(\ulin r) e^{-2\alpha_1 t_0}(L/e^{2t_0})^{-\alpha_1}(1+O(e^{-\beta_1 t_0}))(1+O(e^{2t_0}/L)).$$
For $L>e^{36}$, by choosing $t_0>3$ such that $e^{2t_0}=L^{2/(2+\beta_1)}$ and letting $C_0={\cal Z} q(\infty)$, we get $p(\ulin r;L)=C_0  G_1^*(\ulin r) L^{-\alpha_1} (1+O(L^{-\beta_1'}))$. Here we note that $\beta_1'=\beta_1/(\beta_1+2)$.

Since $G_1^*(r_+,r_-)=G_1(r_+,-r_-;1,-1)$, we proved (\ref{Thm1-est}) for $v_\pm =\pm 1$,  $w_+\in[0,1)$, and $w_-\in(-1,0]$.
Since $G_1(aw_++b,aw_-+b;av_++b ,av_-+b)=a^{-\alpha_1} G_1(w_+,w_-;v_+,v_-)$ for any $a>0$ and $b\in\R$, by a translation and a dilation, we get (\ref{Thm1-est}) in the case that $(v_++v_-)/2\in[w_-,w_+]$. Here we use the assumption that $0\in[v_-,v_+]$ to control the amount of translation.

Finally, we consider all other cases, i.e., $(v_++v_-)/2\not\in [w_-,w_+]$. By symmetry, we may assume that $(v_++v_-)/2<w_-$. Let $v_0=(w_++w_-)/2$. Then $v_+>w_+>v_0>w_->v_-$, but $v_+-v_0<v_0-v_-$. We still let $V_\nu$ be the force point functions started from $v_\nu$, $\nu\in\{0,+,-\}$. By (\ref{pa-X}), $V_\nu$ satisfies the PDE $\pa_+ V_\nu\aeq \frac{2W_{+,1}^2}{V_\nu -W_+ }$ on ${\cal D}_1^{\disj}$ as defined in Section \ref{section-deterministic-2}.  Thus, on ${\cal D}_1^{\disj}$, for any $\nu_1\ne \nu_2\in\{+,-,0\}$,
$\pa_+\log |V_{\nu_1} -V_{\nu_2} |\aeq \frac{-2W_{+,1}^2} {(V_{\nu_2} -W_+ )(V_{\nu_1} -W_+ )}$,
which implies that
\BGE \frac{\pa_+(\frac{V_+-V_0}{V_0-V_-})}{\pa_+\log(V_+-V_-)}
=\frac{V_+-V_0}{W_+-V_0}\cdot \frac{V_+-V_-}{V_0-V_-} >1.\label{>1}\EDE
The displayed formula means that $\frac{V_+ -V_0 }{V_0 -V_- }|^-_0$ is increasing  faster than $\log(V_+ -V_-)|^-_0$. From the assumption, $\frac{V_+(\ulin 0)-V_0(\ulin 0)}{V_0(\ulin 0)-V_-(\ulin 0)}=\frac{v_+-v_0}{v_0-v_-}\in(0,1)$. Let $\tau_+$ be the first $t$ such that $\frac{V_+(t,0)-V_0(t,0)}{V_0(t,0)-V_-(t,0)}=1$; if such time does not exist, then set $\tau_+=T_+$. Then $\tau_+$ is an $\F^+$-stopping time, and from (\ref{>1}) we know that, for any $0\le t<\tau_+$, $|V_+(t,0)-V_-(t,0)|<e|v_+-v_-|$, which implies by (\ref{V-V}) that $\diam([v_-,v_+]\cup\eta_+[0,t])< e|v_+-v_-|$. Let $L=e|v_+-v_-|$. From  $0\in [v_-,v_+]$ we get $\tau_+\le \tau^+_L$.

Here and below, we write $\ulin W$ and $\ulin V$ for $(W_+,W_-)$ and $(V_+,V_-)$, respectively. From Lemma  \ref{M-mart} we know that $M_1(\cdot\wedge \tau_L^+,0)$ is a martingale closed by $M_1(\tau_L^+,0)$. By Proposition \ref{OST} and the facts that $M_1=G_1(\ulin W;\ulin V)$ and $M_1(t,0)=0$ for $t\ge T_+$, we get
\BGE \EE[{\bf 1}_{\{\tau_+<T_+\}} G_1(\ulin W ;\ulin V )|_{(\tau_+,0)}]=\EE[M_1(\tau_+,0)]=M_1(0,0)=G_1(\ulin w;\ulin v).\label{EM1}\EDE
Using the same argument as in the proof of (\ref{inclusion}) with $(\tau_+,0)$ in place of $\ulin u(t_0)$ and $g_{K(\tau_+,0)}$ in place of $\ha g^u_{t_0}$, we get
\BGE  \PP[\eta_\sigma\cap\{|z|=L\}\ne\emptyset,\sigma\in\{+,-\}|\F^+_{\tau_+},\tau_+<T_+]\gtreqless p((\ulin W;\ulin V)|_{(\tau_+,0)} ;L\pm(V_+ -V_- )|_{(\tau_+,0)}).\label{p=exp}\EDE

Suppose $\tau_+<T_+$. Then the middle point of $[V_-(\tau_+,0),V_+(\tau_+,0)]$ is $V_0(\tau_+,0)$, which lies in $[W_-(\tau_+,0),W_+(\tau_+,0)]$. Also note that $0\in [V_-(\tau_+,0),V_+(\tau_+,0)]$ since $V_\pm (\tau_+,0)\gtreqless v_\pm \gtreqless 0$. Let $L_\pm=L\pm(V_+ -V_- )|_{(\tau_+,0)}$.
We may apply the result in the particular case to get
\begin{align}
p((\ulin W;\ulin V)|_{(\tau_+,0)} ;L_\pm) \nonumber  =& C_0 G_1(\ulin W;\ulin V)|_{(\tau_+,0)}\cdot L_\pm^{-\alpha_1} (1+O( {(V_+ -V_- )|_{(\tau_+,0)}}/{L_\pm} )^{\beta_1'}  ) \nonumber \\
  =& C_0 G_1(\ulin W;\ulin V)|_{(\tau,0)}\cdot L ^{-\alpha_1} (1+O({|v_+-v_-|}/{L})^{\beta_1'}). \label{p=est}
\end{align}
Here in the last step we used $(V_+ -V_- )|_{(\tau_+,0)}\le e|v_+-v_-|$ and $L_\pm /L=1+O(|v_+-v_-|/L)$. Plugging (\ref{p=est}) into (\ref{p=exp}), taking expectation on both sides of (\ref{p=exp}), and using the fact that $\{\eta_+\cap \{|z|=L\}\ne\emptyset\}\subset\{\tau_+<T_+\}$, we get
\begin{align*}
p(\ulin w;\ulin v;L)  =&C_0 \EE[{\bf 1}_{\{\tau<T_+\}} G_1(\ulin W;\ulin V)|_{(\tau,0)}]\cdot L^{-\alpha_1} (1+O(|v_+-v_-|/L)^{\beta_1'})\\
  =&C_0G_1(\ulin w;\ulin v) \cdot L^{-\alpha_1} (1+O(|v_+-v_-|/L)^{\beta_1'}),
\end{align*}
 where in the last step we used (\ref{EM1}). The proof is now complete.
\end{proof}

\begin{Theorem}
Let $\kappa\in(4,8)$. Then	Theorem \ref{Thm1} holds with  the same  $\alpha_1,\beta_1,G_1$ but a different positive constant $C$ under either of the following two modifications:
\begin{enumerate}
	\item [(i)]   the set $\{|z|>L\}$ is replaced by $(L,\infty)$, $(-\infty,-L)$, or $(L,\infty)\cup (-\infty,-L)$;
	\item [(ii)]   the event that $\eta_\sigma\cap \{|z|>L\}\ne \emptyset$, $\sigma\in \{+,-\}$, is replaced by $\eta_+\cap \eta_-\cap \{|z|>L\}\ne\emptyset$.
\end{enumerate}
\label{Thm1'}
\end{Theorem}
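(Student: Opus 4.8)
The plan is to deduce Theorem \ref{Thm1'} from Theorem \ref{Thm1} by a bootstrapping argument that upgrades the ``both curves hit $\{|z|>L\}$'' estimate to the finer estimates in (i) and (ii), using the structure of the link pattern and the reversibility/DMP of $2$-SLE$_\kappa$. The key observation, already noted in the proof of Theorem \ref{Thm1}, is that for $L>|v_+|\vee|v_-|$ the curve $\ha\eta_\sigma$ hits $\{|z|>L\}$ if and only if $\eta_\sigma$ does, and that by the time either curve reaches $\{|z|>L\}$ the configuration is, up to a conformal map close to a scaling by $e^{-2t}$, governed by a commuting pair of hSLE$_\kappa$ curves started from the diffusion $\ulin R(t)$ living in $(0,1)^2$. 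So the first step is to run exactly the same reduction as in Theorem \ref{Thm1}: reduce to the normalized case $v_\pm=\pm1$, $w_\pm=\pm r_\pm$, $v_0=0$, introduce the time curve $\ulin u$ and the diffusion $\ulin R$, pick $t_0\in[3,\tfrac12\log(L/2))$, and condition on $\F_{\ulin u(t_0)}\cap\{T^u>t_0\}$. On that event the $\ha g^u_{t_0}$-image of the parts of $\ha\eta_\pm$ after $u_\pm(t_0)$ is (after normalization) a commuting pair of hSLE$_\kappa$ curves started from $(R_+(t_0),-R_-(t_0);1,-1)$, and $\ha g^u_{t_0}(\{|z|>L\})$ is sandwiched between $\{|z|>L/e^{2t_0}\pm 2\}$ by (\ref{R1R2}); moreover $\ha g^u_{t_0}$ maps the two positive/negative real rays $(L,\infty)$ and $(-\infty,-L)$ to intervals that are again sandwiched between $(L/e^{2t_0}\pm2,\infty)$ and $(-\infty,-L/e^{2t_0}\mp2)$ respectively, since $\ha g^u_{t_0}$ is real on the real line outside the hulls and order-preserving. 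Likewise the intersection set $\eta_+\cap\eta_-$ maps to the intersection set of the two image curves. Hence every one of the modified events in (i) and (ii) satisfies the same self-similarity relation $p^{(\bullet)}(L)\gtreqless (1\pm 2e^{2t_0}/L)^{-\alpha_1}p^{(\bullet)}(L/e^{2t_0}\pm2)$ that drove the proof of Theorem \ref{Thm1}, provided the corresponding probability is positive.

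The second step is to verify positivity of each modified probability $p^{(\bullet)}(\ulin r;L)$ for $\ulin r\in[0,1)^2\setminus\{(0,0)\}$ and $L$ large, and here the hypothesis $\kappa\in(4,8)$ enters. For case (ii), $\eta_+\cap\eta_-\ne\emptyset$ has positive probability precisely because for $\kappa\in(4,8)$ two commuting curves (in the link pattern $w_+\lr v_+;w_-\lr v_-$) touch with positive probability; one can see positivity of $\eta_+\cap\eta_-\cap\{|z|>L\}\ne\emptyset$ by the same crosscut/extremal-length lower bounds used in Lemma \ref{lower-bound} together with the DMP: force both curves to travel out near $\infty$ first and then force them to meet there. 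For case (i), $\eta_\sigma\cap(L,\infty)\ne\emptyset$ has positive probability because the curve $\ha\eta_\sigma$ for $\kappa\in(4,8)$ hits the boundary ray between its two force points with positive probability (this is the non-simple-curve behaviour already invoked repeatedly in the paper). Rather than re-prove these positivity facts from scratch, I would phrase them as: there is a crosscut configuration of positive probability (under the independent coupling, hence under the true coupling by Lemma \ref{RN-M-ic} and the uniform bound Lemma \ref{uniform}) forcing the relevant event, and then combine with the DMP at $\ulin u(t_0)$ exactly as in Lemma \ref{lower-bound-Cor}. Once positivity is in hand, the functions $q^{(\bullet)}(L)=L^{\alpha_1}p^{(\bullet)}(L)$ satisfy (\ref{q-lim}) with the same exponent $\alpha_1$, so $\lim_{L\to\infty}q^{(\bullet)}(L)=q^{(\bullet)}(\infty)\in(0,\infty)$ exists, giving (\ref{p-asymp}) with a modified but still strictly positive constant. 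The rest of the proof of Theorem \ref{Thm1}---averaging over the quasi-invariant density $\til p^1_\infty$, then passing to a deterministic starting point $\ulin r$ via Lemma \ref{property-til-p} and the transition density Lemma \ref{transition-1}, then removing the assumption $(v_++v_-)/2\in[w_-,w_+]$ by growing $\eta_+$ up to $\tau_+$ and using (\ref{EM1})---goes through verbatim, because at no point did that argument use more about the target event than its self-similarity and positivity. The only change is the value of the constant $C$.

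The one genuinely new point, and the main obstacle, is making sure the ``image event'' identifications are exactly right for cases (i) and (ii), i.e.\ that when we push forward by $\ha g^u_{t_0}$ (or by $g_{K(\tau_+,0)}$ in the last part of the argument) the event ``$\eta_\sigma$ hits $(L,\infty)$'' becomes ``the image curve hits an interval within $O(1)$ of $(L/e^{2t_0},\infty)$'' and the event ``$\eta_+\cap\eta_-$ meets $\{|z|>L\}$'' becomes ``the two image curves intersect within the image annulus''. For (ii) this is essentially automatic since $\ha g^u_{t_0}$ is a homeomorphism off the hulls and the parts of the curves before $u_\pm(t_0)$ stay inside $\{|z|\le 2e^{2t_0}\}<\{|z|>L\}$, so any intersection point in $\{|z|>L\}$ lies in the common ``future'' of both curves; for (i) one must check the image of the real ray, which is straightforward from $\ha g^u_{t_0}(\C\setminus(K^{\doub}\cup[v_-,v_+]))=\C\setminus[-1,1]$, the reality and monotonicity of $\ha g^u_{t_0}$ on $\R\setminus[a,b]$, and the distortion bound Lemma \ref{distortion}. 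I would also double-check that in the non-centered reduction (growing $\eta_+$ up to $\tau_+$) the modified events are preserved under $g_{K(\tau_+,0)}$, which again follows because $g_{K(\tau_+,0)}$ is real and order-preserving on the relevant part of $\R$ and $K(\tau_+,0)$ has diameter $<e|v_+-v_-|\ll L$. Everything else is bookkeeping; no new estimate beyond those already proved in the paper is required.
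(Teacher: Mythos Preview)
Your proposal is correct and follows essentially the same approach as the paper. The paper's own proof is a single sentence: ``The same argument in the proof of Theorem \ref{Thm1} works here, where the assumption that $\kappa\in(4,8)$ is used to guarantee that the probability of all events are positive for any $L>0$.'' You have simply (and usefully) unpacked what ``the same argument works'' means---checking that the modified events are preserved under $\ha g^u_{t_0}$ and $g_{K(\tau_+,0)}$, and that positivity comes from $\kappa\in(4,8)$---but there is no difference in strategy.
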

\begin{proof}
	The same argument in the proof of Theorem \ref{Thm1} works here, where the assumption that $\kappa\in(4,8)$ is used to guarantee that the probability of all event are positive for any $L>0$.
\end{proof}

\begin{Theorem}
   Let $v_-<w_-<w_+<v_+\in\R$ be such that $0\in [v_-,v_+]$. Let $\eta_w$ be an hSLE$_\kappa$ curve in $\HH$ connecting $w_+$ and $w_-$ with force points $v_+$ and $v_-$.
    Let $\alpha_2=\frac{2}\kappa(12-\kappa)$, $\beta_2'=\frac 56$, and $G_2$ be as in (\ref{G2(w,v)}.
  Then there is a constant $C>0$ depending only on $\kappa$ such that, as $L\to \infty$,
  $$  \PP[\ha\eta_w\cap\{|z|>L\}\ne \emptyset  ]= CL^{-\alpha_2} G_2(\ulin w;\ulin v) (1+O ( {|v_+-v_-|}/L )^{\beta_2'}),$$
  where the implicit constants in the $O(\cdot)$ symbol depend only on $\kappa$.
 \label{Thm2}
\end{Theorem}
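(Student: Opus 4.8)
The proof of Theorem \ref{Thm2} will closely parallel that of Theorem \ref{Thm1}, with the single-curve commuting pair of Section \ref{section-iSLE-1} replacing the two-curve commuting pair of Section \ref{section-two-curve}. The plan is as follows. First, from the hSLE$_\kappa$ curve $\ha\eta_w$ connecting $w_+$ and $w_-$ with force points $v_+,v_-$, I would introduce the auxiliary partner curve $\ha\eta_v$ to view $(\ha\eta_w,\ha\eta_v)$ as a $2$-SLE$_\kappa$ with link pattern $(w_+\lr w_-;v_+\lr v_-)$ --- exactly the setup of Section \ref{section-iSLE-1} --- and construct the commuting pair of chordal Loewner curves $(\eta_+,\eta_-;{\cal D}_2)$ by orienting $\ha\eta_w$ from $w_+$ and from $w_-$ respectively. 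The key observation, already noted in Section \ref{Strategy}, is that $\ha\eta_v$ disconnects $\ha\eta_w$ from $\infty$, so for $L>|v_+|\vee|v_-|$ the event $\{\ha\eta_w\cap\{|z|>L\}\ne\emptyset\}$ reduces to an event about $\eta_+$ and $\eta_-$ reaching $\{|z|>L\}$; more precisely, since $\eta_+$ and $\eta_-$ together exhaust $\ha\eta_w$ up to the point where one of them disconnects its target from $\infty$, $\ha\eta_w$ hits $\{|z|>L\}$ iff both $\eta_+$ and $\eta_-$ do (by the same "grows in a bounded component" argument as in the proof of Theorem \ref{Thm1}).

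Next, in the special normalization $v_\pm=\pm1$, $w_\pm=\pm r_\pm$, $v_0=0$ (so that (\ref{add-assump}) holds after translation/dilation), I would run the time curve $\ulin u:[0,T^u)\to{\cal D}_2$ from Section \ref{time curve} and the diffusion $\ulin R=(R_+,R_-)$, which under $\PP_2$ has transition density $\til p^2_t$ (Lemma \ref{transition-2}) and quasi-invariant density $\til p^2_\infty$ (Lemma \ref{property-til-p}). Using Lemma \ref{DMP-123} for $j=2$ together with the distortion estimate Lemma \ref{distortion} applied to $f_2(z)=(z+1/z)/2$ and the map $\ha g^u_{t_0}$, I get the analogue of (\ref{inclusion}): for $L>2e^{2t_0}$ and $t_0\ge 3$,
\BGE \PP[\eta_\sigma\cap\{|z|>L\}\ne\emptyset,\sigma\in\{+,-\}\mid \lin\F_{\ulin u(t_0)},T^u>t_0]\gtreqless p_2(\ulin R(t_0); L/e^{2t_0}\pm 2),\EDE
where $p_2(\ulin r;L)$ denotes the probability in question for the curve with parameters $(r_+,-r_-;1,-1)$. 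The condition $\eta_\sigma(u_\sigma(t_0))\notin\eta_{-\sigma}[0,u_{-\sigma}(t_0)]$ needed to invoke the DMP holds a.s.\ on $\{T^u>t_0\}$ by Lemma \ref{W=V} and the fact that the transition density of $(\ulin R)$ vanishes outside $(0,1)^2$. Then, exactly as in the proof of Theorem \ref{Thm1}, averaging over the quasi-invariant density and iterating via the functional inequality (\ref{L-L}) gives the existence of $q(\infty):=\lim_{L\to\infty}L^{\alpha_2}\int p_2(\ulin r;L)\til p^2_\infty(\ulin r)\,d\ulin r\in(0,\infty)$, hence (\ref{p-asymp}) with $\alpha_1,\beta_1'$ replaced by $\alpha_2,\beta_2'$; and feeding this back through the deterministic-start estimate using the exponential convergence rate $\beta_2=10$ from Lemma \ref{property-til-p} yields $p_2(\ulin r;L)=C_0 G_2^*(\ulin r)L^{-\alpha_2}(1+O(L^{-\beta_2'}))$ with $\beta_2'=\beta_2/(\beta_2+2)=5/6$.

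Finally, to remove the normalization $v_\pm=\pm1$ and handle the case $(v_++v_-)/2\notin[w_-,w_+]$, I would reuse verbatim the last part of the proof of Theorem \ref{Thm1}: set $v_0=(w_++w_-)/2$, define $\tau_+$ as the first time the cross-ratio $\frac{V_+-V_0}{V_0-V_-}$ equals $1$, use the monotonicity computation (\ref{>1}) to control $\diam([v_-,v_+]\cup\eta_+[0,t])$ up to $\tau_+$, apply the optional stopping theorem to $M_2(\cdot\wedge\tau_L^+,0)$ (Lemma \ref{M-mart2}, Proposition \ref{OST}) to get $\EE[{\bf 1}_{\{\tau_+<T_+\}}G_2(\ulin W;\ulin V)|_{(\tau_+,0)}]=G_2(\ulin w;\ulin v)$, and combine with the already-established estimate and the homogeneity $G_2(a\ulin w+b;a\ulin v+b)=a^{-\alpha_2}G_2(\ulin w;\ulin v)$. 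The main obstacle --- relative to just citing Theorem \ref{Thm1}'s proof --- is that $M_2$ need not be strictly positive on ${\cal D}_2$ when $\kappa\in(4,8)$ (Remark after Lemma \ref{positive-M-T}), so the Radon-Nikodym change of measure and the DMP application must be carried out with the $\lin\F$-filtration and the almost-sure positivity of $M_2(\ulin\tau)$ on $\{\ulin\tau\in{\cal D}_2\}$ (Lemmas \ref{RN-Thm2-right}, \ref{positive-M-T}, \ref{RN-Thm2-inv}) in place of pointwise positivity; one must also check that the event $\{\ha\eta_w\cap\{|z|>L\}\ne\emptyset\}$ is genuinely $\{T^u>t_0\}$-measurable up to the relevant scale, which uses Beurling's estimate (\ref{V-V'}) just as before. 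Everything else is a routine transcription with $G_1\rightsquigarrow G_2$, $\alpha_1\rightsquigarrow\alpha_2$ (note $\alpha_2=2(\frac{12}\kappa-1)=\alpha_1$ numerically, so even the exponent bookkeeping is unchanged), and $\til p^1\rightsquigarrow\til p^2$.
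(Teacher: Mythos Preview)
Your proposal is correct and follows essentially the same approach as the paper's own proof, which simply says the argument of Theorem~\ref{Thm1} carries over with Lemmas~\ref{property-til-p} and~\ref{DMP-123} applied for $j=2$ and Lemma~\ref{M-mart2} replacing Lemma~\ref{M-mart}. Your additional care about the $\lin\F$-filtration and the a.s.\ positivity of $M_2(\ulin\tau)$ on $\{\ulin\tau\in{\cal D}_2\}$ (Lemmas~\ref{positive-M-T}, \ref{RN-Thm2-inv}) is appropriate, though note that for the final optional-stopping step at $(\tau_+,0)$ the one-parameter martingale $M_2(\cdot,0)$ causes no extra difficulty since only $\eta_+$ has grown.
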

\begin{proof}
  Let $(\eta_+,\eta_-;{\cal D}_2)$ be the  random commuting pair of chordal Loewner curves as defined in Section \ref{section-iSLE-1}. Then for $L>\max\{|v_+|,|v_-|\}$, $\ha \eta_w\cap\{|z|>L\}\ne \emptyset$ if and only if $\eta_{\sigma}\cap \{|z|>L\}\ne\emptyset$ for $\sigma\in \{+,-\}$.
   The rest of the proof follows  that  of Theorem \ref{Thm1} except that we now apply Lemmas \ref{property-til-p} and \ref{DMP-123} with $j=2$ and  use Lemma \ref{M-mart2}
    in place of Lemma  \ref{M-mart}.
\end{proof}

\begin{Theorem}
    Let $ w_-<w_+<v_+ \in\R$ be such that $0\in [w_-,v_+]$. Let $\eta_w$ be an hSLE$_\kappa$ in $\HH$ connecting $w_+$ and $w_-$ with force points $v_+$ and $\infty$. Let $\alpha_3=\frac {12}\kappa-1$, $\beta_3'=\frac 45$, and
   $G_3(\ulin w;v_+)$ be as in (\ref{G3(w,v)}).
  Then there is a constant $C>0$ depending only on $\kappa$ such that, as $L\to \infty$,
  $$  \PP[\ha\eta_w \cap\{|z|>L\}\ne \emptyset ]= CL^{-\alpha_3} G_3(\ulin w;v_+) (1+O ( {|w_+-v_-|}/L )^{ {\beta_3'} } ),$$
  where the implicit constants in the $O(\cdot)$ symbol depend only on $\kappa$.
  \label{Thm3}
\end{Theorem}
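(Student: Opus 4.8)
The plan is to mirror the proof of Theorem \ref{Thm2} almost verbatim, replacing the commuting pair $(\eta_+,\eta_-;{\cal D}_2)$ associated to the link pattern $(w_+\lr w_-;v_+\lr v_-)$ by the commuting pair $(\eta_+,\eta_-;{\cal D}_3)$ from Section \ref{section-iSLE-2}, which is the decomposition of the single hSLE$_\kappa$ curve $\eta_w$ with force points $v_+$ and $\infty$. First I would observe that, since SLE$_\kappa$ is not boundary-filling for $\kappa\in(0,8)$ and $\eta_w$ has an endpoint at neither $\infty$ nor $v_+$, for $L>\max\{|v_+|,|w_+|,|w_-|\}$ the event $\ha\eta_w\cap\{|z|>L\}\ne\emptyset$ holds if and only if $\eta_\sigma\cap\{|z|>L\}\ne\emptyset$ for both $\sigma\in\{+,-\}$, exactly as in the earlier proofs: once one of the two halves of $\eta_w$ disconnects the target $w_{-\sigma}$ from $\infty$, the remaining growth of $\ha\eta_w$ is trapped in a bounded component of $\HH\sem\eta_\sigma$. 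Thus the two-curve machinery applies and the relevant quantity is $\PP[\eta_\sigma\cap\{|z|>L\}\ne\emptyset,\ \sigma\in\{+,-\}]$.

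Next I would set up the special normalized case. Following Section \ref{section-iSLE-2}, introduce the auxiliary points $v_0=(w_++w_-)/2$ and $v_-=2v_0-v_+$, so that $v_0$ is the midpoint of $[v_-,v_+]$ and lies in $[w_-,w_+]$; this is precisely the hypothesis (\ref{add-assump}) needed to build the time curve $\ulin u:[0,T^u)\to{\cal D}_3$ of Section \ref{time curve}. After a translation and dilation we may take $v_\pm=\pm1$, $w_\pm=\pm r_\pm$ with $r_\pm\in[0,1)$; here the assumption $0\in[w_-,v_+]$ plays the role of $0\in[v_-,v_+]$ in Theorem \ref{Thm1}, bounding the translation. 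Define $R_\sigma$, $\ulin R$ as usual, and recall $M_3^u(t)=(e^{2t}I)^{\alpha_3}G_3^*(\ulin R(t))$ with $G_3^*(r_+,r_-)=G_3(r_+,-r_-;1)$. Then, using Lemma \ref{DMP-123} for $j=3$, the renormalized images of the curves past time $t_0$ on $\{T^u>t_0\}$ form a commuting pair of hSLE$_\kappa$ curves in the chordal coordinate started from $(R_+(t_0),-R_-(t_0);1)$; combining this with the Koebe/distortion estimate Lemma \ref{distortion} applied to $\ha g^u_{t_0}$ and $f_2(z)=(z+1/z)/2$ yields the sandwich inequality (\ref{inclusion}) with $p(\ulin r;L)$ the probability in the normalized case. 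From here the self-similarity argument of \cite{LR} — averaging over the quasi-invariant density $\til p^3_\infty$ of Lemma \ref{property-til-p}(i) to get $p(L)$, deriving the functional inequality (\ref{q-lim}) for $q(L)=L^{\alpha_3}p(L)$, concluding $q(L)\to q(\infty)\in(0,\infty)$, then de-averaging via the exponential convergence of the conditional law of $\ulin R(t_0)$ in Lemma \ref{property-til-p}(ii) with $\beta_3=8$ — gives $p(\ulin r;L)=C_0 G_3^*(\ulin r)L^{-\alpha_3}(1+O(L^{-\beta_3'}))$ with $\beta_3'=\beta_3/(\beta_3+2)=4/5$, which is the stated exponent. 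Note the homogeneity $G_3(aw_++b,aw_-+b;av_++b)=a^{-\alpha_3}G_3(w_+,w_-;v_+)$ undoes the normalization.

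Finally I would handle the non-generic position $(v_++v_-)/2\notin[w_-,w_+]$ — but here a simplification occurs relative to Theorem \ref{Thm1}: since $v_-$ and $v_0$ were \emph{defined} by $v_0=(w_++w_-)/2$ and $v_-=2v_0-v_+$, the midpoint $v_0$ of $[v_-,v_+]$ automatically lies in $[w_-,w_+]$, so the time curve $\ulin u$ exists directly and no extra stopping-time argument (the $\tau_+$ device of Theorem \ref{Thm1}, using (\ref{>1}) and the martingale $M_1$) is needed. One should double-check that the constraint $0\in[w_-,v_+]$ is exactly what is required to keep $0\in[v_-,v_+]$ after the definition of $v_-$ and to bound the translation; if $v_-<-$ something, one may still need the one-curve martingale trick with $M_3$ in place of $M_1$, running one half of $\eta_w$ up to the stopping time $\tau_+$ at which the cross-ratio forces the normalization, using Lemma \ref{RN-Thm3-inv} and Proposition \ref{OST} to produce the analogue of (\ref{EM1}), namely $\EE[{\bf 1}_{\{\tau_+<T_+\}}G_3(\ulin W;V_+)|_{(\tau_+,0)}]=G_3(\ulin w;v_+)$, and then the analogue of (\ref{p=exp})--(\ref{p=est}). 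The main obstacle I anticipate is purely bookkeeping: verifying the precise $\asymp$ estimates for $p^3_\infty(r_+,r_-)\asymp(1-r_+)^{\frac8\kappa-1}(1-r_-)^{\frac4\kappa-1}(r_+r_-)^{\frac4\kappa-1}$ and $G_3^*(r_+,r_-)\asymp(r_++r_-)^{\frac8\kappa-1}$ near the boundary of $[0,1]^2$ (these are asserted in Section \ref{summary}), so that ${\cal Z}_3\in(0,\infty)$ and the quasi-invariant density is genuinely usable; and making sure the force point $\infty$ of $\eta_w$ is correctly tracked through all the conformal maps, since unlike $v_\pm$ it is not a finite real point and the corresponding $\rho$ for it is $0$ (this is why $\rho_-=0$, $\rho_+=2$ appear in Lemma \ref{transition-3}).
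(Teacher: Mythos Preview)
Your proposal is correct and takes essentially the same approach as the paper's own (very brief) proof, which simply says to follow Theorems \ref{Thm1} and \ref{Thm2} using the setup of Section \ref{section-iSLE-2} with $v_0=(w_++w_-)/2$, $v_-=2v_0-v_+$, and Lemmas \ref{property-til-p} and \ref{DMP-123} for $j=3$. Your hesitation about the non-generic case is unnecessary: since $v_0=(w_++w_-)/2\in[w_-,w_+]$ by construction, and since $0\in[w_-,v_+]$ forces $v_-=w_++w_--v_+<w_-\le 0\le v_+$ so that $0\in[v_-,v_+]$, the time curve $\ulin u$ always exists directly and the $\tau_+$ stopping-time device from the end of the proof of Theorem \ref{Thm1} is never needed here.
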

\begin{proof}
  The proof follows  those of Theorems \ref{Thm2} and \ref{Thm1} except that we now introduce $v_0:=(w_++w_-)/2$ and $v_-:=2v_0-v_+$ as in Section \ref{section-iSLE-2}. Then we can define the time curve $\ulin u$ as in Section \ref{time curve} and apply Lemmas \ref{property-til-p} and \ref{DMP-123} with $j=3$. 
\end{proof}

\begin{proof}[Proof of Theorem \ref{main-Thm1}]
By conformal invariance of $2$-SLE$_\kappa$, we may assume that $D=\HH$ and $z_0=\infty$. Case (A1) follows immediately from Theorem \ref{Thm1}. Cases (A2) and (B) respectively follow from Theorems \ref{Thm2} and \ref{Thm3} since we only need to consider the Green's function for the curve connecting $w_+$ and $w_-$, which is an hSLE$_\kappa$ curve.
\end{proof}

\begin{Remark}
 The  hSLE$_\kappa$ curve  is a special case of the intermediate SLE$_\kappa(\rho)$ (iSLE$_\kappa(\rho)$ for short) curves in \cite{kappa-rho} with $\rho=2$. An iSLE$_\kappa(\rho)$  curve is defined using Definition \ref{Def-hSLE} with $F:= \,_2F_1(1-\frac{4}\kappa,\frac{2\rho}\kappa; \frac{2\rho+4}\kappa;\cdot )$ and $\til G:=\kappa \frac{F'}F+\rho$. The curve is well defined for $\kappa\in(0,8)$ and $\rho>\min\{-2,\frac \kappa 2-4\}$, and satisfies reversibility when $\kappa\in(0,4]$ and $\rho>-2$ or $\kappa\in (4,8)$ and $\rho\ge \frac \kappa 2-2$ (cf.\ \cite{multipl-kappa-rho}). When an iSLE$_\kappa(\rho)$ satisfies reversibility, we can obtain a commuting pair of iSLE$_\kappa(\rho)$ curves in the chordal coordinate started from $(w_+\lr w_-;v_+,v_-)$ or $(w_+\lr w_-;v_+)$ for given points $v_-<w_-<w_+<v_+$, which satisfy two-curve DMP. Following similar arguments, we find that Theorems \ref{Thm2} and \ref{Thm3} respectively hold for iSLE$_\kappa(\rho)$ curves with $\alpha_2=\frac{\rho+2}\kappa(\rho+4-\frac \kappa 2)$, $\alpha_3=\frac 2\kappa (\rho+4-\frac \kappa 2)$, $\beta_2'=\frac{2\rho+6}{2\rho+8}$, $\beta_3'=\frac{\rho+6}{\rho+8}$, and (with $F= \,_2F_1(1-\frac{4}\kappa,\frac{2\rho}\kappa; \frac{2\rho+4}\kappa;\cdot )$)
$$G_2(\ulin w;\ulin v)=|w_+-w_-|^{\frac 8\kappa -1}|v_+-v_-|^{\frac{\rho(2\rho+4-\kappa)}{2\kappa}}\prod_{\sigma\in\{+,-\}} |w_\sigma-v_{-\sigma}|^{\frac{2\rho}\kappa}  F\Big(\frac{(v_+-w_+)(w_--v_-)}{(w_+-v_-)(v_+-w_-)}\Big)^{-1},$$
$$G_3(\ulin w;v_+)=|w_+-w_-|^{\frac 8\kappa -1}|v_+-w_-|^{\frac{2\rho}\kappa}  F\Big(\frac{v_+-w_+ }{ w_+-w_-}\Big)^{-1}.$$
 The proofs use the estimate on the transition density of $\ulin R$ under $\PP_{\ulin w;\ulin v}^{(\rho,\rho)}$ and $\PP_{\ulin w;\ulin v}^{(\rho)}$ (Corollary \ref{transition-R-infty}) and revisions of Lemmas \ref{RN-Thm2-inv} and  \ref{RN-Thm3-inv})  with $\PP_2^0$ and $\PP_3^0$ now respectively representing  $\PP_{\ulin w;\ulin v}^{(\rho,\rho)}$ and $\PP_{\ulin w;\ulin v}^{(\rho)}$, $\PP_2$ and $\PP_3$ now respectively representing the joint law of the driving functions for a commuting pair of iSLE$_\kappa(\rho)$ curves in the chordal coordinate started from $(w_+\lr w_-;v_+,v_-)$ and from $(w_+\lr w_-;v_+)$, and $M_2$ and $M_3$ replaced by $G_2(W_+,W_-;V_+,V_-)$ and $G_3(W_+,W_-;V_+)$ for the current $G_2$ and $G_3$.

The revision of Theorem \ref{Thm2} (resp.\ \ref{Thm3}) also holds in the degenerate case: $v_+=w_+^+$, in which  the $\eta_w$ oriented from $w_-$ to $w_+$ is a chordal SLE$_\kappa(\rho)$ curve in $\HH$ from $w_-$ to $w_+$ with the force point at $v_-$ (resp.\ $\infty$). After a conformal map, we then obtain the boundary Green's function for a chordal SLE$_\kappa(\rho)$ curve in $\HH$ from $0$ to $\infty$ with the force point $v>0$ at a point $z_0\in (v,\infty)$ or at $z_0=v$. Such Green's functions may also be obtained from the traditional one-curve approach in \cite{Mink-real}.
The exponents $\alpha_2$ and $\alpha_3$ have appeared in \cite[Theorem 3.1]{MW} with a rougher estimate on the intersection probability.
\end{Remark}

\end{document}